\newtheorem{theorem}{Theorem}[section]
\newtheorem{conj}[theorem]{Conjecture}
\newtheorem{question}[theorem]{Question}
\newtheorem{corollary}[theorem]{Corollary}
\newtheorem{lemma}[theorem]{Lemma}
\newtheorem{proposition}[theorem]{Proposition}
\theoremstyle{definition}
\newtheorem{definition}[theorem]{Definition}
\newtheorem{example}[theorem]{Example}
\newtheorem{remark}[theorem]{Remark}
\newtheorem{notation}[theorem]{Notation}
\newcounter{notes}%
\newcommand{\low}[1]{\multirow{2}{*}{$#1$}}  %Dans une matrice, abaisse le coefficient d'une demi-ligne
\newcommand{\push}[1]{\multicolumn{2}{c}{$#1$}}  %Dans une matrice, pousse le coefficient d'une demi-colonne
	\newcommand{\ignore}[1]{}  %Ignorer un bout de code
\newcommand{\Li}{\mathrm{Li}_2}
\newcommand{\Vol}{\mathrm{Vol}}
\newcommand{\Log}{\mathrm{Log}}
\tikzset{
  % style to apply some styles to each segment of a path
  on each segment/.style={
    decorate,
    decoration={
      show path construction,
      moveto code={},
      lineto code={
        \path [#1]
        (\tikzinputsegmentfirst) -- (\tikzinputsegmentlast);
      },
      curveto code={
        \path [#1] (\tikzinputsegmentfirst)
        .. controls
        (\tikzinputsegmentsupporta) and (\tikzinputsegmentsupportb)
        ..
        (\tikzinputsegmentlast);
      },
      closepath code={
        \path [#1]
        (\tikzinputsegmentfirst) -- (\tikzinputsegmentlast);
      },
    },
  },
  % style to add an arrow in the middle of a path
  mid arrow/.style={postaction={decorate,decoration={
        markings,
        mark=at position .5 with {\arrow[#1]{>}}
      }}}
      ,
}
\tikzset{
  % style to apply some styles to each segment of a path
  on each segment/.style={
    decorate,
    decoration={
      show path construction,
      moveto code={},
      lineto code={
        \path [#1]
        (\tikzinputsegmentfirst) -- (\tikzinputsegmentlast);
      },
      curveto code={
        \path [#1] (\tikzinputsegmentfirst)
        .. controls
        (\tikzinputsegmentsupporta) and (\tikzinputsegmentsupportb)
        ..
        (\tikzinputsegmentlast);
      },
      closepath code={
        \path [#1]
        (\tikzinputsegmentfirst) -- (\tikzinputsegmentlast);
      },
    },
  },
  % style to add an arrow in the middle of a path
  mid arrow d/.style={postaction={decorate,decoration={
        markings,
        mark=at position .5 with {\arrow[#1]{>>}}
      }}}
      ,
}
\tikzset{
  % style to apply some styles to each segment of a path
  on each segment/.style={
    decorate,
    decoration={
      show path construction,
      moveto code={},
      lineto code={
        \path [#1]
        (\tikzinputsegmentfirst) -- (\tikzinputsegmentlast);
      },
      curveto code={
        \path [#1] (\tikzinputsegmentfirst)
        .. controls
        (\tikzinputsegmentsupporta) and (\tikzinputsegmentsupportb)
        ..
        (\tikzinputsegmentlast);
      },
      closepath code={
        \path [#1]
        (\tikzinputsegmentfirst) -- (\tikzinputsegmentlast);
      },
    },
  },
  % style to add an arrow in the middle of a path
  mid arrow s/.style={postaction={decorate,decoration={
        markings,
        mark=at position .5 with {\arrow[#1]{stealth}}
      }}}
      ,
}
\tikzset{
  % style to apply some styles to each segment of a path
  on each segment/.style={
    decorate,
    decoration={
      show path construction,
      moveto code={},
      lineto code={
        \path [#1]
        (\tikzinputsegmentfirst) -- (\tikzinputsegmentlast);
      },
      curveto code={
        \path [#1] (\tikzinputsegmentfirst)
        .. controls
        (\tikzinputsegmentsupporta) and (\tikzinputsegmentsupportb)
        ..
        (\tikzinputsegmentlast);
      },
      closepath code={
        \path [#1]
        (\tikzinputsegmentfirst) -- (\tikzinputsegmentlast);
      },
    },
  },
  % style to add an arrow in the middle of a path
  mid arrow l/.style={postaction={decorate,decoration={
        markings,
        mark=at position .5 with {\arrow[#1]{latex}}
      }}}
      ,
}
\newcommand{\C}{\mathbb{C}}
\newcommand{\R}{\mathbb{R}}
\newcommand{\Z}{\mathbb{Z}}
\newcommand{\N}{\mathbb{N}}
\newcommand{\B}{\mathsf{b}}
\newcommand{\stareq}{\stackrel{\star}{=}}
\newcommand{\sarrow}{\,\mathbin{\rotatebox[origin=c]{90}{$\rightarrow$}}}
\newcommand{\darrow}{\,\mathbin{\rotatebox[origin=c]{90}{$\twoheadrightarrow$}}}
\DeclareSymbolFont{EulerScript}{U}{eus}{m}{n}
\DeclareSymbolFontAlphabet\mathscr{EulerScript}
\title[Triangulations \& Teichm\"uller TQFT volume conjecture for twist knots]{Geometric triangulations and the Teichm\"uller TQFT volume conjecture for twist knots}
\author{Fathi Ben Aribi, Fran\c cois Gu\'eritaud and Eiichi Piguet-Nakazawa}
\address{
	UCLouvain, IRMP, Chemin du Cyclotron 2 \\
	1348 Louvain-la-Neuve \\
	Belgium}
\email{fathi.benaribi@uclouvain.be}
\address{
	CNRS and IRMA, Universit\'e de Strasbourg, UMR 7501 \\
	7 rue Ren\'e Descartes, 67084 Strasbourg Cedex \\
	France}
\email{francois.gueritaud@unistra.fr}
\address{Universit\'e de Gen\`eve, Section de math\'ematiques, 2-4 rue du Li\`evre, Case postale 64 1211 Gen\`eve 4, Suisse}
\email{eiichi.piguet@unige.ch}
\begin{document}

\subjclass[2010]{57M25, 57M27, 57M50}
\keywords{Triangulations; twist knots; $3$-manifolds; hyperbolic volume; Teichm\"uller TQFT; volume conjecture; saddle point method}

\maketitle

\begin{abstract}
We construct a new infinite family of ideal triangulations and H-triangulations for the complements of twist knots, using a method originating from Thurston. These triangulations provide a new upper bound for the Matveev complexity of twist knot complements.

We then prove that these ideal triangulations are geometric. The proof uses techniques of Futer and the second author, which consist in studying the volume functional on the polyhedron of angle structures.

Finally, we use these triangulations to compute explicitly the partition function of the Teichm\"uller TQFT and to prove the associated volume conjecture for all twist knots, using the saddle point method.
\end{abstract}

\maketitle

\tableofcontents

\section{Introduction}

Quantum topology began in 1984 with the definition of the Jones polynomial \cite{Jo}, a knot invariant that Witten later retrieved in the  Chern--Simons quantum field theory on the three-sphere with gauge group $SU(2)$ \cite{Wi}. Following Witten's intuitions from physics, several Topological Quantum Field Theories (or \textit{TQFT} for short, meaning certain functors from cobordisms to vector spaces  \cite{At}) were defined in the nineties and provided new invariants of knots and $3$-manifolds \cite{RT1, RT2, BHMV1, BHMV2, TV}.

The \textit{volume conjecture} of Kashaev and Murakami--Murakami is perhaps the most studied conjecture in quantum topology currently \cite{Ka95, MM, MuIntro, MY}; it states that the colored Jones polynomials of a given hyperbolic knot evaluated at a certain root of unity asymptotically grow with an exponential rate, which is the hyperbolic volume of this knot. As such, it hints at a deep connection between quantum topology and classical geometry. In the last twenty years, several variants of the volume conjecture have been put forward for other quantum invariants: for instance the Baseilhac--Benedetti generalisation in terms of quantum hyperbolic invariants \cite{BB}, or the Chen--Yang volume conjecture on the Turaev--Viro invariants for hyperbolic $3$-manifolds \cite{CY}. Some of these conjectures have been proven for several infinite families of examples, such as the fundamental shadow links \cite{Co}, the Whitehead chains \cite{VdV} and integral Dehn fillings on the figure-eight knot complement \cite{Oh}.
 See \cite{MuIntro, MM} for more examples.

In \cite{AK}, Andersen and Kashaev constructed the Teichm\"uller TQFT, a \textit{generalised} Topological Quantum Field Theory, in the sense that the operators of the theory act on \textit{infinite-dimensional} vector spaces. The partition function of the Teichm\"uller TQFT yields a quantum invariant $|\mathcal{Z}_{\hbar}(X,\alpha)| \in \R_{>0}$ (indexed by a quantum parameter $\hbar >0$) of a triangulated $3$-manifold $X$ endowed with a family of dihedral angles $\alpha$, up to certain moves on such triangulations with angles (see \cite{AK} for details).
Taking its roots in quantum Teichm\"uller theory and making use of Faddeev's quantum dilogarithm, this infinite-dimensional TQFT is constructed with state integrals on tempered distributions from the given triangulation with angles. The Teichm\"uller TQFT already admits several formulations and generalisations (see \cite{AK, AKnew, KaWB, AKicm}), and it is still not clear at the time of writing which formulation one should favor in order to best reduce the technical constraints in the definitions and computations. 

Nevertheless, two points remain clear regardless of the chosen formulation. Firstly, the Teichm\"uller TQFT is a promising lead for obtaining a mathematical model of quantum Chern--Simons theory with non-compact gauge group $SL(2,\C)$ \cite{AK, AKicm, Mi}. Secondly, the Teichm\"uller TQFT should also satisfy a \textit{volume conjecture}, stated as follows without details: 

\begin{conj}[Conjecture 1 of \cite{AK}, Conjecture \ref{conj:vol:BAGPN}]\label{conj:vol:intro}
Let $M$ be  a  closed  oriented  3-manifold  and $K\subset M$ a  knot  whose complement  is  hyperbolic.  Then  the  partition  function  of  the  Teichm\"uller  TQFT  associated  to $(M,K)$ follows an exponential decrease in the semi-classical limit $\hbar \to 0^+$, whose rate is the hyperbolic volume $\Vol(M \setminus K)$.
\end{conj}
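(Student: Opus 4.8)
The plan is to prove the conjecture not in full generality but for the infinite family of twist knots $K_n$, following the three-stage strategy announced in the abstract: produce economical triangulations, prove they are geometric, and then carry out a saddle point analysis of the resulting state integral. First I would construct, via Thurston's gluing method, an explicit ideal triangulation $\mathcal{T}_n$ of each twist knot complement $S^3 \setminus K_n$, together with a one-vertex H-triangulation in which $K_n$ is realised as a single edge. Building these by hand, with the number of tetrahedra linear in $n$ and with fully explicit gluing and edge data, is what makes the later state-integral computation tractable; it is also what yields the Matveev complexity bound mentioned in the abstract.

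Second, I would prove that $\mathcal{T}_n$ is geometric, i.e.\ that it supports the complete hyperbolic structure with all tetrahedra positively oriented. Here I would use the Casson--Rivin / Futer--Gu\'eritaud technique: consider the polytope $\mathcal{A}_n$ of angle structures on $\mathcal{T}_n$ and the volume functional $\mathcal{V}\colon \mathcal{A}_n \to \R$, which is concave. By Rivin's theorem, the complete hyperbolic structure corresponds precisely to the unique critical point of $\mathcal{V}$ in the interior of $\mathcal{A}_n$, so it suffices to show that $\mathcal{V}$ attains its maximum in the interior rather than on the boundary. This reduces to a face-by-face boundary analysis: one checks that on every facet of $\mathcal{A}_n$ the volume can be strictly increased by pushing inward, which forces the maximiser to be interior and hence forces $\mathcal{T}_n$ to be geometric.

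Third, and this is where the quantum invariant enters, I would use the combinatorics of $\mathcal{T}_n$ and of the H-triangulation to write the Teichm\"uller TQFT partition function $\mathcal{Z}_\hbar$ as an explicit finite-dimensional state integral over some $\R^{N}$, with integrand assembled from one copy of Faddeev's quantum dilogarithm $\Phi_\hbar$ per tetrahedron. I would then isolate the semiclassical limit $\hbar \to 0^+$ by the saddle point method: $\Phi_\hbar$ degenerates to the classical dilogarithm $\Li$, so the leading exponential of the integrand is governed by a potential $\mathcal{S}$ whose critical points are exactly the solutions of Thurston's gluing equations, and whose critical value at the geometric solution has real part equal to $-\Vol(S^3 \setminus K_n)$ (the imaginary part recording the Chern--Simons invariant, which is invisible to $|\mathcal{Z}_\hbar|$).

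The hard part, and the technical heart of the argument, will be making the saddle point method rigorous rather than formal. This requires: (a) locating the dominant saddle and proving it coincides with the geometric solution identified in the second stage, using the concavity and uniqueness from the angle-structure analysis to exclude competing critical points; (b) justifying that the contour of integration can be deformed onto a steepest-descent path through this saddle while keeping $\Phi_\hbar$ away from its poles and zeros; and (c) establishing the tail and error estimates showing the integral concentrates at the saddle, so that the exponential rate of $|\mathcal{Z}_\hbar|$ is exactly $\Vol(S^3 \setminus K_n)$. The interplay between the positivity of the angle structure (geometricity) and the convergence and steepest-descent requirements of the state integral is precisely what the carefully engineered triangulations of the first stage are designed to secure.
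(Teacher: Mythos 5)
Your proposal follows essentially the same route as the paper: Thurston-style ideal triangulations and one-vertex H-triangulations, geometricity via the Casson--Rivin volume-maximisation argument in the style of Futer--Gu\'eritaud, an explicit state integral for the partition function, and a saddle point analysis whose potential $S$ recovers Thurston's gluing equations and whose critical value at the geometric solution has real part $-\Vol(S^3\setminus K_n)$. The only notable difference is that the paper does not deform onto a steepest-descent path: it integrates over the fixed affine contour determined by the complete angle structure, on which strict concavity of $\Re S$ already forces the unique maximum to sit at the geometric saddle, and it then handles the non-compact tails and the comparison between $\Phi_\B$ and $\Li$ by uniform bounds on horizontal bands (a parity trick plus explicit estimates), which is where the genuinely new technical work lies.
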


Generally speaking, solving a volume conjecture requires  {one} to find connections between quantum topology and hyperbolic geometry hidden in the invariant, and to overcome technical difficulties (often analytical in nature). The payoff is worth the hassle, though: the previously mentioned connections can enrich both domains of mathematics and may provide new insights on how  {one can best} mathematically model physical quantum field theories. 
In the present paper, we solve the Teichm\"uller TQFT volume conjecture for the infinite family of hyperbolic twist knots in $S^3$ (see Figure \ref{fig:twist:knot} for a picture of these knots). Before, the conjecture was proven for the first two knots of this family \cite{AK} and numerically checked for the next nine \cite{AN, BAPNcras}. 
Moreover, since the first version of the present paper, the conjecture has also been proven for an infinite family of fibered knots in lens spaces \cite{PN}.
To the authors' knowledge, the twist knots are now the first  {infinite} family of hyperbolic knots in $S^3$ for which a volume conjecture is proven.  {Meanwhile, specific infinite families of links were tackled in \cite{Co, VdV} and the closed integral surgeries on the figure-eight knot were handled in \cite{Oh} for the Chen--Yang volume conjecture; this last result is comparable to the main result of our paper, as the twist knots can also be seen as a family of Dehn surgeries (on the Whitehead link).}
We hope that the techniques and results of this paper can provide valuable insights for further studies of this volume conjecture or its siblings that concern other quantum invariants \cite{Ka95, MM, CY}. Notably, it would be interesting to try to apply the techniques of this paper to prove other conjectures for the twist knots.

\

Let us now  {specify} the objects used and the results proven in this paper. Before all, we should clarify that the results split in two halves: Sections \ref{sec:trig} to \ref{sec:vol:conj} focus on the hyperbolic twist knots with an \textit{odd} number of crossings, while the even twist knots are studied in 
Section~\ref{sec:appendix}. Indeed, the constructions and proofs vary slightly  {depending on} whether the crossing number is odd or even. Hence, the reader interested in discovering for the first time our objects and techniques should focus on the odd twist knots in Sections \ref{sec:trig} to \ref{sec:vol:conj}. Likewise, Section \ref{sec:appendix} is for the experienced reader who wants to understand the difficulties in generalising our results from one infinite family of knots to another, and can be a starting point for future further proofs of the Teichm\"uller TQFT volume conjecture.

\

The first part of this paper deals with topological constructions of triangulations for twist knot complements (Sections \ref{sec:trig} and \ref{sub:even:trig}).

In the seventies, Thurston showed that hyperbolic geometry was deeply related to low-dimensional topology. He notably conjectured that  every compact, oriented, irreducible, atoroidal $3$-manifold $M$ with (empty or)  toroidal boundary and infinite fundamental group admits a complete hyperbolic metric \cite{Th2}. This \textit{Hyperbolization Conjecture} was then proved by Thurston for Haken manifolds \cite{Mor} and later by Perelman in the general case \cite{MT}.  For $3$-manifolds with toroidal boundary, such as complements of knots in the three-sphere, this hyperbolic metric is unique up to isometry, by the Mostow--Prasad rigidity theorem \cite{Mo, Pr}. Hyperbolic geometry can thus provide topological invariants, such as the hyperbolic volume of a knot complement.

Several knot invariants  can be computed from an \textit{ideal triangulation} $X=(T_1,\ldots,T_N,\sim)$ of the knot complement $S^3 \setminus K$, that is to say a gluing of $N$ ideal (i.e.\ without their vertices) tetrahedra $T_1, \ldots, T_N$ along  {with} a pairing of faces $\sim$. As a given knot complement admits an infinite number of triangulations, it is therefore natural to look for convenient triangulations with as few tetrahedra as possible.

The twist knots $K_n$ of Figure \ref{fig:twist:knot} form the simplest infinite family of hyperbolic knots (when $n\geqslant 2$, starting at the figure-eight knot). Recall that a knot is \textit{hyperbolic}  if its complement admits a complete hyperbolic structure of finite volume. In order to study the Teichm\"uller TQFT for the family of twist knots, we thus constructed particularly convenient ideal triangulations of their complements.

An intermediate step was to construct \textit{H-triangulations} of $(S^3,K_n)$, which are  triangulations of $S^3$ by compact tetrahedra, where the knot $K_n$ is represented by  {a single} edge. We now state the first result of this paper.

\begin{theorem}[Theorem \ref{thm:trig}]\label{thm:intro:trig}
For every $n\geqslant 2$, there exist an ideal triangulation $X_n$ of the twist knot complement $S^3 \setminus K_n$ with $\lfloor \frac{n+4}{2} \rfloor$ tetrahedra and a H-triangulation $Y_n$ of the pair $(S^3,K_n)$ with $\lfloor \frac{n+6}{2} \rfloor$ tetrahedra. Moreover, the edges of all these triangulations admit orientations for which no triangle is a cycle.
\end{theorem}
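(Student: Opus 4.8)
The plan is to prove both existence statements by a single explicit, recursive construction, producing the H-triangulation $Y_n$ first and then extracting the ideal triangulation $X_n$ from it. The key numerical observation driving the construction is that $\lfloor \frac{n+6}{2} \rfloor - \lfloor \frac{n+4}{2} \rfloor = 1$, so $Y_n$ should have exactly one more tetrahedron than $X_n$: I would arrange $Y_n$ to contain a distinguished tetrahedron having the knot $K_n$ as one of its edges, and recover $X_n$ by collapsing that tetrahedron onto the knot edge (the standard move relating an H-triangulation to an ideal triangulation of the complement). Thus the entire combinatorial burden is to build $Y_n$, after which $X_n$ and its tetrahedron count follow immediately.

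To build $Y_n$ I would apply Thurston's method to a standard diagram of $K_n$, isolating the twist region of $n$ crossings. First I would exhibit a single combinatorial \emph{building block} (one tetrahedron, with a prescribed face pairing) whose concatenation with a copy of itself reproduces the insertion of one full twist into the diagram; this is the mechanism that makes each full twist cost exactly one tetrahedron and produces the floor functions, the parity of $n$ governing whether the twist region terminates in a block or in a ``cap''. The clasp region, the two closing arcs, and the distinguished knot-edge tetrahedron then contribute a bounded number of further tetrahedra, independent of $n$. I would formalize this as an induction on $n$: the base case is a small twist knot (the figure-eight $K_2$, whose triangulation is classical), and the inductive step layers one additional block, described by an explicit list of vertices and face identifications that is uniform in $n$.

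The main obstacle I expect is \textbf{verifying the homeomorphism type}: one must prove that the explicitly glued complex is genuinely $S^3$ with the distinguished edge isotopic to $K_n$ (for $Y_n$), hence that $X_n$ is $S^3 \setminus K_n$. The cleanest route is to realize the inductive step as a sequence of moves that manifestly preserve the underlying pair $(S^3, K_n)$ while adding crossings — tracking how layering a block alters the diagram — rather than recognizing the quotient space after the fact. As supporting checks I would compute the Euler characteristic, verify that the link of the ideal vertex of $X_n$ is a single torus (so the cusp is correct) and that all vertex links in $Y_n$ are spheres, and confirm that the collapse indeed sends the knot-edge tetrahedron to the edge representing $K_n$. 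The delicate point throughout is the bookkeeping of face identifications in a family parametrized by $n$, and ensuring the inductive gluing is compatible at the two ends of the twist region.

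Finally, for the edge orientations I would define a global orientation induced by the layered structure of the construction (equivalently, from a local ordering of the four vertices of each tetrahedron), and verify that no triangular face becomes a directed $3$-cycle. Because the triangulation is assembled from finitely many combinatorial \emph{types} of tetrahedra and faces — the repeated building block together with the bounded number of end and clasp pieces — the acyclicity condition only needs to be checked on these finitely many local models and on the way consecutive blocks are glued, after which it propagates to all of $X_n$ and $Y_n$ for every $n$. This step is routine once the explicit gluing data from the construction are in hand.
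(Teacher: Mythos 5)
Your overall route is the paper's: apply Thurston's diagram method to obtain a polyhedral decomposition of $S^3$ with $K_n$ as a distinguished edge, triangulate it to get the H-triangulation $Y_n$, and recover $X_n$ by collapsing the single tetrahedron containing the knot edge --- the count $\lfloor\frac{n+6}{2}\rfloor=\lfloor\frac{n+4}{2}\rfloor+1$ is realized exactly as you predict, the branching is handled by exhibiting a vertex order on each tetrahedron and checking finitely many local models, and the parity of $n$ does force two parallel constructions ($p=\frac{n-3}{2}$ for odd $n$, $p=\frac{n-2}{2}$ for even $n$), as you anticipate.

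The gap is in the step you compress into ``exhibit a single combinatorial building block whose concatenation reproduces the insertion of one full twist.'' That is the entire difficulty. Thurston's method as you describe it produces one quadrilateral per crossing, hence a polyhedron whose boundary contains an $(n+1)$-gon, and any direct triangulation of that polyhedron yields on the order of $n$ tetrahedra --- which is exactly the previously known upper bound for twist knots via two-bridge presentations. To get down to one tetrahedron per \emph{two} crossings the paper interposes a specific combinatorial reduction, the ``bigon trick,'' applied $p$ times: each application merges two arrow-edges of the polyhedron into one at the cost of sliding a triangular face, and only after these $p$ reductions does the large face become a $(p+2)$-gon admitting a triangulation by a tower of $p$ tetrahedra (your repeated block). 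Positing the block without this reduction amounts to assuming the count you are trying to prove. Two smaller points: adding one full twist takes $K_n$ to $K_{n+2}$, so your induction must step by $2$ and needs both $K_2$ and $K_3$ as base cases; and the paper avoids induction altogether by carrying out the construction uniformly in $n$ for each parity, which also disposes of your concern about recognizing the homeomorphism type after the fact, since every move (collapsing the crossing tetrahedra, gluing the two balls along $E$, the bigon trick, the final subdivision) is manifestly a re-decomposition of the same pair $(S^3,K_n)$ rather than an abstract gluing whose quotient must be identified a posteriori.
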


The condition on edge orientations implies that every tetrahedron comes with a full order on its vertices: such a property is needed
to define the Teichm\"uller TQFT, see Section~\ref{sec:prelim}. Note that in \cite{BB}, this property is called a \textit{branching} on the triangulation (the first of several similarities between the Teichm\"uller TQFT and the Baseilhac--Benedetti quantum hyperbolic invariants).

To prove Theorem \ref{thm:intro:trig}, we study the cases ``$n$ odd'' and ``$n$ even'' separately. In both cases, we use a method introduced by Thurston
\cite{Th2}
 and later developed by Menasco and Kashaev--Luo--Vartanov \cite{Me, KLV}: we start from a diagram of the knot $K_n$ and we obtain a combinatorial description of $S^3$ as a polyhedron glued to itself, where $K_n$ is one particular edge. We then apply a combinatorial trick to reduce the number of edges in the polyhedron, and finally we triangulate it. This yields an H-triangulation $Y_n$ of $(S^3,K_n)$, which then gives the ideal triangulation $X_n$ of $S^3 \setminus K_n$ by  {collapsing} the  {single} tetrahedron containing the edge $K_n$.

The numbers $\lfloor \frac{n+4}{2} \rfloor$ in Theorem \ref{thm:intro:trig} give new upper bounds for the Matveev complexities of the manifolds $S^3 \setminus K_n$, and experimental tests on the software \textit{SnapPy} lead us to conjecture that these numbers are actually equal to the Matveev complexities for this family (see Conjecture \ref{conj:matveev} and Remark \ref{rem:snappy}).

\

In the second part of this paper (Sections \ref{sec:geom} and \ref{sub:even:geom}), we prove the \textit{geometricity} of these new ideal triangulations, which means that their tetrahedra can be endowed with positive dihedral angles corresponding to the complete hyperbolic structure on the underlying hyperbolic $3$-manifold. 

In \cite{Th}, Thurston provided a method to study geometricity of a given triangulation, which is a system of \textit{gluing equations} on complex parameters associated to the tetrahedra; if this system admits a solution, then this solution is unique and corresponds to the complete hyperbolic metric on the triangulated manifold.

However, this system of equations is difficult to solve in practice. In the nineties, Casson and Rivin devised a technique to prove geometricity (see the survey \cite{FG}). The idea is to focus on the argument part of the system of complex gluing equations (this part can be seen as a linear system) and use properties of the volume functional. Futer and the second author applied such a method for particular triangulations of once-punctured torus bundles and two-bridge link complements \cite{Gf}.

In this vein, we prove that the ideal triangulations $X_n$ of Theorem \ref{thm:intro:trig} are geometric.

\begin{theorem}[Theorems \ref{thm:geometric} and \ref{thm:appendix:geom:even}]\label{thm:intro:geom}
For every $n\geqslant 2$, $X_n$ is geometric.
\end{theorem}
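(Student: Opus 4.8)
The plan is to apply the variational method of Casson and Rivin, in the form developed by Futer and the second author in \cite{Gf} (see also the survey \cite{FG}). Recall that an \emph{angle structure} on the ideal triangulation $X_n$ assigns to each of the $N=\lfloor \frac{n+4}{2}\rfloor$ tetrahedra three positive dihedral angles (one per pair of opposite edges) summing to $\pi$, subject to the linear condition that the angles around each edge of $X_n$ sum to $2\pi$. These constraints cut out a convex polytope $\mathcal{A}(X_n)$, the space of angle structures. The smooth \emph{volume functional} $\Vol\colon \overline{\mathcal{A}(X_n)}\to\R$ sends an angle structure to the sum of the hyperbolic volumes of its ideal tetrahedra, each computed from its angles via the Lobachevsky function $\Lambda$. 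The criterion I would invoke is: $X_n$ is geometric if and only if $\mathcal{A}(X_n)\neq\emptyset$ and the maximum of $\Vol$ over the compact set $\overline{\mathcal{A}(X_n)}$ is attained at an interior point. Indeed, the argument (imaginary) part of Thurston's log gluing equations holds by construction on all of $\mathcal{A}(X_n)$, while the modulus (real) part is exactly the condition that an angle structure be a critical point of $\Vol$; an interior critical point thus yields a positively oriented solution of the gluing equations, which by Mostow--Prasad rigidity realizes the complete hyperbolic metric on $S^3\setminus K_n$.

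The proof therefore reduces to two steps. First, I would show $\mathcal{A}(X_n)\neq\emptyset$. Using the explicit combinatorics of $X_n$ from Theorem \ref{thm:intro:trig} together with the edge orientations (branching), I would write down the gluing equations explicitly; the repetitive structure of the twist region lets one reduce to a small number of angle parameters, namely one ``generic'' tetrahedron shape repeated along the twist region plus a bounded number of tetrahedra near the clasp. Nonemptiness is then verified by exhibiting one explicit tuple of positive angles solving the linear system, uniformly in $n$.

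The second and harder step is to rule out a maximum of $\Vol$ on the boundary $\partial\mathcal{A}(X_n)$, where some tetrahedron degenerates (an angle equals $0$ or $\pi$). The key tool is that the derivative of $\Lambda$ blows up logarithmically at the endpoints of $(0,\pi)$: $\Lambda'(\theta)=-\log|2\sin\theta|\to +\infty$ as $\theta\to 0^+$. Concretely, assuming $\Vol$ attains its maximum at a boundary point $p$, I would exhibit a tangent vector $v$ to $\overline{\mathcal{A}(X_n)}$ at $p$ pointing into the polytope and reducing the degenerate angles, and then show that the directional derivative $\partial_v\Vol(p)=+\infty$, contradicting maximality. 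Producing such a vector amounts to a combinatorial analysis of which configurations of zero angles are compatible with the gluing equations, and to checking that each admits an inward direction along which some $\Lambda'$ term diverges to $+\infty$.

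I expect the main obstacle to be the uniformity of this boundary analysis in $n$: the polytope $\mathcal{A}(X_n)$ varies with the crossing number, and one must control all possible degenerations simultaneously rather than case by case. The layered, quasi-periodic structure of $X_n$ along the twist region is the decisive asset here---as in the once-punctured-torus-bundle and two-bridge link treatments of \cite{Gf}, it should allow one to handle the long repetitive part with a single argument and to reduce the genuinely exceptional behavior to the bounded-size ``clasp'' part near the knot.
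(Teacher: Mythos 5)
Your framework is the paper's: Casson--Rivin (Theorem \ref{thm:casson:rivin}), nonemptiness of the angle polytope via an explicit tuple uniform in $n$ (Lemma \ref{lem:non:empty}), and exclusion of boundary maxima of the volume functional (Lemma \ref{lem:interior}). The first step is fine. The gap is in the boundary analysis. The mechanism you propose --- find an inward direction $v$ at a boundary maximizer with $\partial_v \Vol = +\infty$ because $\Lambda'(\theta)=-\log|2\sin\theta|\to+\infty$ as $\theta\to 0^+$ --- only disposes of tetrahedra that are \emph{flat but not taut} (one angle $0$, the other two in $(0,\pi)$). For a \emph{taut} tetrahedron, with angles $(0,0,\pi)$, the same logarithmic divergence occurs at $\pi^-$ as at $0^+$; since the three angle increments of $v$ in that tetrahedron sum to zero, the divergent $-\log(2t)$ contributions cancel exactly and the one-sided derivative of $\Vol$ into the polytope is \emph{finite} (it is even identically zero along directions that keep one of the two zero angles equal to zero). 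So no inward $v$ produces an infinite derivative at a taut degeneration, and your contradiction evaporates in precisely the hardest case.

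This is why the paper argues in two stages. The infinite-derivative argument is packaged once and for all as ``flat implies taut at a maximizer'' (Lemma \ref{lem:flat:taut}, quoted from Gu\'eritaud--Futer); the real work, which your proposal does not supply, is the exclusion of taut configurations. There the paper first symmetrizes so that $U,V,W$ carry the same angles (concavity of $\Vol$ plus the symmetry of the equations), then exploits the specific gluing equations of $X_n$: the balancing conditions force the sequence $(0,c_1,\dots,c_p)$ to be convex, and ascending and descending inductions on the $b_k$ show that any taut tetrahedron propagates until every tetrahedron is flat, giving total volume $0$, which is not maximal since $\mathcal{A}_{X_n}\neq\emptyset$. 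In the even case one taut configuration survives this propagation and must be excluded by computing a \emph{finite} one-sided derivative explicitly, namely bounding $\exp(-\partial\mathcal{V}/\partial t)$, a product of powers of sines of the nondegenerate angles, by a constant less than $1$ --- which again shows that the blow-up of $\Lambda'$ is not the operative mechanism. Your closing remark correctly anticipates that the quasi-periodic structure of the twist region is what makes the analysis uniform in $n$, but the argument it feeds is this propagation through the gluing equations, not a divergence of the derivative.
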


To prove Theorem \ref{thm:intro:geom}, we use techniques of Futer and the second author (see \cite{FG, Gf}). We first prove that the space of angle structures on $X_n$ is non-empty (Lemma \ref{lem:non:empty} for the odd case), and then that the volume functional cannot attain its maximum on the boundary of this space (Lemma \ref{lem:interior} for the odd case). Then Theorem \ref{thm:intro:geom}  follows from a result of Casson and Rivin (see Theorem \ref{thm:casson:rivin}).

\

In the third part of this paper (Sections \ref{sec:part:odd}, \ref{sec:part:H:odd} and \ref{sub:even:tqft}), we compute the partition functions of the Teichm\"uller TQFT for the triangulations $X_n$ and $Y_n$, and we notably prove that they satisfy the properties expected in Conjecture \ref{conj:vol:BAGPN}. Without going into details, we can summarise these properties as:

\begin{theorem}[Theorems \ref{thm:part:func}, \ref{thm:even:part:func}, \ref{thm:part:func:Htrig:odd} and \ref{thm:part:func:Htrig:even}]\label{thm:intro:partition}
	For every $n\geqslant 2$ and every $\hbar>0$, the partition function $\mathcal{Z}_{\hbar}(X_n,\alpha)$ of the ideal triangulation $X_n$ 
	(resp.\ $\mathcal{Z}_{\hbar}(Y_n,\alpha)$ of the H-triangulation $Y_n$) is computed explicitly for every angle structure $\alpha$ of $X_n$ (resp.\ of $Y_n$). 
	
	Moreover, the value $| \mathcal{Z}_{\hbar}(X_n,\alpha)| $ depends only 
	on three entities:
	 two linear combinations of angles $\mu_{X_n}(\alpha)$ and $\lambda_{X_n}(\alpha)$ ({which are the angular holonomies of} the meridian and longitude of the knot $K_n$),
	and a function $(x \mapsto J_{X_n}(\hbar,x))$, defined on some open subset of $\C$, and independent of the angle structure $\alpha$.
	
Furthermore, the value $|J_{X_n}(\hbar,0)|$ can be retrieved in a certain asymptotic of the partition function $\mathcal{Z}_{\hbar}(Y_n,\alpha)$ of the H-triangulation $Y_n$.
\end{theorem}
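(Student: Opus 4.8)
The plan is to compute $\mathcal{Z}_\hbar(X_n,\alpha)$ and $\mathcal{Z}_\hbar(Y_n,\alpha)$ directly from the Andersen--Kashaev state-integral formula recalled in Section~\ref{sec:prelim}, feeding in the combinatorial data of Theorem~\ref{thm:intro:trig}. First I would use the explicit face pairings of $X_n$ and $Y_n$ together with the vertex orders supplied by the acyclic edge orientations: to each tetrahedron $T_j$ the branching assigns a sign and a pair of ``Kashaev variables'', and the TQFT rules attach to $T_j$ a Boltzmann weight that is, up to a Gaussian phase $e^{i\pi(\cdots)}$, a single Faddeev quantum dilogarithm $\Phi_b$ evaluated at a linear form in the state variables (with $\hbar$ encoded in $b$). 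Multiplying these weights and assigning one real state variable to each pair of glued faces, subject to the gluing constraints, expresses $\mathcal{Z}_\hbar(X_n,\alpha)$ as a finite-dimensional tempered integral over a Euclidean space of dimension comparable to $N=\lfloor\frac{n+4}{2}\rfloor$. I would first check convergence by locating the argument of each $\Phi_b$ in the correct horizontal strip, which is exactly where the positivity of the angle structure $\alpha$ enters.

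The heart of the computation is to collapse this high-dimensional integral to a single residual one. The twist region of $K_n$ contributes a chain of tetrahedra whose weights share variables in a nearest-neighbour pattern, so I would integrate out the internal variables one at a time, using the Fourier-transform (``beta'') integral identity for $\Phi_b$ together with its inversion and functional relations. Each such integration consumes one quantum dilogarithm and produces a new one with shifted argument, so the chain telescopes; I expect the cleanest route is an induction on $n$, reducing the $n$-th integral to the $(n-1)$-th up to an explicit kernel. After all but one variable is eliminated, what remains is a one-dimensional integral whose integrand is an $\alpha$-independent product of quantum dilogarithms and a Gaussian; this integral, as a function of a single complex shift parameter, is the sought function $x\mapsto J_{X_n}(\hbar,x)$, holomorphic on the strip where it converges.

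It then remains to isolate the $\alpha$-dependence. Because the edge-gluing equations force the sum of dihedral angles around each edge to equal $2\pi$, the many linear forms in $\alpha$ appearing in the prefactors collapse to only two independent combinations, which by inspection of the cusp triangulation are the meridian and longitude holonomies $\mu_{X_n}(\alpha)$ and $\lambda_{X_n}(\alpha)$. Writing the prefactor as a product of a pure phase and a real Gaussian in these two quantities and passing to $|\mathcal{Z}_\hbar(X_n,\alpha)|$, the phase disappears, leaving an explicit real factor in $\mu_{X_n}(\alpha),\lambda_{X_n}(\alpha)$ times $|J_{X_n}(\hbar,\cdot)|$ evaluated at the combination dictated by $\mu_{X_n}(\alpha)$, which is precisely the claimed dependence on three entities. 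For $Y_n$ I would run the same computation: $Y_n$ differs from $X_n$ by one extra tetrahedron, the one carrying the knot edge $K_n$, which by construction collapses to recover $X_n$. In the state integral this tetrahedron contributes one further $\Phi_b$ factor whose argument involves the angle at $K_n$; driving that angle to its degenerate value sends the argument toward a pole of $\Phi_b$, and the dominant contribution is the residue there, which a dominated-convergence argument shows extracts exactly the value $|J_{X_n}(\hbar,0)|$ of the residual integral at the origin.

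The main obstacle is the explicit telescoping of the multidimensional integral: one must find the precise order of integrations and match the shifted quantum-dilogarithm arguments against the beta integral at every step, while controlling the contour shifts and strip conditions so that no pole is crossed and every intermediate integral converges. The $Y_n$ asymptotic is the secondary delicate point, where the degeneration of the knot-edge tetrahedron must be justified uniformly, via a genuine residue/dominated-convergence estimate rather than a formal limit.
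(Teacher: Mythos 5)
Your overall setup is right, but the central mechanism of your computation does not work and is not what the paper does. You propose to ``collapse the high-dimensional integral to a single residual one'' by integrating out the internal variables of the twist-region chain one at a time with the beta-integral identity for $\Phi_\B$, expecting the chain to telescope so that $J_{X_n}(\hbar,\cdot)$ ends up being a \emph{one-dimensional} integral. There is no reason for this to happen: in the Teichm\"uller TQFT the integration variables of the partition function are indexed by the \emph{tetrahedra}, each such variable $t_j$ sits inside its own factor $\Phi_\B(t_j\mp\tfrac{i}{2\pi\sqrt{\hbar}}(\pi-a_j))^{\pm 1}$, and the different $t_j$ are coupled through a genuine quadratic form coming from the kinematical kernel. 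Integrating out one such variable via the Fourier-transform formula for $\Phi_\B$ (after using the inversion relation to handle the Gaussian) replaces one quantum dilogarithm by a ratio of two with entangled arguments; the count of $\Phi_\B$'s does not decrease and no telescoping or induction on $n$ is available. The paper never integrates over any variable appearing inside a $\Phi_\B$: the only exact integrations are over the face variables and the auxiliary Fourier variables, which occur purely in phases (Lemma \ref{lem:dirac}), and the resulting $J_{X_n}(\hbar,x)$ remains a $(p+2)$-dimensional integral, with $\mathcal{Z}_{\hbar}(X_n,\alpha)$ expressed as a further one-dimensional integral of $J_{X_n}(\hbar,x)e^{x\lambda_{X_n}(\alpha)/2\sqrt{\hbar}}$ over the contour $\R+i\mu_{X_n}(\alpha)/(2\pi\sqrt{\hbar})$ --- not, as you assert, as a prefactor times a pointwise evaluation of $J_{X_n}$. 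Relatedly, the step you wave at by ``inspection of the cusp triangulation'' is the actual crux: one needs the explicit identity $2\widetilde{Q}_n\widetilde{\Gamma}(\alpha)+\widetilde{C}(\alpha)=(\text{combination of edge weights and }\lambda_{X_n}(\alpha))$ (Lemma \ref{lem:2QGamma+C}) relating the quadratic form of the kinematical kernel to the gluing equations, and this requires computing $-RA^{-1}B$ exactly, not just counting independent linear forms.

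The H-triangulation part is also off in its mechanism. In the paper the kinematical kernel of $Y_n$ retains Dirac deltas $\delta(t_Z)\delta(t_U-t_V-t_Z)$, which localize $t_Z=0$ exactly; the prefactor $\Phi_\B\bigl(\tfrac{\pi-\omega_{Y_n,\alpha}(\overrightarrow{K_n})}{2\pi i\sqrt{\hbar}}\bigr)$ then cancels the surviving $Z$-tetrahedron factor identically, and the limit $\alpha\to\tau$ is a dominated-convergence argument on the remaining $(p+2)$-dimensional integral together with a verification that $\mathcal{W}(\tau)=\mathcal{W}_n$. There is no residue computation at a pole of $\Phi_\B$, and a residue argument would in any case require the degenerating variable to be an integration variable approaching a pole, which it is not here. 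So while your outline identifies the right objects (state integral, strip conditions from positivity of angles, degeneration of the knot-edge tetrahedron), the two steps that carry the proof --- the exact reduction of the integral and the precise cancellation of the angle dependence --- are missing or rest on identities that do not hold.
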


The function $(\hbar \mapsto J_{X_n}(\hbar,0))$ should be seen as an analogue of the Kashaev invariant $\langle \cdot \rangle_{N}$ of \cite{Ka94,Ka95}, or of the colored Jones polynomials evaluated at a certain root of unity $J_{\cdot}(N,e^{2i \pi/N})$, where $\hbar$ behaves as the inverse of the color $N$. It is not clear at the time of writing that $(\hbar \mapsto J_{X_n}(\hbar,0))$ always yields a proper knot invariant independent of the triangulation. However,
Theorem \ref{thm:intro:partition} states that we can attain this function in at least two ways (as anticipated in the volume conjecture of \cite{AK}), which increases the number of available tools for proving such an invariance.
Theorem \ref{thm:intro:partition} is also of interest for studying the \textit{AJ-conjecture} for the Teichm\"uller TQFT, as stated in \cite{AM}.

To prove Theorem \ref{thm:intro:partition}, we compute the aforementioned partition functions, and especially their parts that encode how the faces of the triangulation are glued to one another (such a part is called the \textit{kinematical kernel}). We then show a connection between this kinematical kernel and the gluing equations on angles for the same triangulation, which allows us to prove that the partition function only depends on the angle structure $\alpha$ via the weight of $\alpha$ on each edge (which is constant equal to $2 \pi$) and via two angular holonomies $\mu_{X_n}(\alpha)$ and $\lambda_{X_n}(\alpha)$ related to the meridian and longitude of the twist knot $K_n$. Finally, we need to establish some uniform bounds on the quantum dilogarithm in order to apply the dominated convergence theorem in the computation of the asymptotic of $\mathcal{Z}_{\hbar}(Y_n,\alpha)$.

At the time of writing, whether or not the partition function always contains such topological information (the meridian and longitude of the knot) is an open question. Nevertheless, we hope that the patterns noticed for this infinite family of examples can illuminate the path.

\

In the fourth and final part of this paper (Sections \ref{sec:vol:conj} and \ref{sub:even:vol:conj}), we prove that the function $(\hbar \mapsto J_{X_n}(\hbar,0))$ (extracted from the partition functions of the Teichm\"uller TQFT in Theorem \ref{thm:intro:partition}) exponentially decreases in the semi-classical limit $ \hbar \to 0^+$, with decrease rate the hyperbolic volume.  {More precisely:}

\begin{theorem}[Theorems \ref{thm:vol:conj} and \ref{thm:even:vol:conj}]\label{thm:intro:vol:conj}
	For every $n\geqslant 2$, we have the following limit:
$$
\lim_{\hbar \to 0^+} 2\pi \hbar \log \vert J_{X_n}(\hbar,0) \vert
= -\emph{Vol}(S^3\backslash K_n).$$	
\end{theorem}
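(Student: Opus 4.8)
The plan is to apply the saddle point method to an explicit state-integral formula for $J_{X_n}(\hbar,0)$. By Theorem \ref{thm:intro:partition}, the function $J_{X_n}(\hbar,x)$ is known explicitly as an integral over some open subset of $\C$ (or a product of such, indexed by the tetrahedra), built out of Faddeev's quantum dilogarithm $\Phi_\B$. The first step is to set $x=0$ and write $J_{X_n}(\hbar,0)$ in the form $\int_{\mathcal{C}} e^{\frac{1}{2\pi\hbar} V_n(z)}\, \psi_\hbar(z)\, dz$ for a suitable contour $\mathcal{C}$, where $V_n$ is a \emph{potential function} governing the leading exponential behaviour and $\psi_\hbar$ is a subexponential prefactor. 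To extract $V_n$ one uses the classical asymptotic expansion of the quantum dilogarithm as $\hbar\to 0^+$, namely $\log \Phi_\B(z) \sim \frac{1}{2\pi i \B^2}\,\Li(-e^{2\pi \B z})$ in the appropriate region (with $\B^2 = \hbar$), so that each tetrahedron contributes a dilogarithm term and $V_n(z)$ becomes a sum of dilogarithms in the tetrahedral shape variables.

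Next I would identify the critical points of $V_n$ by solving $\nabla V_n = 0$. The crucial geometric input is that these critical point equations should coincide, after the change of variables relating the integration variables to the shape parameters of the tetrahedra, with Thurston's gluing equations for the triangulation $X_n$. Since Theorem \ref{thm:intro:geom} asserts that $X_n$ is geometric, the gluing equations admit the unique solution corresponding to the complete hyperbolic structure, and this solution provides a distinguished critical point $z_0$ of $V_n$. The value $V_n(z_0)$ at this geometric critical point should then equal $-\Vol(S^3\setminus K_n)$ (up to the normalisation $2\pi$), because the sum of the imaginary parts of the dilogarithms at the complete structure is exactly the hyperbolic volume, via the Lobachevsky function / the standard formula $\Vol = \sum_j \mathcal{D}(z_j)$ for the volume as a sum of Bloch--Wigner dilogarithms of the tetrahedron shapes.

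The analytic heart of the argument is then a rigorous saddle point (Laplace/steepest descent) estimate: I would show that the contour $\mathcal{C}$ can be deformed, without crossing singularities of the integrand and while controlling the behaviour of $\Phi_\B$ near the boundary of its domain, so that it passes through $z_0$ along a path of steepest descent, and that the contribution away from $z_0$ is exponentially negligible. Combined with uniform bounds on the quantum dilogarithm (of the kind already established in the proof of Theorem \ref{thm:intro:partition} to justify dominated convergence), this yields $\lim_{\hbar\to 0^+} 2\pi\hbar \log|J_{X_n}(\hbar,0)| = \mathrm{Re}\,V_n(z_0) = -\Vol(S^3\setminus K_n)$, as desired.

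I expect the main obstacle to be the \emph{justification} of the saddle point analysis rather than the formal critical point computation. Two difficulties stand out. First, one must verify that the dominant geometric critical point $z_0$ is the one actually selected by the prescribed contour $\mathcal{C}$ — that is, that no competing critical point has larger real part of $V_n$ and that the original contour is homotopic to a steepest descent contour through $z_0$; this is a global, $n$-dependent concavity/positivity statement that must be controlled uniformly in the family. Second, because $\Phi_\B$ has poles and zeros accumulating as $\hbar\to 0^+$ and the asymptotic expansion of $\log\Phi_\B$ is only valid away from these, one must establish tail estimates and error bounds that hold uniformly enough to apply a dominated convergence argument to the rescaled integral. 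Overcoming both points likely requires exploiting the explicit, relatively rigid form of the potential $V_n$ for twist knots — in particular using the convexity of the volume functional on the space of angle structures (from the Casson--Rivin machinery underlying Theorem \ref{thm:intro:geom}) to locate and certify the dominant critical point.
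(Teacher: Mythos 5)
Your proposal follows essentially the same route as the paper: a potential function built from dilogarithms via the semi-classical expansion of $\Phi_\B$, identification of its critical points with Thurston's gluing and completeness equations so that geometricity supplies the unique saddle, the Bloch--Wigner formula to evaluate $\Re S$ there as $-\Vol(S^3\setminus K_n)$, and uniform error bounds on the quantum dilogarithm together with tail estimates on the unbounded contour. The only cosmetic difference is that the paper does not deform to a steepest-descent path but instead fixes the integration contour at the complete angle structure, where strict concavity of $\Re S$ certifies that the geometric critical point is the global maximum --- exactly the mechanism you anticipate from the Casson--Rivin machinery.
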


To prove Theorem \ref{thm:intro:vol:conj}, we apply the saddle point method on the semi-classical approximation of $\vert J_{X_n}(\hbar,0) \vert$ (expressed with classical dilogarithms $\Li$),  {and then} bound the remaining error terms with respect to $\hbar$. 

More precisely, the \textit{saddle point method} is a common designation of various theorems that state that an integral $\int_\gamma \exp(\lambda S(z)) dz$ behaves mostly as $\exp\left (\lambda \max_\gamma(\Re(S))\right )$ when $\lambda \to \infty$ (see Theorem \ref{thm:SPM} for the version we used, and \cite{Wo} for a survey). In order to apply this method, we must check technical conditions such as the fact that the maximum of $\Re(S)$ on $\gamma$ is unique and a simple critical point. Fortunately, in the present paper, these conditions are consequences of the \textit{geometricity} of the ideal triangulations $X_n$ (Theorem \ref{thm:intro:geom}); indeed, the equations $\nabla S = 0$ here correspond exactly to the complex gluing equations, and their unique solution (the complete hyperbolic angle structure) provides the expected saddle point. Geometricity was the main ingredient we needed, in order to go from a finite number of numerical checks of the Teichm\"uller TQFT volume conjecture \cite{BAPNcras} to an exact proof for an infinite family.

Note that thanks to Theorem \ref{thm:intro:geom}, we did not need to compute the exact value of the complete hyperbolic structure or of the hyperbolic volume, although such computations would be doable in the manner of \cite{CMY} with our triangulations $X_n$.

The previously mentioned error bounds follow from the fact that $J_{X_n}(\hbar,0)$ does not depend exactly on the potential function $S$ made of classical dilogarithms, but on a quantum deformation $S'_{\hbar}$ using quantum dilogarithms. An additional difficulty stems from the fact that we must bound the error uniformly on a \textit{non-compact} contour, when $\hbar \to 0^+$. To the authors' knowledge, this difficulty never happened in studies of volume conjectures for other quantum invariants, since asymptotics of these invariants (such as the colored Jones polynomials) involve integrals on \textit{compact} contours. Hence we hope that the analytical techniques we developed in this paper ({which} are not specific to the twist knots) can be of use for future studies of volume conjectures with unbounded contours. More precisely, the parity trick in Lemma \ref{lem:parity} and its application in the bound for the whole non-compact contour (Lemma \ref{lem:unif:bound}) are our main additions from the previous techniques of \cite{AH}.

\

It is natural to wonder if the main result of this paper can be extended to any knot complement admitting a geometric triangulation. For such a manifold, we expect the analytical results of Section \ref{sec:vol:conj} to hold similarly (as their proofs did not use the fact that we studied the twist knots). However, it is yet unclear how one can generalise the computation and simplification of the partition functions (see Section \ref{sec:part:odd}) and its relation with the gluing equations (Lemma \ref{lem:grad:thurston}). We expect that combinatorial techniques on triangulations, such as those used to define the Neumann-Zagier datum \cite{DG, NZ}, will be needed.

\

Part of the results in this paper (Theorems \ref{thm:trig}, \ref{thm:part:func}, \ref{thm:even:part:func}, \ref{thm:part:func:Htrig:odd} and \ref{thm:part:func:Htrig:even}) were announced in~\cite{BAPNcras}. Sections \ref{sec:trig}, \ref{sec:geom} and \ref{sub:even:trig} appeared in the arXiv preprint \cite{BAPN2}. 

The paper is organised as follows: in Section \ref{sec:prelim}, we review preliminaries and  {notation}; in Section \ref{sec:trig} we construct the triangulations for odd twist knots; in Section \ref{sec:geom}, we prove geometricity of these triangulations for odd twist knots; in Section \ref{sec:part:odd} (resp.\ \ref{sec:part:H:odd}) we compute the partition function of the Teichm\"uller TQFT for the ideal triangulations (resp.\ H-triangulations), still for odd twist knots; in Section \ref{sec:vol:conj}, we prove the volume conjecture for odd twist knots (readers eager to arrive at Section  \ref{sec:vol:conj} can skip Section \ref{sec:part:H:odd} after reading Section \ref{sec:part:odd}); finally, in Section \ref{sec:appendix}, we explain how the proofs of the previous sections differ for the even twist knots.

\section*{Acknowledgements}

The first and third authors were supported by the Swiss National Science Foundation at the
University of Geneva, with subsidy $200021\_162431$. The first author was moreover supported by the FNRS in his "Research Fellow" position at UCLouvain, under Grant no. 1B03320F. 
The second author acknowledges support from  the ANR under the
grant DynGeo (ANR-16-CE40-0025-01) and through the Labex Cempi (ANR-11-LABX0007-01).
We thank Rinat Kashaev for helpful discussions, Renaud Detcherry for his proof of Lemma \ref{lem:complex:sym}, and the University of Geneva and UCLouvain for their hospitality.  We thank the anonymous referees for their valuable corrections and suggestions, notably Remark \ref{rem:snappy}.

\section{Preliminaries and  {notation}} \label{sec:prelim}

\subsection{Triangulations}

In this section we follow \cite{AK, KaWB}.
A tetrahedron $T$ with faces $\mathsf{A},\mathsf{B},\mathsf{C},\mathsf{D}$ will be denoted as in Figure \ref{fig:tetrahedron}, where the face outside the circle represents the back face and the center of the circle is the opposite vertex pointing towards the reader. We always choose an \textit{order} on the four vertices of $T$ and we call them $0_T,1_T,2_T,3_T$ (or $0,1,2,3$ if the context makes it obvious). Consequently, if we rotate $T$ such that $0$ is in the center and $1$ at the top, then there are two possible places for vertices $2$ and $3$; we call $T$ a \textit{positive} tetrahedron if they are as in Figure \ref{fig:tetrahedron}, and \textit{negative} otherwise. We denote $\varepsilon(T) \in \{ \pm 1\}$ the corresponding \textit{sign} of $T$. We \textit{orient the edges} of $T$  {according} to the order on vertices, and we endow each edge with a parametrisation by $[0,1]$ respecting the orientation. Note that such a structure was called a \textit{branching} in \cite{BB}.

Thus, up to isotopies fixing the $1$-skeleton pointwise, there is only one way of \textit{gluing} two triangular faces together while \textit{respecting the order of the vertices} and the edge parametrisations, and that is the only type of face gluing we consider in this paper.

% un tétraèdre en mercedes
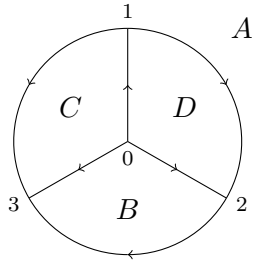
\begin{figure}[h]
\centering
\begin{tikzpicture}

%Tétraèdre 1
\begin{scope}[xshift=0cm,yshift=0cm,rotate=0,scale=1.5]

\draw (0,-0.15) node{\scriptsize $0$} ;
\draw (0,1.15) node{\scriptsize $1$} ;
\draw (1,-0.55) node{\scriptsize $2$} ;
\draw (-1,-0.55) node{\scriptsize $3$} ;
\draw (1,1) node{$\mathsf{A}$} ;
\draw (0,-0.6) node{$\mathsf{B}$} ;
\draw (-0.5,0.3) node{$\mathsf{C}$} ;
\draw (0.5,0.3) node{$\mathsf{D}$} ;

\path [draw=black,postaction={on each segment={mid arrow =black}}]
(0,0)--(-1.732/2,-0.5);

\path [draw=black,postaction={on each segment={mid arrow =black}}]
(0,0)--(0,1);

\path [draw=black,postaction={on each segment={mid arrow =black}}]
(0,0)--(1.732/2,-0.5);

\draw[->](1.732/2,-0.5) arc (-30:-90:1);
\draw (0,-1) arc (-90:-150:1);

\draw[->](0,1) arc (90:30:1);
\draw (1.732/2,0.5) arc (30:-30:1);

\draw[->](0,1) arc (90:150:1);
\draw (-1.732/2,0.5) arc (150:210:1);

\end{scope}

\end{tikzpicture}
\caption{The positive tetrahedron $T$} 
\label{fig:tetrahedron}
\end{figure}

Note that a tetrahedron $T$ like in Figure \ref{fig:tetrahedron} will either represent a \textit{compact} tetrahedron homeomorphic to a $3$-ball $B^3$ (notably when considering \textit{H-triangulations}) or an \textit{ideal} tetrahedron homeomorphic to a $3$-ball minus $4$ points in the boundary (when considering \textit{ideal triangulations}).

A \textit{triangulation} $X=(T_1,\ldots,T_N,\sim)$ is the data of $N$ distinct tetrahedra $T_1, \ldots, T_N$ and an equivalence relation $\sim$ first defined on the faces by pairing and the only gluing that respects vertex order, and also induced on edges  {and} vertices by the combined identifications. We call $M_X$ the (pseudo-)$3$-manifold 
$ M_X = T_1 \sqcup \cdots \sqcup T_N / \sim$ obtained by quotient. Note that $M_X$ may fail to be a manifold only at  (the image by the quotient map of) a vertex  of the triangulation, whose regular  {neighbourhood} might not be a $3$-ball (but for instance a cone over a torus for exteriors of links).

We denote $X^{k}$ (for $k=0, \ldots, 3$) the set of $k$-cells of $X$ after identification by $\sim$. In this paper we always  {assume} that \textit{no face is left unpaired  by $\sim$}, thus $X^{2}$ is always of  {cardinality} $2N$. By a slight abuse of notation we also call $T_j$ the $3$-cell inside the tetrahedron $T_j$, so that $X^{3} = \{T_1, \ldots, T_N\}$. Elements of $X^{1}$ are usually represented by distinct types of arrows, which are drawn on the corresponding preimage edges, see Figure \ref{fig:id:tri:41:complement} for an example.

An \textit{ideal triangulation} $X$ contains ideal tetrahedra, and in this case the quotient space minus its vertices $M_X \setminus X^0$ is an open manifold. In this case we will denote $M=M_X \setminus X^0$ and say that the open manifold $M$ admits the ideal triangulation $X$.

A (one-vertex) \textit{H-triangulation} is a triangulation $Y$ with compact tetrahedra so that $M=M_Y$ is a closed manifold and $Y^0$ is a singleton, with one distinguished edge in $Y^1$;  this edge will represent a knot $K$ (up to ambient isotopy) in the closed manifold $M$, and we will say that $Y$ is an \textit{H-triangulation for $(M,K)$}.

Finally, for $X$ a triangulation and $k=0,1,2,3,$ we define $x_k\colon X^3 \to X^2$ the  {function} such that $x_k(T)$ is the equivalence class of the face of $T$ opposed to its vertex $k$.

\begin{example}\label{ex:41}
Figure \ref{fig:id:tri:41:complement} displays two possible ways of representing the same ideal triangulation of the complement of the figure-eight knot $M=S^3 \setminus 4_1$, with one positive and one negative tetrahedron. Here $X^3 =\{T_+, T_-\}$, $X^2 =\{\mathsf{A},\mathsf{B},\mathsf{C},\mathsf{D}\}$, $X^1 =\{ \sarrow , \darrow \}$ and $X^0$ is a singleton. On the left the tetrahedra are drawn as usual and all the cells are named; on the right we represent each tetrahedron by a ``comb'' \begin{tikzpicture}[style=very thick] 
\begin{scope}[scale=0.3]
\draw(0,0)--(3,0);
\draw(0,0)--(0,1/2);
\draw(1,0)--(1,1/2);
\draw(2,0)--(2,1/2);
\draw(3,0)--(3,1/2);
\end{scope}
\end{tikzpicture}
 with four spikes numbered $0,1,2,3,$ from left to right. We join the spike $j$ of $T$ to the spike $k$ of $T'$ if $x_j(T)=x_k(T')$, and we add a $+$ or $-$ next to each tetrahedron according to its sign.

%ideal trig de 4_1
\begin{figure}[h]
\centering
\begin{tikzpicture}

%T\'etra\`edre T+
\begin{scope}[xshift=0cm,yshift=0cm,rotate=0,scale=1.5]

\draw (0,-0.15) node{\scriptsize $0$} ;
\draw (0,1.15) node{\scriptsize $1$} ;
\draw (1,-0.55) node{\scriptsize $2$} ;
\draw (-1,-0.55) node{\scriptsize $3$} ;

\draw (1,1) node{$\mathsf{B}$} ;
\draw (0,-0.6) node{$\mathsf{A}$} ;
\draw (-0.5,0.3) node{$\mathsf{C}$} ;
\draw (0.5,0.3) node{$\mathsf{D}$} ;

\draw (0,-1.4) node{\large $T_+$} ;

\path [draw=black,postaction={on each segment={mid arrow =black}}]
(0,0)--(-1.732/2,-0.5);

\path [draw=black,postaction={on each segment={mid arrow =black}}]
(0,0)--(0,1);

\path [draw=black,postaction={on each segment={mid arrow d =black}}]
(0,0)--(1.732/2,-0.5);

\draw[->](1.732/2,-0.5) arc (-30:-90:1);
\draw (0,-1) arc (-90:-150:1);

\draw[->>](0,1) arc (90:30:1);
\draw (1.732/2,0.5) arc (30:-30:1);

\draw[->>](0,1) arc (90:150:1);
\draw (-1.732/2,0.5) arc (150:210:1);

\end{scope}

%T\'etra\`edre T-
\begin{scope}[xshift=4cm,yshift=0cm,rotate=0,scale=1.5]

\draw (0,-0.15) node{\scriptsize $0$} ;
\draw (0,1.15) node{\scriptsize $1$} ;
\draw (1,-0.55) node{\scriptsize $3$} ;
\draw (-1,-0.55) node{\scriptsize $2$} ;

\draw (1,1) node{$\mathsf{C}$} ;
\draw (0,-0.6) node{$\mathsf{D}$} ;
\draw (-0.5,0.3) node{$\mathsf{A}$} ;
\draw (0.5,0.3) node{$\mathsf{B}$} ;

\draw (0,-1.4) node{\large $T_-$} ;

\path [draw=black,postaction={on each segment={mid arrow =black}}]
(0,0)--(-1.732/2,-0.5);

\path [draw=black,postaction={on each segment={mid arrow d =black}}]
(0,0)--(0,1);

\path [draw=black,postaction={on each segment={mid arrow d =black}}]
(0,0)--(1.732/2,-0.5);

\draw(1.732/2,-0.5) arc (-30:-90:1);
\draw[<<-] (0,-1) arc (-90:-150:1);

\draw[->](0,1) arc (90:30:1);
\draw (1.732/2,0.5) arc (30:-30:1);

\draw[->](0,1) arc (90:150:1);
\draw (-1.732/2,0.5) arc (150:210:1);

\end{scope}

%combs

\begin{scope}[xshift=9.5cm,yshift=0cm,rotate=0]
\node[inner sep=0pt] (russell) at (0,0)
    {\includegraphics[scale=0.8]{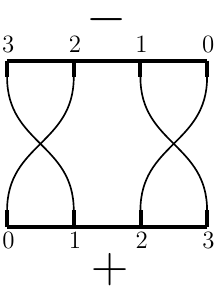}};
\end{scope}

\end{tikzpicture}
\caption{Two representations of an ideal triangulation of the knot complement $S^3 \setminus 4_1$.}
 \label{fig:id:tri:41:complement}
\end{figure}

\end{example}

\subsection{Angle structures}

For a given triangulation $X=(T_1,\ldots,T_N,\sim)$ we denote $\mathcal{S}_X$ the set of \textit{shape structures on $X$}, defined as
$$
\mathcal{S}_X 
 = 
 \left \{ 
\alpha = \left (a_1,b_1,c_1,\ldots,
a_N,b_N,c_N\right ) \in (0,\pi)^{3N} \ \big | \
\forall k \in \{1,\ldots,N\}, \
a_k+b_k+c_k = \pi
\right \}.
$$
An angle $a_k$ (respectively $b_k,c_k$) represents the value of a dihedral angle on the edge $\overrightarrow{01}$ (respectively $\overrightarrow{02}$, $\overrightarrow{03}$) and its opposite edge in the tetrahedron $T_k$. If a particular shape structure $\alpha=(a_1,\ldots,c_N)\in \mathcal{S}_X$ is fixed, we define three associated  {function}s $\alpha_j\colon X^3 \to (0,\pi)$ (for $j=1,2,3$) that send $T_k$ to the $j$-th element of $\{a_k,b_k,c_k\}$ for each $k \in \{1,\ldots,N\}$.

Let $(X,\alpha)$ be a triangulation with a shape structure as before. We denote $\omega_{X,\alpha}\colon X^1 \to \mathbb{R}$ the associated \textit{weight function}, which sends an edge $e\in X^1$ to the sum of angles $\alpha_j(T_k)$ corresponding to tetrahedral edges that are preimages of $e$ by $\sim$. 
For example, if we denote 
$\alpha=(a_+,b_+,c_+,a_-,b_-,c_-)$ a shape structure on the triangulation $X$ of Figure \ref{fig:id:tri:41:complement}, then $\omega_{X,\alpha}(\sarrow) =
2 a_+ + c_+ + 2 b_- + c_-.$

One can also consider the closure $\overline{\mathcal{S}_X}$ (sometimes called the space of \textit{extended shape structures}) where the $a_k,b_k,c_k$ are taken in $[0,\pi]$ instead. The definitions of the  {function}s $\alpha_j$ and  $\omega_{X,\alpha}$ can immediately be extended.

We finally define $
\mathcal{A}_X := \left \{
\alpha \in \mathcal{S}_X \ \big | \
\forall e \in X^1, \ \omega_{X,\alpha}(e)=2\pi
\right \}
$
 the set of \textit{balanced shape structures on $X$}, or \textit{angle structures on $X$}, and
 $
\overline{\mathcal{A}_X} := \left \{
\alpha \in \overline{\mathcal{S}_X} \ \big | \
\forall e \in X^1, \ \omega_{X,\alpha}(e)=2\pi
\right \}
$ the set of \textit{extended angle structures on $X$}.

\subsection{The volume functional} \label{sub:volume}

In this section we recall some known facts about the volume functional on the space of angle structures. See for example the survey \cite{FG}  for details.

One can understand a shape structure $(a,b,c)$ on an ideal tetrahedron $T$ as a way of realising $T$ in the hyperbolic space $\mathbb{H}^3$, with its four vertices at infinity. In this hyperbolic ideal tetrahedron, the angles $a,b,c$ will represent dihedral angles between two  faces.

The \textit{Lobachevsky function} $\Lambda\colon \R \to \R$ given by:
$$
\Lambda(x) =  - \int_0^x \log \vert 2 \sin (t) \vert \, dt$$
is well defined, continuous on $\R$, and periodic with period $\pi$. Furthermore, if $T$ is a hyperbolic ideal tetrahedron with dihedral angles $a,b,c$, its
volume satisfies
$$
\Vol(T)=\Lambda(a)+\Lambda(b)+\Lambda(c).$$

Let $X=(T_1,\ldots,T_N,\sim)$ be an ideal triangulation and $\mathcal{A}_X$ its space of angle structures, which is a (possibly empty) convex polytope in $\R^{3 N}$.
Then we define a volume functional $\mathcal{V} \colon \overline{\mathcal{A}_X} \to \R$, by assigning to an (extended) angle structure $\alpha= (a_1,b_1,c_1,\ldots,a_N,b_N,c_N)$ the real number
$$\mathcal{V}(\alpha)= \Lambda(a_1)+\Lambda(b_1)+\Lambda(c_1) + \cdots + \Lambda(a_N)+\Lambda(b_N)+\Lambda(c_N).$$

By \cite[Propositions 6.1 and 6.6]{Gf} and \cite[Lemma 5.3]{FG}, the volume functional $\mathcal{V}$ is strictly concave on $\mathcal{A}_X$ and concave on $\overline{\mathcal{A}_X}$. The maximum of the volume functional is actually related to the complete hyperbolic structure, see for example \cite[Theorem 1.2]{FG} that we re-state below.

\begin{theorem}[Casson--Rivin] \label{thm:casson:rivin}
Let $M$ be an orientable $3$-manifold with boundary consisting of tori, and let $X$ be an ideal triangulation of $M$. Then an angle structure $\alpha \in \mathcal{A}_X$  corresponds
to a complete hyperbolic metric on the interior of $M$ (which is unique) if and only if $\alpha$ is a critical
point of the functional $\mathcal{V}\colon \mathcal{A}_X\to \R$.
\end{theorem}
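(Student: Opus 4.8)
The plan is to identify the critical-point equation for $\mathcal{V}$ on the polytope $\mathcal{A}_X$ with Thurston's gluing equations, so that the statement reduces to the classical correspondence between positively oriented solutions of the gluing equations and the complete hyperbolic structure. First I would attach to each tetrahedron $T_k$ its complex shape parameter $z_k$ in the upper half-plane of $\C$, whose three associated cross-ratios $z_k,\ z_k'=(1-z_k)^{-1},\ z_k''=1-z_k^{-1}$ have arguments $a_k,b_k,c_k$ respectively; thus $\arg z_k=a_k$ records the dihedral angle and the single remaining real degree of freedom is $\log|z_k|$. By Thurston's theory \cite{Th} together with Mostow--Prasad rigidity \cite{Mo, Pr}, the complete (hence unique) hyperbolic metric on the interior of $M$ corresponds precisely to a positively oriented solution of the system of complex gluing equations: one \emph{edge equation} $\sum_{\text{at }e}\Log z_j^{(\cdot)}=2\pi i$ for each $e\in X^1$, together with the completeness (cusp) equations. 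The key preliminary observation is that the imaginary part of each edge equation is exactly $\omega_{X,\alpha}(e)=2\pi$; hence for any $\alpha\in\mathcal{A}_X$ the imaginary parts hold \emph{automatically}, and what remains to be verified is the vanishing of the real parts $\sum_{\text{at }e}\log|z_j^{(\cdot)}|=0$ (plus the completeness conditions).

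Next I would compute $d\mathcal{V}$. Since $\Lambda'(x)=-\log|2\sin x|$ and every tangent vector $\dot\alpha=(\dot a_k,\dot b_k,\dot c_k)_k$ to $\mathcal{S}_X$ satisfies $\dot a_k+\dot b_k+\dot c_k=0$, substituting $\dot c_k=-\dot a_k-\dot b_k$ gives
$$d\mathcal{V}(\dot\alpha)=\sum_{k=1}^N\Big(\log\tfrac{\sin c_k}{\sin a_k}\,\dot a_k+\log\tfrac{\sin c_k}{\sin b_k}\,\dot b_k\Big).$$
The point is that the coefficients $\log(\sin c_k/\sin a_k)$, etc., are, up to sign and relabeling, the real parts $\log|z_k^{(\cdot)}|=\mathrm{Re}\,\Log z_k^{(\cdot)}$ of the logarithms of the shape parameters. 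So the gradient of $\mathcal{V}$, read off against the angle variables, encodes exactly the quantities appearing in the real parts of the edge equations.

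The heart of the argument is then to pair $d\mathcal{V}$ with a convenient spanning set of $T_\alpha\mathcal{A}_X$, namely the \emph{leading-trailing (edge) deformations} $w_e$ of \cite{FG, Gf}: for each edge $e$ one follows the corresponding normal curve in the link of $e$ and adds $+1$ to the ``leading'' and $-1$ to the ``trailing'' dihedral angle at each corner it traverses. One checks that each $w_e$ preserves all the defining constraints of $\mathcal{A}_X$, so is genuinely tangent, and --- this is the main linear-algebra input --- that the $w_e$ together with the peripheral deformations span $T_\alpha\mathcal{A}_X$. The crucial computation is that $d\mathcal{V}(w_e)$ equals, up to sign, the real part $\sum_{\text{at }e}\log|z_j^{(\cdot)}|$ of the logarithmic edge equation at $e$, while pairing with the peripheral deformations reproduces the real and imaginary parts of the completeness equations. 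I expect this pairing identity, together with the verification that the leading-trailing deformations span the tangent space, to be the main obstacle: it is where the combinatorics of the face gluings meets the hyperbolic geometry of the shapes, and where one must track signs around each edge carefully.

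Granting this, the conclusion is immediate: because the $w_e$ (and peripheral deformations) span $T_\alpha\mathcal{A}_X$, the structure $\alpha$ is a critical point of $\mathcal{V}$ if and only if $d\mathcal{V}(w_e)=0$ for all $e$ and the peripheral pairings vanish, that is, if and only if all real parts of the edge equations vanish and the completeness equations hold. Combined with the automatically satisfied imaginary parts coming from the balancing condition, this is exactly the statement that the shapes $z_k$ solve Thurston's full gluing system with positive imaginary parts, which by \cite{Th, Mo, Pr} is the complete hyperbolic metric on the interior of $M$. Uniqueness is guaranteed both by Mostow--Prasad rigidity and, on the variational side, by the strict concavity of $\mathcal{V}$ on $\mathcal{A}_X$ recalled above, which forces the critical point to be unique whenever it exists.
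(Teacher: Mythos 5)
The paper does not give its own proof of this statement---it simply quotes it as \cite[Theorem 1.2]{FG}---and your sketch is precisely the standard Casson--Rivin argument from that reference: the balancing condition accounts for the imaginary parts of Thurston's gluing equations, pairing $d\mathcal{V}$ against the leading--trailing deformations recovers the real parts of the edge equations and of the peripheral holonomies, and the fact that these deformations span $T_\alpha\mathcal{A}_X$ converts criticality of $\mathcal{V}$ into the full gluing-plus-completeness system, with uniqueness from Mostow--Prasad rigidity or from strict concavity. Your proposal is therefore correct and follows essentially the same route as the proof the paper points to.
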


In this last case, we say that the ideal triangulation $X$ of the $3$-manifold $M$ is \textit{geometric}.

\subsection{Thurston's complex gluing equations}\label{sub:thurston}

To a shape structure $(a,b,c)$ on an ordered tetrahedron $T$ (i.e.\ an element of $(0,\pi)^3$ of coordinate sum $\pi$) we can associate bijectively a \textit{complex shape structure} $z \in \R+i\R_{>0}$, as well as two companion complex numbers of positive imaginary part
$$z':=\frac{1}{1-z} \text{\ and \ } z'':=\frac{z-1}{z}.$$
Each of the $z, z', z''$ is associated to an edge, in a slightly different way according to $\varepsilon(T)$:
\begin{itemize}
\item In all cases, $z$ corresponds to the same two edges as the angle $a$.
\item If $\varepsilon(T)=1$, then $z'$ corresponds to $c$ and $z''$ to $b$.
\item If $\varepsilon(T)=-1$, then $z'$ corresponds to $b$ and $z''$ to $c$.
\end{itemize}
Another way of phrasing it is that $z, z', z''$ are always in a counterclockwise order around a vertex, whereas $a,b,c$ need to follow the specific vertex ordering of $T$.

In this article we will use the following definition of the complex logarithm:
\[
\Log(z) := \log\vert z \vert + i\arg(z)  \ \textrm{for} \ z \in \C^{*},
\]
where $\arg(z) \in (-\pi,\pi]$.

We now introduce a third way of describing the shape associated to a tetrahedron, by the complex number
$$y := \varepsilon(T)(\Log(z)-i \pi) \in \R  + i \varepsilon(T)(-\pi,0),$$
 which lives in a horizontal strip of the complex plane. 

We now list the equations relating $(a,b,c), (z,z',z'')$ and $y$ for both possible signs of~$T$:
\begin{align*}
\text{\underline{Positive \ tetrahedron:} \ }
&
y+i\pi = \Log(z) = \log\left (\dfrac{\sin(c)}{\sin(b)}\right ) + i a.
\\
& -\Log(1+e^y) = \Log(z') = \log\left (\dfrac{\sin(b)}{\sin(a)}\right ) + i c. \\
& \Log(1+e^{-y}) = \Log(z'') = \log\left (\dfrac{\sin(a)}{\sin(c)}\right ) + i b.\\
& y= \log\left (\dfrac{\sin(c)}{\sin(b)}\right ) -i(\pi-a) \in \R -i(\pi-a).\\
& z = -e^y \in \R + i\R_{>0}.
\end{align*}
\begin{align*}
\text{\underline{Negative \ tetrahedron:} \ }
&
-y+i\pi = \Log(z) = \log\left (\dfrac{\sin(b)}{\sin(c)}\right ) + i a.
\\
& -\Log(1+e^{-y}) = \Log(z') = \log\left (\dfrac{\sin(c)}{\sin(a)}\right ) + i b. \\
& \Log(1+e^{y}) = \Log(z'') = \log\left (\dfrac{\sin(a)}{\sin(b)}\right ) + i c.\\
& y= \log\left (\dfrac{\sin(c)}{\sin(b)}\right ) + i(\pi-a) \in \R +i (\pi-a).\\
& z = -e^{-y}\in \R + i\R_{>0}.
\end{align*}

For clarity, let us define the diffeomorphism
$$\psi_T\colon \R+i\R_{>0} \to \R - i \varepsilon(T)(0,\pi), \ z \mapsto \varepsilon(T)(\Log(z)-i \pi),$$
and its inverse
$$\psi^{-1}_T\colon \R - i \varepsilon(T)(0,\pi) \to \R+i\R_{>0}, \ y \mapsto -\exp\left (\varepsilon(T) y\right ).$$

We can now define the \textit{complex weight function}
$\omega^{\C}_{X,\alpha}\colon X^1 \to \C$ associated to a triangulation $X$ and an angle structure $\alpha \in \mathcal{A}_X$, which sends an edge $e \in X^1$ to the sum of logarithms of complex shapes associated to preimages of $e$ by $\sim$. For example, for the triangulation $X$ of Figure \ref{fig:id:tri:41:complement} and an angle structure $\alpha=(a_+,b_+,c_+,a_-,b_-,c_-)$, we have:
\begin{align*}
\omega^{\C}_{X,\alpha}(\sarrow) &=
2 \Log(z_+) + \Log(z'_+) + 2 \Log(z'_-) + \Log(z''_-)\\
&= \log\left (
\dfrac{\sin(c_+)^2 \sin(b_+) \sin(c_-)^2 \sin(a_-)}
{\sin(b_+)^2 \sin(a_+) \sin(a_-)^2 \sin(b_-)}
\right ) +i \omega_{X,\alpha}(\sarrow).
\end{align*}

Let $S$ denote one toroidal boundary component of a $3$-manifold $M$ ideally triangulated by $X=(T_1,\ldots,T_N,\sim)$, and $\sigma$ an oriented normal closed curve in $S$. 
Truncating the tetrahedra $T_j$ at each vertex yields a triangulation of $S$ by triangles coming from vertices of $X$ (called the \textit{cusp triangulation}).
If the curve $\sigma$ intersects these triangles transversely (without back-tracking), then $\sigma$  cuts off corners of each such encountered triangle. Let us then denote $(z_1,\ldots,z_l)$ the sequence of (abstract) complex shape variables associated to these corners (each such $z_k$ is of the form $z_{T_{j_k}}, z'_{T_{j_k}}$ or $z''_{T_{j_k}}$).
Following \cite{FG}, we define the
 \textit{complex holonomy} $H^\C(\sigma)$  as 
$H^\C(\sigma):= \sum_{k=1}^l \epsilon_k \Log(z_k),$
where $\epsilon_k$ is $1$ if the $k$-th cut corner lies on the left of $\sigma$ and $-1$ if it lies on the right. The \textit{angular holonomy} $H^\R(\sigma)$ of $\sigma$ is similarly defined, replacing the term $\Log(z_k)$ by the (abstract) angle  $d_k=\arg(z_k)=\Im (\Log (z_k))$  (which is of the form $a_{T_{j_k}}$, $b_{T_{j_k}}$ or $c_{T_{j_k}}$) lying in the $i$-th corner. For example, in the triangulation of  Figure \ref{fig:trig:cusp:odd}, we have 
$$H^\C(m_{X_n})=\Log(z_U)-\Log(z_V) \text{ \ \ and \ \ } H^\R (m_{X_n}) = a_U-a_V.$$

The \textit{complex gluing edge equations} associated to $X$ consist in asking that the holonomies of each closed curve in $\partial M$ circling a vertex of the induced boundary triangulation are all equal to $2i\pi$, or in other words that
$$\forall e\in X^1, \omega_{X,\alpha}^\C(e) = 2i \pi.$$
The \textit{complex completeness equations} require that the complex holonomies of all curves generating the first homology $H_1(\partial M)$ vanish (when $M$ is of toroidal boundary).
% Observe that once one asks that a shape structure $\alpha$ of $X$ satisfies the complex gluing edge equations of $X$ (in particular $\alpha \in \mathcal{A}_X$), then for any toroidal boundary component $S$ of $M$, if one calls $l,m$ two curves generating $H_1(S)$, then the following are equivalent formulations of the complex completeness equation for $S$:
%\begin{itemize}
%\item $H^\C(m)=0$,
%\item $H^\C(l)=0$,
%\item $H^\R(m)=0$ and $H^\R(l)=0$.
%\end{itemize}
%This can be compared with the equivalent definitions for a quadrilateral $ABCD$ to be a parallelogram: either you ask that $AB$ and $CD$ are parallel of same length, or the same for $AD$ and $BC$, or equivalently that $AB$ and $CD$ are parallel and $AD$ and $BC$ are too.

If $M$ is an orientable $3$-manifold with boundary consisting of tori, and  ideally triangulated by $X$, then an angle structure $\alpha \in \mathcal{A}_X$  corresponds to the complete hyperbolic metric on the interior of $M$ (which is unique) if and only if $\alpha$ satisfies the complex gluing edge equations and the complex completeness equations.

\subsection{The classical dilogarithm}

For the dilogarithm function, we will use the definition:
$$ \Li(z) := - \int_0^z \Log(1-u) \frac{du}{u} \ \ \ \textrm{for} \ z \in \C \setminus [1,\infty)$$
(see for example \cite{Za}).
For $z$ in the unit disk, $\Li(z)=\sum_{n\geq 1} n^{-2} z^n$.
We will use the following properties of the dilogarithm function, referring for example to \cite[Appendix A]{AH} for the proofs.

\begin{proposition}[Some properties of $\Li$]\label{prop:dilog}
\

\begin{enumerate}
\item (inversion relation) $$ \forall z \in \C \setminus [1,\infty), \
 \Li\left (\frac{1}{z}\right ) = - \Li(z) - \frac{\pi^2}{6} - \frac{1}{2}\Log(-z)^2.
$$
\item (integral form) For all $y \in \R +i(-\pi,\pi)$,
$$ \frac{-i}{2 \pi} \Li(-e^y) =
\int_{v \in \R + i 0^+}
\dfrac{\exp\left (-i \frac{y v}{\pi}\right )}{4 v^2 \sinh(v)} \,  dv.
$$
 In the previous formula and in the remainder of the paper, $\R + i 0^+$ denotes a contour in $\C$ that is deformed from the horizontal line $\R \subset \C$ by avoiding $0$ via the upper half-plane (with a small half-circle for example).
\end{enumerate}
\end{proposition}

\subsection{The Bloch--Wigner function}

The \emph{Bloch--Wigner function} $D:\C \rightarrow \R $ defined by
\[
D(z) := \Im(\Li(z)) + \arg(1-z)\log \vert z \vert \quad \text{ if $z\in \C \smallsetminus \R$, and $0$ otherwise}
\]
is continuous on $\C$, and real-analytic on $\mathbb{C} \backslash \{0,1\}$ (see \cite[Section 3]{Za} for details). 
The Bloch-Wigner function plays a central role in hyperbolic geometry.  
The following result will be important for us (for a proof, see \cite{NZ}).

\begin{proposition}
Let $T$ be an ideal tetrahedron in $\mathbb{H}^3$ with complex shape structure $z$. Then, its volume is given by
\[
\Vol(T)= D(z) = D \left( \frac{z-1}{z} \right) = D \left( \frac{1}{1-z} \right).
\]
\end{proposition}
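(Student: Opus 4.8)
The plan is to establish the identity
\[
\Vol(T) = D(z) = D\!\left(\frac{z-1}{z}\right) = D\!\left(\frac{1}{1-z}\right)
\]
by first connecting the Bloch--Wigner function $D$ to the Lobachevsky function $\Lambda$, for which the volume formula $\Vol(T) = \Lambda(a)+\Lambda(b)+\Lambda(c)$ is already recorded in Section~\ref{sub:volume}. Writing the complex shape as $z = e^{i\theta}\rho$ and separating $\Im(\Li(z))$ from the correction term $\arg(1-z)\log|z|$, I would first verify that on the unit circle $z = e^{i\theta}$ (where $\log|z| = 0$), the Bloch--Wigner function reduces to $D(e^{i\theta}) = \Im\!\left(\sum_{n\geq 1} n^{-2} e^{in\theta}\right) = \sum_{n\geq 1} n^{-2}\sin(n\theta)$, and that this series is exactly $2\Lambda(\theta/2)$ via the Fourier expansion of $\Lambda$ coming from $-\log|2\sin t|$. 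This pins down $D$ on the circle in terms of $\Lambda$.

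Next I would relate the geometric dihedral angles to the argument of $z$. Recall from Section~\ref{sub:thurston} that for a positive tetrahedron $\arg(z) = a$, $\arg(z') = c$, $\arg(z'') = b$, with $a+b+c = \pi$, and that $z, z', z''$ are the three shape parameters with $z' = 1/(1-z)$ and $z'' = (z-1)/z$. The key algebraic step is to show that $D(z) = D(z') = D(z'')$; this is precisely the statement that $D$ is invariant under the order-three cyclic action $z \mapsto 1/(1-z)$ on the three shapes. I would prove this invariance directly from the five-term and inversion functional equations of $\Li$ (the inversion relation is already available as Proposition~\ref{prop:dilog}(1)), reducing everything to real-analytic identities, so that equality on a dense set (e.g. the unit circle) extends to all of $\C\setminus[1,\infty)$ by the real-analyticity of $D$ asserted in the preceding paragraph.

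Finally, to obtain $\Vol(T) = D(z)$ rather than merely the cyclic invariance, I would combine the two previous steps: since $D(z) = D(z') = D(z'')$, I can evaluate $D$ at the point on the unit circle carrying the relevant argument, or symmetrize to write $D(z) = \tfrac{1}{3}\bigl(D(z)+D(z')+D(z'')\bigr)$ and identify each summand, via the circle computation, with the corresponding $\Lambda(a)$, $\Lambda(b)$, $\Lambda(c)$ up to the appropriate normalization. Summing yields $D(z) = \Lambda(a)+\Lambda(b)+\Lambda(c) = \Vol(T)$, where the last equality is the recorded volume formula. (The negative-tetrahedron case is handled identically after swapping the roles of $b$ and $c$, using $\Lambda$ odd and periodic.)

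The main obstacle I anticipate is the cyclic invariance $D(z) = D\!\left(\tfrac{1}{1-z}\right)$ on the full domain $\C\setminus[1,\infty)$: the correction term $\arg(1-z)\log|z|$ does not transform nicely under the Möbius substitution, and keeping track of the branch of $\arg$ and the $\Log(-z)^2$ term from the inversion relation is where sign and constant errors creep in. The cleanest route is probably to differentiate $D$ along the one-parameter family and show the derivative vanishes, reducing the global identity to a single base-point evaluation rather than manipulating multivalued terms directly; alternatively one cites the computation in \cite{NZ} for exactly this invariance. Either way, the analytic continuation from the unit circle to the whole cut plane, justified by real-analyticity, is what makes the argument rigorous.
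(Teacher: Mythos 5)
First, a point of reference: the paper does not actually prove this proposition — it only cites \cite{NZ} — so you are reconstructing the standard argument of Milnor and Neumann--Zagier. Much of your scaffolding is correct and is indeed part of that argument: the Fourier computation $D(e^{i\theta})=\sum_{n\geq 1}n^{-2}\sin(n\theta)=2\Lambda(\theta/2)$, the dictionary $\arg z=a$, $\arg z'=c$, $\arg z''=b$ with $a+b+c=\pi$, and the cyclic invariance of $D$ under $z\mapsto 1/(1-z)$ are all true and relevant. Taking $\Vol(T)=\Lambda(a)+\Lambda(b)+\Lambda(c)$ as given from Section 2.3 is also fair.

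The gap is in your final step. Cyclic invariance alone cannot produce $D(z)=\Lambda(a)+\Lambda(b)+\Lambda(c)$. The symmetrization $D(z)=\tfrac13\bigl(D(z)+D(z')+D(z'')\bigr)$ is a tautology whose three summands each equal the full $D(z)$, so there is nothing to identify termwise with $\Lambda(a),\Lambda(b),\Lambda(c)$; and you cannot ``evaluate $D$ at the point on the unit circle carrying the relevant argument,'' because $D$ is not constant on rays ($D(z)\neq D(e^{i\arg z})$), and in any case $D(e^{ia})=2\Lambda(a/2)\neq\Lambda(a)$, a discrepancy no normalization fixes. The missing ingredient is a genuinely different functional equation, the Kummer-type relation (derived in \cite{NZ} from the five-term relation for $\Li$):
\[
D(z)=\tfrac12\Bigl[D\bigl(z/\overline{z}\bigr)+D\bigl(z'/\overline{z'}\bigr)+D\bigl(z''/\overline{z''}\bigr)\Bigr],
\]
whose three arguments lie on the unit circle with arguments $2a$, $2c$, $2b$, so that $\tfrac12 D(e^{2ia})=\Lambda(a)$, etc. This identity is not a consequence of $D(z)=D(z')=D(z'')$ and must be proved separately. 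A second, independent flaw is the analytic-continuation step: $D$ is real-analytic but not holomorphic, and a real-analytic function of two real variables is not determined by its values on a curve (e.g.\ $|z|^2-1$ vanishes on the unit circle without vanishing identically), so ``equality on the unit circle extends by real-analyticity'' is not a valid inference. Your alternative — comparing the explicit closed $1$-forms $dD$ on both sides and matching a single base-point value — is the correct repair for both the cyclic invariance and the Kummer relation, but as written the outline does not carry it out, and without the Kummer relation the conclusion does not follow.
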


\subsection{Twist knots}

We denote by $K_n$ the unoriented twist knot with $n$ half-twists and $n+2$ crossings, according to Figure \ref{fig:twist:knot}.

\begin{figure}[h]
\begin{center}
\includegraphics[scale=0.5]{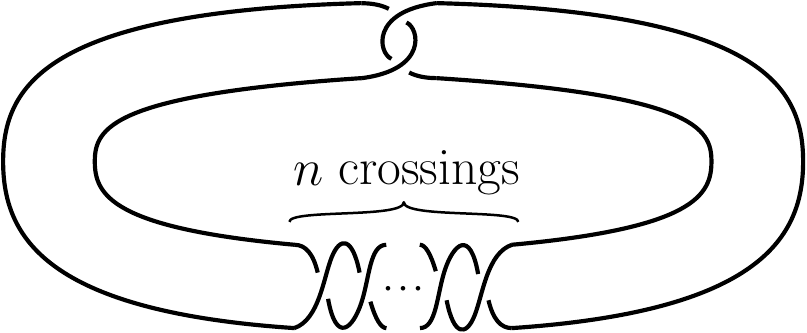}
\end{center}
\caption{The twist knot $K_n$} \label{fig:twist:knot}
\end{figure}

For clarity, we list the names of the  {first $13$}  twist knots in the table of Figure \ref{fig:table:twist:knot}, along with their hyperbolic volume and the coefficient of the Dehn filling one must apply on the Whitehead link to obtain 
(up to mirror image) the considered knot. The Dehn coefficient
is useful for studying $K_n$ for large $n$ on the software \textit{SnapPy} without having to draw a huge knot diagram by hand.

\begin{figure}[!h]
\begin{tabular}{|c|c|c|c|}
  \hline
$n$& $K_n$ & $\begin{matrix}
\text{Dehn Surgery coefficient} \\
\text{from the Whitehead link}
\end{matrix}$  & Hyperbolic volume  \\
  \hline
$0$& $0_1$ & $(1,0)$ & not hyperbolic  \\
  \hline
  $1$& $3_1$ & $(1,-1)$ & not hyperbolic \\
  \hline
  $2$& $4_1$ & $(1,1)$ & $2.02988321... $  \\
  \hline
  $3$& $5_2$ & $(1,-2)$ & $2.82812208... $  \\
  \hline
    $4$& $6_1$ & $(1,2)$ & $3.16396322... $  \\
  \hline
    $5$& $7_2$ & $(1,-3)$ & $3.33174423... $  \\
  \hline
    $6$& $8_1$ & $(1,3)$ & $3.42720524... $  \\
  \hline
    $7$& $9_2$ & $(1,-4)$ & $ 3.48666014...$  \\
  \hline
    $8$& $10_1$ & $(1,4)$ & $3.52619599... $  \\
  \hline
    $9$& $11_{a_{247}}$ & $(1,-5)$ & $3.55381991... $  \\
  \hline
    $10$& $12_{a_{803}}$ & $(1,5)$ & $ 3.57388254...$  \\
  \hline
    $11$& $13_{a_{3143}}$ & $(1,-6)$ & $ 3.588913917...$  \\
  \hline
    $12$& $14_{a_{12741}}$ & $(1,6)$ & $3.600467262... $  \\
  \hline
  
\end{tabular}
\caption{The first twist knots} \label{fig:table:twist:knot}
\end{figure}

The twist knots form, in a sense, the simplest infinite family of hyperbolic knots (for $n \geqslant 2$). This is why our initial motivation was to study the volume conjecture for the Teichm\"uller TQFT for this particular family (see \cite{BAPNcras}).

\begin{remark} The twist knots $K_{2n-1}$ and $K_{2n}$ are obtained, up to mirror image, by Dehn filling on one component of the Whitehead link with respective coefficients $(1,-n)$ and 
 $(1,n)$.
Replacing a twist knot by its inverse or mirror image has no effect on the modulus of the associated partition function of the Teichmüller TQFT (see Remark \ref{rem:mirror}).
	
	 Furthermore,  as a consequence of the J{\o}rgensen--Thurston theorem \cite{Th,NZ}, the hyperbolic volume of $K_n$ tends to 
	$3.6638623767088...$ 
	(the volume of the Whitehead link) as $n \to + \infty$.
\end{remark}

\subsection{Faddeev's quantum dilogarithm}

Recall \cite{AK} that for $\hbar >0$ and $\B >0$ such that $$(\B+\B^{-1}) \sqrt{\hbar} = 1,$$
 \emph{Faddeev's quantum dilogarithm} $\Phi_\B$ is the holomorphic function on $\R + i \left (\frac{-1}{2 \sqrt{\hbar}}, \frac{1}{2 \sqrt{\hbar}}\right )$ given by
$$
\Phi_\B(z) = \exp\left (
\frac{1}{4} \int_{w \in \R + i 0^+}
\dfrac{e^{-2 i z w} dw}{\sinh(\B w) \sinh({\B}^{-1}w) w}
\right ) \ \ \ \ \text{for} \ z \in \R + i \left (\frac{-1}{2 \sqrt{\hbar}}, \frac{1}{2 \sqrt{\hbar}}\right ),
$$
and extended to a meromorphic function for $z\in \C$ via the functional equation 
$$\Phi_\B\left (z-i \frac{\B^{\pm  1}}{2}\right )= \left (1+e^{2\pi \B^{\pm 1} z}\right )
\Phi_\B\left (z + i \frac{\B^{\pm 1}}{2}\right ).
$$
 Recall that $\R + i 0^+$ denotes a contour in $\C$ that is deformed from the horizontal line $\R \subset \C$ by avoiding $0$ by above.

Note that $\Phi_\B$ depends only on $\hbar = \frac{1}{(\B+\B^{-1})^2}$. Furthermore, as a consquence of the functional equation, the poles of $\Phi_\B$  lie on $ i \left [\frac{1}{2 \sqrt{\hbar}}, \infty\right ) $ and the zeroes lie symmetrically on $i \left (-\infty, \frac{-1}{2 \sqrt{\hbar}}\right ]$.  We stress the fact that in this paper we always assume that $\B$ is a real positive number, which simplifies several formulas in \cite[Appendix A]{AK}; notably the poles and zeroes live in the imaginary line instead of in sectors.

We now list several useful properties of Faddeev's quantum dilogarithm. We refer to \cite[Appendix A]{AK} for these properties (and several more), and to \cite[Lemma 3]{AH} for an alternate proof of the semi-classical limit property.

\begin{proposition}[Some properties of $\Phi_\B$]\label{prop:quant:dilog}
\

\begin{enumerate}
\item (inversion relation) For any $\B \in \R_{>0}$ and any  $z \in \R + i \left (\frac{-1}{2 \sqrt{\hbar}}, \frac{1}{2 \sqrt{\hbar}}\right )$, 
$$\Phi_\B(z) \Phi_\B(-z) = e^{i\frac{\pi}{12}(\B^2 + \B^{-2})} e^{i \pi z^2}.$$
\item (unitarity) For any $\B \in \R_{>0}$ and any  $z \in \R + i \left (\frac{-1}{2 \sqrt{\hbar}}, \frac{1}{2 \sqrt{\hbar}}\right )$, 
$$\overline{\Phi_\B(z)} = \frac{1}{\Phi_\B(\overline{z})}.$$
\item (semi-classical limit) For any $z \in \R + i \left (-\pi,\pi \right )$,
$$\Phi_\B\left (\frac{z}{2 \pi \B}\right ) = \exp\left (\frac{-i}{2 \pi \B^2} \Li (- e^z)\right ) \left ( 1 + O_{\B \to 0^+}(\B^2)\right ).$$
\item (behavior at infinity) For any  $\B \in \R_{>0}$,
\begin{align*}
 \Phi_\B(z) \ \ \underset{\Re(z)\to -\infty}{\sim} & \ \ 1, \\
 \Phi_\B(z) \ \  \underset{\Re(z)\to \infty}{\sim} & \ \ e^{i\frac{\pi}{12}(\B^2 + \B^{-2})} e^{i \pi z^2}.
\end{align*}
In particular, for any  $\B \in \R_{>0}$ and any $d \in \left (\frac{-1}{2 \sqrt{\hbar}}, \frac{1}{2 \sqrt{\hbar}}\right )$,
\begin{align*}
|\Phi_\B(x+id) | \ \ \underset{\R \ni x \to -\infty}{\sim} & \ \ 1, \\
|\Phi_\B(x+id) | \ \  \underset{\R \ni x \to +\infty}{\sim} & \ \ e^{-2 \pi x d}.
\end{align*}
\end{enumerate}
\end{proposition}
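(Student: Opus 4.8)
The plan is to reduce all four statements to the defining integral representation $\Phi_\B=\exp(g_\B)$, where
\[
g_\B(z)=\frac14\int_{w\in\R+i0^+}\frac{e^{-2izw}}{\sinh(\B w)\,\sinh(\B^{-1}w)\,w}\,dw,
\]
and to exploit the symmetries of the kernel $\kappa_\B(w):=\bigl(\sinh(\B w)\sinh(\B^{-1}w)\,w\bigr)^{-1}$, which is odd in $w$, invariant under $\B\leftrightarrow\B^{-1}$, and whose denominator vanishes at $w\in i\pi\B\Z\cup\frac{i\pi}{\B}\Z$. The unitarity relation (2) is the quickest: for $z$ in the strip so is $\overline z$, and conjugating the integral while substituting $w\mapsto-\overline w$ turns $e^{-2izw}$ into $e^{-2i\overline z\,w}$ and produces one sign from the odd kernel and another from reversing the contour, yielding $\overline{g_\B(z)}=-g_\B(\overline z)$; exponentiating gives $\overline{\Phi_\B(z)}=1/\Phi_\B(\overline z)$.

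For the inversion relation (1) I would set $F(z):=g_\B(z)+g_\B(-z)$ and differentiate once in $z$, obtaining
\[
F'(z)=-\int_{\R+i0^+}\frac{\sin(2zw)}{\sinh(\B w)\sinh(\B^{-1}w)}\,dw.
\]
The integrand is now summable on the strip (the exponential decay of $1/(\sinh\sinh)$ beats the growth of $\sin(2zw)$ precisely because $|\Im z|<\frac{1}{2\sqrt\hbar}=\frac12(\B+\B^{-1})$) and odd in $w$, so by Sokhotski--Plemelj the principal value over $\R$ vanishes and only the simple pole at $w=0$, of residue $2z$, contributes, giving $F'(z)=2\pi i z$. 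Integrating yields $F(z)=i\pi z^2+2g_\B(0)$, and $g_\B(0)=\frac{i\pi}{24}(\B^2+\B^{-2})$ is evaluated by closing the contour upward and summing the residues at $w\in i\pi\B\N\cup\frac{i\pi}{\B}\N$, which combine symmetrically under $\B\leftrightarrow\B^{-1}$; this produces the constant $\frac{i\pi}{12}(\B^2+\B^{-2})$ and proves (1).

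The semi-classical limit (3) is the heart of the matter. Taking the argument $\frac{z}{2\pi\B}$ and rescaling $w=\B v$ gives $g_\B\!\left(\tfrac{z}{2\pi\B}\right)=\frac14\int_{\R+i0^+}\frac{e^{-izv/\pi}}{v\,\sinh(\B^2 v)\,\sinh(v)}\,dv$. Expanding $\frac{1}{\sinh(\B^2 v)}=\frac{1}{\B^2 v}\bigl(1-\tfrac16\B^4 v^2+\cdots\bigr)$ and keeping the leading term produces $\frac{1}{\B^2}\cdot\frac14\int_{\R+i0^+}\frac{e^{-izv/\pi}}{v^2\sinh(v)}\,dv$, which is exactly $\frac{-i}{2\pi\B^2}\Li(-e^z)$ by the integral form of Proposition \ref{prop:dilog}(2) (whose hypothesis $z\in\R+i(-\pi,\pi)$ secures convergence via $|e^{-izv/\pi}|=e^{\Im(z)\,\Re(v)/\pi}$ against $1/\sinh(v)$). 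Exponentiating and absorbing the correction yields the stated $(1+\mathcal{O}(\B^2))$. The behavior at infinity (4) then splits: as $\Re(z)\to-\infty$, shifting $\R+i0^+$ upward makes $\Re(-2izw)\to-\infty$ both on the displaced contour and at every pole of $\kappa_\B$ in the upper half-plane, so the shifted integral and all collected residues vanish, giving $g_\B(z)\to0$ and $\Phi_\B(z)\to1$; feeding this into (1) gives $\Phi_\B(z)\sim e^{i\frac{\pi}{12}(\B^2+\B^{-2})}e^{i\pi z^2}$ as $\Re(z)\to+\infty$, and taking moduli with $z=x+id$ yields the two asymptotics for $|\Phi_\B(x+id)|$.

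The main obstacle is controlling the error uniformly in the semi-classical limit (3): one must justify interchanging the $\B\to0^+$ expansion with the integral over the \emph{non-compact} contour, i.e.\ bound
\[
\frac14\int_{\R+i0^+}\frac{e^{-izv/\pi}}{v\,\sinh(v)}\left(\frac{1}{\sinh(\B^2 v)}-\frac{1}{\B^2 v}\right)dv
\]
by $\mathcal{O}(\B^2)$ with a constant uniform for $z$ in compact subsets of the strip. The delicate regime is $v\sim\B^{-2}$, where the subtracted factor is no longer small; there the super-exponential decay of $1/\sinh(v)$ overwhelms it, so a dominated-convergence estimate combining this decay with the vanishing of the bracketed term near $v=0$ closes the argument. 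The same analytic care — convergence on the open contour together with the Sokhotski--Plemelj and residue bookkeeping at $w=0$ and at the imaginary poles — is the only subtle ingredient in parts (1) and (4) as well.
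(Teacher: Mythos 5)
The paper itself does not prove this proposition: it is quoted from the literature, with the reader referred to \cite[Appendix A]{AK} for all four properties and to \cite[Lemma 3]{AH} for the semi-classical limit. So there is no in-paper argument to compare against; your proposal has to be judged on its own. Its architecture is the standard one and is mostly sound: the unitarity computation via $w\mapsto-\overline{w}$ is correct; the derivation $F'(z)=2\pi i z$ from the odd kernel plus the Sokhotski--Plemelj contribution $-i\pi\,\mathrm{Res}_{w=0}$ is correct (and you rightly identify $|\Im z|<\tfrac12(\B+\B^{-1})$ as what makes the differentiated integral converge); the rescaling $w=\B v$ and the Taylor-remainder control of $\tfrac{\B^2 v}{\sinh(\B^2 v)}-1$ is exactly the mechanism the paper itself uses later in the proof of Lemma \ref{lem:unif:bound}, so part (3) is fine as a pointwise statement; and part (4) follows from a small upward shift of the contour (no poles need be crossed) together with (1).

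The one step that would fail as written is the evaluation of $g_\B(0)$ ``by closing the contour upward and summing the residues at $w\in i\pi\B\N\cup\frac{i\pi}{\B}\N$.'' The contour cannot be closed in the upper half-plane: for $z=0$ the integrand decays only as $|\Re(w)|\to\infty$ and is merely $O(1/|w|)$ along directions near the positive imaginary axis, so the arc contribution does not vanish; moreover the residues at $w=i\pi n/\B$ are $\frac{(-1)^n}{i\pi n\,\sinh(i\pi n\B^{-2})}$, a series with small-denominator problems that is not absolutely convergent and certainly does not visibly sum to the stated constant. The correct (and much shorter) evaluation is local: the kernel is odd with Laurent expansion $\frac{1}{w^3}-\frac{\B^2+\B^{-2}}{6w}+O(w)$ at $w=0$, the principal value over $\R$ and the semicircle contribution of the $w^{-3}$ term both vanish by oddness, and the same $-i\pi\,\mathrm{Res}_{w=0}$ bookkeeping you already used for $F'$ gives
\begin{equation*}
g_\B(0)=\tfrac14\left(-i\pi\right)\left(-\tfrac{\B^2+\B^{-2}}{6}\right)=\tfrac{i\pi}{24}\left(\B^2+\B^{-2}\right),
\end{equation*}
hence $F(0)=\frac{i\pi}{12}(\B^2+\B^{-2})$ as required. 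With that replacement the proof of (1), and therefore of the whole proposition, goes through.
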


\subsection{The Teichm\"uller TQFT of Andersen--Kashaev}

In this section we follow \cite{AK, KaWB, Kan}. Let $\mathcal{S}(\R^d)$ denote the Schwartz space of smooth 
functions from $\R^d$ to $\C$ that are rapidly decaying (in the sense that any derivative decays faster than any negative power of the norm of the input). 
Its continuous dual $\mathcal{S}'(\R^d)$ is the space of tempered distributions.

Recall that the \emph{Dirac delta function} is the tempered distribution $\mathcal{S}(\R) \to \C$ denoted by $\delta(x)$ or $\delta$ and defined by
$
\delta(x) \cdot f:= \int_{x \in \R} \delta(x) f(x) dx =
f(0)
$ for all $f \in \mathcal{S}(\R)$ (where $x \in \R$ denotes the argument of $f\in \mathcal{S}(\R)$).
Furthermore, we have the equality of tempered distributions
\[
\delta(x)=\int_{w \in \R} e^{-2 \pi i x w} \,dw,
\] 
in the sense that for all $f \in \mathcal{S}(\R)$, 
$$
\left (\int_{w \in \R} e^{-2 \pi i x w} \,dw\right ) (f) =
\int_{x \in \R} \int_{w \in \R} e^{-2 \pi i x w} f(x)  \,dw \, dx \ = f(0) = \delta(x) \cdot f.
$$
The second equality follows from applying the Fourier transform $\mathcal{F}$ twice and using the fact that $\mathcal{F}(\mathcal{F}(f))(x) = f(-x)$ for $f\in \mathcal{S}(\R), x \in \R$. Recall also that the definition of the Dirac delta function and the previous argument have multi-dimensional analogues (see for example \cite{Kan} for details).

Given a triangulation $X$, 
writing $X^k$ for its collection of $k$-cells ($k\in \{0,1,2,3\}$), we assign to the tetrahedra $T_1, \ldots,T_N \in X^3$ formal real variables $t_1, \ldots, t_N$. 
We name $\mathsf{t}\colon T_j \mapsto t_{j}$ the corresponding bijection, and $\mathbf{t} = (t_{1},\ldots,t_{N})$ the corresponding
formal vector in $\R^{X^{3}}$.

Recall the notation $x_i(T) \in X^2$ for the $i$-th face ($i\in \{0,1,2,3\}$) of the tetrahedron $T\in X^3$.

We now define the kinematical kernel of $X$, which is a tempered distribution. Note that in many cases of interest (Lemma \ref{lem:kin:odd} and the proof of Theorem \ref{thm:even:part:func}), a distribution-free formula holds (Lemma~\ref{lem:dirac} below)  and might be used as an alternate definition. However this is not always the case: in the proofs of Theorems \ref{thm:part:func:Htrig:odd} and \ref{thm:part:func:Htrig:even}, each of the kinematical kernels associated to an \textit{H-triangulation} $Y_n$ is a distribution supported on a codimension-$2$ hyperplane. It is yet unclear whether these two types of kinematical kernels are the only ones that can appear.

\begin{definition}
Let $X$ be a triangulation such that $H_2(M_X\smallsetminus X^0,\Z)=0$. The \textit{kinematical kernel of $X$} is a tempered distribution $\mathcal{K}_X \in \mathcal{S}'\left (\R^{X^{3}}\right )$ defined by the integral
$$\mathcal{K}_X(\mathbf{t}) = \int_{\mathbf{x} \in \R^{X^{2}}} d\mathbf{x} \prod_{T \in X^3} e^{ 2 i \pi \varepsilon(T) x_0(T) \mathsf{t}(T)}
\delta\left ( x_0(T)- x_1(T)+ x_2(T)\right )
\delta\left ( x_2(T)- x_3(T)+ \mathsf{t}(T)\right )$$
where, with a slight abuse of notation, $x_i(T)$ 
refers to the $x_i(T)$-th component of $\mathbf{x}\in \R^{X^2}$.
(This convention, of denoting by $x_i(T)$ both a $2$-cell and the formal variable associated to it, is taken from~\cite{AK}: it will help keep our formulas short.)
\end{definition}

Essentially, if $\pi : \R^{X^2 \cup X^3} \rightarrow \R^{X^3}$ denotes the canonical projection, then
 $\mathcal{K}_X(\mathbf{t})$ associates to a Schwartz function $f:\R^{X^3} \rightarrow \R$ the (normalized) integral,
 over the affine subspace of $\R^{X^2 \cup X^3}$ where the arguments of the $\delta$'s vanish,
  of the product $(f \circ \pi) \cdot g$, where $g:\R^{X^2 \cup X^3} \rightarrow \R$ is the exponential of a certain quadratic form.
  
More formally, one
should understand the integral of the previous formula as the following equality of tempered distributions, similarly as above ( ${\!\top}$ denoting the transpose):
$$
\mathcal{K}_X(\mathbf{t}) =
\int_{\mathbf{x} \in \R^{X^{2}}} d\mathbf{x} 
\int_{\mathbf{w} \in \R^{2 N}} d\mathbf{w} \
e^{ 2 i \pi \mathbf{t}^{\!\top} R \mathbf{x}}
e^{ -2 i \pi \mathbf{w}^{\!\top} A \mathbf{x}}
e^{ -2 i \pi \mathbf{w}^{\!\top} B \mathbf{t}} \
\in \mathcal{S}'\left (\R^{X^{3}}\right ),
$$
where
$\mathbf{w}=(w_1,\ldots,w_N,w'_1, \ldots,w'_N)$ is a vector of $2N$ new real variables, such that $w_j,w'_j$ are associated to 
$\delta\left ( x_0(T_j)- x_1(T_j)+ x_2(T_j)\right )$ and
$\delta\left ( x_2(T_j)- x_3(T_j)+ \mathsf{t}(T_j)\right )$, and where
 $R,A,B$ are matrices with integer coefficients depending on the values $x_k(T_j)$, i.e.\ on the combinatorics of the face gluings. More precisely, the rows (resp.\ columns) of $R$ are indexed by the vector of tetrahedron variables $\mathbf{t}$ (resp.\ of face variables $\mathbf{x}$) and $R$ has a coefficient $\varepsilon(T_j)=\pm 1$ at coordinate $(t_j,x_0(T_j))$ and zero everywhere else; $B$ is indexed by  $\mathbf{w}$ (rows) and  $\mathbf{t}$ (columns) and has a $1$ at the coordinate $(w'_j,t_j)$; finally, $A$ is such that $A \mathbf{x} + B  \mathbf{t}$ is a column vector indexed by  $\mathbf{w}$ containing the values 
 $\left (x_0(T_j)- x_1(T_j)+ x_2(T_j)\right )_{1\leq j \leq N}$ followed by $\left (x_2(T_j)- x_3(T_j)+ t_j \right)_{1\leq j \leq N}$.

 \begin{example}\label{ex:41:RAB}
For the triangulation of $S^3 \setminus 4_1$ in Example \ref{ex:41}, if we denote $\mathbf{x} =(\mathsf{A},\mathsf{B},\mathsf{C},\mathsf{D})$, $\mathbf{t}=(t_{T_+}, t_{T_-})$ and $\mathbf{w}=(w_{T_+},w_{T_-},w'_{T_+},w'_{T_-})$, then we can calculate the three matrices
$$
R=\kbordermatrix{
	\mbox{} 	& \mathsf{A} & \mathsf{B} & \mathsf{C} & \mathsf{D} \\
	t_{T_+} 		& 0 & 1 & 0 & 0 \\
	t_{T_-} 		& 0 & 0 & 1 & 0 \\
},
\
A=\kbordermatrix{
	\mbox{} 	& \mathsf{A} & \mathsf{B} & \mathsf{C} & \mathsf{D} \\
	w_{T_+} 		& -1 & 1 & 1 & 0 \\
	w_{T_-} 		& 0 & 1 & 1 & -1 \\
	w'_{T_+} 		& 0 & 0 & 1 & -1 \\
	w'_{T_-} 		& -1 & 1 & 0 & 0 \\	
},
 \
 B=\kbordermatrix{
	\mbox{} 	& t_{T_+} & t_{T_-}\\
	w_{T_+} 		& 0 & 0 \\
	w_{T_-} 		& 0 & 0 \\
	w'_{T_+} 		& 1 & 0 \\
	w'_{T_-} 		& 0 & 1 \\
},
$$
with the unfortunate clash in notation regarding the letters $A, B$ and $\mathsf{A}, \mathsf{B}$.
One can see that the involution
$\mathsf{A} \leftrightarrow \mathsf{D},
\mathsf{B} \leftrightarrow \mathsf{C},
T_+ \leftrightarrow T_-$
preserves each matrix $R,A,B$;
this involution also acts as a reflection in Figure \ref{fig:id:tri:41:complement}: it swaps the edges while respecting their orientations (and reverses 3-dimensional orientation).
 \end{example}

\begin{lemma}\label{lem:dirac}
If the $2N\times 2N$ matrix $A$ in the previous formula is invertible, then the kinematical kernel is simply a bounded function given by:
$$ \mathcal{K}_X(\mathbf{t}) = \frac{1}{| \det(A) |} e^{ 2 i \pi \mathbf{t}^{\!\top} (-R A^{-1} B) \mathbf{t}}.
$$\end{lemma}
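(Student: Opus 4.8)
The plan is to evaluate the distributional integral by performing the $\mathbf{w}$-integration first, which collapses the oscillatory factors into a multidimensional Dirac delta, and then to use the invertibility of $A$ to integrate against that delta. Working from the distributional form of $\mathcal{K}_X$ displayed above, I first observe that the $\mathbf{w}$-integral is exactly the $2N$-dimensional analogue of the Fourier representation $\delta(x)=\int_{w}e^{-2\pi i xw}\,dw$ recalled earlier: for each fixed $(\mathbf{x},\mathbf{t})$,
$$\int_{\mathbf{w}\in\R^{2N}} e^{-2i\pi \mathbf{w}^T(A\mathbf{x}+B\mathbf{t})}\, d\mathbf{w} = \delta\!\left(A\mathbf{x}+B\mathbf{t}\right),$$
understood as a tempered distribution in the $2N$ components of $A\mathbf{x}+B\mathbf{t}$. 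Since $A$ is invertible, the affine map $\mathbf{x}\mapsto A\mathbf{x}+B\mathbf{t}$ is a diffeomorphism of $\R^{2N}$ for each fixed $\mathbf{t}$, so this pulled-back distribution is well defined and we may substitute it into the remaining integral.

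Next I perform the linear change of variables $\mathbf{y}=A\mathbf{x}$ in the $\mathbf{x}$-integral. This produces the Jacobian factor $|\det A|^{-1}$ together with the standard identity $\delta(A\mathbf{x}+B\mathbf{t})=|\det A|^{-1}\delta(\mathbf{y}+B\mathbf{t})$, while the phase becomes $e^{2i\pi \mathbf{t}^T RA^{-1}\mathbf{y}}$. Integrating $e^{2i\pi \mathbf{t}^T RA^{-1}\mathbf{y}}$ against $\delta(\mathbf{y}+B\mathbf{t})$ simply evaluates the exponential at $\mathbf{y}=-B\mathbf{t}$, giving
$$\mathcal{K}_X(\mathbf{t}) = \frac{1}{|\det A|}\, e^{2i\pi \mathbf{t}^T RA^{-1}(-B)\mathbf{t}} = \frac{1}{|\det A|}\, e^{2i\pi \mathbf{t}^T(-RA^{-1}B)\mathbf{t}},$$
which is the claimed formula. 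To conclude that this is a bounded function, I note that $R,A,B$ have integer entries, so $-RA^{-1}B$ has rational, hence real, entries; therefore the quadratic form $\mathbf{t}^T(-RA^{-1}B)\mathbf{t}$ is real for every $\mathbf{t}\in\R^{X^3}$, and consequently $|\mathcal{K}_X(\mathbf{t})|=|\det A|^{-1}$ is constant. In particular $\mathcal{K}_X$ is a smooth bounded function and thus a bona fide tempered distribution.

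The delicate point is not the algebra but the rigorous bookkeeping in $\mathcal{S}'$: each of the three integrals is only conditionally convergent, so the manipulations above must be read as identities of tempered distributions rather than of absolutely convergent integrals. The clean way to make this precise is to pair $\mathcal{K}_X$ with an arbitrary test function $f\in\mathcal{S}(\R^{X^3})$ and to carry out the $\mathbf{w}$- and $\mathbf{x}$-integrations in that order, justifying the interchange via the multidimensional Dirac delta identity recalled above (equivalently, via Fubini applied to the iterated Fourier transform). Because $A$ is invertible, the support of $\delta(A\mathbf{x}+B\mathbf{t})$ is the graph $\{\mathbf{x}=-A^{-1}B\mathbf{t}\}$, on which the projection onto the $\mathbf{t}$-variables is a proper bijection; this is precisely what guarantees that integrating out $\mathbf{x}$ yields a well-defined distribution in $\mathbf{t}$ and legitimizes the substitution $\mathbf{x}=-A^{-1}B\mathbf{t}$. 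This is the step I expect to require the most care; once it is in place, the change-of-variables and evaluation steps are routine.
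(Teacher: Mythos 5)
Your proof is correct and follows essentially the same route as the paper: the paper also swaps the integration order and applies the Fourier transform twice (it phrases the step $\int_{\mathbf{w}} e^{-2i\pi\mathbf{w}^T(A\mathbf{x}+B\mathbf{t})}d\mathbf{w}=\delta(A\mathbf{x}+B\mathbf{t})$ as $\mathcal{F}(\mathcal{F}(f_{\mathbf{t}}))(A^{-1}B\mathbf{t})=f_{\mathbf{t}}(-A^{-1}B\mathbf{t})$ after the substitution $\mathbf{v}=A^T\mathbf{w}$, which is the same computation). One small wording caution: when you say the change of variables $\mathbf{y}=A\mathbf{x}$ produces the Jacobian factor $|\det A|^{-1}$ \emph{together with} the identity $\delta(A\mathbf{x}+B\mathbf{t})=|\det A|^{-1}\delta(\mathbf{y}+B\mathbf{t})$, you appear to count the factor twice — after substituting $\mathbf{y}=A\mathbf{x}$ the delta is literally $\delta(\mathbf{y}+B\mathbf{t})$ and the single factor $|\det A|^{-1}$ comes from $d\mathbf{x}=|\det A|^{-1}d\mathbf{y}$ alone — but your final formula correctly carries only one such factor.
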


\begin{proof}
The lemma follows from the same argument as above (swapping integration symbols and applying the  Fourier transform $\mathcal{F}$ twice), this time for the multi-dimensional function
$f_{\mathbf{t}}:= \left (\mathbf{x} \mapsto e^{ 2 i \pi \mathbf{t}^{\!\top} R \mathbf{x}}\right ).$ More precisely:
\begin{align*}
 \mathcal{K}_X(\mathbf{t}) &=
\int_{\mathbf{x} \in \R^{X^{2}}} d\mathbf{x} 
\int_{\mathbf{w} \in \R^{2 N}} d\mathbf{w} \
e^{ 2 i \pi \mathbf{t}^{\!\top} R \mathbf{x}}
e^{ -2 i \pi \mathbf{w}^{\!\top} A \mathbf{x}}
e^{ -2 i \pi \mathbf{w}^{\!\top} B \mathbf{t}} \\
&= 
\int_{\mathbf{w} \in \R^{2N}} d\mathbf{w} \
e^{ -2 i \pi \mathbf{w}^{\!\top} B \mathbf{t}}
\int_{\mathbf{x} \in \R^{2N}} d\mathbf{x} \
f_{\mathbf{t}}(\mathbf{x})
e^{ -2 i \pi \mathbf{w}^{\!\top} A \mathbf{x}}\\
&= \int_{\mathbf{w} \in \R^{2N}} d\mathbf{w} \
e^{ -2 i \pi \mathbf{w}^{\!\top} B \mathbf{t}} \
\mathcal{F}\left (f_{\mathbf{t}}\right ) (A^{\!\top} \mathbf{w})\\
&=\frac{1}{| \det(A)|}
\int_{\mathbf{v} \in \R^{2N}} d\mathbf{v} \
e^{ -2 i \pi \mathbf{v}^{\!\top} A^{-1} B \mathbf{t}} \
\mathcal{F}\left (f_{\mathbf{t}}\right ) (\mathbf{v})\\
&=\frac{1}{| \det(A)|}
\mathcal{F}\left (\mathcal{F}\left (f_{\mathbf{t}}\right )\right ) (A^{-1} B \mathbf{t}) = \frac{1}{| \det(A)|}
f_{\mathbf{t}} (-A^{-1} B \mathbf{t})= \frac{1}{| \det(A) |} e^{ 2 i \pi \mathbf{t}^{\!\top} (-R A^{-1} B) \mathbf{t}}.
\end{align*}
\end{proof}

The product of several Dirac delta functions might not be a tempered distribution in general. However the kinematical kernels in this paper will always be, thanks to the assumption that $H_2(M_X\setminus X^0,\Z)=0$ (satisfied by  {any} knot complement). See \cite{AK} for more details, via the theory of wave fronts. The key property to notice is the linear independence of the terms $x_0(T_j)- x_1(T_j)+ x_2(T_j), \ x_2(T_j)- x_3(T_j)+ t_j$.

\begin{definition}
Let $X$ be a triangulation. Its \textit{dynamical content} associated to $\hbar>0$ is a function $\mathcal{D}_{\hbar,X}\colon \mathcal{A}_X \to  \mathcal{S}\left (\R^{X^{3}}\right )$ defined on each set of angles $\alpha \in \mathcal{A}_X$ by
$$\mathcal{D}_{\hbar,X}(\mathbf{t},\alpha)= \prod_{T\in X^{3}} 
\dfrac{\exp \left( \hbar^{-1/2} \alpha_3(T) \mathsf{t}(T) \right )}
{\Phi_\B\left (\mathsf{t}(T) - \dfrac{i}{2 \pi \sqrt{\hbar}}\varepsilon(T) (\pi-\alpha_1(T))\right )^{\varepsilon(T)}}.
$$
\end{definition}

Note that $\mathcal{D}_{\hbar,X}(\cdot,\alpha)$ is in $\mathcal{S}\left (\R^{X^{3}}\right )$ thanks to the properties of $\Phi_\B$ and the positivity of the dihedral angles in $\alpha$ (see \cite{AK} for details).

More precisely, each term in the dynamical content has exponential decrease as described in the following lemma.

\begin{lemma}\label{lem:dec:exp}
	Let $\B \in \R_{>0}$ and $a,b,c \in (0,\pi)$ such that $a+b+c=\pi$. Then
	$$
	\left |
	\dfrac{e^{\frac{1}{ \sqrt{\hbar}} c x}}{\Phi_\B\left (x-\frac{i}{ 2 \pi \sqrt{\hbar}}(b+c)\right )}
	\right | \underset{\R \ni x \to \pm \infty}{\sim} \left |
	e^{\frac{1}{ \sqrt{\hbar}} c x} \Phi_\B\left (x+\frac{i}{ 2 \pi \sqrt{\hbar}}(b+c)\right )
	\right | \ \ \left \{
	\begin{matrix}
	\underset{\R \ni x \to -\infty}{\sim} e^{\frac{1}{ \sqrt{\hbar}} c x}. \\
	\ \\
	\underset{\R \ni x \to +\infty}{\sim} e^{-\frac{1}{ \sqrt{\hbar}} b x}.
	\end{matrix} \right .
	$$
\end{lemma}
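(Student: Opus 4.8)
The plan is to reduce everything to the two properties of Faddeev's quantum dilogarithm recorded in Proposition \ref{prop:quant:dilog}, namely \emph{unitarity} and the \emph{behavior at infinity}. The first ``$\sim$'' in the statement is in fact an exact equality valid for every $x$, which I would extract directly from unitarity; the two remaining asymptotics then follow at once from the large-$|x|$ behavior of $|\Phi_\B|$ along a horizontal line.

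First I would set $z := x - \frac{i}{2\pi\sqrt{\hbar}}(b+c)$, so that $\overline{z} = x + \frac{i}{2\pi\sqrt{\hbar}}(b+c)$. Since $a+b+c = \pi$ with $a,b,c \in (0,\pi)$, we have $b+c \in (0,\pi)$, hence both $z$ and $\overline{z}$ lie in the strip $\R + i\left(\frac{-1}{2\sqrt{\hbar}}, \frac{1}{2\sqrt{\hbar}}\right)$ on which unitarity holds. Unitarity gives $\overline{\Phi_\B(z)} = 1/\Phi_\B(\overline{z})$, hence $|\Phi_\B(z)| = 1/|\Phi_\B(\overline{z})|$, i.e.\ $1/|\Phi_\B(z)| = |\Phi_\B(\overline{z})|$. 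As $c,x \in \R$ and $\hbar > 0$, the factor $e^{\frac{1}{\sqrt{\hbar}}cx}$ is real and positive, so it passes freely through $|\cdot|$ and multiplying through yields the exact identity
$$
\left|\frac{e^{\frac{1}{\sqrt{\hbar}}cx}}{\Phi_\B\left(x-\frac{i}{2\pi\sqrt{\hbar}}(b+c)\right)}\right|
=
\left|e^{\frac{1}{\sqrt{\hbar}}cx}\,\Phi_\B\left(x+\frac{i}{2\pi\sqrt{\hbar}}(b+c)\right)\right|,
$$
which is the first claimed relation, with ``$\sim$'' in fact upgraded to ``$=$''.

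Next I would apply the behavior at infinity to $\Phi_\B(x+id)$ with $d := \frac{b+c}{2\pi\sqrt{\hbar}}$. The same bound $b+c < \pi$ places $d$ in the admissible range $\left(0, \frac{1}{2\sqrt{\hbar}}\right) \subset \left(\frac{-1}{2\sqrt{\hbar}}, \frac{1}{2\sqrt{\hbar}}\right)$ required by that property. It then gives $|\Phi_\B(x+id)| \sim 1$ as $x \to -\infty$ and $|\Phi_\B(x+id)| \sim e^{-2\pi x d} = e^{-\frac{1}{\sqrt{\hbar}}(b+c)x}$ as $x \to +\infty$. Combining with the factor $e^{\frac{1}{\sqrt{\hbar}}cx}$, the $x \to -\infty$ asymptotic is immediate, while for $x \to +\infty$ the exponents add to $\frac{1}{\sqrt{\hbar}}\bigl(cx - (b+c)x\bigr) = -\frac{1}{\sqrt{\hbar}}bx$, giving exactly the two stated limits.

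There is no genuine obstacle here: the only points requiring care are the bookkeeping of signs and moduli (ensuring $e^{\frac{1}{\sqrt{\hbar}}cx} > 0$) and the verification that $z$, $\overline{z}$, and the shift $d$ all stay within the strips of validity of the invoked estimates, both of which are guaranteed by the single inequality $b+c < \pi$. The heart of the argument is simply the observation that dividing by $\Phi_\B$ at the point $z$ is, in modulus, the same as multiplying by $\Phi_\B$ at the conjugate point $\overline{z}$, after which the known growth of $|\Phi_\B|$ on horizontal lines finishes the computation.
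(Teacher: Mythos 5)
Your proof is correct and follows essentially the same route as the paper, which simply observes that the lemma follows from the behavior-at-infinity property of $\Phi_\B$ (Proposition \ref{prop:quant:dilog} (4)) applied on the horizontal lines $\R \pm \frac{i}{2\pi\sqrt{\hbar}}(b+c)$, both of which lie in the admissible strip since $b+c<\pi$. Your additional use of unitarity to upgrade the first ``$\sim$'' to an exact equality is a harmless (and slightly stronger) refinement, but not a genuinely different argument: the paper obtains that first equivalence for free because both sides share the same explicit asymptotics at $\pm\infty$.
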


\begin{proof}
The lemma immediately follows from Proposition \ref{prop:quant:dilog} (4).
\end{proof}

Lemma \ref{lem:dec:exp} illustrates why we need the three angles $a,b,c$ to be in $(0,\pi)$: $b$ and $c$ must be positive in order to have exponential decrease in both directions, and $a$ must be  {positive} as well so that $b+c < \pi$ and $\Phi_\B\left (x \pm \frac{i}{ 2 \pi \sqrt{\hbar}}(b+c)\right )$ is always defined.

Now, for $X$ a triangulation such that $H_2(M_X\setminus X_0,\Z)=0$, $\hbar>0$ and $\alpha \in \mathcal{A}_X$ an angle structure,  the associated \textit{partition function of the Teichm\"uller TQFT} is the complex number:
$$\mathcal{Z}_{\hbar}(X,\alpha)= \int_{\mathbf{t} \in \R^{X^3}}  \mathcal{K}_X(\mathbf{t}) \mathcal{D}_{\hbar,X}(\mathbf{t},\alpha) d\mathbf{t} \ \ \ \in \C. $$

Andersen and Kashaev proved in \cite{AK} that the  {modulus} $\left |\mathcal{Z}_{\hbar}(X,\alpha) \right | \in \R_{>0}$ is invariant under Pachner moves with positive angles, and then generalised this property to a larger class of moves and triangulations with angles, using analytic continuation in complex-valued $\alpha$ \cite{AKicm}.

\begin{remark}\label{rem:mirror}
If we denote $X^\sharp$ the \textit{mirror image} of the triangulation $X$ (obtained by applying a reflection to each tetrahedron), then all tetrahedron signs $\varepsilon(T_j)$ are multiplied by $-1$. Therefore, it follows from the definition of the Teichm\"uller TQFT and Proposition \ref{prop:quant:dilog} (2) that 
$\mathcal{Z}_{\hbar}(X^\sharp,\alpha) = \overline{\mathcal{Z}_{\hbar}(X,\alpha)},$
and thus $\left |\mathcal{Z}_{\hbar}(X^\sharp,\alpha) \right | = \left |\mathcal{Z}_{\hbar}(X,\alpha) \right |$. Consequently, the following results will stand for the twist knots $K_n$ of Figure \ref{fig:twist:knot} and their mirror images $K^\sharp_n$.
\end{remark}

We can now state our version of the \textit{volume conjecture} for the Teichm\"uller TQFT, in a slightly different (and less powerful) way  {than the one in} Andersen--Kashaev in \cite[Conjecture 1]{AK}. Notably, we make the statements depend on specific chosen triangulations $X$ and $Y$; thus we will not be interested in the present paper in how the following properties change under Pachner moves or depend on the triangulations. For some insights on these points, see \cite{AK}. We also introduced a new combination of angles $\mu_X$, which has  {an} interesting topological origin.

\begin{conj}[see \cite{AK}, Conjecture 1] \label{conj:vol:BAGPN}
	Let $M$ be a connected closed oriented $3$-manifold and let $K \subset M$ be a hyperbolic knot.
There exist an ideal triangulation $X$ of $M \setminus K$ and a one-vertex H-triangulation $Y$ of $(M,K)$ such that $K$ is represented by an edge $\overrightarrow{K}$ in a single tetrahedron $Z$ of $Y$, and $\overrightarrow{K}$ has only one pre-image. Moreover,
there exists a function $J_X\colon \R_{>0} \times \C \to \C$ such that 
 the following properties hold:
\begin{enumerate}
\item   There exist $\mu_X, \lambda_X$
 linear combinations  of dihedral angles in $X$ such that
for all angle structures $\alpha \in \mathcal{A}_{X}$ and all $\hbar>0$, we have:
\begin{equation*}
\left |\mathcal{Z}_{\hbar}(X,\alpha) \right |
= \left | 
\int_{\mathbb{R}+i \frac{\mu_{X}(\alpha) }{2\pi \sqrt{\hbar}}  } 
J_{X}(\hbar,x)
e^{\frac{1}{2 \sqrt{\hbar}}  x  \lambda_{X}(\alpha)} 
dx \right |.
\end{equation*}
Moreover, if $M=S^3$, then $J_X$ can be chosen such that $\mu_X, \lambda_X$ are angular holonomies associated to a meridian and a preferred longitude of $K$.
\item For every $\B>0$, and for every 
$\tau\in \mathcal{S}_{Y \setminus Z} \times \overline{\mathcal{S}_Z}$
such that $\omega_{Y,\tau}$ vanishes on the edge $\overrightarrow{K}$ and is equal to $2\pi$ on every other edge, one has, denoting $\hbar = \frac{1}{(\B+\B^{-1})^2}$:
\begin{equation*}
\underset{\tiny \begin{matrix}\alpha \to \tau \\ \alpha \in \mathcal{S}_{Y} \end{matrix}}{\lim} \left |
\Phi_{\B}\left( \frac{\pi-\omega_{Y,\alpha}\left (\overrightarrow{K}\right )}{2\pi i \sqrt{\hbar}} \right)  \mathcal{Z}_{\hbar}(Y,\alpha)\right | = \left | J_{X}(\hbar,0)\right |,
\end{equation*}
\item In the semi-classical limit $\hbar \to 0^+$, we retrieve the hyperbolic volume of $K$ as:
$$
\lim_{\hbar \to 0^+} 2\pi \hbar  \log \vert J_{X}(\hbar,0) \vert
= -\Vol(M\backslash K).$$
\end{enumerate}
\end{conj}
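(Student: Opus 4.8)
The plan is to establish the three assertions of Conjecture \ref{conj:vol:BAGPN} separately for the twist knots, using the triangulations $X_n$ and $Y_n$ from Theorem \ref{thm:intro:trig} together with their geometricity from Theorem \ref{thm:intro:geom}. For part (1) I would first compute the kinematical kernel $\mathcal{K}_{X_n}$ explicitly. Since $H_2(M_{X_n}\setminus X_n^0,\Z)=0$ for a knot complement, and provided the associated matrix $A$ is invertible, Lemma \ref{lem:dirac} turns $\mathcal{K}_{X_n}$ into an explicit bounded Gaussian exponential $e^{2i\pi \mathbf{t}^T Q \mathbf{t}}$ with $Q=-RA^{-1}B$. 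Pairing this with the dynamical content $\mathcal{D}_{\hbar,X_n}$ — a product of $\Phi_\B$-factors indexed by the tetrahedra — produces a multidimensional state integral. The crucial observation I expect is that the combinatorics encoded by $Q$ mirror Thurston's complex gluing equations of Section~\ref{sub:thurston}; imposing the balancing relations $\omega_{X_n,\alpha}(e)=2\pi$ on every edge should let all but one integration variable decouple, leaving a single residual complex variable $x$. The dependence on $\alpha$ should then survive only through the height of the contour (an imaginary shift $\mu_{X_n}(\alpha)/(2\pi\sqrt\hbar)$) and through a linear exponential factor $e^{x\lambda_{X_n}(\alpha)/(2\sqrt\hbar)}$, which I would identify with the meridian and longitude angular holonomies $\mu_{X_n},\lambda_{X_n}$. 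This yields the function $J_{X_n}$.

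For part (2) I would run the same computation on the H-triangulation $Y_n$, which differs from $X_n$ only by the extra tetrahedron $Z$ carrying the knot edge $\overrightarrow{K}$. In $\mathcal{D}_{\hbar,Y_n}$ this tetrahedron contributes a factor $\Phi_\B\big(\cdots - \tfrac{i}{2\pi\sqrt\hbar}\varepsilon(Z)(\pi - \alpha_1(Z))\big)^{\pm 1}$ whose argument approaches the pole locus of $\Phi_\B$ exactly when $\omega_{Y,\alpha}(\overrightarrow K)\to 0$. Multiplying by $\Phi_\B\big(\tfrac{\pi-\omega_{Y,\alpha}(\overrightarrow K)}{2\pi i\sqrt\hbar}\big)$ cancels this blow-up, and passing to the limit $\alpha\to\tau$ should collapse the $Z$-integration, using the behaviour of $\Phi_\B$ at infinity (Proposition \ref{prop:quant:dilog}(4)) and the inversion relation (Proposition \ref{prop:quant:dilog}(1)), leaving precisely the state integral defining $J_{X_n}(\hbar,0)$. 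The analytic justification is a dominated-convergence argument, for which the uniform exponential bounds of Lemma \ref{lem:dec:exp} on each dynamical-content factor are the key input.

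Part (3) is where the geometry enters and is the most delicate. Writing $J_{X_n}(\hbar,0)=\int_\gamma e^{\frac{1}{2\pi\hbar}S'_\hbar(x)}\,dx$ over a non-compact contour, I would use the semi-classical limit of $\Phi_\B$ (Proposition \ref{prop:quant:dilog}(3)) to replace the quantum potential $S'_\hbar$ by a classical potential $S$ assembled from dilogarithms $\Li$, up to $\mathcal{O}(\hbar)$ corrections. The critical-point equation $\nabla S=0$ is then exactly Thurston's complex gluing system for $X_n$, whose unique solution is the complete hyperbolic structure — this is guaranteed by geometricity (Theorem \ref{thm:intro:geom}), which simultaneously ensures that the saddle is unique, nondegenerate, and a strict maximum of $\Re(S)$ along the steepest-descent contour (via strict concavity of the volume functional of Section \ref{sub:volume}). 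Applying the saddle point method (Theorem \ref{thm:SPM}) then gives $2\pi\hbar\log|J_{X_n}(\hbar,0)|\to \Re(S)$ at the saddle, which equals $-\Vol(S^3\setminus K_n)$ through the standard relation between $\Li$ (equivalently the Lobachevsky function $\Lambda$) and hyperbolic volume.

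The main obstacle I anticipate is controlling the saddle point approximation on the \emph{non-compact} contour $\gamma$ as $\hbar\to 0^+$: the error between $S'_\hbar$ and $S$ must be bounded uniformly along a tail stretching to infinity, where classical compact-contour saddle point theorems do not directly apply. I would handle this by isolating a compact neighbourhood of the saddle, where Theorem \ref{thm:SPM} applies, and separately bounding the contribution of the non-compact tails; the parity trick of Lemma \ref{lem:parity} and the resulting uniform bound of Lemma \ref{lem:unif:bound} are exactly the devices needed to show that these tails are exponentially negligible, refining the techniques of \cite{AH}.
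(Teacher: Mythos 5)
Your proposal follows essentially the same route as the paper: explicit computation of the kinematical kernel via Lemma \ref{lem:dirac}, isolation of the whole angle-structure dependence into the two holonomies $\mu_{X_n},\lambda_{X_n}$ through the identity relating the quadratic form $Q_n$ to the edge weights, cancellation of the degenerating $\Phi_\B$-factor of the tetrahedron $Z$ followed by dominated convergence for part (2), and the saddle point method at the complete structure supplied by geometricity for part (3), with the parity trick of Lemma \ref{lem:parity} and the uniform bound of Lemma \ref{lem:unif:bound} controlling the non-compact tails. The only step you do not explicitly anticipate is the additional comparison between the parameters $\B^2$ and $\hbar=\B^2(1+\B^2)^{-2}$ (the paper's second quantum potential $S'_\B$ and Lemma \ref{lem:unif:bound:hbar}), but this is a refinement of the same uniform-bound technique rather than a new idea.
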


The rest of the paper consists in proving Conjecture \ref{conj:vol:BAGPN} for the infinite family of hyperbolic twist knots (in Theorems
\ref{thm:trig},  
\ref{thm:part:func}, 
\ref{thm:part:func:Htrig:odd}, 
\ref{thm:vol:conj},
\ref{thm:even:part:func}, 
\ref{thm:part:func:Htrig:even}
 and 
\ref{thm:even:vol:conj}). Several remarks are in order concerning Conjecture \ref{conj:vol:BAGPN}.

\begin{remark}
In Conjecture \ref{conj:vol:BAGPN} (1), one may notice that $J_X,  \mu_X$ and $\lambda_X$ are not unique, since one can for example replace $(J_X(\hbar,x),x,\mu_X,\lambda_X)$ by
\begin{itemize}
\item either $(J_X(\hbar,x)e^{-\frac{1}{2 \sqrt{\hbar}}C x},x,\mu_X,\lambda_X+C)$ for any constant $C \in \R$,
\item or $(D J_X(\hbar,D x'),x',\mu_X/D,D \lambda_X)$ for any constant $D \in \R^*$ (via the change of variable $x'=x/D$).
\end{itemize}
Note however that in both cases, the expected limit $\lim_{\hbar \to 0^+} 2\pi \hbar \log \vert J_{X}(\hbar,0) \vert$ does not change. When $M=S^3$, a promising way to reduce ambiguity in the definition of $J_X$ is to impose that $\mu_X(\alpha)$ and $\lambda_X(\alpha)$  are uniquely determined as the angular holonomies of a meridian and a preferred longitude  of the knot $K$. In proving Conjecture \ref{conj:vol:BAGPN} (1) for the twist knots in Theorems \ref{thm:part:func} and \ref{thm:even:part:func}, we find such properties for $\mu_X$ and $\lambda_X$.
\end{remark}

\begin{remark}
The function $(\hbar \mapsto J_X(\hbar,0))$ should play the role of the Kashaev invariant in the comparison with the Kashaev--Murakami--Murakami volume conjecture \cite{Ka95,MM}. Notably, the statement of Conjecture \ref{conj:vol:BAGPN} (2) has a similar form as the definition of the Kashaev invariant in \cite{Ka94} and Conjecture \ref{conj:vol:BAGPN} (3) resembles the volume conjecture stated in \cite{Ka95}, where $\hbar$ corresponds to the inverse of the color $N$.	
\end{remark}

\begin{remark}
The final form of the Teichm\"uller TQFT volume conjecture is not yet set in stone, notably because of the suboptimal definitions of the function $(\hbar \mapsto J_X(\hbar,0))$ (in Conjecture \ref{conj:vol:BAGPN} (1) and (2))
and the uncertain invariance of the variables and statements under (ordered) Pachner moves. Nevertheless, we hope Conjecture \ref{conj:vol:BAGPN} as stated here and its resolution can help us understand better how to solve these difficulties in the future.	
\end{remark}

\subsection{Saddle point method}

Let $n\geqslant 1$ be an integer.
Recall \cite{Kr} that a complex-valued function $(z_1,\ldots,z_n) \mapsto S(z_1,\ldots,z_n)$ defined on an open subset of $\C^n$ is called \textit{analytic} (or \textit{holomorphic}) if it is analytic in every variable (as a function of one complex variable).
Moreover, its \textit{holomorphic gradient} $\nabla S$  is the function valued in $\C^n$ whose coordinates are the partial derivatives $\dfrac{\partial S}{\partial z_j}$, and its \textit{holomorphic hessian} $\mathrm{Hess}(S)$ is the $n\times n$ matrix with  {coefficients} the second partial derivatives $\dfrac{\partial^2 S}{\partial z_j z_k}$; in both of these cases, the \textit{holomorphic}  {designation} comes from the absence of partial derivatives of the form $\dfrac{\partial }{\partial \overline{z_j}}$.

The \textit{saddle point method} is a general name for studying asymptotics of integrals of the form $\int f e^{\lambda S}$ when $\lambda\to +\infty$. The main contribution is expected to be the value of the integrand at a saddle point of $S$ maximizing $\Re S$. For an overview of such methods, see \cite[Chapter II]{Wo}.

Before going in detail in the saddle point method, let us recall the notion of asymptotic expansion.

\begin{definition}
Let $f:\Omega \to \C$ be a function where $\Omega \subset \C$ is unbounded. A complex power series $\sum_{n=0}^{\infty} a_n z^{-n}$ (either convergent or divergent) is called an \emph{asymptotic expansion} of $f$ if, for every fixed integer $N \geq 0$, one has
\[
f(z) = \sum_{n=0}^{N} a_nz^{-n} + O\left (z^{-(N+1)}\right )
\]
when $z \to \infty$. 
In this case, one denotes
\[
f(z) 
\underset{z \to \infty}{\cong}
 \sum_{n=0}^{\infty} a_nz^{-n}.
\]
\end{definition}

For various properties of asymptotic expansions, see \cite{Wo}.

The following theorem is due to Fedoryuk and can be found in \cite[Section 2.4.5]{Fe2} (for the statement) and in \cite[Chapter 5]{Fe1} (for the details and proofs, in Russian).
 {Compare also with
\cite[Theorem 4.2]{PV} (in English).}
 To our knowledge, this is the only version of the saddle point method in the literature for $f,S$ analytic functions in several complex variables.

\begin{theorem}[Fedoryuk]\label{thm:SPM}
Let $m\geqslant 1$ be an integer, and
$ \gamma^m $ an $m$-dimensional smooth compact real sub-manifold of $\C^m$ with connected boundary.
We denote $z=(z_1,\ldots,z_m) \in \C^m$ and $dz= dz_1\cdots dz_m$. Let $z\mapsto f(z)$ and $z\mapsto S(z)$ be two complex-valued functions analytic on a domain $D$ such that $\gamma^m \subset D \subset \C^m$. We consider the integral
$$F(\lambda) = \int_{\gamma^m} f(z) \exp(\lambda  S(z)) \, dz,$$
with parameter $\lambda \in \R$.

Assume that $\max_{z \in \gamma^m} \Re S(z)$ is attained only at a point $z^0$, which is an interior point of $\gamma^m$ and  a simple saddle point of $S$ (i.e.\ $\nabla S(z^0)=0$ and $\det \mathrm{Hess}(S)(z^0) \neq 0$).

Then as $\lambda \to + \infty$, there is the asymptotic expansion
$$F(\lambda) 
\underset{\lambda \to \infty}{\cong}
 \left (\dfrac{2\pi}{\lambda}\right )^{m/2} \dfrac{\exp \left ( \lambda  S(z^0)\right )}{\sqrt{\det \mathrm{Hess}(S)(z^0)}}
\left [
f(z^0) + \sum_{k=1}^\infty c_k \lambda^{-k}
\right ],
$$
where the $c_k$ are complex numbers and the choice of branch for the root $\sqrt{\det  \mathrm{Hess}(S)(z^0)}$ depends on the orientation of the contour $\gamma^m$.

In particular, $\lim_{\lambda \to + \infty}
\frac{1}{\lambda} \log \vert F(\lambda) \vert = \Re S(z^0)$.
\end{theorem}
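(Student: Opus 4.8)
The plan is to prove this by the classical method of steepest descent in several complex variables, whose three ingredients are \emph{localization} near the dominant critical point, reduction of $S$ to a quadratic \emph{normal form} via the holomorphic Morse lemma, and evaluation of the resulting Gaussian integral together with its full asymptotic expansion. Throughout, the holomorphy of $f$ and $S$ is what allows the contour $\gamma^m$ to be deformed freely inside $D$ by Cauchy's theorem, a freedom unavailable for merely smooth integrands.

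First I would localize. Since $\gamma^m$ is compact and $\Re S$ attains its maximum on $\gamma^m$ only at the interior point $z^0$, for every neighborhood $U$ of $z^0$ there is a $\delta>0$ with $\Re S(z)\leqslant \Re S(z^0)-\delta$ on $\gamma^m\setminus U$. Hence
$$
\left| \int_{\gamma^m\setminus U} f(z)\exp(\lambda S(z))\,dz \right|
\leqslant C\, e^{\lambda(\Re S(z^0)-\delta)},
$$
which is exponentially smaller than the expected main term of size $\lambda^{-m/2}e^{\lambda\Re S(z^0)}$; in particular the entire boundary $\partial\gamma^m$, lying in $\gamma^m\setminus U$, contributes negligibly. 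Thus only an arbitrarily small neighborhood of $z^0$ matters, and the asymptotics are insensitive to the boundary (this is where its connectedness is harmless rather than essential).

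Next, on $U$ I would invoke the holomorphic Morse lemma: because $\nabla S(z^0)=0$ and $\det \mathrm{Hess}(S)(z^0)\neq 0$, there is a biholomorphism $w=\varphi(z)$ with $\varphi(z^0)=0$ such that $S(z)=S(z^0)-\tfrac12(w_1^2+\cdots+w_m^2)$, the Jacobian of $\varphi$ at $z^0$ satisfying $\det(D\varphi)(z^0)^2=\pm\det \mathrm{Hess}(S)(z^0)$. Using holomorphy once more, I would deform $\varphi(\gamma^m\cap U)$ to the standard steepest-descent contour on which each $w_j$ is real, so that $-\tfrac12\sum w_j^2$ has a genuine maximal real part at the origin; the discarded lateral parts of the deformation lie where $\Re S<\Re S(z^0)$ and are again exponentially small. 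Pulling back $f(z)\,dz$ and Taylor-expanding the amplitude $g(w)=f(\varphi^{-1}(w))\,\det(D\varphi^{-1})(w)$ about $w=0$, the integral becomes a sum of Gaussian moment integrals $\int_{\R^m} w^\beta e^{-\frac{\lambda}{2}\sum w_j^2}\,dw$. The leading one yields $(2\pi/\lambda)^{m/2}g(0)$, and since $g(0)$ equals $f(z^0)$ divided by $\det(D\varphi)(z^0)$, whose square is $\pm\det \mathrm{Hess}(S)(z^0)$, this produces the announced prefactor $(2\pi/\lambda)^{m/2}f(z^0)/\sqrt{\det \mathrm{Hess}(S)(z^0)}$, up to a choice of branch; the surviving higher moments (even ones, by parity) supply the coefficients $c_k\lambda^{-k}$ of the full expansion. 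The limit $\lim_{\lambda\to\infty}\frac1\lambda\log|F(\lambda)|=\Re S(z^0)$ then follows at once from the leading term.

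The main obstacle is the contour deformation in $\C^m$. Since $f(z)e^{\lambda S(z)}\,dz_1\wedge\cdots\wedge dz_m$ is a closed holomorphic $m$-form, Stokes' theorem equates the integral over $\varphi(\gamma^m\cap U)$ with that over the real plane provided the lateral boundary of the homotopy between them contributes nothing; verifying that this homotopy stays inside $D$ and that its lateral contribution is exponentially small (again using $\Re S<\Re S(z^0)$ off $z^0$) is the delicate geometric input. Intertwined with this is the bookkeeping of the branch of $\sqrt{\det \mathrm{Hess}(S)(z^0)}$, equivalently the orientation of $\gamma^m$, which is precisely the orientation-dependence flagged in the statement. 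A secondary technical point is the uniform-in-$\lambda$ justification of exchanging the Taylor expansion of $g$ with the Gaussian integration, i.e.\ a Watson-type estimate of the remainders; this is routine once the quadratic form has negative-definite real part along the deformed contour.
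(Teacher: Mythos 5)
The paper does not prove this statement at all: Theorem \ref{thm:SPM} is quoted as a known result of Fedoryuk, with the statement referenced to \cite[Section 2.4.5]{Fe2} and the proof to \cite[Chapter 5]{Fe1}; the authors explicitly treat it as a black box. So there is no in-paper proof to compare yours against. That said, your sketch is the standard multidimensional steepest-descent argument (localization, holomorphic Morse lemma, deformation to the real Gaussian contour, Watson-type expansion of the amplitude), which is essentially the route taken in Fedoryuk's monograph, and as a sketch it is sound.

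Two points deserve more care if you were to write this out in full. First, the contour deformation after the Morse lemma is not merely a matter of Stokes plus exponential smallness of the lateral boundary: you must show that the tangent plane $T_0$ of $\varphi(\gamma^m\cap U)$ at the origin is a totally real $m$-plane on which $\Re\bigl(\sum w_j^2\bigr)$ is positive definite (this follows from $\Re S$ having a strict interior maximum at $z^0$ along the contour and the nondegeneracy of the Hessian), and then that the space of such planes is connected so that $T_0$ can be homotoped to $\R^m$ while keeping $\Re\bigl(-\tfrac12\sum w_j^2\bigr)$ strictly negative away from $0$ throughout the homotopy; this connectedness is the genuinely geometric input and is what makes the branch of $\sqrt{\det\mathrm{Hess}(S)(z^0)}$ well defined up to orientation. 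Second, your relation $\det(D\varphi)(z^0)^2=\pm\det\mathrm{Hess}(S)(z^0)$ should read $\det\mathrm{Hess}(S)(z^0)=(-1)^m\det(D\varphi)(z^0)^2$, since $\mathrm{Hess}(S)(z^0)=-D\varphi(z^0)^TD\varphi(z^0)$; the resulting factor of $i^{\pm m}$ is exactly what is being absorbed into the unspecified branch in the statement, so the discrepancy is harmless but worth stating precisely. Neither issue is a gap in the sense of a failing step; both are the places where a complete proof, such as Fedoryuk's, has to do real work.
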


\subsection{ {Notation} and conventions}
Let $p \in \N$. In the various following sections, we will use the following recurring conventions:
\begin{itemize}
\item A roman letter in \textbf{bold} will denote a vector of $p+2$ variables (often integration variables), which are the aforementioned letter indexed by $1, \ldots, p,U, W$. For example,
$ \mathbf{y} = (y_1,\ldots,y_p,y_U,y_W)$.
\item A roman letter in \textbf{bold} and with a tilde $\widetilde{ \ }$ will have $p+3$ variables indexed by $1, \ldots, p, U,V,W$. For example,
$ \widetilde{\mathbf{y}}' = (y'_1,\ldots,y'_p,y'_U,y'_V,y'_W)$. 
\item Matrices and other vectors of size $p+3$ will also wear a tilde but will not necessarily be in bold, for example $\widetilde{C}(\alpha)=(c_1,\ldots,c_p,c_U,c_V,c_W)$.
\item A roman letter in \textbf{bold} and with a hat $\widehat{ \ }$ will have $p+4$ variables indexed by $1, \ldots, p, U,V,W, Z$. For example,
$ \widehat{\mathbf{t}} = (t_1,\ldots,t_p,t_U,t_V,t_W,t_Z)$.
\end{itemize}
For $j \in \{1,\ldots,p,U,V,W,Z\}$, we will also use the conventions that:
\begin{itemize}
\item the symbols $e_j, f_j$ are faces of a triangulation (for $j \in \{1,\ldots,p\}$),
\item the symbol $\overrightarrow{\eta_j}$ is an edge of a triangulation (for $j \in \{1,\ldots,p\}$),
\item the integration variable $t_j$ lives in $\R$,
\item the symbols $a_j,b_j,c_j$ are angles in $(0,\pi)$ (sometimes $[0,\pi]$) with sum $\pi$,
\item the integration variable $y'_j$ lives in $\R \pm \frac{i(\pi-a_j)}{2 \pi \sqrt{\hbar}}$,
\item the integration variable $y_j$ live{}s in $\R \pm i(\pi-a_j)$,
\item the symbols $x_j, d_j$ are the real and imaginary part of $y_j$,
\item  the symbol $z_j$ lives in $\R + i \R_{>0}$,
\end{itemize}
and are (each time) naturally associated to the tetrahedron $T_j$. Moreover, we will simply  {write} $U,V,W,Z$ for the tetrahedra $T_U,T_V,T_W,T_Z$.

\section{New triangulations for the twist knots}\label{sec:trig}

We describe the construction of new triangulations for the twist knots, starting from a knot diagram and using an algorithm introduced by Thurston in \cite{Th2} and refined in \cite{Me, KLV}. For the odd twist knots the details are in this section, and for the even twist knots they are in Section~\ref{sec:appendix}.

\subsection{Statement of results}

\begin{theorem}\label{thm:trig}
For every $n\geqslant 3$ odd (respectively for every $n\geqslant 2$ even), the triangulations $X_n$ and $Y_n$ represented in Figure \ref{fig:trig:odd} (respectively in Figure \ref{fig:trig:even}) are an ideal triangulation of $S^3 \setminus K_n$ and an H-triangulation of $(S^3,K_n)$ respectively.
\end{theorem}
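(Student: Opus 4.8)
The plan is to prove Theorem \ref{thm:trig} by following the Thurston--Menasco--Kashaev-Luo-Vartanov algorithm explicitly, constructing the H-triangulation $Y_n$ first and then obtaining $X_n$ from it by a collapse. Since the constructions differ for odd and even $n$, I would treat the odd case in detail here (as promised in Section \ref{sec:trig}) and defer the even case to Section \ref{sec:appendix}. First I would start from the standard diagram of $K_n$ in Figure \ref{fig:twist:knot} and apply Thurston's method: thicken the knot, place a polyhedron whose combinatorial structure is dictated by the crossings of the diagram, and read off a description of $S^3$ as a single polyhedron glued to itself along its boundary faces, with $K_n$ realized as one distinguished edge. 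Here the $n$ half-twists produce a ``stack'' of $n$ similar bigonal or quadrilateral regions, which is precisely the source of the linear growth $\lfloor \frac{n+6}{2}\rfloor$ in the number of tetrahedra.

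The next step is the combinatorial simplification. After obtaining the self-glued polyhedron, I would apply the edge-reduction trick of \cite{Me, KLV} to decrease the number of edges, and then subdivide the resulting polyhedron into compact tetrahedra $T_1,\dots,T_p,U,V,W,Z$ (using the notation established in the Notations and conventions subsection, with $p$ tetrahedra of the ``twist region'' type plus the four special tetrahedra). This yields the claimed H-triangulation $Y_n$ of $(S^3, K_n)$ with $\lfloor \frac{n+6}{2}\rfloor$ tetrahedra, in which $K_n$ is the distinguished edge lying in the single tetrahedron $Z$. I would verify that the face gluings of Figure \ref{fig:trig:odd} do assemble to $S^3$ --- the cleanest way is to check that the quotient is a closed, simply-connected (hence, by Poincar\'e, standard) $3$-manifold, e.g.\ by confirming that there is a single vertex, that the link of that vertex is a sphere, and that the Euler characteristic is correct, or alternatively by exhibiting an explicit sequence of Pachner/bistellar moves reducing $Y_n$ to a known triangulation of $S^3$.

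To obtain the ideal triangulation $X_n$ of $S^3 \setminus K_n$, I would collapse the tetrahedron $Z$ containing the edge $\overrightarrow{K_n}$: removing a neighborhood of $K_n$ turns the two endpoints of that edge into the ideal cusp, and the tetrahedron $Z$ degenerates, leaving $\lfloor\frac{n+6}{2}\rfloor - 2 = \lfloor\frac{n+4}{2}\rfloor$ ideal tetrahedra glued along the induced face pairings shown in Figure \ref{fig:trig:odd}. I would check that this collapse is valid (the edge $\overrightarrow{K_n}$ has a single preimage, as required in Conjecture \ref{conj:vol:BAGPN}, so the collapse does not create degeneracies elsewhere) and that the result is genuinely an ideal triangulation of the knot complement, again most safely by comparing the combinatorics against a reliable model such as the SnapPy triangulation of $K_n$ or by tracking the cusp triangulation. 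Finally, the orientability and branching claim (Theorem \ref{thm:intro:trig}'s statement that the edges admit orientations with no triangle a cycle) amounts to exhibiting the explicit edge orientations drawn in Figure \ref{fig:trig:odd} and checking on each of the finitely many tetrahedron-types that every $2$-face receives a source, a sink, and an intermediate vertex, i.e.\ is acyclic; by the uniformity of the twist region this reduces to a bounded check independent of $n$.

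The main obstacle I expect is bookkeeping rather than conceptual: because the number of tetrahedra grows linearly with $n$, I cannot simply draw one finite picture, so the real work is to set up a clean inductive or ``periodic'' description of the gluing pattern in the twist region (the $T_1,\dots,T_p$ block) and prove that the face identifications, the edge classes, and the single-vertex condition all behave correctly \emph{uniformly in $n$}. Verifying that every edge class has the right valence and that the vertex link is a sphere for all $n$ simultaneously --- rather than case by case --- is the delicate part, and I would organize it around an explicit description of how each new half-twist adds one tetrahedron and modifies the cusp triangulation in a predictable, repeating way.
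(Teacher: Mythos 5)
Your proposal follows essentially the same route as the paper: Thurston's diagram-to-polyhedron algorithm, the bigon trick to reduce the number of edges, an explicit subdivision into the tetrahedra $T_1,\dots,T_p,U,V,W,Z$ giving $Y_n$, and a collapse of $Z$ to produce $X_n$; the paper's ``proof'' of Theorem \ref{thm:trig} is precisely this construction (Sections \ref{sub:trig:odd} and \ref{sub:even:trig}), with the formal proof reduced to matching the resulting gluings against the comb diagrams of Figures \ref{fig:trig:odd} and \ref{fig:trig:even}. Two points where your write-up diverges are worth flagging. First, the collapse of $Z$ removes exactly \emph{one} tetrahedron, so the count is $\lfloor\frac{n+6}{2}\rfloor - 1 = \lfloor\frac{n+4}{2}\rfloor$; your ``$-2$'' is an arithmetic slip (and the identity you wrote is false). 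Second, your proposed a posteriori verifications --- checking simple-connectivity and invoking Poincar\'e, or comparing against a SnapPy triangulation --- are not how the argument goes and the latter would not constitute a proof. The paper never needs to \emph{verify} that the quotient is $S^3$: the initial cellular decomposition is read directly off the diagram of $K_n$ sitting in $S^3$, and every subsequent move (collapsing the crossing tetrahedra, gluing $B_+$ to $B_-$ along $E$, the bigon trick, the final subdivision) manifestly preserves the homeomorphism type of the pair $(S^3,K_n)$, so correctness holds by construction. Your instinct that the real work is the uniform-in-$n$ bookkeeping of the twist-region block is exactly right, and is what the tower of Figure \ref{fig:GG:tower} and the periodic gluing pattern of the $T_k$ are designed to handle.
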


\begin{figure}[!h]
\begin{center}
\includegraphics[scale=0.4,angle=90]{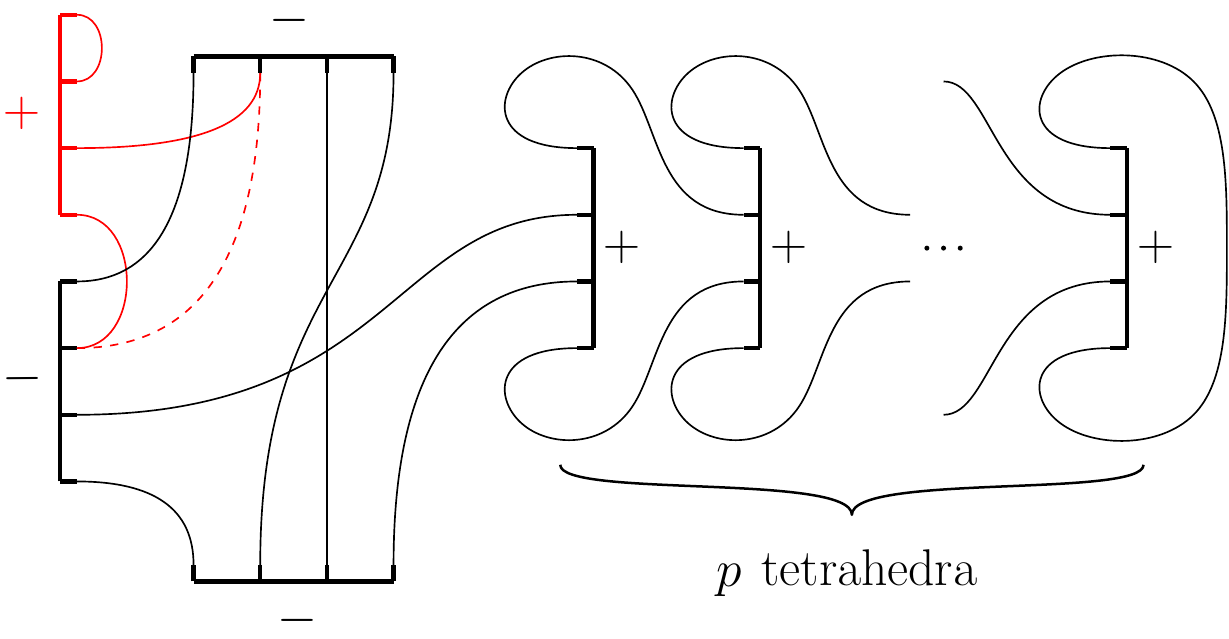}
\end{center}
\caption{An H-triangulation $Y_n$ of $(S^3,K_n)$ (full red part) and an ideal triangulation $X_n$ of $S^3 \setminus K_n$ (dotted red part), for odd $n\geqslant 3$, with $p=\frac{n-3}{2}$.} \label{fig:trig:odd}
\end{figure}

\begin{figure}[!h]
\begin{center}
\includegraphics[scale=0.4,angle=90]{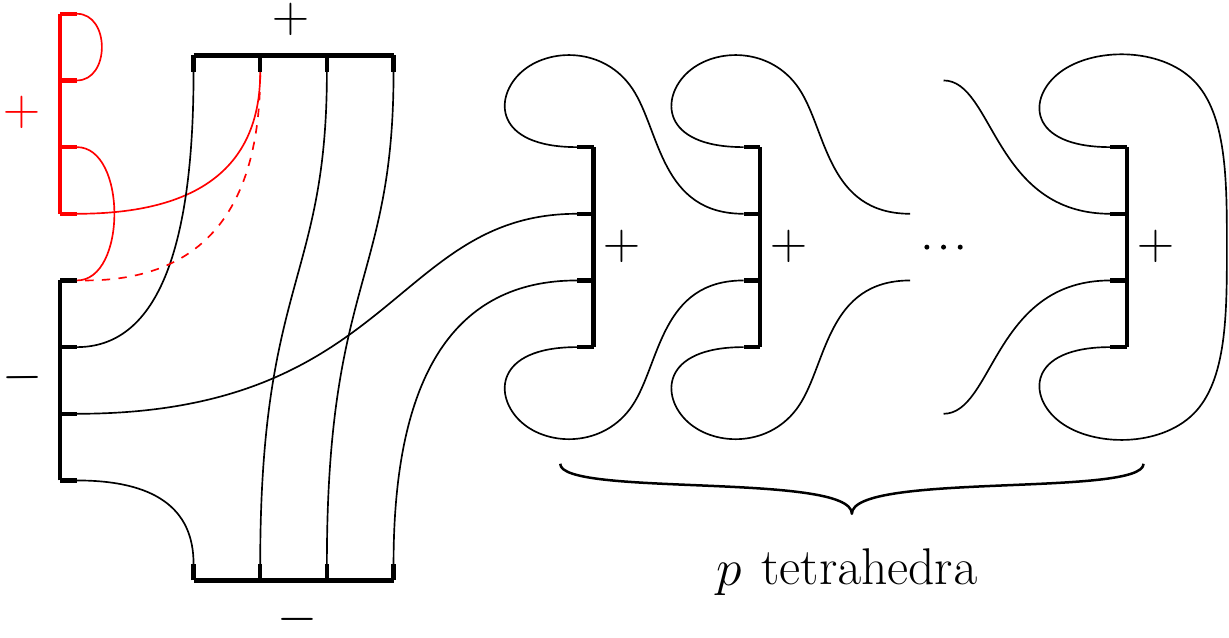}
\end{center}
\caption{An H-triangulation $Y_n$ of $(S^3,K_n)$ (full red part) and an ideal triangulation $X_n$ of $S^3 \setminus K_n$ (dotted red part), for even $n\geqslant 2$, with $p=\frac{n-2}{2}$.} \label{fig:trig:even}
\end{figure}

Figures \ref{fig:trig:odd} and \ref{fig:trig:even} display an H-triangulation $Y_n$ of $(S^3,K_n)$, and the corresponding ideal triangulation $X_n$ of $S^3 \setminus K_n$ is obtained by replacing the upper left red tetrahedron (partially glued to itself) by the dotted line (note that we omitted the numbers $0,1,2,3$ of the vertices for simplicity). Theorem \ref{thm:trig} is proven by applying an algorithm due to Thurston (later refined by Menasco and Kashaev--Luo--Vartanov) to construct a polyhedral decomposition of $S^3$ where the knot $K_n$ is one of the edges, starting from a diagram of $K_n$; along the way we apply a combinatorial trick to reduce the number of edges and we finish by choosing a convenient triangulation of the polyhedron. Once we have the H-triangulation of $(S^3,K_n)$, we can collapse both the edge representing the knot $K_n$ and its  {one} underlying tetrahedron to obtain an ideal triangulation of $S^3 \setminus K_n$. This is detailed in Section \ref{sub:trig:odd} (for odd $n$) and in Section \ref{sub:even:trig} (for even $n$).

\subsection{Consequences  {for} Matveev complexity}

An immediate consequence of Theorem \ref{thm:trig} is a new upper bound for the Matveev complexity of a general twist knot complement. Recall that the Matveev complexity $\mathfrak{c}(S^3\setminus K)$ of a knot complement is equal to the minimal number of tetrahedra in an ideal triangulation of this knot complement $S^3\setminus K$ (see \cite{Ma} for this definition and the original wider definition using simple spines).

\begin{corollary}\label{cor:complexity}
Let $n\geqslant 2$. Then the Matveev complexity $\mathfrak{c}\left (S^3\setminus K_n\right )$ of the $n$-th twist knot complement satisfies:
$$ \mathfrak{c}\left (S^3\setminus K_n\right ) \leqslant \left \lfloor \frac{n+4}{2} \right  \rfloor.$$
\end{corollary}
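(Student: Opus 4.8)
The plan is to read the corollary off directly from Theorem \ref{thm:trig}. The excerpt defines the Matveev complexity $\mathfrak{c}(S^3 \setminus K_n)$ as the minimal number of tetrahedra among all ideal triangulations of $S^3 \setminus K_n$; hence any single ideal triangulation of the complement furnishes an upper bound equal to its own number of tetrahedra. Since Theorem \ref{thm:trig} provides an explicit ideal triangulation $X_n$ of $S^3 \setminus K_n$ for every $n \geq 2$, it suffices to count the tetrahedra of $X_n$ and verify that this count equals $\lfloor \frac{n+4}{2} \rfloor$ in both parity classes.

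First I would treat the two parities separately, following the conventions fixed in the preliminaries, where $X_n$ is assembled from tetrahedra indexed by $1, \ldots, p, U, V, W$ (the ``tilde'' objects of size $p+3$). For odd $n \geq 3$ (Figure \ref{fig:trig:odd}) one has $p = \frac{n-3}{2}$, so $X_n$ consists of $p+3 = \frac{n+3}{2}$ tetrahedra; since $n+4$ is odd this coincides with $\lfloor \frac{n+4}{2} \rfloor$. For even $n \geq 2$ (Figure \ref{fig:trig:even}) one has $p = \frac{n-2}{2}$, so $X_n$ consists of $p+3 = \frac{n+4}{2}$ tetrahedra, which is exactly $\lfloor \frac{n+4}{2} \rfloor$ as $n+4$ is even. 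The excluded value $n = 1$ corresponds to the trefoil $K_1 = 3_1$, which is not hyperbolic, so every remaining $n \geq 2$ is covered by Theorem \ref{thm:trig}.

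The inequality then follows immediately: $X_n$ is an ideal triangulation of $S^3 \setminus K_n$ with $\lfloor \frac{n+4}{2} \rfloor$ tetrahedra, so the minimum over all ideal triangulations is at most this number, giving $\mathfrak{c}(S^3 \setminus K_n) \leq \lfloor \frac{n+4}{2} \rfloor$. There is no genuine obstacle at this stage beyond the bookkeeping of the tetrahedron count in each parity class and the consistency check that $p+3$ equals $\lfloor \frac{n+4}{2} \rfloor$; all the substantive work is already contained in the combinatorial construction underlying Theorem \ref{thm:trig}.
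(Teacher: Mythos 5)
Your proposal is correct and is exactly the paper's argument: the authors state that Corollary \ref{cor:complexity} ``follows immediately from Theorem \ref{thm:trig}'', i.e.\ the ideal triangulation $X_n$ realizes $\left\lfloor \frac{n+4}{2}\right\rfloor$ tetrahedra and the Matveev complexity is the minimum over all ideal triangulations. Your parity bookkeeping ($p+3=\frac{n+3}{2}$ for odd $n$ with $p=\frac{n-3}{2}$, and $p+3=\frac{n+4}{2}$ for even $n$ with $p=\frac{n-2}{2}$) matches the tetrahedron counts given in Sections \ref{sub:trig:odd} and \ref{sub:even:trig}.
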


Corollary \ref{cor:complexity} follows immediately from Theorem \ref{thm:trig} and is of double interest.

 Firstly, this new upper bound, which is roughly half the crossing number of the knot, is stricly better  {than} the upper bounds currently  {in the literature.} Indeed, the usual upper bound for $\mathfrak{c}\left (S^3\setminus K_n\right )$ is roughly $4$ times the crossing number (see for example \cite[Proposition 2.1.11]{Ma}); a better upper bound for two-bridge knots is given in \cite[Theorem 1.1]{IN}, and is equal to $n$ for the $n$-th twist knot $K_n$.
 
 Secondly, experiments on the software \textit{SnapPy} lead us to conjecture that the bound of Corollary \ref{cor:complexity} is actually an exact value. 

\begin{conj}\label{conj:matveev}
Let $n\geqslant 3$. Then the Matveev complexity $\mathfrak{c}\left (S^3\setminus K_n\right )$ of the $n$-th twist knot complement satisfies:
$$ \mathfrak{c}\left (S^3\setminus K_n\right ) = \left \lfloor \frac{n+4}{2} \right  \rfloor.$$
\end{conj}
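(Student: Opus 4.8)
The upper bound $\mathfrak{c}\left(S^3\setminus K_n\right)\leqslant \lfloor\frac{n+4}{2}\rfloor$ is already supplied by Corollary \ref{cor:complexity}, so the entire content of Conjecture \ref{conj:matveev} is the matching lower bound
\[
\mathfrak{c}\left(S^3\setminus K_n\right)\geqslant \left\lfloor \frac{n+4}{2}\right\rfloor.
\]
The first point to record is that the most classical complexity estimate is useless here. Any ideal triangulation with $N$ tetrahedra satisfies $\Vol(M)\leqslant N\, v_3$, where $v_3=1.0149\ldots$ is the volume of the regular ideal tetrahedron, whence $N\geqslant \Vol(S^3\setminus K_n)/v_3$; but as noted after Figure \ref{fig:table:twist:knot}, $\Vol(S^3\setminus K_n)$ increases only to the Whitehead link volume $3.66\ldots$, so this yields merely $N\geqslant 4$ for all $n$. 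Any successful proof must therefore invoke an invariant that grows \emph{linearly} in $n$, reflecting the fact that the $n$-crossing twist region is geometrically a long Margulis tube around the short core geodesic produced by the $(1,\pm n)$ Dehn filling on the Whitehead link.

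The plan for obtaining the correct \emph{order} of growth is to use the Turaev--Viro invariant $TV_r$ at a fixed small level $r$. These invariants obey a multiplicative collapse bound of the shape $|TV_r(M)|\leqslant \kappa_r^{\,\mathfrak{c}(M)}$ for an explicit constant $\kappa_r$, so a lower estimate $|TV_r(S^3\setminus K_n)|\geqslant C\,\rho_r^{\,n}$ with $\rho_r>1$ would give $\mathfrak{c}\left(S^3\setminus K_n\right)\geqslant \frac{n\log\rho_r}{\log\kappa_r}+O(1)$. Such an estimate is accessible in principle: via the Chen--Yang expression of $TV_r$ through the relevant Reshetikhin--Turaev invariants \cite{CY} and the Kauffman bracket skein calculus, each pair of crossings in the twist region contributes a fixed transfer matrix, and its spectral radius governs $\rho_r$. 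First I would compute this transfer matrix explicitly at a convenient $r$ and read off $\rho_r$ and $\kappa_r$.

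The hard part will be exactness. This strategy, like every currently known lower-bound technique, controls only the growth \emph{slope} and not its precise value: there is no reason for $\frac{\log\rho_r}{\log\kappa_r}$ to equal $\tfrac12$ for any $r$, nor for the additive error to reproduce the exact floor $\lfloor\frac{n+4}{2}\rfloor$. I expect that pinning down the exact constant genuinely requires the normal-surface machinery of Jaco--Rubinstein--Tillmann rather than a quantum bound. Concretely, one would take a minimal ideal triangulation of $S^3\setminus K_n$ to be $0$-efficient, exploit the identity $\#\text{edges}=\#\text{tetrahedra}$ valid for one-cusped ideal triangulations, and then show that the longitude of $K_n$ --- which winds $\sim n$ times through the twist region --- cannot be carried by the cusp cross-section of a triangulation with fewer than $\lfloor\frac{n+4}{2}\rfloor$ edges, for instance by exhibiting a linearly growing family of essential normal annuli or by a direct combinatorial analysis of how the twisting is resolved edge by edge. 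This sharp edge-counting certificate, scaling exactly as $\tfrac{n}{2}$, is where essentially all the difficulty lies: even for lens spaces $L(p,q)$ the exact complexity was established only through substantial normal-surface arguments, and the twist-knot family should be at least as delicate. A possibly cleaner alternative would be an inductive argument proving $\mathfrak{c}\left(S^3\setminus K_{n+2}\right)\geqslant \mathfrak{c}\left(S^3\setminus K_n\right)+1$ directly from the addition of one full twist, but establishing such strict monotonicity of complexity under a single move is itself an open-type problem, so I would regard it as the riskiest step of the program.
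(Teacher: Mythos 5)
The statement you are addressing is presented in the paper as Conjecture \ref{conj:matveev}, not as a theorem: the paper proves only the upper bound $\mathfrak{c}\left(S^3\setminus K_n\right)\leqslant \left\lfloor \frac{n+4}{2}\right\rfloor$ (Corollary \ref{cor:complexity}, an immediate consequence of the triangulations of Theorem \ref{thm:trig}), and supports the matching lower bound only by SnapPy experiments up to $n=12$ together with a plausibility discussion via the pair complexity $\mathfrak{c}\left(S^3,K_n\right)$, the double branched covers $L(2n+1,n)$, and the still-open lens space complexity conjecture (Corollary \ref{cor:complexity:pair} and the surrounding remarks). So there is no proof in the paper to compare against, and your text---correctly framed as a research program---does not supply one either: nothing in it establishes $\mathfrak{c}\left(S^3\setminus K_n\right)\geqslant \left\lfloor \frac{n+4}{2}\right\rfloor$ for even a single value of $n$. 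Your opening observations are sound (the upper bound is exactly Corollary \ref{cor:complexity}; the volume bound $N\geqslant \Vol/v_3$ is useless because the volumes stay below the Whitehead volume), and they match the paper's own framing.

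The genuine gap, beyond the overall incompleteness, is that the one step you call ``accessible in principle'' would demonstrably fail. At a fixed root of unity, the $SU(2)$ (or $SO(3)$) TQFT representation of the mapping class group of the torus is projectively unitary and the Dehn twist acts with eigenvalues that are roots of unity; your transfer matrix for a full twist in the twist region is therefore a finite-order (projectively) unitary operator with spectral radius $1$. Consequently $TV_r\left(S^3\setminus K_n\right)$, obtained by pairing fixed vectors against the orbit of a vector under powers of this operator, is bounded and eventually periodic in $n$, so no estimate of the form $|TV_r|\geqslant C\rho_r^{\,n}$ with $\rho_r>1$ can hold, and the quantum route yields slope zero---consistent with the fact that exponential growth of $TV_r$ along a sequence of levels detects only hyperbolic volume, which is bounded here. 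Your fallback suggestions (a $0$-efficient normal-surface edge count scaling as $n/2$, or strict monotonicity $\mathfrak{c}\left(S^3\setminus K_{n+2}\right)\geqslant \mathfrak{c}\left(S^3\setminus K_n\right)+1$) are flagged by you as open, and indeed the paper states that no lower bounds at all are currently known for $\mathfrak{c}\left(S^3\setminus K_n\right)$; even its conditional bound $\frac{1}{2}\,\mathfrak{c}(L(2n+1,n))\leqslant \mathfrak{c}\left(S^3,K_n\right)$ concerns the pair complexity of Conjecture \ref{conj:matveev:pair}, not the complement, and rests on the unproved lens space complexity conjecture. In short: your diagnosis of where the difficulty lies is reasonable, but the proposal contains one provably dead-end step and no step that closes the gap, so the statement remains exactly as open as the paper leaves it.
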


\begin{remark}\label{rem:snappy}
The statement of Conjecture \ref{conj:matveev} holds at least for $3 \leq n \leq 17$, i.e.\ when the triangulation $X_n$ has up to $10$ tetrahedra. Indeed, Burton \cite{BB} has systematically listed all ideal manifolds of up to $9$ tetrahedra (and homeomorphisms between pairs thereof), which makes a search possible --- using certified volume computations under Sage  to ensure a manifold is \emph{not} in the list. The computations can be done on SnapPy, with code such as :\\
\\
\texttt{
In[1]: W = Manifold('5\^}\texttt{2\_1') \\
In[2]: W.dehn\_fill((1,6),0) \\
In[3]: W.identify() \\
Out[3]: [t00017(0,0), K8\_1(0,0), K14a12741(0,0)] \\
}
\\
\texttt{
sage: W = snappy.Manifold('5\_1\^}\texttt{2')\\
sage: W.dehn\_fill((1,8),0)\\
sage: W.volume(verified=True)\\
3.627534484691?\\
sage: L = snappy.OrientableCuspedCensus[3.6275:3.6276]\\
sage: len(L)\\
0}\\

The first block of the above code determines if the manifold obtained by Dehn filling with coefficient $(1,6)$ on the Whitehead link complement (i.e.\ the twist knot complement $S^3 \setminus K_{12}$, recall Figure \ref{fig:table:twist:knot}) lies in the current census. This manifold is identified as \texttt{K8\_1}, namely the first
knot complement in the census that is triangulated with $8$ tetrahedra.  Since it cannot be triangulated with fewer tetrahedra because of the completeness of the enumeration, its Matveev complexity is indeed equal to $\left \lfloor \frac{12+4}{2} \right  \rfloor = 8$ as expected in Conjecture \ref{conj:matveev}.  The same conclusion holds if we replace $12$ with any value in $\{3,...,15\}$.

For $n \in \{16, 17\}$, Corollary \ref{cor:complexity} yields the upper bound $10$ for the Matveev complexity of $S^3 \setminus K_n$, but as the second block of code shows (for $n=16$ and the $(1,8)$ Dehn filling coefficient), $S^3 \setminus K_n$ is not in the census of ideal manifolds with Matveev complexity $9$ or less.

For general $n$, no lower bounds for  $\mathfrak{c}\left (S^3\setminus K_n\right )$ have yet been found, to our knowledge.
\end{remark}

In the rest of this section, we present one last lead that gives credence to Conjecture \ref{conj:matveev}, via the notion of complexity of \textit{pairs}. 

As defined in \cite{PP}, the Matveev complexity $\mathfrak{c}\left (S^3, K_n\right )$ of the knot $K_n$ in $S^3$ is the minimal number of tetrahedra in a triangulation of $S^3$ where $K_n$ is the union of some quotient edges. Since H-triangulations (as defined in this article) are such triangulations, we deduce from Theorem \ref{thm:trig} the following corollary:

\begin{corollary}\label{cor:complexity:pair}
	Let $n\geqslant 2$. Then the Matveev complexity $\mathfrak{c}\left (S^3, K_n\right )$ of the $n$-th twist knot in $S^3$ satisfies:
	$$ \mathfrak{c}\left (S^3, K_n\right ) \leqslant \left \lfloor \frac{n+6}{2} \right  \rfloor.$$
\end{corollary}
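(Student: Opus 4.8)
The plan is to obtain this bound as an immediate formal consequence of Theorem \ref{thm:trig}, in exactly the same way that Corollary \ref{cor:complexity} follows. First I would recall the relevant definition: following \cite{PP}, the Matveev complexity $\mathfrak{c}(S^3,K_n)$ of the pair is the minimal number of tetrahedra among all triangulations of $S^3$ in which $K_n$ appears as a union of quotient edges. The key observation is that an H-triangulation, as defined in Section \ref{sec:prelim} of this paper, is precisely such a triangulation: it is a triangulation of $S^3$ by compact tetrahedra in which the knot $K_n$ is represented by a distinguished edge of $Y_n^1$. Hence every H-triangulation of $(S^3,K_n)$ is an admissible competitor in the minimization defining $\mathfrak{c}(S^3,K_n)$, and therefore the number of tetrahedra of any such H-triangulation is an upper bound for $\mathfrak{c}(S^3,K_n)$.

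With this in hand, I would simply invoke Theorem \ref{thm:trig}, which produces for every $n\geqslant 2$ an explicit H-triangulation $Y_n$ of $(S^3,K_n)$ with $\lfloor \frac{n+6}{2}\rfloor$ tetrahedra. Substituting $Y_n$ as a competitor yields
\[
\mathfrak{c}(S^3,K_n) \;\leqslant\; \#\{\text{tetrahedra of } Y_n\} \;=\; \left\lfloor \frac{n+6}{2} \right\rfloor,
\]
which is exactly the claimed inequality. The case distinction between odd $n\geqslant 3$ and even $n\geqslant 2$ in Theorem \ref{thm:trig} does not affect the argument, since in both parities the stated tetrahedron count for $Y_n$ is $\lfloor \frac{n+6}{2}\rfloor$; one only needs to note that the single remaining case $n=2$ even is already covered by the even branch of Theorem \ref{thm:trig}.

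There is essentially no analytic or combinatorial obstacle here, since all the substantive work has been discharged in the construction underlying Theorem \ref{thm:trig}. The only point meriting a word of care is a definitional check: one must confirm that the distinguished edge representing $K_n$ in the H-triangulation $Y_n$ genuinely realizes $K_n$ as a union of quotient edges in the sense of \cite{PP}, rather than merely as an abstract edge class. This is exactly the content of the H-triangulation definition (the distinguished edge of $Y^1$ represents the knot $K$ up to ambient isotopy in the closed manifold $M=M_Y$), so the matching of the two notions is immediate. Thus the corollary is a direct corollary of Theorem \ref{thm:trig} together with the inclusion of H-triangulations into the class of pair-triangulations of $(S^3,K_n)$.
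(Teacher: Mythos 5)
Your proposal is correct and matches the paper's argument exactly: the paper likewise observes that an H-triangulation is by definition a triangulation of $S^3$ in which $K_n$ is a union of quotient edges in the sense of \cite{PP}, so the $\left\lfloor \frac{n+6}{2}\right\rfloor$-tetrahedron H-triangulation $Y_n$ from Theorem \ref{thm:trig} immediately gives the bound. Nothing further is needed.
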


 {The upper bound of $\left \lfloor \frac{n+6}{2} \right  \rfloor$ for the knots $K_n$ in Corollary \ref{cor:complexity:pair} is better than the upper bound of $4n+10$ in \cite[Propostion 5.1]{PP}.}
For these same knots $K_n$, the best lower bound to date seems to be linear in $\log(n)$, see  \cite[Theorem 5.4]{PP}. Still, we offer the following conjecture:

\begin{conj}\label{conj:matveev:pair}
	Let $n\geqslant 3$. Then the Matveev complexity $\mathfrak{c}\left (S^3, K_n\right )$ of the $n$-th twist knot in $S^3$ satisfies:
	$$ \mathfrak{c}\left (S^3, K_n\right ) = \left \lfloor \frac{n+6}{2} \right  \rfloor.$$
\end{conj}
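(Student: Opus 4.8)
The upper bound $\mathfrak{c}(S^3,K_n) \leqslant \lfloor \frac{n+6}{2}\rfloor$ is already Corollary \ref{cor:complexity:pair}, so the entire content of the statement is the matching lower bound
$$\mathfrak{c}(S^3,K_n) \geqslant \left\lfloor \frac{n+6}{2}\right\rfloor.$$
Volume-based lower bounds are useless here, since the complements have uniformly bounded volume (converging to that of the Whitehead link), so they cannot grow with $n$. The plan is instead to extract a quantity that is simultaneously a complexity lower bound and grows linearly in $n$; the natural candidate is the Matveev complexity of the double branched cover.

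First I would use that $K_n$ is the two-bridge knot $\mathfrak{b}(2n+1,2)$ (up to mirror image), so that its double branched cover $\Sigma_2(K_n)$ is the lens space $L(2n+1,2)$. Given a minimal triangulation $\mathcal{T}$ of the pair $(S^3,K_n)$, i.e. with $N=\mathfrak{c}(S^3,K_n)$ tetrahedra and $K_n$ a union of edges, passing to the cover branched along $K_n$ produces a genuine (singular) triangulation of $L(2n+1,2)$: each tetrahedron disjoint from $K_n$ lifts to two copies, and a tetrahedron having an edge on $K_n$ also lifts to two copies, the transverse branching $w\mapsto w^2$ ensuring that the link of the lifted branch edge is again a circle, so that the total space is a manifold. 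This yields exactly $2N$ tetrahedra, whence
$$\mathfrak{c}(L(2n+1,2)) \leqslant 2\,\mathfrak{c}(S^3,K_n).$$

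Next I would invoke the exact value of the Matveev complexity of lens spaces. With the continued-fraction expansion $\frac{2n+1}{2}=[n,2]$, the sum of partial quotients is $n+2$, and the known formula $\mathfrak{c}(L(p,q))=S(p,q)-3$ (whose lower bound rests on the deep minimal-triangulation results for lens spaces) gives $\mathfrak{c}(L(2n+1,2))=n-1$. Combining the two displays yields
$$\mathfrak{c}(S^3,K_n) \geqslant \frac{n-1}{2},$$
which already has the conjectured slope $\tfrac12$.

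The hard part is the additive constant. The branched-cover count is lossy, and the crude inequality falls short of $\lfloor\frac{n+6}{2}\rfloor$ by roughly $\tfrac72$ tetrahedra; recovering the exact value would require improving a complexity lower bound by an additive constant, which is typically as delicate as the original problem. To close the gap I would try to sharpen the doubling inequality using the rigidity of the H-triangulation (one vertex, and $K_n$ an edge with a single preimage), arguing that an economical triangulation of the pair is forced to carry several tetrahedra near $K_n$ whose lifts do not merge efficiently; alternatively, one could seek a lower bound adapted directly to the pair — via Turaev–Viro type estimates, or normal-surface and spine arguments keyed to the two-bridge structure — that avoids the lossy passage to the cover altogether. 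I expect this constant-order refinement, rather than the linear growth itself, to be the genuine obstacle, which is precisely why the statement remains conjectural.
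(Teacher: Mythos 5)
The statement you were asked about is Conjecture \ref{conj:matveev:pair}: the paper does not prove it, and neither do you --- your proposal is explicit that it cannot close the gap, so there is no complete proof on either side to compare. What you \emph{have} written is essentially the paper's own evidence for the conjecture, reorganised. The passage to the double branched cover with the count $\mathfrak{c}(\Sigma_2(K_n)) \leqslant 2\,\mathfrak{c}(S^3,K_n)$ is exactly the inequality $\tfrac{1}{2}\mathfrak{c}(M_n)\leqslant \mathfrak{c}(S^3,K_n)$ that the paper quotes from Pervova--Petronio, with $M_n = L(2n+1,n)\cong L(2n+1,2)$, and the resulting bound $\mathfrak{c}(S^3,K_n)\geqslant \lceil\frac{n-1}{2}\rceil$ is precisely the lower half of the double bound the paper records as motivation. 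So the route is the same, not a genuinely different one.

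There is one substantive inaccuracy you should fix: you invoke ``the known formula $\mathfrak{c}(L(p,q))=S(p,q)-3$'' as if its lower-bound half were established. It is not --- the paper states explicitly that the Matveev complexity of these lens spaces ``is not yet known but conjectured to be $n-1$'' via the general lens space complexity conjecture. The upper bound $\mathfrak{c}(L(p,q))\leqslant S(p,q)-3$ is known, but the matching lower bound is open outside of special families, and $L(2n+1,2)$ is not among the cases settled in the literature. Consequently even your ``slope $\tfrac12$'' inequality $\mathfrak{c}(S^3,K_n)\geqslant \frac{n-1}{2}$ is conditional on an open conjecture, not unconditional as your write-up suggests. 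Your diagnosis of the remaining difficulty (the additive constant lost in the branched-cover count) is reasonable and matches the paper's framing, but it sits on top of a step that is itself conjectural; a correct account must flag both obstructions.
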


If true, Conjecture \ref{conj:matveev:pair} would  {have the surprising consequence} that the H-triangulation $Y_n$ of cardinality $\left \lfloor \frac{n+6}{2} \right  \rfloor$ would be minimal although it has the double restriction that the knot $K_n$ lies in only one edge of the triangulation of $S^3$ and that $Y_n$ admits a vertex ordering.

Conjectures \ref{conj:matveev} and \ref{conj:matveev:pair} are equivalent if and only if the following question admits a positive answer:

\begin{question}\label{quest:matveev}
	Let $n\geqslant 2$. Do the respective Matveev complexities of the $n$-th twist knot complement  and of the $n$-th twist knot in $S^3$  {always} differ by $1$, i.e.\  
	 do we always have
	$$ \mathfrak{c}\left (S^3, K_n\right ) = \mathfrak{c}\left (S^3\setminus K_n\right )+1 \ \ ?$$
\end{question}

Question \ref{quest:matveev} looks far from easy to solve, though. On one hand, it is not clear that the minimal triangulation for the pair $(S^3,K_n)$ can always yield an ideal triangulation for $S^3\setminus K_n$ by collapsing exactly one tetrahedron (which is the case for $X_n$ and $Y_n$ as we will see in the following section). On the other hand, it is not clear that one can always construct an H-triangulation of  $(S^3,K_n)$ from an ideal triangulation of $S^3\setminus K_n$ by adding only one tetrahedron. 

The previously mentioned lower bound linear in $\log(n)$ for $\mathfrak{c}\left (S^3, K_n\right )$ comes from the general property that
 $$\frac{1}{2} \mathfrak{c}(M_n) \leqslant \mathfrak{c}\left (S^3, K_n\right )$$
  where $M_n$ is the double branched cover of $(S^3,K_n)$ \cite[Proposition 5.2]{PP}. Here $M_n$ happens to be the lens space $L(2n+1,n)$ (see for example \cite[Section 12]{BZH}), whose Matveev complexity is not yet known but conjectured to be $n-1$ through a general conjecture on the complexity of lens spaces \cite[Section 2.3.3 page 77]{Ma}.
However, the  current best lower bounds for complexities of lens spaces such as $L(2n+1,n)$ are linear in $\log(n)$, as a consequence of \cite[Section 5.2]{PP}.

 Hence, if the lens space complexity conjecture holds, then we would have from Corollary \ref{cor:complexity:pair} the double bound
 $$ \left \lceil \dfrac{n-1}{2} \right  \rceil \leqslant  \mathfrak{c}\left (S^3, K_n\right ) \leqslant \left \lfloor \frac{n+6}{2} \right  \rfloor,$$
 which would imply that $\mathfrak{c}\left (S^3, K_n\right )$ can only take  four possible values. All this makes Conjecture \ref{conj:matveev:pair} sound more plausible, and Conjecture \ref{conj:matveev} as well by extension.

\subsection{Construction for odd twist knots}\label{sub:trig:odd}

We first consider a general twist knot $K_n$ for $n\geqslant 3$, $n$ odd. We will construct an H-triangulation of $(S^3,K_n)$ and an ideal triangulation of $S^3 \setminus K_n$ starting from a knot diagram of $K_n$. The method dates back to Thurston \cite{Th} and was also described in more detail in \cite{KLV, Me}.

%knot diagram
\begin{figure}[!h]
\centering
\begin{tikzpicture}[every path/.style={string ,black} , every node/.style={transform shape , knot crossing , inner sep=1.5 pt } ]

\begin{scope}[scale=0.7]

\begin{scope}[dashed,decoration={
    markings,
    mark=at position 0.5 with {\arrow{>}}}
    ] 
    \draw[postaction={decorate}] (-2,-2)--(-4,-4);
    \draw[postaction={decorate}] (-7,1)--(-4,-4);
    \draw[postaction={decorate}] (-2,-2)--(-4,0);
    \draw[postaction={decorate}] (-7,1)--(-4,0);
    \draw[postaction={decorate}] (-7,1)--(-2,6);
    \draw[postaction={decorate}] (-2,2)--(-4,0);
    \draw[postaction={decorate}] (-2,2)--(-2,6);
    \draw[postaction={decorate}] (6,3)--(2,2);
    \draw[postaction={decorate}] (4,0)--(2,2);
    \draw[postaction={decorate}] (6,3)--(7,0);
    \draw[postaction={decorate}] (4,0)--(7,0);
    \draw[postaction={decorate}] (3,-4)--(7,0);
    \draw[postaction={decorate}] (3,-4)--(2,-2);
    \draw[postaction={decorate}] (4,0)--(2,-2);
\end{scope}

\begin{scope}[dashed,decoration={
    markings,
    mark=at position 0.5 with {\arrow{>>}}}
    ] 
    \draw[postaction={decorate}] (-2,2)--(2,2);
    \draw[postaction={decorate}] (-2,2)--(-1,4);
    \draw[postaction={decorate}] (1,4)--(2,2);
    \draw[postaction={decorate}] (1,4)--(-1,4);
    \draw[postaction={decorate}] (1,4)--(3,6);
    \draw[postaction={decorate}] (-2,6)--(-1,4);
    \draw[postaction={decorate}] (-2,6)--(3,6);
\end{scope}

\draw[style=dashed] (1,4) -- (6,3);

\begin{scope}[xshift=3.5cm, yshift=3.5cm, rotate=-100, scale=0.2]
\draw (1,0) -- (0,1) -- (-1,0) -- (1,0);
\end{scope}

\draw[color=blue, line width=0.5mm] (3,6) -- (6,3); % Par défaut c'est 0.4mm
\draw[color=blue, line width=0.5mm] (2,2) -- (7,0);
\draw[color=blue, line width=0.5mm] (4,0) -- (3,-4);
\draw[color=blue, line width=0.5mm] (-7,1) -- (-2,-2);
\draw[color=blue, line width=0.5mm] (-4,0) -- (-2,6);
\draw[color=blue, line width=0.5mm] (3,6) -- (-1,4);
\draw[color=blue, line width=0.5mm] (-2,2) -- (1,4);

\draw[color=blue, line width=0.5mm] (2,-2) -- (3.3,-1.5);
\draw[color=blue, line width=0.5mm] (4,-1.25) -- (7,0);

\draw[color=blue, line width=0.5mm] (4,0) -- (4.5,0.7);
\draw[color=blue, line width=0.5mm] (4.8,1.2) -- (6,3);

\draw[color=blue, line width=0.5mm] (-1,4) -- (-0.2,3.5);
\draw[color=blue, line width=0.5mm] (0.2,3.2) -- (2,2);

\draw[color=blue, line width=0.5mm] (1,4) -- (0.3,4.45);
\draw[color=blue, line width=0.5mm] (-0.1,4.7) -- (-2,6);

\draw[color=blue, line width=0.5mm] (-7,1) -- (-3.7,1.6);
\draw[color=blue, line width=0.5mm] (-3.1,1.75) -- (-2,2);

\draw[color=blue, line width=0.5mm] (-4,0) -- (-4,-0.6);
\draw[color=blue, line width=0.5mm] (-4,-1.1) -- (-4,-4);

\draw[scale=4,color=blue] (0,-3/4) node {$\ldots$};

\draw[scale=2] (0,0) node {$D$};
\draw[scale=2] (3/2,4.5/2) node {$m$};
\draw[scale=2] (2.5/2,3/2) node {$r$};
\draw[scale=2] (-1.5/2,4/2) node {$s$};
\draw[scale=2] (-6/2,4/2) node {$E$};

\end{scope}
\end{tikzpicture}
\caption{Building an H-triangulation from a diagram of $K_n$} \label{fig:diagram:htriang}
\end{figure}
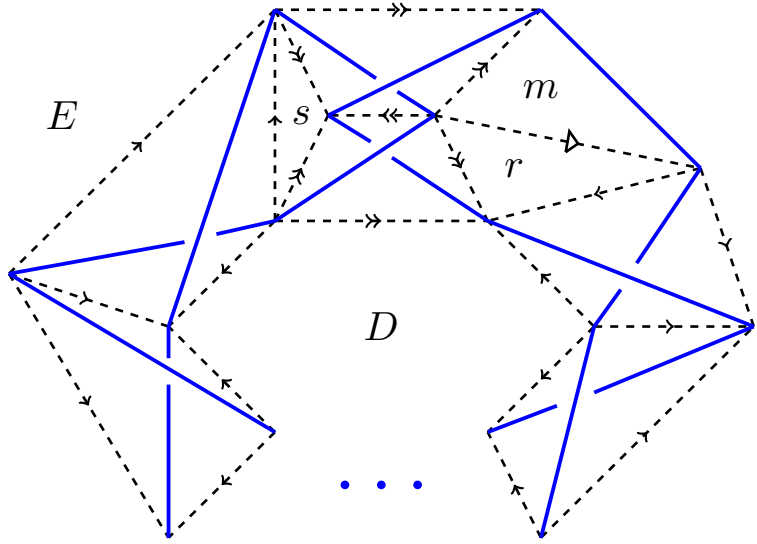

For the first step, as in Figure \ref{fig:diagram:htriang}, we choose a middle point for each arc of the diagram, except for one arc where we choose two (the  upper right one on the figure), and we draw quadrilaterals around the crossings with the chosen points as vertices (in dotted lines in Figure \ref{fig:diagram:htriang}).

 {
Observe that dotted edges in the same quadrilateral are isotopic through $(S^3,K_n)$.
We consider the equivalence relation on dotted edges generated by ``being part of the same quadrilateral'', and we choose an arrow type and an orientation for each equivalence class. In Figure \ref{fig:diagram:htriang} there are two such classes, one with a simple arrow and one with a double arrow. The arrows on the dotted edges are oriented so that isotopy through $(S^3,K_n)$ preserves the oriented labels, which makes
the directions keep alternating when one goes around any quadrilateral.}

There remains one quadrilateral with three dotted edges and one edge from the knot $K_n$. We cut this one into two triangles $m$ and $r$, introducing a third arrow type, the ``white triangle'' one (see Figure \ref{fig:diagram:htriang}).

Here $m,r,s,D,E$ are the polygonal $2$-cells that  {(together with the quadrilaterals)} decompose the equatorial plane around the knot; note that $m,r,s$ are triangles, $D$ is an $(n+1)$-gon and $E$ is an $(n+2)$-gon.

% bords des boules cote à cote
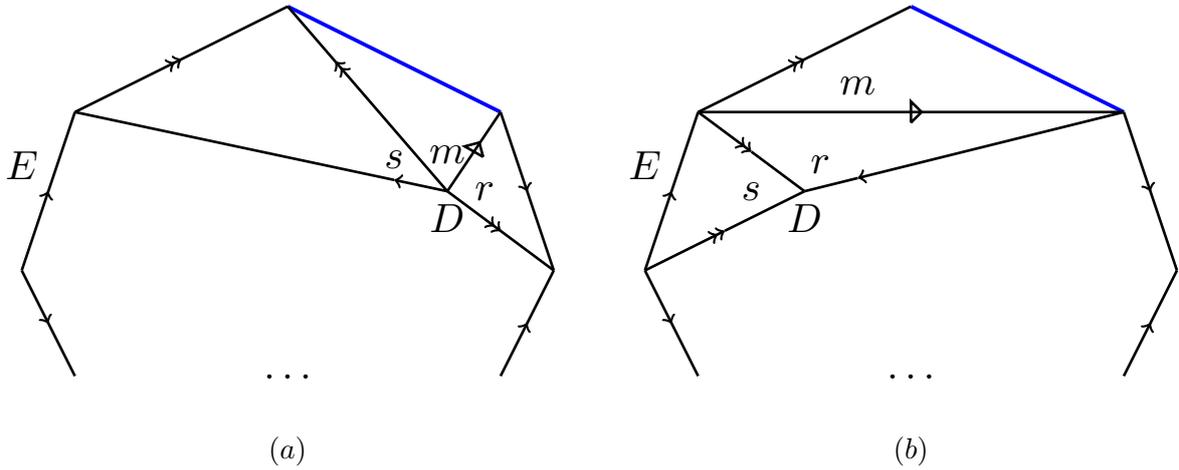
\begin{figure}[!h]

\centering
\begin{tikzpicture}[every path/.style={string ,black} , every node/.style={transform shape , knot crossing , inner sep=1.5 pt } ]

\draw[scale=1] (0,-1) node {$(a)$};
\draw[scale=1] (8.2,-1) node {$(b)$};

%%%%%%%%%%%%%%%%%%%%%%

%boule du haut
\begin{scope}[scale=0.7]

%bord
\begin{scope}[decoration={
    markings,
    mark=at position 0.5 with {\arrow{>}}}
    ] 
    \draw[postaction={decorate}] (-5,2)--(-4,0);
    \draw[postaction={decorate}] (-5,2)--(-4,5);
    \draw[postaction={decorate}] (4,5) -- (5,2);
    \draw[postaction={decorate}] (4,0) -- (5,2);
\end{scope}

\begin{scope}[decoration={
    markings,
    mark=at position 0.5 with {\arrow{>>}}}
    ] 
    \draw[postaction={decorate}] (-4,5)--(0,7);
\end{scope}

\draw[color=blue, line width=0.5mm] (0,7) -- (4,5);

%dots
\draw[scale=2] (0,0) node {$\ldots$};

%pink
\draw[->] (3,3.5) -- (2,3.5 +1.5/7);
\draw (2,3.5 +1.5/7) -- (-4,5);

\begin{scope}[xshift=3.5cm, yshift=4.25cm, rotate=-30, scale=0.2]
\draw (1,0) -- (0,1) -- (-1,0) -- (1,0);
\end{scope}
\draw (3,3.5) -- (4,5);

\begin{scope}[decoration={
    markings,
    mark=at position 0.5 with {\arrow{>>}}}
    ] 
    \draw[postaction={decorate}] (3,3.5) -- (5,2);
\end{scope}

\begin{scope}[decoration={
    markings,
    mark=at position 0.7 with {\arrow{>>}}}
    ] 
    \draw[postaction={decorate}] (3,3.5) -- (0,7);
\end{scope}

\draw[scale=2] (3/2,3/2) node {$D$};
\draw[scale=2] (3/2,4.2/2) node {$m$};
\draw[scale=2] (2/2,4.1/2) node {$s$};
\draw[scale=2] (3.7/2,3.5/2) node {$r$};

\draw[scale=2] (-5/2,4/2) node {$E$};

\end{scope}

%%%%%%%%%

%boule du bas
\begin{scope}[xshift=8.2cm,scale=0.7]

%bord
\begin{scope}[decoration={
    markings,
    mark=at position 0.5 with {\arrow{>}}}
    ] 
    \draw[postaction={decorate}] (-5,2)--(-4,0);
    \draw[postaction={decorate}] (-5,2)--(-4,5);
    \draw[postaction={decorate}] (4,5) -- (5,2);
    \draw[postaction={decorate}] (4,0) -- (5,2);
\end{scope}

\begin{scope}[decoration={
    markings,
    mark=at position 0.5 with {\arrow{>>}}}
    ] 
    \draw[postaction={decorate}] (-4,5)--(0,7);
\end{scope}

\draw[color=blue, line width=0.5mm] (0,7) -- (4,5);

%dots
\draw[scale=2] (0,0) node {$\ldots$};

%white
\begin{scope}[xshift=0cm, yshift=5cm, rotate=-90, scale=0.2]
\draw (1,0) -- (0,1) -- (-1,0) -- (1,0);
\end{scope}
\draw (-4,5) -- (4,5);

\draw[->>] (-5,2) -- (-3.5,2.75);
\draw (-3.5,2.75) -- (-2,3.5);

\draw[->>] (-4,5) -- (-3,4.25);
\draw (-3,4.25) -- (-2,3.5);

\draw[->] (4,5) -- (-1,5-1.5*5/6);
\draw (-1,5-1.5*5/6) -- (-2,3.5);
\draw[scale=2] (-2/2,3/2) node {$D$};
\draw[scale=2] (-1/2,5.5/2) node {$m$};
\draw[scale=2] (-3/2,3.5/2) node {$s$};
\draw[scale=2] (-1.7/2,4/2) node {$r$};

\draw[scale=2] (-5/2,4/2) node {$E$};

\end{scope}
\end{tikzpicture}
\caption{Boundaries of $B_+$ and $B_-$}\label{fig:boundary:balls}
\end{figure}

In Figure \ref{fig:diagram:htriang} we can see that around each crossing of the diagram, there are six edges (two in blue from the knot, four dotted with arrows) that  {define} an embedded tetrahedron. We will now collapse each of these tetrahedra into one segment, so that each of the two ``knot edges''  {is} collapsed to an extremal point of the segment and all four dotted edges fuse into a single one, with natural orientation.
The homeomorphism type of $(S^3,K_n)$ does not change if we collapse every tetrahedron in such a way, and that is what we do next.

After such a collapse, the ambient space (that we will call again $S^3$) decomposes as one $0$-cell (the collapsed point), four edges (simple arrow, double arrow, arrow with a triangle and blue edge coming from $K_n$), five polygonal $2$-cells still denoted $m,r,s,D,E$, and two $3$-balls $B_+$ and $B_-$, respectively from  {above} and below the figure. The boundaries of $B_+$ and $B_-$ are given in Figure \ref{fig:boundary:balls}. Note that the boundary of $B_+$ is obtained from Figure \ref{fig:diagram:htriang} by collapsing the upper strands of $K_n$, and $B_+$ is implicitly residing above Figure \ref{fig:boundary:balls} (a). Similarly, $B_-$ resides  {below} Figure \ref{fig:boundary:balls} (b). Note that the boundary of $D$, read clockwise, is the sequence of $n+1$ arrows $\twoheadrightarrow, \leftarrow, \rightarrow, \ldots, \leftarrow$ with the simple arrows alternating directions.

% deux boules réunies
\begin{figure}[!h]
\centering
\begin{tikzpicture}[every path/.style={string ,black} , every node/.style={transform shape , knot crossing , inner sep=1.5 pt } ]

\draw[scale=1] (0,-2) node {$(a)$};
\draw[scale=1] (8,-2) node {$(b)$};

%%%%%%%%%%%%%%%%%%%%%%%%%
% 2 boules réunies

\begin{scope}[scale=0.7]

%bord
\begin{scope}[decoration={
    markings,
    mark=at position 0.5 with {\arrow{>}}}
    ] 
    \draw[postaction={decorate}] (-5,2)--(-4,0);
    \draw[postaction={decorate}] (-5,2)--(-4,5);
    \draw[postaction={decorate}] (4,5) -- (5,2);
    \draw[postaction={decorate}] (4,0) -- (5,2);
\end{scope}

\begin{scope}[decoration={
    markings,
    mark=at position 0.5 with {\arrow{>>}}}
    ] 
    \draw[postaction={decorate}] (-4,5)--(0,7);
\end{scope}

\draw[color=blue, line width=0.5mm] (0,7) -- (4,5);

%dots
\draw[scale=2] (0,0) node {$\ldots$};

%white
\begin{scope}[xshift=0cm, yshift=5cm, rotate=-90, scale=0.2]
\draw (1,0) -- (0,1) -- (-1,0) -- (1,0);
\end{scope}
\draw (-4,5) -- (4,5);

\draw[->>] (-5,2) -- (-3.5,2.75);
\draw (-3.5,2.75) -- (-2,3.5);

\draw[->>] (-4,5) -- (-3,4.25);
\draw (-3,4.25) -- (-2,3.5);

\draw[->] (4,5) -- (-1,5-1.5*5/6);
\draw (-1,5-1.5*5/6) -- (-2,3.5);
\draw[scale=2] (-2/2,3/2) node {$D$};
\draw[scale=2] (-1/2,5.5/2) node {$m$};
\draw[scale=2] (-3/2,3.5/2) node {$s$};
\draw[scale=2] (-1.7/2,4/2) node {$r$};

%pink

\begin{scope}[xshift=3.5cm, yshift=4.25cm, rotate=-30, scale=0.2]
\draw[color=red] (1,0) -- (0,1) -- (-1,0) -- (1,0);
\end{scope}

\begin{scope}[style=dashed]

\draw[color=red][->] (3,3.5) -- (2,3.5 +1.5/7);
\draw[color=red] (2,3.5 +1.5/7) -- (-4,5);

\draw[color=red] (3,3.5) -- (4,5);

\begin{scope}[decoration={
    markings,
    mark=at position 0.5 with {\arrow{>>}}}
    ] 
    \draw[postaction={decorate}][color=red] (3,3.5) -- (5,2);
\end{scope}

\begin{scope}[decoration={
    markings,
    mark=at position 0.7 with {\arrow{>>}}}
    ] 
    \draw[postaction={decorate}][color=red] (3,3.5) -- (0,7);
\end{scope}

\draw[scale=2][color=red] (3/2,3/2) node {$D$};
\draw[scale=2][color=red] (3/2,4.2/2) node {$m$};
\draw[scale=2][color=red] (2/2,4.1/2) node {$s$};
\draw[scale=2][color=red] (3.7/2,3.5/2) node {$r$};

\end{scope}

\end{scope}

%%%%%%%%%%%%%%%%%%%%%%%%%%%%%%%
%H-triangulation
\begin{scope}[xshift=8cm,yshift=-1.5cm,scale=0.5]

\begin{scope}[decoration={
    markings,
    mark=at position 0.5 with {\arrow{>}}}
    ] 
    \draw[postaction={decorate}] (0,4)--(0,0);
    \draw[postaction={decorate}] (0,4)--(0,6);
    \draw[postaction={decorate}] (0,10) -- (0,12);
    \draw[postaction={decorate}] (-8,16) -- (0,12);
    \draw[postaction={decorate},color=red] (4,14) -- (0,0);
    \draw[postaction={decorate}] (-8,16) -- (-1,5);
\end{scope}

\begin{scope}[decoration={
    markings,
    mark=at position 0.5 with {\arrow{>>}}}
    ] 
    \draw[postaction={decorate}] (0,4)--(-1,5);
    \draw[postaction={decorate}] (0,0)--(-1,5);
    \draw[postaction={decorate},color=red] (4,14) -- (8,16);
    \draw[postaction={decorate}] (0,0) -- (8,16);
    \draw[postaction={decorate},color=red] (4,14) -- (0,12);
\end{scope}

\draw (0,0) -- (-8,16);
\begin{scope}[xshift=-4cm, yshift=8cm, rotate=30, scale=0.2]
\draw (1,0) -- (0,1) -- (-1,0) -- (1,0);
\end{scope}

\draw[color=red] (4,14) -- (-8,16);
\begin{scope}[xshift=-2cm, yshift=15cm, rotate=75, scale=0.2]
\draw[color=red] (1,0) -- (0,1) -- (-1,0) -- (1,0);
\end{scope}

\draw[color=blue, line width=0.5mm] (-8,16) -- (8,16);

%dots
\draw[scale=2] (0,8/2) node {$\vdots$};

\draw[scale=2, color=red] (4/2,15/2) node {$m$};
\draw[scale=2, color=red] (2.8/2,13.8/2) node {$r$};
\draw[scale=2, color=red] (5/2,14/2) node {$s$};
\draw[scale=2, color=red] (3/2,12.5/2) node {$D$};

\draw[scale=2] (-4/2,5/2) node {$m$};
\draw[scale=2] (-1.5/2,4.5/2) node {$r$};
\draw[scale=2] (-0.3/2,3.5/2) node {$s$};
\draw[scale=2] (-0.8/2,6/2) node {$D$};

\end{scope}

\end{tikzpicture}
\caption{A cellular decomposition of $(S^3,K_n)$ as a polyhedron glued to itself}\label{fig:Htriang:polyhedron}
\end{figure}
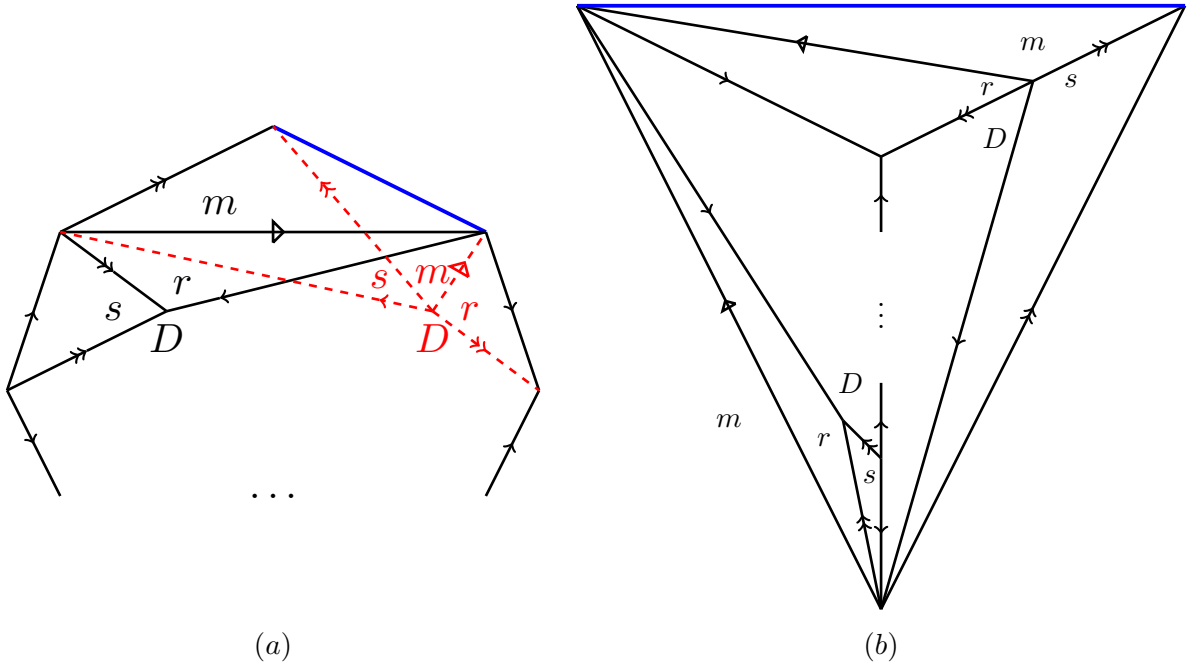

 {By gluing the balls $B_+$ and $B_-$ along the face $E$, we obtain a new ball whose face pairings still describe $S^3$ (see Figure \ref{fig:Htriang:polyhedron} (a)).}
Indeed, since $B_-$ is behind Figure \ref{fig:boundary:balls} (b) and $B_+$ in front of Figure \ref{fig:boundary:balls} (a), we can picture the gluing along $E$ in the following way, from front to back:
\begin{itemize}
\item the faces $D,m,r,s$ of $B_-$,
\item the $3$-cell $B_-$,
\item the face $E$ of $B_-$,
\item the face $E$ of $B_+$,
\item the $3$-cell $B_+$,
\item the faces $D,m,r,s$ of $B_+$.
\end{itemize}

Note that in Figure \ref{fig:Htriang:polyhedron} (a) the red dashed faces lie on the back of the figure, and the only $3$-cell now lives inside the polyhedron. Finally we can rotate this polyhedron and obtain the cellular decomposition of $S^3$ in Figure \ref{fig:Htriang:polyhedron} (b), where one  face $m$ is in the back and the seven other faces lie in front.

%bigon trick
\begin{figure}[!h]
\centering
\begin{tikzpicture}
[every path/.style={string ,black}]

%1
\begin{scope}[xshift=0cm,yshift=0cm,scale=1]

\draw[scale=1] (0,-1.5) node {\large $(a)$};

\draw (0,0) node[shape=circle,fill=black,scale=0.3] {};
\draw (0,1) node[shape=circle,fill=black,scale=0.3] {};
\draw (0,2) node[shape=circle,fill=black,scale=0.3] {};
\draw (0,3) node[shape=circle,fill=black,scale=0.3] {};
\draw (0,4) node[shape=circle,fill=black,scale=0.3] {};

\begin{scope}[decoration={markings,mark=at position 0.5 with {\arrow{>}}}] 
    \draw[postaction={decorate}] (0,0)--(0,1);
    \draw[postaction={decorate}] (0,2)--(0,1);
    \draw[postaction={decorate}] (0,2)--(0,3);
    \draw[postaction={decorate}] (0,4)--(0,3);   
\end{scope}

\draw[->,>=latex] (0,0) .. controls +(-0.5,0) and +(0,-0.5) .. (-1,1);
\draw (-1,1) .. controls +(0,0.5) and +(-0.5,0) .. (0,2);

\draw[scale=1] (-0.5,1) node {$u$};
\draw[scale=1] (1,2) node {$F$};
\draw[scale=1] (-1,3) node {$F$};
\draw[scale=1] (0,-0.5) node {$\vdots$};
\draw[scale=1] (0,4.5) node {$\vdots$};

\end{scope}

%2
\begin{scope}[xshift=4.5cm,yshift=0cm,scale=1]

\draw[scale=1] (0,-1.5) node {\large $(b)$};

\draw (0,0) node[shape=circle,fill=black,scale=0.3] {};
\draw (0,1) node[shape=circle,fill=black,scale=0.3] {};
\draw (0,2) node[shape=circle,fill=black,scale=0.3] {};
\draw (0,3) node[shape=circle,fill=black,scale=0.3] {};
\draw (0,4) node[shape=circle,fill=black,scale=0.3] {};

\begin{scope}[decoration={markings,mark=at position 0.5 with {\arrow{>}}}] 
    \draw[postaction={decorate}] (0,0)--(0,1);
    \draw[postaction={decorate}] (0,2)--(0,1);
    \draw[postaction={decorate}] (0,2)--(0,3);
    \draw[postaction={decorate}] (0,4)--(0,3);   
\end{scope}

\draw[->,>=latex] (0,0) .. controls +(-0.5,0) and +(0,-0.5) .. (-1,1);
\draw (-1,1) .. controls +(0,0.5) and +(-0.5,0) .. (0,2);

\draw[->>,>=latex] (0,2) .. controls +(-0.5,0) and +(0,-0.5) .. (-1,3);
\draw (-1,3) .. controls +(0,0.5) and +(-0.5,0) .. (0,4);

\draw[->>,>=latex] (0,0) .. controls +(0.5,0) and +(0,-0.5) .. (1,1);
\draw (1,1) .. controls +(0,0.5) and +(0.5,0) .. (0,2);

\draw[scale=1] (-0.5,1) node {$u$};
\draw[scale=1] (-0.5,3) node {$v$};
\draw[scale=1] (0.5,1) node {$v$};
\draw[scale=1] (1,2) node {$F'$};
\draw[scale=1] (-1,2) node {$F'$};
\draw[scale=1] (0,-0.5) node {$\vdots$};
\draw[scale=1] (0,4.5) node {$\vdots$};

\end{scope}

%3
\begin{scope}[xshift=9cm,yshift=0cm,scale=1]

\draw[scale=1] (2,-1.5) node {\large $(c)$};

\draw (0,0) node[shape=circle,fill=black,scale=0.3] {};
\draw (0,2) node[shape=circle,fill=black,scale=0.3] {};
\draw (0,3) node[shape=circle,fill=black,scale=0.3] {};
\draw (0,4) node[shape=circle,fill=black,scale=0.3] {};

\begin{scope}[decoration={markings,mark=at position 0.5 with {\arrow{>}}}] 
    \draw[postaction={decorate}] (0,2)--(0,3);
    \draw[postaction={decorate}] (0,4)--(0,3);   
\end{scope}

\draw[->,>=latex] (0,0) .. controls +(-0.5,0) and +(0,-0.5) .. (-1,1);
\draw (-1,1) .. controls +(0,0.5) and +(-0.5,0) .. (0,2);

\draw[->>,>=latex] (0,2) .. controls +(-0.5,0) and +(0,-0.5) .. (-1,3);
\draw (-1,3) .. controls +(0,0.5) and +(-0.5,0) .. (0,4);

\draw[->>,>=latex] (0,0) .. controls +(0.5,0) and +(0,-0.5) .. (1,1);
\draw (1,1) .. controls +(0,0.5) and +(0.5,0) .. (0,2);

\draw[scale=1] (0,1) node {$w$};
\draw[scale=1] (-0.5,3) node {$v$};
\draw[scale=1] (1,2) node {$F'$};
\draw[scale=1] (-1,2) node {$F'$};
\draw[scale=1] (0,-0.5) node {$\vdots$};
\draw[scale=1] (0,4.5) node {$\vdots$};

\draw[scale=1] (2,2) node {$\sqcup$};

\begin{scope}[xshift=4cm,yshift=1cm]
\draw[->,>=latex] (0,0) .. controls +(-0.5,0) and +(0,-0.5) .. (-1,1);
\draw (-1,1) .. controls +(0,0.5) and +(-0.5,0) .. (0,2);

\draw (0,0) node[shape=circle,fill=black,scale=0.3] {};
\draw (0,1) node[shape=circle,fill=black,scale=0.3] {};
\draw (0,2) node[shape=circle,fill=black,scale=0.3] {};

\begin{scope}[decoration={markings,mark=at position 0.5 with {\arrow{>}}}] 
    \draw[postaction={decorate}] (0,0)--(0,1);
    \draw[postaction={decorate}] (0,2)--(0,1);
\end{scope}

\draw[->>,>=latex] (0,0) .. controls +(0.5,0) and +(0,-0.5) .. (1,1);
\draw (1,1) .. controls +(0,0.5) and +(0.5,0) .. (0,2);

\draw[scale=1] (-0.5,1) node {$u$};
\draw[scale=1] (0.5,1) node {$v$};
\draw[scale=1] (0,2.3) node {$w$};

\end{scope}

\end{scope}

%4
\begin{scope}[xshift=0cm,yshift=-7cm,scale=1]

\draw[scale=1] (2,-1.5) node {\large $(d)$};

\draw (0,0) node[shape=circle,fill=black,scale=0.3] {};
\draw (0,2) node[shape=circle,fill=black,scale=0.3] {};
\draw (0,3) node[shape=circle,fill=black,scale=0.3] {};
\draw (0,4) node[shape=circle,fill=black,scale=0.3] {};

\begin{scope}[decoration={markings,mark=at position 0.5 with {\arrow{>}}}] 
    \draw[postaction={decorate}] (0,2)--(0,3);
    \draw[postaction={decorate}] (0,4)--(0,3);   
\end{scope}

\draw[->,>=latex] (0,0) .. controls +(-0.5,0) and +(0,-0.5) .. (-1,1);
\draw (-1,1) .. controls +(0,0.5) and +(-0.5,0) .. (0,2);

\draw[->>,>=latex] (0,2) .. controls +(-0.5,0) and +(0,-0.5) .. (-1,3);
\draw (-1,3) .. controls +(0,0.5) and +(-0.5,0) .. (0,4);

\draw[->>,>=latex] (0,0) .. controls +(0.5,0) and +(0,-0.5) .. (1,1);
\draw (1,1) .. controls +(0,0.5) and +(0.5,0) .. (0,2);

\draw[scale=1] (0,1) node {$w$};
\draw[scale=1] (-0.5,3) node {$v$};
\draw[scale=1] (1,2) node {$F'$};
\draw[scale=1] (-1,2) node {$F'$};
\draw[scale=1] (0,-0.5) node {$\vdots$};
\draw[scale=1] (0,4.5) node {$\vdots$};

\draw[scale=1] (2,2) node {$\sqcup$};

\begin{scope}[xshift=4cm,yshift=1cm]
\draw[->>,>=latex] (0,0) .. controls +(-0.5,0) and +(0,-0.5) .. (-1,1);
\draw (-1,1) .. controls +(0,0.5) and +(-0.5,0) .. (0,2);

\draw (0,0) node[shape=circle,fill=black,scale=0.3] {};
\draw (1,1) node[shape=circle,fill=black,scale=0.3] {};
\draw (0,2) node[shape=circle,fill=black,scale=0.3] {};

\begin{scope}[decoration={markings,mark=at position 0.5 with {\arrow{>}}}] 
    \draw[postaction={decorate}] (0,0)--(1,1);
    \draw[postaction={decorate}] (0,2)--(1,1);
\end{scope}

\draw[->,>=latex] (0,0) -- (0,1);
\draw (0,1) -- (0,2);

\draw[scale=1] (-0.5,1) node {$w$};
\draw[scale=1] (0.5,1) node {$u$};
\draw[scale=1] (0,2.3) node {$v$};

\end{scope}

\end{scope}

%5
\begin{scope}[xshift=8.5cm,yshift=-7cm,scale=1]

\draw[scale=1] (0,-1.5) node {\large $(e)$};

\draw (0,0) node[shape=circle,fill=black,scale=0.3] {};
\draw (0,2) node[shape=circle,fill=black,scale=0.3] {};
\draw (0,3) node[shape=circle,fill=black,scale=0.3] {};
\draw (0,4) node[shape=circle,fill=black,scale=0.3] {};

\begin{scope}[decoration={markings,mark=at position 0.5 with {\arrow{>}}}] 
    \draw[postaction={decorate}] (0,2)--(0,3);
    \draw[postaction={decorate}] (0,4)--(0,3);   
\end{scope}

\draw[->,>=latex] (0,0) .. controls +(-0.5,0) and +(0,-0.5) .. (-1,1);
\draw (-1,1) .. controls +(0,0.5) and +(-0.5,0) .. (0,2);

\draw[->>,>=latex] (0,2) .. controls +(-0.5,0) and +(0,-0.5) .. (-1,3);
\draw (-1,3) .. controls +(0,0.5) and +(-0.5,0) .. (0,4);

\draw[->>,>=latex] (0,0) .. controls +(0.5,0) and +(0,-0.5) .. (1,1);
\draw (1,1) .. controls +(0,0.5) and +(0.5,0) .. (0,2);

\draw[->,>=latex] (0,2) .. controls +(-0.25,0) and +(0,-0.5) .. (-0.5,3);
\draw (-0.5,3) .. controls +(0,0.5) and +(-0.25,0) .. (0,4);

\draw[scale=1] (0,1) node {$w$};
\draw[scale=1] (-0.25,3) node {$u$};
\draw[scale=1] (-0.75,3) node {$w$};
\draw[scale=1] (1,2) node {$F'$};
\draw[scale=1] (-1,2) node {$F'$};
\draw[scale=1] (0,-0.5) node {$\vdots$};
\draw[scale=1] (0,4.5) node {$\vdots$};

\end{scope}

%6
\begin{scope}[xshift=13cm,yshift=-7cm,scale=1]

\draw[scale=1] (0,-1.5) node {\large $(f)$};

\draw (0,0) node[shape=circle,fill=black,scale=0.3] {};
\draw (0,2) node[shape=circle,fill=black,scale=0.3] {};
\draw (0,3) node[shape=circle,fill=black,scale=0.3] {};
\draw (0,4) node[shape=circle,fill=black,scale=0.3] {};

\begin{scope}[decoration={markings,mark=at position 0.5 with {\arrow{>}}}] 
    \draw[postaction={decorate}] (0,2)--(0,3);
    \draw[postaction={decorate}] (0,4)--(0,3);   
\end{scope}

\draw[->,>=latex] (0,0) -- (0,1);
\draw (0,1) -- (0,2);

\draw[->,>=latex] (0,2) .. controls +(-0.5,0) and +(0,-0.5) .. (-1,3);
\draw (-1,3) .. controls +(0,0.5) and +(-0.5,0) .. (0,4);

\draw[scale=1] (-0.5,3) node {$u$};
\draw[scale=1] (1,2) node {$F''$};
\draw[scale=1] (-1,2) node {$F''$};
\draw[scale=1] (0,-0.5) node {$\vdots$};
\draw[scale=1] (0,4.5) node {$\vdots$};

\end{scope}

\end{tikzpicture}
\caption{The bigon trick}
\label{fig:bigon:trick}
\end{figure}
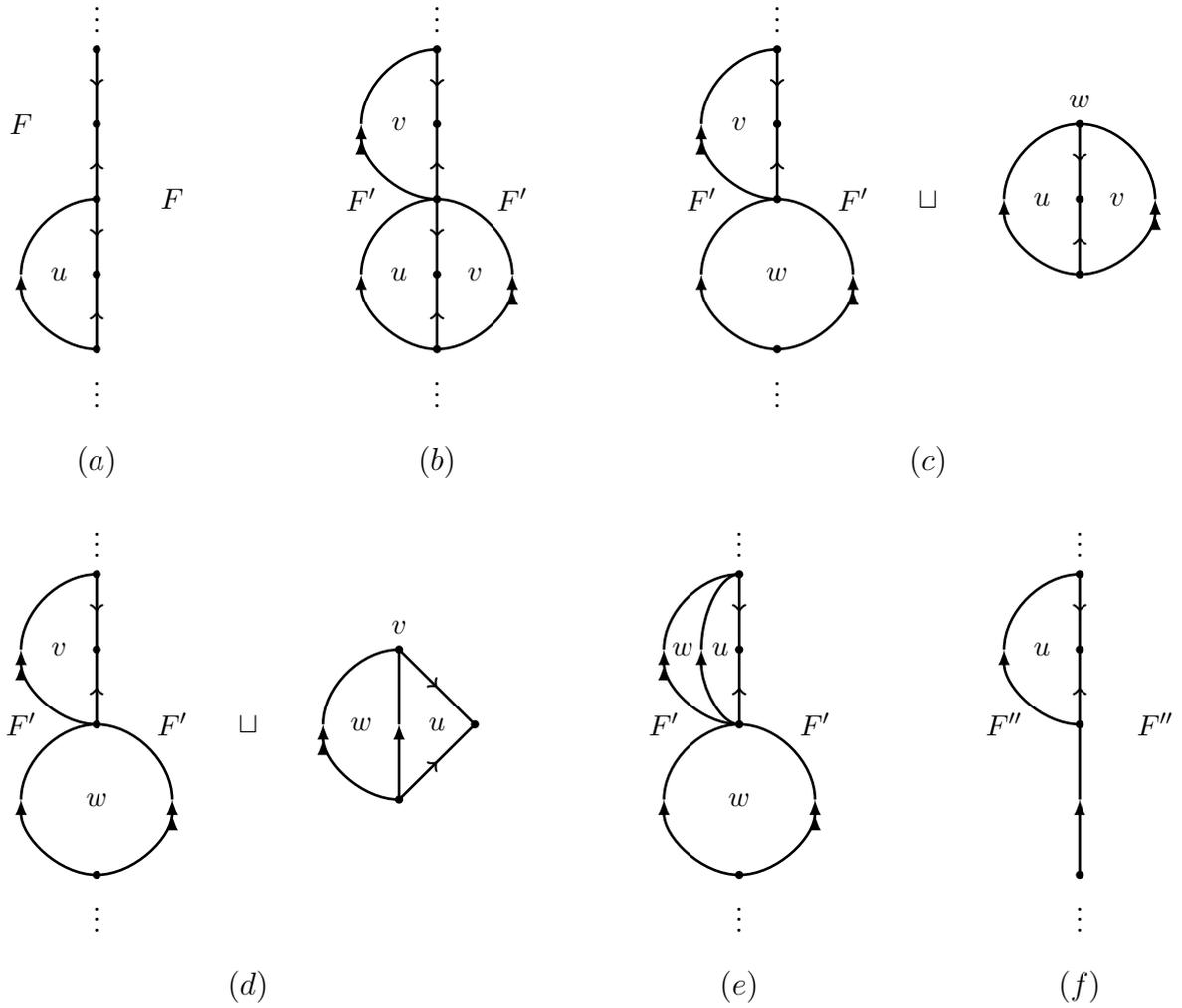

We will now use the \textit{bigon trick} to find another polyhedral description of $(S^3,K_n)$ with many fewer edges. The bigon trick is described in Figure \ref{fig:bigon:trick} (a) to (f). We start at (a), with the two faces $F$ having several edges in common, and a triangle $u$ adjacent to $F$ (note that there is a second face $u$ adjacent to the other $F$ somewhere else). Then we go to (b) by cutting $F$ along a new edge (with double full arrow) into $F'$ and a triangle $v$. The CW-complex described in (b) is the same as the one in (c), where the right part is a $3$-ball whose boundary is cut into the triangles $u$ and $v$ and the bigon $w$. The picture in (d) is simply the one from (c) with the ball rotated so that $v$ lies in the back instead of $w$. Then we obtain (e) by gluing the two parts of (d) along the face $v$, and finally (f) by fusing $F'$ and $w$ into a new face $F''$. As a result, we replaced two simple arrows by one longer different  {\textit{full} arrow (where \textit{full} means that the arrow is marked by a solid triangle)} and we  {slid} the face $u$ up.

% h trig after bigon trick
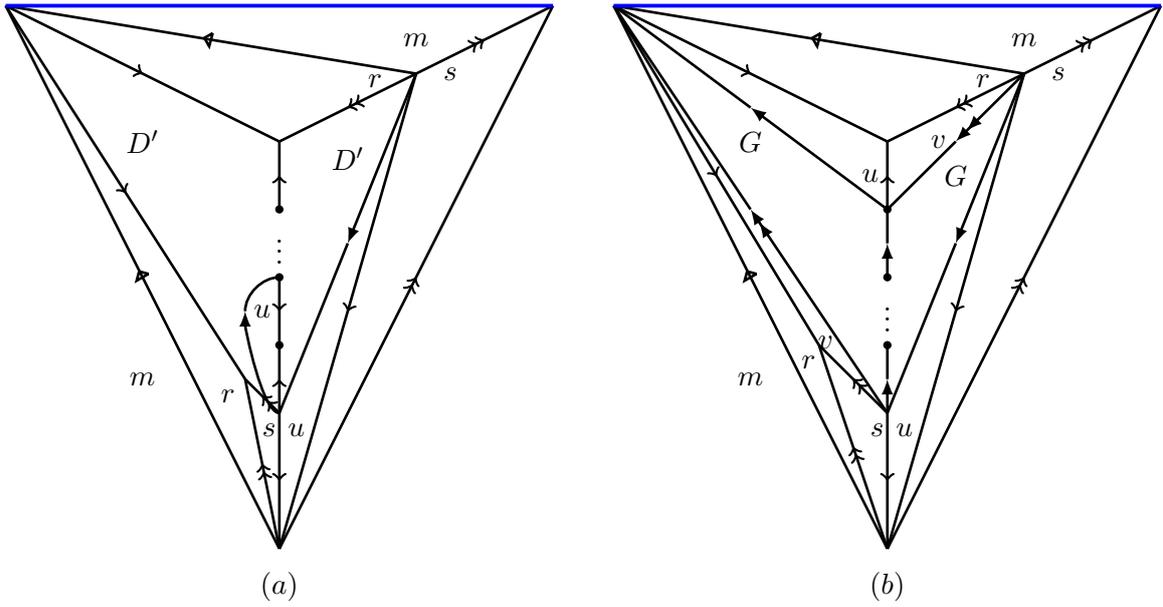
\begin{figure}[!h]
\centering
\begin{tikzpicture}[every path/.style={string ,black}]

\draw[scale=1] (0,-2) node {$(a)$};
\draw[scale=1] (8,-2) node {$(b)$};

%%%%%%%%%%%%%%%%%%%%%%%%%%%%%%%
%H-triangulation
\begin{scope}[xshift=0cm,yshift=-1.5cm,scale=0.45]

\draw (0,6) node[shape=circle,fill=black,scale=0.3] {};
\draw (0,8) node[shape=circle,fill=black,scale=0.3] {};
\draw (0,10) node[shape=circle,fill=black,scale=0.3] {};

\draw[->,>=latex] (0,4) .. controls +(-0.5,0) and +(0,-0.5) .. (-1,7);
\draw (-1,7) .. controls +(0,0.5) and +(-0.5,0) .. (0,8);

\draw[scale=2] (-0.5/2,7/2) node {$u$};

\draw[->,>=latex] (4,14) -- (2,9);
\draw (2,9) -- (0,4);

\draw[scale=2] (0.5/2,3.5/2) node {$u$};

\begin{scope}[decoration={
    markings,
    mark=at position 0.5 with {\arrow{>}}}
    ] 
    \draw[postaction={decorate}] (0,4)--(0,0);
    \draw[postaction={decorate}] (0,4)--(0,6);
    \draw[postaction={decorate}] (0,8)--(0,6);
    \draw[postaction={decorate}] (0,10) -- (0,12);
    \draw[postaction={decorate}] (-8,16) -- (0,12);
    \draw[postaction={decorate}] (4,14) -- (0,0);
    \draw[postaction={decorate}] (-8,16) -- (-1,5);
\end{scope}

\begin{scope}[decoration={
    markings,
    mark=at position 0.5 with {\arrow{>>}}}
    ] 
    \draw[postaction={decorate}] (0,4)--(-1,5);
    \draw[postaction={decorate}] (0,0)--(-1,5);
    \draw[postaction={decorate}] (4,14) -- (8,16);
    \draw[postaction={decorate}] (0,0) -- (8,16);
    \draw[postaction={decorate}] (4,14) -- (0,12);
\end{scope}

\draw (0,0) -- (-8,16);
\begin{scope}[xshift=-4cm, yshift=8cm, rotate=30, scale=0.2]
\draw (1,0) -- (0,1) -- (-1,0) -- (1,0);
\end{scope}

\draw (4,14) -- (-8,16);
\begin{scope}[xshift=-2cm, yshift=15cm, rotate=75, scale=0.2]
\draw (1,0) -- (0,1) -- (-1,0) -- (1,0);
\end{scope}

\draw[color=blue, line width=0.5mm] (-8,16) -- (8,16);

%dots
\draw[scale=2] (0,9/2) node {$\vdots$};

\draw[scale=2] (4/2,15/2) node {$m$};
\draw[scale=2] (2.8/2,13.8/2) node {$r$};
\draw[scale=2] (5/2,14/2) node {$s$};
\draw[scale=2] (2/2,11.5/2) node {$D'$};

\draw[scale=2] (-4/2,5/2) node {$m$};
\draw[scale=2] (-1.5/2,4.5/2) node {$r$};
\draw[scale=2] (-0.3/2,3.5/2) node {$s$};
\draw[scale=2] (-4/2,12/2) node {$D'$};

\end{scope}

%%%%%%%%%%%%%%%%%%%%%%%%%%%%%%%
%H-triangulation
\begin{scope}[xshift=8cm,yshift=-1.5cm,scale=0.45]

\draw (0,6) node[shape=circle,fill=black,scale=0.3] {};
\draw (0,8) node[shape=circle,fill=black,scale=0.3] {};
\draw (0,10) node[shape=circle,fill=black,scale=0.3] {};

\draw[->>,>=latex] (4,14) -- (2,12);
\draw (2,12) -- (0,10);

\draw[scale=2] (1.5/2,12/2) node {$v$};

\draw[->>,>=latex] (0,4) -- (-4,10);
\draw (-4,10) -- (-8,16);

\draw[scale=2] (-1.8/2,6.1/2) node {$v$};

\draw[->,>=latex] (0,10) -- (-4,13);
\draw (-4,13) -- (-8,16);

\draw[scale=2] (-0.5/2,11/2) node {$u$};

\draw[->,>=latex] (4,14) -- (2,9);
\draw (2,9) -- (0,4);

\draw[scale=2] (0.5/2,3.5/2) node {$u$};

\begin{scope}[decoration={
    markings,
    mark=at position 0.5 with {\arrow{>}}}
    ] 
    \draw[postaction={decorate}] (0,4)--(0,0);
    \draw[postaction={decorate}] (0,10) -- (0,12);
    \draw[postaction={decorate}] (-8,16) -- (0,12);
    \draw[postaction={decorate}] (4,14) -- (0,0);
    \draw[postaction={decorate}] (-8,16) -- (-2,6);
\end{scope}

\draw[->,>=latex] (0,4) -- (0,5);
\draw (0,5) -- (0,6);

\draw[->,>=latex] (0,8) -- (0,9);
\draw (0,9) -- (0,10);

\begin{scope}[decoration={
    markings,
    mark=at position 0.5 with {\arrow{>>}}}
    ] 
    \draw[postaction={decorate}] (0,4)--(-2,6);
    \draw[postaction={decorate}] (0,0)--(-2,6);
    \draw[postaction={decorate}] (4,14) -- (8,16);
    \draw[postaction={decorate}] (0,0) -- (8,16);
    \draw[postaction={decorate}] (4,14) -- (0,12);
\end{scope}

\draw (0,0) -- (-8,16);
\begin{scope}[xshift=-4cm, yshift=8cm, rotate=30, scale=0.2]
\draw (1,0) -- (0,1) -- (-1,0) -- (1,0);
\end{scope}

\draw (4,14) -- (-8,16);
\begin{scope}[xshift=-2cm, yshift=15cm, rotate=75, scale=0.2]
\draw (1,0) -- (0,1) -- (-1,0) -- (1,0);
\end{scope}

\draw[color=blue, line width=0.5mm] (-8,16) -- (8,16);

%dots
\draw[scale=2] (0,7/2) node {$\vdots$};

\draw[scale=2] (4/2,15/2) node {$m$};
\draw[scale=2] (2.8/2,13.8/2) node {$r$};
\draw[scale=2] (5/2,14/2) node {$s$};
\draw[scale=2] (2/2,11/2) node {$G$};

\draw[scale=2] (-4/2,5/2) node {$m$};
\draw[scale=2] (-2.3/2,5.5/2) node {$r$};
\draw[scale=2] (-0.3/2,3.5/2) node {$s$};
\draw[scale=2] (-4/2,12/2) node {$G$};

\end{scope}

\end{tikzpicture}
\caption{A cellular decomposition of $(S^3,K_n)$ before and after the bigon trick}\label{fig:Htriang:bigon:trick}
\end{figure}

Let us now go back to our cellular decomposition of $(S^3,K_n)$.
We start from Figure \ref{fig:Htriang:polyhedron} (b) and cut $D$ into new faces $u$ and $D'$ as in Figure \ref{fig:Htriang:bigon:trick} (a). Then we apply the bigon trick $p$ times, where $p:=\tfrac{n-3}{2}$, to slide the cell $u$ on the left $D'$, and finally we cut the face obtained from $D'$ a final time into a $p+2$-gon $G$ and a triangle $v$ by adding a double full arrow. See Figure \ref{fig:Htriang:bigon:trick} (b).

Note that if $n=3$, i.e.\ $p=0$, we do not use the bigon trick, and simply denote $D'$ by $v$. In this case, $G$ is empty and the double full arrow should be identified with the simple full arrow.

% GG tower
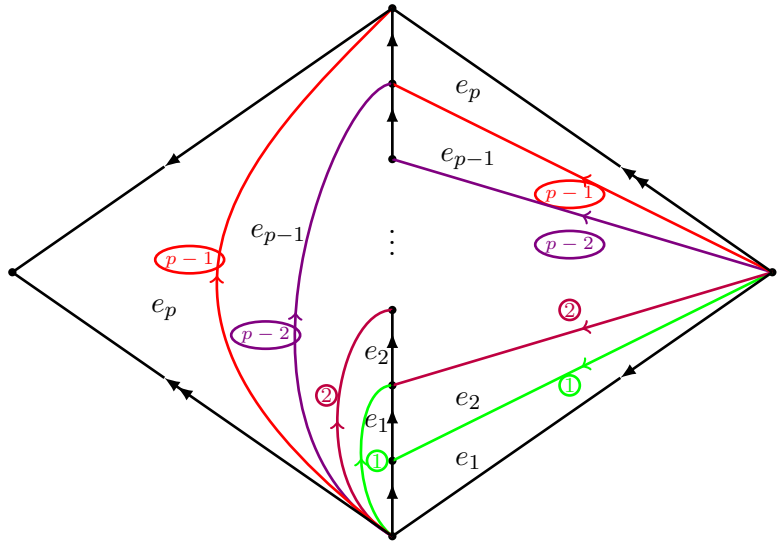
\begin{figure}[!h]
\centering
\begin{tikzpicture}
[every path/.style={string ,black}]

\draw (0,0) node[shape=circle,fill=black,scale=0.3] {};
\draw (0,1) node[shape=circle,fill=black,scale=0.3] {};
\draw (0,2) node[shape=circle,fill=black,scale=0.3] {};
\draw (0,3) node[shape=circle,fill=black,scale=0.3] {};
\draw (0,5) node[shape=circle,fill=black,scale=0.3] {};
\draw (0,6) node[shape=circle,fill=black,scale=0.3] {};
\draw (0,7) node[shape=circle,fill=black,scale=0.3] {};
\draw (5,3.5) node[shape=circle,fill=black,scale=0.3] {};
\draw (-5,3.5) node[shape=circle,fill=black,scale=0.3] {};

\draw[scale=1] (0,4) node {$\vdots$};

\draw[scale=1,color=black] (1,1) node {$e_1$};
\draw[scale=1,color=black] (-0.2,1.5) node {$e_1$};
\draw[scale=1,color=black] (1,1.8) node {$e_2$};
\draw[scale=1,color=black] (-0.2,2.4) node {$e_2$};
\draw[scale=1,color=black] (1,5) node {$e_{p-1}$};
\draw[scale=1,color=black] (-1.5,4) node {$e_{p-1}$};
\draw[scale=1,color=black] (1,5.9) node {$e_p$};
\draw[scale=1,color=black] (-3,3) node {$e_p$};

\begin{scope}[xshift=0cm,yshift=0cm,rotate=0,scale=1/1.5]
\draw[color=green] (3.5,3) circle (0.2) node{\scriptsize $1$};
\end{scope}

\begin{scope}[xshift=0cm,yshift=0cm,rotate=0,scale=1/1.5]
\draw[color=green] (-0.3,1.5) circle (0.2) node{\scriptsize $1$};
\end{scope}

\begin{scope}[xshift=0cm,yshift=0cm,rotate=0,scale=1/1.5,color=purple]
\draw[color=purple] (3.5,4.5) circle (0.2) node{\scriptsize $2$};
\end{scope}

\begin{scope}[xshift=0cm,yshift=0cm,rotate=0,scale=1/1.5,color=purple]
\draw[color=purple] (-1.3,2.8) circle (0.2) node{\scriptsize $2$};
\end{scope}

\begin{scope}[xshift=0cm,yshift=0cm,rotate=0,scale=1/1.5]
\draw[color=violet] (3.5,5.8)  node{\tiny $p-2$};
\node[draw,ellipse,color=violet] (S) at(3.5,5.8) {\ \ \ \ };
\end{scope}

\begin{scope}[xshift=0cm,yshift=0cm,rotate=0,scale=1/1.5]
\draw[color=violet] (-2.5,4)  node{\tiny $p-2$};
\node[draw,ellipse,color=violet] (S) at(-2.5,4) {\ \ \ \ };
\end{scope}

\begin{scope}[xshift=0cm,yshift=0cm,rotate=0,scale=1/1.5]
\draw[color=red] (3.5,6.8)  node{\tiny $p-1$};
\node[draw,ellipse,color=red] (S) at(3.5,6.8) {\ \ \ \ };
\end{scope}

\begin{scope}[xshift=0cm,yshift=0cm,rotate=0,scale=1/1.5]
\draw[color=red] (-4,5.5)  node{\tiny $p-1$};
\node[draw,ellipse,color=red] (S) at(-4,5.5) {\ \ \ \ };
\end{scope}

\begin{scope}[decoration={markings,mark=at position 0.5 with {\arrow{>}}}] 
    \draw[postaction={decorate},color=green] (5,3.5)--(0,1);
    \draw[postaction={decorate},color=purple] (5,3.5)--(0,2);
    \draw[postaction={decorate},color=violet] (5,3.5)--(0,5);
    \draw[postaction={decorate},color=red] (5,3.5)--(0,6); 
   	\draw[postaction={decorate},color=green] (0,0) .. controls +(-0.6,0.5) and +(-0.5,0) .. (0,2);  
   	\draw[postaction={decorate},color=purple] (0,0) .. controls +(-1.2,0.7) and +(-0.7,0) .. (0,3);  
   	\draw[postaction={decorate},color=violet] (0,0) .. controls +(-2.5,1.5) and +(-0.7,0) .. (0,6);  
   	\draw[postaction={decorate},color=red] (0,0) .. controls +(-4,3) and +(-2,-2) .. (0,7); 
\end{scope}

\draw[->>,>=latex] (0,0) -- (-3,2.1);
\draw (-3,2.1) -- (-5,3.5);
\draw[->,>=latex] (0,7) -- (-3,7-2.1);
\draw (-3,7-2.1) -- (-5,3.5);

\draw (0,0) -- (3,2.1);
\draw[<-,>=latex] (3,2.1) -- (5,3.5);
\draw (0,7) -- (3,7-2.1);
\draw[<<-,>=latex] (3,7-2.1) -- (5,3.5);

\begin{scope}[yshift=0cm]
\draw[->,>=latex] (0,0) -- (0,0.7);
\draw (0,0.6)--(0,1);
\end{scope}

\begin{scope}[yshift=1cm]
\draw[->,>=latex] (0,0) -- (0,0.7);
\draw (0,0.6)--(0,1);
\end{scope}

\begin{scope}[yshift=2cm]
\draw[->,>=latex] (0,0) -- (0,0.7);
\draw (0,0.6)--(0,1);
\end{scope}

\begin{scope}[yshift=5cm]
\draw[->,>=latex] (0,0) -- (0,0.7);
\draw (0,0.6)--(0,1);
\end{scope}

\begin{scope}[yshift=6cm]
\draw[->,>=latex] (0,0) -- (0,0.7);
\draw (0,0.6)--(0,1);
\end{scope}

\end{tikzpicture}
\caption{Decomposing the two faces $G$ in a tower of tetrahedra}
\label{fig:GG:tower}
\end{figure}

Then, if $p\geqslant 1$,  we triangulate the two faces $G$ as in Figure \ref{fig:GG:tower}: we add $p-1$ new edges drawn with simple arrows and circled $k$ for $k=1, \ldots, p-1$ (and drawn in different colors in Figure \ref{fig:GG:tower} but not in the following pictures), and $G$ is cut into $p$ triangles $e_1, \ldots, e_p$. This still makes sense if $p=1$, in  {which} case we have $G=e_p=e_1$ and no new edges.

Now, by combining Figures \ref{fig:Htriang:bigon:trick} (b) and \ref{fig:GG:tower}, we obtain a decomposition of $S^3$ as a polyhedron with only triangular faces glued to one another, and $K_n$ still represents the blue edge after identifications. 
In order to harmonize the  {notation} with the small cases ($p=0,1$), we do the following arrow replacements:
\begin{itemize}
\item  {Replace} full black simple arrow by simple arrow with circled $0$,
\item  {replace} full black double arrow by simple arrow with circled $p$,
\item  {replace} white triangle simple arrow by simple arrow with circled $p+1$.
\end{itemize}
Moreover, we cut the previous  {polyhe}dron of Figures \ref{fig:Htriang:bigon:trick} (b) and \ref{fig:GG:tower} into $p+4$ tetrahedra, introducing new triangular faces $e_{p+1}$ (behind $r,u,v$), $g$ (behind $r,s,v$), $s'$ (completing $m,m,s$), $f_p$ (completing $g,s',u$) and $f_1, \ldots, f_{p-1}$ at each of the $p-1$ ``floors'' of the tower of Figure \ref{fig:GG:tower} (from front to back of the figure).
 {More precisely, for $k \in \{1, \ldots, p-1\}$, the new face $f_k$ is made of three edges: \begin{itemize}
\item an edge with circled $k$ (curved and going up in the left half of Figure \ref{fig:GG:tower}),
\item an edge with circled $k+1$ (or full black double arrow for $k=p-1$) in Figure \ref{fig:GG:tower}, going from the  rightmost vertex to the  endpoint of the previous edge, 
\item the edge with  full black simple arrow in Figure \ref{fig:GG:tower} going from the  rightmost vertex  to the bottom vertex.	\end{itemize} }
 We add the convention $f_0=e_1$ to account for the case $p=0$.
We also choose an orientation for the blue edge and thus a sign for the tetrahedron that contains it (this choice will not have any influence on the ideal triangulation, though).

Finally, we obtain the H-triangulation for $(S^3,K_n)$ described in Figure \ref{fig:H:trig:odd}, for any $p \geqslant 0$ (recalling the convention $f_0=e_1$ if need be).

% H triang odd finale
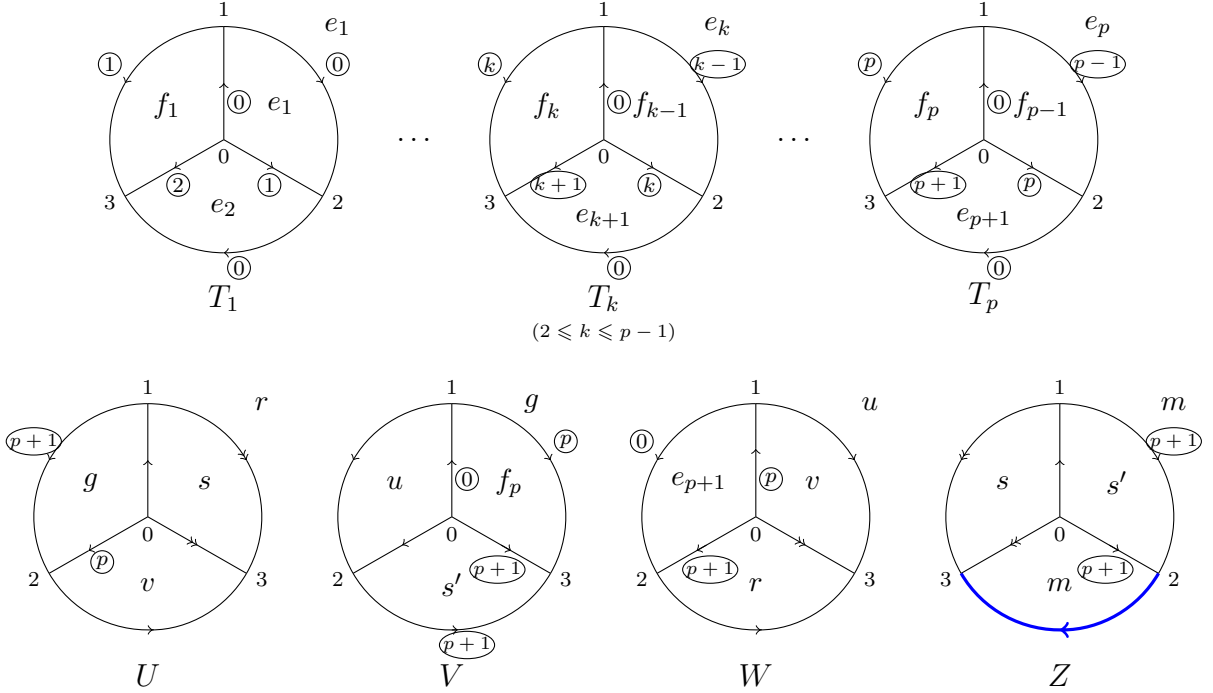
\begin{figure}[!h]
\begin{tikzpicture}
%

%Tétraèdre T1
\begin{scope}[xshift=1cm,yshift=0cm,rotate=0,scale=1.5]

\draw (0,-0.15) node{\scriptsize $0$} ;
\draw (0,1.15) node{\scriptsize $1$} ;
\draw (1,-0.55) node{\scriptsize $2$} ;
\draw (-1,-0.55) node{\scriptsize $3$} ;
\draw (1,1) node{$e_1$} ;
\draw (0,-0.6) node{$e_2$} ;
\draw (-0.5,0.3) node{$f_1$} ;
\draw (0.5,0.3) node{$e_1$} ;

\draw (0,-1.4) node{\large $T_1$} ;

\path [draw=black,postaction={on each segment={mid arrow=black}}]
(0,0)--(-1.732/2,-0.5);

\path [draw=black,postaction={on each segment={mid arrow =black}}]
(0,0)--(0,1);

\path [draw=black,postaction={on each segment={mid arrow =black}}]
(0,0)--(1.732/2,-0.5);

\draw[->](1.732/2,-0.5) arc (-30:-90:1);
\draw (0,-1) arc (-90:-150:1);

\draw[->](0,1) arc (90:30:1);
\draw (1.732/2,0.5) arc (30:-30:1);

\draw[->](0,1) arc (90:150:1);
\draw (-1.732/2,0.5) arc (150:210:1);

% number in a circle up left
\begin{scope}[xshift=0cm,yshift=0cm,rotate=0,scale=1/1.5]
\draw (-1.5,1) circle (0.15) node{\scriptsize $1$};
\end{scope}

%
% number in a circle up right
\begin{scope}[xshift=0cm,yshift=0cm,rotate=0,scale=1/1.5]
\draw (1.5,1) circle (0.15) node{\scriptsize $0$};
\end{scope}

%
% number in a circle up right
\begin{scope}[xshift=0cm,yshift=0cm,rotate=0,scale=1/1.5]
\draw (0.2,0.5) circle (0.15) node{\scriptsize $0$};
\end{scope}

%
% number in a circle up right
\begin{scope}[xshift=0cm,yshift=0cm,rotate=0,scale=1/1.5]
\draw (0.2,-1.7) circle (0.15) node{\scriptsize $0$};
\end{scope}

% number in a circle down left
\begin{scope}[xshift=0cm,yshift=0cm,rotate=0,scale=1/1.5]
\draw (-0.6,-0.6) circle (0.15) node{\scriptsize $2$};
\end{scope}

% number in a circle down right
\begin{scope}[xshift=0cm,yshift=0cm,rotate=0,scale=1/1.5]
\draw (0.6,-0.6) circle (0.15) node{\scriptsize $1$};
\end{scope}

\end{scope}
%%%%%%%%%%%%%%%%%%%%%%%%%%%%%%%%%%%%%%%%%%%%%%%%%%%%%%%%%%%%%%%

%dots
\draw (3.5,0) node{$\ldots$} ;
\draw (8.5,0) node{$\ldots$} ;
%%%%%%%%%%%%%%%%%%%%%%%%%%%%%%%%%%%%%%%%%%%%%%%%%%%%%%%%%%%%%%%

%Tétraèdre Tk
\begin{scope}[xshift=6cm,yshift=0cm,rotate=0,scale=1.5]

\draw (0,-0.15) node{\scriptsize $0$} ;
\draw (0,1.15) node{\scriptsize $1$} ;
\draw (1,-0.55) node{\scriptsize $2$} ;
\draw (-1,-0.55) node{\scriptsize $3$} ;
\draw (1,1) node{$e_k$} ;
\draw (0,-0.7) node{$e_{k+1}$} ;
\draw (-0.5,0.3) node{$f_k$} ;
\draw (0.5,0.3) node{$f_{k-1}$} ;

\draw (0,-1.4) node{\large $T_k$} ;
\draw (0,-1.7) node{\tiny $(2\leqslant k \leqslant p-1)$} ;

\path [draw=black,postaction={on each segment={mid arrow=black}}]
(0,0)--(-1.732/2,-0.5);

\path [draw=black,postaction={on each segment={mid arrow =black}}]
(0,0)--(0,1);

\path [draw=black,postaction={on each segment={mid arrow =black}}]
(0,0)--(1.732/2,-0.5);

\draw[->](1.732/2,-0.5) arc (-30:-90:1);
\draw (0,-1) arc (-90:-150:1);

\draw[->](0,1) arc (90:30:1);
\draw (1.732/2,0.5) arc (30:-30:1);

\draw[->](0,1) arc (90:150:1);
\draw (-1.732/2,0.5) arc (150:210:1);

%
% number in a circle up right
\begin{scope}[xshift=0cm,yshift=0cm,rotate=0,scale=1/1.5]
\draw (0.2,0.5) circle (0.15) node{\scriptsize $0$};
\end{scope}

%
% number in a circle up right
\begin{scope}[xshift=0cm,yshift=0cm,rotate=0,scale=1/1.5]
\draw (0.2,-1.7) circle (0.15) node{\scriptsize $0$};
\end{scope}

% number in a circle up left
\begin{scope}[xshift=0cm,yshift=0cm,rotate=0,scale=1/1.5]
\draw (-1.5,1) circle (0.15) node{\scriptsize $k$};
\end{scope}

% number in a circle up right
\begin{scope}[xshift=0cm,yshift=0cm,rotate=0,scale=1/1.5]
\draw (1.5,1)  node{\tiny $k-1$};
\node[draw,ellipse] (S) at(1.5,1) {\ \ \ };
\end{scope}

% number in a circle down left
\begin{scope}[xshift=0cm,yshift=0cm,rotate=0,scale=1/1.5]
\draw (-0.6,-0.6)  node{\tiny $k+1$};
\node[draw,ellipse] (S) at(-0.6,-0.6) {\ \ \ };
\end{scope}

% number in a circle down right
\begin{scope}[xshift=0cm,yshift=0cm,rotate=0,scale=1/1.5]
\draw (0.6,-0.6) circle (0.15) node{\scriptsize $k$};
\end{scope}

\end{scope}
%%%%%%%%%%%%%%%%%%%%%%%%%%%%%%%%%%%%%%%%%%%%%%%%%%%%%%%%%%%%%%%

%Tétraèdre Tk
\begin{scope}[xshift=11cm,yshift=0cm,rotate=0,scale=1.5]

\draw (0,-0.15) node{\scriptsize $0$} ;
\draw (0,1.15) node{\scriptsize $1$} ;
\draw (1,-0.55) node{\scriptsize $2$} ;
\draw (-1,-0.55) node{\scriptsize $3$} ;
\draw (1,1) node{$e_p$} ;
\draw (0,-0.7) node{$e_{p+1}$} ;
\draw (-0.5,0.3) node{$f_p$} ;
\draw (0.5,0.3) node{$f_{p-1}$} ;

\draw (0,-1.4) node{\large $T_p$} ;

\path [draw=black,postaction={on each segment={mid arrow=black}}]
(0,0)--(-1.732/2,-0.5);

\path [draw=black,postaction={on each segment={mid arrow =black}}]
(0,0)--(0,1);

\path [draw=black,postaction={on each segment={mid arrow =black}}]
(0,0)--(1.732/2,-0.5);

\draw[->](1.732/2,-0.5) arc (-30:-90:1);
\draw (0,-1) arc (-90:-150:1);

\draw[->](0,1) arc (90:30:1);
\draw (1.732/2,0.5) arc (30:-30:1);

\draw[->](0,1) arc (90:150:1);
\draw (-1.732/2,0.5) arc (150:210:1);

%
% number in a circle up right
\begin{scope}[xshift=0cm,yshift=0cm,rotate=0,scale=1/1.5]
\draw (0.2,0.5) circle (0.15) node{\scriptsize $0$};
\end{scope}

\begin{scope}[xshift=0cm,yshift=0cm,rotate=0,scale=1/1.5]
\draw (0.2,-1.7) circle (0.15) node{\scriptsize $0$};
\end{scope}

% number in a circle up left
\begin{scope}[xshift=0cm,yshift=0cm,rotate=0,scale=1/1.5]
\draw (-1.5,1) circle (0.15) node{\scriptsize $p$};
\end{scope}

% number in a circle up right
\begin{scope}[xshift=0cm,yshift=0cm,rotate=0,scale=1/1.5]
\draw (1.5,1)  node{\tiny $p-1$};
\node[draw,ellipse] (S) at(1.5,1) {\ \ \ };
\end{scope}

% number in a circle down left
\begin{scope}[xshift=0cm,yshift=0cm,rotate=0,scale=1/1.5]
\draw (-0.6,-0.6)  node{\tiny $p+1$};
\node[draw,ellipse] (S) at(-0.6,-0.6) {\ \ \ };
\end{scope}

% number in a circle down right
\begin{scope}[xshift=0cm,yshift=0cm,rotate=0,scale=1/1.5]
\draw (0.6,-0.6) circle (0.15) node{\scriptsize $p$};
\end{scope}

\end{scope}
%%%%%%%%%%%%%%%%%%%%%%%%%%%%%%%%%%%%%%%%%%%%%%%%%%%%%%%%%%%%%%%

%Tétraèdre U
\begin{scope}[xshift=0cm,yshift=-5cm,rotate=0,scale=1.5]

\draw (0,-0.15) node{\scriptsize $0$} ;
\draw (0,1.15) node{\scriptsize $1$} ;
\draw (1,-0.55) node{\scriptsize $3$} ;
\draw (-1,-0.55) node{\scriptsize $2$} ;
\draw (1,1) node{$r$} ;
\draw (0,-0.6) node{$v$} ;
\draw (-0.5,0.3) node{$g$} ;
\draw (0.5,0.3) node{$s$} ;

\draw (0,-1.4) node{\large $U$} ;

\path [draw=black,postaction={on each segment={mid arrow =black}}]
(0,0)--(0,1);

\path [draw=black,postaction={on each segment={mid arrow d=black}}]
(0,0)--(1.732/2,-0.5);

\draw[->] (0,0)--(-1.732*0.3,-1*0.3);
\draw (-1.732*0.3,-1*0.3)--(-1.732/2,-1/2);

\draw[color=black][-<](1.732/2,-0.5) arc (-30:-90:1);
\draw[color=black] (-1.732/2,-0.5) arc (-150:-87:1);

\draw[color=black][->>](0,1) arc (90:30:1);
\draw[color=black] (1.732/2,0.5) arc (30:-30:1);

\draw[color=black][->](0,1) arc (90:150:1);
\draw[color=black] (-1.732/2,0.5) arc (150:210:1);
\begin{scope}[xshift=0cm,yshift=0cm,rotate=0,scale=1/1.5]
\draw (-1.5,1)  node{\tiny $p+1$};
\node[draw,ellipse] (S) at(-1.5,1) {\ \ \ };
\end{scope}

% number in a circle down left
\begin{scope}[xshift=0cm,yshift=0cm,rotate=0,scale=1/1.5]
\draw (-0.6,-0.6) circle (0.15) node{\scriptsize $p$};
\end{scope}

\end{scope}
%%%%%%%%%%%%%%%%%%%%%%%%%%%%%%%%%%%%%%%%%%%%%%%%%%%%%%%%%%%%%%%

%Tétraèdre V
\begin{scope}[xshift=4cm,yshift=-5cm,rotate=0,scale=1.5]

\draw (0,-0.15) node{\scriptsize $0$} ;
\draw (0,1.15) node{\scriptsize $1$} ;
\draw (1,-0.55) node{\scriptsize $3$} ;
\draw (-1,-0.55) node{\scriptsize $2$} ;
\draw (0.7,1) node{$g$} ;
\draw (0,-0.6) node{$s'$} ;
\draw (-0.5,0.3) node{$u$} ;
\draw (0.5,0.3) node{$f_p$} ;

\draw (0,-1.4) node{\large $V$} ;

\path [draw=black,postaction={on each segment={mid arrow =black}}]
(0,0)--(0,1);

\path [draw=black,postaction={on each segment={mid arrow =black}}]
(0,0)--(-1.732/2,-0.5);

\draw[color=black,<-](1.732/2*0.6,-0.5*0.6) -- (0,0);
\draw[color=black](1.732/4,-0.25) -- (1.732/2,-0.5);

\draw[color=black](1.732/2,-0.5) arc (-30:-90:1);
\draw[color=black,->] (-1.732/2,-0.5) arc (-150:-87:1);

\draw[color=black][->](0,1) arc (90:30:1);
\draw[color=black] (1.732/2,0.5) arc (30:-30:1);

\draw[color=black][->](0,1) arc (90:150:1);
\draw[color=black] (-1.732/2,0.5) arc (150:210:1);

\begin{scope}[xshift=0cm,yshift=0cm,rotate=0,scale=1/1.5]
\draw (0.2,-1.7)  node{\tiny $p+1$};
\node[draw,ellipse] (S) at(0.2,-1.7) {\ \ \ };
\end{scope}

\begin{scope}[xshift=0cm,yshift=0cm,rotate=0,scale=1/1.5]
\draw (0.6,-0.7)  node{\tiny $p+1$};
\node[draw,ellipse] (S) at(0.6,-0.7) {\ \ \ };
\end{scope}

% number in a circle up right
\begin{scope}[xshift=0cm,yshift=0cm,rotate=0,scale=1/1.5]
\draw (1.5,1) circle (0.15) node{\scriptsize $p$};
\end{scope}

%
% number in a circle up right
\begin{scope}[xshift=0cm,yshift=0cm,rotate=0,scale=1/1.5]
\draw (0.2,0.5) circle (0.15) node{\scriptsize $0$};
\end{scope}

\end{scope}
%%%%%%%%%%%%%%%%%%%%%%%%%%%%%%%%%%%%%%%%%%%%%%%%%%%%%%%%%%%%%%%

%Tétraèdre W
\begin{scope}[xshift=8cm,yshift=-5cm,rotate=0,scale=1.5]

\draw (0,-0.15) node{\scriptsize $0$} ;
\draw (0,1.15) node{\scriptsize $1$} ;
\draw (1,-0.55) node{\scriptsize $3$} ;
\draw (-1,-0.55) node{\scriptsize $2$} ;
\draw (1,1) node{$u$} ;
\draw (0,-0.6) node{$r$} ;
\draw (-0.5,0.3) node{$e_{p+1}$} ;
\draw (0.5,0.3) node{$v$} ;

\draw (0,-1.4) node{\large $W$} ;

\draw[->](0,0)--(0,0.6);
\draw(0,0.6)--(0,1);

\path [draw=black,postaction={on each segment={mid arrow d=black}}]
(0,0)--(1.732/2,-0.5);

\draw[color=black,<-](-1.732/2*0.6,-0.5*0.6) -- (0,0);
\draw[color=black](-1.732/4,-0.25) -- (-1.732/2,-0.5);

\draw[color=black][-<](1.732/2,-0.5) arc (-30:-90:1);
\draw[color=black] (-1.732/2,-0.5) arc (-150:-87:1);

\draw[color=black][->](0,1) arc (90:30:1);
\draw[color=black] (1.732/2,0.5) arc (30:-30:1);

\draw[color=black][->](0,1) arc (90:150:1);
\draw[color=black] (-1.732/2,0.5) arc (150:210:1);

\begin{scope}[xshift=0cm,yshift=0cm,rotate=0,scale=1/1.5]
\draw (-0.6,-0.7)  node{\tiny $p+1$};
\node[draw,ellipse] (S) at(-0.6,-0.7) {\ \ \ };
\end{scope}

% number in a circle up right
\begin{scope}[xshift=0cm,yshift=0cm,rotate=0,scale=1/1.5]
\draw (-1.5,1) circle (0.15) node{\scriptsize $0$};
\end{scope}

%
% number in a circle up right
\begin{scope}[xshift=0cm,yshift=0cm,rotate=0,scale=1/1.5]
\draw (0.2,0.5) circle (0.15) node{\scriptsize $p$};
\end{scope}

\end{scope}
%%%%%%%%%%%%%%%%%%%%%%%%%%%%%%%%%%%%%%%%%%%%%%%%%%%%%%%%%%%%%%%

%Tétraèdre Z
\begin{scope}[xshift=12cm,yshift=-5cm,rotate=0,scale=1.5]

\draw (0,-0.15) node{\scriptsize $0$} ;
\draw (0,1.15) node{\scriptsize $1$} ;
\draw (1,-0.55) node{\scriptsize $2$} ;
\draw (-1,-0.55) node{\scriptsize $3$} ;
\draw (1,1) node{$m$} ;
\draw (0,-0.6) node{$m$} ;
\draw (-0.5,0.3) node{$s$} ;
\draw (0.5,0.3) node{$s'$} ;

\draw (0,-1.4) node{\large $Z$} ;

\path [draw=black,postaction={on each segment={mid arrow =black}}]
(0,0)--(0,1);

\draw[color=black,<-](1.732/2*0.6,-0.5*0.6) -- (0,0);
\draw[color=black](1.732/4,-0.25) -- (1.732/2,-0.5);

\path [draw=black,postaction={on each segment={mid arrow d=black}}]
(0,0)--(-1.732/2,-0.5);

\draw[very thick,color=blue][->](1.732/2,-0.5) arc (-30:-90:1);
\draw[very thick,color=blue] (-1.732/2,-0.5) arc (-150:-87:1);

\draw[color=black,->](0,1) arc (90:30:1);
\draw[color=black] (1.732/2,0.5) arc (30:-30:1);

\draw[color=black][->>](0,1) arc (90:150:1);
\draw[color=black] (-1.732/2,0.5) arc (150:210:1);

% number in a circle up right
\begin{scope}[xshift=0cm,yshift=0cm,rotate=0,scale=1/1.5]
\draw (1.5,1)  node{\tiny $p+1$};
\node[draw,ellipse] (S) at(1.5,1) {\ \ \ };
\end{scope}

\begin{scope}[xshift=0cm,yshift=0cm,rotate=0,scale=1/1.5]
\draw (0.6,-0.7)  node{\tiny $p+1$};
\node[draw,ellipse] (S) at(0.6,-0.7) {\ \ \ };
\end{scope}

\end{scope}
%%%%%%%%%%%%%%%%%%%%%%%%%%%%%%%%%%%%%%%%%%%%%%%%%%%%%%%%%%%%%%%
\end{tikzpicture}
\caption{The H-triangulation $Y_n$ for $(S^3,K_n)$, $n$ odd, $n \geqslant 3$, with $p=\frac{n-3}{2}$} \label{fig:H:trig:odd}
\end{figure}

In the H-triangulation of Figure \ref{fig:H:trig:odd} there are
\begin{itemize}
\item $1$ common vertex,
\item $p+5 = \frac{n+7}{2}$ edges (simple arrow $\overrightarrow{\eta_s}$, double arrow $\overrightarrow{\eta_d}$, blue simple arrow $\overrightarrow{K_n}$, and the simple arrows $\overrightarrow{\eta_0}, \ldots, \overrightarrow{\eta_{p+1}}$ indexed by $0, \ldots p+1$ in circles),
\item $2p+8 = n+5$ faces ($e_1, \ldots, e_{p+1}, f_1, \ldots, f_{p},g, m,r,s,s',u,v  $),
\item $p+4 = \frac{n+5}{2}$ tetrahedra ($T_1, \ldots, T_{p},  U,  V, W, Z$) .
\end{itemize}

We are now ready to obtain an ideal triangulation of $S^3 \setminus K_n$.
From the H-triangulation of $(S^3,K_n)$ of Figure \ref{fig:H:trig:odd}, let us collapse  the whole tetrahedron $Z$ into a triangle: this transforms the blue edge (corresponding to $K_n$) into a point, collapses the two faces $m$, and identifies the faces $s$ and $s'$ in a new face also called $s$, and the double arrow edge to the arrow with circled $p+1$.

Hence we get an ideal triangulation of the knot complement $S^3 \setminus K_n$, detailed in Figure \ref{fig:id:trig:odd}.

% id triang odd finale
\begin{figure}[!h]
\begin{tikzpicture}
%

%Tétraèdre T1
\begin{scope}[xshift=1cm,yshift=0cm,rotate=0,scale=1.5]

\draw (0,-0.15) node{\scriptsize $0$} ;
\draw (0,1.15) node{\scriptsize $1$} ;
\draw (1,-0.55) node{\scriptsize $2$} ;
\draw (-1,-0.55) node{\scriptsize $3$} ;
\draw (1,1) node{$e_1$} ;
\draw (0,-0.6) node{$e_2$} ;
\draw (-0.5,0.3) node{$f_1$} ;
\draw (0.5,0.3) node{$e_1$} ;

\draw (0,-1.4) node{\large $T_1$} ;

\path [draw=black,postaction={on each segment={mid arrow=black}}]
(0,0)--(-1.732/2,-0.5);

\path [draw=black,postaction={on each segment={mid arrow =black}}]
(0,0)--(0,1);

\path [draw=black,postaction={on each segment={mid arrow =black}}]
(0,0)--(1.732/2,-0.5);

\draw[->](1.732/2,-0.5) arc (-30:-90:1);
\draw (0,-1) arc (-90:-150:1);

\draw[->](0,1) arc (90:30:1);
\draw (1.732/2,0.5) arc (30:-30:1);

\draw[->](0,1) arc (90:150:1);
\draw (-1.732/2,0.5) arc (150:210:1);

% number in a circle up left
\begin{scope}[xshift=0cm,yshift=0cm,rotate=0,scale=1/1.5]
\draw (-1.5,1) circle (0.15) node{\scriptsize $1$};
\end{scope}

%
% number in a circle up right
\begin{scope}[xshift=0cm,yshift=0cm,rotate=0,scale=1/1.5]
\draw (1.5,1) circle (0.15) node{\scriptsize $0$};
\end{scope}

%
% number in a circle up right
\begin{scope}[xshift=0cm,yshift=0cm,rotate=0,scale=1/1.5]
\draw (0.2,0.5) circle (0.15) node{\scriptsize $0$};
\end{scope}

%
% number in a circle up right
\begin{scope}[xshift=0cm,yshift=0cm,rotate=0,scale=1/1.5]
\draw (0.2,-1.7) circle (0.15) node{\scriptsize $0$};
\end{scope}

% number in a circle down left
\begin{scope}[xshift=0cm,yshift=0cm,rotate=0,scale=1/1.5]
\draw (-0.6,-0.6) circle (0.15) node{\scriptsize $2$};
\end{scope}

% number in a circle down right
\begin{scope}[xshift=0cm,yshift=0cm,rotate=0,scale=1/1.5]
\draw (0.6,-0.6) circle (0.15) node{\scriptsize $1$};
\end{scope}

\end{scope}
%%%%%%%%%%%%%%%%%%%%%%%%%%%%%%%%%%%%%%%%%%%%%%%%%%%%%%%%%%%%%%%

%dots
\draw (3.5,0) node{$\ldots$} ;
\draw (8.5,0) node{$\ldots$} ;
%%%%%%%%%%%%%%%%%%%%%%%%%%%%%%%%%%%%%%%%%%%%%%%%%%%%%%%%%%%%%%%

%Tétraèdre Tk
\begin{scope}[xshift=6cm,yshift=0cm,rotate=0,scale=1.5]

\draw (0,-0.15) node{\scriptsize $0$} ;
\draw (0,1.15) node{\scriptsize $1$} ;
\draw (1,-0.55) node{\scriptsize $2$} ;
\draw (-1,-0.55) node{\scriptsize $3$} ;
\draw (1,1) node{$e_k$} ;
\draw (0,-0.7) node{$e_{k+1}$} ;
\draw (-0.5,0.3) node{$f_k$} ;
\draw (0.5,0.3) node{$f_{k-1}$} ;

\draw (0,-1.4) node{\large $T_k$} ;
\draw (0,-1.7) node{\tiny $(2\leqslant k \leqslant p-1)$} ;

\path [draw=black,postaction={on each segment={mid arrow=black}}]
(0,0)--(-1.732/2,-0.5);

\path [draw=black,postaction={on each segment={mid arrow =black}}]
(0,0)--(0,1);

\path [draw=black,postaction={on each segment={mid arrow =black}}]
(0,0)--(1.732/2,-0.5);

\draw[->](1.732/2,-0.5) arc (-30:-90:1);
\draw (0,-1) arc (-90:-150:1);

\draw[->](0,1) arc (90:30:1);
\draw (1.732/2,0.5) arc (30:-30:1);

\draw[->](0,1) arc (90:150:1);
\draw (-1.732/2,0.5) arc (150:210:1);

%
% number in a circle up right
\begin{scope}[xshift=0cm,yshift=0cm,rotate=0,scale=1/1.5]
\draw (0.2,0.5) circle (0.15) node{\scriptsize $0$};
\end{scope}

%
% number in a circle up right
\begin{scope}[xshift=0cm,yshift=0cm,rotate=0,scale=1/1.5]
\draw (0.2,-1.7) circle (0.15) node{\scriptsize $0$};
\end{scope}

% number in a circle up left
\begin{scope}[xshift=0cm,yshift=0cm,rotate=0,scale=1/1.5]
\draw (-1.5,1) circle (0.15) node{\scriptsize $k$};
\end{scope}

% number in a circle up right
\begin{scope}[xshift=0cm,yshift=0cm,rotate=0,scale=1/1.5]
\draw (1.5,1)  node{\tiny $k-1$};
\node[draw,ellipse] (S) at(1.5,1) {\ \ \ };
\end{scope}

% number in a circle down left
\begin{scope}[xshift=0cm,yshift=0cm,rotate=0,scale=1/1.5]
\draw (-0.6,-0.6)  node{\tiny $k+1$};
\node[draw,ellipse] (S) at(-0.6,-0.6) {\ \ \ };
\end{scope}

% number in a circle down right
\begin{scope}[xshift=0cm,yshift=0cm,rotate=0,scale=1/1.5]
\draw (0.6,-0.6) circle (0.15) node{\scriptsize $k$};
\end{scope}

\end{scope}
%%%%%%%%%%%%%%%%%%%%%%%%%%%%%%%%%%%%%%%%%%%%%%%%%%%%%%%%%%%%%%%

%Tétraèdre Tk
\begin{scope}[xshift=11cm,yshift=0cm,rotate=0,scale=1.5]

\draw (0,-0.15) node{\scriptsize $0$} ;
\draw (0,1.15) node{\scriptsize $1$} ;
\draw (1,-0.55) node{\scriptsize $2$} ;
\draw (-1,-0.55) node{\scriptsize $3$} ;
\draw (1,1) node{$e_p$} ;
\draw (0,-0.7) node{$e_{p+1}$} ;
\draw (-0.5,0.3) node{$f_p$} ;
\draw (0.5,0.3) node{$f_{p-1}$} ;

\draw (0,-1.4) node{\large $T_p$} ;

\path [draw=black,postaction={on each segment={mid arrow=black}}]
(0,0)--(-1.732/2,-0.5);

\path [draw=black,postaction={on each segment={mid arrow =black}}]
(0,0)--(0,1);

\path [draw=black,postaction={on each segment={mid arrow =black}}]
(0,0)--(1.732/2,-0.5);

\draw[->](1.732/2,-0.5) arc (-30:-90:1);
\draw (0,-1) arc (-90:-150:1);

\draw[->](0,1) arc (90:30:1);
\draw (1.732/2,0.5) arc (30:-30:1);

\draw[->](0,1) arc (90:150:1);
\draw (-1.732/2,0.5) arc (150:210:1);

%
% number in a circle up right
\begin{scope}[xshift=0cm,yshift=0cm,rotate=0,scale=1/1.5]
\draw (0.2,0.5) circle (0.15) node{\scriptsize $0$};
\end{scope}

\begin{scope}[xshift=0cm,yshift=0cm,rotate=0,scale=1/1.5]
\draw (0.2,-1.7) circle (0.15) node{\scriptsize $0$};
\end{scope}

% number in a circle up left
\begin{scope}[xshift=0cm,yshift=0cm,rotate=0,scale=1/1.5]
\draw (-1.5,1) circle (0.15) node{\scriptsize $p$};
\end{scope}

% number in a circle up right
\begin{scope}[xshift=0cm,yshift=0cm,rotate=0,scale=1/1.5]
\draw (1.5,1)  node{\tiny $p-1$};
\node[draw,ellipse] (S) at(1.5,1) {\ \ \ };
\end{scope}

% number in a circle down left
\begin{scope}[xshift=0cm,yshift=0cm,rotate=0,scale=1/1.5]
\draw (-0.6,-0.6)  node{\tiny $p+1$};
\node[draw,ellipse] (S) at(-0.6,-0.6) {\ \ \ };
\end{scope}

% number in a circle down right
\begin{scope}[xshift=0cm,yshift=0cm,rotate=0,scale=1/1.5]
\draw (0.6,-0.6) circle (0.15) node{\scriptsize $p$};
\end{scope}

\end{scope}
%%%%%%%%%%%%%%%%%%%%%%%%%%%%%%%%%%%%%%%%%%%%%%%%%%%%%%%%%%%%%%%

%Tétraèdre U
\begin{scope}[xshift=1cm,yshift=-5cm,rotate=0,scale=1.5]

\draw (0,-0.15) node{\scriptsize $0$} ;
\draw (0,1.15) node{\scriptsize $1$} ;
\draw (1,-0.55) node{\scriptsize $3$} ;
\draw (-1,-0.55) node{\scriptsize $2$} ;
\draw (1,1) node{$r$} ;
\draw (0,-0.6) node{$v$} ;
\draw (-0.5,0.3) node{$g$} ;
\draw (0.5,0.3) node{$s$} ;

\draw (0,-1.4) node{\large $U$} ;

\path [draw=black,postaction={on each segment={mid arrow =black}}]
(0,0)--(0,1);

\path [draw=black,postaction={on each segment={mid arrow =black}}]
(0,0)--(1.732/2,-0.5);

\draw[->] (0,0)--(-1.732*0.3,-1*0.3);
\draw (-1.732*0.3,-1*0.3)--(-1.732/2,-1/2);

\draw[color=black][-<](1.732/2,-0.5) arc (-30:-90:1);
\draw[color=black] (-1.732/2,-0.5) arc (-150:-87:1);

\draw[color=black][->](0,1) arc (90:30:1);
\draw[color=black] (1.732/2,0.5) arc (30:-30:1);

\draw[color=black][->](0,1) arc (90:150:1);
\draw[color=black] (-1.732/2,0.5) arc (150:210:1);
\begin{scope}[xshift=0cm,yshift=0cm,rotate=0,scale=1/1.5]
\draw (-1.5,1)  node{\tiny $p+1$};
\node[draw,ellipse] (S) at(-1.5,1) {\ \ \ };
\end{scope}

\begin{scope}[xshift=0cm,yshift=0cm,rotate=0,scale=1/1.5]
\draw (1.5,1)  node{\tiny $p+1$};
\node[draw,ellipse] (S) at(1.5,1) {\ \ \ };
\end{scope}

\begin{scope}[xshift=0cm,yshift=0cm,rotate=0,scale=1/1.5]
\draw (0.6,-0.7)  node{\tiny $p+1$};
\node[draw,ellipse] (S) at(0.6,-0.7) {\ \ \ };
\end{scope}

% number in a circle down left
\begin{scope}[xshift=0cm,yshift=0cm,rotate=0,scale=1/1.5]
\draw (-0.6,-0.6) circle (0.15) node{\scriptsize $p$};
\end{scope}

\end{scope}
%%%%%%%%%%%%%%%%%%%%%%%%%%%%%%%%%%%%%%%%%%%%%%%%%%%%%%%%%%%%%%%

%Tétraèdre V
\begin{scope}[xshift=6cm,yshift=-5cm,rotate=0,scale=1.5]

\draw (0,-0.15) node{\scriptsize $0$} ;
\draw (0,1.15) node{\scriptsize $1$} ;
\draw (1,-0.55) node{\scriptsize $3$} ;
\draw (-1,-0.55) node{\scriptsize $2$} ;
\draw (0.7,1) node{$g$} ;
\draw (0,-0.6) node{$s$} ;
\draw (-0.5,0.3) node{$u$} ;
\draw (0.5,0.3) node{$f_p$} ;

\draw (0,-1.4) node{\large $V$} ;

\path [draw=black,postaction={on each segment={mid arrow =black}}]
(0,0)--(0,1);

\path [draw=black,postaction={on each segment={mid arrow =black}}]
(0,0)--(-1.732/2,-0.5);

\draw[color=black,<-](1.732/2*0.6,-0.5*0.6) -- (0,0);
\draw[color=black](1.732/4,-0.25) -- (1.732/2,-0.5);

\draw[color=black](1.732/2,-0.5) arc (-30:-90:1);
\draw[color=black,->] (-1.732/2,-0.5) arc (-150:-87:1);

\draw[color=black][->](0,1) arc (90:30:1);
\draw[color=black] (1.732/2,0.5) arc (30:-30:1);

\draw[color=black][->](0,1) arc (90:150:1);
\draw[color=black] (-1.732/2,0.5) arc (150:210:1);

\begin{scope}[xshift=0cm,yshift=0cm,rotate=0,scale=1/1.5]
\draw (0.2,-1.7)  node{\tiny $p+1$};
\node[draw,ellipse] (S) at(0.2,-1.7) {\ \ \ };
\end{scope}

\begin{scope}[xshift=0cm,yshift=0cm,rotate=0,scale=1/1.5]
\draw (0.6,-0.7)  node{\tiny $p+1$};
\node[draw,ellipse] (S) at(0.6,-0.7) {\ \ \ };
\end{scope}

% number in a circle up right
\begin{scope}[xshift=0cm,yshift=0cm,rotate=0,scale=1/1.5]
\draw (1.5,1) circle (0.15) node{\scriptsize $p$};
\end{scope}

%
% number in a circle up right
\begin{scope}[xshift=0cm,yshift=0cm,rotate=0,scale=1/1.5]
\draw (0.2,0.5) circle (0.15) node{\scriptsize $0$};
\end{scope}

\end{scope}
%%%%%%%%%%%%%%%%%%%%%%%%%%%%%%%%%%%%%%%%%%%%%%%%%%%%%%%%%%%%%%%

%Tétraèdre W
\begin{scope}[xshift=11cm,yshift=-5cm,rotate=0,scale=1.5]

\draw (0,-0.15) node{\scriptsize $0$} ;
\draw (0,1.15) node{\scriptsize $1$} ;
\draw (1,-0.55) node{\scriptsize $3$} ;
\draw (-1,-0.55) node{\scriptsize $2$} ;
\draw (1,1) node{$u$} ;
\draw (0,-0.6) node{$r$} ;
\draw (-0.5,0.3) node{$e_{p+1}$} ;
\draw (0.5,0.3) node{$v$} ;

\draw (0,-1.4) node{\large $W$} ;

\draw[->](0,0)--(0,0.6);
\draw(0,0.6)--(0,1);

\path [draw=black,postaction={on each segment={mid arrow =black}}]
(0,0)--(1.732/2,-0.5);

\draw[color=black,<-](-1.732/2*0.6,-0.5*0.6) -- (0,0);
\draw[color=black](-1.732/4,-0.25) -- (-1.732/2,-0.5);

\draw[color=black][-<](1.732/2,-0.5) arc (-30:-90:1);
\draw[color=black] (-1.732/2,-0.5) arc (-150:-87:1);

\draw[color=black][->](0,1) arc (90:30:1);
\draw[color=black] (1.732/2,0.5) arc (30:-30:1);

\draw[color=black][->](0,1) arc (90:150:1);
\draw[color=black] (-1.732/2,0.5) arc (150:210:1);

\begin{scope}[xshift=0cm,yshift=0cm,rotate=0,scale=1/1.5]
\draw (-0.6,-0.7)  node{\tiny $p+1$};
\node[draw,ellipse] (S) at(-0.6,-0.7) {\ \ \ };
\end{scope}

\begin{scope}[xshift=0cm,yshift=0cm,rotate=0,scale=1/1.5]
\draw (0.6,-0.7)  node{\tiny $p+1$};
\node[draw,ellipse] (S) at(0.6,-0.7) {\ \ \ };
\end{scope}

% number in a circle up right
\begin{scope}[xshift=0cm,yshift=0cm,rotate=0,scale=1/1.5]
\draw (-1.5,1) circle (0.15) node{\scriptsize $0$};
\end{scope}

%
% number in a circle up right
\begin{scope}[xshift=0cm,yshift=0cm,rotate=0,scale=1/1.5]
\draw (0.2,0.5) circle (0.15) node{\scriptsize $p$};
\end{scope}

\end{scope}
%%%%%%%%%%%%%%%%%%%%%%%%%%%%%%%%%%%%%%%%%%%%%%%%%%%%%%%%%%%%%%%

%%%%%%%%%%%%%%%%%%%%%%%%%%%%%%%%%%%%%%%%%%%%%%%%%%%%%%%%%%%%%%%
\end{tikzpicture}
\caption{The ideal triangulation $X_n$ for $S^3\setminus K_n$, $n$ odd, $n \geqslant 3$, with $p=\frac{n-3}{2}$} \label{fig:id:trig:odd}
\end{figure}
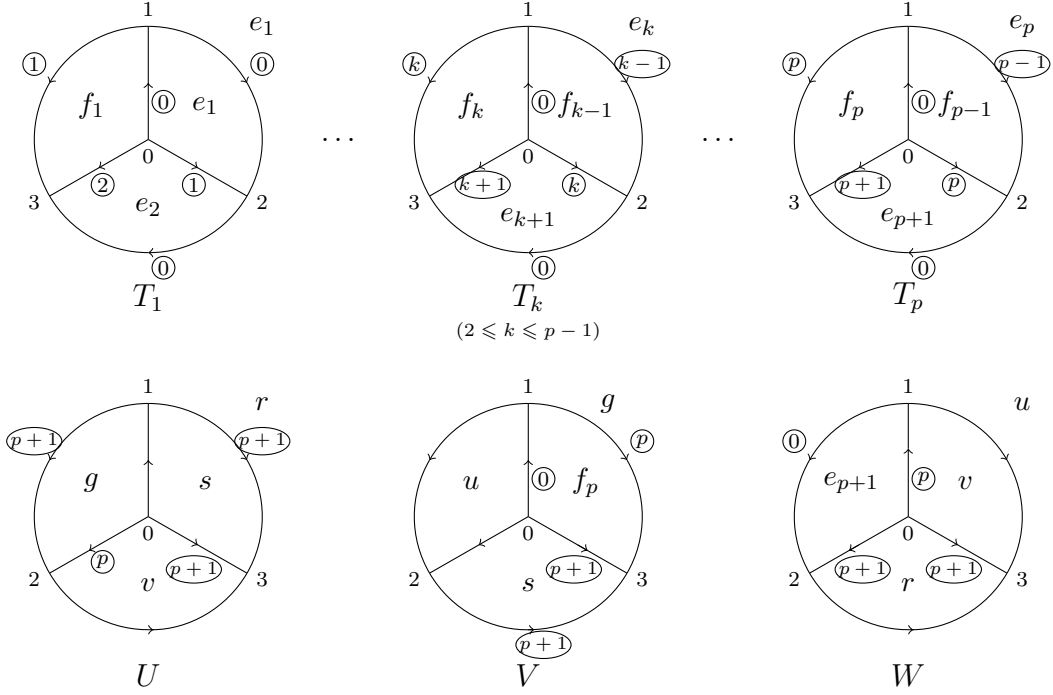

In Figure \ref{fig:id:trig:odd} there are
\begin{itemize}
\item $1$ common vertex,
\item $p+3 = \frac{n+3}{2}$ edges (simple arrow $\overrightarrow{\eta_s}$ and the simple arrows $\overrightarrow{\eta_0}, \ldots, \overrightarrow{\eta_{p+1}}$ indexed by $0, \ldots p+1$ in circles),
\item $2p+6 = n+3$ faces ($e_1, \ldots, e_{p+1}, f_1, \ldots, f_{p}, g,r,s,u,v  $),
\item $p+3 = \frac{n+3}{2}$ tetrahedra ($T_1, \ldots, T_{p}, U,  V, W$).
\end{itemize}

\subsection{Proof of Theorem \ref{thm:trig}}

We can now conclude with the proof of Theorem \ref{thm:trig}.

\begin{proof}[Proof of Theorem \ref{thm:trig}]
The triangulations of Figures \ref{fig:H:trig:odd} and \ref{fig:id:trig:odd} correspond to the common ``comb representation'' of Figure \ref{fig:trig:odd}.

Similarly, the triangulations of Figures \ref{fig:H:trig:even} and \ref{fig:id:trig:even} (constructed in Section \ref{sub:even:trig}) correspond to the common ``comb representation'' of Figure \ref{fig:trig:even}.
\end{proof}

\section{Angle structures and geometricity (odd case)}\label{sec:geom}

In this section, $n$ will be an odd integer greater than or equal to $3$. 

\subsection{Geometricity of the ideal triangulations}

Here we will compute the balanced angle relations for the ideal triangulations $X_n$ and their spaces of angle structures $\mathcal{A}_{X_n}$. We will then prove that the $X_n$ are \textit{geometric}.

\begin{theorem}\label{thm:geometric}
For every odd $n \geqslant 3$, the ideal triangulation $X_n$ of the $n$-th twist knot complement $S^3 \setminus K_n$ is geometric.
\end{theorem}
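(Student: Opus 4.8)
The plan is to follow the Casson--Rivin strategy exactly as outlined in the introduction, reducing geometricity to two statements about the volume functional $\mathcal{V}\colon \overline{\mathcal{A}_{X_n}} \to \R$. Since $\mathcal{V}$ is continuous on the compact polytope $\overline{\mathcal{A}_{X_n}}$, it attains its maximum somewhere. If I can show (i) that $\mathcal{A}_{X_n} \neq \emptyset$, so that $\overline{\mathcal{A}_{X_n}}$ has nonempty interior, and (ii) that this maximum is \emph{not} attained on the boundary $\partial \overline{\mathcal{A}_{X_n}}$, then the maximizer lies in the open polytope $\mathcal{A}_{X_n}$ and is therefore a critical point of $\mathcal{V}$ restricted to $\mathcal{A}_{X_n}$. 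By Theorem~\ref{thm:casson:rivin} (Casson--Rivin), such a critical point corresponds to the (unique) complete hyperbolic metric on $S^3 \setminus K_n$, which is precisely the assertion that $X_n$ is geometric. Strict concavity of $\mathcal{V}$ on $\mathcal{A}_{X_n}$ moreover guarantees this critical point is unique, consistent with Mostow--Prasad rigidity.

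The first concrete task is to read off the balanced angle relations from the combinatorics of Figure~\ref{fig:id:trig:odd}. Labeling the angles of the tetrahedra $T_1,\dots,T_p,U,V,W$ and imposing $\omega_{X_n,\alpha}(e) = 2\pi$ on each of the $p+3$ edges $\overrightarrow{e_0},\dots,\overrightarrow{e_{p+1}},\overrightarrow{e_s}$ yields an explicit linear system; together with the three-angles-sum-to-$\pi$ constraints this cuts out the convex polytope $\mathcal{A}_{X_n} \subset \R^{3(p+3)}$. For non-emptiness (Lemma~\ref{lem:non:empty} in the paper), I would exhibit an explicit interior point: since the combinatorial pattern for the ``tower'' tetrahedra $T_k$ is uniform in $k$, I expect a symmetric ansatz (assigning the same triple of angles to all $T_k$ and treating $U,V,W$ separately) to reduce the balancing equations to a small system whose positive solution can be verified by hand, giving a point of $\mathcal{A}_{X_n}$ with every angle strictly in $(0,\pi)$.

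The main obstacle is (ii), the interior-maximum statement (Lemma~\ref{lem:interior}). Here I would use the leading--trailing deformation technique of Futer and the second author \cite{FG, Gf}. The key analytic input is that $\Lambda'(x) = -\log\lvert 2\sin x\rvert \to +\infty$ as $x \to 0^+$ (and as $x \to \pi^-$), so that $\mathcal{V}$ has infinite slope transverse to the boundary faces of $\overline{\mathcal{A}_{X_n}}$. Concretely, suppose for contradiction that the maximum is attained at some $\alpha_0 \in \partial\overline{\mathcal{A}_{X_n}}$, so that at least one tetrahedral angle vanishes. The goal is to produce a tangent vector $w$ to $\overline{\mathcal{A}_{X_n}}$ (a deformation preserving all the linear balancing constraints) pointing into the interior, with a strictly positive component on some vanishing angle and no offsetting $+\infty$ contribution elsewhere, so that
\[
\left.\frac{d}{dt}\right|_{t=0^+}\mathcal{V}(\alpha_0 + tw) = +\infty,
\]
contradicting maximality. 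The combinatorial heart of the argument is to show such a $w$ always exists given the specific edge identifications of $X_n$: one analyzes which configurations of zero angles are compatible with the balancing relations and rules them out one by one, exploiting the chain of face gluings linking $T_1,\dots,T_p$ to $U,V,W$. This casework is where the particular geometry of the twist-knot triangulation enters, and where I expect the bulk of the technical effort to lie; the uniform structure of the tower should make the induction on $p$ tractable, but the ``end'' tetrahedra $U,V,W$ will require separate handling. Once Lemma~\ref{lem:interior} is established, Theorem~\ref{thm:geometric} follows immediately from the reduction in the first paragraph.
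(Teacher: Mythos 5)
Your overall strategy is exactly the paper's: show $\mathcal{A}_{X_n}\neq\emptyset$, show the maximum of $\mathcal{V}$ on the compact set $\overline{\mathcal{A}_{X_n}}$ cannot lie on the boundary, and conclude by Casson--Rivin. Your treatment of the interior-maximum step also matches the paper's in substance: the infinite slope of $\Lambda$ at $0$ and $\pi$ is packaged in the paper as the statement that at a boundary maximizer every flat tetrahedron must be taut (quoted from \cite[Proposition 7.1]{Gf}, Lemma~\ref{lem:flat:taut}), after which the taut configurations are ruled out by casework on the balancing relations --- the paper first symmetrizes so that $U,V,W$ carry the same angles (using concavity of $\mathcal{V}$ and the symmetry of the equations), then shows $T_p$ and $U$ cannot be flat and runs an upward induction for $b_k>0$ and a downward induction for $b_k<\pi$.

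There is, however, one concrete step in your plan that would fail: the ``symmetric ansatz'' for non-emptiness, assigning the same triple $(a,b,c)$ to all the tower tetrahedra $T_1,\dots,T_p$. The interior edge equations read $E_k:\ c_{k-1}+2b_k+c_{k+1}=2\pi$, which (using $a_k+b_k+c_k=\pi$) rewrite as $c_{k-1}-2c_k+c_{k+1}=2a_k$. Thus any angle structure forces the sequence $(0,c_1,\dots,c_p)$ to be \emph{strictly convex}, so the $c_k$ cannot all be equal; a uniform assignment gives $2b+2c=2\pi$, hence $a=0$, which is not an interior point. The paper's Lemma~\ref{lem:non:empty} instead takes a $j$-dependent family, essentially $c_j=\epsilon j^2$ with $a_j=\epsilon$ for $j\leqslant p-1$ and separate values for $T_p$, $U$, $V$, $W$; your ansatz needs to be modified along these lines (any strictly convex choice of the $c_j$ compatible with the end equations $E_p$, $E_{p+1}$, $E_s$ will do). This is a local repair rather than a flaw in the strategy, but as written the non-emptiness step does not go through.
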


To prove Theorem \ref{thm:geometric}, we follow  Futer--Gu\'eritaud \cite{FG}: we first prove that the space of angle structures $\mathcal{A}_{X_n}$ is non-empty (Lemma \ref{lem:non:empty}); then we prove by contradiction that the volume functional cannot attain its maximum on the boundary $\overline{\mathcal{A}_{X_n}} \setminus \mathcal{A}_{X_n}$ (Lemma \ref{lem:interior}).

For the remainder of this section, $n$ will be a fixed odd integer, $n \geqslant 7$. Recall that $p=\frac{n-3}{2}$. The cases $n=3, 5$ (i.e.\ $p=0, 1$) are similar and simpler than the general following $n \geqslant 7$ case, and will be discussed at the end of this section (Remark
\ref{rem:p1}).

Recall that we denoted $\overrightarrow{\eta_0}, \ldots, \overrightarrow{\eta_{p+1}}, \overrightarrow{\eta_s} \in (X_n)^1$ the $p+3$ edges in $X_n$ respectively represented in Figure \ref{fig:id:trig:odd} by arrows with circled $0$, \ldots, circled $p+1$ and simple arrow.

For $\alpha=(a_1,b_1,c_1,\ldots,a_p,b_p,c_p,a_U,b_U,c_U,a_V,b_V,c_V,a_W,b_W,c_W) \in \mathcal{S}_{X_n}$ a shape structure on $X_n$, we compute the weights of each edge:
\begin{itemize}
\item $\omega_s(\alpha):= \omega_{X_n,\alpha}(\overrightarrow{\eta_s})=
2 a_U+b_V+c_V+a_W+b_W
$
\item $\omega_0(\alpha):= \omega_{X_n,\alpha}(\overrightarrow{\eta_0})=
2 a_1 + c_1 + 2 a_2 + \ldots + 2 a_p + a_V+c_W
$
\item $\omega_1(\alpha):= \omega_{X_n,\alpha}(\overrightarrow{\eta_1})=
2b_1+c_2
$
\\

\item $\omega_k(\alpha):= \omega_{X_n,\alpha}(\overrightarrow{\eta_k})=
c_{k-1}+2b_k+c_{k+1}
$ \ \
(for $2\leqslant k \leqslant p-1$)
\\
\vspace*{-2mm}
\item $\omega_p(\alpha):= \omega_{X_n,\alpha}(\overrightarrow{\eta_p})=
c_{p-1}+2b_p+b_U+b_V+a_W$
\item $\omega_{p+1}(\alpha):= \omega_{X_n,\alpha}(\overrightarrow{\eta_{p+1}})=
c_p+b_U+2c_U+a_V+c_V+b_W+c_W
$
\end{itemize}
The space of angle structures $\mathcal{A}_{X_n}$ is made of shape structures $\alpha \in \mathcal{S}_{X_n}$ satisfying $\omega_j(\alpha)=2\pi$ for all $j\in\{s,0,\ldots,p+1\}$. 
The sum of all these equations says that all the angles add up to $(p+3)\pi$, which is true in any shape structure, therefore we can drop $\omega_0(\alpha)$ as redundant.
Using the properties of shape structures, $\mathcal{A}_{X_n}$ is thus defined by the $p+2$ following equations on $\alpha$:

\begin{itemize}
\item $E_s(\alpha): \ 2 a_U =a_V+c_W $
\item $E_1(\alpha): \ 2b_1+c_2 = 2 \pi$ 
\\ \vspace*{-2mm}
\item $E_k(\alpha): \ c_{k-1}+2b_k+c_{k+1} = 2 \pi $
\ \ (for $2\leqslant k \leqslant p-1$)
\\ \vspace*{-2mm}
\item $E_p(\alpha): \    c_{p-1}+2b_p+ (b_U+b_V+a_W)=2\pi$
\item $E_{p+1}(\alpha): \ 3c_p + (a_U+a_V+c_W) + 3(c_U+c_V+b_W) = 3\pi~;$
\end{itemize}
the last line was obtained as $3B_{p+1}+2 B_s - 3F_U - 2F_V - 2F_W$,
where $F_j$ is the relationship $a_j+b_j+c_j=\pi$ and $B_j$ is the relationship $\omega_{j}(\alpha) = 2 \pi$.
In other words, 
$$\mathcal{A}_{X_n} = \{ \alpha \in \mathcal{S}_{X_n} \ | \ \forall j \in \{s,1,\ldots,p+1\}, \ E_j(\alpha)\}.$$

\begin{lemma}\label{lem:non:empty}
The set  $\mathcal{A}_{X_n}$ is non-empty.
\end{lemma}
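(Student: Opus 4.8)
The plan is to prove non-emptiness by exhibiting a single explicit point of $\mathcal{A}_{X_n}$. Since $\mathcal{A}_{X_n}$ is cut out of the open box $(0,\pi)^{3(p+3)}$ (intersected with the affine conditions $F_j\colon a_j+b_j+c_j=\pi$) by the finitely many linear equations $E_s,E_1,\ldots,E_{p+1}$, the only real content is the positivity of all angles. The structural observation I would exploit is that the system is essentially \emph{block-decoupled}: the equations $E_1,\ldots,E_{p-1}$ involve only the tower tetrahedra $T_1,\ldots,T_p$, whereas $E_p$, $E_{p+1}$ and $E_s$ are the only relations coupling $T_p$ to the terminal block $U,V,W$.

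First I would deal with the tower. Reading $E_k$ as $c_{k-1}+2b_k+c_{k+1}=2\pi$ shows that, once the $c_k$ are chosen, each $b_k$ (for $1\le k\le p-1$) is forced to equal $\pi-\tfrac12(c_{k-1}+c_{k+1})$, whence $a_k=\pi-b_k-c_k=\tfrac12(c_{k-1}+c_{k+1})-c_k$ is exactly the discrete second difference of the sequence $(c_k)$ (with the convention $c_0=0$, which makes $E_1$ a degenerate case of $E_k$). The key idea is therefore to choose $(c_k)$ to be a \emph{strictly convex, positive} sequence, so that every $a_k$ is automatically positive. Concretely I would set $c_k=\gamma k^2$ for a small parameter $\gamma>0$: then $a_k=\gamma>0$ for all $k$, while $b_k=\pi-\gamma(k^2+1)$, and all of $a_k,b_k,c_k$ lie in $(0,\pi)$ as soon as $\gamma$ is of order $1/p^2$ (so that $c_p=\gamma p^2$ stays below, say, $\pi/2$).

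Next I would close up the terminal block. The angle $b_p$ does not occur in the tower equations; it is determined by $E_p$ as $b_p=\pi-\tfrac12\bigl(c_{p-1}+b_U+b_V+a_W\bigr)$, so it suffices to produce angles for $U,V,W$ satisfying $E_s$ and $E_{p+1}$ with $b_U+b_V+a_W$ controlled. Substituting $E_s\colon a_V+c_W=2a_U$ into $E_{p+1}$ collapses the two constraints into the single mild relation $a_U+c_U+c_V+b_W=\pi-c_p$, which visibly admits positive solutions. I would record an explicit symmetric one, for instance $a_U=c_U=c_W=\tfrac\pi4$, $b_U=\tfrac\pi2$, $a_V=\tfrac\pi4$, $c_V=b_W=\tfrac\pi4-\tfrac{c_p}2$, $b_V=a_W=\tfrac\pi2+\tfrac{c_p}2$, and then check directly that $E_s$ and $E_{p+1}$ hold and that the resulting $b_p$ and $a_p=\pi-b_p-c_p$ are positive for $\gamma$ small (both are $\approx\tfrac\pi4$ and $\approx\tfrac{3\pi}4$ respectively up to $O(\gamma p^2)$ corrections).

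The main obstacle — and the reason a naive uniform guess fails — is positivity across the entire tower: a constant choice of $(b_k,c_k)$ forces $a_k=0$, and the second-difference structure of $E_k$ forces any admissible sequence $(c_k)$ to be strictly convex, hence to grow at least quadratically. One must therefore scale $\gamma\sim 1/p^2$ precisely so that this unavoidable growth never pushes $c_p$ beyond $\pi$ nor drives some $b_k$ negative. Once this scaling is fixed, the remaining verifications (the boundary equations $E_p,E_{p+1},E_s$ and the positivity of $b_p$ and $a_p$) are routine. This argument applies for $p\ge 2$, that is $n\ge 7$; the small cases $p=0,1$ are to be treated separately, as indicated in Remark \ref{rem:p1}.
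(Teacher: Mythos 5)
Your proposal is correct and takes essentially the same approach as the paper: both exhibit an explicit angle structure and verify the equations $E_s,E_1,\ldots,E_{p+1}$ directly, using the same quadratic choice $c_k=\gamma k^2$ (the paper's $\epsilon j^2$) so that the second differences $a_k$ are a small positive constant. Your terminal block $(U,V,W,T_p)$ uses different explicit values than the paper's (which takes $(a_U,b_U,c_U)=(a_V,b_V,c_V)=(c_W,a_W,b_W)=(\pi/2+\epsilon p^2/2,\pi/3,\pi/6-\epsilon p^2/2)$), but I checked that your choice satisfies $E_s$, the reduced form of $E_{p+1}$, and $E_p$ with all angles in $(0,\pi)$ for $\gamma$ small, so this is only a cosmetic difference.
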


\begin{proof}
For small $\epsilon>0$, define:
$$
\begin{pmatrix}a_j\\b_j\\c_j \end{pmatrix}:=
\begin{pmatrix}\epsilon\\ \pi - \epsilon(j^2+1) \\ \epsilon j^2 \end{pmatrix}
\text{for } 1\leqslant j \leqslant p-1
, \
\begin{pmatrix}a_p\\b_p\\c_p \end{pmatrix}:=
\begin{pmatrix} \pi/2 - \epsilon(p^2+2p-1)/2 \\ \pi/2 - \epsilon(p^2-2p+1)/2 \\ \epsilon p^2 \end{pmatrix},
$$
$$
\begin{pmatrix}a_U\\b_U\\c_U \end{pmatrix} = 
\begin{pmatrix}a_V\\b_V\\c_V \end{pmatrix} = 
\begin{pmatrix}c_W\\a_W\\b_W \end{pmatrix}:=
\begin{pmatrix}  \pi/2 + \epsilon p^2/2  \\ \pi/3   \\ \pi/6 - \epsilon p^2/2  \end{pmatrix}.
$$
By direct computation, we can check that this $\alpha$  is a shape structure (the angles are in $(0,\pi)$ if $\epsilon$ is small enough), and that  the equations $E_j(\alpha)$ are satisfied for $j\in\{s,1,\ldots,p+1\}$.
\end{proof}

We will say that a tetrahedron $T$ of a triangulation $X$ endowed with an extended shape structure $\alpha \in \overline{\mathcal{S}_X}$ is \textit{flat for $\alpha$} if  {at least} one of the three angles of $T$ is zero, and \textit{taut for $\alpha$} if two angles are zero and the third is $\pi$. In both cases, $T$ has a volume equal to zero. 

\begin{lemma}\label{lem:flat:taut}
Suppose $\alpha \in \overline{\mathcal{A}_{X_n}} \setminus \mathcal{A}_{X_n}$ is such that the volume functional on $\overline{\mathcal{A}_{X_n}}$ is maximal at $\alpha$. If an angle of $\alpha$ equals $0$, then the other two angles for the same tetrahedron are $0$ and~$\pi$. In other words, if a tetrahedron is flat for $\alpha$, then it is taut for $\alpha$.
\end{lemma}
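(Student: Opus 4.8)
The plan is to argue by contradiction: suppose that at the maximizer $\alpha$ some tetrahedron is \emph{flat but not taut}, i.e.\ it has exactly one vanishing angle while the other two lie in $(0,\pi)$ (indeed, if an angle equals $\pi$ the remaining two sum to $0$ and hence both vanish, so a $\pi$-angle already forces tautness; thus the only way to be flat without being taut is to have precisely one zero angle). I would then produce a path inside $\overline{\mathcal{A}_{X_n}}$, issuing from $\alpha$ and entering $\mathcal{A}_{X_n}$, along which $\mathcal{V}$ strictly increases near $\alpha$, contradicting maximality.

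First, by Lemma \ref{lem:non:empty} I would fix an interior point $\beta \in \mathcal{A}_{X_n}$, so that every angle of $\beta$ lies in $(0,\pi)$, and consider the segment $\alpha_t := (1-t)\alpha + t\beta$ for $t \in [0,1]$. Since the edge equations are linear and satisfied by both $\alpha$ and $\beta$, each $\alpha_t$ satisfies them too; moreover, because $\beta$ has all angles strictly inside $(0,\pi)$ while $\alpha$ has all angles in $[0,\pi]$, the point $\alpha_t$ has every angle in $(0,\pi)$ for $t \in (0,1]$. Hence $\alpha_t \in \mathcal{A}_{X_n}$ for $t \in (0,1]$ and $\alpha_t \in \overline{\mathcal{A}_{X_n}}$ throughout. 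Setting $g(t) := \mathcal{V}(\alpha_t)$, maximality of $\alpha = \alpha_0$ gives $g(t) \le g(0)$ for all $t \in [0,1]$.

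Next I would differentiate $g$ on $(0,1]$, where it is smooth since every angle stays in $(0,\pi)$. Writing $\theta(t) = (1-t)\theta^\alpha + t\theta^\beta$ for a generic tetrahedral angle and using $\Lambda'(x) = -\log\lvert 2\sin x\rvert$, I obtain $g'(t) = \sum_{\theta} \Lambda'(\theta(t))\,(\theta^\beta - \theta^\alpha)$, the sum ranging over all $3(p+3)$ tetrahedral angles. The only terms diverging as $t \to 0^+$ come from angles equal to $0$ or $\pi$ at $\alpha$, where $\Lambda'(\theta(t)) = -\log t + O(1)$; writing the leading behaviour as $g'(t) = B\,(-\log t) + O(1)$, I would compute the coefficient $B$ tetrahedron by tetrahedron. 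A tetrahedron with all angles positive contributes $0$. A taut tetrahedron, with angles $(0,0,\pi)$, contributes $\theta_1^\beta + \theta_2^\beta + (\theta_3^\beta - \pi) = \pi - \pi = 0$, the cancellation being forced \emph{only} by $\theta_1^\beta + \theta_2^\beta + \theta_3^\beta = \pi$. A flat-but-not-taut tetrahedron has a single vanishing angle $\theta_1$ and contributes exactly $\theta_1^\beta > 0$.

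Consequently, if at least one tetrahedron is flat but not taut, then $B = \sum_{\text{flat, not taut}} \theta_{\mathrm{zero}}^\beta > 0$, whence $g'(t) = B\,(-\log t) + O(1) \to +\infty$ as $t \to 0^+$. In particular $g'(t) > 0$ on some interval $(0,\varepsilon)$, so $g$ is strictly increasing there and $g(t) > g(0)$ for small $t > 0$, contradicting maximality. Therefore no tetrahedron can be flat without being taut, which is the claim. I expect the delicate point to be precisely the bookkeeping of the competing blow-ups in the computation of $B$: each zero angle produces a positive logarithmic divergence and each $\pi$-angle a negative one of the same order, and the structural fact that rescues the argument is that inside a taut tetrahedron these cancel exactly, while a flat-but-not-taut tetrahedron leaves one uncancelled positive term.
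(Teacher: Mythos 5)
Your argument is correct and is precisely the proof that the paper invokes by citation: the paper's ``proof'' of this lemma is simply a reference to \cite[Proposition 7.1]{Gf}, and that proposition is established by exactly your segment-toward-an-interior-point computation, with the logarithmic blow-up of $\Lambda'$ at $0$ and $\pi$ cancelling for taut tetrahedra and leaving a strictly positive coefficient for a flat-but-not-taut one. Nothing to add.
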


\begin{proof}
The proof is a computation, for which we refer to \cite[Proposition 7.1]{Gf}.
The basic idea is that unflattening a single flat-but-not-taut tetrahedron will make its volume increase with unbounded derivative.
\end{proof}

Next, we claim that among the volume maximizers, there is one such that 
$(a_U,b_U,c_U)=(a_V,b_V,c_V)=(c_W,a_W,b_W)$.
The involution $(a_V, b_V, c_V) \leftrightarrow (c_W,a_W,b_W)$ preserves all equations $E_j(\alpha)$, so by concavity of the volume function, there is a maximizer such that $(a_V, b_V, c_V)=(c_W,a_W,b_W)$. 
By $E_s(\alpha)$ this implies $a_U=a_V=c_W$. 
The order-3 substitution of variables $$(a_U, b_U, c_U) \rightarrow  (a_V, b_V, c_V) \rightarrow (c_W, a_W, b_W) \rightarrow (a_U, b_U, c_U)$$
then clearly leaves $E_p$ and $E_{p+1}$ unchanged, so by concavity we may average out and find a maximizer such that 
$U,V,W$ have the same angles, as desired.

These identifications make $E_s(\alpha)$ redundant. Moreover, dropping the angles of $V$ and $W$ as variables, we may now rewrite the system of constraints as 
\begin{itemize}
\item $E_1 : \ 2b_1+c_2 = 2 \pi$
\vspace{2mm}
\item $E_k : \ c_{k-1}+2b_k+c_{k+1} = 2 \pi $ \quad (for $2\leqslant k \leqslant p-1$)
\vspace{2mm} 
\item $E'_p : \ c_{p-1}+2b_p + 3b_U  = 2 \pi$
\item $E'_{p+1} : \ c_p + a_U + 3c_U  = \pi$  \quad (not $2\pi$!).
\end{itemize}

\begin{lemma}\label{lem:interior}
Suppose  that the volume functional on $\overline{\mathcal{A}_{X_n}}$ is maximal at $\alpha$. Then $\alpha$ cannot be on the boundary $\overline{\mathcal{A}_{X_n}} \setminus \mathcal{A}_{X_n}$, and is necessarily in the interior $\mathcal{A}_{X_n}$.
\end{lemma}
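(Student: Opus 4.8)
The plan is to argue by contradiction, exploiting the combinatorial rigidity of \emph{taut} tetrahedra rather than any delicate analytic estimate on the volume. Since $\overline{\mathcal{A}_{X_n}}$ is a compact polytope and $\mathcal{V}$ is continuous, a maximizer exists; by the symmetrization carried out just above, I may assume the maximizer $\alpha$ satisfies $U=V=W$ and is subject to the reduced balance equations $E_1$, $E_k$ (for $2\leqslant k\leqslant p-1$), $E'_p$ and $E'_{p+1}$. Suppose for contradiction that $\alpha$ lies on the boundary $\overline{\mathcal{A}_{X_n}}\setminus\mathcal{A}_{X_n}$. Then at least one tetrahedron is flat for $\alpha$, hence \emph{taut} by Lemma \ref{lem:flat:taut}. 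The heart of the proof is to show that a single taut tetrahedron forces \emph{all} of $T_1,\dots,T_p,U$ to be taut.

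I would establish this propagation by running through the tower of equations, recalling that tautness of a tetrahedron means one angle equals $\pi$ and the other two vanish. If $T_1$ is taut, then $E_1\colon 2b_1+c_2=2\pi$ together with $b_1,c_2\in[0,\pi]$ forces $b_1=\pi$ and $c_2=0$ (the alternative $b_1=0$ would demand $c_2=2\pi$); in particular $a_1=c_1=0$ and $T_2$ becomes flat, hence taut. The same reasoning applied to $E_k\colon c_{k-1}+2b_k+c_{k+1}=2\pi$ shows inductively that each $T_k$ is taut with $b_k=\pi$ and $c_k=0$, the competing possibilities being excluded because they would force an angle to exceed $\pi$. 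At the caps, $E'_p\colon c_{p-1}+2b_p+3b_U=2\pi$ with $c_{p-1}=0$ and $b_p=\pi$ forces $b_U=0$, so $U$ is flat hence taut, and then $E'_{p+1}\colon c_p+a_U+3c_U=\pi$ together with $a_U+c_U=\pi$ (from $b_U=0$) and $c_p=0$ pins down $a_U=\pi$, $c_U=0$. The symmetric starting case, when the initially taut tetrahedron is $U$ rather than $T_1$, runs the same argument in reverse: $b_U=\pi$ and $c_U=\pi$ are impossible (they make $E'_p$, respectively $E'_{p+1}$, unsolvable in $[0,\pi]$), so necessarily $a_U=\pi$, which forces $c_p=0$ and then propagates tautness up the tower to $T_1$. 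In every case all tetrahedra end up taut.

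Once every tetrahedron is taut, each contributes $\Lambda(0)+\Lambda(0)+\Lambda(\pi)=0$ to the volume functional, so $\mathcal{V}(\alpha)=0$. This contradicts maximality: the explicit angle structure $\alpha_0\in\mathcal{A}_{X_n}$ produced in Lemma \ref{lem:non:empty} has all its angles in the open interval $(0,\pi)$, so every tetrahedron is a nondegenerate ideal hyperbolic tetrahedron of strictly positive volume and $\mathcal{V}(\alpha_0)>0$; since $\alpha$ is a maximizer, $0=\mathcal{V}(\alpha)\geqslant\mathcal{V}(\alpha_0)>0$, which is absurd. Hence no maximizer lies on the boundary, and $\alpha\in\mathcal{A}_{X_n}$. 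The main obstacle is precisely the propagation step of the second paragraph: although each individual deduction is elementary, one must run the whole tower uniformly and, at the two caps, disentangle the equations $E'_p$ and $E'_{p+1}$ that couple the tower variables $c_{p-1},c_p$ with the $U$-block; this bookkeeping, and not any estimate on $\mathcal{V}$ itself, is where the care lies. The degenerate small cases $p=0,1$ are treated separately, as announced in Remark \ref{rem:p1}.
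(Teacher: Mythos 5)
Your overall strategy (propagate tautness from one flat tetrahedron to all of them, then contradict maximality via $\mathcal{V}(\alpha)=0<\mathcal{V}(\alpha_0)$) is sound in spirit, and its endgame coincides with what the paper does in one of its branches; but the central propagation claim is not established for all starting positions, and in one position it genuinely fails as a local argument. You only run the propagation when the initially taut tetrahedron is $T_1$ or $U$, whereas the flat tetrahedron could be any $T_k$ with $2\leqslant k\leqslant p$. The middle cases can still be treated locally (if $b_k=\pi$, tautness spreads in both directions as you describe; if instead $a_k=\pi$ or $c_k=\pi$, then $E_k$ forces $c_{k-1}=c_{k+1}=\pi$, and pushing this down the tower contradicts $E_1$). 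The real obstruction is the case where $T_p$ is taut with $a_p=\pi$, i.e.\ $b_p=c_p=0$: then $E_{p-1}$ becomes $c_{p-2}+2b_{p-1}=2\pi$, $E'_p$ becomes $c_{p-1}+3b_U=2\pi$ and $E'_{p+1}$ becomes $a_U+3c_U=\pi$, and none of these forces $T_{p-1}$ or $U$ to be flat, so the step-by-step propagation stalls. To close this case one needs the global observation that $(0,c_1,\dots,c_p)$ is a convex sequence, because $E_k$ rewrites as $c_{k-1}-2c_k+c_{k+1}=2a_k\geqslant 0$; a non-negative convex sequence vanishing at both ends is identically zero, so $c_p=0$ forces $c_1=\dots=c_p=0$ and hence flatness of every $T_k$. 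This convexity argument is exactly the key step of the paper's proof, and it is the idea missing from yours.

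Two smaller points. At the cap you assert $b_p=\pi$ without excluding $b_p=0$; in that subcase $E'_p$ and $E'_{p+1}$ give $(a_U,b_U,c_U)=(0,2\pi/3,\pi/3)$, which is flat but not taut and so contradicts Lemma~\ref{lem:flat:taut} (or, as the paper prefers, makes all tetrahedra flat and the volume zero) — harmless, but it must be said. Finally, for comparison: the paper organizes the argument the other way around, proving directly and in a specific order that no tetrahedron is flat ($T_p$ first, via the convexity argument for $c_p=0$ and via $E'_p$, $E'_{p+1}$ for $c_p=\pi$; then $U$; then $0<b_k<\pi$ by an upward and a downward induction), and it invokes the zero-volume contradiction only in the branch $c_p=0$. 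Once you insert the convexity step and complete the case analysis over the initially flat tetrahedron, your ``all-taut'' route does go through; as written, however, the proposal has a genuine gap.
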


\begin{proof}
By Lemma~\ref{lem:flat:taut}, it is enough enough to show that there are no taut tetrahedra, i.e.\ that each tetrahedron has at least one angle that is not in $\{0,\pi\}$.

First, the tetrahedron $T_p$ is not taut. 
Indeed, on one hand $c_p=\pi$ would by $E'_{p+1}$ entail $a_U=c_U=0$, hence $b_U=\pi$, incompatible with $E'_p$.
On the other hand,  suppose by contradiction that $c_p=0$. Observe that the non-negative sequence $(0, c_1, \dots, c_p)$ is convex, because $E_k$ can be rewritten $c_{k-1} - 2c_k + c_{k+1} = 2 a_k \geq 0$ (agreeing that ``$c_0$'' stands for $0$). 
Hence $c_1=\dots=c_p=0$, and $b_p\in\{0,\pi \}$ by Lemma~\ref{lem:flat:taut}. 
If $b_p=0$ then $(E'_p, E'_{p+1})$ yield $(a_U, b_U, c_U)=(0,2\pi/3, \pi/3)$. 
If $b_p=\pi$ they yield $(a_U, b_U, c_U)=(\pi,0,0)$.
In either case, all tetrahedra are flat so the volume vanishes and cannot be  {maximal. This} contradiction shows $c_p>0$.

Next, we show that $U$ is not 
taut. 
We cannot have $c_U=\pi$ or $b_U=\pi$, by $E'_{p+1}$ and $E'_p$.
But $a_U=\pi$ is also impossible, since by $E'_{p+1}$ it would imply $c_p=0$, ruled out above.

We can see by induction that $b_1,\dots, b_{p-1}>0$: the initialisation is given by $E_1$, written as $b_1=\pi-c_2/2\geq \pi/2$. 
For the induction step, suppose $b_{k-1}>0$ for some $1<  k \leq p-1$: then $c_{k-1}<\pi$, hence $E_k$ implies $b_k>0$.

Finally, $b_1,\dots, b_{p-1}<\pi$: we show this by \emph{descending} induction. 
Initialisation: by $E_{p-1}$, we have $b_{p-1}\leq \pi-c_p/2 < \pi$ since $T_p$ is not flat. 
For the induction step, suppose $b_{k+1}<\pi$ for some $1\leq  k < p-1$: then $0< b_{k+1}<\pi$ by the previous induction, hence $c_{k+1}>0$ by Lemma~\ref{lem:flat:taut}, hence $E_k$ implies $b_k<\pi$.
\end{proof}

\begin{remark}[Cases $p=0,1$]\label{rem:p1}
The above discussion is valid for $p\geq 2$.
If $p=1$, we have only the weights $\omega_s$, $\omega_{p+1}$ and $\omega_p$, the latter taking the form $2b_p+b_U+b_V+a_W$ (i.e.\ the variable ``$c_{p-1}$'' disappears from equation $E'_p$). The argument is otherwise unchanged --- the inductions in the proof of Lemma~\ref{lem:interior} being empty.

If $p=0$, we find only one equation $E'_{p+1}: \ a_U+3c_U=\pi$ (i.e.\ the variable ``$c_p$'' disappears). The volume maximizer $(a_U, b_U, c_U)$ on the segment from $(\pi,0,0)$ to $(0,2\pi/3, \pi/3)$ yields the complete hyperbolic metric.
\end{remark}

\begin{remark}
In \emph{most} boundary points of $\mathcal{A}_{X_n}$ (e.g.\ inside a top-dimensional face of $\mathcal{A}_{X_n}$), some tetrahedron is flat but not taut.
From such a point, it is easy to increase volume by moving inwards (see Lemma~\ref{lem:flat:taut}). 
The proof of Lemma~\ref{lem:interior} above essentially says that \emph{every} boundary point is either of that sort, or has zero volume. 
Ideal triangulations other than $X_n$ do not always have that convenient property: compare with Lemma~\ref{lem:girafe} below (for even twist knots) and the more involved discussion that follows it. 
\end{remark}

\begin{proof}[Proof of Theorem \ref{thm:geometric}]
In the case $n\geqslant 7$, we have proven in Lemma 
\ref{lem:non:empty} that $\mathcal{A}_{X_n}$ is non-
empty, thus the volume functional $\mathcal{V}\colon 
\overline{\mathcal{A}_{X_n}}\to \R$ admits a maximum at 
a certain point $\alpha \in \overline{\mathcal{A}
_{X_n}}$  as a continuous function on a non-empty 
compact set. We proved in Lemma \ref{lem:interior} that 
$\alpha \notin \overline{\mathcal{A}_{X_n}} \setminus \mathcal{A}_{X_n}$, therefore 
$\alpha \in  \mathcal{A}_{X_n}$. It follows from 
Theorem \ref{thm:casson:rivin} that $X_n$ is geometric.

For the cases $n=3$ and $n=5$, we follow the same reasoning, 
replacing Lemma~\ref{lem:interior} with Remark~\ref{rem:p1}.
\end{proof}

\subsection{The cusp triangulation} \label{sub:cusp:trig}

% boundary torus odd
\begin{figure}[!h]
	\centering
	\begin{tikzpicture}[every path/.style={string ,black}]
	
	\begin{scope}[scale=0.65]
	
	\draw[color=blue] (11-21,0.5) node {$3_U$};
	\draw (10.5-21,4) node {$v$};
	\draw (11.7-21,5) node {$s$};
	\draw (-10.5,-1.7) node {$r$};
\draw[color=brown] (-11.7,-1.7) node {$a$};
\draw[color=brown] (-9.4,-1.7) node {$b$};
	\draw[color=brown] (11.8-21,10) node {$c$};
	
	\draw[color=blue] (-10.3,9) node {$3_W$};
	\draw (10.5-21,6) node {$v$};
	\draw (10.5-21,11.7) node {$r$};
	\draw (-11.7,4) node {$u$};

	\draw[color=brown] (-11.8,1) node {$b$};
	\draw[color=brown] (9.3-21,11.7) node {$a$};
	\draw[color=brown] (11.7-21,11.7) node {$c$};

	%%%%%%%%%%%%%%%%%%%%%%%%%%%%%%%%%%%%%%%%%%%%%%%%%%%

	\draw[color=blue] (-7.9,3) node {$3_V$};
	\draw (-7.7,-0.8) node {$g$};
	\draw (-8.7,6) node {$s$};
	\draw (-7.6,4.8) node {$f_p$};
	\draw[color=brown] (-8.8,-1.5) node {$a$};
	\draw[color=brown] (-6.3,0.3) node {$b$};
	\draw[color=brown] (-8.8,10) node {$c$};
	
	\draw[color=blue] (-5,6.5) node {$3_p$};
	\draw (-7,6) node {$f_p$};
	\draw (-4,10) node {$e_{p+1}$};
	\draw (-3,5) node {$e_p$};
	\draw[color=brown] (-0.4,8.8) node {$a$};
	\draw[color=brown] (-5.9,0.6) node {$b$};
	\draw[color=brown] (-8.7,11.6) node {$c$};
	
	\draw[color=blue] (-2,11) node { $2_W$};
	\draw (-3.5,10.5) node { $e_{p+1}$};
	\draw (-0.3,10.5) node { $u$};
	\draw (-4.5,11.7) node { $r$};
	\draw[color=brown] (-0.3,11.7) node {$a$};
	\draw[color=brown] (-7.2,11.7) node {$b$};
	\draw[color=brown] (-0.3,9.3) node {$c$};

	\draw[color=blue] (-6,-1.3) node {$2_U$};
	\draw (-6.8,-0.8) node {$g$};
	\draw (-5,-1.7) node {$r$};
	\draw (-3.5,-1.2) node {$v$};
	\draw[color=brown] (-2,-1.7) node {$a$};
	\draw[color=brown] (-6,-0.3) node {$b$};
	\draw[color=brown] (-8,-1.7) node {$c$};
	
	\draw[color=blue] (-1.5,-0.5) node {$1_W$};
	\draw (-2.7,-0.8) node { $v$};
	\draw (-2.6,0.1) node { $e_{p+1}$};
	\draw (-0.3,-0.5) node { $u$};
	\draw[color=brown] (-5.1,-0.1) node {$a$};
	\draw[color=brown] (-0.3,-1.6) node {$b$};
	\draw[color=brown] (-0.3,0.6) node {$c$};

%%%%%%%%%%%%%%%%%%%%%%%%%%%%%%%%%%%%%%%%%%%%%%%%%%%%%%%%%%%%%%%%%%%%%%%%%%	
	
	\draw[color=blue] (11,0.5) node {$2_V$};
	\draw (10.5,6) node {$g$};
	\draw (11.7,5) node {$u$};
	\draw (10.5,11.7) node {$s$};
	\draw[color=brown] (9.3,11.7) node {$b$};
	\draw[color=brown] (11.8,10) node {$c$};
	\draw[color=brown] (11.7,11.7) node {$a$};
	
	\draw[color=blue] (10.5,9.5) node {$1_U$};
	\draw (10.5,4) node {$g$};
	\draw (21-11.7,4) node {$r$};
	\draw (21-10.5,-1.7) node {$s$};
	\draw[color=brown] (21-11.8,1) node {$c$};
	\draw[color=brown] (21-11.7,-1.7) node {$a$};
	\draw[color=brown] (21-9.4,-1.7) node {$b$};

%%%%%%%%%%%%%%%%%%%%%%%%%%%%%%%%%%%%%%%%%%%%%%%%%%%%%%%%%%%%%%%%%%%%%%%%%%	
	
	\draw[color=blue] (6,10+1.2) node {$0_U$};
	\draw (7,11) node {$v$};
	\draw (5,11.7) node {$s$};
	\draw (3.5,11.2) node {$g$};
	\draw[color=brown] (2,11.7) node {$a$};
	\draw[color=brown] (6,10.3) node {$b$};
	\draw[color=brown] (8,11.7) node {$c$};
	
	\draw[color=blue] (8,7) node {$0_W$};
	\draw (7.7,10.8) node {$v$};
	\draw (8.7,4) node {$r$};
	\draw (7.8,10-4.8) node {$e_{p+1}$};
	\draw[color=brown] (8.8,11.5) node {$c$};
	\draw[color=brown] (6.3,10-0.3) node {$a$};
	\draw[color=brown] (8.8,0) node {$b$};
	
	\draw[color=blue] (5,10-6.5) node {$0_p$};
	\draw (6.9,10-6) node {$e_{p+1}$};
	\draw (4,0) node {$f_{p}$};
	\draw (3.4,5) node {$f_{p-1}$};
	\draw[color=brown] (0.4,10-8.8) node {$a$};
	\draw[color=brown] (5.9,10-0.6) node {$b$};
	\draw[color=brown] (8.7,10-11.6) node {$c$};
	
	\draw[color=blue] (1.7,-0.8) node {$0_{V}$};
	\draw (0.3,-0.5) node { $u$};
	\draw (2.5,-0.3) node { $f_p$};
	\draw (4.5,-1.8) node { $s$};
	\draw[color=brown] (0.2,0.7) node { $a$};
	\draw[color=brown] (0.2,-1.7) node { $b$};
	\draw[color=brown] (7.7,-1.8) node { $c$};
	
	\draw[color=blue] (1.5,10.6) node {$1_{V}$};
	\draw (0.2,10.5) node { $u$};
	\draw (2,9.7) node { $f_p$};
	\draw (3,10.7) node { $g$};
	\draw[color=brown] (0.2,9.4) node {$a$};
	\draw[color=brown] (5,10.1) node {$b$};
	\draw[color=brown] (0.2,11.7) node {$c$};

	\draw (0,-2)--(10,-2)--(12,-2)--(12,12)--(9,12)--(0,12)--(-9,12)--(-12,12)--(-12,-2)--(-9,-2)--(0,-2);
	\draw (0,-2)--(0,1)--(9,-2)--(6,10)--(9,12)--(9,-2);
	\draw (9,-2)--(12,12);
	\draw (6,10)--(0,12)--(0,9)--(-9,12)--(-6,0)--(0,-2);
	\draw (-6,0)--(-9,-2)--(-9,12);
	\draw (-12,-2)--(-9,12);
	
	\draw[color=black] (-6,0)--(0,9)--(6,10)--(0,1)--(-6,0);
	\draw[color=black] (-6,0)--(-3.6,2)--(0,1)--(-2.4,3)--(-1.2,4)--(0,1)--(0,9)--(-1.2,4);
	\draw[color=black] (-3.6,2)--(0,9)--(-2.4,3);
	\draw[color=black] (0,9)--(1.2,6)--(0,1)--(2.4,7)--(0,9)--(3.6,8)--(0,1);
	\draw[color=black] (1.2,6)--(2.4,7);
	\draw[color=black] (3.6,8)--(6,10);

	\draw[color=violet,style=dashed,very thick] (10,-2)--(10,6);
	\draw[color=violet,style=dashed, very thick,<-] (10,6)--(10,12);
	\draw[color=violet] (9.6,7) node {\scriptsize $m_{X_n}$};

	\draw[color=teal,style=dashed, very thick,->] (-12,5.5)--(-11,5.5+3.25);	
		\draw[color=teal,style=dashed, very thick] (-11,5.5+3.25)--(-10,12);	
	\draw[color=teal,style=dashed, very thick,->] (-10,-2)--(-9,-1)--(-5,-1);	
		\draw[color=teal,style=dashed, very thick]	(-5,-1)--(0,-1)--(7.5,-2);	
	\draw[color=teal,style=dashed, very thick,->] (7.5,12)--(7.5+0.9,12-1.3)	;
		\draw[color=teal,style=dashed, very thick] (7.5+0.9,12-1.3)--(12,5.5)	;
		\draw[color=teal] (-11,10) node {\scriptsize $l_{X_n}$};
	
	\draw[color=blue] (3.3,8.5) node {$1_p$};
	\draw (4.5,9.1) node {$e_p$};
	\draw (3.4,9.3) node {$f_p$};
	\draw (2.1,8.8) node {$f_{p-1}$};
	\draw[color=brown] (1,8.9) node {$a$};
	
	\draw[color=blue] (4,7.7) node {\tiny $0_{p-1}$};
	\draw (4.5,8.45) node {\tiny $e_p$};
	\draw (3,6.5) node {\tiny $f_{p-2}$};
	\draw (4,7.2) node {\tiny $f_{p-1}$};
	\draw[color=brown] (1.8,4) node {\tiny $a$};
	
	\draw[color=blue] (-3.9,2.3) node {\tiny $3_{p-1}$};
	\draw (-3,4) node {\tiny $e_p$};
	\draw (-3.45,3) node {\tiny $e_{p-1}$};
	\draw (-4.35,1.7) node {\tiny $f_{p-1}$};
	\draw[color=brown] (-1.2,6.95) node {\tiny $a$};
	
	\draw[color=blue] (-3.4,1.2) node {$2_{p}$};
	\draw (-2,1.4) node {\tiny $e_p$};
	\draw (-3,0.65) node {\tiny $e_{p+1}$};
	\draw (-4.2,0.9) node {\tiny $f_{p-1}$};
	\draw[color=brown] (-1,1.05) node {\tiny $a$};
	
	\draw[color=blue] (-0.7,4) node {$2_{1}$};
	\draw (-0.5,5.9) node {\tiny $e_1$};
	\draw (-0.3,4.7) node {\tiny $e_1$};
	\draw (-0.5,3) node {\tiny $e_2$};
	\draw[color=brown] (-0.2,2) node {\tiny $a$};
	
	\draw[color=blue] (-1.4,4.5) node {$3_1$};
	\draw (-1,5.7) node {\tiny $e_1$};
	\draw (-1.35,5) node {\tiny $e_2$};
	\draw (-1.65,3.9) node {\tiny $f_{1}$};
	\draw[color=brown] (-0.65,7) node {\tiny $a$};
	
	\draw[color=blue] (-1.7,2.9) node {$2_2$};
	\draw (-1.3,2.3) node {\tiny $e_3$};
	\draw (-1,2.8) node {\tiny $e_2$};
	\draw (-1.5,3.45) node {\tiny $f_{1}$};
	\draw[color=brown] (-0.6,1.8) node {\tiny $a$};
	
	\draw[color=blue] (0.5,6) node {$1_{1}$};
	\draw (0.25,4.8) node {\tiny $e_1$};
	\draw (0.5,4) node {\tiny $e_1$};
	\draw (0.5,7) node {\tiny $f_1$};
	\draw[color=brown] (0.15,8.1) node {\tiny $a$};
	
	\draw[color=blue] (1.8,7.1) node {$1_{2}$};
	\draw (1.6,6.6) node {\tiny $e_2$};
	\draw (1.3,7.6) node {\tiny $f_2$};
	\draw (1.1,7) node {\tiny $f_1$};
	\draw[color=brown] (0.5,8.3) node {\tiny $a$};
	
	\draw[color=blue] (1.45,5.5) node {$0_1$};
	\draw (1.8,6.2) node {\tiny $e_2$};
	\draw (1.1,4.5) node {\tiny $e_1$};
	\draw (1.4,5) node {\tiny $f_1$};
	\draw[color=brown] (0.65,3) node {\tiny $a$};
	
	\draw (3-0.12,7.5-0.1) node[shape=circle,fill=black,scale=0.2] {};
	\draw (3,7.5) node[shape=circle,fill=black,scale=0.2] {};
	\draw (3+0.12,7.5+0.1) node[shape=circle,fill=black,scale=0.2] {};
	
	\draw (-3-0.12,2.5-0.1) node[shape=circle,fill=black,scale=0.2] {};
	\draw (-3,2.5) node[shape=circle,fill=black,scale=0.2] {};
	\draw (-3+0.12,2.5+0.1) node[shape=circle,fill=black,scale=0.2] {};
	
	%%%%%%%%%%%%%%%%%%%%%%%%%%%%%%%%%%%%%
	
\draw[color=red, very thick,->] (-12,12)--(-11,12);
\draw[color=red, very thick] (-11,12)--(-9,12);
	\draw[color=red] (-10.7,12.5) node {(i)};
	
\draw[color=red, very thick,->] (-9,12)--(-9,4);
\draw[color=red, very thick] (-9,4)--(-9,-2);
\draw[color=red] (-8.4,4) node {(ii)};

\draw[color=red, very thick,->] (-9,-2)--(-4,-2);
\draw[color=red, very thick] (-4,-2)--(0,-2);
\draw[color=red] (-4,-2.5) node {(iii)};

\draw[color=red, very thick,->] (0,12)--(4,12);
\draw[color=red, very thick] (4,12)--(9,12);
\draw[color=red] (4,12.5) node {(iv)};

\draw[color=red, very thick,->] (9,12)--(9,6);
\draw[color=red, very thick] (9,6)--(9,-2);
\draw[color=red] (8.4,6) node {(v)};

\draw[color=red, very thick,->] (9,-2)--(11,-2);
\draw[color=red, very thick] (11,-2)--(12,-2);
\draw[color=red] (10.7,-2.5) node {(vi)};

	\end{scope}
	
	\end{tikzpicture}
	\caption{Triangulation of the boundary torus for the truncation of $X_n$, $n$ odd, with angles (brown), meridian curve $m_{X_n}$ (violet, dashed), longitude curve $l_{X_n}$ (green, dashed) and preferred longitude curve $l_{X_n}^0=$ (i)$\cup \ldots \cup$(vi) (red).}\label{fig:trig:cusp:odd}
\end{figure}
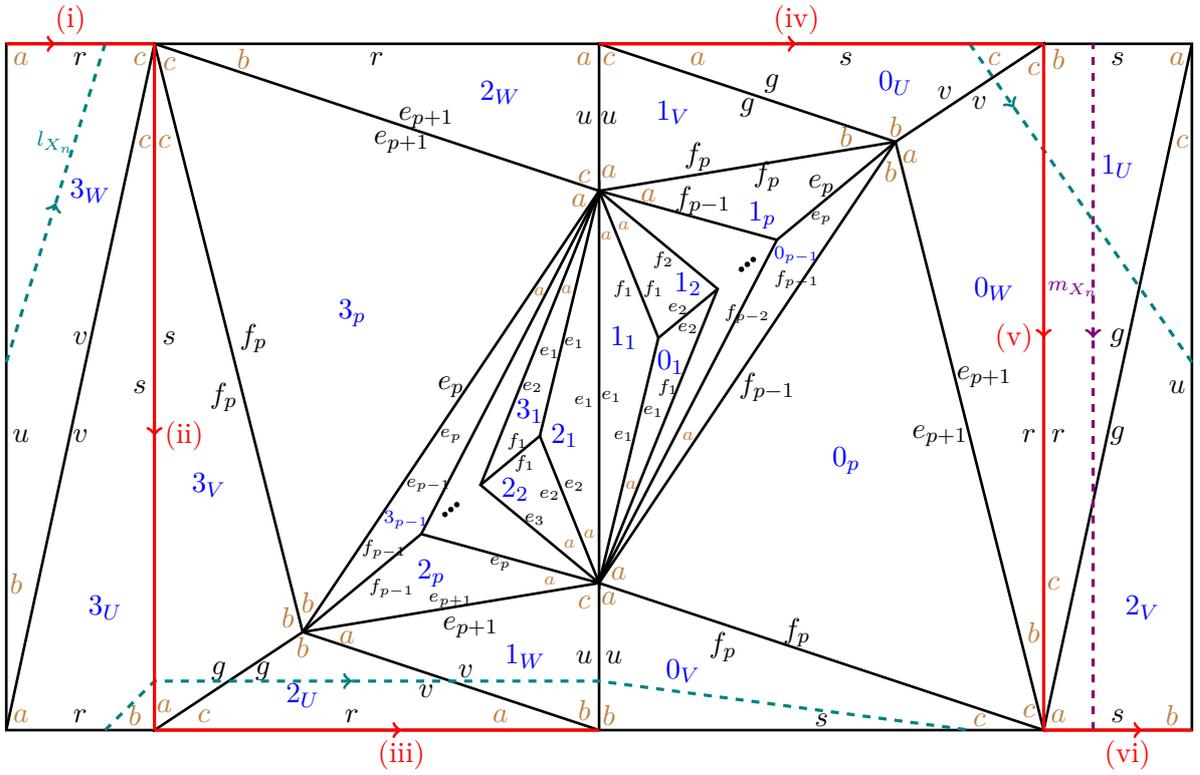

If we truncate the ideal triangulation $X_n$ of Figure  \ref{fig:id:trig:odd} by removing a small  {neighbourhood} of each vertex, then we obtain a cellular decomposition by compact truncated tetrahedra of the knot exterior $S^3 \setminus \nu(K_n)$ (where $\nu(K)$ is an open tubular  {neighbourhood} of $K$). This induces a triangulation on the boundary torus $\partial \nu(K_n)$, where each triangle comes from a pre-quotient vertex of a tetrahedron of $X$. See Figure \ref{fig:trig:cusp:odd} for the full description of the triangulation of this torus. 

The triangles are called (in blue) by the names of the corresponding truncated vertices (written $k_j$ for the $k$-th vertex in the $j$-th tetrahedron), the edges are called (in black) by the names of the truncated faces they are part of, and the angles $a,b,c$ at each corner of a triangle (in brown) obviously come from the corresponding truncated edges in $X_n$. Note that we did not put the indices on $a,b,c$ for readability, but it goes without saying that angles $a,b,c$ in the triangle $k_j$ are actually the coordinates $a_j,b_j,c_j$. Moreover, for some small faces, we only indicated the brown $a$ angle for readability; the $b$ and $c$ follow clockwise (since all the concerned tetrahedra have positive sign).

We drew three particular curves  in Figure \ref{fig:trig:cusp:odd}: $m_{X_n}$ in violet and dashed, $l_{X_n}$ in green and dashed, and finally the concatenation  {$l_{X_n}^0:=$} (i) $\cup \ldots \cup$ (vi) in red. These curves can be seen as generators of the first homology group of the torus. We call $m_{X_n}$ a \textit{meridian curve} since it actually comes from the projection to $\partial \nu(K_n)$ of a meridian curve in $S^3 \setminus K_n$, the one circling the knot and going through faces $s$ and $E$ on the upper left of Figure \ref{fig:diagram:htriang}, to be exact (we encourage the motivated reader to check this fact by following the curve on the several pictures from Figure \ref{fig:diagram:htriang} to \ref{fig:id:trig:odd}). Similarly, $l_{X_n}$ and  {$l_{X_n}^0=$} (i) $\cup \ldots \cup$ (vi) are two distinct  \textit{longitude curves}, and  {$l_{X_n}^0$} corresponds to a \textit{preferred longitude} of the knot $K_n$, i.e.\ a longitude with zero linking number with the knot. 

This last fact can be checked in Figure \ref{fig:longitude:odd}: on the bottom of the figure, the sub-curves (i) to (vi) are drawn on a truncated tetrahedron $U$; on the top of the figure, the corresponding full longitude curve (in red) is drawn in the exterior of the knot (in blue) before the collapsing of the knot into one point (compare with Figure \ref{fig:diagram:htriang}). We check that in each square on the left of the figure, the sum of the signs of crossings between blue and red strands is zero (the signs are marked in green circled $+$ and $-$), and thus the red longitude curve has zero linking number with the knot, i.e.\ is a preferred longitude.

\begin{figure}
	\includegraphics[scale=1.7]{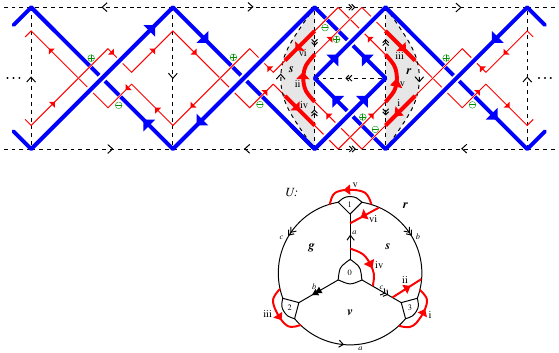}
	\caption{A preferred longitude  $l_{X_n}^0=$ (i)$\cup \ldots \cup$(vi) (in red) for the odd twist knot $K_n$, seen in $S^3 \setminus K_n$ (top) and on the truncated tetrahedron $U$ (bottom).}\label{fig:longitude:odd}
\end{figure}

To the curves $m_{X_n}$ and $l_{X_n}$ are associated combinations of angles (the \textit{angular holonomies})
$$m_{X_n}(\alpha):= H^\R(m_{X_n}) = a_U-a_V \ \ \text{and} \ \ l_{X_n}(\alpha):=H^\R(l_{X_n}) = 2(c_V-b_W),$$
following the convention that when the curve crosses a triangle, the lone angle among the three is counted positively if it lies on the left of the curve, and negatively if it lies on the right. Remark that this convention cannot rigorously be applied to the red curve  {$l_{X_n}^0=$} {(i) $\cup \ldots \cup$ (vi)} in Figure \ref{fig:trig:cusp:odd}, since it lies on edges and vertices. Nevertheless, one can see in Figure \ref{fig:trig:cusp:odd} that in the homology group of the boundary torus, we have the relation
$$  {l_{X_n}^0} = l_{X_n} + 2 m_{X_n}.$$

\subsection{The complex gluing equations}\label{sub:complete:odd}

Here seems to be an appropriate place to list the complex versions of the balancing and completeness equations for $X_n$, which will be useful in Section \ref{sec:vol:conj}.

For a complex shape structure $\widetilde{\mathbf{z}}=(z_1,\ldots,z_p,z_U,z_V,z_W) \in (\R+i\R_{>0})^{p+3}$, its complex weight functions are:

\begin{itemize}
\item $\omega^{\C}_s(\widetilde{\mathbf{z}}):= \omega^{\C}_{X_n,\alpha}(\overrightarrow{\eta_s})=
2\Log(z_U) + \Log(z'_V) + \Log(z''_V) + \Log(z_W) + \Log(z'_W)
$
\item $\omega^{\C}_0(\widetilde{\mathbf{z}}):= \omega^{\C}_{X_n,\alpha}(\overrightarrow{\eta_0})=
2\Log(z_1) + \Log(z'_1) + 2\Log(z_2) + \cdots + 2\Log(z_p) + \Log(z_V) + \Log(z''_W)
$
\item $\omega^{\C}_1(\widetilde{\mathbf{z}}):= \omega^{\C}_{X_n,\alpha}(\overrightarrow{\eta_1})=
2\Log(z''_1) + \Log(z'_2)
$
\\
\vspace*{-2mm}
\item $\omega^{\C}_k(\widetilde{\mathbf{z}}):= \omega^{\C}_{X_n,\alpha}(\overrightarrow{\eta_k})=
\Log(z'_{k-1}) + 2\Log(z''_k) + \Log(z'_{k+1})
$
\ \
(for $2\leqslant k \leqslant p-1$)
\\
\vspace*{-2mm}
\item $\omega^{\C}_p(\widetilde{\mathbf{z}}):= \omega^{\C}_{X_n,\alpha}(\overrightarrow{\eta_p})=
\Log(z'_{p-1}) + 2\Log(z''_p) + \Log(z'_U) + \Log(z'_V) + \Log(z_W)$
\item $\omega^{\C} _{p+1}(\widetilde{\mathbf{z}}):= \omega^{\C}_{X_n,\alpha}(\overrightarrow{\eta_{p+1}})=
\Log(z'_p) + \Log(z'_U) + 2\Log(z''_U) + \Log(z_V) + \Log(z''_V) + \Log(z'_W) + \Log(z''_W)
$
\end{itemize}

It follows from Theorem \ref{thm:geometric} that there exists exactly one complex angle structure $\widetilde{\mathbf{z^0}}=(z_1^0,\ldots,z_p^0,z_U^0,z_V^0,z_W^0)\in (\R+i\R_{>0})^{p+3}$ corresponding to the complete hyperbolic metric. This $\widetilde{\mathbf{z^0}}$ is the only $\widetilde{\mathbf{z}} \in (\R+i\R_{>0})^{p+3}$ satisfying 
$$ \omega^{\C}_s(\widetilde{\mathbf{z}}) = \omega^{\C}_0(\widetilde{\mathbf{z}}) = \ldots = \omega^{\C}_{p+1}(\widetilde{\mathbf{z}}) = 2 i \pi$$
as well as the complex completeness equation
$$\Log(z_U)-\Log(z_V)=0$$
coming from the meridian curve $m_{X_n}$.

These conditions are equivalent to the following system $\mathcal{E}^{co}_{X_n}(\widetilde{\mathbf{z}})$ of equations on $\widetilde{\mathbf{z}}$:
\begin{itemize}
\item $\mathcal{E}_{X_n,0}(\widetilde{\mathbf{z}}) \colon \Log(z'_1) + 2 \Log(z_1)+\cdots + 2\Log(z_p)+2\Log(z_U) = 2i\pi$
\item $\mathcal{E}_{X_n,1}(\widetilde{\mathbf{z}}) \colon 2\Log(z''_1)+\Log(z'_2)=2i\pi$\\
\vspace*{-2mm}
\item $\mathcal{E}_{X_n,k}(\widetilde{\mathbf{z}}) \colon \Log(z'_{k-1})+2\Log(z''_k)+\Log(z'_{k+1})=2i\pi$ \ \ (for $2 \leqslant k \leqslant p-1$)\\
\vspace*{-2mm}
\item $\mathcal{E}_{X_n,p+1}^{co}(\widetilde{\mathbf{z}}) \colon \Log(z'_{p}) +2 \Log(z''_{U})-\Log(z_{W})=0$
\item $\mathcal{E}_{X_n,s}^{co}(\widetilde{\mathbf{z}}) \colon \Log(z''_{W}) -\Log(z_{U})=0$
\item $z_V=z_U$
\end{itemize}
Indeed, notice that the equation $\omega^{\C}_p(\widetilde{\mathbf{z}})=2i\pi$ was redundant with the other complex balancing equation. Remark furthermore that the variable $z_V$ only appears in the equation $z_V=z_U$, which is why  we will allow a slight abuse of notation to use the equations 
$$\mathcal{E}_{X_n,0}(\mathbf{z}), \ldots, \mathcal{E}_{X_n,p-1}(\mathbf{z}),
\mathcal{E}^{co}_{X_n,p+1}(\mathbf{z}), \mathcal{E}^{co}_{X_n,s}(\mathbf{z})$$
 also for a variable $\mathbf{z}=(z_1,\ldots,z_p,z_U,z_W) \in (\R+i\R_{>0})^{p+2}$ without the coordinate $z_V$ (see Lemma \ref{lem:grad:thurston}).

%%%%%%%%%%%%%%%%%%%%%%%%%%%%%%%%%%%%%%%%%%%%%%%%%%%%%%%%%%%%%%%%%%%%%%%%%%%%%%%%%%%%%%%%%%%%%%%%%%%%%%%%%%%%%%%%%%%%%%%%%%%

\section{Partition function for the ideal triangulations (odd case)}\label{sec:part:odd}

\begin{notation}
From now, we will denote $\stareq$ the equality up to taking the complex  {modulus}.
\end{notation}

In this section, $n$ will be an odd integer greater than or equal to $3$, and $p=\frac{n-3}{2}$.
We will compute the partition functions of the Teichm\"uller TQFT for the ideal triangulations $X_n$ of the twist knot complements $S^3\setminus K_n$ constructed in Section \ref{sec:trig} and we will prove that they can be expressed in a simple way using a one-variable function independent of the angle structure, as well as only two linear combinations of angles, which are two independent angular holonomies in the cusp link triangulation.

This results in a slightly different version of the first statement in the Andersen--Kashaev volume conjecture of  \cite[Conjecture 1 (1)]{AK}.
Note that our partition functions are computed only for the specific ideal triangulations $X_n$. In order to generalise  Theorem \ref{thm:part:func} to any ideal triangulation of a twist knot complement, one would need further properties of invariance under change of triangulation (more general than the ones discussed in \cite{AK}).
A version for the even case is proved in Section \ref{sub:even:tqft} (see Theorem \ref{thm:even:part:func}).

\begin{theorem}\label{thm:part:func}
Let $n\geqslant 3$ be an odd integer and $p=\frac{n-3}{2}$. Consider the ideal triangulation $X_n$ of $S^3\setminus K_n$ described in Figure \ref{fig:id:trig:odd}. Then for all angle structures $\alpha=(a_1,\ldots,c_W) \in \mathcal{A}_{X_n}$ and all $\hbar>0$, we have:
\begin{equation*}
\mathcal{Z}_{\hbar}(X_n,\alpha) 
\stareq 
\int_{\mathbb{R}+i \frac{\mu_{X_n}(\alpha) }{2\pi \sqrt{\hbar}}  } 
J_{X_n}(\hbar,x)
e^{\frac{1}{2 \sqrt{\hbar}}  x  \lambda_{X_n}(\alpha)} 
dx,
\end{equation*}
with 
\begin{itemize}
\item the degree one angle polynomial $\mu_{X_n}\colon\alpha\mapsto  a_U- a_V$,
\item the degree one angle polynomial $\lambda_{X_n}\colon\alpha\mapsto 2(a_U-a_V+c_V-b_W)$,
\item the map $(\hbar,x) \mapsto$
\begin{equation*}
J_{X_n}(\hbar,x)=\int_{\mathcal{Y}'} d\mathbf{y'} \
e^{2 i \pi \left (\mathbf{y'}^{\!\top} Q_n \mathbf{y'}
	+ x(x- y'_U-y'_W)\right )}
e^{
	\frac{1}{\sqrt{\hbar}} \left (\mathbf{y'}^{\!\top} \mathcal{W}_n
	- \pi x\right )}
\dfrac{
	\Phi_\B\left (y'_U\right )
	\Phi_\B\left (y'_U+x\right )
	\Phi_\B\left (y'_W\right )
}{
	\Phi_\B\left (y'_1\right )
	\cdots
	\Phi_\B\left (y'_p\right )
}
,
\end{equation*}
where $\mathcal{Y}'=\mathcal{Y}'_{\hbar,\alpha}=
\prod_{k=1}^p\left (\R - \frac{i}{2 \pi \sqrt{\hbar}} (\pi-a_k)\right ) 
\times 
\prod_{l=U,W}
\left (\R + \frac{i}{2 \pi \sqrt{\hbar}} (\pi-a_l)\right ),$
\begin{equation*}
\mathbf{y'}=\begin{bmatrix}
y'_1 \\ \vdots \\ y'_p \\ y'_U \\y'_W
\end{bmatrix}, 
\quad 
\mathcal{W}_n=\begin{bmatrix}-2p\pi \\ \vdots \\ -2 \pi \left ( k p - \frac{k(k-1)}{2}\right ) \\ \vdots \\ -p(p+1)\pi \\ (p^2+p+1)\pi \\ \pi\end{bmatrix} 
\quad 
\text{ and }
\quad
Q_n=\begin{bmatrix}
1 & 1 & \cdots & 1 & -1 & 0 \\ 
1 & 2 & \cdots & 2 & -2 & 0 \\ 
\vdots & \vdots & \ddots & \vdots & \vdots & \vdots \\ 
1 & 2 & \cdots & p & -p & 0 \\ 
-1 & -2 & \cdots & -p & p & \frac{1}{2} \\ 
0 & 0 & \cdots & 0 & \frac{1}{2} & 0 
\end{bmatrix}.
\end{equation*}
\end{itemize}
\end{theorem}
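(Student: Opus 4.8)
The plan is to compute $\mathcal{Z}_\hbar(X_n,\alpha)=\int_{\mathbf{t}\in\R^{X_n^3}}\mathcal{K}_{X_n}(\mathbf{t})\,\mathcal{D}_{\hbar,X_n}(\mathbf{t},\alpha)\,d\mathbf{t}$ directly from the definition and to transform it into the claimed form by a controlled sequence of elementary moves: determine the kinematical kernel, insert the dynamical content, and perform a contour translation and a change of variables. First I would read off from Figure \ref{fig:id:trig:odd} the face-pairing data of $X_n$, i.e.\ the faces $x_0(T),\dots,x_3(T)$ and the sign $\varepsilon(T)$ for each $T\in\{T_1,\dots,T_p,U,V,W\}$ (the $T_k$ being positive and $U,V,W$ negative). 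This produces the matrices $R,A,B$ entering the kinematical kernel. Since $H_2(M_{X_n}\setminus X_n^0;\Z)=0$ for a knot complement in $S^3$, the kernel $\mathcal{K}_{X_n}$ is a bona fide tempered distribution, and the $2(p+3)$ Dirac deltas collapse the $\mathbf{x}$-integral; I would apply Lemma \ref{lem:dirac} (or keep the surviving deltas as affine relations among the $t_j$) to write $\mathcal{K}_{X_n}(\mathbf{t})$ as a Gaussian phase $e^{2i\pi\mathbf{t}^T(-RA^{-1}B)\mathbf{t}}$ up to an overall constant $1/|\det A|$.

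Next I would assemble the full integrand. Substituting $\mathcal{D}_{\hbar,X_n}(\mathbf{t},\alpha)=\prod_T \exp(\hbar^{-1/2}\alpha_3(T)\mathsf{t}(T))\,\Phi_\B(\mathsf{t}(T)-\tfrac{i}{2\pi\sqrt\hbar}\varepsilon(T)(\pi-\alpha_1(T)))^{-\varepsilon(T)}$ turns the integral into a product of $p+3$ quantum dilogarithms weighted by a quadratic phase in $\mathbf{t}$ and a term linear in $\hbar^{-1/2}$. I would then translate each contour, $t_T\mapsto y'_T:=t_T-\tfrac{i}{2\pi\sqrt\hbar}\varepsilon(T)(\pi-\alpha_1(T))$, placing each $y'_T$ on the prescribed component of $\mathcal{Y}'$ and pushing the $\alpha$-dependence into the Gaussian exponent. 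This is exactly the step where the balancing equations of Section \ref{sec:geom} intervene: imposing $\omega_{X_n,\alpha}(e)=2\pi$ on every edge makes the angle contributions telescope, so that after the shift the only angle combinations not absorbed into the contour positions are $\mu_{X_n}(\alpha)=a_U-a_V$ and $\lambda_{X_n}(\alpha)=2(a_U-a_V+c_V-b_W)$. I would verify that these coincide with the angular holonomies of the meridian $m_{X_n}$ and of the preferred longitude (i.e.\ $\lambda_{X_n}=l_{X_n}+2m_{X_n}$) computed from the cusp triangulation in Section \ref{sub:cusp:trig}.

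The decisive structural point is the role of the tetrahedron $V$. Its complex shape satisfies $z_V=z_U$ at the complete structure (see Section \ref{sub:complete:odd}), so I expect the $V$-variable to be reparametrized as $y'_U+x$, the new variable $x$ measuring the meridian deviation and acquiring the contour $\R+i\frac{\mu_{X_n}(\alpha)}{2\pi\sqrt\hbar}$. Extracting this outer $x$-integral leaves an inner integral over $y'_1,\dots,y'_p,y'_U,y'_W$ with the three numerator factors $\Phi_\B(y'_U)\Phi_\B(y'_U+x)\Phi_\B(y'_W)$ and the $p$ denominator factors from the tower, which is precisely $J_{X_n}(\hbar,x)$; the factor $e^{\frac{1}{2\sqrt\hbar}x\lambda_{X_n}(\alpha)}$ and the phase $x(x-y'_U-y'_W)$ fall out of this separation. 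Matching the quadratic form to $\mathbf{y'}^TQ_n\mathbf{y'}$ and the linear term to $\mathbf{y'}^T\mathcal{W}_n-\pi x$ amounts to summing the telescoping contributions along $T_1,\dots,T_p$, from which the $\min(j,k)$-type entries of $Q_n$ and the $kp-\binom{k}{2}$ entries of $\mathcal{W}_n$ emerge. Throughout, the identity is asserted only up to modulus ($\stareq$), which lets me discard the constant phases generated by the inversion relation Proposition \ref{prop:quant:dilog}(1) and by $|\det A|$.

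The main obstacle is twofold. First, organizing the tower combinatorics so that the kinematical quadratic form collapses to \emph{exactly} $Q_n$ and $\mathcal{W}_n$: the many Gaussian cross-terms among $t_1,\dots,t_p$ must be summed and simplified without error, and this is the computational heart of the argument. Second, and more conceptually, showing cleanly that all $\alpha$-dependence reduces to the two holonomies $\mu_{X_n},\lambda_{X_n}$; this rests on the precise cancellation between the linear exponent contributed by $\alpha_3(T)$ in the dynamical content and the edge-balancing relations, which must be invoked in just the right combination to avoid leaving spurious angle terms. The geometric consistency check — that the bookkeeping indeed yields the meridian and preferred-longitude holonomies — is what confirms the computation has been carried out correctly.
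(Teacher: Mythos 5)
Your proposal follows essentially the same route as the paper's proof: the kinematical kernel is computed via Lemma \ref{lem:dirac} with explicit matrices $R,A,B$ (the paper's Lemma \ref{lem:kin:odd}, where $\det A=1$), the contour shift $t_T\mapsto y'_T$ produces the linear exponent $2\widetilde{Q}_n\widetilde{\Gamma}(\alpha)+\widetilde{C}(\alpha)$ whose reduction to a vector depending only on $\lambda_{X_n}(\alpha)$ via the balancing equations is exactly the paper's Lemma \ref{lem:2QGamma+C}, and the substitution $y'_V=y'_U+x$ with $x$ on the contour $\R+i\mu_{X_n}(\alpha)/(2\pi\sqrt{\hbar})$ is the paper's final step. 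The only caveat is that your plan defers the explicit matrix inversions and the verification that the quadratic and linear terms collapse to precisely $Q_n$ and $\mathcal{W}_n$, which is where the bulk of the paper's computational work lies.
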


The reader may notice that indices corresponding to $V$ are missing in the integration variables. This comes from the change of variables $x= y'_V-y'_U$, which makes $x$ replace the variable $y'_V$. Simply speaking, we chose to make $V$ disappear rather than $U$, because   $V$ appeared a lot less than $U$ in the defining gluing equations (see end of Section \ref{sec:geom}).

\begin{remark}
Note that, if you fix $\hbar>0$ and $x \in \R + i\left (-\frac{1}{2 \sqrt{\hbar}},\frac{1}{2 \sqrt{\hbar}}\right )$,
 the integration contour $\mathcal{Y}'$ in the definition of $J_{X_n}(\hbar,x)$ depends a priori on the angle structure $\alpha$; however, since the integrand in $J_{X_n}(\hbar,x)$ is a holomorphic function of the variables in $\mathbf{y'}$ on a {neighbourhood} of $\mathcal{Y}'$ in $\C^{p+2}$, it follows from the Bochner--Martinelli formula (that generalises the Cauchy theorem, see \cite{Kr}) and the fast decay properties of this integrand at infinity that $\mathcal{Y}'$ could be replaced with a different contour. In this sense, $J_{X_n}(\hbar,x)$ is  independent of the angle structure $\alpha$. Nevertheless, picking the particular contour $\mathcal{Y}'=\mathcal{Y}'(\hbar,\alpha)$ with the complete structure $\alpha=\alpha^0$ will help us  prove the volume conjecture in Section \ref{sec:vol:conj}.
\end{remark}

\begin{remark}
The quantities $\mu_{X_n}(\alpha)$ and $\lambda_{X_n}(\alpha)$ in Theorem \ref{thm:part:func} satisfy the following relations with the angular holonomies corresponding to the meridian and longitude curves $m_{X_n}(\alpha), l_{X_n}(\alpha)$ from Section \ref{sub:cusp:trig}:
$$ \mu_{X_n}(\alpha) = m_{X_n}(\alpha) \ \ \text{and} \ \ \lambda_{X_n}(\alpha) = l_{X_n}(\alpha) + 2 m_{X_n}(\alpha).$$
Hence, $\lambda_{X_n}(\alpha)$ is the angular holonomy of a curve on $\partial \nu (K_n)$ that is equal in homology to the curve {$l_{X_n}^0=$} (i) $\cup \ldots \cup$ (vi) (of Figures \ref{fig:trig:cusp:odd} and \ref{fig:longitude:odd}), thus $\lambda_{X_n}(\alpha)$ comes from a preferred longitude of the knot, as expected in Conjecture \ref{conj:vol:BAGPN} (1). Similarly, $\mu_{X_n}(\alpha)$ is associated to a meridian of the knot.
\end{remark}

We will need two lemmas to prove Theorem \ref{thm:part:func}.

\begin{lemma}\label{lem:kin:odd}
	Let $n\geqslant 3$ be an odd integer and $p=\frac{n-3}{2}$. For the ideal triangulation $X_n$ of $S^3\setminus K_n$ described in Figure \ref{fig:id:trig:odd}, the kinematical kernel is 
 $\mathcal{K}_{X_n}(\mathbf{\widetilde{t}})= \exp\left (2 i \pi \mathbf{\widetilde{t}}^{\top} \widetilde{Q}_n \mathbf{\widetilde{t}} \right ),$
 where $\mathbf{\widetilde{t}} = (t_1, \ldots, t_p, t_U, t_V, t_W)^{\!\top} \in \R^{X_n^{3}}$ and
 $\widetilde{Q}_n$  is the following symmetric matrix with half-integer coefficients:
$$\widetilde{Q}_n=\kbordermatrix{
	\mbox{}	&t_1		&t_2		&\cdots 	& t_{p-1} 	& t_p 	& \omit\vrule	& t_U 	& t_V 	& t_W 	\\
	t_1 		& 1 		& 1 		& \cdots 	& 1  		& 1 		& \omit\vrule	& -1 		& 0 		& 0 		\\
	t_2 		& 1 		& 2 		& \cdots 	& 2  		& 2 		& \omit\vrule	& -2 		& 0 		& 0 		\\
	\vdots 	& \vdots 	& \vdots 	& \ddots 	& \vdots  	& \vdots 	& \omit\vrule	& \vdots 	& \vdots 	& \vdots 	\\
	t_{p-1} 	& 1 		& 2 		& \cdots 	& p-1  	& p-1 	& \omit\vrule	& -(p-1) 	& 0 		& 0 		\\
	t_p 		& 1 		&  2 		& \cdots 	&  p-1 	& p 		& \omit\vrule	& -p 		& 0 		& 0 		\\
	\cline{1-1} \cline{2-10}
	t_U 		& -1 		& -2 		& \cdots 	& -(p-1)  	& -p  	& \omit\vrule	& p+2 	& -3/2 	& 1 		\\
	t_V 		& 0 		& 0 		& \cdots 	&  0 		& 0 		& \omit\vrule	& -3/2 	& 1 		& -1/2 	\\
	t_W 		& 0 		& 0 		& \cdots 	&  0 		& 0 		& \omit\vrule	& 1 		& -1/2 	& 0 }.
$$
\end{lemma}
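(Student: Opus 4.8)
The plan is to compute $\mathcal{K}_{X_n}$ through Lemma~\ref{lem:dirac}, by showing that the $2N\times 2N$ matrix $A$ attached to $X_n$ (here $N=p+3$) is unimodular, so that the kinematical kernel reduces to the single exponential $\exp\!\big(2i\pi\,\mathbf{\widetilde{t}}^T(-RA^{-1}B)\mathbf{\widetilde{t}}\big)$, and then identifying the symmetric part of $-RA^{-1}B$ with $\widetilde{Q}_n$. First I would read off from Figure~\ref{fig:id:trig:odd} the face-opposite-to-vertex data of each tetrahedron, namely
\begin{align*}
&x_0(T_k)=e_k,\ x_1(T_k)=e_{k+1},\ x_2(T_k)=f_k,\ x_3(T_k)=f_{k-1}\quad(1\leqslant k\leqslant p),\\
&x_0(U)=r,\ x_1(U)=v,\ x_2(U)=s,\ x_3(U)=g,\\
&x_0(V)=g,\ x_1(V)=s,\ x_2(V)=f_p,\ x_3(V)=u,\\
&x_0(W)=u,\ x_1(W)=r,\ x_2(W)=v,\ x_3(W)=e_{p+1},
\end{align*}
with the convention $f_0=e_1$, together with the signs $\varepsilon(T_k)=+1$ and $\varepsilon(U)=\varepsilon(V)=\varepsilon(W)=-1$; here one uses that the negative tetrahedra $U,V,W$ have their bottom vertices labelled $3$ (right) and $2$ (left), which is what assigns $x_2$ to the right face and $x_3$ to the left. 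A quick check confirms that each of the $2p+6$ faces occurs exactly twice.

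Rather than invert $A$ directly, I would integrate out the $2N$ Dirac deltas one at a time. Each tetrahedron $T$ imposes the two affine relations $x_0(T)-x_1(T)+x_2(T)=0$ and $x_3(T)=x_2(T)+\mathsf{t}(T)$; since each delta can be used to eliminate a face variable occurring in it with coefficient $\pm1$, the integration produces unit Jacobian at every step, so $|\det A|=1$ and no prefactor survives. This unimodularity is exactly the linear independence of the delta arguments guaranteed by the hypothesis $H_2(M_{X_n}\setminus X_n^0,\Z)=0$, and it is what makes Lemma~\ref{lem:dirac} applicable.

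The solve telescopes along the tower $T_1,\dots,T_p$. The relations $x_3=x_2+\mathsf{t}$ read $f_{k-1}=f_k+t_k$, giving $f_m=f_p+\sum_{k=m+1}^{p}t_k$, while the base value $f_p=-t_U$ is forced by the $U,V$ relations $g=s+t_U$ and $g-s+f_p=0$. The relations $x_0-x_1+x_2=0$ read $e_{k+1}=e_k+f_k$, and integrating from $e_1=f_1+t_1$ yields
\[
e_k=-k\,t_U+\sum_{l=1}^{p}\min(k,l)\,t_l\qquad(1\leqslant k\leqslant p),
\]
the identity $1+\min(k-1,l-1)=\min(k,l)$ being the source of the $\min$-pattern. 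The faces of $U,V,W$ come out as $s=t_U-t_V$, $g=2t_U-t_V$, $u=t_V-t_U$ and $r=\sum_{k=1}^{p}k\,t_k-(p+2)t_U+t_V-t_W$. Substituting everything into the phase
\[
\sum_{T}\varepsilon(T)\,x_0(T)\,\mathsf{t}(T)=\sum_{k=1}^{p}e_k t_k-r\,t_U-g\,t_V-u\,t_W
\]
and symmetrizing gives $\mathbf{\widetilde{t}}^T\widetilde{Q}_n\mathbf{\widetilde{t}}$: the double sum $\sum_{k,l}\min(k,l)t_kt_l$ is exactly the upper-left $p\times p$ block, the combined $-2k\,t_kt_U$ terms give the border entries $-k$, and the purely $U,V,W$ contribution $(p+2)t_U^2+t_V^2-3t_Ut_V+2t_Ut_W-t_Vt_W$ produces the lower-right $3\times3$ block after halving the off-diagonal coefficients.

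The main obstacle is the bookkeeping of this telescoping solve uniformly in $p$: one must track the nested sums carefully to obtain the clean $\min(k,l)$ block and, above all, symmetrize correctly, since $-RA^{-1}B$ is a priori not symmetric and only its symmetric part is pinned down by the quadratic form. Producing the half-integer entries $-3/2$ and $-1/2$ in the $U,V,W$ block requires combining the contributions of $-r\,t_U$, $-g\,t_V$, $-u\,t_W$ and then halving, which is precisely where an error would be easy to make. Finally, the degenerate cases $p=0,1$ (where part of the tower is empty and $f_0=e_1$) should be checked separately to confirm that the formula for $\widetilde{Q}_n$ still holds.
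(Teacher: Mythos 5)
Your proposal is correct and follows essentially the same route as the paper: both apply Lemma \ref{lem:dirac} to reduce the kernel to $\exp\bigl(2i\pi\,\mathbf{\widetilde{t}}^T(-RA^{-1}B)\mathbf{\widetilde{t}}\bigr)$ and then symmetrize, the only difference being that you obtain $-RA^{-1}B\,\mathbf{\widetilde{t}}$ by back-substituting through the delta relations rather than by writing out and inverting the $(2p+6)\times(2p+6)$ matrix $A$ explicitly. Your face assignments, signs, the solved values $f_p=-t_U$, $s=t_U-t_V$, $g=2t_U-t_V$, $u=t_V-t_U$, $e_k=-k\,t_U+\sum_l\min(k,l)\,t_l$ and $r=\sum_k k\,t_k-(p+2)t_U+t_V-t_W$ all match the paper's data, and the resulting symmetrized quadratic form is exactly $\widetilde{Q}_n$.
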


\begin{proof}
Let  $n \geqslant 3$ be an odd integer and $p=\frac{n-3}{2}$. We will denote $$\mathbf{\widetilde{t}} = (\mathsf{t}(T_1), \ldots, \mathsf{t}(W))^{\!\top}= (t_1, \ldots, t_p, t_U, t_V, t_W)^{\!\top} \in \R^{X_n^{3}}$$ a vector whose coordinates are associated to the tetrahedra ($t_j$ for $T_j$).
The generic vector in $\R^{X_n^{2}}$ corresponding to the face variables will be denoted  
$$\mathbf{x}=(e_1, \ldots,e_p,e_{p+1}, f_1, \ldots, f_p,v,r,s,g,u)^{\!\top} \in \R^{X_n^{2}}.$$
By definition, the kinematical kernel is:
$$\mathcal{K}_{X_n}\left (\mathbf{\widetilde{t}}\right ) = \int_{\mathbf{x} \in \R^{X_n^{2}}} d\mathbf{x} \prod_{T \in X_n^3} e^{2 i \pi \varepsilon(T) x_0(T) \mathsf{t}(T)}
\delta( x_0(T)- x_1(T)+ x_2(T))
\delta( x_2(T)- x_3(T)+ \mathsf{t}(T)).
$$

Following  Lemma \ref{lem:dirac}
 we compute from Figure \ref{fig:id:trig:odd} that:
$$
\mathcal{K}_{X_n}\left (\mathbf{\widetilde{t}}\right ) =
\int_{\mathbf{x} \in \R^{X_n^{2}}} d\mathbf{x} 
\int_{\mathbf{w} \in \R^{2(p+3)}} d\mathbf{w} \
e^{ 2 i \pi \mathbf{\widetilde{t}}^{\!\top} R \mathbf{x}}
e^{ -2 i \pi \mathbf{w}^{\!\top} A \mathbf{x}}
e^{ -2 i \pi \mathbf{w}^{\!\top} B \mathbf{\widetilde{t}}},
$$
where
$\mathbf{w}=(w_1, \ldots,w_W, w'_1, \ldots, w'_W)^{\!\top} \in \R^{2(p+3)}$
and the matrices $R,A,B$ are given by: 
$$
R=\kbordermatrix{
	\mbox{} 	& e_1 	& \dots 	& e_p	&e_{p+1} 	& \omit\vrule	&f_1	& \ldots 	& f_p	&\omit\vrule	&  v 	& r 	& s 		& g 	& u \\
	t_1 		& 1	 	 & 		&\push{\low{0}} 	& \omit\vrule	& 	&  		&  	&\omit\vrule	&   	&  	&   		&  	&  \\
	\vdots 	& 	    	&\ddots 	& 		&	  	& \omit\vrule	& 	&  0		&  	&\omit\vrule	& 	&   	&  0		&	&  \\
	t_{p}    	&\push{0}		       	& 1        	& 		& \omit\vrule	&  	&  		&	&\omit\vrule	&	&	&		&	&  \\ 
	\cline{1-1} \cline{2-15}
	t_U	 	& 		& 		& 		&  		& \omit\vrule	&	&	 	&	&\omit\vrule	&0  	&-1 	&0  		&0 	&0 \\
	t_V	 	& 		&\push{0}	  		&  		& \omit\vrule	&	& 	0	& 	&\omit\vrule	&0  	&0 	&0  		&-1 	&0 \\
	t_W	 	& 		&  		& 		& 	 	& \omit\vrule	&	&	  	&  	&\omit\vrule	&0 	&0 	&0  		&0 	&-1
},
$$

%$$
%A=\kbordermatrix{
%	\mbox{}&e_1&f_1&e_2&f_2&e_3 & \ldots & f_{p-2}& e_{p-1} & f_{p-1} & e_p & f_p & e_{p+1} & v & r& s&g&u \\
%	w_1&1&1&-1& 0&0 & \ldots & & &  &  &  &  &  & &  & & \\
%	w'_1&-1&1&0& 0&0 & \ldots & & &  &  &  &  &  & &  & & \\
%	w_2&0&0&1& 1& -1& \ldots & & &  &  &  & 0 &  & &  & & \\
%	w'_2&0&-1&0& 1& 0& \ldots & & &  &  &  &  &  & &  & & \\
%	& & & & & &  & & &  &  &  &  &  & &  & & \\
%	\vdots&\vdots&\vdots&\vdots& \vdots&\vdots &\ddots  &\vdots &\vdots &\vdots  &\vdots  &\vdots  &\vdots  &\vdots  &\vdots &\vdots  &\vdots &\vdots \\
%	& & & & & &  & & &  &  &  &  &  & &  & & \\
%	w_{p-1}& & & & & & \ldots & 0&1 &1  &-1  &0  &0  &0  &0 &0  &0 &0 \\
%	w'_{p-1}& & & & & & \ldots & -1&0 &1  &0  &0  &0  &0  &0 &0  &0 &0 \\
%	w_p& & & & & & \ldots &0 &0 & 0 &1  &1  &-1  &0  &0 &0  &0 &0 \\
%	w'_p& & & &  & & \ldots &0 &0 &-1  &0  &1  &0  &0  &0 &0  &0 &0 \\
%	w_U& & &0 &  & & \ldots &0 &0 &0  &0  &0  &0  &-1  &1 &1  &0 &0 \\
%	w'_U& & & & & & \ldots &0 &0 &0  &0  &0  &0  &0  &0 &1  &-1 &0 \\
%	w_V& & & &  & & \ldots &0 &0 &0  &0  &1  &0  &0  &0 &-1  &1 &0 \\
%	w'_V& & & & & & \ldots &0 &0 &0  &0  &1  &0  &0  &0 &0  &0 &-1 \\
%	w_W& & & &  & & \ldots &0 &0 &0  &0  &0  &0  &1  &-1 &0  &0 &1 \\
%	w'_W& & & &  & & \ldots & 0& 0& 0 &0  &0  &-1  &1  &0 &0  &0 &0 
%},$$

$$
A=\kbordermatrix{
	\mbox{} 	& e_1 	& e_2 	& \dots 	&e_p		& e_{p+1}	& \omit\vrule	&f_1	&f_{2}	&  \ldots 	& f_p	&\omit\vrule	&  v 	& r 	& s 		& g 	& u \\
	w_1 		& 1		&   -1 	& 		&		&		& \omit\vrule	& 1	&		&  		&  	&\omit\vrule	&   	&  	&  	 	&  	&  \\
	\vdots 	&     		&   \ddots	& \ddots 	&	\push{0}		& \omit\vrule	& 	&\ddots	&\push{0}  	&\omit\vrule	& 	&   	&  \low{0}	&	&  \\
	\vdots 	&     	\push{0}   		& \ddots 	&\ddots	&		& \omit\vrule	& \push{0}  	&\ddots	&  	&\omit\vrule	& 	&   	&  		&	&  \\
	w_{p} 	&		&		&		&	1	& -1		& \omit\vrule 	&	&		&		&1	&\omit\vrule	&	&	&		&	&  \\
\cline{1-1} \cline{2-17}
	w_{U} 	&		&		&		&		&  		& \omit\vrule 	&	&		&		&0	&\omit\vrule	&-1	&1	&1		&0	&0  \\
	w_{V} 	&		&		& 0		&		&  		& \omit\vrule 	&	&		&		&1	&\omit\vrule	&0	&0	&-1		&1	&0  \\
	w_{W} 	&		&		&		&		&  		& \omit\vrule 	&	&		&		&0	&\omit\vrule	&1	&-1	&0		&0	&1  \\
\cline{1-1} \cline{2-17}
	w'_1 		& -1		&   	 	& 		&		&		& \omit\vrule	& 1	&  		&  		&	&\omit\vrule	&   	&  	&   		&  	&  \\
	\vdots 	&     		&   		& 	 	&		&		& \omit\vrule	& -1	&  \ddots	&  \push{0}	&\omit\vrule	& 	&   	&  \low{0}	&	&  \\
	\vdots 	&     		&   		& 	 	&		&		& \omit\vrule	& 	&  \ddots	& \ddots 	&	&\omit\vrule	& 	&   	&  		&	&  \\
	w'_{p} 	&		&		&		&		&  		& \omit\vrule 	&	\push{0}	&-1		&1	&\omit\vrule	&	&	&		&	&  \\
\cline{1-1} \cline{2-17}
	w'_{U} 	&		&		&		&		&  		& \omit\vrule 	&	&		&		&0	&\omit\vrule	&0	&0	&1		&-1	&0  \\
	w'_{V} 	&		&		&		&		&  		& \omit\vrule 	&	&		&		&1	&\omit\vrule	&0	&0	&0		&0	&-1 \\
	w'_{W} 	&		&		&		&		&  -1		& \omit\vrule 	&	&		&		&0	&\omit\vrule	&1	&0	&0		&0	&0 
},
$$

$$
B=\kbordermatrix{
	\mbox{} 	& t_1 	& \dots 	& t_{p} 	& \omit\vrule	& t_{U}	& t_{V}	& t_{W}	\\ 
	w_1 		& 		&   	 	& 		&			&		&		& 		\\
	\vdots 	&     		&   		& 	 	&			&		&		& 		\\
	w_{p} 	&		&		&	    \multicolumn{3}{c}{\low{0}}	& 		&		\\  \cline{1-1} % \cline{2-17}
	w_{U} 	&		&		&		&			&		&  		&		\\
	w_{V} 	&		&		& 		&			&		&  		&		\\
	w_{W} 	&		&		&		&			&		&  		&		\\
\cline{1-1} \cline{2-8}
	w'_1 		& 1		&   	 	& 		&			&		&		&  		\\
	\vdots 	&     		&\ddots   	& 	 	&			&		&	0	& 		\\
	w'_{p} 	&		&		& \multicolumn{3}{c}{\low{\ddots}}	&  		&		\\  \cline{1-1}  % \cline{2-17}
	w'_{U} 	&		&		&		&			&		&  		&		\\
	w'_{V} 	&		&	0	&		&			&		&  \ddots	& 		\\
	w'_{W} 	&		&		&		&			&		&  		&	1	
}.
$$

Careful computation yields that $\det(A)=1$ and that the inverse $A^{-1}$ 
is equal to
$$
A^{-1}=\kbordermatrix{
	\mbox{} 	& w_{1}	& w_{2} 	& \ldots	& w_{p-1}	& w_{p}	& \omit\vrule	& w_{U}	& w_{V}	& w_{W}	& \omit\vrule	&  w'_{1}	& w'_{2}	& \ldots	& w'_{p-1}			& w'_{p}			& \omit\vrule 	& w'_{U}	& w'_{V}	& w'_{W}	\\ 
	e_{1} 	& 	0	&		& \cdots	& 		&	0	& \omit\vrule 	& 	0	& 	1	& 	0	& \omit\vrule	&  	-1	& 	-1	&	\push{\cdots}			& 	-1			& \omit\vrule 	& 	1	& 	0	& 	0	\\ 
	e_{2} 	& 	-1	& 	0	& 		& 		& 		& \omit\vrule 	& 	0	& 	2	& 	0	& \omit\vrule	&  	-1	& 	-2	&	\push{\cdots}			& 	-2			& \omit\vrule 	& 	2	& 	0	& 	0	\\ 
\low{\vdots} 	& 	-1	& 	-1	& \ddots	& 		& \vdots	& \omit\vrule 	& 		& 		& 		& \omit\vrule	& 	 	&		& \ddots	& 				& 	\vdots		& \omit\vrule 	& 		& 		& 		\\ 
		 	& \vdots	& 		& \ddots	& 	0	& 	0	& \omit\vrule 	& \vdots	& \vdots	& \vdots	& \omit\vrule	&  \vdots	& \vdots	&		&\text{\tiny {\(1-p\)}}	& \text{\tiny {\(1-p\)}}	& \omit\vrule 	& \vdots	& \vdots	& \vdots	\\ 
	e_{p} 	& 		& 		& 		& 	-1	& 	0	& \omit\vrule 	& 		& 		& 		& \omit\vrule	&  		& 		&		& \text{\tiny {\(1-p\)}}	& 	-p			& \omit\vrule 	& 		& 		& 		\\ 
	e_{p+1} 	& -1		& 		& \cdots	& 		& 	-1	& \omit\vrule 	& 	0	&  p+1	& 	0	& \omit\vrule	&  	-1	&	-2	&\cdots	& \text{\tiny {\(1-p\)}}	& 	-p			& \omit\vrule 	& 	p+1	& 	0	& 	0	\\ 
\cline{1-1} \cline{2-20}
	f_{1} 	& 		& 		& 		& 		& 		& \omit\vrule 	& 	0	& 	1	& 	0	& \omit\vrule	&  	0	& -1		& 	\push{\cdots}			&	-1			& \omit\vrule 	& 	1	& 	0	& 	0	\\ 
	f_{2} 	& 		& 		& 		& 		& 		& \omit\vrule 	& 		& 	1	& 		& \omit\vrule	&  	0	&0		& \ddots	&				&	\low{\vdots}	& \omit\vrule 	& 	1	& 		& 		\\ 
	\vdots 	& 		& 		& 	0	& 		& 		& \omit\vrule 	& \vdots	& \vdots	& \vdots	& \omit\vrule	& \vdots 	& 		& \ddots	&	-1			&	-1			& \omit\vrule 	& \vdots	& \vdots	& \vdots	\\ 
	f_{p-1} 	& 		& 		& 		& 		& 		& \omit\vrule 	& 		& 		& 		& \omit\vrule	&  	0	& 		& 		&	0			&	-1			& \omit\vrule 	& 		& 		& 		\\ 
	f_{p} 	& 		& 		& 		& 		& 		& \omit\vrule 	& 	0	& 	1	& 	0	& \omit\vrule	&  	0	& 		& \cdots	&				&	0			& \omit\vrule 	& 	1	& 	0	& 	0	\\ 
\cline{1-1} \cline{2-20}
	v	 	& 	-1	& 		& \cdots	& 		& 	-1	& \omit\vrule 	& 	0	& p+1	& 	0	& \omit\vrule	&  	-1	& 	-2	& \push{\cdots}				&	-p			& \omit\vrule 	& 	p+1	& 	0	& 	1	\\ 
	r	 	& 	-1	& 		& \cdots	& 		& 	-1	& \omit\vrule 	& 	0	& p+2	& 	-1	& \omit\vrule	&  	-1	& 	-2	& \push{\cdots}				&	-p			& \omit\vrule 	& 	p+2	& 	-1	& 	1	\\ 
	s	 	& 		& 		& 		& 		& 		& \omit\vrule 	& 	1	& 	-1	& 	1	& \omit\vrule	&  		& 		& 		&				&				& \omit\vrule 	& 	-1	& 	1	& 	0	\\ 
	g	 	& 		& 		& 	0	& 		& 		& \omit\vrule 	& 	1	& 	-1	& 	1	& \omit\vrule	&  		& 		& 	0	&				&				& \omit\vrule 	& 	-2	& 	1	& 	0	\\ 
	u	 	& 		& 		& 		& 		& 		& \omit\vrule 	& 	0	& 	1	& 	0	& \omit\vrule	&  		& 		& 		&				&				& \omit\vrule 	& 	1	& 	-1	& 	0	
}.
$$

Hence, following Lemma \ref{lem:dirac}, we have
$$
\mathcal{K}_{X_n}\left (\mathbf{\widetilde{t}}\right ) = \frac{1}{|\det(A)|} e^{ 2 i \pi \mathbf{\widetilde{t}}^{\!\top} (-R A^{-1} B) \mathbf{\widetilde{t}}}= e^{ 2 i \pi \mathbf{\widetilde{t}}^{\!\top} (-R A^{-1} B) \mathbf{\widetilde{t}}}.$$
The lemma finally follows from the identity $2 \widetilde{Q}_n = (-R A^{-1} B) + (-R A^{-1} B)^{\!\top}$, where $\widetilde{Q}_n$ is defined in the statement of the lemma.
\end{proof}

The following lemma relates the symmetric matrix $\widetilde{Q}_n$ to the gluing equations.

\begin{lemma}\label{lem:2QGamma+C}
		Let $n\geqslant 3$ be an odd integer and $p=\frac{n-3}{2}$.
		Let 		$\alpha = (a_1,b_1,c_1,\ldots,a_W,b_W,c_W) \in \mathcal{S}_{X_n}$ denote a shape structure. If we denote $\widetilde{Q}_n$  the symmetric matrix from Lemma \ref{lem:kin:odd}, $\widetilde{C}(\alpha) = (c_1,\ldots,c_W)^{\!\top}$, and $\widetilde{\Gamma}(\alpha) := (a_1-\pi,\ldots, a_p-\pi,\pi-a_U,\pi-a_V,\pi-a_W)^{\!\top}$, then
		(indexing entries by $k\in\{1,\ldots,p\}$ and $U,V,W$) we have the vector equality
		$
2\widetilde{Q}_n \widetilde{\Gamma}(\alpha) + \widetilde{C}(\alpha) =$ 

$$ 
\renewcommand{\kbldelim}{(}
\renewcommand{\kbrdelim}{)}
 \kbordermatrix{
	\mbox{}	& \\
	 k=1	& \vdots\\
\vdots	& \hspace{6mm} k(\omega_{s}(\alpha) -2(p+2) \pi) + \sum_{j=1}^{k}j \omega_{k-j}(\alpha) \\
	k=p 	& \vdots \\ 
\cline{1-1} \cline{2-2}
	 	& \omega_{p+1}(\alpha)- \omega_{s}(\alpha) - \left ( p(\omega_{s}(\alpha) -2(p+2) \pi) + \sum_{j=1}^{p}j \omega_{p-j}(\alpha) \right )  + 2 \pi - \frac{1}{2} \lambda_{X_n}(\alpha) \\
		& \frac{1}{2}\lambda_{X_n}(\alpha) + \omega_{s}(\alpha) - 3 \pi \\
		& 3 \pi - \omega_{s}(\alpha)
},
$$
where $\lambda_{X_n}(\alpha)=2(a_U-a_V+c_V-b_W)$.

In particular, for $\alpha \in \mathcal{A}_{X_n}$ an angle structure, the vector of angles

\renewcommand{\arraystretch}{1.2} $$
2\widetilde{Q}_n \widetilde{\Gamma}(\alpha) + \widetilde{C}(\alpha) = \:
\renewcommand{\kbldelim}{(}
\renewcommand{\kbrdelim}{)}
 \kbordermatrix{
\mbox{}	& \\
	 k=1	& \vdots\\
\vdots	& -2 \pi \left ( k p - \dfrac{k(k-1)}{2}\right ) \\
	k=p 	& \vdots \\ 
\cline{1-1} \cline{2-2}
		& (p^2+p+2)\pi - \frac{1}{2}\lambda_{X_n}(\alpha) \\
		& \frac{1}{2}\lambda_{X_n}(\alpha) - \pi \\
		& \pi }
$$ \renewcommand{\arraystretch}{1.0}
only depends on the linear combination $\lambda_{X_n}(\alpha)$.
\end{lemma}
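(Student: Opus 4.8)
The statement is an identity between two vector-valued affine functions of the shape structure $\alpha\in\mathcal{S}_{X_n}$. Since the matrix $\widetilde{Q}_n$ is given explicitly in Lemma \ref{lem:kin:odd}, while $\widetilde{\Gamma}(\alpha)$, $\widetilde{C}(\alpha)$ and the weights $\omega_s,\omega_0,\dots,\omega_{p+1}$ are explicit linear functions of the angles, the whole claim reduces to a finite linear computation. The plan is therefore to compute the left-hand side $2\widetilde{Q}_n\widetilde{\Gamma}(\alpha)+\widetilde{C}(\alpha)$ row by row and to match it entry by entry against the asserted right-hand side, using the expressions for the weights recalled at the beginning of Section \ref{sec:geom} together with the relations $a_j+b_j+c_j=\pi$.

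Writing $\widetilde{\Gamma}(\alpha)=(\gamma_1,\dots,\gamma_p,\gamma_U,\gamma_V,\gamma_W)$ with $\gamma_j=a_j-\pi$ for $1\leqslant j\leqslant p$ and $\gamma_l=\pi-a_l$ for $l\in\{U,V,W\}$, the triangular block of $\widetilde{Q}_n$ (whose $(i,j)$ entry is $\min(i,j)$ for $i,j\leqslant p$) gives, for $1\leqslant k\leqslant p$, the component $\sum_{j=1}^{p}\min(k,j)\gamma_j-k\gamma_U$, whereas the $V$ and $W$ rows are short and read off at once. Indeed $2(\widetilde{Q}_n\widetilde{\Gamma})_V+c_V=3a_U-2a_V+a_W+c_V-2\pi$ and $2(\widetilde{Q}_n\widetilde{\Gamma})_W+c_W=\pi-2a_U+a_V+c_W$, which match $\tfrac12\lambda_{X_n}(\alpha)+\omega_s(\alpha)-3\pi$ and $3\pi-\omega_s(\alpha)$ after substituting $b_V=\pi-a_V-c_V$, $b_W=\pi-a_W-c_W$; the combination $\tfrac12\lambda_{X_n}(\alpha)=a_U-a_V+c_V-b_W$ arises precisely from the off-diagonal coefficients $-3/2,1,-1/2$ of the bottom-right block. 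The $U$ row carries the full triangular sum $-\sum_{j=1}^{p}j\,\gamma_j$, which is the negative of the $k=p$ sum, so its verification recycles the $k$-row computation, the extra terms $\omega_{p+1}(\alpha)-\omega_s(\alpha)+2\pi-\tfrac12\lambda_{X_n}(\alpha)$ coming from the entries $p+2$, $1$ of the last block.

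I expect the genuinely delicate point to be the $k$-th components: one must identify $2\sum_{j=1}^{p}\min(k,j)\gamma_j$ with the cumulative combination $\sum_{j=1}^{k}j\,\omega_{k-j}(\alpha)=\sum_{i=0}^{k-1}(k-i)\,\omega_i(\alpha)$. The mechanism is discrete summation by parts: each interior weight $\omega_i$ ($1\leqslant i\leqslant p-1$) has the second-difference shape $c_{i-1}+2b_i+c_{i+1}$, so summing it against the linear weight $(k-i)$ telescopes, the $c$-contributions collapsing to $\sum_m 2(k-m)c_m$ while the $b$-contributions, rewritten through $b_i=\pi-a_i-c_i$, feed into the $a$-terms carried by $k\,\omega_0(\alpha)$ and into the constant $k\bigl(\omega_s(\alpha)-2(p+2)\pi\bigr)$. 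The only real labor is bookkeeping the $\pi$-constants produced by the inhomogeneity of $\widetilde{\Gamma}$ and by these substitutions; once collected, the two sides agree. Finally, the \emph{in particular} statement follows by setting $\omega_j(\alpha)=2\pi$ for all $j\in\{s,0,\dots,p+1\}$, the defining condition of $\mathcal{A}_{X_n}$: the $k$-th entry becomes $k\bigl(2\pi-2(p+2)\pi\bigr)+2\pi\sum_{j=1}^{k}j=\pi k(k-2p-1)=-2\pi\bigl(kp-\tfrac{k(k-1)}{2}\bigr)$, and the $U,V,W$ entries collapse to $(p^2+p+2)\pi-\tfrac12\lambda_{X_n}(\alpha)$, $\tfrac12\lambda_{X_n}(\alpha)-\pi$ and $\pi$, so the whole vector depends on $\alpha$ only through $\lambda_{X_n}(\alpha)$.
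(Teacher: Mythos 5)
Your proposal is correct and follows the same route as the paper, whose proof of this lemma is simply the phrase ``the lemma follows from direct computations'': you carry out exactly that row-by-row linear verification, and your spot-checkable entries (the $V$ and $W$ rows, the $k=1$ case, and the specialization $\omega_j(\alpha)=2\pi$ giving $\pi k(k-2p-1)=-2\pi\bigl(kp-\tfrac{k(k-1)}{2}\bigr)$) all check out against the explicit $\widetilde{Q}_n$ and the weight formulas. Nothing further is needed.
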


\begin{proof}
The lemma follows from direct computations.
\end{proof}

We can now proceed with the proof of Theorem \ref{thm:part:func}.

\begin{proof}[Proof of Theorem \ref{thm:part:func}]
Let  $n \geqslant 3$ be an odd integer and $p=\frac{n-3}{2}$.
We want to compute the partition function associated to $X_n$ and prove that it is of the desired form. We know the form of the kinematical kernel from Lemma \ref{lem:kin:odd}. Let us now compute the dynamical content. Let $\alpha=(a_1,b_1,c_1,\ldots,a_W,b_W,c_W) \in \mathcal{A}_{X_n}$, $\hbar>0$ and $\mathbf{\widetilde{t}} = (t_1, \ldots, t_p, t_U, t_V, t_W)^{\!\top} \in \R^{X_n^{3}}$.

By definition, the dynamical content $\mathcal{D}_{\hbar,X_n}(\mathbf{\widetilde{t}},\alpha)$ is equal to:
$$e^{\frac{1}{\sqrt{\hbar}} \widetilde{C}(\alpha)^{\!\top} \mathbf{\widetilde{t}}}
\dfrac{
\Phi_\B\left (t_U + \frac{i}{2 \pi \sqrt{\hbar}} (\pi-a_U)\right )
\Phi_\B\left (t_V + \frac{i}{2 \pi \sqrt{\hbar}}(\pi-a_V)\right )
\Phi_\B\left (t_W + \frac{i}{2 \pi \sqrt{\hbar}} (\pi-a_W)\right )
}{
\Phi_\B\left (t_1 - \frac{i}{2 \pi \sqrt{\hbar}} (\pi-a_1)\right )
\cdots
\Phi_\B\left (t_p - \frac{i}{2 \pi \sqrt{\hbar}} (\pi-a_p)\right )
},$$
where $\widetilde{C}(\alpha) = (c_1, \ldots, c_p,c_U,c_V, c_W)^{\!\top}$ as in the statement of Lemma \ref{lem:2QGamma+C}.

Now we can compute the partition function of the Teichm\"uller TQFT. By definition:
$$\mathcal{Z}_{\hbar}(X_n,\alpha)= \int_{\mathbf{\widetilde{t}}\in\R^{X_n^{3}}} d\mathbf{\widetilde{t}} \mathcal{K}_{X_n}(\mathbf{\widetilde{t}}) \mathcal{D}_{\hbar,X_n}(\mathbf{\widetilde{t}},\alpha).$$ We do the following  change of variables:
\begin{itemize}
\item $y'_k = t_k - \frac{i}{2 \pi \sqrt{\hbar}} (\pi-a_k)$ for $1 \leqslant k \leqslant p$,
\item $y'_l = t_l + \frac{i}{2 \pi \sqrt{\hbar}} (\pi-a_l)$ for $l\in\{U,V,W\}$,
\end{itemize}
and we denote $\mathbf{\widetilde{y}'}=\left (y'_1, \ldots, y'_{p}, y'_U, y'_V, y'_W\right )^{\!\top}$. 
We also denote 
$$\widetilde{\mathcal{Y}}'_{\hbar,\alpha} :=
\prod_{k=1}^p\left (\R - \frac{i}{2 \pi \sqrt{\hbar}} (\pi-a_k)\right ) 
\times 
\prod_{l=U,V,W}
\left (\R + \frac{i}{2 \pi \sqrt{\hbar}} (\pi-a_l)\right ),$$
the subset of $\C^{p+3}$ on which the variables in $\mathbf{\widetilde{y}'}$ will reside. Finally we denote:
\begin{align*}
\widetilde{\Gamma}(\alpha) := \frac{2 \pi \sqrt{\hbar}}{i}(\mathbf{\widetilde{y}'}-\mathbf{\widetilde{t}}) 
= (a_1-\pi,\ldots, a_p-\pi,\pi-a_U,\pi-a_V,\pi-a_W)^{\!\top}.
\end{align*}
as in the statement of Lemma \ref{lem:2QGamma+C}.
We can now compute:

\begin{align*}
&\mathcal{Z}_{\hbar}(X_n,\alpha)
= \int_{\mathbf{\widetilde{t}}\in\R^{X_n^{3}}} d\mathbf{\widetilde{t}} \mathcal{K}_{X_n}(\mathbf{\widetilde{t}}) \mathcal{D}_{\hbar,X_n}(\mathbf{\widetilde{t}},\alpha) \\
&= \int_{\mathbf{\widetilde{y}'}\in \widetilde{\mathcal{Y}}'_{\hbar,\alpha}} d\mathbf{\widetilde{y}'} 
\mathcal{K}_{X_n}\left (\mathbf{\widetilde{y}'}-\frac{i}{2 \pi \sqrt{\hbar}} \widetilde{\Gamma}(\alpha) \right ) \mathcal{D}_{\hbar,X_n}\left (\mathbf{\widetilde{y}'}-\frac{i}{2 \pi \sqrt{\hbar}} \widetilde{\Gamma}(\alpha),\alpha\right ) \\
&=  \int_{\mathbf{\widetilde{y}'} \in \widetilde{\mathcal{Y}}'_{\hbar,\alpha}}
\hspace*{-0.3cm} d\mathbf{\widetilde{y}'} 
e^{
2 i \pi \mathbf{\widetilde{y}}^{\prime T} \widetilde{Q}_n  \mathbf{\widetilde{y}'} 
+\frac{2}{\sqrt{\hbar}} \widetilde{\Gamma}(\alpha)^{\!\top} \widetilde{Q}_n  \mathbf{\widetilde{y}'} 
- \frac{i}{2 \pi \hbar}  \widetilde{\Gamma}(\alpha)^{\!\top} \widetilde{Q}_n  \widetilde{\Gamma}(\alpha) 
+ \frac{1}{\sqrt{\hbar}} \widetilde{C}(\alpha)^{\!\top} \mathbf{\widetilde{y}'} 
- \frac{i}{2 \pi \hbar} \widetilde{C}(\alpha)^{\!\top} \widetilde{\Gamma}(\alpha)
}
\frac{
\Phi_\B\left (y'_U\right )
\Phi_\B\left (y'_V\right )
\Phi_\B\left (y'_W\right )
}{
\Phi_\B\left (y'_1\right )
\cdots
\Phi_\B\left (y'_p\right )
}   \\
&\stareq  
\int_{\mathbf{\widetilde{y}'} \in \widetilde{\mathcal{Y}}'_{\hbar,\alpha}} d\mathbf{\widetilde{y}'} 
e^{
2 i \pi \mathbf{\widetilde{y}}^{\prime T} \widetilde{Q}_n  \mathbf{\widetilde{y}'} 
+\frac{2}{\sqrt{\hbar}} \widetilde{\Gamma}(\alpha)^{\!\top} \widetilde{Q}_n  \mathbf{\widetilde{y}'} 
+ \frac{1}{\sqrt{\hbar}} \widetilde{C}(\alpha)^{\!\top} \mathbf{\widetilde{y}'} 
}
\dfrac{
\Phi_\B\left (y'_U\right )
\Phi_\B\left (y'_V\right )
\Phi_\B\left (y'_W\right )
}{
\Phi_\B\left (y'_1\right )
\cdots
\Phi_\B\left (y'_p\right )
} \\
&=
\int_{\mathbf{\widetilde{y}'} \in \widetilde{\mathcal{Y}}'_{\hbar,\alpha}} d\mathbf{\widetilde{y}'}
e^{
2 i \pi \mathbf{\widetilde{y}}^{\prime T} \widetilde{Q}_n  \mathbf{\widetilde{y}'} + \frac{1}{\sqrt{\hbar}} \widetilde{\mathcal{W}}(\alpha)^{\!\top} \mathbf{\widetilde{y}'} 
}
\dfrac{
\Phi_\B\left (y'_U\right )
\Phi_\B\left (y'_V\right )
\Phi_\B\left (y'_W\right )
}{
\Phi_\B\left (y'_1\right )
\cdots
\Phi_\B\left (y'_p\right )
},
\end{align*}
where $\widetilde{\mathcal{W}}(\alpha):= 2 \widetilde{Q}_n  \widetilde{\Gamma}(\alpha)+\widetilde{C}(\alpha)$.
Now, from Lemma \ref{lem:2QGamma+C}, we have
$$\widetilde{\mathcal{W}}(\alpha) = \begin{pmatrix}-2p\pi\\
\vdots \\
-2 \pi \left ( k p - \dfrac{k(k-1)}{2}\right ) \\
\vdots \\
-p(p+1)\pi  \\
(p^2+p+2)\pi - \frac{1}{2}\lambda_{X_n}(\alpha) \\
\frac{1}{2}\lambda_{X_n}(\alpha) - \pi \\
\pi 
\end{pmatrix}.$$

We define a new variable $x:= y'_V-y'_U$ living in the set 
$$\mathcal{Y}'^0_{\hbar,\alpha}:=\R + 
\frac{i}{2 \pi \sqrt{\hbar}} (a_U-a_V),$$ and we also define $
\mathbf{y'}$ (respectively  $\mathcal{Y}'
_{\hbar,\alpha}$) exactly like $
\widetilde{\mathbf{y}'}$ (respectively $
\widetilde{\mathcal{Y}}'_{\hbar,\alpha}$) but with the second-to-last coordinate (corresponding to the tetrahedron $V$) taken out. 
We finally define 
\begin{equation} \label{eqn:form:W:and:Q}
\mathcal{W}_{n}=
\begin{bmatrix}\mathcal{W}_{n,1} \\ \vdots \\ \mathcal{W}_{n,k} \\ \vdots \\ \mathcal{W}_{n,p} \\ \mathcal{W}_{n,U} \\ \mathcal{W}_{n,W} \end{bmatrix}
:=
\begin{bmatrix}-2p\pi \\ \vdots \\ -2 \pi \left ( k p - \frac{k(k-1)}{2}\right ) \\ \vdots \\ -p(p+1)\pi \\ (p^2+p+1)\pi \\ \pi\end{bmatrix}
\qquad 
\text{ and }
\qquad
Q_n:=\begin{bmatrix}
1 & 1 & \cdots & 1 & -1 & 0 \\ 
1 & 2 & \cdots & 2 & -2 & 0 \\ 
\vdots & \vdots & \ddots & \vdots & \vdots & \vdots \\ 
1 & 2 & \cdots & p & -p & 0 \\ 
-1 & -2 & \cdots & -p & p & \frac{1}{2} \\ 
0 & 0 & \cdots & 0 & \frac{1}{2} & 0 
\end{bmatrix}.
\end{equation}
Notice that $Q_n$ is obtained from $\widetilde{Q}_n$ by the following operations:
\begin{itemize}
	\item add the $V$-row to the $U$-row,
	\item add the $V$-column to the $U$-column,
	\item delete the $V$-row and the $V$-column,
\end{itemize}
and $\mathcal{W}_{n}$ is obtained  from $\widetilde{\mathcal{W}}(\alpha)$ by the same operations on rows.

We can now use the substitution $y'_V = y'_U+x$ to compute:
\begin{align*}
2 i \pi \widetilde{\mathbf{y}}^{\prime T} \widetilde{Q}_n \widetilde{\mathbf{y}'} 
&= 
2 i \pi \left ( 
(\mathbf{y'}^{\!\top} Q_n \mathbf{y'} 
- p {y'_U}^2 - y'_U y'_W) 
+ (p+2){y'_U}^2 - 3 y'_U y'_V + 2 y'_U y'_W
 + {y'_V}^2 - y'_V y'_W
\right ) \\
&= 
2 i \pi \left (\mathbf{y'}^{\!\top} Q_n \mathbf{y'} -xy'_U -x y'_W+x^2\right ),
\end{align*}
and 
$\frac{1}{\sqrt{\hbar}} \widetilde{\mathcal{W}}(\alpha)^{\!\top} \widetilde{\mathbf{y}'}
= \frac{1}{\sqrt{\hbar}} \left (\mathcal{W}_n^{\!\top} \mathbf{y'}
+x (\frac{1}{2}\lambda_{X_n}(\alpha)-\pi)\right )$,
thus

\begin{align*}
&\mathcal{Z}_{\hbar}(X_n,\alpha) \stareq
\int_{\mathbf{\widetilde{y}'} \in \widetilde{\mathcal{Y}}'_{\hbar,\alpha}} d\mathbf{\widetilde{y}'}
e^{
2 i \pi \mathbf{\widetilde{y}}^{\prime T} \widetilde{Q}_n  \mathbf{\widetilde{y}'} + \frac{1}{\sqrt{\hbar}}  \widetilde{\mathcal{W}}(\alpha)^{\!\top} \mathbf{\widetilde{y}'} 
}
\dfrac{
\Phi_\B\left (y'_U\right )
\Phi_\B\left (y'_V\right )
\Phi_\B\left (y'_W\right )
}{
\Phi_\B\left (y'_1\right )
\cdots
\Phi_\B\left (y'_p\right )
}
\\
&\stareq
\int dx d\mathbf{y'} 
e^{2 i \pi \left (\mathbf{y'}^{\!\top} Q_n \mathbf{y'}+x(x-y'_U -y'_W)\right )+
\frac{1}{\sqrt{\hbar}} \left (\mathcal{W}_n^{\!\top} \mathbf{y'}
+x (\frac{1}{2}\lambda_{X_n}(\alpha)-\pi)\right )
}
\dfrac{
\Phi_\B\left (y'_U\right )
\Phi_\B\left (y'_U+x\right )
\Phi_\B\left (y'_W\right )
}{
\Phi_\B\left (y'_1\right )
\cdots
\Phi_\B\left (y'_p\right )
}  ,
\end{align*}
where the variables $(\mathbf{y'},x)$ in the last integral lie in $\mathcal{Y}'_{\hbar,\alpha} \times \mathcal{Y}'^0_{\hbar,\alpha}$.

Finally 
we obtain that
\begin{equation*}
\mathcal{Z}_{\hbar}(X_n,\alpha) \stareq  \int_{x \in \R + \frac{i}{2 \pi \sqrt{\hbar}} \mu_{X_n}(\alpha)} J_{X_n}(\hbar,x)e^{\frac{1}{2 \sqrt{\hbar}}  x  \lambda_{X_n}(\alpha)} dx,
\end{equation*}
where 
\begin{equation*} 
J_{X_n}(\hbar,x)=\int_{\mathcal{Y}'} d\mathbf{y'} \
e^{2 i \pi \left (\mathbf{y'}^{\!\top} Q_n \mathbf{y'}
+ x(x- y'_U-y'_W)\right )}
e^{
\frac{1}{\sqrt{\hbar}} \left (\mathbf{y'}^{\!\top} \mathcal{W}_n
- \pi x\right )}
\dfrac{
\Phi_\B\left (y'_U\right )
\Phi_\B\left (y'_U+x\right )
\Phi_\B\left (y'_W\right )
}{
\Phi_\B\left (y'_1\right )
\cdots
\Phi_\B\left (y'_p\right )
},
\end{equation*}
$\mathcal{Y}' = \mathcal{Y}'_{\hbar,\alpha}$,
and $\mu_{X_n}(\alpha)=a_U-a_V$,
which concludes the proof.
\end{proof}

We conclude this section with a slight rephrasing of Theorem \ref{thm:part:func}, in the following Corollary \ref{cor:part:func}.
Although the expression in Theorem \ref{thm:part:func} was the closest to the statement of \cite[Conjecture 1 (1)]{AK}, we find that the following re-formulation has additional benefits: the integration multi-contour  is now independent of $\hbar$ and the integrand is closer to the form $e^{\frac{1}{2 \pi \hbar} S(\mathbf{y})}$  that we need in order to apply the saddle point method (see Theorem \ref{thm:SPM}, where $\lambda \to \infty$ should be thought of as $2 \pi \hbar \to 0^+$).

\begin{corollary}\label{cor:part:func}
Let $n\geqslant 3$ be an odd integer and $p=\frac{n-3}{2}$. Consider the ideal triangulation $X_n$ of $S^3\setminus K_n$ from Figure \ref{fig:id:trig:odd}. Then for all angle structures $\alpha \in \mathcal{A}_{X_n}$ and all $\hbar>0$, we have:
\begin{equation*}
\mathcal{Z}_{\hbar}(X_n,\alpha) 
\stareq 
\int_{\mathbb{R}+i \mu_{X_n}(\alpha)  } 
\mathfrak{J}_{X_n}(\hbar,\mathsf{x})
e^{\frac{1}{4 \pi \hbar}  \mathsf{x}  \lambda_{X_n}(\alpha)} 
d\mathsf{x},
\end{equation*}
with the  map
\begin{equation*}
\mathfrak{J}_{X_n}\colon(\hbar,\mathsf{x})\mapsto
\left (
\frac{1}{2 \pi \sqrt{\hbar}}
\right )^{p+3}
\int_{\mathcal{Y}_\alpha} 
d\mathbf{y} \
e^{\frac
{
i \mathbf{y}^{\!\top} Q_n \mathbf{y} + 
 i \mathsf{x}(\mathsf{x}- y_U-y_W) + 
  \mathbf{y}^{\!\top} \mathcal{W}_n - \pi \mathsf{x}
}
 {2 \pi \hbar} 
}
\dfrac{
\Phi_\B\left ( \frac{y_U}{2 \pi \sqrt{\hbar}}  \right )
\Phi_\B\left ( \frac{y_U+\mathsf{x}}{2 \pi \sqrt{\hbar}} \right )
\Phi_\B\left ( \frac{y_W}{2 \pi \sqrt{\hbar}} \right )
}{
\Phi_\B\left (\frac{y_1}{2 \pi \sqrt{\hbar}}\right )
\cdots 
\Phi_\B\left (\frac{y_p}{2 \pi \sqrt{\hbar}}\right )
} 
,
\end{equation*}
where $\mu_{X_n},\lambda_{X_n}, \mathcal{W}_n, Q_n$ are the same as in Theorem \ref{thm:part:func}, and 
$$\mathcal{Y}_\alpha =
\prod_{k=1}^p\left (\R - i (\pi - a_k)\right ) 
\times 
\prod_{l=U,W}
\left (\R + i (\pi - a_l)\right ).$$
\end{corollary}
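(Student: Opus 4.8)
The plan is to deduce Corollary \ref{cor:part:func} directly from Theorem \ref{thm:part:func} by a single rescaling of all the integration variables, chosen so that the $\hbar$-dependence is transferred out of the contours and into the integrand, yielding the form $e^{S/(2\pi\hbar)}$ wanted for the saddle point method. Concretely, since $2\pi\sqrt{\hbar}>0$, I would substitute $\mathbf{y}=2\pi\sqrt{\hbar}\,\mathbf{y'}$ in the inner integral defining $J_{X_n}(\hbar,x)$ and $\mathsf{x}=2\pi\sqrt{\hbar}\,x$ in the outer integral of the theorem. Under multiplication by the positive real number $2\pi\sqrt{\hbar}$, each horizontal line $\R-\frac{i}{2\pi\sqrt{\hbar}}(\pi-a_k)$ is sent to $\R-i(\pi-a_k)$ and each $\R+\frac{i}{2\pi\sqrt{\hbar}}(\pi-a_l)$ to $\R+i(\pi-a_l)$, so the $\hbar$-dependent contour $\mathcal{Y}'$ becomes exactly $\mathcal{Y}_\alpha$; likewise the outer contour $\R+\frac{i}{2\pi\sqrt{\hbar}}\mu_{X_n}(\alpha)$ becomes $\R+i\mu_{X_n}(\alpha)$, with orientations preserved because the scaling factor is positive.

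Next I would track the Jacobians and the exponent. The vector $\mathbf{y'}$ has $p+2$ coordinates, so $d\mathbf{y'}=(2\pi\sqrt{\hbar})^{-(p+2)}\,d\mathbf{y}$, while $dx=(2\pi\sqrt{\hbar})^{-1}\,d\mathsf{x}$; collecting these produces the overall prefactor $(2\pi\sqrt{\hbar})^{-(p+3)}$ appearing in $\mathfrak{J}_{X_n}$. For the exponent, the quadratic terms are homogeneous of degree two in $(\mathbf{y'},x)$, so $2i\pi(\mathbf{y'}^T Q_n\mathbf{y'}+x(x-y'_U-y'_W))$ picks up the factor $(2\pi\sqrt{\hbar})^{-2}$ and becomes $\frac{i}{2\pi\hbar}(\mathbf{y}^T Q_n\mathbf{y}+\mathsf{x}(\mathsf{x}-y_U-y_W))$; the linear terms carry the explicit prefactor $\frac{1}{\sqrt{\hbar}}$, so $\frac{1}{\sqrt{\hbar}}(\mathbf{y'}^T\mathcal{W}_n-\pi x)$ becomes $\frac{1}{2\pi\hbar}(\mathbf{y}^T\mathcal{W}_n-\pi\mathsf{x})$. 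The arguments of Faddeev's quantum dilogarithm transform as $y'_j\mapsto\frac{y_j}{2\pi\sqrt{\hbar}}$ and $y'_U+x\mapsto\frac{y_U+\mathsf{x}}{2\pi\sqrt{\hbar}}$, matching the numerator and denominator of $\mathfrak{J}_{X_n}$ verbatim.

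Assembling these computations yields the identity $\mathfrak{J}_{X_n}(\hbar,\mathsf{x})=\frac{1}{2\pi\sqrt{\hbar}}\,J_{X_n}\!\left(\hbar,\frac{\mathsf{x}}{2\pi\sqrt{\hbar}}\right)$, and substituting this together with $e^{\frac{1}{2\sqrt{\hbar}}x\lambda_{X_n}(\alpha)}=e^{\frac{1}{4\pi\hbar}\mathsf{x}\lambda_{X_n}(\alpha)}$ into the outer integral of Theorem \ref{thm:part:func} gives precisely the stated formula. Since the substitution is an exact affine change of variables (an honest equality of integrals, not merely an equality of moduli), the relation $\stareq$ inherited from Theorem \ref{thm:part:func} is preserved throughout. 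There is essentially no analytic obstacle here: the only point requiring care is the bookkeeping of homogeneity degrees --- the quadratic part scaling like $\hbar^{-1}$ and the linear part like $\hbar^{-1/2}$ before its explicit $\frac{1}{\sqrt{\hbar}}$ prefactor --- which conspire to produce the common denominator $2\pi\hbar$ characteristic of the form $e^{S/(2\pi\hbar)}$ used for the saddle point method in Section \ref{sec:vol:conj}.
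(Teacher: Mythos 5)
Your proposal is correct and follows essentially the same route as the paper: the paper's proof of this corollary is precisely the rescaling $y'_j=\frac{y_j}{2\pi\sqrt{\hbar}}$, $x=\frac{\mathsf{x}}{2\pi\sqrt{\hbar}}$ applied to Theorem \ref{thm:part:func}, with the same bookkeeping of contours, Jacobians, and homogeneity degrees that you carry out. Your intermediate identity $\mathfrak{J}_{X_n}(\hbar,\mathsf{x})=\frac{1}{2\pi\sqrt{\hbar}}J_{X_n}\bigl(\hbar,\frac{\mathsf{x}}{2\pi\sqrt{\hbar}}\bigr)$ is exactly the relation the paper records and reuses later in the proof of Theorem \ref{thm:vol:conj}.
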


\begin{proof}
We start from the expressions in Theorem \ref{thm:part:func}, and, with $\hbar >0$ fixed, we do the change of variables
$y_j = (2\pi \sqrt{\hbar}) y'_j$ for $j \in \{1, \ldots, p,U,W\}$
and $\mathsf{x} = (2\pi \sqrt{\hbar}) x$.
\end{proof}

%%%%%%%%%%%%%%%%%%%%%%%%%%%%%%%%%%%%%%%%%%%%%%%%%%%%%%%%%%%%%%%%%%%%%%%%%%%%%%%%%%%%%%%%%%%%%%%%%%%%%%%%%%%%%%%%%%%%%%%%%%%

\section{Partition function for the H-triangulations (odd case)}\label{sec:part:H:odd}

As stated in the introduction, this section is not essential for understanding the proof of the volume conjecture in Section \ref{sec:vol:conj}, and thus may be skipped at first read. However similar this section looks to the previous Section \ref{sec:part:odd}, subtle differences remain in the equations and calculations, and details should thus be read carefully.

Before stating Theorem \ref{thm:part:func:Htrig:odd}, we compute the weights on each edge of the H-triangulation $Y_n$ given in Figure \ref{fig:H:trig:odd} (for $n \geqslant 3$ odd).  

Recall that we denoted $\overrightarrow{\eta_0}, \ldots, \overrightarrow{\eta_{p+1}}, \overrightarrow{\eta_s}, \overrightarrow{\eta_d}, \overrightarrow{K_n} \in (Y_n)^1$ the $p+5$ edges in $Y_n$ respectively represented in Figure \ref{fig:H:trig:odd} by arrows with circled $0$, \ldots, circled $p+1$, simple arrow, double arrow and blue simple arrow.

For $\alpha=(a_1,b_1,c_1,\ldots,a_p,b_p,c_p,a_U,b_U,c_U,a_V,b_V,c_V,a_W,b_W,c_W,a_Z,b_Z,c_Z) \in \mathcal{S}_{Y_n}$ a shape structure on $Y_n$, the weights of each edge are given by:
\begin{itemize}
\item $\widehat{\omega}_s(\alpha):= \omega_{Y_n,\alpha}(\overrightarrow{\eta_s})=
2 a_U+b_V+c_V+a_W+b_W+a_Z
$
\item $\widehat{\omega}_d(\alpha):= \omega_{Y_n,\alpha}(\overrightarrow{\eta_d})=
b_U+c_U+c_W+b_Z+c_Z
$
\item $\omega_0(\alpha):= \omega_{Y_n,\alpha}(\overrightarrow{\eta_0})=
2 a_1 + c_1 + 2 a_2 + \ldots + 2 a_p + a_V+c_W
$
\item $\omega_1(\alpha):= \omega_{Y_n,\alpha}(\overrightarrow{\eta_1})=
2b_1+c_2
$
\\
\vspace*{-2mm}
\item $\omega_k(\alpha):= \omega_{Y_n,\alpha}(\overrightarrow{\eta_k})=
c_{k-1}+2b_k+c_{k+1}
$
\ \
(for $2\leqslant k \leqslant p-1$)
\\
\vspace*{-2mm}
\item $\omega_p(\alpha):= \omega_{Y_n,\alpha}(\overrightarrow{\eta_p})=
c_{p-1}+2b_p+b_U+b_V+a_W$
\item $\widehat{\omega}_{p+1}(\alpha):= \omega_{Y_n,\alpha}(\overrightarrow{\eta_{p+1}})=
c_p+c_U+a_V+c_V+b_W+b_Z+c_Z
$
\item $\widehat{\omega}_{\overrightarrow{K_n}}(\alpha):= \omega_{Y_n,\alpha}(\overrightarrow{K_n})=
a_Z
$
\end{itemize}

Note that some of these weights have the same value as the ones for $X_n$ listed in Section \ref{sec:geom} (and are thus also denoted $\omega_j(\alpha)$), and some are specific to $Y_n$ (and are written with a hat).

We can now compute the partition function of the Teichm\"uller TQFT for the H-triangulations $Y_n$, and prove the following theorem. We will denote $\mathcal{S}_{Y_n \setminus Z}$ the space of shape structures on every tetrahedron of $Y_n$ except for $Z$.

\begin{theorem}\label{thm:part:func:Htrig:odd}
Let $n \geqslant 3$ be an odd integer, $p=\frac{n-3}{2}$ and $Y_n$ the one-vertex H-triangulation of the pair $(S^3,K_n)$ from Figure \ref{fig:H:trig:odd}. Then for every $\hbar>0$ and for every 
$\tau\in \mathcal{S}_{Y_n \setminus Z} \times  \overline{\mathcal{S}_Z}$
 such that $\omega_{Y_n,\tau}$ vanishes on $\overrightarrow{K_n}$ and is equal to $2\pi$ on every other edge, one has
\begin{equation*}
\underset{\tiny \begin{matrix}\alpha \to \tau \\ \alpha \in \mathcal{S}_{Y_n} \end{matrix}}{\lim}
	 \Phi_{\B}\left( \frac{\pi-\omega_{Y_n,\alpha}\left (\overrightarrow{K_n}\right )}{2\pi i \sqrt{\hbar}} \right)  \mathcal{Z}_{\hbar}(Y_n,\alpha) \stareq J_{X_n}(\hbar,0),
\end{equation*}
where $J_{X_n}$ is defined in Theorem \ref{thm:part:func}.
\end{theorem}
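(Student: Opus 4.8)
The plan is to compute $\mathcal{Z}_{\hbar}(Y_n,\alpha)$ exactly, just as in the proof of Theorem~\ref{thm:part:func}, keeping track of the extra tetrahedron $Z$, and then to take the limit $\alpha \to \tau$ so that the factor $\Phi_\B$ evaluated at $\frac{\pi - \omega_{Y_n,\alpha}(\overrightarrow{K_n})}{2\pi i \sqrt{\hbar}}$ cancels against a diverging $\Phi_\B$ coming from the tetrahedron $Z$. First I would compute the kinematical kernel $\mathcal{K}_{Y_n}(\widehat{\mathbf{t}})$ via Lemma~\ref{lem:dirac}. Because $Y_n$ is obtained from $X_n$ by adjoining the single tetrahedron $Z$ (whose faces $m,m,s,s'$ are glued in the manner of Figure~\ref{fig:H:trig:odd}), the matrices $R,A,B$ differ from those in Lemma~\ref{lem:kin:odd} only in the rows and columns indexed by $Z$ and by the faces touching it. I expect $\mathcal{K}_{Y_n}(\widehat{\mathbf{t}}) \stareq \exp(2i\pi\, \widehat{\mathbf{t}}^T \widehat{Q}_n \widehat{\mathbf{t}})$ for a symmetric matrix $\widehat{Q}_n$ extending $\widetilde{Q}_n$ by one row and column, and the task is to check that $\det(A)$ remains $\pm 1$ so that $\mathcal{K}_{Y_n}$ is again a genuine function rather than a distribution.

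Next I would write down the dynamical content $\mathcal{D}_{\hbar,Y_n}(\widehat{\mathbf{t}},\alpha)$, which is the product of the $X_n$ dynamical content with the extra factor
\[
\frac{\exp\!\left(\hbar^{-1/2} c_Z\, t_Z\right)}{\Phi_\B\!\left(t_Z + \tfrac{i}{2\pi\sqrt{\hbar}}(\pi - a_Z)\right)},
\]
since $Z$ is a positive tetrahedron. Performing the same affine change of variables $y'_j = t_j \mp \frac{i}{2\pi\sqrt{\hbar}}(\pi - a_j)$ as before (now including $y'_Z = t_Z + \frac{i}{2\pi\sqrt{\hbar}}(\pi - a_Z)$) and the substitution $x = y'_V - y'_U$, I would integrate out the variables $y'_1,\dots,y'_p,y'_U,y'_W$ exactly as in Theorem~\ref{thm:part:func}. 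The role of the edge weight $\widehat{\omega}_{\overrightarrow{K_n}}(\alpha) = a_Z$ is crucial here: since $\omega_{Y_n,\alpha}(\overrightarrow{K_n}) = a_Z$, the prefactor $\Phi_\B\!\left(\frac{\pi - a_Z}{2\pi i \sqrt{\hbar}}\right)$ in the statement is precisely designed to multiply against the $\Phi_\B(y'_Z)$ sitting in the denominator of $\mathcal{D}_{\hbar,Y_n}$ in a way that becomes tractable in the limit.

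The main obstacle, and the step requiring the most care, is the limit $\alpha \to \tau$ where $a_Z \to 0$ (so $\omega_{Y_n,\alpha}(\overrightarrow{K_n}) \to 0$) while the other two angles of $Z$ tend to a taut configuration. As $a_Z \to 0$, the integration contour for $y'_Z$ degenerates and the factor $1/\Phi_\B(y'_Z)$ develops a pole structure near the boundary of the domain of $\Phi_\B$; the companion factor $\Phi_\B\!\left(\frac{\pi - a_Z}{2\pi i \sqrt{\hbar}}\right)$ compensates this blow-up. Concretely I would use the functional equation and the asymptotics of Proposition~\ref{prop:quant:dilog}~(4) to show that the $t_Z$-integral, once multiplied by the prefactor, converges to a Dirac-type evaluation forcing $x = 0$; this is the mechanism by which the variable $x$ (and hence the integration over the $V$-direction) is eliminated and one recovers $J_{X_n}(\hbar,0)$ rather than the full contour integral of Theorem~\ref{thm:part:func}. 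Making this limit rigorous — justifying the exchange of limit and integral via a dominated-convergence argument uniform near $a_Z = 0$, using the exponential decay in Lemma~\ref{lem:dec:exp} — is where the real work lies; the algebraic reduction of $\widehat{Q}_n$ and $\mathcal{W}$ to the $X_n$ data at $x=0$ will follow from direct computation analogous to Lemma~\ref{lem:2QGamma+C}.
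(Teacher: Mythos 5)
Your overall architecture (compute the kernel and dynamical content, change variables, cancel the $Z$-contribution against the prefactor, then pass to the limit by dominated convergence) is the right one, and you correctly identify that the prefactor $\Phi_{\B}\bigl( \frac{\pi-a_Z}{2\pi i \sqrt{\hbar}} \bigr)$ is meant to absorb the quantum dilogarithm attached to $Z$. However, there is a genuine gap at the very first step, and it propagates into a wrong mechanism for the rest of the argument. You expect the kinematical kernel of $Y_n$ to be a pure Gaussian $\exp(2i\pi\,\widehat{\mathbf{t}}^T\widehat{Q}_n\widehat{\mathbf{t}})$ with an invertible face-gluing matrix of determinant $\pm 1$. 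That check fails: for the H-triangulation the analogue of the matrix $A$ of Lemma \ref{lem:dirac} is \emph{singular} (the column indexed by the face $m$, which is glued to itself inside $Z$, is identically zero), so Lemma \ref{lem:dirac} does not apply and $\mathcal{K}_{Y_n}$ is genuinely a tempered distribution. The correct computation, obtained by deleting the $m$-column and one row and Fourier-transforming the rest, yields
\[
\mathcal{K}_{Y_n}(\widehat{\mathbf{t}})=e^{2 i \pi \left( \mathbf{t}^T Q_n \mathbf{t} +(t_V - t_U)(t_1 + \cdots + p\, t_p - p\, t_U) \right)}\,\delta(t_Z)\, \delta(t_U - t_V - t_Z),
\]
with two Dirac factors present for \emph{every} angle structure $\alpha$, not just in a limit.

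This changes the mechanism you propose for the second half. There is no $t_Z$-integral whose asymptotics as $a_Z\to 0$ must be analysed, and no pole of $1/\Phi_\B$ to be compensated: $\delta(t_Z)$ sets $t_Z=0$ outright, after which the $Z$-factor of the dynamical content becomes exactly $1/\Phi_\B\bigl(\frac{\pi-a_Z}{2\pi i\sqrt{\hbar}}\bigr)$ and cancels the prefactor identically, for every $\alpha\in\mathcal{S}_{Y_n}$. Likewise, the evaluation at $x=0$ is not produced by any "Dirac-type evaluation" emerging in the limit $a_Z\to 0$; it comes from the second delta $\delta(t_U-t_V-t_Z)$, which forces $t_V=t_U$ from the start. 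What remains for the limit $\alpha\to\tau$ is only a dominated convergence argument for a well-behaved $(p+2)$-dimensional integral (using bounds of the type you mention, as in Lemma \ref{lem:PhiB:bounded}), together with the algebraic verification that the conditions on $\tau$ (weights $2\pi$ on all edges except $\overrightarrow{K_n}$, where the weight vanishes) give $a^\tau_U=a^\tau_V$ and force the linear weight vector to equal $\mathcal{W}_n$, so that the resulting integral is literally $J_{X_n}(\hbar,0)$. Without recognising the distributional nature of $\mathcal{K}_{Y_n}$, the proof as you outline it cannot be completed.
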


Before proving Theorem \ref{thm:part:func:Htrig:odd}, let us mention a useful result: the fact that $\Phi_\B$ is bounded on compact horizontal bands.

\begin{lemma}\label{lem:PhiB:bounded}
Let $\hbar>0$ and $\delta \in (0,\pi/2)$. Then
$\displaystyle M_{\delta,\hbar} := \underset{z \in \R+i[\delta,\pi-\delta]}{\max} |\Phi_\B(z)|$ is finite.
\end{lemma}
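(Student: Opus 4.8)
The plan is to bound $|\Phi_\B|$ separately on a large compact part of the band and near its two ends $\Re(z)\to\pm\infty$, using continuity for the former and the behavior at infinity (Proposition \ref{prop:quant:dilog}(4)) for the latter. First I would record that $\Phi_\B$ is holomorphic and zero-free on the band: by the description of its poles and zeros following the definition of $\Phi_\B$, the poles all lie on $i[\frac{1}{2\sqrt{\hbar}},\infty)$ and the zeros on $i(-\infty,-\frac{1}{2\sqrt{\hbar}}]$, so the closed band $\R+i[\delta,\pi-\delta]$ meets neither as soon as $\pi-\delta<\frac{1}{2\sqrt{\hbar}}$, i.e.\ once the band lies inside the strip of holomorphy of $\Phi_\B$ (the regime $\hbar\to 0^+$ in which the lemma is applied). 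Consequently $z\mapsto|\Phi_\B(z)|$ is continuous on the band, and in particular bounded on every compact sub-band $[-R,R]+i[\delta,\pi-\delta]$.

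It then remains to control the two ends. For a fixed imaginary part $d\in[\delta,\pi-\delta]$, Proposition \ref{prop:quant:dilog}(4) gives $|\Phi_\B(x+id)|\to 1$ as $x\to-\infty$ and $|\Phi_\B(x+id)|\sim e^{-2\pi x d}$ as $x\to+\infty$; since $d\geq\delta>0$, the latter tends to $0$. I would upgrade these two limits to estimates that are \emph{uniform} in $d\in[\delta,\pi-\delta]$, producing an $R>0$ such that $|\Phi_\B(x+id)|\leq 2$ for all $|x|\geq R$ and all $d\in[\delta,\pi-\delta]$. Combined with the compact bound from the first step this yields $M_{\delta,\hbar}\leq\max\{2,\ \sup_{[-R,R]+i[\delta,\pi-\delta]}|\Phi_\B|\}<\infty$, which is the claim.

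The one genuinely analytic point — and the step I expect to be the main obstacle — is exactly this uniformity in $d$, since Proposition \ref{prop:quant:dilog}(4) is phrased for a single value of $d$. I would obtain it either directly from the integral representation defining $\Phi_\B$ (the governing error term is an integral depending continuously on $d$, with tail estimates uniform over the compact interval $[\delta,\pi-\delta]$), or, more economically, by reducing the two ends to one via the inversion relation of Proposition \ref{prop:quant:dilog}(1): writing $z=x+id$ and taking moduli gives
\[
|\Phi_\B(z)|\,|\Phi_\B(-z)| = \bigl|e^{i\frac{\pi}{12}(\B^2+\B^{-2})}\bigr|\,\bigl|e^{i\pi z^2}\bigr| = e^{-2\pi x d},
\]
since $\B\in\R_{>0}$ makes the first factor have modulus $1$. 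As $-z$ has real part $\to-\infty$ and imaginary part $-d\in[-(\pi-\delta),-\delta]$, where $\Phi_\B$ is zero-free with $|\Phi_\B(-z)|\to 1$, this expresses the $+\infty$ end in terms of the $-\infty$ end. Hence a single uniform statement, namely $|\Phi_\B(w)|\to 1$ as $\Re(w)\to-\infty$ uniformly for $\Im(w)$ in the compact set $[-(\pi-\delta),-\delta]\cup[\delta,\pi-\delta]$, suffices to finish; establishing that uniform limit is the crux, while everything else is continuity and compactness.
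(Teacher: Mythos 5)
Your proof is correct and follows essentially the same route as the paper's: the paper argues by contradiction with a sequence $(z_n)$ in the band along which $|\Phi_\B(z_n)|\to\infty$, splitting into the case of bounded real parts (handled by continuity and compactness) and the case of real parts escaping to $\pm\infty$ (handled by Proposition~\ref{prop:quant:dilog}~(4)), which is just the contrapositive of your direct decomposition into a compact sub-band plus two ends. The two points you single out as the crux --- that the band must avoid the poles and zeros of $\Phi_\B$, and that the asymptotics of Proposition~\ref{prop:quant:dilog}~(4) must be made uniform in the imaginary part $d\in[\delta,\pi-\delta]$ --- are both left implicit in the paper's two-line argument, so your treatment is if anything the more careful one.
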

\begin{proof}
Let $\hbar>0$ and $\delta \in (0,\pi/2)$. By contradiction, let us assume that $ M_{\delta,\hbar} = \infty$. Then there exists a sequence $(z_n)_{n\in \N} \in \left (\R+i[\delta,\pi-\delta]\right )^\N$ such that $|\Phi_\B(z_n)| \underset{n \to \infty}{\to} \infty$.

If $(\Re(z_n))_{n\in \N}$ is bounded, then $(z_n)_{n\in \N}$ lives in a compact set, which contradicts the continuity of $|\Phi_\B |$.

If  $(\Re(z_n))_{n\in \N}$ admits a subsequence going to $-\infty$ (resp.\ $\infty$), then the image of this subsequence by $| \Phi_\B |$ should still tend to $\infty$, which contradicts Proposition \ref{prop:quant:dilog} (4).
\end{proof}

\begin{proof}[Proof of Theorem \ref{thm:part:func:Htrig:odd}]
Let $n \geqslant 3$ be an odd integer and $p=\frac{n-3}{2}$. The proof will consist in three steps: computing the partition function $\mathcal{Z}_{\hbar}(Y_n,\alpha)$, applying the dominated convergence theorem in $\alpha \to \tau$ and finally retrieving the value $J_{X_n}(\hbar,0)$ in $\alpha =\tau$.

\textit{Step 1. Computing the partition function $\mathcal{Z}_{\hbar}(Y_n,\alpha)$.}

Like in the proof of Theorem \ref{thm:part:func} we start by computing the kinematical kernel. We  denote 
\[
\widehat{\mathbf{t}}=(t_1,\ldots,t_{p-1},t_p,t_U,t_V,t_W,t_Z) \in \mathbb{R}^{Y_n^3}
\]
the vector whose coordinates are associated to the tetrahedra ($t_j$ for $T_j$). The generic vector in $\mathbb{R}^{Y_n^2}$ which corresponds to the faces variables will be denoted
\[
\widehat{\mathbf{x}}=(e_1,\ldots,e_{p+1},f_1,\ldots,f_p,v,r,s,s',g,u,m) \in \mathbb{R}^{Y_n^2}. 
\]

By definition, the kinematical kernel is:
$$\mathcal{K}_{Y_n}\left (\mathbf{\widehat{t}}\right ) = \int_{\widehat{\mathbf{x}} \in \R^{Y_n^{2}}} d\widehat{\mathbf{x}} \prod_{T \in Y_n^3} e^{2 i \pi \varepsilon(T) x_0(T) \mathsf{t}(T)}
\delta( x_0(T)- x_1(T)+ x_2(T))
\delta( x_2(T)- x_3(T)+ \mathsf{t}(T)).
$$

Following Lemma \ref{lem:dirac},
we compute from Figure \ref{fig:H:trig:odd} that:
$$
\mathcal{K}_{Y_n}\left (\mathbf{\widehat{t}}\right ) =
\int_{\widehat{\mathbf{x}} \in \R^{Y_n^{2}}} d\widehat{\mathbf{x}} 
\int_{\widehat{\mathbf{w}} \in \R^{2 (p+4)}} d\widehat{\mathbf{w}} \
e^{ 2 i \pi \mathbf{\widehat{t}}^{\!\top} \widehat{S} \widehat{\mathbf{x}}}
e^{ -2 i \pi \widehat{\mathbf{w}}^{\!\top} \widehat{H} \widehat{\mathbf{x}}}
e^{ -2 i \pi \widehat{\mathbf{w}}^{\!\top} \widehat{D} \mathbf{\widehat{t}}},
$$
where the matrices $\widehat{S}, \widehat{H}, \widehat{D}$ are given by:
$$ 
\widehat{S}=\kbordermatrix{
	\mbox{} 	& e_1 	& \dots 	& e_p	&e_{p+1} 	& \omit\vrule	&f_1	& \ldots 	& f_p	&\omit\vrule	&  v 	& r 	& s 	& s'	& g	& u 	& m 	\\
	t_1 		& 1	 	 & 		&\push{\low{0}} 	& \omit\vrule	& 	&  		&  	&\omit\vrule	&   	&  	&   	&	&	&  	&  	\\
	\vdots 	& 	    	&\ddots 	& 		&	  	& \omit\vrule	& 	&  0		&  	&\omit\vrule	& 	&   	&  	& 0	&	&	&  	\\
	t_{p}    	&\push{0}		       	& 1        	& 		& \omit\vrule	&  	&  		&	&\omit\vrule	&	&	&	&	&	&	&  	\\ 
\cline{1-1} \cline{2-17}
	t_U	 	& 		& 		& 		&  		& \omit\vrule	&	&	 	&	&\omit\vrule	&0  	&-1 	&0  	& 0	& 0	&0 	& 0 	\\
	t_V	 	& 		&\push{0}	  		&  		& \omit\vrule	&	& 	0	& 	&\omit\vrule	&0  	&0 	&0  	& 0	& -1	& 0 	& 0 	\\
	t_W	 	& 		&  		& 		& 	 	& \omit\vrule	&	&	  	&  	&\omit\vrule	&0 	&0 	&0  	& 0	& 0	& -1 	& 0	\\
	t_Z	 	& 		&  		& 		& 	 	& \omit\vrule	&	&	  	&  	&\omit\vrule	&0 	&0 	&0  	& 0	& 0	& 0 	& 1
},$$

$$
\widehat{H}=\kbordermatrix{
\mbox{} 		& e_1 	& e_2 	& \dots 	&e_p		& e_{p+1}	& \omit\vrule	&f_1	&f_{2}	&  \ldots 	& f_p	&\omit\vrule	&  v 	& r 	& s 	& s'		& g	& u 	& m 	\\
	w_1 		& 1		&   -1 	& 		&		&		& \omit\vrule	& 1	&		&  		&  	&\omit\vrule	&   	&  	&  	&		&	&  	&  	\\
	\vdots 	&     		&   \ddots	& \ddots 	&	\push{0}		& \omit\vrule	& 	&\ddots	&\push{0}  	&\omit\vrule	& 	&   	&  	& \low{0}	&	&	&  	\\
	\vdots 	&     	\push{0}   		& \ddots 	&\ddots	&		& \omit\vrule	& \push{0}  	&\ddots	&  	&\omit\vrule	& 	&   	&  	&		&	&	&  	\\
	w_{p} 	&		&		&		&	1	& -1		& \omit\vrule 	&	&		&		&1	&\omit\vrule	&	&	&	&		&	&	&  	\\
\cline{1-1} \cline{2-19}
	w_{U} 	&		&		&		&		&  		& \omit\vrule 	&	&		&		&	&\omit\vrule	& -1	& 1	& 1	&	0	& 0	& 0	& 0 	\\
	w_{V} 	&		&		& \low{0}	&		&  		& \omit\vrule 	&	&		&		&1	&\omit\vrule	& 0	& 0	& 0	&	-1	& 1	& 0	& 0  	\\
	w_{W} 	&		&		&		&		&  		& \omit\vrule 	&	&		&		&	&\omit\vrule	& 1	& -1	& 0	&	0	& 0	& 1	& 0 	\\
	w_{Z} 	&		&		&		&		&  		& \omit\vrule 	&	&		&		&	&\omit\vrule	& 0	& 0	& 1	&	0	& 0	& 0	& 0 	\\
\cline{1-1} \cline{2-19}
	w'_1 		& -1		&   	 	& 		&		&		& \omit\vrule	& 1	&  		&  		&	&\omit\vrule	&   	&  	&   	&		&	&  	&  	\\
	\vdots 	&     		&   		& 	 	&		&		& \omit\vrule	& -1	&  \ddots	&  \push{0}	&\omit\vrule	& 	&   	&  	& \low{0}	&	&	&  	\\
	\vdots 	&     		&   		& 	 	&		&		& \omit\vrule	& 	&  \ddots	& \ddots 	&	&\omit\vrule	& 	&   	&  	&		&	&	&  	\\
	w'_{p} 	&		&		&		&		&  		& \omit\vrule 	&	\push{0}	&-1		&1	&\omit\vrule	&	&	&	&		&	&	&  	\\
\cline{1-1} \cline{2-19}
	w'_{U} 	&		&		&		&		&  		& \omit\vrule 	&	&		&		&	&\omit\vrule	& 0	& 0	& 1	&	0	& -1	& 0	& 0  	\\
	w'_{V} 	&		&		&		&		&  		& \omit\vrule 	&	&		&		&1	&\omit\vrule	& 0	& 0	& 0	&	0	& 0	& -1	& 0 	\\
	w'_{W} 	&		&		&		&		&  -1		& \omit\vrule 	&	&		&		&	&\omit\vrule	& 1	& 0	& 0	&	0	& 0	& 0	& 0 	\\
	w'_{Z} 	&		&		&		&		&  		& \omit\vrule 	&	&		&		&	&\omit\vrule	& 0	& 0	& 1	& 	-1	& 0	& 0	& 0 
},$$
$$
\widehat{D}=\kbordermatrix{
	\mbox{} 	& t_1 	& \dots 	& t_{p} 	& \omit\vrule	& t_{U}	& t_{V}	& t_{W}	& t_{Z}	\\ 
	w_1 		& 		&   	 	& 		&			&		&		& 		&	\\
	\vdots 	&     		&   		& 	 	&			&		&		& 		&	\\
	w_{p} 	&		&		&	    	&			& 		&		&	\\  \cline{1-1} % \cline{2-17}
	w_{U} 	&		&		&		&			&	0	&  		&		&	\\
	w_{V} 	&		&		& 		&			&		&  		&		&	\\
	w_{W} 	&		&		&		&			&		&  		&		&	\\
	w_{Z} 	&		&		&		&			&		&  		&		&	\\
\cline{1-1} \cline{2-9}
	w'_1 		& 1		&   	 	& 		&			&		&		&  		&	\\
	\vdots 	&     		&\ddots   	& 	 	&			&		&		& 	0	&	\\
	w'_{p} 	&		&		& 		&			&		&  		&		&	\\  \cline{1-1}  % \cline{2-17}
	w'_{U} 	&		&		&		&			&\ddots	&  		&		&	\\
	w'_{V} 	&		&		&		&			&		&  		& 		&	\\
	w'_{W} 	&		&	0	&		&			&		&  		&\ddots	& 	\\
	w'_{Z} 	&		&		&		&			&		&  		&		& 1
	}.
	$$

Let us define $S$ the submatrix of $\widehat{S}$ without the $m$-column, $H$ the submatrix of $\widehat{H}$ without the $m$-column and the $w_V$-row, $R_V$ this very $w_V$-row of $\widehat{H}$, $D$ the submatrix of $\widehat{D}$ without the $w_V$-row, $\mathbf{x}$ the subvector of $\widehat{\mathbf{x}}$ without the variable $m$ and $\mathbf{w}$ the subvector of $\widehat{\mathbf{w}}$ without the variable $w_V$. Finally let us denote
$f_{\widehat{\mathbf{t}},w_V}(\mathbf{x}):=e^{2i \pi (\widehat{\mathbf{t}}^{\!\top} S - w_V R_V)\mathbf{x}}$. We remark that $H$ is invertible (whereas $\widehat{H}$ was not) and $\det(H)=-1$.  Observe that since $H$ and its inverse $H^{-1}$ (which is written a little further) have integer coefficients, their determinants are $1$ or $-1$, which is enough for concluding that $|\det(H)|=1$ in the following computations.

 Hence, by using multi-dimensional Fourier transform and the integral definition of the Dirac delta function, we compute:

\begin{align*}
&\mathcal{K}_{Y_n}\left (\mathbf{\widehat{t}}\right ) =
\int_{\widehat{\mathbf{x}} \in \R^{Y_n^{2}}} d\widehat{\mathbf{x}} 
\int_{\widehat{\mathbf{w}} \in \R^{2 (p+4)}} d\widehat{\mathbf{w}} \
e^{ 2 i \pi \mathbf{\widehat{t}}^{\!\top} \widehat{S} \widehat{\mathbf{x}}}
e^{ -2 i \pi \widehat{\mathbf{w}}^{\!\top} \widehat{H} \widehat{\mathbf{x}}}
e^{ -2 i \pi \widehat{\mathbf{w}}^{\!\top} \widehat{D} \mathbf{\widehat{t}}}\\
&= \int_{m \in \R} d m
\int_{w_V \in \R} d w_V
\int_{\mathbf{x} \in \R^{2p+7}} d\mathbf{x}
\int_{\mathbf{w} \in \R^{2p+7}} d\mathbf{w} \
e^{ 2 i \pi t_Z m}
e^{ -2 i \pi w_V R_V \mathbf{x}}
e^{ 2 i \pi \mathbf{\widehat{t}}^{\!\top} S \mathbf{x}}
e^{ -2 i \pi \mathbf{w}^{\!\top} H \mathbf{x}}
e^{ -2 i \pi \mathbf{w}^{\!\top} D \mathbf{\widehat{t}}}\\
&=\int_{m \in \R} d m \ e^{ 2 i \pi t_Z m}
\int_{w_V \in \R} d w_V
\int_{\mathbf{w} \in \R^{2p+7}} d\mathbf{w} \
e^{ -2 i \pi \mathbf{w}^{\!\top} D \mathbf{\widehat{t}}}
\int_{\mathbf{x} \in \R^{2p+7}} d\mathbf{x} \
f_{\widehat{\mathbf{t}},w_V}(\mathbf{x})
e^{ -2 i \pi \mathbf{w}^{\!\top} H \mathbf{x}}\\
&= \delta(-t_Z)
\int_{w_V \in \R} d w_V
\int_{\mathbf{w} \in \R^{2p+7}} d\mathbf{w} \
e^{ -2 i \pi \mathbf{w}^{\!\top} D \mathbf{\widehat{t}}} \
\mathcal{F}\left (f_{\widehat{\mathbf{t}},w_V}\right ) (H^{\!\top} \mathbf{w})\\
&= \delta(-t_Z)
\int_{w_V \in \R} d w_V \
\frac{1}{| \det(H)|}
\mathcal{F}\left (\mathcal{F}\left (f_{\widehat{\mathbf{t}},w_V}\right )\right ) (H^{-1} D \mathbf{\widehat{t}}) \\
&=  \delta(-t_Z)
\int_{w_V \in \R} d w_V \
f_{\widehat{\mathbf{t}},w_V} (-H^{-1} D \mathbf{\widehat{t}})\\
&=  \delta(-t_Z)
\int_{w_V \in \R} d w_V \
e^{2i \pi (\widehat{\mathbf{t}}^{\!\top} S - w_V R_V) (-H^{-1} D \mathbf{\widehat{t}})}\\
&=  \delta(-t_Z)
e^{2i \pi \widehat{\mathbf{t}}^{\!\top} (-S H^{-1} D) \mathbf{\widehat{t}}}
\int_{w_V \in \R} d w_V \
e^{-2i \pi  w_V (-R_V H^{-1} D \mathbf{\widehat{t}})}\\
&=  \delta(-t_Z)
e^{2i \pi \widehat{\mathbf{t}}^{\!\top} (-S H^{-1} D) \mathbf{\widehat{t}}}
\delta  (-R_V H^{-1} D \mathbf{\widehat{t}}).
\end{align*}

We can now compute $H^{-1}=$
$$ 
\kbordermatrix{
	\mbox{} 	& w_{1}	& w_{2} 	& \ldots	& w_{p-1}	& w_{p}	& \omit\vrule	& w_{U}			& w_{W}			& w_{Z}			& \omit\vrule	&  w'_{1}	& w'_{2}	& \ldots	& w'_{p-1}			& w'_{p}			& \omit\vrule 	& w'_{U}	& w'_{V}			& w'_{W}	& w'_{Z}	\\
	e_{1} 	& 	0	&		& \cdots	& 		&	0	& \omit\vrule 	& 	1			& 	1			& 	-1			& \omit\vrule	&  	-1	& 	-1	&	\push{\cdots}			& 	-1			& \omit\vrule 	& 	0	& 	1			& 	0	&	0	\\
	e_{2} 	& 	-1	& 	0	& 		& 		& 		& \omit\vrule 	& 	2			& 	2			& 	-2			& \omit\vrule	&  	-1	& 	-2	&	\push{\cdots}			& 	-2			& \omit\vrule 	& 	0	& 	2			& 	0	&	0	\\
\low{\vdots} 	& 	-1	& 	-1	& \ddots	& 		& \vdots	& \omit\vrule 	& 				& 				& 				& \omit\vrule	& 	 	&		& \ddots	& 				& 	\vdots		& \omit\vrule 	& 		& 				& 		&		\\
		 	& \vdots	& 		& \ddots	& 	0	& 	0	& \omit\vrule 	& \vdots			& \vdots			&  \vdots			& \omit\vrule	&  \vdots	& \vdots	&		&\text{\tiny {\(1-p\)}}	& \text{\tiny {\(1-p\)}}	& \omit\vrule 	& \vdots	& \vdots			& \vdots	& \vdots	\\
	e_{p} 	& 		& 		& 		& 	-1	& 	0	& \omit\vrule 	& 				& 				& 				& \omit\vrule	&  		& 		&		& \text{\tiny {\(1-p\)}}	& 	-p			& \omit\vrule 	& 		& 				& 		&		\\
	e_{p+1} 	& -1		& 		& \cdots	& 		& 	-1	& \omit\vrule 	& \text{\tiny {\(p+1\)}}	& \text{\tiny {\(p+1\)}}	& \text{\tiny {\(-p-1\)}}	& \omit\vrule	&  	-1	&	-2	&\cdots	& \text{\tiny {\(1-p\)}}	& 	-p			& \omit\vrule 	& 	0	& \text{\tiny {\(p+1\)}}	& 	0	&	0	\\
\cline{1-1} \cline{2-21}
	f_{1} 	& 		& 		& 		& 		& 		& \omit\vrule 	& 	1			& 	1			& 	-1			& \omit\vrule	&  	0	& -1		& 	\push{\cdots}			&	-1			& \omit\vrule 	& 	0	& 	1			& 	0	&	0	\\
	f_{2} 	& 		& 		& 		& 		& 		& \omit\vrule 	& 				& 				& 				& \omit\vrule	&  	0	&0		& \ddots	&				&	\low{\vdots}	& \omit\vrule 	& 		& 				& 		&		\\
	\vdots 	& 		& 		& 	0	& 		& 		& \omit\vrule 	& \vdots			& \vdots			& \vdots			& \omit\vrule	& \vdots 	& 		& \ddots	&	-1			&	-1			& \omit\vrule 	& \vdots	& \vdots			& \vdots	& \vdots	\\
	f_{p-1} 	& 		& 		& 		& 		& 		& \omit\vrule 	& 				& 				& 				& \omit\vrule	&  	0	& 		& 		&	0			&	-1			& \omit\vrule 	& 		& 				& 		&		\\
	f_{p} 	& 		& 		& 		& 		& 		& \omit\vrule 	& 	1			& 	1			& 	-1			& \omit\vrule	&  	0	& 		& \cdots	&				&	0			& \omit\vrule 	& 	0	& 	1			& 	0	&	0	\\
\cline{1-1} \cline{2-21}
	v	 	& 	-1	& 		& \cdots	& 		& 	-1	& \omit\vrule 	& \text{\tiny {\(p+1\)}}	& \text{\tiny {\(p+1\)}}	&\text{\tiny {\(-p-1\)}}	& \omit\vrule	&  	-1	& 	-2	& \push{\cdots}				&	-p			& \omit\vrule 	& 	0	& \text{\tiny {\(p+1\)}}	& 	1	&	0	\\
	r	 	& 	-1	& 		& \cdots	& 		& 	-1	& \omit\vrule 	& \text{\tiny {\(p+2\)}}	& \text{\tiny {\(p+1\)}}	&\text{\tiny {\(-p-2\)}}	& \omit\vrule	&  	-1	& 	-2	& \push{\cdots}				&	-p			& \omit\vrule 	& 	0	& \text{\tiny {\(p+1\)}}	& 	1	&	0	\\
	s	 	& 		& 		& 		& 		& 		& \omit\vrule 	& 	0			& 	0			& 	1			& \omit\vrule	&  		& 		& 		&				&				& \omit\vrule 	& 	0	& 	0			& 	0	&	0	\\
	s'	 	& 		& 		& \low{0}	& 		& 		& \omit\vrule 	& 	0			& 	0			& 	1			& \omit\vrule	&  		& 		& \low{0}	&				&				& \omit\vrule 	& 	0	& 	0			& 	0	&	-1	\\
	g	 	& 		& 		& 		& 		& 		& \omit\vrule 	& 	0			& 	0			& 	1			& \omit\vrule	&  		& 		& 		&				&				& \omit\vrule 	& 	-1	& 	0			& 	0	&	0	\\
	u	 	& 		& 		& 		& 		& 		& \omit\vrule 	& 	1			& 	1			& 	-1			& \omit\vrule	&  		& 		& 		&				&				& \omit\vrule 	& 	0	& 	0			& 	0	&	0		
},$$
and thus compute that $-R_V H^{-1} D \mathbf{\widehat{t}}=t_U-t_V-t_Z$ and
$$-S H^{-1} D= \kbordermatrix{
	\mbox{}	&t_1		&t_2		& \cdots 	& t_{p-1} 	& t_p 	& \omit\vrule	& t_U 	& t_V 	& t_W 	&t_Z		\\
	t_1 		& 1 		& 1 		& \cdots 	& 1  		& 1 		& \omit\vrule	& 0 		& -1 		& 0  		& 0 		\\
	t_2 		& 1 		& 2 		& \cdots 	& 2  		& 2 		& \omit\vrule	& 0 		& -2 		& 0  		& 0 		\\
	\vdots 	& \vdots 	& \vdots 	& \ddots 	& \vdots  	& \vdots 	& \omit\vrule	& \vdots 	& \vdots 	& \vdots 	& \vdots	\\
	t_{p-1} 	& 1 		& 2 		& \cdots 	& p-1  	& p-1 	& \omit\vrule	& 0		& -(p-1)  	& 0  		& 0 		\\
	t_p 		& 1 		&  2 		& \cdots 	&  p-1 	& p 		& \omit\vrule	& 0		& -p 		& 0  		& 0 		\\
\cline{1-1} \cline{2-11}	
	t_U 		& -1 		& -2 		& \cdots 	& -(p-1)  	& -p  	& \omit\vrule	& 0 		& p+1 	& 1  		& 0 		\\
	t_V 		& 0 		& 0 		& \cdots 	&  0 		& 0 		& \omit\vrule	& -1 		& 0 		& 0  		& 0 		\\
	t_W 		& 0 		& 0 		& \cdots 	&  0 		& 0 		& \omit\vrule	& 0 		& 0 		& 0 		& 0 		\\
	t_Z 		& 0 		& 0 		& \cdots 	&  0 		& 0 		& \omit\vrule	& 0 		& 0 		& 0 		& 0 		}.$$

Since $\widehat{\mathbf{t}}^{\!\top} (-S H^{-1} D) \mathbf{\widehat{t}} = \mathbf{t}^{\!\top} Q_n \mathbf{t}
+ (t_V-t_U)(t_1+\ldots+p t_p-pt_U)$, where $\mathbf{t}=(t_1,\ldots,t_p,t_U,t_W)$ and $Q_n$ is defined in Theorem \ref{thm:part:func}, we conclude that the kinematical kernel can be written as  
\[
\mathcal{K}_{Y_n}(\mathbf{\widehat{t}})= 
 e^{2 i \pi \left( \mathbf{t}^{\!\top} Q_n \mathbf{t} +(t_V - t_U)(t_1 + \cdots + p t_p - p t_U) \right)}
\delta(-t_Z) \delta(t_U - t_V - t_Z).
\]

We now compute the dynamical content. We denote
$\alpha=(a_1,b_1,c_1,\ldots,a_W,b_W,c_W,a_Z,b_Z,c_Z)$ a general 
vector in $\mathcal{S}_{Y_n}$.
Let $\hbar>0$. The dynamical content $\mathcal{D}_{\hbar,Y_n}(\mathbf{\widehat{t}},\alpha)$ is equal to:
\[
e^{\frac{1}{\sqrt{\hbar}} \widehat{C}(\alpha)^{\!\top} \mathbf{\widehat{t}}}
\dfrac{
\Phi_\B\left (t_U + \frac{i}{2 \pi \sqrt{\hbar}} (\pi-a_U)\right )
\Phi_\B\left (t_V + \frac{i}{2 \pi \sqrt{\hbar}}(\pi-a_V)\right )
\Phi_\B\left (t_W + \frac{i}{2 \pi \sqrt{\hbar}} (\pi-a_W)\right )
}{
\Phi_\B\left (t_1 - \frac{i}{2 \pi \sqrt{\hbar}} (\pi-a_1)\right )
\cdots
\Phi_\B\left (t_p - \frac{i}{2 \pi \sqrt{\hbar}} (\pi-a_p)\right )
\Phi_\B\left (t_Z - \frac{i}{2 \pi \sqrt{\hbar}} (\pi-a_Z)\right )
},
\]
where $\widehat{C}(\alpha) = (c_1, \ldots, c_p,c_U,c_V, c_W, c_Z)^{\!\top}$.

Let us come back to the computation of the partition function of the Teichm\"uller TQFT. By definition, 
\[
\mathcal{Z}_{\hbar}(Y_n,\alpha)= \int_{\mathbf{\widehat{t}}\in\R^{Y_n^{3}}} d\mathbf{\widehat{t}} \mathcal{K}_{Y_n}(\mathbf{\widehat{t}}) \mathcal{D}_{\hbar,Y_n}(\mathbf{\widehat{t}},\alpha).
\]
We begin by integrating over the variables $t_V$ and $t_Z$, which consists in removing the two Dirac delta functions $\delta(t_Z)$ and $\delta(t_U - t_V - t_Z)$ in the kinematical kernel and replacing $t_Z$ by $0$ and $t_V$ by $t_U$ in the other terms. Therefore, we have
$$
\Phi_{\B}\left( \frac{\pi-a_Z}{2\pi i \sqrt{\hbar}} \right)  \mathcal{Z}_{\hbar}(Y_n,\alpha) 
= \int_{\mathbf{t}\in\R^{p+2}} d\mathbf{t} 
e^{2 i \pi \mathbf{t}^{\!\top} Q_n\mathbf{t}} e^{\frac{1}{\sqrt{\hbar}} (c_1 t_1 + \cdots + c_p t_p + (c_U + c_V)t_U + c_W t_W)} 
\Pi(\mathbf{t},\alpha,\hbar),$$
where $\mathbf{t} =(t_1, \ldots,t_p,t_U,t_W)$ and
$$\Pi(\mathbf{t},\alpha,\hbar) :=
\frac{
\Phi_\B\left (t_U + \frac{i}{2 \pi \sqrt{\hbar}} (\pi-a_U)\right )
\Phi_\B\left (t_U + \frac{i}{2 \pi \sqrt{\hbar}}(\pi-a_V)\right )
\Phi_\B\left (t_W + \frac{i}{2 \pi \sqrt{\hbar}} (\pi-a_W)\right )
}{
\Phi_\B\left (t_1 - \frac{i}{2 \pi \sqrt{\hbar}} (\pi-a_1)\right )
\cdots
\Phi_\B\left (t_p - \frac{i}{2 \pi \sqrt{\hbar}} (\pi-a_p)\right )
}.
$$

\textit{Step 2. Applying the dominated convergence theorem for $\alpha \to \tau$.}

For the rest of the proof, let 
$$\tau = (a^\tau_1,b^\tau_1,c^\tau_1,\ldots,a^\tau_Z,b^\tau_Z,c^\tau_Z) \in \mathcal{S}_{Y_n \setminus Z} \times \overline{\mathcal{S}_Z}$$
be such that $\omega_j(\tau) = 2 \pi$ for all $j \in \{0,1, \ldots, p-1,p\}$,
$\widehat{\omega}_j(\tau) = 2 \pi$ for all $j \in \{s,d,p+1\}$
and $\widehat{\omega}_{\overrightarrow{K_n}}(\tau)=a^\tau_Z=0$.

Let $\delta>0$ such that there exists a {neighbourhood} $\mathfrak{U}$ of $\tau$ in $\mathcal{S}_{Y_n \setminus Z} \times \overline{\mathcal{S}_Z}$ such that for each $\alpha \in \mathfrak{U} \cap \mathcal{S}_{Y_n}$ the $3p+9$ first coordinates $a_1, \ldots, c_W$ of $\alpha$ live in $(\delta,\pi-\delta)$.

Then for all $\alpha \in \mathfrak{U}\cap \mathcal{S}_{Y_n}$, for any $j \in \{1, \ldots,p,U,V,W\}$, and for any $t \in \R$, we have
$$\left |e^{\frac{1}{ \sqrt{\hbar}} c_j t} \Phi_\B\left (t \pm \frac{i}{ 2 \pi \sqrt{\hbar}}(b_j+c_j)\right )^{\pm 1}\right | \leqslant M_{\delta,\hbar} \ e^{-\frac{1}{\sqrt{\hbar}} \delta |t| }.$$
Indeed, this is immediate for $t \leqslant 0$ by Lemma \ref{lem:PhiB:bounded} and the fact that $c_j>\delta$. For $t \geqslant 0$, one has to use that $b_j > \delta$ but also Proposition \ref{prop:quant:dilog} (1) and (2) to remark that :
$$ \left |\Phi_\B\left (t + \frac{i}{ 2 \pi \sqrt{\hbar}}(b_j+c_j)\right )\right | =
\left |\Phi_\B\left (-t + \frac{i}{ 2 \pi \sqrt{\hbar}}(b_j+c_j)\right )\right | \left |e^{i \pi \left ( \frac{i}{ 2 \pi \sqrt{\hbar}}(b_j+c_j)\right )^2}\right | \leqslant M_{\delta,\hbar} e^{-\frac{1}{\sqrt{\hbar}} (b_j+c_j) t}.$$

Consequently, we have a domination of the previous integrand uniformly over $\mathfrak{U}\cap \mathcal{S}_{Y_n}$, i.e.\ 
$$\left |e^{2 i \pi \mathbf{y}^{\!\top} Q_n\mathbf{y}} e^{\frac{1}{\sqrt{\hbar}} (c_1 t_1 + \cdots + c_p t_p + (c_U + c_V)t_U + c_W t_W)} 
\Pi(\mathbf{t},\alpha,\hbar)\right | \leqslant  \left (M_{\delta,\hbar}\right )^{p+3}
e^{-\frac{1}{\sqrt{\hbar}} \delta \left ( |t_1|+\ldots |t_p|+2|t_U|+|t_W| \right )}
$$
for  all $\alpha \in \mathfrak{U}\cap \mathcal{S}_{Y_n}$ and  for all $\mathbf{t} \in \R^{p+2}$.

Since the right hand side of this inequality is integrable over $\R^{p+2}$, we can then apply the dominated convergence theorem to conclude that $\Phi_{\B}\left( \frac{\pi-a_Z}{2\pi i \sqrt{\hbar}} \right)  \mathcal{Z}_{\hbar}(Y_n,\alpha) $ tends to 
$$
\int_{\mathbf{t}\in\R^{p+2}} d\mathbf{t} 
e^{2 i \pi \mathbf{t}^{\!\top} Q_n\mathbf{t}} e^{\frac{1}{\sqrt{\hbar}} (c^\tau_1 t_1 + \cdots + c^\tau_p t_p + (c^\tau_U + c^\tau_V)t_U + c^\tau_W t_W)} \Pi(\mathbf{t},\tau,\hbar)$$
as $\alpha \in \mathcal{S}_{Y_n}, \alpha \to \tau$ (recall that $c_j^\tau$ denotes the $c_j$ coordinate of $\tau$).

\textit{Step 3. Retrieving the value $J_{X_n}(\hbar,0)$ in $\alpha =\tau$.}

Let us now prove that
$$
\int_{\mathbf{t}\in\R^{p+2}} d\mathbf{t} 
e^{2 i \pi \mathbf{t}^{\!\top} Q_n\mathbf{t}} e^{\frac{1}{\sqrt{\hbar}} (c^\tau_1 t_1 + \cdots + c^\tau_p t_p + (c^\tau_U + c^\tau_V)t_U + c^\tau_W t_W)} \Pi(\mathbf{t},\tau,\hbar) = J_{X_n}(\hbar,0).$$

We first do the following  change of variables:
\begin{itemize}
\item $y'_k = t_k - \frac{i}{2 \pi \sqrt{\hbar}} (\pi-a^\tau_k)$ for $1 \leqslant k \leqslant p$,
\item $y'_l = t_l + \frac{i}{2 \pi \sqrt{\hbar}} (\pi-a^\tau_l)$ for $l\in\{U,W\}$,
\end{itemize}
and we denote $\mathbf{y'}=\left (y'_1, \ldots, y'_{p}, y'_U, y'_W\right )^{\!\top}$. 
Note that the term $\Phi_\B\left (t_U + \frac{i}{2 \pi \sqrt{\hbar}}(\pi-a^\tau_V)\right )$ will become 
$\Phi_\B\left (y'_U + \frac{i}{2 \pi \sqrt{\hbar}}(a^\tau_U-a^\tau_V)\right )=
\Phi_\B\left (y'_U \right ),$
since $a^\tau_U-a^\tau_V = (\widehat{\omega}_{s}(\tau)- 2\pi)+(\widehat{\omega}_{d}(\tau)- 2\pi) = 0$.

We also denote 
$$\mathcal{Y}'_{\hbar,\tau} :=
\prod_{k=1}^p\left (\R - \frac{i}{2 \pi \sqrt{\hbar}} (\pi-a^\tau_k)\right ) 
\times 
\prod_{l=U,W}
\left (\R + \frac{i}{2 \pi \sqrt{\hbar}} (\pi-a^\tau_l)\right ),$$
the subset of $\C^{p+2}$ {in} which the variables in $\mathbf{y'}$  reside. 

By a similar computation as in the proof of Theorem \ref{thm:part:func}, we obtain
\begin{align*}
&\int_{\mathbf{t}\in\R^{p+2}} d\mathbf{t} 
e^{2 i \pi \mathbf{t}^{\!\top} Q_n\mathbf{t}} e^{\frac{1}{\sqrt{\hbar}} (c^\tau_1 t_1 + \cdots + c^\tau_p t_p + (c^\tau_U + c^\tau_V) t_U + c^\tau_W t_W)} \Pi(\mathbf{t},\tau,\hbar)\\
&\stareq
\int_{\mathbf{y'} \in \mathcal{Y}'_{\hbar,\tau}}  d\mathbf{y'}
e^{
2 i \pi \mathbf{y}^{\prime T} Q_n  \mathbf{y'} + \frac{1}{\sqrt{\hbar}} \mathcal{W}(\tau)^{\!\top} \mathbf{y'} 
}
\dfrac{
\Phi_\B\left (y'_U\right )
\Phi_\B\left (y'_U \right )
\Phi_\B\left (y'_W\right )
}{
\Phi_\B\left (y'_1\right )
\cdots
\Phi_\B\left (y'_p\right )
},
\end{align*}
where for any $\alpha \in \mathcal{S}_{Y_n \setminus Z}$, $\mathcal{W}(\alpha)$ is defined as
$$\mathcal{W}(\alpha):= 2 Q_n \Gamma(\alpha)+C(\alpha)+(0,\ldots,0,c_V,0)^{\!\top},$$
following the definitions of $\Gamma(\alpha)$ and $C(\alpha)$ in the proof of Theorem \ref{thm:part:func}.

Hence, from the value of $J_{X_n}(\hbar,0)$, it remains only to prove that $\mathcal{W}(\tau) = \mathcal{W}_n$.
 
Let us denote $\Lambda: (u_1,\ldots,u_p,u_U,u_V,u_W) \mapsto (u_1,\ldots,u_p,u_U,u_W)$ the process of forgetting the second-to-last coordinate. then obviously $C(\alpha) = \Lambda (\widetilde{C}(\alpha))$. Recall from Lemma \ref{lem:2QGamma+C} 
that $\widetilde{\mathcal{W}}(\alpha) = 2 \widetilde{Q}_n \widetilde{\Gamma}(\alpha) + \widetilde{C}(\alpha)$ depends almost only on edge weights of the angles in $X_n$.

Thus, a direct calculation shows that for any $\alpha \in \mathcal{S}_{Y_n \setminus Z}$, we have
\begin{equation*} \label{eqn:v:Alpha:Odd}
\mathcal{W}(\alpha) =
\Lambda(\widetilde{\mathcal{W}}(\alpha)) + 
\begin{bmatrix}
0 \\ \vdots \\ 0 \\ c_V - 4(\pi-a_U)+3(\pi-a_V)-(\pi-a_W) \\ a_U-a_V
\end{bmatrix}.
\end{equation*}

Now, if we specify $\alpha=\tau$, then the weights $\omega_{X_n,j}(\alpha)$ appearing in $\Lambda(\widetilde{\mathcal{W}}(\alpha))$ will all be equal to $2\pi$, since 
$\omega_s(\tau) =\widehat{\omega}_s(\tau)-\widehat{\omega}_{\overrightarrow{K_n}}(\tau) = 2 \pi$
and
$$\omega_{p+1}(\tau) =\widehat{\omega}_d(\tau)+\widehat{\omega}_{p+1}(\tau) - 2\left (\pi-\widehat{\omega}_{\overrightarrow{K_n}}(\tau)\right ) = 2\pi.$$
Hence 
$$\mathcal{W}(\tau)= \mathcal{W}_{n} + 
\begin{bmatrix}
0 \\ \vdots \\ 0 \\ 
\pi - \frac{1}{2}\lambda_{X_n}(\tau)+
c^\tau_V - 4(\pi-a^\tau_U)+3(\pi-a^\tau_V)-(\pi-a^\tau_W) \\ a^\tau_U-a^\tau_V
\end{bmatrix}.
$$

Recall that $a^\tau_U-a^\tau_V=0$, and remark finally that
\begin{align*}
&\pi - \frac{1}{2}\lambda_{X_n}(\tau)+
c^\tau_V - 4(\pi-a^\tau_U)+3(\pi-a^\tau_V)-(\pi-a^\tau_W) \\
&= 3a^\tau_U-2a^\tau_V+a^\tau_W+b^\tau_W-\pi\\
&= 2(a^\tau_U-a^\tau_V)+(a^\tau_U-c^\tau_W)\\
&= -(\widehat{\omega}_{d}(\tau)- 2\pi) -\widehat{\omega}_{\overrightarrow{K_n}}(\tau)=0.
\end{align*}
Hence $\mathcal{W}(\tau) = \mathcal{W}_n$ and the theorem is proven.
\end{proof}

%%%%%%%%%%%%%%%%%%%%%%%%%%%%%%%%%%%%%%%%%%%%%%%%%%%%%%%%%%%%%%%%%%%%%%%%%%%%%%%%%%%%%%%%%%%%%%%%%%%%%%%%%%%%%%%%%%%%%%%%%%%

\section{Proving the volume conjecture (odd case)}\label{sec:vol:conj}

We now arrive to the final and most technical part of this paper, that is to say the proof of the volume conjecture using detailed analytical methods. We advise the reader to be familiar with the proofs and {notation} of Section \ref{sec:part:odd} before reading this section. Having read section \ref{sec:part:H:odd} is not as essential, but can nevertheless help understanding some arguments in the following first three subsections. The main result is as follows:

\begin{theorem} \label{thm:vol:conj}
Let $n$ be an odd integer greater or equal to $3$. Let $J_{X_n}$ and $\mathfrak{J}_{X_n}$ be the functions defined in Theorem \ref{thm:part:func} and Corollary \ref{cor:part:func}. Then we have:
$$
\lim_{\hbar \to 0^+} 2\pi \hbar \log \vert J_{X_n}(\hbar,0) \vert
= \lim_{\hbar \to 0^+} 2\pi \hbar \log \vert \mathfrak{J}_{X_n}(\hbar,0) \vert
 = -\emph{Vol}(S^3\backslash K_n).$$
 In other words, the Teichm\"uller TQFT volume conjecture of Andersen--Kashaev is proved for the infinite family of odd twist knots.
\end{theorem}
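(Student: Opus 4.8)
\textbf{Strategy.} The plan is to work with the contour-integral expression for $\mathfrak{J}_{X_n}(\hbar,0)$ from Corollary \ref{cor:part:func}, whose integrand is already in the shape $e^{\frac{1}{2\pi\hbar}(\cdots)}$ times a ratio of quantum dilogarithms and whose contour $\mathcal{Y}_\alpha$ does not depend on $\hbar$. First I would note that $J_{X_n}(\hbar,0)$ and $\mathfrak{J}_{X_n}(\hbar,0)$ differ only by the change of variables $y_j=\frac{y'_j}{2\pi\sqrt{\hbar}}$ and the prefactor $\left(\frac{1}{2\pi\sqrt{\hbar}}\right)^{p+3}$; since $2\pi\hbar\,\log\left(\frac{1}{2\pi\sqrt{\hbar}}\right)^{p+3}\to 0$, the two limits coincide and it suffices to treat $\mathfrak{J}_{X_n}$. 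Setting $\mathsf{x}=0$ and replacing each $\Phi_\B\!\left(\frac{y_j}{2\pi\sqrt{\hbar}}\right)$ by its semi-classical limit from Proposition \ref{prop:quant:dilog}~(3) produces, to leading order, an integrand $\approx e^{\frac{1}{2\pi\hbar}S(\mathbf{y})}$ with classical potential
$$
S(\mathbf{y}) = i\,\mathbf{y}^T Q_n \mathbf{y} + \mathbf{y}^T\mathcal{W}_n - i\Big(2\Li(-e^{y_U}) + \Li(-e^{y_W}) - \sum_{k=1}^p \Li(-e^{y_k})\Big).
$$

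\textbf{Identifying the saddle point.} Using $\tfrac{d}{dy}\Li(-e^y)=-\Log(1+e^y)$, I would compute $\nabla S$ and check that the critical-point system $\nabla S(\mathbf{y})=0$ coincides, under the substitution $z_j=-e^{\varepsilon(T_j)y_j}$ dictated by the tetrahedron signs, with Thurston's complex gluing-and-completeness system $\mathcal{E}^{co}_{X_n}$ of Section \ref{sub:complete:odd} (this is the content of Lemma \ref{lem:grad:thurston}). By geometricity (Theorem \ref{thm:geometric}), this system has a unique solution $\mathbf{y}^0$, corresponding to the complete hyperbolic structure, which is therefore an interior critical point of $S$ on $\mathcal{Y}_\alpha$. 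I would then verify that $\mathbf{y}^0$ is a \emph{simple} saddle, i.e.\ $\det\mathrm{Hess}(S)(\mathbf{y}^0)\neq 0$, and that $\Re S$ attains its maximum on the contour only at $\mathbf{y}^0$; the latter is where the concavity of the volume functional on $\overline{\mathcal{A}_{X_n}}$ (Section \ref{sub:volume}) enters, guaranteeing uniqueness of the maximizer. Finally I would evaluate $\Re S(\mathbf{y}^0)=-\Vol(S^3\setminus K_n)$, using the identification of the imaginary parts of the dilogarithms with ideal-tetrahedron volumes through the Bloch--Wigner function $D$ and the relation $\Vol(T)=D(z)$.

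\textbf{The analytic core and main obstacle.} The genuine difficulty is that $\mathfrak{J}_{X_n}(\hbar,0)$ integrates not $e^{\frac{1}{2\pi\hbar}S}$ but a quantum deformation $e^{\frac{1}{2\pi\hbar}S'_\hbar}$ with $S'_\hbar=S+\mathcal{O}(\hbar)$, while the Fedoryuk theorem (Theorem \ref{thm:SPM}) requires a \emph{compact} contour. I would first establish uniform bounds on $|\Phi_\B|$ from its behavior at infinity (Proposition \ref{prop:quant:dilog}~(4)), combined with the parity trick (Lemma \ref{lem:parity}) and the uniform bound on the full non-compact contour (Lemma \ref{lem:unif:bound}), so as to show that the tails of $\mathcal{Y}_\alpha$ away from $\mathbf{y}^0$ contribute subdominantly as $\hbar\to 0^+$. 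After truncating to a compact neighborhood of $\mathbf{y}^0$ and checking that the semi-classical error $S'_\hbar-S$ is uniformly small there, Theorem \ref{thm:SPM} yields
$$
\lim_{\hbar\to 0^+} 2\pi\hbar\,\log\vert\mathfrak{J}_{X_n}(\hbar,0)\vert = \Re S(\mathbf{y}^0) = -\Vol(S^3\setminus K_n),
$$
and the same value for $J_{X_n}(\hbar,0)$ by the first paragraph. The step I expect to be hardest is this uniform control of the non-compact tails together with the error term, since it must be done simultaneously in all $p+2$ integration directions and uniformly as $\hbar\to 0^+$; the parity trick is precisely the device that makes the tail estimate tractable on an unbounded domain.
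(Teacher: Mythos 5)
Your proposal is correct and follows essentially the same route as the paper: reduce $J_{X_n}$ to $\mathfrak{J}_{X_n}$, identify the unique saddle of the classical potential with the complete hyperbolic structure via Lemma \ref{lem:grad:thurston} and Theorem \ref{thm:geometric}, evaluate $\Re S(\mathbf{y^0})=-\Vol$, and then control the quantum deformation and the non-compact tails uniformly (parity trick, Lemma \ref{lem:unif:bound}) before applying Theorem \ref{thm:SPM} on a compact piece. The only small imprecision is that uniqueness of the maximizer of $\Re S$ on the contour comes in the paper from the strict concavity of $\Re S$ restricted to the horizontal contour, proved by a direct hessian computation (Lemma \ref{lem:concave}), rather than from the concavity of the volume functional on $\overline{\mathcal{A}_{X_n}}$.
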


The proof of Theorem \ref{thm:vol:conj} will be split into several lemmas. The general idea is to translate the expressions in Theorem \ref{thm:vol:conj} into asymptotics of the form of Theorem \ref{thm:SPM}, and check that the assumptions of Theorem \ref{thm:SPM} are satisfied one by one, i.e.\ that we are allowed to apply the saddle point method. Technical analytical lemmas are required for the asymptotics and error bounds, notably due to the fact that we work with \textit{unbounded} integration contours.

More precisely, here is an overview of Section \ref{sec:vol:conj}:
\begin{itemize}
	\item \underline{Sections \ref{sub:S:U}, \ref{sub:ReS:Yalpha} and \ref{sub:ReS:Y0}:} For the ``classical'' potential $S$, we check the prerequisites for the saddle point method, notably that $\Re(S)$ attains a maximum of $-\Vol(S^3\setminus K_n)$ at the complete angle structure (from Lemma \ref{lem:complex:sym} to Lemma \ref{lem:-vol}). This part refers to Thurston's gluing equations and the properties of the classical dilogarithm.
	\item \underline{Section \ref{sub:asym:Y0}:} We apply the saddle point method to the classical potential $S$ on a compact integration contour (Proposition \ref{prop:compact:contour:S:SPM}) and we then deduce asymptotics when the contour is unbounded (Lemma \ref{lem:unbounded:contour} and Proposition \ref{prop:all:contour:S}). This part is where the analytical arguments start.
	\item \underline{Section \ref{sub:asym:PhiB}:} We compare the classical and quantum dilogarithms $\Li$ and $\Phi_{\B}$ in the asymptotic $\B \to 0^+$ (Lemmas \ref{lem:parity}, \ref{lem:unif:bound}, \ref{lem:unif:bound:neg}) and deduce asymptotics for the quantum potential $S_\B$ (Proposition \ref{prop:all:contour:Sb}). This part, and Lemma \ref{lem:unif:bound} in particular, contains the heart of the proof, and needs several new analytical arguments to establish uniform bounds on an unbounded integration contour.
	\item \underline{Section \ref{sub:asym:hbar}:} In order to get back to the functions $J_{X_n}$ and $\mathfrak{J}_{X_n}$ of Theorem \ref{thm:vol:conj}, we compare the two previous potentials with a second quantum potential $S'_\B$ related to $J_{X_n}$ (Remark \ref{rem:J':S'b}) and we deduce the corresponding asymptotics for $S'_\B$ (Lemma \ref{lem:unif:bound:hbar} and Proposition \ref{prop:all:contour:S'b}). This part uses similar analytical arguments as the previous one, and is needed because of the particular construction of the Teichm\"uller TQFT partition function and the subtle difference between $\frac{1}{\B^2}$ and $\frac{1}{\hbar}$.
	\item \underline{Section \ref{sub:conjvol:conclusion}:} We conclude with the (now short) proof of Theorem \ref{thm:vol:conj} and we offer comments on how our techniques could be re-used for further works.
\end{itemize}

Let us finish this introduction by establishing some {notation}. For the remainder of this section, $n$ will be an odd integer greater or equal to $3$ and $p=\frac{n-3}{2}$.

Let us now recall and define some {notation}:
\begin{itemize}
\item We denote the following product of open ``horizontal  bands" in $\C$, and
$$\mathcal{U}:= \prod_{k=1}^p\left (\R + i (-\pi,0) \right ) 
\times 
\prod_{l=U,W}
\left (\R + i (0,\pi)\right ),$$
 an open subset of $\C^{p+2}$.
\item For any angle structure $\alpha = (a_1, \ldots, c_W) \in \mathcal{A}_{X_n}$, we denote 
$$\mathcal{Y}_\alpha :=
\prod_{k=1}^p\left (\R - i (\pi - a_k)\right ) 
\times 
\prod_{l=U,W}
\left (\R + i (\pi - a_l)\right ),$$
an affine real plane of real dimension $p+2$ in $\C^{p+2}$, contained in the band $\mathcal{U}$.
\item For the complete angle structure $\alpha^0 = (a^0_1, \ldots, c^0_W) \in \mathcal{A}_{X_n}$ (which exists because of Theorem \ref{thm:geometric}), we denote 
$$\mathcal{Y}^0 := \mathcal{Y}_{\alpha^0}.$$
\item We define the potential function $S\colon \mathcal{U} \to \C$, an holomorphic  function on $p+2$ complex variables, by:
$$S(\mathbf{y}) =
i \mathbf{y}^{\!\top} Q_n \mathbf{y} +  
  \mathbf{y}^{\!\top} \mathcal{W}_n
  + i \Li\left (-e^{y_1}\right ) + \cdots
  + i \Li\left (-e^{y_p}\right )
  - 2 i \Li\left (-e^{y_U}\right )
  - i \Li\left (-e^{y_W}\right ), 
   $$
   where $Q_n$ and $\mathcal{W}_n$ are like in Theorem \ref{thm:part:func}.
\end{itemize}

\subsection{Properties of the potential function $S$ on the open band $\mathcal{U}$}\label{sub:S:U}

The following lemma will be very useful to prove the invertibility of the holomorphic hessian of the potential $S$.

\begin{lemma}\label{lem:complex:sym}
Let $m\geqslant 1$ an integer, and $S_1, S_2 \in M_m(\R)$ such that $S_1$ is symmetric positive definite and $S_2$ is symmetric. Then the complex symmetric matrix $S_1 + i S_2$ is invertible.
\end{lemma}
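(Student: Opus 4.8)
The plan is to prove invertibility by showing that the kernel of $S_1 + iS_2$ is trivial, using the fact that a \emph{real} symmetric matrix induces a real-valued Hermitian quadratic form on $\C^m$. First I would take an arbitrary $v \in \C^m$ with $(S_1 + iS_2)v = 0$ and pair this relation against $v^*$ (the conjugate transpose) to obtain the scalar equation $v^* S_1 v + i\, v^* S_2 v = 0$.

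The key observation, which I would record as the central step, is that both $v^* S_1 v$ and $v^* S_2 v$ are \emph{real} numbers. Indeed, since $S_1$ and $S_2$ are real and symmetric, they are Hermitian ($S_j^* = \overline{S_j}^T = S_j^T = S_j$), so the quadratic forms $v \mapsto v^* S_j v$ take real values. Concretely, writing $v = x + iy$ with $x,y \in \R^m$, one checks that the imaginary parts $x^T S_j y - y^T S_j x$ vanish by symmetry of $S_j$, leaving $v^* S_j v = x^T S_j x + y^T S_j y \in \R$.

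Given this, I would take the real part of the scalar equation $v^* S_1 v + i\, v^* S_2 v = 0$: since $v^* S_1 v$ is real and $i\, v^* S_2 v$ is purely imaginary, the real part reads simply $v^* S_1 v = 0$, that is $x^T S_1 x + y^T S_1 y = 0$. Because $S_1$ is positive definite, each summand is nonnegative, forcing $x^T S_1 x = y^T S_1 y = 0$ and hence $x = y = 0$, i.e. $v = 0$. Thus $\ker(S_1 + iS_2) = \{0\}$ and, the matrix being square, it is invertible.

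There is no genuine analytic obstacle here; the only subtlety worth flagging is precisely the reality of the two Hermitian forms, which is what allows the real part of the identity to decouple the positive-definite contribution of $S_1$ from the imaginary contribution of $S_2$. Once this is observed, the argument is a one-line consequence of positive definiteness, so in the write-up I would keep the verification that $v^* S_j v \in \R$ explicit and treat everything else as routine.
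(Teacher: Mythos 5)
Your proof is correct and follows essentially the same route as the paper's: show that the kernel is trivial by pairing $(S_1+iS_2)v=0$ against the conjugate transpose of $v$, observe that both Hermitian forms $v^*S_jv$ are real, and take real parts to isolate $v^*S_1v=0$, whence $v=0$ by positive definiteness. If anything, your write-up is slightly more careful than the paper's, which writes $v^TS_jv$ where the conjugate transpose $\overline{v}^{\,T}S_jv$ is what is actually needed for the reality claim (and hence the real-part argument) to hold.
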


\begin{proof}
Let $v \in \C^m$ such that $(S_1 + i S_2) v = 0$. Let us prove that $v=0$.

Since $S_1$ and $S_2$ are real symmetric (hence hermitian), we have $\overline{v}^{\!\top} S_1 v, \overline{v}^{\!\top} S_2 v \in \R$.

Now, since $(S_1 + i S_2) v = 0$, then 
$$0 = \overline{v}^{\!\top} (S_1 + i S_2) v  = \overline{v}^{\!\top} S_1 v +i \overline{v}^{\!\top} S_2 v,$$
thus, by taking the real part, we get  $0=\overline{v}^{\!\top} S_1 v$, which implies $v=0$ since $S_1$ is positive definite.
\end{proof}

We can now prove that the holomorphic hessian is non-degenerate at each point.

\begin{lemma}\label{lem:hess}
For every $\mathbf{y}\in \mathcal{U}$, the holomorphic hessian of $S$ is given by:
$$ \mathrm{Hess}(S)(\mathbf{y}) = \left (\dfrac{\partial^2 S}{\partial y_j \partial y_k}\right )_{j,k\in\{1,\ldots,p,U,W\}}
(\mathbf{y}) =
2 i  Q_n + i
\begin{pmatrix}
\frac{-1}{1+e^{-y_1}} & \ & 0 &0 &0 \\
\ & \ddots & \ & \vdots & \vdots \\
0 & \ & \frac{-1}{1+e^{-y_p}} &0 &0 \\
0 & \cdots & 0 &\frac{2}{1+e^{-y_U}} &0 \\
0 & \cdots & 0 &0 &\frac{1}{1+e^{-y_W}} 
\end{pmatrix}
.$$
Furthermore, $\mathrm{Hess}(S)(\mathbf{y})$ has non-zero determinant for every $\mathbf{y}\in \mathcal{U}$.
\end{lemma}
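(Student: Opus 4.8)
The plan is to separate the computation of the Hessian from the proof of its non-degeneracy. For the first part I would differentiate $S$ termwise. The quadratic term $i\mathbf{y}^T Q_n \mathbf{y}$ has Hessian $i(Q_n+Q_n^T)=2iQ_n$ since $Q_n$ is symmetric, and the linear term $\mathbf{y}^T\mathcal{W}_n$ contributes nothing. For each dilogarithm term I would use the chain rule together with $\Li'(z)=-\Log(1-z)/z$ to get $\frac{d}{dy}\Li(-e^y)=-\Log(1+e^y)$ (this is exactly the relation appearing in Thurston's gluing equations in the excerpt), and then $\frac{d^2}{dy^2}\Li(-e^y)=-\frac{e^y}{1+e^y}=-\frac{1}{1+e^{-y}}$. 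Since each dilogarithm depends on a single variable, these terms are purely diagonal, so all off-diagonal entries of $\mathrm{Hess}(S)$ come from $2iQ_n$ alone; collecting the coefficients $+i,-2i,-i$ of the respective $\Li$ terms reproduces the diagonal matrix in the statement.

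For non-degeneracy I would factor out the common $i$ and write
\[
\mathrm{Hess}(S)(\mathbf{y}) = i\,M, \qquad M := 2Q_n + D,
\]
where $D$ is the diagonal matrix with entries $\tfrac{-1}{1+e^{-y_k}}$ ($k\le p$), $\tfrac{2}{1+e^{-y_U}}$, $\tfrac{1}{1+e^{-y_W}}$. The key computation is that for $y=x+id$ one has $\Im\frac{1}{1+e^{-y}}=\frac{e^{-x}\sin d}{|1+e^{-y}|^2}$, whose sign is the sign of $\sin d$. On $\mathcal{U}$ the first $p$ variables satisfy $\Im y_k\in(-\pi,0)$, so $\sin(\Im y_k)<0$; the accompanying minus sign then makes $\Im\big(\tfrac{-1}{1+e^{-y_k}}\big)>0$. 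The variables $y_U,y_W$ satisfy $\Im y\in(0,\pi)$, so $\sin(\Im y)>0$ and the entries $\tfrac{2}{1+e^{-y_U}},\tfrac{1}{1+e^{-y_W}}$ also have strictly positive imaginary part. Hence every diagonal entry of $D$, and therefore of $M$, has positive imaginary part.

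I would then conclude using Lemma \ref{lem:complex:sym}. Since $\Im M = \Im D$ is diagonal with strictly positive entries, it is real symmetric positive definite, while $\Re M = 2Q_n+\Re D$ is real symmetric. Writing $-iM = \Im M + i(-\Re M)$ and applying Lemma \ref{lem:complex:sym} with $S_1=\Im M$ and $S_2=-\Re M$ shows that $-iM$, hence $M$, hence $\mathrm{Hess}(S)(\mathbf{y})=iM$, is invertible; in particular $\det\mathrm{Hess}(S)(\mathbf{y})=i^{p+2}\det M\neq 0$ for every $\mathbf{y}\in\mathcal{U}$. The only delicate point is the sign bookkeeping of the imaginary parts: the two families of bands ($\R+i(-\pi,0)$ versus $\R+i(0,\pi)$) produce $\sin(\Im y)$ of opposite signs, and it is precisely the signs $-,+,+$ of the dilogarithm coefficients that line all diagonal entries up on the positive side, making $\Im M$ definite rather than merely invertible.
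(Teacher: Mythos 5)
Your proposal is correct and follows essentially the same route as the paper: termwise differentiation using $\frac{d}{dy}\Li(-e^y)=-\Log(1+e^y)$ for the first part, and for the second part an application of Lemma \ref{lem:complex:sym} after observing that the imaginary parts of the diagonal dilogarithm terms all have a definite sign on $\mathcal{U}$ (your $\Im M$ being positive definite is exactly the paper's observation that $\Re(\mathrm{Hess}(S))=-\Im M$ is diagonal with negative entries). The sign bookkeeping via $\Im\frac{1}{1+e^{-y}}=\frac{e^{-x}\sin d}{|1+e^{-y}|^2}$ is accurate, and the reduction to the lemma by writing $-iM=\Im M+i(-\Re M)$ is valid.
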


\begin{proof}
The first part follows from the double differentiation of $S$ and the fact that 
$$\dfrac{\partial \Li(-e^y)}{\partial y} = - \Log(1+e^y)$$
 for $y \in \R \pm i(0,\pi)$ (note that $y \in \R \pm i(0,\pi)$ implies $-e^y \in \C \setminus \R$).

Let us prove the second part. Let $\mathbf{y}\in \mathcal{U}$. Then $\Im(\mathrm{Hess}(S)(\mathbf{y}))$ is a symmetric matrix (as the sum of $Q_n$ and a diagonal matrix), and
$$
\Re(\mathrm{Hess}(S)(\mathbf{y}))=
\begin{pmatrix}
-\Im\left (\frac{-1}{1+e^{-y_1}}\right ) & \ & 0 &0 &0 \\
\ & \ddots & \ & \vdots & \vdots \\
0 & \ & -\Im\left (\frac{-1}{1+e^{-y_p}}\right ) &0 &0 \\
0 & \cdots & 0 &-\Im\left (\frac{2}{1+e^{-y_U}}\right ) &0 \\
0 & \cdots & 0 &0 &-\Im\left (\frac{1}{1+e^{-y_W}}\right )
\end{pmatrix}$$
is diagonal with negative coefficients (because $\Im(y_1), \ldots, \Im(y_p) \in(-\pi,0)$ and $\Im(y_U),\Im(y_W)\in (0,\pi)$). Hence it follows from Lemma \ref{lem:complex:sym} that $\mathrm{Hess}(S)(\mathbf{y})$ is invertible for every $\mathbf{y}\in \mathcal{U}$.
\end{proof}

The following lemma establishes an equivalence between critical points of the potential $S$ and complex shape structures that solve the balancing and completeness equations.

\begin{lemma}\label{lem:grad:thurston}
Let us consider the diffeomorphism
$$\psi := \left (\prod_{T \in \{T_1,\ldots,T_p,U,W\} } \psi_T \right )\colon (\R+i\R_{>0})^{p+2} \to \mathcal{U},$$
where $\psi_T$ was defined in Section \ref{sub:thurston}.
Then $\psi$ induces a bijective mapping between\\
$\{\mathbf{y} \in \mathcal{U};
\nabla S(\mathbf{y}) = 0\}$ and
$$\left \{\mathbf{z}=(z_1,\ldots,z_p,z_U,z_W) \in (\R+i\R_{>0})^{p+2} |
\mathcal{E}_{X_n,0}(\mathbf{z}) \wedge \ldots \wedge \mathcal{E}_{X_n,p-1}(\mathbf{z}) \wedge
\mathcal{E}^{co}_{X_n,p+1}(\mathbf{z}) \wedge \mathcal{E}^{co}_{X_n,s}(\mathbf{z})
\right \},$$
 where the equations $\mathcal{E}_{X_n,0}(\mathbf{z}), \ldots , \mathcal{E}_{X_n,p-1}(\mathbf{z}),
 \mathcal{E}^{co}_{X_n,p+1}(\mathbf{z}), \mathcal{E}^{co}_{X_n,s}(\mathbf{z})$ were defined at the end of Section \ref{sec:geom}.
 
In particular, $S$ admits only one critical point
$\mathbf{y^0}$ on $\mathcal{U}$, corresponding to the complete hyperbolic structure $\mathbf{z^0}$ on the geometric ideal triangulation $X_n$ (adding $z^0_V$ equal to $z^0_U$).
\end{lemma}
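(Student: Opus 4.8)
The plan is to compute the holomorphic gradient $\nabla S$ explicitly, transport the equation $\nabla S(\mathbf{y})=0$ through the change of variables $\psi$, and match it term by term with the complex gluing and completeness system from the end of Section~\ref{sec:geom}. Since $\psi$ is a diffeomorphism, any such term-by-term equivalence automatically yields the claimed bijection, and the ``in particular'' statement then follows from geometricity.

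First I would differentiate $S$. Using $\frac{\partial}{\partial y}\Li(-e^y)=-\Log(1+e^y)$, which is valid on $\R\pm i(0,\pi)$ as recalled in the proof of Lemma~\ref{lem:hess}, one finds that for each index $j$ the quantity $\frac{1}{i}\frac{\partial S}{\partial y_j}$ is affine-linear in $\mathbf{y}$ with matrix $2Q_n$, plus the constant $-i(\mathcal{W}_n)_j$, plus a single term $\sigma_j\Log(1+e^{y_j})$, where $\sigma_j$ is the coefficient of the corresponding dilogarithm in $S$ (namely $-1$ for $j\in\{1,\ldots,p\}$, $+2$ for $U$, and $+1$ for $W$). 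Hence $\nabla S(\mathbf{y})=0$ is equivalent to the vector identity $2iQ_n\mathbf{y}+\mathcal{W}_n = iL(\mathbf{y})$, where $L(\mathbf{y})$ collects the logarithmic terms $-\sigma_j\Log(1+e^{y_j})$.

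Next, I would substitute $\mathbf{y}=\psi(\mathbf{z})$ and use the dictionary of Section~\ref{sub:thurston}. For the positive tetrahedra $T_1,\ldots,T_p$ we have $y_k=\Log(z_k)-i\pi$, hence $-e^{y_k}=z_k$ and $\Log(1+e^{y_k})=-\Log(z'_k)$; for the negative tetrahedra $U,W$ we have $y_l=i\pi-\Log(z_l)$, hence $-e^{-y_l}=z_l$ and $\Log(1+e^{y_l})=\Log(z''_l)$. Feeding these in, the linear part $2Q_n\psi(\mathbf{z})$ becomes a combination of the $\Log(z_k)$, while $iL(\psi(\mathbf{z}))$ becomes a combination of the $\Log(z'_k)$ and $\Log(z''_l)$. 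The complex counterpart of the computation behind Lemma~\ref{lem:2QGamma+C} — the same identity $2Q_n(\cdots)+(\cdots)$ read with $\Log(z)$-data in place of the angle-data — then shows that the resulting $p+2$ equations are, up to an invertible integer change of basis, exactly $\mathcal{E}_{X_n,0}(\mathbf{z}),\ldots,\mathcal{E}_{X_n,p-1}(\mathbf{z}),\mathcal{E}^{co}_{X_n,p+1}(\mathbf{z}),\mathcal{E}^{co}_{X_n,s}(\mathbf{z})$. The elimination of the variable $V$ and the specialization $x=0$ built into the definition of $S$ are precisely what encode the meridian-completeness condition $z_U=z_V$ and turn the edge equations $\omega^{\C}_{p+1},\omega^{\C}_s$ into their superscript-$co$ versions. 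This establishes the desired bijection between $\{\mathbf{y}\in\mathcal{U};\ \nabla S(\mathbf{y})=0\}$ and the prescribed solution set.

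For the last assertion, Theorem~\ref{thm:geometric} guarantees that $X_n$ is geometric, so by Section~\ref{sub:complete:odd} the full system $\mathcal{E}^{co}_{X_n}$ — equivalently, the reduced system above together with $z_V=z_U$ — has the unique solution $\widetilde{\mathbf{z^0}}$ coming from the complete hyperbolic structure. Dropping the coordinate $z^0_V=z^0_U$ gives the unique solution $\mathbf{z^0}$ of the reduced system, and by the bijection $S$ has exactly one critical point $\mathbf{y^0}=\psi(\mathbf{z^0})$ on $\mathcal{U}$. The main obstacle is the middle step: verifying with the explicit matrices $Q_n,\mathcal{W}_n$ that the affine-logarithmic system produced by $\nabla S=0$ coincides with the gluing/completeness equations, while carefully controlling the branches of $\Log$ so that the $\pm i\pi$ shifts and the $2i\pi$ constants come out correctly and the $V$-elimination/$x=0$ reduction is tracked faithfully. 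This is a finite but delicate linear-algebra bookkeeping, essentially the complex analogue of Lemma~\ref{lem:2QGamma+C}.
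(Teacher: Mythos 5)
Your proposal is correct and follows essentially the same route as the paper: compute $\nabla S$ explicitly, rewrite the logarithmic terms via the dictionary $y=\psi_T(z)$ (so $\Log(1+e^{y_k})=-\Log(z'_k)$ for positive tetrahedra and $\Log(1+e^{y_l})=\Log(z''_l)$ for negative ones), and show that an invertible integer change of basis turns the resulting system into $\mathcal{E}_{X_n,0},\ldots,\mathcal{E}_{X_n,p-1},\mathcal{E}^{co}_{X_n,p+1},\mathcal{E}^{co}_{X_n,s}$ — the paper realizes your ``invertible integer change of basis'' as an explicit lower-triangular matrix $A\in GL_{p+2}(\Z)$ and carries out the bookkeeping you defer, then concludes uniqueness from geometricity exactly as you do. The only quibble is a sign slip in your description of $\sigma_j$ as ``the coefficient of the corresponding dilogarithm in $S$'' (it is minus that coefficient, divided by $i$), but the explicit values you list are the correct ones, so the argument is unaffected.
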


\begin{proof}
First we compute, for every $\mathbf{y} \in \mathcal{U}$,
$$
\nabla S(\mathbf{y}) =
\begin{pmatrix}
\partial_1 S(\mathbf{y})\\
\vdots\\
\partial_p S(\mathbf{y})\\
\partial_U S(\mathbf{y})\\
\partial_W S(\mathbf{y})
\end{pmatrix} =
2 i Q_n \mathbf{y} + \mathcal{W}_n + i
\begin{pmatrix}
-\Log (1+e^{y_1})\\
\vdots\\
-\Log (1+e^{y_p})\\
2\Log (1+e^{y_U})\\
\Log (1+e^{y_W})
\end{pmatrix}.
$$

Then, we define a  lower triangular matrix $A=\kbordermatrix{
	\mbox{}	&y_1	&y_2		&y_3		&\cdots  	& y_p	& \omit\vrule	& y_U	& y_W 	\\
	y_1 		& 1 	& 		& 		& 		& 		& \omit\vrule	& 		& 		\\
	y_2 		&-2 	& 1 		& 		& 		&0 		& \omit\vrule	& 		&		\\
	y_3 		&1 	& -2 		& 1 		& 		& 		& \omit\vrule	& 		&		\\
	\vdots 	& 	& \ddots 	& \ddots 	& \ddots 	& 		& \omit\vrule	& 		&		\\
	y_p		& 	& 		& 1 		& -2 		& 1 		& \omit\vrule	&0 		&0		\\
\cline{1-1} \cline{1-9}
	y_U 		& 	& 		& 		& 		& 1 		& \omit\vrule	& 1 		& 0 		\\
	y_W 		& 	&0 		& 		& 		&0 		& \omit\vrule	& 0		&1		}
\in GL_{p+2}(\Z)$, and 
we compute
$$
A \cdot  \nabla S(\mathbf{y}) =
\begin{pmatrix}
2i(y_1+\cdots+y_p-y_U)-2\pi p -i \Log (1+e^{y_1})\\
-2i y_1 + 2 \pi +2 i \Log (1+e^{y_1}) - i \Log (1+e^{y_2}) \\
2\pi -i \Log (1+e^{y_1}) +2 i \Log (1+e^{y_2}) - 2i y_2
-i \Log (1+e^{y_3})\\
\vdots\\
2\pi -i \Log (1+e^{y_{k-1}}) +2 i \Log (1+e^{y_k}) - 2i y_k
-i \Log (1+e^{y_{k+1}})\\
\vdots\\
2\pi -i \Log (1+e^{y_{p-2}}) +2 i \Log (1+e^{y_{p-1}}) - 2i y_{p-1}
-i \Log (1+e^{y_p})\\
\pi -i \Log (1+e^{y_p}) +2 i \Log (1+e^{y_U})+i y_W\\
\pi + i y_U +i\Log(1+e^{y_W})
\end{pmatrix}.
$$

For $1 \leqslant k \leqslant p$, by denoting $y_k=\psi_{T_k}(z_k)$, we have
$$ \Log(z_k) = y_k + i \pi, \
\Log(z'_k) = -\Log(1+e^{y_k}), \
\Log(z''_k) = \Log(1+e^{-y_k}),$$
and for $l = {U,W}$, by denoting $y_l=\psi_{T_l}(z_l)$, we have
$$ \Log(z_l) = -y_l + i \pi, \
\Log(z'_l) = -\Log(1+e^{-y_l}), \
\Log(z''_l) = \Log(1+e^{y_l}).$$

Hence we compute, for all $\mathbf{z} \in (\R+i\R_{>0})^{p+2}$,
$$
A \cdot  (\nabla S)(\psi(\mathbf{z})) =
i \begin{pmatrix}
\Log(z'_1) + 2 \Log(z_1)+\cdots + 2\Log(z_p)+2\Log(z_U)-2i\pi\\
2\Log(z''_1)+\Log(z'_2)-2i\pi \\
\Log(z'_{1})+2\Log(z''_2)+\Log(z'_{3})-2i\pi \\
\vdots\\
\Log(z'_{k-1})+2\Log(z''_k)+\Log(z'_{k+1})-2i\pi \\
\vdots\\
\Log(z'_{p-2})+2\Log(z''_{p-1})+\Log(z'_{p})-2i\pi \\
\Log(z'_{p}) +2 \Log(z''_{U})-\Log(z_{W}) \\
\Log(z''_{W}) -\Log(z_{U})
\end{pmatrix}.
$$
This last vector is zero if and only if one has
$$\mathcal{E}_{X_n,0}(\mathbf{z}) \wedge \ldots \wedge \mathcal{E}_{X_n,p-1}(\mathbf{z}) \wedge
\mathcal{E}^{co}_{X_n,p+1}(\mathbf{z}) \wedge \mathcal{E}^{co}_{X_n,s}(\mathbf{z}).$$
Since $A$ is invertible, we thus have 
$$ 
\mathbf{z} \in (\R+i\R_{>0})^{p+2} \text{ and } 
\mathcal{E}_{X_n,0}(\mathbf{z}) \wedge \ldots \wedge \mathcal{E}_{X_n,p-1}(\mathbf{z}) \wedge
\mathcal{E}^{co}_{X_n,p+1}(\mathbf{z}) \wedge \mathcal{E}^{co}_{X_n,s}(\mathbf{z})$$
$$\Updownarrow$$
$$\psi(\mathbf{z}) \in \mathcal{U} \text{ and }
(\nabla S)(\psi(\mathbf{z})) = 0.$$
\end{proof}

Let us now consider the multi-contour
$$\mathcal{Y}^0= \mathcal{Y}_{\alpha^0} =
\prod_{k=1}^p\left (\R - i (\pi - a^0_k)\right ) 
\times 
\prod_{l=U,W}
\left (\R + i (\pi - a^0_l)\right ),$$
 where $\alpha^0 \in \mathcal{A}_{X_n}$ is the complete hyperbolic angle structure corresponding to the complete hyperbolic complex shape structure $\mathbf{z^0}$. Notice that $\mathbf{y^0} \in \mathcal{Y}^0 \subset \mathcal{U}$. 

We will parametrise $\mathbf{y}  \in \mathcal{Y}^0$ as
$$\mathbf{y}= 
\begin{pmatrix}
y_1 \\ \vdots \\ y_W
\end{pmatrix} =
\begin{pmatrix}
x_1 + i d^0_1 \\ \vdots \\ x_W + i d^0_W
\end{pmatrix} =
\mathbf{x}+ i \mathbf{d^0},
$$
where $d^0_k = -(\pi - a^0_k) <0$ for $k=1, \ldots, p$ and $d^0_l = \pi - a^0_l>0$ for $l=U,W$. For the scrupulous readers, this means that $\mathbf{d^0}$ is a new notation for $\Gamma(\alpha^0)$, where $\Gamma(\alpha)$ was defined in Section \ref{sec:part:odd}.
Notice that $\mathcal{Y}^0 = \R^{p+2} + i \mathbf{d^0} \subset \C^{p+2}$ is an $\R$-affine subspace of $\C^{p+2}$.

\subsection{Concavity of $\Re S$ on each contour $\mathcal{Y}_{\alpha}$} \label{sub:ReS:Yalpha}
Now we focus on the behaviour of the real part $\Re S$ of the classical potential, on each horizontal contour $\mathcal{Y}_{\alpha}$.

\begin{lemma}\label{lem:concave}
For any $\alpha \in \mathcal{A}_{X_n}$, the function $\Re S \colon \mathcal{Y}_{\alpha} \to \R$ is strictly concave on $\mathcal{Y}_{\alpha}$.
\end{lemma}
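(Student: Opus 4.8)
The plan is to identify $\mathcal{Y}_{\alpha}$ with a copy of $\R^{p+2}$ and to show that the real Hessian of $\Re S$ in these coordinates is negative definite at every point, from which strict concavity follows at once. Concretely, I would write $\mathbf{d}=\mathbf{d}(\alpha) := \bigl(-(\pi-a_1),\ldots,-(\pi-a_p),\pi-a_U,\pi-a_W\bigr)\in\R^{p+2}$, so that $\mathcal{Y}_{\alpha} = \R^{p+2} + i\mathbf{d}$ is an affine real plane contained in $\mathcal{U}$: indeed $a_k\in(0,\pi)$ forces $\Im y_k\in(-\pi,0)$ for $k\leqslant p$ and $\Im y_l\in(0,\pi)$ for $l=U,W$, which are exactly the defining bands of $\mathcal{U}$. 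I would then parametrise a point of $\mathcal{Y}_{\alpha}$ by $\mathbf{y}=\mathbf{x}+i\mathbf{d}$ with $\mathbf{x}\in\R^{p+2}$ and set $g(\mathbf{x}):=\Re S(\mathbf{x}+i\mathbf{d})$; the statement is equivalent to showing that $\mathrm{Hess}_{\mathbf{x}}(g)$ is negative definite for every $\mathbf{x}$.

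The first key step is the observation that, because $S$ is holomorphic and $\mathbf{d}$ is constant along $\mathcal{Y}_{\alpha}$, the chain rule gives $\partial g/\partial x_j = \Re(\partial S/\partial y_j)$, and differentiating once more
\[
\frac{\partial^2 g}{\partial x_j \partial x_k}(\mathbf{x}) = \Re\!\left(\frac{\partial^2 S}{\partial y_j \partial y_k}(\mathbf{y})\right),
\]
so that the real Hessian of $g$ equals $\Re\bigl(\mathrm{Hess}(S)(\mathbf{y})\bigr)$, the entrywise real part of the holomorphic Hessian. This is the only genuinely structural point, and it is a routine consequence of the Cauchy--Riemann equations (equivalently, of the absence of $\partial/\partial\overline{y_j}$ derivatives in the holomorphic Hessian).

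It then remains only to invoke Lemma \ref{lem:hess}, which already computes $\Re\bigl(\mathrm{Hess}(S)(\mathbf{y})\bigr)$ at every $\mathbf{y}\in\mathcal{U}$ to be the diagonal matrix with entries $-\Im\bigl(\tfrac{-1}{1+e^{-y_k}}\bigr)$ (for $k=1,\ldots,p$), $-\Im\bigl(\tfrac{2}{1+e^{-y_U}}\bigr)$ and $-\Im\bigl(\tfrac{1}{1+e^{-y_W}}\bigr)$, all of which are strictly negative precisely because $\Im y_k\in(-\pi,0)$ and $\Im y_U,\Im y_W\in(0,\pi)$. Since $\mathcal{Y}_{\alpha}\subset\mathcal{U}$, this diagonal matrix is negative definite at every point of $\mathcal{Y}_{\alpha}$, hence $g$ is strictly concave on $\R^{p+2}$ and $\Re S$ is strictly concave on $\mathcal{Y}_{\alpha}$. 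I do not anticipate a real obstacle here: the analytic content is entirely contained in Lemma \ref{lem:hess}, and the sole point requiring a moment's care is the sign bookkeeping on the diagonal entries, which is exactly the computation already carried out in the proof of Lemma \ref{lem:hess}. The mild conceptual novelty, compared with that lemma, is simply recognising that for holomorphic $S$ the \emph{real} Hessian along a horizontal contour is the real part of the \emph{holomorphic} Hessian, so that negative-definiteness of the latter's real part immediately yields strict concavity of $\Re S$.
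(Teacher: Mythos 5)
Your proposal is correct and follows essentially the same route as the paper: parametrise $\mathcal{Y}_{\alpha}=\R^{p+2}+i\mathbf{d}$, identify the real Hessian of $\Re S\vert_{\mathcal{Y}_{\alpha}}$ with the real part of the holomorphic Hessian, and read off negative definiteness of the resulting diagonal matrix from Lemma \ref{lem:hess} using $d_1,\ldots,d_p\in(-\pi,0)$ and $d_U,d_W\in(0,\pi)$. No gaps.
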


\begin{proof} Let $\alpha \in \mathcal{A}_{X_n}$.
Since $\Re S \colon \mathcal{Y}_{\alpha} \to \R$ is twice continuously differentiable (as a function on $p+2$ real variables), we only need to check that its (real) hessian matrix $\left (\Re S\vert_{\mathcal{Y}_{\alpha}}\right )''$ is negative definite on every point $\mathbf{x}+ i \mathbf{d} \in \mathcal{Y}_{\alpha}$.

Now, since this real hessian is equal to the real part of the holomorphic hessian of $S$, it follows from Lemma \ref{lem:hess} that for all $\mathbf{x} \in \R^{p+2}$, this real hessian is:
\begin{align*}
&\left (\Re S\vert_{\mathcal{Y}_{\alpha}}\right )''(\mathbf{x}+ i \mathbf{d})=
\Re(\mathrm{Hess}(S)(\mathbf{x}+ i \mathbf{d}))\\
&= 
\begin{pmatrix}
-\Im\left (\frac{-1}{1+e^{-x_1-i d_1}}\right ) & \ & 0 &0 &0 \\
\ & \ddots & \ & \vdots & \vdots \\
0 & \ & -\Im\left (\frac{-1}{1+e^{-x_p-i d_p}}\right ) &0 &0 \\
0 & \cdots & 0 &-\Im\left (\frac{2}{1+e^{-x_U-i d_U}}\right ) &0 \\
0 & \cdots & 0 &0 &-\Im\left (\frac{1}{1+e^{-x_W-i d_W}}\right )
\end{pmatrix},
\end{align*}
which is diagonal  with negative coefficients, since $d_1,\ldots,d_p \in (-\pi,0)$ and $d_U,d_W\in (0,\pi)$. 

In particular $\left (\Re S\vert_{\mathcal{Y}_{\alpha}}\right )''$ is negative definite everywhere, thus $\Re S\vert_{\mathcal{Y}_{\alpha}}$ is strictly concave.
\end{proof}

\subsection{Properties of $\Re S$ on the complete contour $\mathcal{Y}^0$}\label{sub:ReS:Y0}

On the complete contour $\mathcal{Y}^0$, the function $\Re S$ is not only strictly concave but also admits a strict global maximum, at the complete structure  $\mathbf{y^0}$.

\begin{lemma}\label{lem:maximum}
	The function  $\Re S \colon \mathcal{Y}^0 \to \R$ admits a strict global maximum on $\mathbf{y^0} \in \mathcal{Y}^0$.
\end{lemma}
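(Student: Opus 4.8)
The plan is to deduce the statement immediately from two ingredients already at our disposal: the strict concavity of $\Re S$ on the horizontal contour (Lemma \ref{lem:concave}) and the identification of $\mathbf{y^0}$ as a critical point of $S$ on $\mathcal{U}$ (Lemma \ref{lem:grad:thurston}). The guiding principle is that a strictly concave differentiable function on a convex set which possesses a critical point attains a strict global maximum precisely at that point, so no compactness or behaviour-at-infinity analysis is needed.

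First I would verify that $\mathbf{y^0}$ is a critical point of the restriction $\Re S|_{\mathcal{Y}^0}$, not merely of the holomorphic function $S$. Writing $\mathbf{y} = \mathbf{x} + i\mathbf{d^0}$ with $\mathbf{x} \in \R^{p+2}$, the map $\Re S|_{\mathcal{Y}^0}$ becomes a smooth function of the real variable $\mathbf{x}$, and since $\partial y_j/\partial x_j = 1$ we have $\partial(\Re S)/\partial x_j = \Re(\partial S/\partial y_j)$ for each $j \in \{1,\dots,p,U,W\}$. By Lemma \ref{lem:grad:thurston}, $\mathbf{y^0}$ is the unique zero of $\nabla S$ on $\mathcal{U}$, and since $\mathbf{y^0} \in \mathcal{Y}^0 \subset \mathcal{U}$, it follows that the real gradient of $\Re S|_{\mathcal{Y}^0}$ vanishes at $\mathbf{y^0}$.

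Next I would invoke the first-order characterization of concave functions on a convex domain: if $f$ is differentiable and concave on the convex set $\mathcal{Y}^0 = \R^{p+2} + i\mathbf{d^0}$, then for all $\mathbf{y} \in \mathcal{Y}^0$ one has $\Re S(\mathbf{y}) \leqslant \Re S(\mathbf{y^0}) + \nabla(\Re S|_{\mathcal{Y}^0})(\mathbf{y^0}) \cdot (\mathbf{x} - \mathbf{x^0})$, and strict concavity (Lemma \ref{lem:concave}) makes the inequality strict for $\mathbf{y} \neq \mathbf{y^0}$. Since the gradient term vanishes by the previous step, this reads $\Re S(\mathbf{y}) < \Re S(\mathbf{y^0})$ for every $\mathbf{y} \in \mathcal{Y}^0 \setminus \{\mathbf{y^0}\}$, which is exactly the claim of a strict global maximum at $\mathbf{y^0}$.

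The argument is short, and there is no serious obstacle; the only point requiring care is conceptual rather than computational. Strict concavity of $\Re S$ on the \emph{unbounded} affine plane $\mathcal{Y}^0$ does not by itself guarantee that the supremum is attained (as the example $x \mapsto -e^x$ on $\R$ shows). The essential input that rescues us is the existence of the interior critical point $\mathbf{y^0}$ provided by geometricity through Lemma \ref{lem:grad:thurston}; once concavity and this critical point are combined, the maximum is automatically realized and strict, so the remaining work is purely the bookkeeping described above.
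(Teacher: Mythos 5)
Your proposal is correct and follows essentially the same route as the paper: both deduce that $\mathbf{y^0}$ is a critical point of $\Re S\vert_{\mathcal{Y}^0}$ from the vanishing of the holomorphic gradient in Lemma \ref{lem:grad:thurston}, and then combine this with the strict concavity from Lemma \ref{lem:concave} to get the strict global maximum. Your added remark that concavity alone on the unbounded plane $\mathcal{Y}^0$ would not suffice without the interior critical point is accurate and a worthwhile clarification, but it does not change the argument.
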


\begin{proof}
	Since the holomorphic gradient of $S\colon \mathcal{U}\to \C$ vanishes on $\mathbf{y^0}$ by Lemma \ref{lem:grad:thurston}, the (real) gradient of $\Re S\vert_{\mathcal{Y}^0}$ (which is the real part of the holomorphic gradient of $S$) then vanishes as well on $\mathbf{y^0}$, thus $\mathbf{y^0}$ is a critical point of 
	$\Re S\vert_{\mathcal{Y}^0}$.
	
	Besides, $\Re S\vert_{\mathcal{Y}^0}$ is strictly concave by Lemma \ref{lem:concave}, thus $\mathbf{y^0}$ is a global maximum of $\Re S\vert_{\mathcal{Y}^0}$.
\end{proof}

Before computing the value $\Re S (\mathbf{y^0})$, we establish a useful formula for the potential $S$:

\begin{lemma}\label{lem:rewriteS}
	The function $S\colon \mathcal{U} \to \C $ can be re-written 
	\begin{multline*}
		S(\mathbf{y}) =  i \Li\left (-e^{y_1}\right ) + \cdots
		+ i \Li\left (-e^{y_p}\right )
		+ 2 i \Li\left (-e^{-y_U}\right )
		+ i \Li\left (-e^{-y_W}\right ) \\
		+ i \mathbf{y}^{\!\top} Q_n \mathbf{y} +  
		i y_U^2 + i \frac{y_W^2}{2} +
		\mathbf{y}^{\!\top} \mathcal{W}_n
		+   i \frac{\pi^2}{2}.
	\end{multline*}
\end{lemma}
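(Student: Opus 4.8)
The plan is to compare the given definition of $S$ with the claimed expression and to notice that the two differ only in the terms involving $y_U$ and $y_W$. Indeed, the summands $i\Li(-e^{y_k})$ for $1\leqslant k\leqslant p$, together with $i\mathbf{y}^T Q_n\mathbf{y}$ and $\mathbf{y}^T\mathcal{W}_n$, appear identically in both formulas. Hence it suffices to establish, for every $\mathbf{y}\in\mathcal{U}$, the single identity
$$-2i\Li(-e^{y_U})-i\Li(-e^{y_W})=2i\Li(-e^{-y_U})+i\Li(-e^{-y_W})+iy_U^2+i\frac{y_W^2}{2}+i\frac{\pi^2}{2}.$$

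First I would apply the inversion relation of Proposition \ref{prop:dilog}(1) with $z=-e^{y_U}$, so that $1/z=-e^{-y_U}$ and $-z=e^{y_U}$. The crucial point is the evaluation of the branch $\Log(-z)=\Log(e^{y_U})$: since $\mathbf{y}\in\mathcal{U}$ forces $\Im(y_U)\in(0,\pi)$, the argument of $e^{y_U}$ lies in $(0,\pi)\subset(-\pi,\pi]$, hence $\Log(e^{y_U})=y_U$. The inversion relation then reads $\Li(-e^{-y_U})=-\Li(-e^{y_U})-\frac{\pi^2}{6}-\frac{1}{2}y_U^2$, which after multiplication by $2i$ gives $-2i\Li(-e^{y_U})=2i\Li(-e^{-y_U})+i\frac{\pi^2}{3}+iy_U^2$.

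Next I would run the same computation with $z=-e^{y_W}$; again $\Im(y_W)\in(0,\pi)$ guarantees $\Log(e^{y_W})=y_W$, yielding $-i\Li(-e^{y_W})=i\Li(-e^{-y_W})+i\frac{\pi^2}{6}+i\frac{y_W^2}{2}$. Adding the two resulting equations and using $\frac{\pi^2}{3}+\frac{\pi^2}{6}=\frac{\pi^2}{2}$ produces exactly the required identity, and substituting it into the original definition of $S$ finishes the proof.

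The only genuine obstacle is the branch bookkeeping: one must confirm that the principal-value logarithm $\Log(e^{y_U})$ (and likewise $\Log(e^{y_W})$) collapses to $y_U$ rather than to some $y_U-2\pi i$, which is precisely where the constraint $\Im(y_U),\Im(y_W)\in(0,\pi)$ built into the definition of $\mathcal{U}$ enters. Everything else is a one-line substitution into the inversion relation.
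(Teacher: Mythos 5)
Your proposal is correct and follows exactly the paper's own argument: apply the inversion relation of Proposition \ref{prop:dilog}(1) to $z=-e^{y_l}$ for $l\in\{U,W\}$ and collect the constants. The extra care you take with the branch of $\Log(e^{y_l})$ (using $\Im(y_U),\Im(y_W)\in(0,\pi)$ from the definition of $\mathcal{U}$) is a detail the paper leaves implicit, and your verification of it is accurate.
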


\begin{proof}
	We first recall the well-known formula for the dilogarithm (see Proposition \ref{prop:dilog} (1)):
	$$ \forall z \in \C \setminus [1,+\infty), \
	\Li\left (\frac{1}{z}\right ) = - \Li(z) - \frac{\pi^2}{6} - \frac{1}{2}\Log(-z)^2.
	$$
	We then apply this formula for $z=-e^{y_l}$ for $l\in\{U,W\}$ to conclude the proof.
\end{proof}

We can now use this formula to prove that the hyperbolic volume appears at the complete structure $\mathbf{y^0}$, in the following lemma.

\begin{lemma}\label{lem:-vol} We have
$$ \Re(S)(\mathbf{y^0}) = - \mathrm{Vol}(S^3 \setminus K_n).$$
\end{lemma}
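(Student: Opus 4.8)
The plan is to evaluate $\Re(S)(\mathbf{y^0})$ at the critical point and identify it with the sum of the volumes of the ideal tetrahedra of $X_n$. First I would use the reformulation of $S$ from Lemma~\ref{lem:rewriteS}, so that every dilogarithm term has a negative-exponential argument ($-e^{y_k}$ for the $T_k$ with $\Im(y_k)\in(-\pi,0)$, and $-e^{-y_U}, -e^{-y_W}$ for $U,W$ with $\Im(y_U),\Im(y_W)\in(0,\pi)$). In all these cases the argument lies in $\C\setminus[1,\infty)$ so $\Li$ is well defined, and moreover $\Im$ of the exponent lies in $(-\pi,\pi)$, which is exactly the regime where the imaginary part of $i\Li(-e^{y})$ relates to the Bloch--Wigner function. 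The key computational fact I would isolate is that, writing $z_k=\psi_{T_k}^{-1}(y_k)$ for the complex shape of each tetrahedron, one has
\[
\Re\bigl(i\,\Li(-e^{y_k})\bigr) = \Im\bigl(\Li(-e^{y_k})\bigr),
\]
and that $\Im\Li(-e^{y})$ together with the quadratic and linear correction terms assembles into the Bloch--Wigner function $D(z)$ of the corresponding shape, which by the Proposition following the definition of $D$ equals $\Vol(T)$ for an ideal tetrahedron of shape $z$.

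Concretely, I would split $\Re(S)(\mathbf{y^0})$ into the dilogarithm contributions and the polynomial contributions $\Re\bigl(i\,\mathbf{y}^T Q_n\mathbf{y} + i y_U^2 + i\tfrac{y_W^2}{2} + \mathbf{y}^T\mathcal{W}_n + i\tfrac{\pi^2}{2}\bigr)$. Since $Q_n$, $\mathcal{W}_n$ are real, the real part of the polynomial terms reduces to $-\Im(\mathbf{y}^T Q_n\mathbf{y}) - \Im(y_U^2) - \tfrac12\Im(y_W^2)$ plus $0$ (the real linear term $\mathbf{y}^T\mathcal{W}_n$ has real part equal to $\mathbf{x}^T\mathcal{W}_n$, but when combined with the imaginary parts of the dilogarithm derivatives at the critical point these cancel). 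The natural way to organise the cancellation is to use that $\mathbf{y^0}$ is a critical point, so $\nabla S(\mathbf{y^0})=0$; this lets me eliminate the linear-in-$\mathbf{x}$ dependence and reduce everything to an expression purely in the imaginary parts $\mathbf{d^0}=\Gamma(\alpha^0)$ and the shapes $z^0_k$. After this bookkeeping, each tetrahedron's contribution should collapse to $D(z^0_k)$ (respectively $D(z^0_U)$ counted with multiplicity two, and $D(z^0_W)$), matching the signs $\varepsilon(T)$ built into $S$.

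The cleanest route for the final identification is to invoke the Bloch--Wigner proposition: for the complete structure, $z^0_1,\dots,z^0_p,z^0_U,z^0_W$ are precisely the shapes of the geometric ideal tetrahedra of $X_n$ (with $z^0_V=z^0_U$ by Lemma~\ref{lem:grad:thurston}), so
\[
\sum_{k=1}^{p} D(z^0_k) + D(z^0_U) + D(z^0_V) + D(z^0_W) = \sum_{T\in X_n^3}\Vol(T) = \Vol(S^3\setminus K_n),
\]
the last equality holding because $X_n$ is geometric by Theorem~\ref{thm:geometric}. Tracking the overall sign (the $U,W$ terms enter $S$ with the opposite sign, but after the inversion relation of Lemma~\ref{lem:rewriteS} they contribute $+D$ as well, since the Bloch--Wigner function satisfies $D(1/(1-z))=D(z)$ and is odd under orientation reversal) yields $\Re(S)(\mathbf{y^0})=-\Vol(S^3\setminus K_n)$, as claimed.

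The main obstacle I expect is the careful bookkeeping of the polynomial correction terms and the signs: I must verify that $\Im(\mathbf{y^0}^T Q_n\mathbf{y^0})$ plus the $y_U^2,y_W^2$ terms plus $\Re(\mathbf{y^0}^T\mathcal{W}_n)+\tfrac{\pi^2}{2}$-type contributions really do combine with $\arg(1-z)\log|z|$ pieces to reproduce the $D(z)$ normalization exactly, using the critical-point (gluing) equations $\mathcal{E}_{X_n,\bullet}$ to rewrite cross terms. This is a finite but delicate real-and-imaginary-part computation; the conceptual content is entirely captured by the identity $\Im\Li + \arg(1-z)\log|z| = D(z)$ and the geometricity of $X_n$, but the algebra linking the integer matrix $Q_n$ and the vector $\mathcal{W}_n$ to the individual tetrahedral volumes is where the real work lies.
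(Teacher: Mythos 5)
Your proposal follows essentially the same route as the paper's proof: rewrite $S$ via Lemma~\ref{lem:rewriteS}, recognize each $\Im\,\Li$ term (corrected by the $\arg(1-z)\log|z|$ pieces, which here equal $-c_k x_k$ or $b_l x_l$) as the Bloch--Wigner volume of the corresponding tetrahedron, and reduce the claim to the vanishing of a residual term linear in $\mathbf{x^0}$. The paper disposes of that residual term by checking the pointwise vector identity $\mathcal{W}_n - 2Q_n\mathbf{d^0} - (c^0_1,\ldots,c^0_p,2c^0_U,c^0_W)^T = 0$ via Lemma~\ref{lem:2QGamma+C} (which uses only the balancing and completeness relations for the angles, i.e.\ the imaginary parts of your gluing equations), so your plan of invoking the critical-point equations $\nabla S(\mathbf{y^0})=0$ supplies exactly the relations needed — just note that the coefficient vector of the "linear term" involves the angles $c_k, b_l$ and so must be shown to vanish at the point $\mathbf{y^0}$ itself rather than by a gradient argument, and that $\Re(i\,\Li) = -\Im(\Li)$ (a sign you elide but whose net effect you state correctly).
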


\begin{proof}
From Lemma \ref{lem:rewriteS},  for all $\mathbf{y} \in \mathcal{U}$ we have
\begin{multline*}
S(\mathbf{y}) =  i \Li\left (-e^{y_1}\right ) + \cdots
  + i \Li\left (-e^{y_p}\right )
  + 2 i \Li\left (-e^{-y_U}\right )
  + i \Li\left (-e^{-y_W}\right ) \\
   + i \mathbf{y}^{\!\top} Q_n \mathbf{y} +  
i y_U^2 + i \frac{y_W^2}{2} +
  \mathbf{y}^{\!\top} \mathcal{W}_n
  +   i \frac{\pi^2}{2}, 
\end{multline*}
thus
\begin{multline*}
\Re(S)(\mathbf{y}) =  - \Im\left ( \Li\left (-e^{y_1}\right )\right ) - \cdots
  - \Im\left ( \Li\left (-e^{y_p}\right )\right )
- 2 \Im\left ( \Li\left (-e^{-y_U}\right )\right )
- \Im\left ( \Li\left (-e^{-y_W}\right )\right )
   \\
   -\Im\left ( \mathbf{y}^{\!\top} Q_n \mathbf{y} +  
 y_U^2 +  \frac{y_W^2}{2}\right ) +
 \Re\left ( \mathbf{y}^{\!\top} \mathcal{W}_n\right ).
\end{multline*}
Recall that for $z \in \R + i  \R_{>0}$, the ideal hyperbolic tetrahedron of complex shape $z$ has  hyperbolic volume $D(z) =\Im(\Li(z)) +\arg(1-z) \log|z|$ (where $D$ is the Bloch--Wigner function). 
Note that for $z=z_k = -e^{y_k}$ (with $1 \leqslant k \leqslant p$), we have 
$\arg(1-z) \log|z| = - c_k x_k$
and for $z=z_l = -e^{-y_l}$ (with $l\in\{U,W\}$), 
we have
$\arg(1-z) \log|z| = b_l x_l$. Thus we have for  $\mathbf{y} \in \mathcal{U}$:
\begin{multline*}
\Re(S)(\mathbf{y}) =  - D(z_1) - \cdots - D(z_p) - 2 D(z_U) - D(z_W)   
-c_1 x_1 - \cdots -c_p x_p + 2b_U x_U + b_W x_W\\
-2 \mathbf{x}^{\!\top} Q_n \mathbf{d} - 2 d_U x_U - d_W x_W
+ \mathbf{x}^{\!\top} \mathcal{W}_n.
\end{multline*}
Recall that $\mathbf{z^0}$ is the complex shape structure corresponding to the complete hyperbolic structure on the ideal triangulation $X_n$ where $z^0_U$ is the complex shape of both tetrahedra $U$ and $V$ (because of the completeness equation $z_U=z_V$). Thus 
\begin{align*}
-  \mathrm{Vol}(S^3 \setminus K_n) &= - D(z_1^0) - \cdots - D(z_p^0) -  D(z_U^0) - D(z_V^0) - D(z_W^0) \\
&= - D(z_1^0) - \cdots - D(z_p^0) - 2 D(z_U^0) - D(z_W^0).
\end{align*}
Hence we only need to prove that $(\mathbf{x^0})^{\!\top} \cdot \mathcal{T} = 0$, where
$$
\mathcal{T} :=
\begin{pmatrix}
-c_1^0 \\ \vdots \\ -c_p^0 \\ 2 b_U^0 \\ b_W^0
\end{pmatrix} + \mathcal{W}_n
-2 Q_n \mathbf{d^0} +
\begin{pmatrix}
0 \\ \vdots \\ 0 \\ -2 d_U^0 \\ -d_W^0
\end{pmatrix}.
$$
Since $d_l^0 = \pi - a_l^0 = b_l^0 +c_l^0$ for $l=U,W$, we have 
$\mathcal{T} = 
- \begin{pmatrix}
c_1^0 \\ \vdots \\ c_p^0 \\ 2 c_U^0 \\ c_W^0
\end{pmatrix} + \mathcal{W}_n
-2 Q_n \mathbf{d^0}.$

It then follows from the definitions of $\mathcal{W}, \mathcal{W}_n, \widetilde{\Gamma}, \widetilde{C}, \mathbf{d^0}$ and their {relations} established in Sections \ref{sec:part:odd} and \ref{sec:part:H:odd} that $\mathcal{T}=0$.
More precisely, define for instance 
$$\tau^0 := \alpha^0 \oplus (0,0,\pi) \in \mathcal{S}_{Y_n \setminus Z} \times  \overline{\mathcal{S}_Z},$$
which satisfies the assumptions on $\tau$ in Theorem \ref{thm:part:func:Htrig:odd} (as can be checked by computing the weights listed at the beginning of Section \ref{sec:part:H:odd}). Then recall from the end of the proof of Theorem \ref{thm:part:func:Htrig:odd} and the fact that $(a^0_U,b^0_U,c^0_U)=(a^0_V,b^0_V,c^0_V)$ that
$$\mathcal{W}_n = \mathcal{W}(\tau^0):= 2 Q_n \Gamma(\tau^0)+C(\tau^0)+(0,\ldots,0,c^{\tau^0}_V,0)^{\!\top}
= 2 Q_n \mathbf{d^0} + (c_1^0, \ldots, c_p^0,2 c_U^0, c_W^0)^{\!\top},
$$
and thus $\mathcal{T}=0$.
The readers having skipped Section \ref{sec:part:H:odd} can instead use the identity $\widetilde{\mathcal{W}}(\alpha)= 2 \widetilde{Q}_n  \widetilde{\Gamma}(\alpha)+\widetilde{C}(\alpha)$ at the end of Section \ref{sec:part:odd} to arrive at the same conclusion.
\end{proof}

\subsection{Asymptotics of integrals on $\mathcal{Y}^0$} \label{sub:asym:Y0}

For the remainder of the section, let $r_0>0$ and $\gamma=\{ \mathbf{y}\in \mathcal{Y}^0 \ \vert \ \parallel \mathbf{y}-\mathbf{y^0} \parallel \ \leqslant r_0 \}$ a $p+2$-dimensional ball inside $\mathcal{Y}^0$ containing $\mathbf{y^0}$. We start with asymptotics of an integral on this compact contour $\gamma$.

\begin{proposition}\label{prop:compact:contour:S:SPM}
There exists a constant $\rho \in \C^*$ such that, as $\lambda \to \infty$,
$$
\int_{\gamma} d\mathbf{y} \ e^{\lambda S(\mathbf{y})}
= \rho \lambda^{-\frac{p+2}{2}} 
\exp\left (\lambda S(\mathbf{y^0})\right )
\left (
1 + 
o_{\lambda \to \infty}\left (1\right )
\right ).
$$
In particular, 
$$\dfrac{1}{\lambda} \log \left \vert
\int_{\gamma} d\mathbf{y} \ e^{\lambda S(\mathbf{y})}
\right \vert \underset{\lambda \to \infty}{\longrightarrow}  \Re S(\mathbf{y^0}) = - \mathrm{Vol}(S^3 \setminus K_n).
$$
   \end{proposition}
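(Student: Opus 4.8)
The plan is to apply the Fedoryuk saddle point theorem (Theorem \ref{thm:SPM}) directly, with $m = p+2$, amplitude $f \equiv 1$, domain $D = \mathcal{U}$, and the compact contour $\gamma$. First I would record that $\gamma$ is admissible in the sense of that theorem: being a closed $(p+2)$-ball inside the $\R$-affine subspace $\mathcal{Y}^0 \cong \R^{p+2}$, it is a smooth compact real submanifold of $\C^{p+2}$ of dimension $m = p+2$, and since $p \geq 0$ forces $m \geq 2$, its boundary $\partial \gamma \cong S^{m-1}$ is connected. Moreover $\mathcal{Y}^0 \subset \mathcal{U}$ with $\mathcal{U}$ open, so $\gamma \subset \mathcal{U} = D$, and $S$ is holomorphic (hence analytic in each variable) on $\mathcal{U}$, while $f\equiv 1$ is trivially analytic.

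The core of the argument is to verify the single nontrivial hypothesis of Theorem \ref{thm:SPM}, namely that $\max_{\mathbf{y} \in \gamma} \Re S$ is attained only at $\mathbf{y^0}$, which must be an interior point and a simple saddle point. The point $\mathbf{y^0}$ lies in the interior of $\gamma$ by construction. By Lemma \ref{lem:maximum}, $\Re S|_{\mathcal{Y}^0}$ attains a strict global maximum at $\mathbf{y^0}$; restricting to $\gamma \subset \mathcal{Y}^0$ preserves both the maximality and the uniqueness, so $\mathbf{y^0}$ is the unique maximizer of $\Re S$ on $\gamma$. That $\mathbf{y^0}$ is a saddle point, i.e. $\nabla S(\mathbf{y^0}) = 0$, is exactly Lemma \ref{lem:grad:thurston}; that it is \emph{simple}, i.e. $\det \mathrm{Hess}(S)(\mathbf{y^0}) \neq 0$, follows from Lemma \ref{lem:hess}.

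With all hypotheses in place, Theorem \ref{thm:SPM} (with the trivial amplitude $f\equiv 1$, so that only the leading bracket term $f(\mathbf{y^0}) = 1$ survives at first order) yields
\begin{equation*}
\int_{\gamma} e^{\lambda S(\mathbf{y})}\, d\mathbf{y} = \left(\frac{2\pi}{\lambda}\right)^{(p+2)/2} \frac{\exp\left(\lambda S(\mathbf{y^0})\right)}{\sqrt{\det \mathrm{Hess}(S)(\mathbf{y^0})}}\left(1 + o_{\lambda \to \infty}(1)\right).
\end{equation*}
Setting $\rho := (2\pi)^{(p+2)/2}\big/\sqrt{\det \mathrm{Hess}(S)(\mathbf{y^0})}$, a well-defined element of $\C^*$ by Lemma \ref{lem:hess} (the branch of the square root being fixed by the orientation of $\gamma$, which is irrelevant for us), gives the first displayed formula of the proposition.

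Finally, for the ``in particular'' statement I would take the modulus and logarithm of this asymptotic, divide by $\lambda$, and let $\lambda \to \infty$: the contributions $\frac{1}{\lambda}\log|\rho|$, $-\frac{p+2}{2\lambda}\log\lambda$ and $\frac{1}{\lambda}\log|1 + o(1)|$ all tend to $0$, leaving exactly $\Re S(\mathbf{y^0})$, which equals $-\mathrm{Vol}(S^3 \setminus K_n)$ by Lemma \ref{lem:-vol}. Since the whole proof is a verification of the hypotheses of an off-the-shelf theorem, I do not expect a genuine obstacle; the only point requiring real care is the \emph{global} (rather than merely local) uniqueness of the maximizer on $\gamma$, which is why Lemma \ref{lem:maximum} — itself built on the strict concavity of Lemma \ref{lem:concave} and on the geometricity established earlier — is the essential input.
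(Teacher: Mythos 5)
Your proof is correct and follows essentially the same route as the paper: both apply Theorem \ref{thm:SPM} with $f\equiv 1$, $D=\mathcal{U}$, verify the interior-point, unique-maximum, critical-point and non-degenerate-Hessian hypotheses via Lemmas \ref{lem:maximum}, \ref{lem:grad:thurston} and \ref{lem:hess}, and set $\rho = (2\pi)^{(p+2)/2}\big/\sqrt{\det \mathrm{Hess}(S)(\mathbf{y^0})}$. Your extra remarks on the admissibility of $\gamma$ (smooth compact submanifold with connected boundary) are a harmless refinement of what the paper leaves implicit.
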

   
   \begin{proof}
We apply the saddle point method as in Theorem \ref{thm:SPM}, with $m=p+2$, $\gamma^m=\gamma$, $z=\mathbf{y}$, $z^0=\mathbf{y^0}$, $D=\mathcal{U}$, $f=1$ and $S$ as defined in the beginning of this section. Let us check the technical requirements:
\begin{itemize}
\item $\mathbf{y^0}$ is an interior point of $\gamma$ by construction.
\item $\max_\gamma \Re S$ is attained only at $\mathbf{y^0}$ by Lemma \ref{lem:maximum}.
\item $\nabla S (\mathbf{y^0})=0$ by Lemma \ref{lem:grad:thurston}.
\item $\det \mathrm{Hess}(S)(\mathbf{y^0}) \neq 0$ by Lemma \ref{lem:hess}.
\end{itemize}
Thus the first statement follows from Theorem \ref{thm:SPM}, with $\rho := \dfrac{(2\pi)^{\frac{p+2}{2}}}{\sqrt{\det \mathrm{Hess}(S)(\mathbf{y^0})}} \in \C^*$.

The second statement then follows from immediate computation and Lemma \ref{lem:-vol}. 
   \end{proof}

Now we compute an upper bound on the remainder term, i.e.\ the integral on $\mathcal{Y}^0 \setminus \gamma$ the whole unbounded contour minus the compact ball.

\begin{lemma}\label{lem:unbounded:contour}
There exists constants $A,B>0$ such that for all $\lambda > A$,
$$
\left \vert
\int_{\mathcal{Y}^0 \setminus \gamma} d\mathbf{y} \ e^{\lambda S(\mathbf{y})}
\right \vert
 \leqslant
B e^{\lambda M},
$$
where $M := \max_{\partial \gamma} \Re S$.
   \end{lemma}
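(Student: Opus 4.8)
The plan is to exploit the strict concavity of $\Re S$ on the horizontal contour $\mathcal{Y}^0$ (Lemma \ref{lem:concave}, which applies since $\alpha^0 \in \mathcal{A}_{X_n}$) together with the fact that $\mathbf{y^0}$ is its unique critical point (Lemmas \ref{lem:grad:thurston} and \ref{lem:maximum}). First I would introduce polar coordinates in the real affine space $\mathcal{Y}^0 = \R^{p+2} + i\mathbf{d^0}$ centered at $\mathbf{y^0}$, writing $\mathbf{y} = \mathbf{y^0} + t u$ with $t = \|\mathbf{y} - \mathbf{y^0}\| \geq r_0$ and $u$ ranging over the unit sphere $S^{p+1}$, and study the one-variable functions $g_u(t) := \Re S(\mathbf{y^0}+tu)$. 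Each $g_u$ is smooth and strictly concave, and since the (real) gradient of $\Re S|_{\mathcal{Y}^0}$ vanishes at $\mathbf{y^0}$, we have $g_u'(0)=0$ for every $u$.

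The crucial step is to upgrade strict concavity into a \emph{uniform} linear decay rate away from $\gamma$. By strict concavity $g_u'$ is strictly decreasing, so $g_u'(r_0) < g_u'(0) = 0$; since the map $u \mapsto g_u'(r_0) = \nabla \Re S(\mathbf{y^0}+r_0 u)\cdot u$ is continuous on the compact sphere $S^{p+1}$, its maximum is some $-\epsilon < 0$. The tangent-line inequality for concave functions then gives $g_u(t) \leq g_u(r_0) + g_u'(r_0)(t-r_0) \leq M - \epsilon(t-r_0)$ for all $t \geq r_0$, where I use $g_u(r_0) \leq M = \max_{\partial\gamma}\Re S$. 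In other words, $\Re S(\mathbf{y}) \leq M - \epsilon\big(\|\mathbf{y}-\mathbf{y^0}\|-r_0\big)$ throughout $\mathcal{Y}^0\setminus\gamma$.

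With this pointwise bound the estimate is routine: using $|e^{\lambda S}| = e^{\lambda \Re S}$ and passing to polar coordinates,
$$\left| \int_{\mathcal{Y}^0\setminus\gamma} e^{\lambda S(\mathbf{y})}\,d\mathbf{y}\right| \leq e^{\lambda M}\,\mathrm{vol}(S^{p+1})\int_{r_0}^{\infty} e^{-\lambda\epsilon(t-r_0)}\,t^{p+1}\,dt.$$
After the substitution $s = t - r_0$, the remaining integral $\int_0^\infty e^{-\lambda\epsilon s}(s+r_0)^{p+1}\,ds$ is a decreasing function of $\lambda$, hence bounded by its value at any fixed threshold; choosing $A>0$ and setting $B := \mathrm{vol}(S^{p+1})\int_0^\infty e^{-A\epsilon s}(s+r_0)^{p+1}\,ds$ yields $\left|\int_{\mathcal{Y}^0\setminus\gamma} e^{\lambda S}\,d\mathbf{y}\right| \leq B e^{\lambda M}$ for all $\lambda > A$, as claimed.

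I expect the main obstacle to be precisely the second step, namely producing the uniform constant $\epsilon>0$: strict concavity by itself only forces $\Re S$ to decrease along each ray, but the Hessian of $\Re S$ degenerates as $\|\mathbf{y}\|\to\infty$ (its diagonal entries tend to $0$, as is visible in the proof of Lemma \ref{lem:concave}), so one cannot hope for a positive-definite lower bound at infinity. The argument above circumvents this by extracting the decay rate from the compact sphere $\partial\gamma$ alone and then propagating it to infinity through the global tangent-line inequality, which is exactly what concavity supplies. An alternative, should one wish to avoid the compactness step, would be to read off the asymptotic slopes of $\Re S$ directly from the Bloch--Wigner reformulation of Lemma \ref{lem:rewriteS}, since the $D(z_j)$ terms are bounded and the remaining contribution is affine in $\mathbf{x}$; but the concavity route is shorter and self-contained.
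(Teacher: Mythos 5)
Your proof is correct and follows essentially the same route as the paper: spherical coordinates centered at $\mathbf{y^0}$, strict concavity of $\Re S$ restricted to each ray, a uniform negative slope at radius $r_0$ obtained by compactness of the unit sphere, and a resulting exponential-times-polynomial bound on the radial integral. The only cosmetic differences are that the paper uses the secant slope function $N(r)=\frac{f(r)-f(r_0)}{r-r_0}$ where you use the tangent line at $r_0$ (equivalent consequences of concavity), and that the paper evaluates the remaining one-dimensional integral explicitly by integration by parts where you simply invoke its monotonicity in $\lambda$.
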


\begin{proof}
First we apply a change of variables to $p+2$-dimensional spherical coordinates
$$\mathbf{y} \in \mathcal{Y}^0 \setminus \gamma \Longleftrightarrow
r \overrightarrow{\eta}
\in (r_0,\infty) \times \mathbb{S}^{p+1},$$
which yields:
$$
\int_{\mathcal{Y}^0 \setminus \gamma} d\mathbf{y} \ e^{\lambda S(\mathbf{y})}
= 
\int_{\mathbb{S}^{p+1}} d\,vol_{\mathbb{S}^{p+1}} \int_{r_0}^\infty r^{p+1} e^{\lambda S(r \overrightarrow{\eta})} dr
$$
for all $\lambda>0$.

Consequently, we have for all $\lambda>0$:
$$
\left \vert
\int_{\mathcal{Y}^0 \setminus \gamma} d\mathbf{y} \ e^{\lambda S(\mathbf{y})}
\right \vert
 \leqslant
 \mathrm{vol}(\mathbb{S}^{p+1})
 \sup_{\overrightarrow{\eta} \in \mathbb{S}^{p+1}} 
 \int_{r_0}^\infty r^{p+1} e^{\lambda \Re(S)(r \overrightarrow{\eta})} dr.$$

Let us fix $\overrightarrow{\eta} \in \mathbb{S}^{p+1}$ and denote $f=f_{\overrightarrow{\eta}}:= (r \mapsto \Re(S)(r \overrightarrow{\eta}))$ the restriction of $\Re(S)$ on the ray $(r_0,\infty)\overrightarrow{\eta}$. 
Let $\lambda>0$.
Let us find an upper bound on $  \int_{r_0}^\infty r^{p+1} e^{\lambda f(r)} dr$.

Since  $\Re(S)$ is strictly concave by Lemma \ref{lem:concave} and $f$ is its restriction on a convex set, $f$ is strictly concave as well on $(r_0,+\infty)$ (and even on $[0,+\infty)$). Now let us consider the slope function $N\colon [r_0,+\infty) \to \R$ defined by $N(r) := \dfrac{f(r)- f(r_0)}{r-r_0}$ for $r>r_0$ and $N(r_0):=f'(r_0)$. The function $N$ is $C^1$ and satisfies $N'(r) = \frac{f'(r)-N(r)}{r-r_0}$ for $r>r_0$. Now, since $f$ is strictly concave, we have $f'(r )< N(r)$ for any $r \in (r_0,\infty)$, thus $N$ is decreasing on this same interval.
Hence
$$
 \int_{r_0}^\infty r^{p+1} e^{\lambda f(r)} dr
 = e^{\lambda f(r_0)}
  \int_{r_0}^\infty r^{p+1} e^{\lambda N(r)(r-r_0)} dr
  \leqslant
  e^{\lambda f(r_0)}
  \int_{r_0}^\infty r^{p+1} e^{\lambda N(r_0)(r-r_0)} dr.
 $$
Note that $N(r_0)=f'(r_0)<0$ by Lemmas \ref{lem:concave} and \ref{lem:maximum}. Using integration by parts, we can prove by induction that
$$
\int_{r_0}^\infty r^{p+1} e^{\lambda N(r_0)(r-r_0)} dr = \frac{1}{(\lambda N(r_0))^{p+2}}
\sum_{k=0}^{p+1} (-1)^{p+1-k} \frac{(p+1)!}{k!}(\lambda N(r_0))^k r_0^k.
$$ 

Moreover, $N(r_0)=f'(r_0)= \langle (\nabla \Re(S))(r_0 \overrightarrow{\eta}) ; \overrightarrow{\eta} \rangle$, and since $S$ is holomorphic, we conclude that 
$( \overrightarrow{\eta} \mapsto N(r_0) = f_{\overrightarrow{\eta}}'(r_0))$ is a continous map from $\mathbb{S}^{p+1}$ to $\R_{<0}$. Hence there exist $m_1, m_2 >0$ such that $0 < m_1 \leqslant |N(r_0)| \leqslant m_2$ for all vectors $\overrightarrow{\eta} \in \mathbb{S}^{p+1}$.

We thus conclude that for all $\lambda>\frac{1}{m_1 r_0}$, we have the (somewhat unoptimal) upper bound:
\begin{align*}
\int_{r_0}^\infty r^{p+1} e^{\lambda f(r)} dr 
& \leqslant 
e^{\lambda f(r_0)}
\frac{1}{(\lambda N(r_0))^{p+2}}
\sum_{k=0}^{p+1} (-1)^{p+1-k} \frac{(p+1)!}{k!}(\lambda N(r_0))^k r_0^k \\
& \leqslant
e^{\lambda f(r_0)}
\left  \vert \frac{1}{(\lambda N(r_0))^{p+2}}
\sum_{k=0}^{p+1} (-1)^{p+1-k} \frac{(p+1)!}{k!}(\lambda N(r_0))^k r_0^k \right  \vert \\
& \leqslant
e^{\lambda f(r_0)}
 \frac{1}{\vert\lambda N(r_0)\vert^{p+2}}
\sum_{k=0}^{p+1}  (p+1)! \ \vert\lambda N(r_0) r_0\vert^k  \\
& \leqslant
e^{\lambda f(r_0)}
 \frac{(p+2)! \ \vert\lambda N(r_0) r_0\vert^{p+2}}{\vert\lambda N(r_0)\vert^{p+2}}  = (p+2)! \ r_0^{p+2} e^{\lambda f(r_0)}.  \\
\end{align*}

Now, since  
$\int_{r_0}^\infty r^{p+1} e^{\lambda f_{\overrightarrow{\eta}}(r)} dr  \leqslant C e^{\lambda f_{\overrightarrow{\eta}}(r_0)}$ for all $\lambda>\frac{1}{m_1 r_0}$, for all $\overrightarrow{\eta} \in \mathbb{S}^{p+1}$ and with the constant $C>0$ independent of $\lambda$ and $\overrightarrow{\eta}$, we can finally conclude that:
$$
\left \vert
\int_{\mathcal{Y}^0 \setminus \gamma} d\mathbf{y} \ e^{\lambda S(\mathbf{y})}
\right \vert
 \leqslant
 \mathrm{vol}(\mathbb{S}^{p+1})
 \sup_{\overrightarrow{\eta} \in \mathbb{S}^{p+1}} 
 \int_{r_0}^\infty r^{p+1} e^{\lambda \Re(S)(r \overrightarrow{\eta})} dr
\leqslant
C \mathrm{vol}(\mathbb{S}^{p+1}) e^{\lambda M} 
 $$
for all $\lambda>\frac{1}{m_1 r_0}$, where $M= \max_{\partial \gamma} \Re S$. This concludes the proof, by putting $A:= \frac{1}{m_1 r_0}$ and $B:=C  \mathrm{vol}(\mathbb{S}^{p+1})$.
\end{proof}

Finally we obtain the asymptotics for the integral on the whole contour $\mathcal{Y}^0$:

\begin{proposition}\label{prop:all:contour:S}
For the same constant $\rho \in \C^*$ as in Proposition \ref{prop:compact:contour:S:SPM}, we have, as $\lambda \to \infty$,
$$
\int_{\mathcal{Y}^0} d\mathbf{y} \ e^{\lambda S(\mathbf{y})}
= \rho \lambda^{-\frac{p+2}{2}} 
\exp\left (\lambda S(\mathbf{y^0})\right )
\left (
1 + 
o_{\lambda \to \infty}\left (1\right )
\right ).
$$
In particular, 
$$\dfrac{1}{\lambda} \log \left \vert
\int_{\mathcal{Y}^0} d\mathbf{y} \ e^{\lambda S(\mathbf{y})}
\right \vert \underset{\lambda \to \infty}{\longrightarrow}  \Re S(\mathbf{y^0}) = - \mathrm{Vol}(S^3 \setminus K_n).
$$
   \end{proposition}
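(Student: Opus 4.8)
The plan is to split the contour $\mathcal{Y}^0$ into the compact ball $\gamma$ and its unbounded complement $\mathcal{Y}^0 \setminus \gamma$, and to show that the first piece already produces the claimed asymptotic while the second piece is exponentially negligible. Concretely, I would write
$$
\int_{\mathcal{Y}^0} d\mathbf{y} \ e^{\lambda S(\mathbf{y})}
= \int_{\gamma} d\mathbf{y} \ e^{\lambda S(\mathbf{y})}
+ \int_{\mathcal{Y}^0 \setminus \gamma} d\mathbf{y} \ e^{\lambda S(\mathbf{y})},
$$
and apply Proposition \ref{prop:compact:contour:S:SPM} to the first integral, which gives exactly $\rho \lambda^{-\frac{p+2}{2}} \exp(\lambda S(\mathbf{y^0}))(1 + o_{\lambda \to \infty}(1))$ with $\rho \in \C^*$. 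The whole difficulty is then to absorb the remainder integral into the $o(1)$ factor, and for this I would combine the bound from Lemma \ref{lem:unbounded:contour} with a strict separation of exponential rates.

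First I would establish the crucial strict inequality $M < \Re S(\mathbf{y^0})$, where $M = \max_{\partial \gamma} \Re S$ as in Lemma \ref{lem:unbounded:contour}. Since $\gamma$ is a ball of radius $r_0$ centered at $\mathbf{y^0}$ inside $\mathcal{Y}^0$, its boundary $\partial \gamma$ is a $(p+1)$-sphere that does not contain $\mathbf{y^0}$; by Lemma \ref{lem:maximum}, $\mathbf{y^0}$ is the \emph{strict} global maximum of $\Re S$ on $\mathcal{Y}^0$, so $\Re S(\mathbf{y}) < \Re S(\mathbf{y^0})$ for every $\mathbf{y} \in \partial \gamma$. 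As $\partial \gamma$ is compact and $\Re S$ is continuous, the maximum $M$ is attained and satisfies $M < \Re S(\mathbf{y^0})$. I would set $\eta := \Re S(\mathbf{y^0}) - M > 0$.

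Next I would compare the two pieces quantitatively. The leading term has modulus $|\rho| \lambda^{-\frac{p+2}{2}} e^{\lambda \Re S(\mathbf{y^0})}$, while Lemma \ref{lem:unbounded:contour} bounds the remainder by $B e^{\lambda M}$ for $\lambda > A$. Taking the ratio,
$$
\frac{\left| \int_{\mathcal{Y}^0 \setminus \gamma} d\mathbf{y} \ e^{\lambda S(\mathbf{y})} \right|}
{\left| \rho \lambda^{-\frac{p+2}{2}} \exp(\lambda S(\mathbf{y^0})) \right|}
\leqslant
\frac{B}{|\rho|}\, \lambda^{\frac{p+2}{2}}\, e^{-\lambda \eta}
\underset{\lambda \to \infty}{\longrightarrow} 0,
$$
since the exponential decay with rate $\eta > 0$ dominates the polynomial growth. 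Hence the remainder integral is $o_{\lambda \to \infty}\!\left(\lambda^{-\frac{p+2}{2}} \exp(\lambda S(\mathbf{y^0}))\right)$ and can be merged into the $(1 + o(1))$ factor coming from Proposition \ref{prop:compact:contour:S:SPM}, which yields the first displayed asymptotic. For the ``in particular'' statement I would take $\frac{1}{\lambda}\log|\cdot|$ of the asymptotic: the contributions $\frac{1}{\lambda}\log|\rho|$, $-\frac{p+2}{2}\frac{\log \lambda}{\lambda}$ and $\frac{1}{\lambda}\log|1+o(1)|$ all tend to $0$, leaving $\Re S(\mathbf{y^0})$, which equals $-\mathrm{Vol}(S^3 \setminus K_n)$ by Lemma \ref{lem:-vol}.

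The only genuinely substantive step is the strict separation $M < \Re S(\mathbf{y^0})$, and even that is immediate once the strict global maximality of Lemma \ref{lem:maximum} is invoked; the rest is bookkeeping with exponential versus polynomial growth. I do not anticipate a real obstacle here, as the hard analytic work (the saddle point estimate on $\gamma$ and the unbounded-contour bound) has already been done in Proposition \ref{prop:compact:contour:S:SPM} and Lemma \ref{lem:unbounded:contour} respectively; this proposition merely packages them together.
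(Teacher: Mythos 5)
Your proposal is correct and follows essentially the same route as the paper: split the contour into $\gamma$ and $\mathcal{Y}^0\setminus\gamma$, apply Proposition \ref{prop:compact:contour:S:SPM} to the compact piece, and use Lemma \ref{lem:unbounded:contour} together with the strict inequality $M<\Re S(\mathbf{y^0})$ (from the strict global maximality in Lemma \ref{lem:maximum}) to absorb the unbounded piece into the error term. The quantitative ratio computation you spell out is exactly the comparison the paper leaves implicit.
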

   
   \begin{proof}
As for Proposition \ref{prop:compact:contour:S:SPM}, the second statement {immediately} follows from the first one. Let us prove the first statement.   
   
   From  Lemma \ref{lem:unbounded:contour}, for all $\lambda>A$, we have $\left \vert
\int_{\mathcal{Y}^0 \setminus \gamma} d\mathbf{y} \ e^{\lambda S(\mathbf{y})}
\right \vert
 \leqslant
B e^{\lambda M}$. Then, since $M < \Re(S)(\mathbf{y^0})$ by Lemmas \ref{lem:concave} and \ref{lem:maximum}, we have
$$
\int_{\mathcal{Y}^0 \setminus \gamma} d\mathbf{y} \ e^{\lambda S(\mathbf{y})} = o_{\lambda \to \infty}\left (
\lambda^{-\frac{p+2}{2}} 
\exp\left (\lambda S(\mathbf{y^0})\right )
\right )
.$$
 The first statement then follows from Proposition \ref{prop:compact:contour:S:SPM} and the equality 
 $$\int_{\mathcal{Y}^0} d\mathbf{y} \ e^{\lambda S(\mathbf{y})} = 
 \int_{\gamma} d\mathbf{y} \ e^{\lambda S(\mathbf{y})} + 
 \int_{\mathcal{Y}^0 \setminus \gamma} d\mathbf{y} \ e^{\lambda S(\mathbf{y})}.$$
   \end{proof}

\subsection{Extending the asymptotics to the quantum dilogarithm}\label{sub:asym:PhiB}

Let us now introduce some new {notation}:
\begin{itemize}
\item We let $R$ denote any positive number in $(0,\pi)$, for example $\pi/2$. Its exact value will not be relevant.
\item We denote $I_R^+ := (R,\infty)$, $I^-_R := (-\infty,-R)$, $\Lambda_R$ the closed upper half circle of radius $R$ in the complex plane, and $\Omega_R := I_R^- \cup \Lambda_R \cup I^+_R$. Remark that we can replace the contour $\R + i 0^+$ with $\Omega_R$ in the definition of $\Phi_\B$, by the Cauchy theorem.
\item For $\delta>0$, we define the product of closed ``horizontal  bands" in $\C$
$$\mathcal{U}_{\delta}:= \prod_{k=1}^p\left (\R + i [-\pi+\delta,-\delta] \right ) 
\times 
\prod_{l=U,W}
\left (\R + i [\delta,\pi-\delta]\right )$$ a closed subset of $\mathcal{U}$.
\item For $\B>0$, we define a new potential function 
$S_{\B}\colon \mathcal{U} \to \C$, an holomorphic  function on $p+2$ complex variables, by:
$$S_{\B}(\mathbf{y}) =
i \mathbf{y}^{\!\top} Q_n \mathbf{y} +  
  \mathbf{y}^{\!\top} \mathcal{W}_n
+ 2 \pi \B^2 \ \Log\left (
\dfrac{
\Phi_\B\left ( \frac{y_U}{2 \pi \B}  \right )^2
\Phi_\B\left ( \frac{y_W}{2 \pi \B} \right )
}{
\Phi_\B\left (\frac{y_1}{2 \pi \B}\right )
\cdots 
\Phi_\B\left (\frac{y_p}{2 \pi \B}\right )
},
\right ) 
   $$
   where $Q_n$ and $\mathcal{W}_n$ are like in Theorem \ref{thm:part:func}.
\end{itemize}

The following lemma establishes a ``parity property'' for the difference between classical and quantum dilogarithms on the horizontal band $\R + i (0,\pi)$.

\begin{lemma}\label{lem:parity}
For all $\B \in (0,1)$ and all $y \in \R + i (0,\pi)$, 
$$
\Re\left (
\Log\left ( \Phi_\B\left ( \frac{-\overline{y}}{2 \pi \B}  \right )\right )
-
\left (
\frac{-i}{2 \pi \B^2} \Li(-e^{-\overline{y}})
\right )
\right )
= 
\Re\left (
\Log\left ( \Phi_\B\left ( \frac{y}{2 \pi \B}  \right )\right )
-
\left (
\frac{-i}{2 \pi \B^2} \Li(-e^y)
\right )
\right )
.$$
\end{lemma}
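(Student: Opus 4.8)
The plan is to prove the claimed identity by reducing everything to the inversion relations for $\Phi_\B$ and $\Li$, which convert the reflection $y \mapsto -\overline{y}$ into a clean algebraic comparison. First I would set $z := \frac{y}{2\pi\B}$, so that $\frac{-\overline{y}}{2\pi\B} = -\overline{z}$, and observe that the two sides of the claimed equation differ precisely by the reflection $z \mapsto -\overline{z}$ inside the combination $\Log \Phi_\B(\,\cdot\,) + \frac{i}{2\pi\B^2}\Li(-e^{(\cdot)\cdot 2\pi\B})$. The key input from the unitarity property, Proposition~\ref{prop:quant:dilog}~(2), is that $\overline{\Phi_\B(w)} = \frac{1}{\Phi_\B(\overline{w})}$, which gives $\Log\Phi_\B(-\overline{z}) = \overline{-\Log\Phi_\B(-z)} + (\text{multiple of } 2\pi i)$ once one is careful about the branch of $\Log$; taking the real part annihilates the imaginary ambiguity, so $\Re\,\Log\Phi_\B(-\overline{z}) = -\Re\,\Log\Phi_\B(-z)$.

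Next I would apply the inversion relation for $\Phi_\B$, Proposition~\ref{prop:quant:dilog}~(1), namely $\Phi_\B(w)\Phi_\B(-w) = e^{i\frac{\pi}{12}(\B^2+\B^{-2})}e^{i\pi w^2}$, which yields $\Log\Phi_\B(w) + \Log\Phi_\B(-w) = i\pi w^2 + \text{const}$ (again up to a real constant that will not survive). This relates $\Re\,\Log\Phi_\B(-z)$ to $\Re\,\Log\Phi_\B(z)$ modulo the explicit term $\Re(i\pi z^2)$. In parallel, I would run the analogous computation for the dilogarithm piece: the inversion relation of Proposition~\ref{prop:dilog}~(1) for $\Li$, together with the elementary fact that $\overline{\Li(-e^{y})} = \Li(-e^{\overline{y}})$ (which holds because $\Li$ has real Taylor coefficients and $-e^y$ lies off the branch cut $[1,\infty)$ when $y\in\R+i(0,\pi)$), lets me express the reflected dilogarithm term $\frac{-i}{2\pi\B^2}\Li(-e^{-\overline{y}})$ in terms of $\frac{-i}{2\pi\B^2}\Li(-e^{y})$ and an explicit quadratic correction.

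The crux is then purely bookkeeping: after taking real parts, both the $\Phi_\B$ correction and the $\Li$ correction contribute quadratic-in-$y$ terms of the form $\Re(i\pi z^2)$ (with matching normalizations because $\Phi_\B(\frac{z}{2\pi\B})$ and $\exp(\frac{-i}{2\pi\B^2}\Li(-e^z))$ share the same leading semiclassical exponent by Proposition~\ref{prop:quant:dilog}~(3)), and these quadratic terms must cancel against each other in the \emph{difference} $\Log\Phi_\B - \frac{-i}{2\pi\B^2}\Li$. I would verify this cancellation by direct substitution, showing that the quadratic and constant discrepancies introduced by the two inversion formulas are identical and hence disappear when one subtracts the dilogarithm term from the $\Phi_\B$ term.

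The main obstacle I anticipate is controlling the branch-of-logarithm ambiguities consistently across both inversion relations: $\Log\Phi_\B$ is defined via the principal branch on a specific domain, and the inversion and unitarity identities each hold only up to additive multiples of $2\pi i$, so I must argue that on the connected band $\R+i(0,\pi)$ the relevant branch corrections are locally constant and purely imaginary, whence harmless after applying $\Re$. The cleanest way around this is to differentiate both sides in $y$, reduce the identity to one involving only $\Log(1+e^{\pm y})$ and the functional-equation derivative of $\Phi_\B$ (which are unambiguous), check the differentiated identity directly, and then fix the single integration constant by evaluating at a convenient point such as $y = i\pi/2$ where the symmetry $z \mapsto -\overline{z}$ fixes the imaginary axis and the two sides are manifestly equal.
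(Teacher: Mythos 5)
Your proposal is correct and follows essentially the same route as the paper's proof: combine the unitarity and inversion relations of $\Phi_\B$ with the inversion relation and real-analyticity of $\Li$, observe that the quadratic-in-$y$ corrections cancel in the difference, and dispose of the remaining purely imaginary terms by taking real parts. The one bookkeeping slip is your claim that the constant discrepancies from the two inversion formulas are identical and cancel --- they actually differ by $\tfrac{i\pi}{12}\B^2$ --- but since this residue is purely imaginary it is annihilated by $\Re$ exactly as the rest of your argument anticipates, so the proof goes through without the need for your fallback differentiation argument.
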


\begin{proof}
Let $\B \in (0,1)$ and $y \in \R + i(0,\pi)$.

From the fact that $\Li$ is real-analytic and Proposition \ref{prop:dilog} (1) applied to $z=-e^y$, we have 
\begin{align*}
\overline{\frac{-i}{2 \pi \B^2} \Li(-e^{-\overline{y}})} 
&= \frac{i}{2 \pi \B^2} \Li(-e^{-y}) \\
&= \frac{i}{2 \pi \B^2}\left ( -\Li(-e^y)
- \frac{\pi^2}{6} - \frac{y^2}{2}
\right ) \\
&= 
\frac{-i}{2 \pi \B^2} \Li(-e^{y})
 -
\frac{i \pi}{12 \B^2}
-
\frac{i y^2}{4 \pi \B^2}.
\end{align*}

Moreover, from Proposition \ref{prop:quant:dilog} (1) and (2), we have 
$$
\overline{\Phi_\B\left ( \frac{-\overline{y}}{2 \pi \B}  \right )} = \dfrac{1}{\Phi_\B\left ( \frac{-y}{2 \pi \B}  \right )} =
\Phi_\B\left ( \frac{y}{2 \pi \B}  \right ) 
\exp \left ( -i\frac{\pi}{12}(\B^2 + \B^{-2})\right ) \exp\left (-i \pi \left (\frac{y}{2 \pi \B}\right )^2\right ).
$$

Therefore
$$
\overline{
\Log\left ( \Phi_\B\left ( \frac{-\overline{y}}{2 \pi \B}  \right )\right )
-
\left (
\frac{-i}{2 \pi \B^2} \Li(-e^{-\overline{y}})
\right )
}
= 
\Log\left ( \Phi_\B\left ( \frac{y}{2 \pi \B}  \right )\right )
-
\left (
\frac{-i}{2 \pi \B^2} \Li(-e^y)
\right )
- \frac{i \pi}{12} \B^2,$$
and the statement follows.
\end{proof}

As a consequence, we can bound uniformly the difference between classical and quantum dilogarithms on compact horizontal bands above the horizontal axis.

\begin{lemma}\label{lem:unif:bound}
For all $\delta>0$, there exists a constant $B_{\delta}>0$ such that for all $\B \in (0,1)$ and all $y \in \R + i [\delta,\pi-\delta]$, 
$$
\left \vert
\Re\left (
\Log\left ( \Phi_\B\left ( \frac{y}{2 \pi \B}  \right )\right )
-
\left (
\frac{-i}{2 \pi \B^2} \Li(-e^y)
\right )
\right )
\right \vert \leqslant B_{\delta} \B^2
.$$
Moreover, $B_{\delta}$ {can be chosen} of the form $B_\delta = C/\delta + C'$ with $C,C'>0$.
\end{lemma}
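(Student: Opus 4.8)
The plan is to establish the uniform bound on the compact band $\R + i[\delta, \pi - \delta]$ by working from the integral representations of both $\Phi_\B$ and $\Li$, reducing the difference to a single explicit contour integral whose error can be controlled in $\B$. The key tool is Proposition \ref{prop:quant:dilog} (the integral form of $\Phi_\B$) together with Proposition \ref{prop:dilog} (2) (the integral form of $\Li$). After replacing the contour $\R + i0^+$ with $\Omega_R$ in both representations (legitimate by the Cauchy theorem, as noted just before Lemma \ref{lem:parity}), I expect the quantity
\[
\Log\!\left(\Phi_\B\!\left(\tfrac{y}{2\pi\B}\right)\right) - \frac{-i}{2\pi\B^2}\Li(-e^y)
\]
to reduce, after a rescaling of the integration variable, to an integral of the form $\int_{\Omega_R} g(w)\, h_\B(w)\, dw$ where $h_\B(w) \to 0$ as $\B \to 0^+$ pointwise, with $g$ carrying the $y$-dependence through a factor like $\exp(-i y w / \pi)$ or $\exp(-2izw)$. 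The standard comparison here is that $\frac{1}{\sinh(\B w)\sinh(\B^{-1}w)}$ differs from $\frac{1}{w^2/\ldots}$ by an amount that is $\mathcal{O}(\B^2)$ uniformly away from the origin, which is exactly what the half-circle $\Lambda_R$ is designed to avoid.

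\textbf{First I would} split the contour $\Omega_R = I_R^- \cup \Lambda_R \cup I_R^+$ and treat the compact part $\Lambda_R$ and the two tails $I_R^\pm$ separately. On $\Lambda_R$ the integrand is bounded uniformly (the denominators stay away from zero since $R \in (0,\pi)$ keeps us off the poles), and the $\B^2$ dependence comes out directly from Taylor-expanding $\sinh(\B w)\sinh(\B^{-1}w)w$ against its leading behaviour; this contributes a constant multiple of $\B^2$, with the constant controlled by $R$ and hence independent of $\delta$ in the relevant range. On the tails $I_R^\pm$, the factor $\exp(-i y w/\pi)$ with $\Im(y) \in [\delta, \pi-\delta]$ produces exponential decay $e^{-\Im(y) \cdot (\text{sign}) \cdot |w|/\pi}$, and here is where the $1/\delta$ in the stated constant $B_\delta = C/\delta + C'$ enters: integrating $e^{-\delta |w|/\pi}$ over $(R,\infty)$ gives a factor proportional to $1/\delta$. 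Taking real parts throughout, and using Lemma \ref{lem:parity} to fold the band $\R + i[\delta, \pi/2]$ onto $\R + i[\pi/2, \pi - \delta]$ (so that it suffices to control the lower half of the band, where the exponential decay is governed by $\delta$ rather than $\pi - \delta$), yields the claimed bound.

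\textbf{The hard part will be} making the tail estimate genuinely uniform in $\B \in (0,1)$ while extracting the clean $\B^2$ power. The subtlety is that $\frac{1}{\sinh(\B w)\sinh(\B^{-1}w)}$ is not simply a small perturbation of a $\B$-independent kernel across the whole tail: for $|w|$ large, $\sinh(\B^{-1}w)$ blows up and one must argue that this \emph{helps} (faster decay) rather than hurts, while for $|w|$ of moderate size one genuinely needs the $\mathcal{O}(\B^2)$ Taylor remainder of $\B w \coth(\B w) - 1$ or the analogous expansion. I would handle this by writing the difference of the two integrands as $\B^2$ times an explicit remainder function that is integrable against the exponential weight, checking that the implied constant can be bounded by $C/\delta + C'$ uniformly over the strip and over $\B \in (0,1)$. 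The role of Lemma \ref{lem:parity} is precisely to let me restrict attention to $\Im(y)$ near $\delta$ (the worst case for decay), so that a single $1/\delta$ factor suffices and no $1/(\pi-\delta)$ term is needed; this is why the stated form of $B_\delta$ has only the one singular term. Once the pointwise-in-$y$ bound is in hand, uniformity over the horizontal band follows because the controlling quantities ($R$, and the decay rate $\delta/\pi$) depend on $y$ only through $\Im(y) \geq \delta$.
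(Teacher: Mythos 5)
Your overall skeleton matches the paper's proof: both integral representations, the shift of contour to $\Omega_R = I_R^- \cup \Lambda_R \cup I_R^+$, the rescaling $v = w/\B$ so that the difference of kernels becomes $\B^2$ times a Taylor remainder of $z/\sinh(z)$ bounded uniformly on $\R \cup D_R$, and the $1/\delta$ arising from $\int_R^\infty e^{\Im(y)v/\pi}/\sinh(v)\,dv$ on the tail $I_R^+$ where $\Im(y) \leqslant \pi - \delta$. That much is sound and is essentially what the paper does.

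However, you have misread what Lemma \ref{lem:parity} does, and this leaves a genuine hole in your argument. The map in that lemma is $y \mapsto -\overline{y}$, which sends $x + id$ to $-x + id$: it \emph{preserves} the imaginary part and negates the \emph{real} part. It does not fold $\R + i[\delta,\pi/2]$ onto $\R + i[\pi/2,\pi-\delta]$, and it is not there to spare you a $1/(\pi-\delta)$ term (no such term ever threatens to appear: the tail $I_R^-$ is controlled by $\Im(y) \geqslant \delta > 0$ alone and contributes only a constant). Its actual and indispensable role is to reduce to $\Re(y) \leqslant 0$. This matters precisely on the half-circle $\Lambda_R$, where your claim that "the integrand is bounded uniformly (the denominators stay away from zero)" is false as stated: the modulus of $\exp(-iyv/\pi)$ is $\exp\left ((\Re(y)\Im(v) + \Im(y)\Re(v))/\pi\right )$, and since $\Im(v) > 0$ on the interior of $\Lambda_R$, this blows up as $\Re(y) \to +\infty$. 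The band $\R + i[\delta,\pi-\delta]$ has unbounded real part — the paper flags this as the main technical novelty compared to the compact-contour situation of the colored Jones polynomials — and without first invoking the parity relation to assume $\Re(y) \leqslant 0$, your estimate on $\Lambda_R$ does not close. With that correction (restrict to $\Re(y) \leqslant 0$ via Lemma \ref{lem:parity}, then bound $\Lambda_R$ using $\Re(y)\Im(v) \leqslant 0$ and $\Im(y)\Re(v) \leqslant (\pi-\delta)R$), the rest of your plan goes through as in the paper.
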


The proof of Lemma \ref{lem:unif:bound} is quite lengthy, but contains relatively classical calculus arguments. The key points are the fact that $\Im(y)$ is uniformly  upper bounded by a quantity \textit{strictly smaller} than $\pi$, and that we can restrict ourselves to $y \in (-\infty,0] + i [\delta,\pi-\delta]$ (thanks to Lemma \ref{lem:parity}) which implies that $\Re(y)$ is uniformly upper bounded by $0$. The necessity of this last remark stems from the fact that the state variable $y$ must be integrated on an contour with \textit{unbounded real part} in the definition of the Teichm\"uller TQFT, whereas the contour is usually bounded when studying the volume conjecture for the colored Jones polynomials. Compare with \cite[Lemma 3]{AH}. The parity trick of Lemma \ref{lem:parity} and its application to an unbounded contour are the main technical novelties compared with the methods of \cite{AH}.

\begin{proof}
Let $\delta >0$.
In the following proof, $y = x+ i d$ will denote a generic element in $(-\infty,0] + i [\delta,\pi-\delta]$, with $x \in (-\infty,0], d \in [\delta,\pi-\delta]$.
We  remark that we only need to prove the statement for $y \in (-\infty,0] + i [\delta,\pi-\delta]$, thanks to Lemma \ref{lem:parity}.

We first compute, for any $\B \in (0,1)$ and  $y \in \R + i [\delta,\pi-\delta]$:
\begin{align*}
\Log \ \Phi_\B \left ( \frac{y}{2 \pi \B}  \right ) 
&= \int_{w \in \Omega_{R \B}}
\dfrac{\exp\left (-i \frac{y w}{\pi \B}\right ) dw}{4 w\sinh(\B w) \sinh({\B}^{-1}w)} \\
&= \int_{v \in \Omega_{R}}
\dfrac{\exp\left (-i \frac{y v}{\pi}\right ) dv}{4 v\sinh(\B^2 v) \sinh(v)} \\
&= \dfrac{1}{\B^2} \int_{v \in \Omega_{R}}
\dfrac{\exp\left (-i \frac{y v}{\pi}\right )}{4 v^2 \sinh(v)}  \dfrac{(v \B^2)}{\sinh(v \B^2) } dv,
\end{align*}
where the first equality comes from the definition of $\Phi_\B$ (choosing the integration contour $\Omega_{R \B}$), the second one comes from the change of variables $ v=\frac{w}{\B}$ and the last one is a simple re-writing.

Next, we remark that there exists a constant $\sigma_R > 0$ such that $\vert (\frac{v}{\sinh(v)})'' \vert \leq \sigma_R$ for all $v \in \mathbb{R} \cup D_R$, where $D_R$ is the upper half disk of radius $R$.
Indeed, note first that $\sinh$ is nonzero everywhere on $\R \cup D_R$. Then a quick computation yields $\left (\frac{v}{\sinh(v)}\right )'' = \dfrac{v(1+\cosh(v)^2)-2 \sinh(v)\cosh(v)}{\sinh(v)^3}$, which is well-defined and continous on $\R \cup D$, has a limit of $-1/3$ at $v=0$ and has a zero limit in $v\in \R, v \to \pm \infty$. The boundedness on $\R \cup D_R$ follows.

Now, it follows from Taylor's theorem that for every $\B \in (0,1)$ and every $v \in \Omega_R$,
$$ \dfrac{(v \B^2)}{\sinh(v \B^2)} = 1 + (v \B^2)^2 \epsilon(v\B^2),$$
where $\epsilon(v\B^2) := \int_0^1 (1-t)\left (\frac{z}{\sinh(z)}\right )''(v \B^2 t) \ dt$. It then follows from the previous paragraph that $\vert \epsilon(v\B^2) \vert \leqslant \sigma_R$ for every $\B \in (0,1)$ and every $v \in \Omega_R$.

Recall from Proposition \ref{prop:dilog} (2) that for all $\B \in (0,1)$ and all $y \in \R + i [\delta,\pi-\delta]$,
$$\dfrac{1}{\B^2} \int_{v \in \Omega_{R}}
\dfrac{\exp\left (-i \frac{y v}{\pi}\right )}{4 v^2 \sinh(v)}   dv
= \frac{-i}{2 \pi \B^2} \Li(-e^y).
$$

Therefore we can write for all $\B \in (0,1)$ and all $y \in \R + i [\delta,\pi-\delta]$:
\begin{align*}
\Log\left ( \Phi_\B\left ( \frac{y}{2 \pi \B}  \right )\right )
-
\left (
\frac{-i}{2 \pi \B^2} \Li(-e^y)
\right ) 
&= \dfrac{1}{\B^2} \int_{v \in \Omega_{R}}
\dfrac{\exp\left (-i \frac{y v}{\pi}\right )}{4 v^2 \sinh(v)}  \left (\dfrac{(v \B^2)}{\sinh(v \B^2) }-1\right ) dv \\
&= \dfrac{1}{\B^2} \int_{v \in \Omega_{R}}
\dfrac{\exp\left (-i \frac{y v}{\pi}\right )}{4 v^2 \sinh(v)}  (v \B^2)^2 \epsilon(v\B^2) dv \\
&= \B^2 \int_{v \in \Omega_{R}}
\epsilon(v\B^2) \dfrac{\exp\left (-i \frac{y v}{\pi}\right )}{4  \sinh(v)}   dv.
\end{align*}

Now it suffices to prove that the quantity $$\Re\left ( \int_{v \in \Omega_{R}}
\epsilon(v\B^2) \dfrac{\exp\left (-i \frac{y v}{\pi}\right )}{4  \sinh(v)}   dv\right )$$ is uniformly bounded on $y \in (-\infty,0] + i [\delta,\pi-\delta], \B \in (0,1)$. We will split this integral into three parts and prove that each part is uniformly bounded in this way.

Firstly, on the contour $I^+_R$, we have for all $\B \in (0,1)$ and all $y \in \R + i [\delta,\pi-\delta]$:
\begin{align*}
\left \vert 
\Re\left ( 
\int_{v \in I^+_{R}}
\epsilon(v\B^2) 
\dfrac{\exp\left (-i \frac{y v}{\pi}\right )}
{4  \sinh(v)}   dv
\right )
\right \vert 
&\leqslant
\left \vert  
\int_{v \in I^+_{R}}
\epsilon(v\B^2) 
\dfrac{\exp\left (-i \frac{y v}{\pi}\right )}
{4  \sinh(v)}   dv 
\right \vert \\
&\leqslant
\int_{R}^\infty
\vert \epsilon(v\B^2) \vert 
\dfrac{\left \vert 
\exp\left (-i \frac{y v}{\pi}\right )
\right \vert}{4  \sinh(v)}   dv \\
&\leqslant \frac{\sigma_R}{4}
\int_{R}^\infty
 \dfrac{\exp\left (
 \frac{\Im(y) v}{\pi}
 \right )}{\sinh(v)}   dv \\
 &\leqslant \frac{\sigma_R}{4}
\int_{R}^\infty
 \dfrac{\exp\left (
 \frac{(\pi-\delta) v}{\pi}
 \right )}{ \frac{1-e^{-2R}}{2} e^v }   dv \\
 &= \dfrac{\pi \sigma_R e^{-\frac{\delta R}{\pi}}}{2 \delta (1-e^{-2R})},
\end{align*}
where in the last inequality we used the fact that 
$\frac{1-e^{-2R}}{2} e^v \leqslant \sinh(v)$ for all $v\geqslant R$.

Secondly, on the contour $I^-_R$, we have similarly for all $\B \in (0,1)$ and all $y \in \R + i [\delta,\pi-\delta]$:
\begin{align*}
\left \vert 
\Re\left ( 
\int_{v \in I^-_{R}}
\epsilon(v\B^2) 
\dfrac{\exp\left (-i \frac{y v}{\pi}\right )}
{4  \sinh(v)}   dv
\right )
\right \vert 
&\leqslant
\left \vert  
\int_{v \in I^-_{R}}
\epsilon(v\B^2) 
\dfrac{\exp\left (-i \frac{y v}{\pi}\right )}
{4  \sinh(v)}   dv 
\right \vert \\
&\leqslant
\int_{-\infty}^{-R}
\vert \epsilon(v\B^2) \vert 
\dfrac{\left \vert 
\exp\left (-i \frac{y v}{\pi}\right )
\right \vert}{4  \vert \sinh(v) \vert}   dv \\
&=
\int_{R}^{\infty}
\vert \epsilon(-v\B^2) \vert 
\dfrac{\left \vert 
\exp\left (i \frac{y v}{\pi}\right )
\right \vert}{4  \sinh(v) }   dv \\
&\leqslant \frac{\sigma_R}{4}
\int_{R}^\infty
 \dfrac{\exp\left (
 \frac{- \Im(y) v}{\pi}
 \right )}{\sinh(v)}   dv \\
 &\leqslant \frac{\sigma_R}{4}
\int_{R}^\infty
 \dfrac{1}{ \frac{1-e^{-2R}}{2} e^v }   dv \\
 &= \dfrac{\sigma_R e^{-R}}{2 (1-e^{-2R})} = \dfrac{\sigma_R}{4 \sinh(R)}.
\end{align*}

Finally, to obtain the bound on the contour $\Lambda_R$, we will need the assumption that $y \in (-\infty,0] + i [\delta,\pi-\delta]$, since the upper bound will depend on $\Re(y)$. Moreover, we will use the fact that since $\vert \sinh \vert$ is a continous nonzero function on the contour $\Lambda_R$, it is lower bounded by a constant $s_R>0$ on this countour.
We then obtain, for all $\B \in (0,1)$ and all $y \in (-\infty,0] + i [\delta,\pi-\delta]$:
\begin{align*}
\left \vert 
\Re\left ( 
\int_{v \in \Lambda_R}
\epsilon(v\B^2) 
\dfrac{\exp\left (-i \frac{y v}{\pi}\right )}
{4  \sinh(v)}   dv
\right )
\right \vert 
&\leqslant
\left \vert  
\int_{v \in \Lambda_R}
\epsilon(v\B^2) 
\dfrac{\exp\left (-i \frac{y v}{\pi}\right )}
{4  \sinh(v)}   dv 
\right \vert \\
&\leqslant
\int_{v \in \Lambda_R}
\vert \epsilon(v\B^2) \vert 
\dfrac{\left \vert 
\exp\left (-i \frac{y v}{\pi}\right )
\right \vert}{4  \vert \sinh(v) \vert}   dv \\
&\leqslant \frac{\sigma_R}{4 s_R}
\int_{v \in \Lambda_R}
\exp\left (
\Re\left (
-i \frac{y v}{\pi}
\right )
 \right )   dv \\
 &= \frac{\sigma_R}{4 s_R}
\int_{v \in \Lambda_R}
\exp\left (
\frac{\Re(y) \Im(v) + \Im(y) \Re(v)}{\pi}
 \right )   dv \\
 &\leqslant \frac{\sigma_R}{4 s_R} (\pi R) \exp\left (
\dfrac{0 + (\pi-\delta) R}{\pi} 
 \right )
 \leqslant 
 \dfrac{\sigma_R \pi R e^R}{4 s_R},
\end{align*}
where the fourth inequality is due to the fact that $\Re(y) \leqslant 0$, $\Im(v) \geqslant 0$, $ 0 < \Im(y) \leqslant \pi - \delta$ and $ \Re(v) \leqslant R$.

The lemma follows, by taking for example the constant
$$ B_\delta:= 
\dfrac{\pi \sigma_R e^{-\frac{\delta R}{\pi}}}{2 \delta (1-e^{-2R})} + \dfrac{\sigma_R}{4 \sinh(R)} + \dfrac{\sigma_R \pi R e^R}{4 s_R}
.$$
\end{proof}

The following lemma is simply a variant of Lemma \ref{lem:unif:bound} for compact horizontal bands with negative imaginary part.

\begin{lemma}\label{lem:unif:bound:neg}
For all $\delta>0$, there exists a constant $B_{\delta}>0$ (the same as in Lemma \ref{lem:unif:bound}) such that for all $\B \in (0,1)$ and all $y \in \R - i [\delta,\pi-\delta]$, 
$$
\left \vert
\Re\left (
\Log\left ( \Phi_\B\left ( \frac{y}{2 \pi \B}  \right )\right )
-
\left (
\frac{-i}{2 \pi \B^2} \Li(-e^y)
\right )
\right )
\right \vert \leqslant B_{\delta} \B^2
.$$
\end{lemma}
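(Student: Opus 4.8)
The plan is to deduce this lemma from Lemma~\ref{lem:unif:bound} by a complex-conjugation symmetry, of exactly the type already exploited in the proof of Lemma~\ref{lem:parity}. Fix $\delta>0$, $\B\in(0,1)$, and $y\in\R-i[\delta,\pi-\delta]$; then $\overline{y}\in\R+i[\delta,\pi-\delta]$ lies in the upper band to which Lemma~\ref{lem:unif:bound} applies. I abbreviate
$$G_\B(y):=\Re\left(\Log\left(\Phi_\B\left(\tfrac{y}{2\pi\B}\right)\right)-\frac{-i}{2\pi\B^2}\Li(-e^y)\right),$$
and the claim I would establish is that $G_\B(y)=-G_\B(\overline{y})$. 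Granting this, the lemma follows at once with the \emph{same} constant $B_\delta$, since $|G_\B(y)|=|G_\B(\overline{y})|\leqslant B_\delta\B^2$ by Lemma~\ref{lem:unif:bound}.

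First I would treat the quantum dilogarithm term. As $\B$ is real, $\overline{y/(2\pi\B)}=\overline{y}/(2\pi\B)$, and the unitarity relation of Proposition~\ref{prop:quant:dilog}~(2) reads $\overline{\Phi_\B(z)}=1/\Phi_\B(\overline{z})$; taking moduli gives $|\Phi_\B(y/(2\pi\B))|\,|\Phi_\B(\overline{y}/(2\pi\B))|=1$. Since $\Re\Log=\log|\cdot|$, this yields
$$\Re\,\Log\left(\Phi_\B\left(\tfrac{y}{2\pi\B}\right)\right)=-\Re\,\Log\left(\Phi_\B\left(\tfrac{\overline{y}}{2\pi\B}\right)\right).$$
Next I would treat the classical dilogarithm term. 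Because $-e^{\overline{y}}=\overline{-e^{y}}$ and $\Li$ is real-analytic on $\C\setminus[1,\infty)$, one has $\Li(-e^{\overline{y}})=\overline{\Li(-e^{y})}$, hence $\Im\Li(-e^{\overline{y}})=-\Im\Li(-e^{y})$. As $\Re\big(\tfrac{-i}{2\pi\B^2}\Li(w)\big)=\tfrac{1}{2\pi\B^2}\Im\Li(w)$, this term also changes sign under $y\mapsto\overline{y}$. Combining the two displays gives $G_\B(y)=-G_\B(\overline{y})$, as claimed.

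Along the way I would record the harmless domain checks. Writing $y=x-id$ with $d\in[\delta,\pi-\delta]$, one computes $-e^{y}=e^{x}e^{i(\pi-d)}$ and $-e^{\overline{y}}=e^{x}e^{i(\pi+d)}$, whose arguments $\pi-d\in(0,\pi)$ and $\pi+d\in(\pi,2\pi)$ keep both points off the cut $[1,\infty)$, so that $\Li$ and the conjugation identity are legitimate. Similarly, $\Im(y/(2\pi\B))$ has modulus $d/(2\pi\B)<1/(2\B)\leqslant(\B+\B^{-1})/2=1/(2\sqrt{\hbar})$, so both $y/(2\pi\B)$ and $\overline{y}/(2\pi\B)$ lie in the horizontal strip on which the unitarity relation is valid. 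There is no genuine obstacle here: the statement is a mirror image of Lemma~\ref{lem:unif:bound}, and the only point needing care is the bookkeeping of branch cuts of $\Log$, which is exactly why I would phrase everything through $\Re\Log=\log|\cdot|$ and the modulus identity rather than through an equality of complex logarithms.
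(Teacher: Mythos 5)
Your proposal is correct and follows exactly the paper's own (one-line) proof: the paper also deduces the lemma from Lemma \ref{lem:unif:bound} via the real-analyticity identity $\Li(\overline{\,\cdot\,})=\overline{\Li(\cdot)}$ and the unitarity relation of Proposition \ref{prop:quant:dilog} (2), which together give the sign flip $G_\B(y)=-G_\B(\overline{y})$ you establish. You merely make explicit the modulus identity $|\Phi_\B(z)|\,|\Phi_\B(\overline{z})|=1$ and the branch-cut and strip checks that the paper leaves implicit.
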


\begin{proof}
The result follows immediately from the fact that $\Li (\overline{\cdot}) = \overline{\Li(\cdot)}$, Proposition \ref{prop:quant:dilog} (2) and Lemma \ref{lem:unif:bound}.
\end{proof}

The following Proposition \ref{prop:all:contour:Sb} will not actually be used in the proof of Theorem \ref{thm:vol:conj}, but  fits naturally in the current discussion.

\begin{proposition}\label{prop:all:contour:Sb}
For some constant $\rho' \in \C^*$, we have, as $\B \to 0^+$,
\begin{align*}
\int_{\mathcal{Y}^0} 
d\mathbf{y} e^{\frac{1}{2 \pi \B^2}S_\B(\mathbf{y})}
&=
\int_{\mathcal{Y}^0} 
d\mathbf{y} \
e^{\frac
{
i \mathbf{y}^{\!\top} Q_n \mathbf{y}  + 
  \mathbf{y}^{\!\top} \mathcal{W}_n
}
 {2 \pi \B^2} 
}
\dfrac{
\Phi_\B\left ( \frac{y_U}{2 \pi \B}  \right )^2
\Phi_\B\left ( \frac{y_W}{2 \pi \B} \right )
}{
\Phi_\B\left (\frac{y_1}{2 \pi \B}\right )
\cdots 
\Phi_\B\left (\frac{y_p}{2 \pi \B}\right )
} \\
&=  
e^{\frac{1}{2 \pi \B^2} S(\mathbf{y^0})}
\left (
\rho' \B^{p+2} \left (
1 + 
o_{\B \to 0^+}\left (1\right )
\right )
+ O_{\B \to 0^+}(1)
 \right ).
\end{align*}
In particular, 
$$2 \pi \B^2 \log \left \vert
\int_{\mathcal{Y}^0} d\mathbf{y} \ e^{\frac{1}{2 \pi \B^2}S_\B(\mathbf{y})}
\right \vert \underset{\B \to 0^+}{\longrightarrow}  \Re S(\mathbf{y^0}) = - \mathrm{Vol}(S^3 \setminus K_n).
$$
\end{proposition}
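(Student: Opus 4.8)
The plan is to compare the quantum integral with its classical counterpart $\int_{\mathcal{Y}^0} e^{\lambda S(\mathbf{y})}\,d\mathbf{y}$ analysed in Proposition \ref{prop:all:contour:S}, taking as large parameter $\lambda = \frac{1}{2\pi\B^2}$, which tends to $+\infty$ as $\B \to 0^+$. Writing the quantum integrand as the classical one times the correction factor $G_\B(\mathbf{y}) := \exp\!\left(\frac{1}{2\pi\B^2}\big(S_\B(\mathbf{y})-S(\mathbf{y})\big)\right)$, the semi-classical limit of Proposition \ref{prop:quant:dilog} (3) shows that $S_\B \to S$ pointwise; the whole difficulty is to make this quantitative on the \emph{non-compact} contour $\mathcal{Y}^0$.

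First I would record a uniform control of the modulus of $G_\B$. Since the imaginary parts $d^0_k$ of the complete structure are strictly interior to their intervals, there is a $\delta>0$ with $\mathcal{Y}^0 \subset \mathcal{U}_{\delta}$. Grouping $S_\B - S$ by tetrahedron and applying Lemma \ref{lem:unif:bound} to the terms attached to $U$ and $W$ (imaginary part in $[\delta,\pi-\delta]$) and Lemma \ref{lem:unif:bound:neg} to the terms attached to $T_1,\dots,T_p$ (imaginary part in $[-\pi+\delta,-\delta]$), one obtains $\big|\Re(S_\B(\mathbf{y})-S(\mathbf{y}))\big| \leqslant C\B^4$ uniformly in $\mathbf{y}\in\mathcal{Y}^0$ and $\B\in(0,1)$. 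Hence $|G_\B(\mathbf{y})| = e^{\frac{1}{2\pi\B^2}\Re(S_\B-S)} \in [e^{-C'\B^2},e^{C'\B^2}]$, which tends to $1$ uniformly. This is exactly the estimate reducing the modulus of the quantum integrand to that of the classical one on all of $\mathcal{Y}^0$.

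Next I would split $\mathcal{Y}^0 = \gamma \sqcup (\mathcal{Y}^0\setminus\gamma)$, with $\gamma$ the compact ball of Section \ref{sub:asym:Y0}. On the core $\gamma$ the convergence $G_\B \to 1$ is uniform \emph{including the phase} (Proposition \ref{prop:quant:dilog} (3) upgrading to a uniform statement on the compact set $\gamma$), so $\int_\gamma e^{\lambda S_\B} = (1+o(1))\int_\gamma e^{\lambda S}$, and the saddle point method exactly as in Proposition \ref{prop:compact:contour:S:SPM} — with simple critical point $\mathbf{y^0}$ (Lemma \ref{lem:grad:thurston}) and non-degenerate Hessian (Lemma \ref{lem:hess}) — produces the leading term $\rho'\,\B^{p+2}(1+o(1))\,e^{\frac{1}{2\pi\B^2}S(\mathbf{y^0})}$, with $\rho'=\rho\,(2\pi)^{(p+2)/2}$. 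On the tail, the uniform modulus bound above together with the argument of Lemma \ref{lem:unbounded:contour} (which in fact controls $\int e^{\lambda\Re S}$) gives a contribution of modulus $\mathcal{O}(e^{\lambda M})$, where $M=\max_{\partial\gamma}\Re S < \Re S(\mathbf{y^0})$ by Lemmas \ref{lem:concave} and \ref{lem:maximum}; relative to $e^{\frac{1}{2\pi\B^2}S(\mathbf{y^0})}$ this is $\mathcal{O}(1)$. Adding the two pieces yields the first displayed equality.

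Finally, the logarithmic limit follows by squeezing: for $\B$ small the core term dominates the tail, so $\frac{|\rho'|}{2}\B^{p+2}e^{\lambda\Re S(\mathbf{y^0})} \leqslant \big|\int_{\mathcal{Y}^0} e^{\lambda S_\B}\big| \leqslant C\,e^{\lambda\Re S(\mathbf{y^0})}$; multiplying the logarithm by $2\pi\B^2$ and using $\B^2\log\B \to 0$ gives $2\pi\B^2\log\big|\int_{\mathcal{Y}^0} e^{\lambda S_\B}\big| \to \Re S(\mathbf{y^0})$, which equals $-\Vol(S^3\setminus K_n)$ by Lemma \ref{lem:-vol}. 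The main obstacle is the one already flagged: because the contour is unbounded, the quantum--classical comparison can be made uniform only \emph{in modulus} far from the saddle — this is the whole purpose of the parity trick behind Lemmas \ref{lem:unif:bound} and \ref{lem:unif:bound:neg} — so the phase of the integrand is controlled only on the compact core $\gamma$, and the argument is arranged so that this suffices.
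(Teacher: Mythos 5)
Your proposal follows the same architecture as the paper's proof: the same uniform real-part bound $\vert\Re(S_\B-S)\vert\leqslant C\B^4$ on all of $\mathcal{Y}^0$ via Lemmas \ref{lem:unif:bound} and \ref{lem:unif:bound:neg}, the same splitting $\mathcal{Y}^0=\gamma\sqcup(\mathcal{Y}^0\setminus\gamma)$, the same use of Proposition \ref{prop:compact:contour:S:SPM} on the core and of Lemma \ref{lem:unbounded:contour} (through its control of $\int e^{\lambda\Re S}$) on the tail. The one genuine divergence is on the compact core. The paper uses \emph{only} the real-part bound there, estimates $\vert e^{\lambda(S_\B-S)}-1\vert=\mathcal{O}(1)$, and accepts an additive error $\mathcal{O}(1)\,e^{\lambda S(\mathbf{y^0})}$ -- which is why the stated asymptotic carries the term $+\,\mathcal{O}_{\B\to 0^+}(1)$ at all. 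You instead claim full (phase-included) uniform convergence $G_\B\to 1$ on $\gamma$ and deduce the multiplicative form $\int_\gamma e^{\lambda S_\B}=(1+o(1))\int_\gamma e^{\lambda S}$. That is a strictly stronger conclusion, and it is what makes your final squeeze (the lower bound $\tfrac{\vert\rho'\vert}{2}\B^{p+2}e^{\lambda\Re S(\mathbf{y^0})}$) actually rigorous, whereas the paper's deduction of the logarithmic limit from an expression containing an uncontrolled additive $\mathcal{O}(1)$ is looser.

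The weak point is that you justify the phase-included statement by ``upgrading'' Proposition \ref{prop:quant:dilog} (3), which as stated is pointwise in $z$; the upgrade to uniformity on a compact subset of the strip is true but is not free and deserves an argument. It can be extracted from the proof of Lemma \ref{lem:unif:bound}: the estimates there actually bound the modulus of the full complex difference $\Log\Phi_\B(y/2\pi\B)-\frac{-i}{2\pi\B^2}\Li(-e^y)$ by $B\B^2$ (the passage to real parts is only used to invoke the parity trick of Lemma \ref{lem:parity}), and the sole step requiring $\Re(y)\leqslant 0$ is the bound on the arc $\Lambda_R$, which is harmless once $\Re(y)$ ranges over a compact set. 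You should also make explicit that passing from the uniform pointwise error $\mathcal{O}(\B^2)$ to a relative error for the integral uses $\int_\gamma e^{\lambda\Re S}=\mathcal{O}(\lambda^{-(p+2)/2}e^{\lambda\Re S(\mathbf{y^0})})$ (Laplace's method for $\Re S$, using the negative-definite real Hessian from Lemma \ref{lem:concave}); without that factor the error would swamp the main term. With these two points supplied, your proof is complete; alternatively, falling back on the paper's real-part-only estimate on $\gamma$ already yields the proposition exactly as stated.
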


\begin{proof}
The second statement follows from the first one from the fact that the behaviour of
$$\left (
\rho' \B^{p+2} \left (
1 + 
o_{\B \to 0^+}\left (1\right )
\right )
+ O_{\B \to 0^+}(1)
 \right )$$
 is polynomial in $\B$ as $\B \to 0^+$.
 
 To prove the first statement, we will split the integral on $\mathcal{Y}^0$ into two parts, one on the compact contour $\gamma$ from before and the other on the unbounded contour $\mathcal{Y}^0\setminus \gamma$.
 
First we notice that there exists a $\delta>0$ such that for all $\mathbf{y}=(y_1, \ldots,y_p,y_U,y_W)$ in $\mathcal{Y}^0$, $\Im(y_1), \ldots \Im(y_p) \in [-(\pi-\delta),-\delta]$ and $\Im(y_U),\Im(y_W) \in [\delta,\pi-\delta]$. From Lemmas \ref{lem:unif:bound} and  \ref{lem:unif:bound:neg}, if we denote $(\zeta_1, \ldots, \zeta_p,\zeta_U,\zeta_W) := (-1, \ldots,-1,2,1)$, it then follows that:
\begin{align*}
\left \vert
\Re\left (
\frac{1}{2 \pi \B^2}S_\B(\mathbf{y}) - \frac{1}{2 \pi \B^2}S(\mathbf{y})
\right )
\right \vert 
&=
\left \vert
\Re\left (
\sum_{j=1}^W
{\zeta}_j \left (\Log\left ( \Phi_\B\left ( \frac{y_j}{2 \pi \B}  \right )\right )
-
\left (
\frac{-i}{2 \pi \B^2} \Li(-e^{y_j})
\right )
\right )
\right )
\right \vert \\
&\leqslant
\sum_{j=1}^W \vert {\zeta}_j \vert
\left \vert
\Re\left (
 \left (\Log\left ( \Phi_\B\left ( \frac{y_j}{2 \pi \B}  \right )\right )
-
\left (
\frac{-i}{2 \pi \B^2} \Li(-e^{y_j})
\right )
\right )
\right )
\right \vert \\
& \leqslant (p+3)B_{\delta} \B^2.
\end{align*}

Let us now focus on the compact contour $\gamma$ and prove that
$$
\int_{\gamma} 
d\mathbf{y} \ e^{\frac{1}{2 \pi \B^2}S_\B(\mathbf{y})}
=  
e^{\frac{1}{2 \pi \B^2} S(\mathbf{y^0})}
\left (
\rho' \B^{p+2} \left (
1 + 
o_{\B \to 0^+}\left (1\right )
\right )
+ {O}_{\B \to 0^+}(1)
 \right ).$$
 From Proposition \ref{prop:compact:contour:S:SPM}, by identifying $\lambda = \frac{1}{2\pi \B^2}$ and $\rho' := \rho (2\pi)^{\frac{p+2}{2}}$ it suffices to prove that
 $$
 \int_{\gamma} 
d\mathbf{y} \ e^{\frac{1}{2 \pi \B^2}S(\mathbf{y})}
\left (
e^{\frac{1}{2 \pi \B^2}(S_\B(\mathbf{y})-S(\mathbf{y}))} -1
\right )
=  
e^{\frac{1}{2 \pi \B^2} S(\mathbf{y^0})}
{O}_{\B \to 0^+}(1)
.$$
This last equality follows from the upper bound $(p+3)B_{\delta} \B^2$ of the previous paragraph, the compactness of $\gamma$, and Lemma \ref{lem:maximum}.

Finally, let us prove that on the unbounded contour, we have
$$
\int_{\mathcal{Y}^0 \setminus \gamma} 
d\mathbf{y} \ e^{\frac{1}{2 \pi \B^2}S_\B(\mathbf{y})}
=  
e^{\frac{1}{2 \pi \B^2} S(\mathbf{y^0})}
 {O}_{\B \to 0^+}(1).$$
Let $A,B$ be the constants from Lemma \ref{lem:unbounded:contour}.
From the proof of Lemma \ref{lem:unbounded:contour}, we have that for all $\B < (2 \pi A)^{-1/2}$:
$$ \int_{\mathcal{Y}^0\setminus \gamma} d\mathbf{y} \ e^{\frac{1}{2 \pi \B^2} \Re(S) (\mathbf{y})} \leqslant B e^{\frac{1}{2 \pi \B^2} M}.$$
Moreover, for all $\B \in (0,1)$ and $\mathbf{y} \in \mathcal{Y}^0 \setminus \gamma$, we have
$e^{\frac{1}{2 \pi \B^2}\Re\left (S_\B(\mathbf{y})- S(\mathbf{y})\right )} \leqslant e^{(p+3)B_{\delta} \B^2}.$\\
 Let us denote $\upsilon := \frac{\Re(S)(\mathbf{y^0})- M}{2}$.
Thus, for all  $b>0$ smaller than both $(2 \pi A)^{-1/2}$ and $\left ( \dfrac{\upsilon}{2 \pi (p+3) B_{\delta}}
\right )^{1/4}$, we have:
\begin{align*}
\left  \vert
\int_{\mathcal{Y}^0 \setminus \gamma} 
d\mathbf{y} \ e^{\frac{1}{2 \pi \B^2}S_\B(\mathbf{y})}
\right \vert
&= 
\left  \vert
\int_{\mathcal{Y}^0 \setminus \gamma} 
d\mathbf{y} \ e^{\frac{1}{2 \pi \B^2}S(\mathbf{y})}
e^{\frac{1}{2 \pi \B^2}(S_\B(\mathbf{y})-S(\mathbf{y}))}
\right \vert \\
& \leqslant
\int_{\mathcal{Y}^0 \setminus \gamma} 
d\mathbf{y} \ e^{\frac{1}{2 \pi \B^2}\Re(S)(\mathbf{y})}
e^{\frac{1}{2 \pi \B^2}\Re(S_\B(\mathbf{y})-S(\mathbf{y}))} \\
& \leqslant
 B e^{\frac{1}{2 \pi \B^2} M} e^{(p+3)B_{\delta} \B^2} \leqslant B e^{\frac{1}{2 \pi \B^2} (M+\upsilon)} \\
 &=  
e^{\frac{1}{2 \pi \B^2} S(\mathbf{y^0})}
 {O}_{\B \to 0^+}(1),
\end{align*}
which concludes the proof.
\end{proof}

\subsection{Going from $\B$ to $\hbar$}\label{sub:asym:hbar}

Recall that for every $\B>0$, we associate a corresponding parameter $\hbar := \B^2 (1+\B^2)^{-2} >0$.

For $\B>0$, we define a new potential function 
$S'_{\B}\colon \mathcal{U} \to \C$, a holomorphic  function on $p+2$ complex variables, by:
$$S'_{\B}(\mathbf{y}) =
i \mathbf{y}^{\!\top} Q_n \mathbf{y} +  
  \mathbf{y}^{\!\top} \mathcal{W}_n
+ 2 \pi \hbar \ \Log\left (
\dfrac{
\Phi_\B\left ( \frac{y_U}{2 \pi \sqrt{\hbar}}  \right )^2
\Phi_\B\left ( \frac{y_W}{2 \pi \sqrt{\hbar}} \right )
}{
\Phi_\B\left (\frac{y_1}{2 \pi \sqrt{\hbar}}\right )
\cdots 
\Phi_\B\left (\frac{y_p}{2 \pi \sqrt{\hbar}}\right )
},
\right ) 
   $$
   where $Q_n$ and $\mathcal{W}_n$ are like in Theorem \ref{thm:part:func}.
   
\begin{remark}\label{rem:J':S'b}  
   Notice that 
   $$\vert \mathfrak{J}_{X_n}(\hbar,0) \vert =\left \vert \left (\dfrac{1}{2\pi \sqrt{\hbar}}\right )^{p+3} \int_{\mathcal{Y}^0} d \mathbf{y} \ e^{\frac{1}{2\pi \hbar} S'_{\B}(\mathbf{y})} \right \vert.$$
   Indeed, this follows from taking $\tau=\tau^0$ in Theorem \ref{thm:part:func:Htrig:odd}, where $\tau^0$ is defined at the end of the proof of Lemma \ref{lem:-vol}.
   \end{remark}

The following Lemma \ref{lem:unif:bound:hbar} will play a similar role as Lemmas \ref{lem:unif:bound} and \ref{lem:unif:bound:neg}, but its proof is fortunately shorter.

\begin{lemma}\label{lem:unif:bound:hbar}
For all $\delta \in (0,\frac{\pi}{2})$, there exists constants $c_{\delta}, C_{\delta}>0$ such that for all  $\B \in (0,c_{\delta})$ and all $y \in \R+i \left ([-(\pi-\delta),-\delta]\cup[\delta,\pi-\delta]\right )$,  we have:
$$
\left \vert
\Re\left (
\left (
\frac{-i}{2 \pi \B^2} \Li\left (-e^{y(1+\B^2)}\right )
\right )
-
\left (
\frac{-i}{2 \pi \B^2} (1+\B^2)^2 \Li(-e^y)
\right )
\right )
\right \vert \leqslant C_{\delta}
.$$
\end{lemma}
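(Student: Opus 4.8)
The plan is to set $\epsilon := \B^2$ and study the holomorphic function
$$h(y,\epsilon) := \Li\!\left(-e^{y(1+\epsilon)}\right) - (1+\epsilon)^2\Li\!\left(-e^y\right),$$
so that the quantity to be controlled is exactly $\Re\big(\tfrac{-i}{2\pi\epsilon}h(y,\epsilon)\big)$ (recall that, since $\hbar=\B^2(1+\B^2)^{-2}$, one has $\tfrac{-i}{2\pi\B^2}(1+\B^2)^2\Li(-e^y)=\tfrac{-i}{2\pi\hbar}\Li(-e^y)$, which is why this combination is the natural object to compare with $\Log\Phi_\B(\tfrac{y}{2\pi\sqrt{\hbar}})$). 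Since $h(y,0)=0$, I would write $\tfrac{h(y,\epsilon)}{\epsilon}=\int_0^1 \partial_\epsilon h(y,s\epsilon)\,ds$, so that it suffices to bound $\partial_\epsilon h$ in modulus, uniformly over the band and over $\epsilon$ small; this gives a bound on the full modulus $\big|\tfrac{-i}{2\pi\epsilon}h\big|$, and the real-part statement follows a fortiori. By the symmetry $\Li(\overline{z})=\overline{\Li(z)}$ (as in Lemma \ref{lem:unif:bound:neg}) one has $h(\overline{y},\epsilon)=\overline{h(y,\epsilon)}$, so $|h|$ is invariant under $\Im(y)\mapsto-\Im(y)$ and it is enough to treat the upper band $y\in\R+i[\delta,\pi-\delta]$. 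Finally I would fix $c_\delta\in(0,1)$ small enough that $(\pi-\delta)(1+c_\delta^2)<\pi$; this keeps $\Im\big(y(1+\epsilon)\big)\in(0,\pi)$ throughout, which keeps $-e^{y(1+\epsilon)}$ off the cut $[1,\infty)$ and validates all the branch manipulations below.

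A direct differentiation gives $\partial_\epsilon h(y,\epsilon)=-y\,\Log\!\big(1+e^{y(1+\epsilon)}\big)-2(1+\epsilon)\Li(-e^y)$, using $\tfrac{d}{du}\Li(-e^u)=-\Log(1+e^u)$. On the region $\Re(y)\le R_0$ (for any fixed threshold $R_0$) I expect a straightforward bound: when $\Re(y)$ is bounded above, $-e^y$ and $-e^{y(1+\epsilon)}$ stay in a fixed compact subset of $\C\setminus[1,\infty)$, so $\Li(-e^y)$ and $\Log(1+e^{y(1+\epsilon)})$ are bounded there; and as $\Re(y)\to-\infty$ both $\Log(1+e^{y(1+\epsilon)})\sim e^{y(1+\epsilon)}$ and $\Li(-e^y)\sim -e^y$ decay exponentially, which dominates the single algebraic factor $y$. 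Hence $\partial_\epsilon h$ is uniformly bounded on $\{\Re(y)\le R_0\}\cap(\R+i[\delta,\pi-\delta])$ for $\epsilon\in(0,c_\delta^2)$.

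The delicate regime is $\Re(y)\ge R_0$, where each of $\Log(1+e^{y(1+\epsilon)})$ and $\Li(-e^y)$ individually grows (like $y$ and like $-y^2/2$ respectively). Here the plan is to apply the inversion relation of Proposition \ref{prop:dilog} (1) to $z=-e^y$, namely $\Li(-e^y)=-\Li(-e^{-y})-\tfrac{\pi^2}{6}-\tfrac{y^2}{2}$, together with the factorisation $\Log(1+e^{y(1+\epsilon)})=y(1+\epsilon)+\Log(1+e^{-y(1+\epsilon)})$ (valid precisely because $\Im(y(1+\epsilon))\in(0,\pi)$, so that no $2\pi i$ correction arises). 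Substituting both identities into $\partial_\epsilon h$, the $y^2$-terms cancel exactly and one is left with
$$\partial_\epsilon h(y,\epsilon)=-y\,\Log\!\big(1+e^{-y(1+\epsilon)}\big)+(1+\epsilon)\tfrac{\pi^2}{3}+2(1+\epsilon)\Li(-e^{-y}).$$
For $\Re(y)\ge R_0$ with $\Im(y)$ bounded, both $\Log(1+e^{-y(1+\epsilon)})$ and $\Li(-e^{-y})$ are exponentially small, so $|y|\,e^{-\Re(y)}$ controls the first term and the whole expression is uniformly bounded. This exact cancellation of the quadratically growing terms is the crux of the argument and the main obstacle, since naive term-by-term estimates diverge like $y^2$ on the unbounded contour.

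Combining the two regions yields a uniform bound $|\partial_\epsilon h|\le M_\delta$ on the whole upper band for $\epsilon\in(0,c_\delta^2)$, hence $\big|\tfrac{-i}{2\pi\epsilon}h(y,\epsilon)\big|\le \tfrac{M_\delta}{2\pi}=:C_\delta$; taking real parts and invoking the conjugation symmetry for the lower band then completes the proof. I expect no further subtlety beyond the bookkeeping needed to make the compact-region and large-$\Re(y)$ estimates genuinely uniform in $\epsilon$.
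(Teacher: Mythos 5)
Your proof is correct. The skeleton coincides with the paper's: a first-order integral (Taylor) representation of the difference in the parameter $\B^2$, a split of the unbounded band according to $\Re(y)$, and elementary exponential-decay estimates — with the inversion relation of Proposition \ref{prop:dilog} (1) as the one essential external input. Where you differ is in how that inversion relation is deployed. The paper first establishes the parity identity $\Delta(\B,y)=\Delta(\B,-\overline{y})$ for $\Delta:=\Im\bigl(\Li(-e^{y(1+\B^2)})-(1+\B^2)^2\Li(-e^y)\bigr)$, which folds the half-band $\Re(y)\geqslant 0$ onto $\Re(y)\leqslant 0$, so that only the decaying end of the contour ever needs to be estimated (it then splits at $\Re(y)=-1$). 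You instead keep the whole band and, on $\Re(y)\geqslant R_0$, substitute the inversion relation together with $\Log(1+e^{w})=w+\Log(1+e^{-w})$ directly into $\partial_\epsilon h$, exhibiting the exact cancellation of the $y^2$-terms by hand. The two mechanisms are equivalent in substance; yours has the minor bonus of bounding the full modulus $\vert h\vert/\B^2$ rather than only its imaginary part, at the minor cost of having to check the branch bookkeeping (absence of $2\pi i$ shifts) in the factorisation of $\Log\bigl(1+e^{y(1+\epsilon)}\bigr)$ — which your condition $(\pi-\delta)(1+c_\delta^2)<\pi$ does guarantee, exactly as the paper's choice of $c_\delta$ does.
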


\begin{proof}
Let $\delta \in (0,\frac{\pi}{2}) $. Let us define $c_{\delta} := \sqrt{\dfrac{\delta}{2(\pi-\delta)}}$, so that $(\pi-\delta)(1+c_\delta^2) = \pi-\delta/2$.

We  consider the function
$$(x,d,u,\B) \mapsto \left \vert \Log \left ( 1 + e^{(x+id)(1+u\B^2)}\right )\right  \vert,$$
which is continous and well-defined on $[-1,0]\times [\delta,\pi-\delta]\times[0,1]\times[0,c_{\delta}]$; indeed,  since 
$$d(1+u \B^2) \leqslant (\pi-\delta)(1+c_\delta^2) = \pi-\delta/2 < \pi,$$ the exponential will then never be $-1$. Let us denote $L_{\delta}>0$ the maximum of this function.

Let us define
$$\Delta(\B,y):= \Im\left (
\Li\left (-e^{y(1+\B^2)}\right )
-
 (1+\B^2)^2 \Li(-e^y)
\right )$$
for all $\B \in (0,1)$ and all $y \in \R+i \left ([-(\pi-\delta),-\delta]\cup[\delta,\pi-\delta]\right )$.

We first remark a parity property like in Lemma \ref{lem:parity}. Indeed, it similarly follows from Proposition \ref{prop:dilog} (1) that
$\Delta(\B,y) = -\Delta(\B,-y) = -\Delta(\B,\overline{y}) = \Delta(\B,-\overline{y})$
for all $\B \in (0,1)$ and all $y \in \R+i \left ([-(\pi-\delta),-\delta]\cup[\delta,\pi-\delta]\right )$.
Thus we can consider that $y \in \R_{\leqslant 0}+i [\delta,\pi-\delta]$ in the remainder of the proof.

It then follows from Taylor's theorem that for all $\B \in (0,1)$ and all $y \in \R_{\leqslant 0}+i [\delta,\pi-\delta]$,
\begin{align*}
\Delta(\B,y) &= \Im\left (
-\left (\int_{0}^1 
\Log \left ( 1 + e^{y(1+u\B^2)}\right )
 (-y\B^2) du\right )
- (2\B^2 + \B^4) \Li(-e^y)
\right ) \\
&= - \B^2 \Im\left (
y \left (\int_{0}^1 
\Log \left ( 1 + e^{y(1+u\B^2)}\right )
 du\right )
+ (2 + \B^2) \Li(-e^y)
\right ).
\end{align*}

We will bound $\left \vert \dfrac{\Delta(\B,y)}{-\B^2} \right \vert$ separately for $\Re(y) \in [-1,0]$ and then for $\Re(y) \in (-\infty,-1)$.

Firstly, we have for all $y \in [-1,0] + i [\delta,\pi-\delta]$ and all $\B \in (0,c_{\delta})$:
\begin{align*}
\left \vert \dfrac{\Delta(\B,y)}{-\B^2} \right \vert 
& \leqslant \vert y \vert 
\left (\int_{0}^1 
\left \vert \Log \left ( 1 + e^{y(1+u\B^2)}\right )
\right \vert
 du\right )
 +  (2 + \B^2) \vert \Li(-e^y) \vert \\
 & \leqslant \sqrt{1 + (\pi-\delta)^2} L_\delta + 3 L'_\delta,
\end{align*}
where $L'_\delta$ is the maximum of $(x,d) \mapsto \vert \Li(-e^y) \vert$ on $(-\infty,0]\times[\delta,\pi-\delta]$.

Secondly, let $y = x+id \in (-\infty,-1] + i [\delta,\pi-\delta]$ and $\B \in (0,c_{\delta})$. For all $u \in [0,1]$, we have $\left \vert e^{y(1+u\B^2)} \right \vert <1$, therefore (from the triangle inequality on the Taylor expansion):
$$ \left \vert
 \Log \left ( 
 1 + e^{y(1+u\B^2)}
 \right )
\right \vert 
\leqslant
- \log\left (
1-
\left \vert e^{y(1+u\B^2)}
\right \vert 
\right ) 
= \log\left ( 1 + \dfrac{e^{x(1+u\B^2)}}{1-e^{x(1+u\B^2)}}
\right ) \leqslant
\dfrac{e^{2 x}}{1-e^{2 x}},
$$
hence 
\begin{align*}
\left \vert \dfrac{\Delta(\B,y)}{-\B^2} \right \vert 
& \leqslant \vert y \vert 
\left (\int_{0}^1 
\left \vert \Log \left ( 1 + e^{y(1+u\B^2)}\right )
\right \vert
 du\right )
 +  (2 + \B^2) \vert \Li(-e^y) \vert \\
 & \leqslant \sqrt{x^2 + (\pi-\delta)^2} \dfrac{e^{2 x}}{1-e^{2 x}} + 3 L'_\delta \\
 & \leqslant E_\delta + 3 L'_\delta,
\end{align*}
where $E_\delta$ is the maximum of the function $ x \in (-\infty,-1] \mapsto \sqrt{x^2 + (\pi-\delta)^2} \dfrac{e^{2 x}}{1-e^{2 x}}$.

We now conclude the proof by defining $C_{\delta} := \frac{1}{2\pi} \max\{\sqrt{1 + (\pi-\delta)^2} L_\delta + 3 L'_\delta, E_\delta + 3 L'_\delta\}$.
\end{proof}

We can now state and prove the final piece of the proof of Theorem \ref{thm:vol:conj}.

\begin{proposition}\label{prop:all:contour:S'b}
For the constant $\rho' \in \C^*$ defined in Proposition \ref{prop:all:contour:Sb}, we have, as $\hbar \to 0^+$,
\begin{align*}
\int_{\mathcal{Y}^0} 
d\mathbf{y} e^{\frac{1}{2 \pi \hbar}S'_\B(\mathbf{y})}
&=
\int_{\mathcal{Y}^0} 
d\mathbf{y} \
e^{\frac
{
i \mathbf{y}^{\!\top} Q_n\mathbf{y} + 
  \mathbf{y}^{\!\top} \mathcal{W}_n
}
 {2 \pi \hbar} 
}
\dfrac{
\Phi_\B\left ( \frac{y_U}{2 \pi \sqrt{\hbar}}  \right )^2
\Phi_\B\left ( \frac{y_W}{2 \pi \sqrt{\hbar}} \right )
}{
\Phi_\B\left (\frac{y_1}{2 \pi \sqrt{\hbar}}\right )
\cdots 
\Phi_\B\left (\frac{y_p}{2 \pi \sqrt{\hbar}}\right )
} \\
&=  
e^{\frac{1}{2 \pi \hbar} S(\mathbf{y^0})}
\left (
\rho' \hbar^{\frac{p+2}{2}} \left (
1 + 
o_{\hbar \to 0^+}\left (1\right )
\right )
+ {O}_{\hbar \to 0^+}(1)
 \right ).
\end{align*}
In particular, 
$$(2 \pi \hbar) \log \left \vert
\int_{\mathcal{Y}^0} d\mathbf{y} \ e^{\frac{1}{2 \pi \hbar}S'_\B(\mathbf{y})}
\right \vert \underset{\hbar \to 0^+}{\longrightarrow}  \Re S(\mathbf{y^0}) = - \mathrm{Vol}(S^3 \setminus K_n).
$$
\end{proposition}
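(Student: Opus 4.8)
The plan is to mirror the proof of Proposition \ref{prop:all:contour:Sb}, replacing the comparison of $S_\B$ with $S$ by a comparison of $S'_\B$ with the same classical potential $S$, and splitting the integral over $\mathcal{Y}^0$ into the compact ball $\gamma$ and the unbounded remainder $\mathcal{Y}^0\setminus\gamma$. The whole argument reduces to one uniform estimate: that $\left|\Re\bigl(\tfrac{1}{2\pi\hbar}(S'_\B(\mathbf{y})-S(\mathbf{y}))\bigr)\right|$ is bounded by a constant $\kappa>0$, uniformly in $\mathbf{y}\in\mathcal{Y}^0$ and in $\B$ small enough. Since the quadratic and linear terms $i\mathbf{y}^TQ_n\mathbf{y}+\mathbf{y}^T\mathcal{W}_n$ are common to $S'_\B$ and $S$, this difference collapses, after expanding the logarithm of the $\Phi_\B$-quotient, to $\sum_j\eta_j\left[\Log\Phi_\B\!\left(\tfrac{y_j}{2\pi\sqrt{\hbar}}\right)-\tfrac{-i}{2\pi\hbar}\Li(-e^{y_j})\right]$ with $\eta=(-1,\dots,-1,2,1)$, exactly as in Proposition \ref{prop:all:contour:Sb} but with $\tfrac{1}{2\pi\sqrt{\hbar}}$ and $\tfrac{1}{2\pi\hbar}$ in place of $\tfrac{1}{2\pi\B}$ and $\tfrac{1}{2\pi\B^2}$.

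To obtain the constant bound I would use the identity $\tfrac{1}{2\pi\sqrt{\hbar}}=\tfrac{1+\B^2}{2\pi\B}$ to write $\tfrac{y_j}{2\pi\sqrt{\hbar}}=\tfrac{y_j(1+\B^2)}{2\pi\B}$, and decompose each summand as
\[
\left[\Log\Phi_\B\!\left(\tfrac{y_j(1+\B^2)}{2\pi\B}\right)-\tfrac{-i}{2\pi\B^2}\Li\!\left(-e^{y_j(1+\B^2)}\right)\right]
+\left[\tfrac{-i}{2\pi\B^2}\Li\!\left(-e^{y_j(1+\B^2)}\right)-\tfrac{-i}{2\pi\hbar}\Li(-e^{y_j})\right].
\]
On $\mathcal{Y}^0$ the imaginary parts $\Im(y_j)$ lie in a fixed band $[-(\pi-\delta),-\delta]\cup[\delta,\pi-\delta]$; restricting to $\B<c_\delta$ (as in Lemma \ref{lem:unif:bound:hbar}) keeps $\Im(y_j(1+\B^2))$ strictly below $\pi$ in absolute value, so after shrinking the band to some $\delta'$ the first bracket is controlled in real part by Lemmas \ref{lem:unif:bound} and \ref{lem:unif:bound:neg}, giving $\leq B_{\delta'}\B^2$. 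Since $\tfrac{-i}{2\pi\hbar}=\tfrac{-i(1+\B^2)^2}{2\pi\B^2}$, the second bracket is precisely the quantity bounded by $C_\delta$ in Lemma \ref{lem:unif:bound:hbar}. Summing over the $p+2$ tetrahedra, with $\sum_j|\eta_j|=p+3$, yields $\left|\Re\bigl(\tfrac{1}{2\pi\hbar}(S'_\B-S)\bigr)\right|\leq (p+3)(B_{\delta'}\B^2+C_\delta)=:\kappa$ for all small $\B$.

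With this in hand I would assemble the two statements exactly as in Proposition \ref{prop:all:contour:Sb}, taking $\lambda=\tfrac{1}{2\pi\hbar}$. On $\gamma$ I write $e^{\frac{1}{2\pi\hbar}S'_\B}=e^{\frac{1}{2\pi\hbar}S}+e^{\frac{1}{2\pi\hbar}S}\bigl(e^{\frac{1}{2\pi\hbar}(S'_\B-S)}-1\bigr)$: the first term gives the saddle-point main term $\rho'\hbar^{\frac{p+2}{2}}e^{\frac{1}{2\pi\hbar}S(\mathbf{y^0})}(1+o(1))$ by Proposition \ref{prop:compact:contour:S:SPM}, while the second, using $\bigl|e^{\frac{1}{2\pi\hbar}(S'_\B-S)}-1\bigr|\leq e^\kappa+1$, the compactness of $\gamma$ and Lemma \ref{lem:maximum}, is $e^{\frac{1}{2\pi\hbar}S(\mathbf{y^0})}\mathcal{O}(1)$. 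On the unbounded part, combining Lemma \ref{lem:unbounded:contour} (valid for $\hbar<\tfrac{1}{2\pi A}$) with the factor $e^\kappa$ gives $\bigl|\int_{\mathcal{Y}^0\setminus\gamma}e^{\frac{1}{2\pi\hbar}S'_\B}\bigr|\leq e^\kappa B\,e^{\frac{1}{2\pi\hbar}M}$, which is $e^{\frac{1}{2\pi\hbar}S(\mathbf{y^0})}\mathcal{O}(1)$ because $M<\Re S(\mathbf{y^0})$ by Lemmas \ref{lem:concave} and \ref{lem:maximum}. Adding the pieces proves the first displayed equality, and the limit statement follows since $|e^{\frac{1}{2\pi\hbar}S(\mathbf{y^0})}|=e^{\frac{1}{2\pi\hbar}\Re S(\mathbf{y^0})}$ while the bracketed factor grows only polynomially in $\hbar$, so $(2\pi\hbar)\log$ of it vanishes; finally $\Re S(\mathbf{y^0})=-\mathrm{Vol}(S^3\setminus K_n)$ by Lemma \ref{lem:-vol}.

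The main obstacle is the uniform constant bound $\kappa$, and specifically the non-compactness of $\mathcal{Y}^0$: one must verify that the real-part estimates of Lemmas \ref{lem:unif:bound} and \ref{lem:unif:bound:neg} genuinely survive the rescaling $y_j\mapsto y_j(1+\B^2)$, which is exactly why the band has to be shrunk to $\delta'$ and $\B$ restricted to $(0,c_\delta)$. A secondary point worth flagging is that, in contrast with Proposition \ref{prop:all:contour:Sb}, the difference $\Re(S'_\B-S)$ here does \emph{not} tend to $0$ but is merely bounded (the $C_\delta$ term of Lemma \ref{lem:unif:bound:hbar} does not vanish as $\B\to 0^+$), which is precisely why the remainder is only $\mathcal{O}(1)$ rather than $o(1)$; this is harmless for extracting the exponential rate but must be tracked carefully through both the compact and the non-compact estimates.
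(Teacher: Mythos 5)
Your proposal is correct and follows essentially the same route as the paper's proof: the same decomposition of each summand via $\frac{1}{2\pi\sqrt{\hbar}}=\frac{1+\B^2}{2\pi\B}$ into a piece controlled by Lemmas \ref{lem:unif:bound} and \ref{lem:unif:bound:neg} (on the slightly enlarged band, hence the constant $B_{\delta/2}$ in the paper versus your $B_{\delta'}$) and a piece controlled by Lemma \ref{lem:unif:bound:hbar}, yielding the uniform bound $(p+3)(B_{\delta/2}\B^2+C_\delta)$, after which the assembly on $\gamma$ and $\mathcal{Y}^0\setminus\gamma$ is carried out exactly as in Proposition \ref{prop:all:contour:Sb} with $\lambda=\frac{1}{2\pi\hbar}$. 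Your closing remark that the difference $\Re(S'_\B-S)$ is only bounded rather than vanishing, forcing the $\mathcal{O}(1)$ remainder, is an accurate reading of why the statement is phrased as it is.
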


\begin{proof}
The proof will be similar to the one of Proposition \ref{prop:all:contour:Sb} (notably, the second statement follows from the first one in the exact same way), but will need also Lemma \ref{lem:unif:bound:hbar} to bound an extra term. 
Let us prove the first statement.

Let $\delta>0$ such that the absolute value of the imaginary parts of the coordinates of any $\mathbf{y} \in \mathcal{Y}^0$ {lying} in $[\delta,\pi-\delta]$. Let us again denote $({\zeta}_1, \ldots, {\zeta}_p,{\zeta}_U,{\zeta}_W) := (-1,\ldots,-1,2,1)$. Then for all $\mathbf{y} \in \mathcal{Y}^0$ and all $\B \in (0,c_{\delta})$, it follows from Lemmas \ref{lem:unif:bound}, \ref{lem:unif:bound:neg} and \ref{lem:unif:bound:hbar} that
\begin{align*}
&\left \vert
\Re\left (
\frac{1}{2 \pi \hbar}S'_\B(\mathbf{y}) 
- \frac{1}{2 \pi \hbar}S(\mathbf{y})
\right )
\right \vert 
=
\left \vert
\Re\left (
\sum_{j=1}^W
{\zeta}_j \left (\Log\left ( \Phi_\B\left ( \frac{y_j}{2 \pi \sqrt{\hbar}}  \right )\right )
-
\left (
\frac{-i}{2 \pi \hbar} \Li(-e^{y_j})
\right )
\right )
\right )
\right \vert \\
	& \hspace*{1.5cm} \leqslant
\sum_{j=1}^W \vert {\zeta}_j \vert
\left \vert
\Re\left (
 \left (\Log\left ( \Phi_\B\left ( \frac{y_j(1+\B^2)}{2 \pi \B}  \right )\right )
-
\left (
\frac{-i}{2 \pi \B^2} \Li(-e^{y_j(1+\B^2)})
\right )
\right )
\right )
\right \vert \\
& \hspace*{1.5cm} \ \	+\sum_{j=1}^W \vert {\zeta}_j \vert
\left \vert
	\Re\left (
		 \left (
\frac{-i}{2 \pi \B^2} \Li(-e^{y_j(1+\B^2)})
		\right )
-
		\left (
\frac{-i}{2 \pi \B^2}(1+\B^2)^2 \Li(-e^{y_j})
		\right )
	\right )
\right \vert 
\\
& \hspace*{1.5cm} \leqslant (p+3)\left (B_{\frac{\delta}{2}} \B^2 + C_{\delta}\right )
\leqslant (p+3)\left (B_{\frac{\delta}{2}}  + C_{\delta}\right ).
\end{align*}

The remainder of the proof is now the same as for Proposition \ref{prop:all:contour:Sb}, by identifying $\lambda= \frac{1}{2 \pi \hbar}$ and taking
$\hbar$ small enough so that the associated
 $\B$ satisfies 
 $$0 < \B < \min\left \{c_\delta, (2\pi A)^{-1/2}, \left (
\dfrac{\upsilon}{2 \pi (p+3) (B_{\delta/2}+C_{\delta})}
\right )^{1/2}\right \}.$$
\end{proof}

\subsection{Conclusion and comments}\label{sub:conjvol:conclusion}

\begin{proof}[Proof of Theorem \ref{thm:vol:conj}]
The second equality follows from Remark \ref{rem:J':S'b} and Proposition \ref{prop:all:contour:S'b}, and the first equality follows from the identity
$$J_{X_n}(\hbar,x) = 2 \pi \sqrt{\hbar} \ \mathfrak{J}_{X_n}(\hbar, (2\pi \sqrt{\hbar})x).$$
\end{proof}

Some comments are in order. 
\begin{itemize}
\item The various upper bounds we constructed were far from optimal, since we were mostly interested {in proving} that the \textit{exponential decrease rate} yielded the hyperbolic volume. Anyone interested in computing a more detailed asymptotic expansion of $\mathfrak{J}_{X_n}(\hbar,0)$ (looking for the \textit{complex volume}, the \textit{Reidemeister torsions} or potential deeper terms such as the $n$-loop invariants of \cite{DG}) would probably need to develop the estimations of Lemmas \ref{lem:unbounded:contour}, \ref{lem:unif:bound} and \ref{lem:unif:bound:hbar} at higher order and with sharper precision, as well as carefully study  the coefficients appearing in Theorem \ref{thm:SPM}.
\item In this theory, the integration variables $y_j$ in $\mathfrak{J}_{X_n}(\hbar,0)$ lie in an \textit{unbounded} part of $\C$, contrary to what happens for Kashaev's invariant or the colored Jones polynomials. This is why uniform bounds such as the ones of Lemmas \ref{lem:unbounded:contour}, \ref{lem:unif:bound} and \ref{lem:unif:bound:hbar} were new but absolutely necessary technical difficulties to overcome to obtain the desired asymptotics. Since these results do not depend of the knot, triangulation or potential function $S$ (assuming it has the same general form as in here), we hope that they can be of use to further studies of asymptotics of quantum invariants such as the Teichm\"uller TQFT.
\end{itemize}

\section{The case of even twist knots}\label{sec:appendix}

When the twist knot $K_n$ has an even number of crossings, we can prove the same results as for the odd twist knots, which are:
\begin{itemize}
 \item the construction of convenient H-triangulations and ideal triangulations (Section \ref{sub:even:trig}),
 \item the geometricity of the ideal triangulations  (Section \ref{sub:even:geom}),
 \item the computation of the partition functions of the Teichm\"uller TQFT (Section \ref{sub:even:tqft}),
 \item  the volume conjecture as a consequence of geometricity (Section \ref{sub:even:vol:conj}).
\end{itemize}

We tried to provide details of only the parts of proofs that differ from the case of odd twist knots. As the reader will see, most of these differences lie in explicit values and not in general processes of proof. As such, we expect that the techniques developed in the previous sections and adapted in this one can be generalised to several other families of knots in $3$-manifolds.

\subsection{Construction of triangulations}\label{sub:even:trig}

In the rest of this section we consider a twist knot $K_n$ with $n$ even, $n\geqslant 4$ (the case $n=2$ will be treated in Remark \ref{rem:K2}). We proceed as in Section \ref{sec:trig}, and build an H-triangulation of $(S^3,K_n)$ from a diagram of $K_n$. The first step is described in Figure \ref{fig:diagram:htriang:even}. Note that $D$ is once again an $(n+1)$-gon, and $E$ is an $(n+2)$-gon.

%knot diagram
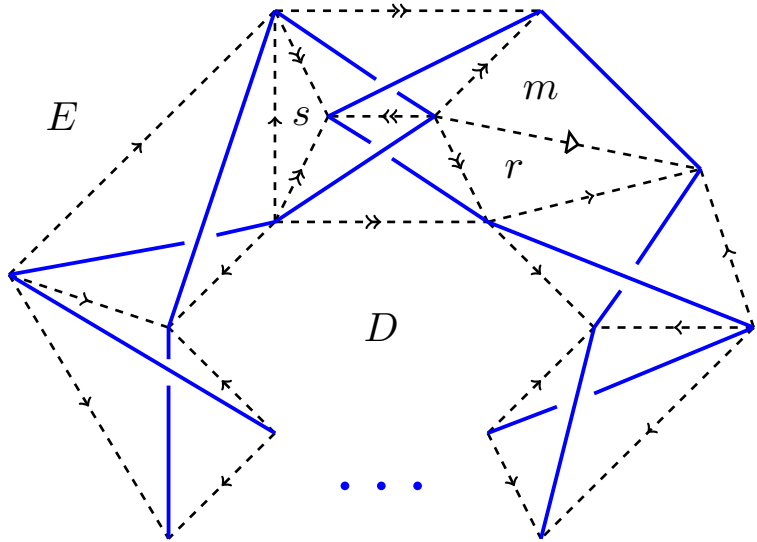
\begin{figure}[!h]
\centering
\begin{tikzpicture}[every path/.style={string ,black} , every node/.style={transform shape , knot crossing , inner sep=1.5 pt } ]

\begin{scope}[scale=0.7]

\begin{scope}[dashed,decoration={
    markings,
    mark=at position 0.5 with {\arrow{>}}}
    ] 
    \draw[postaction={decorate}] (-2,-2)--(-4,-4);
    \draw[postaction={decorate}] (-7,1)--(-4,-4);
    \draw[postaction={decorate}] (-2,-2)--(-4,0);
    \draw[postaction={decorate}] (-7,1)--(-4,0);
    \draw[postaction={decorate}] (-7,1)--(-2,6);
    \draw[postaction={decorate}] (-2,2)--(-4,0);
    \draw[postaction={decorate}] (-2,2)--(-2,6);
    \draw[postaction={decorate}] (2,2)--(6,3);
    \draw[postaction={decorate}] (2,2)--(4,0);
    \draw[postaction={decorate}] (7,0)--(6,3);
    \draw[postaction={decorate}] (7,0)--(4,0);
    \draw[postaction={decorate}] (7,0)--(3,-4);
    \draw[postaction={decorate}] (2,-2)--(3,-4);
    \draw[postaction={decorate}] (2,-2)--(4,0);
\end{scope}

\begin{scope}[dashed,decoration={
    markings,
    mark=at position 0.5 with {\arrow{>>}}}
    ] 
    \draw[postaction={decorate}] (-2,2)--(2,2);
    \draw[postaction={decorate}] (-2,2)--(-1,4);
    \draw[postaction={decorate}] (1,4)--(2,2);
    \draw[postaction={decorate}] (1,4)--(-1,4);
    \draw[postaction={decorate}] (1,4)--(3,6);
    \draw[postaction={decorate}] (-2,6)--(-1,4);
    \draw[postaction={decorate}] (-2,6)--(3,6);
\end{scope}

\draw[style=dashed] (1,4) -- (6,3);

\begin{scope}[xshift=3.5cm, yshift=3.5cm, rotate=-100, scale=0.2]
\draw (1,0) -- (0,1) -- (-1,0) -- (1,0);
\end{scope}

\draw[color=blue, line width=0.5mm] (3,6) -- (6,3);
\draw[color=blue, line width=0.5mm] (2,2) -- (7,0);
\draw[color=blue, line width=0.5mm] (4,0) -- (3,-4);
\draw[color=blue, line width=0.5mm] (-7,1) -- (-2,-2);
\draw[color=blue, line width=0.5mm] (-4,0) -- (-2,6);
\draw[color=blue, line width=0.5mm] (3,6) -- (-1,4);
\draw[color=blue, line width=0.5mm] (-2,2) -- (1,4);

\draw[color=blue, line width=0.5mm] (2,-2) -- (3.3,-1.5);
\draw[color=blue, line width=0.5mm] (4,-1.25) -- (7,0);

\draw[color=blue, line width=0.5mm] (4,0) -- (4.5,0.7);
\draw[color=blue, line width=0.5mm] (4.8,1.2) -- (6,3);

\draw[color=blue, line width=0.5mm] (-1,4) -- (-0.2,3.5);
\draw[color=blue, line width=0.5mm] (0.2,3.2) -- (2,2);

\draw[color=blue, line width=0.5mm] (1,4) -- (0.3,4.45);
\draw[color=blue, line width=0.5mm] (-0.1,4.7) -- (-2,6);

\draw[color=blue, line width=0.5mm] (-7,1) -- (-3.7,1.6);
\draw[color=blue, line width=0.5mm] (-3.1,1.75) -- (-2,2);

\draw[color=blue, line width=0.5mm] (-4,0) -- (-4,-0.6);
\draw[color=blue, line width=0.5mm] (-4,-1.1) -- (-4,-4);

\draw[scale=4,color=blue] (0,-3/4) node {$\ldots$};

\draw[scale=2] (0,0) node {$D$};
\draw[scale=2] (3/2,4.5/2) node {$m$};
\draw[scale=2] (2.5/2,3/2) node {$r$};
\draw[scale=2] (-1.5/2,4/2) node {$s$};
\draw[scale=2] (-6/2,4/2) node {$E$};

\end{scope}
\end{tikzpicture}
\caption{Building an H-triangulation from a diagram of $K_n$} \label{fig:diagram:htriang:even}
\end{figure}

From Figure \ref{fig:diagram:htriang:even} we go to Figure \ref{fig:boundary:balls:even} and Figure \ref{fig:Htriang:polyhedron:even} exactly as in Section \ref{sec:trig}.

% bords des boules cote à cote
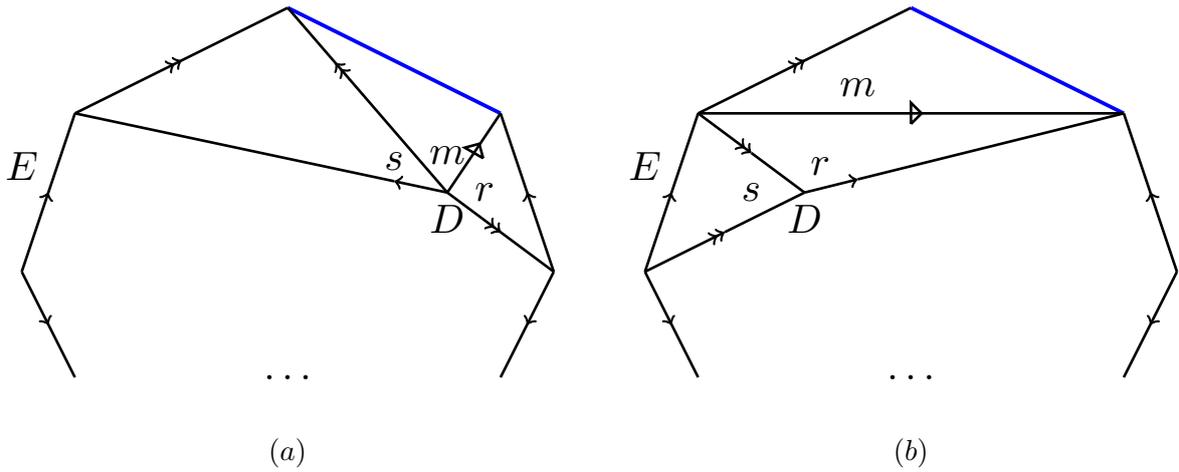
\begin{figure}[!h]
\centering
\begin{tikzpicture}[every path/.style={string ,black} , every node/.style={transform shape , knot crossing , inner sep=1.5 pt } ]

\draw[scale=1] (0,-1) node {$(a)$};
\draw[scale=1] (8.2,-1) node {$(b)$};

%%%%%%%%%%%%%%%%%%%%%%

%boule du haut
\begin{scope}[scale=0.7]

%bord
\begin{scope}[decoration={
    markings,
    mark=at position 0.5 with {\arrow{>}}}
    ] 
    \draw[postaction={decorate}] (-5,2)--(-4,0);
    \draw[postaction={decorate}] (-5,2)--(-4,5);
    \draw[postaction={decorate}] (5,2)--(4,5);
    \draw[postaction={decorate}] (5,2)--(4,0);
\end{scope}

\begin{scope}[decoration={
    markings,
    mark=at position 0.5 with {\arrow{>>}}}
    ] 
    \draw[postaction={decorate}] (-4,5)--(0,7);
\end{scope}

\draw[color=blue, line width=0.5mm] (0,7) -- (4,5);

%dots
\draw[scale=2] (0,0) node {$\ldots$};

%pink
\draw[->] (3,3.5) -- (2,3.5 +1.5/7);
\draw (2,3.5 +1.5/7) -- (-4,5);

\begin{scope}[xshift=3.5cm, yshift=4.25cm, rotate=-30, scale=0.2]
\draw (1,0) -- (0,1) -- (-1,0) -- (1,0);
\end{scope}
\draw (3,3.5) -- (4,5);

\begin{scope}[decoration={
    markings,
    mark=at position 0.5 with {\arrow{>>}}}
    ] 
    \draw[postaction={decorate}] (3,3.5) -- (5,2);
\end{scope}

\begin{scope}[decoration={
    markings,
    mark=at position 0.7 with {\arrow{>>}}}
    ] 
    \draw[postaction={decorate}] (3,3.5) -- (0,7);
\end{scope}

\draw[scale=2] (3/2,3/2) node {$D$};
\draw[scale=2] (3/2,4.2/2) node {$m$};
\draw[scale=2] (2/2,4.1/2) node {$s$};
\draw[scale=2] (3.7/2,3.5/2) node {$r$};

\draw[scale=2] (-5/2,4/2) node {$E$};

\end{scope}

%%%%%%%%%

%boule du bas
\begin{scope}[xshift=8.2cm,scale=0.7]

%bord
\begin{scope}[decoration={
    markings,
    mark=at position 0.5 with {\arrow{>}}}
    ] 
    \draw[postaction={decorate}] (-5,2)--(-4,0);
    \draw[postaction={decorate}] (-5,2)--(-4,5);
    \draw[postaction={decorate}] (5,2)--(4,5);
    \draw[postaction={decorate}] (5,2)--(4,0);
\end{scope}

\begin{scope}[decoration={
    markings,
    mark=at position 0.5 with {\arrow{>>}}}
    ] 
    \draw[postaction={decorate}] (-4,5)--(0,7);
\end{scope}

\draw[color=blue, line width=0.5mm] (0,7) -- (4,5);

%dots
\draw[scale=2] (0,0) node {$\ldots$};

%white
\begin{scope}[xshift=0cm, yshift=5cm, rotate=-90, scale=0.2]
\draw (1,0) -- (0,1) -- (-1,0) -- (1,0);
\end{scope}
\draw (-4,5) -- (4,5);

\draw[->>] (-5,2) -- (-3.5,2.75);
\draw (-3.5,2.75) -- (-2,3.5);

\draw[->>] (-4,5) -- (-3,4.25);
\draw (-3,4.25) -- (-2,3.5);

\draw (4,5) -- (-1,5-1.5*5/6);
\draw[<-] (-1,5-1.5*5/6) -- (-2,3.5);
\draw[scale=2] (-2/2,3/2) node {$D$};
\draw[scale=2] (-1/2,5.5/2) node {$m$};
\draw[scale=2] (-3/2,3.5/2) node {$s$};
\draw[scale=2] (-1.7/2,4/2) node {$r$};

\draw[scale=2] (-5/2,4/2) node {$E$};

\end{scope}
\end{tikzpicture}
\caption{Boundaries of $B_+$ and $B_-$}\label{fig:boundary:balls:even}
\end{figure}

% deux boules réunies
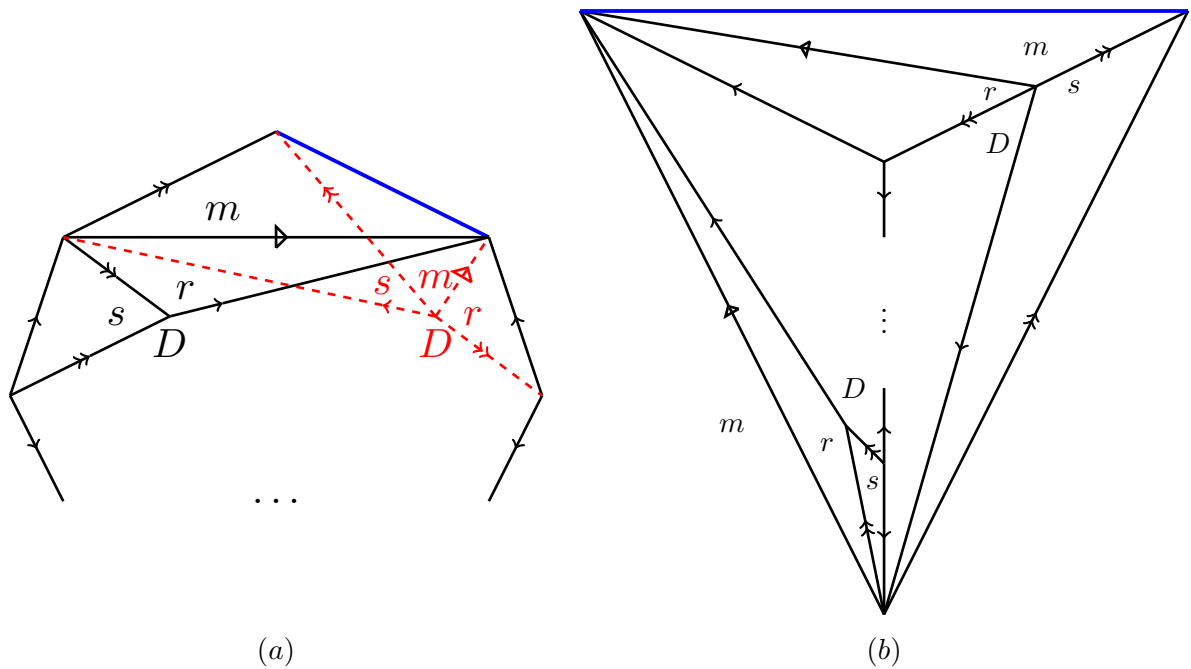
\begin{figure}[!h]
\centering
\begin{tikzpicture}[every path/.style={string ,black} , every node/.style={transform shape , knot crossing , inner sep=1.5 pt } ]

\draw[scale=1] (0,-2) node {$(a)$};
\draw[scale=1] (8,-2) node {$(b)$};

%%%%%%%%%%%%%%%%%%%%%%%%%
% 2 boules réunies

\begin{scope}[scale=0.7]

%bord
\begin{scope}[decoration={
    markings,
    mark=at position 0.5 with {\arrow{>}}}
    ] 
    \draw[postaction={decorate}] (-5,2)--(-4,0);
    \draw[postaction={decorate}] (-5,2)--(-4,5);
    \draw[postaction={decorate}] (5,2)--(4,5);
    \draw[postaction={decorate}] (5,2)--(4,0);
\end{scope}

\begin{scope}[decoration={
    markings,
    mark=at position 0.5 with {\arrow{>>}}}
    ] 
    \draw[postaction={decorate}] (-4,5)--(0,7);
\end{scope}

\draw[color=blue, line width=0.5mm] (0,7) -- (4,5);

%dots
\draw[scale=2] (0,0) node {$\ldots$};

%white
\begin{scope}[xshift=0cm, yshift=5cm, rotate=-90, scale=0.2]
\draw (1,0) -- (0,1) -- (-1,0) -- (1,0);
\end{scope}
\draw (-4,5) -- (4,5);

\draw[->>] (-5,2) -- (-3.5,2.75);
\draw (-3.5,2.75) -- (-2,3.5);

\draw[->>] (-4,5) -- (-3,4.25);
\draw (-3,4.25) -- (-2,3.5);

\draw (4,5) -- (-1,5-1.5*5/6);
\draw[<-] (-1,5-1.5*5/6) -- (-2,3.5);
\draw[scale=2] (-2/2,3/2) node {$D$};
\draw[scale=2] (-1/2,5.5/2) node {$m$};
\draw[scale=2] (-3/2,3.5/2) node {$s$};
\draw[scale=2] (-1.7/2,4/2) node {$r$};

%pink

\begin{scope}[xshift=3.5cm, yshift=4.25cm, rotate=-30, scale=0.2]
\draw[color=red] (1,0) -- (0,1) -- (-1,0) -- (1,0);
\end{scope}

\begin{scope}[style=dashed]

\draw[color=red][->] (3,3.5) -- (2,3.5 +1.5/7);
\draw[color=red] (2,3.5 +1.5/7) -- (-4,5);

\draw[color=red] (3,3.5) -- (4,5);

\begin{scope}[decoration={
    markings,
    mark=at position 0.5 with {\arrow{>>}}}
    ] 
    \draw[postaction={decorate}][color=red] (3,3.5) -- (5,2);
\end{scope}

\begin{scope}[decoration={
    markings,
    mark=at position 0.7 with {\arrow{>>}}}
    ] 
    \draw[postaction={decorate}][color=red] (3,3.5) -- (0,7);
\end{scope}

\draw[scale=2][color=red] (3/2,3/2) node {$D$};
\draw[scale=2][color=red] (3/2,4.2/2) node {$m$};
\draw[scale=2][color=red] (2/2,4.1/2) node {$s$};
\draw[scale=2][color=red] (3.7/2,3.5/2) node {$r$};

\end{scope}

\end{scope}

%%%%%%%%%%%%%%%%%%%%%%%%%%%%%%%
%H-triangulation
\begin{scope}[xshift=8cm,yshift=-1.5cm,scale=0.5]

\begin{scope}[decoration={
    markings,
    mark=at position 0.5 with {\arrow{>}}}
    ] 
    \draw[postaction={decorate}] (0,4)--(0,0);
    \draw[postaction={decorate}] (0,4)--(0,6);
    \draw[postaction={decorate}] (0,12) -- (0,10);
    \draw[postaction={decorate}] (0,12)--(-8,16);
    \draw[postaction={decorate}, color=red] (4,14) -- (0,0);
    \draw[postaction={decorate}] (-1,5)--(-8,16);
\end{scope}

\begin{scope}[decoration={
    markings,
    mark=at position 0.5 with {\arrow{>>}}}
    ] 
    \draw[postaction={decorate}] (0,4)--(-1,5);
    \draw[postaction={decorate}] (0,0)--(-1,5);
    \draw[postaction={decorate}, color=red] (4,14) -- (8,16);
    \draw[postaction={decorate}] (0,0) -- (8,16);
    \draw[postaction={decorate}, color=red] (4,14) -- (0,12);
\end{scope}

\draw (0,0) -- (-8,16);
\begin{scope}[xshift=-4cm, yshift=8cm, rotate=30, scale=0.2]
\draw (1,0) -- (0,1) -- (-1,0) -- (1,0);
\end{scope}

\draw[color=red] (4,14) -- (-8,16);
\begin{scope}[xshift=-2cm, yshift=15cm, rotate=75, scale=0.2]
\draw[color=red] (1,0) -- (0,1) -- (-1,0) -- (1,0);
\end{scope}

\draw[color=blue, line width=0.5mm] (-8,16) -- (8,16);

%dots
\draw[scale=2] (0,8/2) node {$\vdots$};

\draw[scale=2, color=red] (4/2,15/2) node {$m$};
\draw[scale=2, color=red] (2.8/2,13.8/2) node {$r$};
\draw[scale=2, color=red] (5/2,14/2) node {$s$};
\draw[scale=2, color=red] (3/2,12.5/2) node {$D$};

\draw[scale=2] (-4/2,5/2) node {$m$};
\draw[scale=2] (-1.5/2,4.5/2) node {$r$};
\draw[scale=2] (-0.3/2,3.5/2) node {$s$};
\draw[scale=2] (-0.8/2,6/2) node {$D$};

\end{scope}

\end{tikzpicture}
\caption{A cellular decomposition of $(S^3,K_n)$ as a polyhedron glued to itself}\label{fig:Htriang:polyhedron:even}
\end{figure}

Then we add a new edge (with simple full arrow) and cut $D$ into $u$ and $D'$ (see Figure \ref{fig:even:Htriang:bigon:trick} (a)), and then we apply the bigon trick $p$ times, where $p:= \frac{n-2}{2}$. We finally obtain the polyhedron in Figure \ref{fig:even:Htriang:bigon:trick} (b).

% h trig after bigon trick
\begin{figure}[!h]
\centering
\begin{tikzpicture}[every path/.style={string ,black}]

\draw[scale=1] (0,-2) node {$(a)$};
\draw[scale=1] (8,-2) node {$(b)$};

%%%%%%%%%%%%%%%%%%%%%%%%%%%%%%%
%H-triangulation
\begin{scope}[xshift=0cm,yshift=-1.5cm,scale=0.45]

\draw (0,6) node[shape=circle,fill=black,scale=0.3] {};
\draw (0,8) node[shape=circle,fill=black,scale=0.3] {};
\draw (0,10) node[shape=circle,fill=black,scale=0.3] {};

\draw[->,>=latex] (0,4) .. controls +(-0.5,0) and +(0,-0.5) .. (-1,7);
\draw (-1,7) .. controls +(0,0.5) and +(-0.5,0) .. (0,8);

\draw[scale=2] (-0.5/2,7/2) node {$u$};

\draw[->,>=latex] (4,14) -- (2,9);
\draw (2,9) -- (0,4);

\draw[scale=2] (0.5/2,3.5/2) node {$u$};

\begin{scope}[decoration={
    markings,
    mark=at position 0.5 with {\arrow{>}}}
    ] 
    \draw[postaction={decorate}] (0,4)--(0,0);
    \draw[postaction={decorate}] (0,4)--(0,6);
    \draw[postaction={decorate}] (0,8)--(0,6);
    \draw[postaction={decorate}] (0,12) -- (0,10);
    \draw[postaction={decorate}] (0,12)--(-8,16);
    \draw[postaction={decorate}] (4,14) -- (0,0);
    \draw[postaction={decorate}] (-1,5)--(-8,16);
\end{scope}

\begin{scope}[decoration={
    markings,
    mark=at position 0.5 with {\arrow{>>}}}
    ] 
    \draw[postaction={decorate}] (0,4)--(-1,5);
    \draw[postaction={decorate}] (0,0)--(-1,5);
    \draw[postaction={decorate}] (4,14) -- (8,16);
    \draw[postaction={decorate}] (0,0) -- (8,16);
    \draw[postaction={decorate}] (4,14) -- (0,12);
\end{scope}

\draw (0,0) -- (-8,16);
\begin{scope}[xshift=-4cm, yshift=8cm, rotate=30, scale=0.2]
\draw (1,0) -- (0,1) -- (-1,0) -- (1,0);
\end{scope}

\draw (4,14) -- (-8,16);
\begin{scope}[xshift=-2cm, yshift=15cm, rotate=75, scale=0.2]
\draw (1,0) -- (0,1) -- (-1,0) -- (1,0);
\end{scope}

\draw[color=blue, line width=0.5mm] (-8,16) -- (8,16);

%dots
\draw[scale=2] (0,9/2) node {$\vdots$};

\draw[scale=2] (4/2,15/2) node {$m$};
\draw[scale=2] (2.8/2,13.8/2) node {$r$};
\draw[scale=2] (5/2,14/2) node {$s$};
\draw[scale=2] (2/2,11.5/2) node {$D'$};

\draw[scale=2] (-4/2,5/2) node {$m$};
\draw[scale=2] (-1.5/2,4.5/2) node {$r$};
\draw[scale=2] (-0.3/2,3.5/2) node {$s$};
\draw[scale=2] (-4/2,12/2) node {$D'$};

\end{scope}

%%%%%%%%%%%%%%%%%%%%%%%%%%%%%%%
%H-triangulation
\begin{scope}[xshift=8cm,yshift=-1.5cm,scale=0.45]

\draw (0,6) node[shape=circle,fill=black,scale=0.3] {};
\draw (0,10) node[shape=circle,fill=black,scale=0.3] {};

\draw[->,>=latex] (0,12) -- (-1,9);
\draw (-1,9) -- (-2,6);

\draw[scale=2] (-2.5/2,11/2) node {$u$};

\draw[->,>=latex] (4,14) -- (2,9);
\draw (2,9) -- (0,4);

\draw[scale=2] (0.5/2,3.5/2) node {$u$};

\begin{scope}[decoration={
    markings,
    mark=at position 0.5 with {\arrow{>}}}
    ] 
    \draw[postaction={decorate}] (0,4)--(0,0);
    \draw[postaction={decorate}] (0,12)--(-8,16);
    \draw[postaction={decorate}] (4,14) -- (0,0);
    \draw[postaction={decorate}] (-2,6)--(-8,16);
\end{scope}

\draw[->,>=latex] (0,4) -- (0,5);
\draw (0,5) -- (0,6);

\draw[->,>=latex] (0,10) -- (0,11);
\draw (0,11) -- (0,12);

\begin{scope}[decoration={
    markings,
    mark=at position 0.5 with {\arrow{>>}}}
    ] 
    \draw[postaction={decorate}] (0,4)--(-2,6);
    \draw[postaction={decorate}] (0,0)--(-2,6);
    \draw[postaction={decorate}] (4,14) -- (8,16);
    \draw[postaction={decorate}] (0,0) -- (8,16);
    \draw[postaction={decorate}] (4,14) -- (0,12);
\end{scope}

\draw (0,0) -- (-8,16);
\begin{scope}[xshift=-4cm, yshift=8cm, rotate=30, scale=0.2]
\draw (1,0) -- (0,1) -- (-1,0) -- (1,0);
\end{scope}

\draw (4,14) -- (-8,16);
\begin{scope}[xshift=-2cm, yshift=15cm, rotate=75, scale=0.2]
\draw (1,0) -- (0,1) -- (-1,0) -- (1,0);
\end{scope}

\draw[color=blue, line width=0.5mm] (-8,16) -- (8,16);

%dots
\draw[scale=2] (0,8/2) node {$\vdots$};

\draw[scale=2] (4/2,15/2) node {$m$};
\draw[scale=2] (2.8/2,13.8/2) node {$r$};
\draw[scale=2] (5/2,14/2) node {$s$};
\draw[scale=2] (2/2,11/2) node {$G$};

\draw[scale=2] (-4/2,5/2) node {$m$};
\draw[scale=2] (-2.3/2,5.5/2) node {$r$};
\draw[scale=2] (-0.3/2,3.5/2) node {$s$};
\draw[scale=2] (-1/2,6/2) node {$G$};

\end{scope}

\end{tikzpicture}
\caption{A cellular decomposition of $(S^3,K_n)$ before and after the bigon trick}\label{fig:even:Htriang:bigon:trick}
\end{figure}

We now chop off the quadrilateral made up of the two adjacent faces $G$ (which are $(p+2)$-gons) and we add a new edge (double full arrow) and two new faces $e_{p+1},f_p$. We triangulate the previous quadrilateral as in Figure \ref{fig:GG:tower} and we finally obtain a decomposition of $S^3$ in three polyhedra glued to one another, as described in Figure \ref{fig:even:Htriang:flip:tower}. Note that if $p=1$, then $G=e_1=e_p=f_0=f_{p-1}$ and there is no tower.

% h trig flip tower
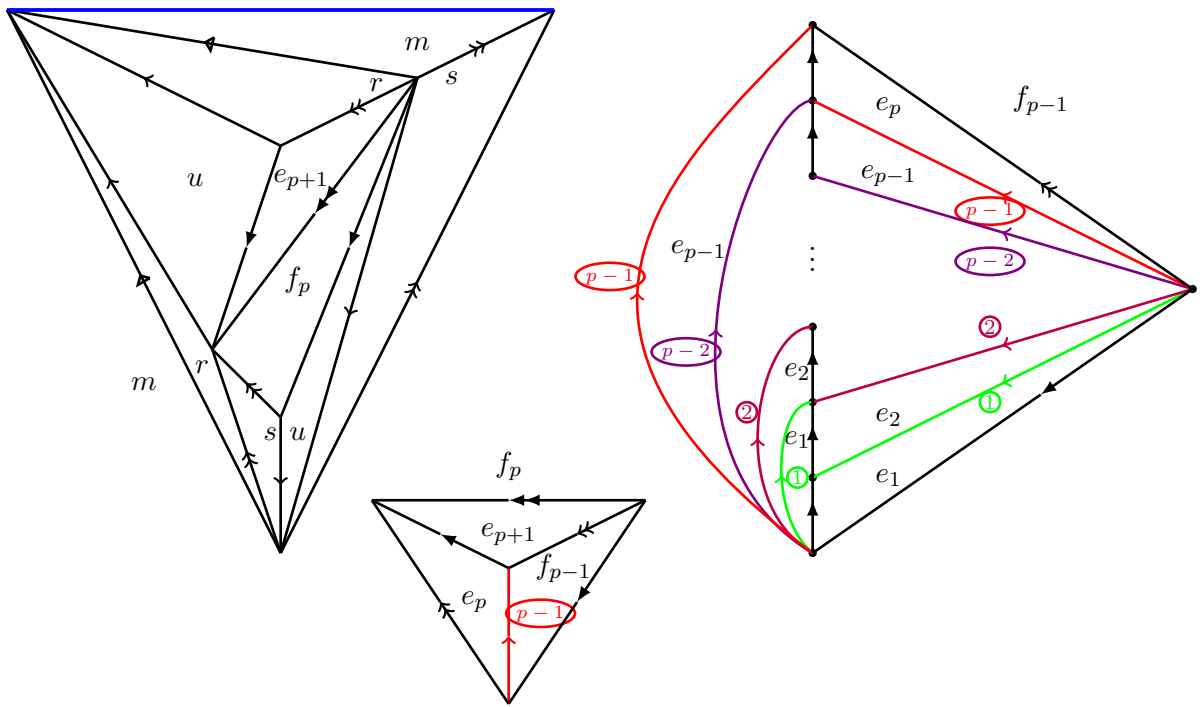
\begin{figure}[!h]
\centering
\begin{tikzpicture}[every path/.style={string ,black}]

%%%%%%%%%%%%%%%%%%%%%%%%%%%%%%%
%U,V,W,Z
\begin{scope}[xshift=0cm,yshift=0cm,scale=0.45]

\draw[->,>=latex] (0,12) -- (-1,9);
\draw (-1,9) -- (-2,6);

\draw[scale=2] (-2.5/2,11/2) node {$u$};

\draw[->,>=latex] (4,14) -- (2,9);
\draw (2,9) -- (0,4);

\draw[scale=2] (0.5/2,3.5/2) node {$u$};

\begin{scope}[decoration={
    markings,
    mark=at position 0.5 with {\arrow{>}}}
    ] 
    \draw[postaction={decorate}] (0,4)--(0,0);
    \draw[postaction={decorate}] (0,12)--(-8,16);
    \draw[postaction={decorate}] (4,14) -- (0,0);
    \draw[postaction={decorate}] (-2,6)--(-8,16);
\end{scope}

\draw[->>,>=latex] (4,14) -- (1,10);
\draw (1,10) -- (-2,6);

\begin{scope}[decoration={
    markings,
    mark=at position 0.5 with {\arrow{>>}}}
    ] 
    \draw[postaction={decorate}] (0,4)--(-2,6);
    \draw[postaction={decorate}] (0,0)--(-2,6);
    \draw[postaction={decorate}] (4,14) -- (8,16);
    \draw[postaction={decorate}] (0,0) -- (8,16);
    \draw[postaction={decorate}] (4,14) -- (0,12);
\end{scope}

\draw (0,0) -- (-8,16);
\begin{scope}[xshift=-4cm, yshift=8cm, rotate=30, scale=0.2]
\draw (1,0) -- (0,1) -- (-1,0) -- (1,0);
\end{scope}

\draw (4,14) -- (-8,16);
\begin{scope}[xshift=-2cm, yshift=15cm, rotate=75, scale=0.2]
\draw (1,0) -- (0,1) -- (-1,0) -- (1,0);
\end{scope}

\draw[color=blue, line width=0.5mm] (-8,16) -- (8,16);

\draw[scale=2] (4/2,15/2) node {$m$};
\draw[scale=2] (2.8/2,13.8/2) node {$r$};
\draw[scale=2] (5/2,14/2) node {$s$};
\draw[scale=2] (0.5/2,8/2) node {$f_p$};

\draw[scale=2] (-4/2,5/2) node {$m$};
\draw[scale=2] (-2.3/2,5.5/2) node {$r$};
\draw[scale=2] (-0.3/2,3.5/2) node {$s$};
\draw[scale=2] (0.6/2,11/2) node {$e_{p+1}$};

\end{scope}

%%%%%%%%%%%%%%%%%%%%%%%%%%%%%%%%
%Flip Tp
\begin{scope}[xshift=3cm,yshift=-2cm,scale=0.9]

\draw[color=red,->] (0,0)--(0,1);
\draw[color=red] (0,1)--(0,2);

\begin{scope}[xshift=0cm,yshift=0cm,rotate=0,scale=1/1.5]
\draw[color=red] (0.7,2)  node{\tiny $p-1$};
\node[draw,ellipse,color=red] (S) at(0.7,2) {\ \ \ \ };
\end{scope}

\draw[->,>=latex] (2,3)--(1,1.5);
\draw (1,1.5)--(0,0);

\draw[->,>=latex] (0,2)--(-1,2.5);
\draw (-1,2.5)--(-2,3);

\draw[->>,>=latex] (2,3)--(0,3);
\draw (0,3)--(-2,3);

\draw[->>] (2,3)--(1,2.5);
\draw (1,2.5)--(0,2);

\draw[->>] (0,0)--(-1,1.5);
\draw (-1,1.5)--(-2,3);

\draw[scale=2] (0.8/2,2/2)  node{$f_{p-1}$};
\draw[scale=2] (-0.5/2,1.5/2)  node{$e_{p}$};
\draw[scale=2] (0/2,2.5/2)  node{$e_{p+1}$};
\draw[scale=2] (0/2,3.5/2)  node{$f_p$};

\end{scope}

%%%%%%%%%%%%%%%%%%%%%%%%%%%%%%%%
%tower
\begin{scope}[xshift=7cm,yshift=0cm,scale=1]

\draw (0,0) node[shape=circle,fill=black,scale=0.3] {};
\draw (0,1) node[shape=circle,fill=black,scale=0.3] {};
\draw (0,2) node[shape=circle,fill=black,scale=0.3] {};
\draw (0,3) node[shape=circle,fill=black,scale=0.3] {};
\draw (0,5) node[shape=circle,fill=black,scale=0.3] {};
\draw (0,6) node[shape=circle,fill=black,scale=0.3] {};
\draw (0,7) node[shape=circle,fill=black,scale=0.3] {};
\draw (5,3.5) node[shape=circle,fill=black,scale=0.3] {};

\draw[scale=1] (0,4) node {$\vdots$};

\draw[scale=1,color=black] (1,1) node {$e_1$};
\draw[scale=1,color=black] (-0.2,1.5) node {$e_1$};
\draw[scale=1,color=black] (1,1.8) node {$e_2$};
\draw[scale=1,color=black] (-0.2,2.4) node {$e_2$};
\draw[scale=1,color=black] (1,5) node {$e_{p-1}$};
\draw[scale=1,color=black] (-1.5,4) node {$e_{p-1}$};
\draw[scale=1,color=black] (1,5.9) node {$e_p$};
\draw[scale=1,color=black] (3,6) node {$f_{p-1}$};

\begin{scope}[xshift=0cm,yshift=0cm,rotate=0,scale=1/1.5]
\draw[color=green] (3.5,3) circle (0.2) node{\scriptsize $1$};
\end{scope}

\begin{scope}[xshift=0cm,yshift=0cm,rotate=0,scale=1/1.5]
\draw[color=green] (-0.3,1.5) circle (0.2) node{\scriptsize $1$};
\end{scope}

\begin{scope}[xshift=0cm,yshift=0cm,rotate=0,scale=1/1.5,color=purple]
\draw[color=purple] (3.5,4.5) circle (0.2) node{\scriptsize $2$};
\end{scope}

\begin{scope}[xshift=0cm,yshift=0cm,rotate=0,scale=1/1.5,color=purple]
\draw[color=purple] (-1.3,2.8) circle (0.2) node{\scriptsize $2$};
\end{scope}

\begin{scope}[xshift=0cm,yshift=0cm,rotate=0,scale=1/1.5]
\draw[color=violet] (3.5,5.8)  node{\tiny $p-2$};
\node[draw,ellipse,color=violet] (S) at(3.5,5.8) {\ \ \ \ };
\end{scope}

\begin{scope}[xshift=0cm,yshift=0cm,rotate=0,scale=1/1.5]
\draw[color=violet] (-2.5,4)  node{\tiny $p-2$};
\node[draw,ellipse,color=violet] (S) at(-2.5,4) {\ \ \ \ };
\end{scope}

\begin{scope}[xshift=0cm,yshift=0cm,rotate=0,scale=1/1.5]
\draw[color=red] (3.5,6.8)  node{\tiny $p-1$};
\node[draw,ellipse,color=red] (S) at(3.5,6.8) {\ \ \ \ };
\end{scope}

\begin{scope}[xshift=0cm,yshift=0cm,rotate=0,scale=1/1.5]
\draw[color=red] (-4,5.5)  node{\tiny $p-1$};
\node[draw,ellipse,color=red] (S) at(-4,5.5) {\ \ \ \ };
\end{scope}

\begin{scope}[decoration={markings,mark=at position 0.5 with {\arrow{>}}}] 
    \draw[postaction={decorate},color=green] (5,3.5)--(0,1);
    \draw[postaction={decorate},color=purple] (5,3.5)--(0,2);
    \draw[postaction={decorate},color=violet] (5,3.5)--(0,5);
    \draw[postaction={decorate},color=red] (5,3.5)--(0,6); 
   	\draw[postaction={decorate},color=green] (0,0) .. controls +(-0.6,0.5) and +(-0.5,0) .. (0,2);  
   	\draw[postaction={decorate},color=purple] (0,0) .. controls +(-1.2,0.7) and +(-0.7,0) .. (0,3);  
   	\draw[postaction={decorate},color=violet] (0,0) .. controls +(-2.5,1.5) and +(-0.7,0) .. (0,6);  
   	\draw[postaction={decorate},color=red] (0,0) .. controls +(-4,3) and +(-2,-2) .. (0,7); 
\end{scope}

\draw (0,0) -- (3,2.1);
\draw[<-,>=latex] (3,2.1) -- (5,3.5);
\draw (0,7) -- (3,7-2.1);
\draw[<<-] (3,7-2.1) -- (5,3.5);

\begin{scope}[yshift=0cm]
\draw[->,>=latex] (0,0) -- (0,0.7);
\draw (0,0.6)--(0,1);
\end{scope}

\begin{scope}[yshift=1cm]
\draw[->,>=latex] (0,0) -- (0,0.7);
\draw (0,0.6)--(0,1);
\end{scope}

\begin{scope}[yshift=2cm]
\draw[->,>=latex] (0,0) -- (0,0.7);
\draw (0,0.6)--(0,1);
\end{scope}

\begin{scope}[yshift=5cm]
\draw[->,>=latex] (0,0) -- (0,0.7);
\draw (0,0.6)--(0,1);
\end{scope}

\begin{scope}[yshift=6cm]
\draw[->,>=latex] (0,0) -- (0,0.7);
\draw (0,0.6)--(0,1);
\end{scope}

\end{scope}

\end{tikzpicture}
\caption{A flip move and a tower of tetrahedra}\label{fig:even:Htriang:flip:tower}
\end{figure}

We can then decompose the polyhedra in Figure \ref{fig:even:Htriang:flip:tower} into ordered tetrahedra and obtain the H-triangulation of Figure \ref{fig:H:trig:even}. Along the way, in order to harmonize the notation with the small cases ($p=0,1$), we did the following arrow replacements:
\begin{itemize}
\item {Replace} full black simple arrow by simple arrow with circled $0$,
\item {replace} full black double arrow by simple arrow with circled $p+1$,
\item {replace} double arrow by simple arrow with circled $p$,
\item {replace} full white arrow by double full white arrow. 
\end{itemize}
Moreover, we cut the previous {polyhe}dron into $p+4$ tetrahedra, introducing new triangular faces $v$ (behind $e_{p+1},r,u$), $g$ (behind $f_p,s,u$), $s'$ (completing $m,m,s$), and $f_1, \ldots, f_{p-1}$ at each of the $p-1$ floors of the tower of Figure \ref{fig:even:Htriang:flip:tower}. We add the convention $f_0=e_1$ to account for the case $p=0$.

% H triang even finale
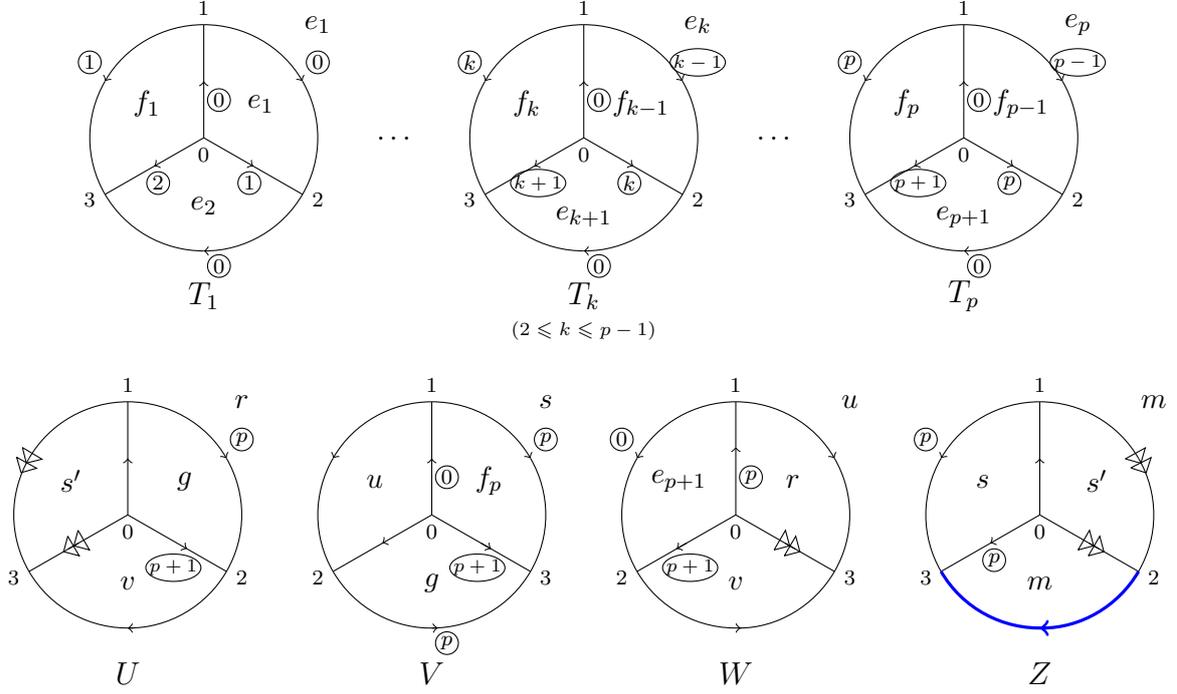
\begin{figure}[!h]
\begin{tikzpicture}
%%%%%%%%%%%%%%%%%%%%%%%%%%%%%%%%%%%%%%%%%%%%%%%%%%%%%%%%%%%%%%%

%Tétraèdre T1
\begin{scope}[xshift=1cm,yshift=0cm,rotate=0,scale=1.5]

\draw (0,-0.15) node{\scriptsize $0$} ;
\draw (0,1.15) node{\scriptsize $1$} ;
\draw (1,-0.55) node{\scriptsize $2$} ;
\draw (-1,-0.55) node{\scriptsize $3$} ;
\draw (1,1) node{$e_1$} ;
\draw (0,-0.6) node{$e_2$} ;
\draw (-0.5,0.3) node{$f_1$} ;
\draw (0.5,0.3) node{$e_1$} ;

\draw (0,-1.4) node{\large $T_1$} ;

\path [draw=black,postaction={on each segment={mid arrow=black}}]
(0,0)--(-1.732/2,-0.5);

\path [draw=black,postaction={on each segment={mid arrow =black}}]
(0,0)--(0,1);

\path [draw=black,postaction={on each segment={mid arrow =black}}]
(0,0)--(1.732/2,-0.5);

\draw[->](1.732/2,-0.5) arc (-30:-90:1);
\draw (0,-1) arc (-90:-150:1);

\draw[->](0,1) arc (90:30:1);
\draw (1.732/2,0.5) arc (30:-30:1);

\draw[->](0,1) arc (90:150:1);
\draw (-1.732/2,0.5) arc (150:210:1);

%
% number in a circle up right
\begin{scope}[xshift=0cm,yshift=0cm,rotate=0,scale=1/1.5]
\draw (1.5,1) circle (0.15) node{\scriptsize $0$};
\end{scope}

%
% number in a circle up right
\begin{scope}[xshift=0cm,yshift=0cm,rotate=0,scale=1/1.5]
\draw (0.2,0.5) circle (0.15) node{\scriptsize $0$};
\end{scope}

%
% number in a circle up right
\begin{scope}[xshift=0cm,yshift=0cm,rotate=0,scale=1/1.5]
\draw (0.2,-1.7) circle (0.15) node{\scriptsize $0$};
\end{scope}

% number in a circle down left
\begin{scope}[xshift=0cm,yshift=0cm,rotate=0,scale=1/1.5]
\draw (-0.6,-0.6) circle (0.15) node{\scriptsize $2$};
\end{scope}

% number in a circle up left
\begin{scope}[xshift=0cm,yshift=0cm,rotate=0,scale=1/1.5]
\draw (-1.5,1) circle (0.15) node{\scriptsize $1$};
\end{scope}

% number in a circle down right
\begin{scope}[xshift=0cm,yshift=0cm,rotate=0,scale=1/1.5]
\draw (0.6,-0.6) circle (0.15) node{\scriptsize $1$};
\end{scope}

\end{scope}
%%%%%%%%%%%%%%%%%%%%%%%%%%%%%%%%%%%%%%%%%%%%%%%%%%%%%%%%%%%%%%%

%dots
\draw (3.5,0) node{$\ldots$} ;
\draw (8.5,0) node{$\ldots$} ;
%%%%%%%%%%%%%%%%%%%%%%%%%%%%%%%%%%%%%%%%%%%%%%%%%%%%%%%%%%%%%%%

%Tétraèdre Tk
\begin{scope}[xshift=6cm,yshift=0cm,rotate=0,scale=1.5]

\draw (0,-0.15) node{\scriptsize $0$} ;
\draw (0,1.15) node{\scriptsize $1$} ;
\draw (1,-0.55) node{\scriptsize $2$} ;
\draw (-1,-0.55) node{\scriptsize $3$} ;
\draw (1,1) node{$e_k$} ;
\draw (0,-0.7) node{$e_{k+1}$} ;
\draw (-0.5,0.3) node{$f_k$} ;
\draw (0.5,0.3) node{$f_{k-1}$} ;

\draw (0,-1.4) node{\large $T_k$} ;
\draw (0,-1.7) node{\tiny $(2\leqslant k \leqslant p-1)$} ;

\path [draw=black,postaction={on each segment={mid arrow=black}}]
(0,0)--(-1.732/2,-0.5);

\path [draw=black,postaction={on each segment={mid arrow =black}}]
(0,0)--(0,1);

\path [draw=black,postaction={on each segment={mid arrow =black}}]
(0,0)--(1.732/2,-0.5);

\draw[->](1.732/2,-0.5) arc (-30:-90:1);
\draw (0,-1) arc (-90:-150:1);

\draw[->](0,1) arc (90:30:1);
\draw (1.732/2,0.5) arc (30:-30:1);

\draw[->](0,1) arc (90:150:1);
\draw (-1.732/2,0.5) arc (150:210:1);

%
% number in a circle up right
\begin{scope}[xshift=0cm,yshift=0cm,rotate=0,scale=1/1.5]
\draw (0.2,0.5) circle (0.15) node{\scriptsize $0$};
\end{scope}

%
% number in a circle up right
\begin{scope}[xshift=0cm,yshift=0cm,rotate=0,scale=1/1.5]
\draw (0.2,-1.7) circle (0.15) node{\scriptsize $0$};
\end{scope}

% number in a circle up left
\begin{scope}[xshift=0cm,yshift=0cm,rotate=0,scale=1/1.5]
\draw (-1.5,1) circle (0.15) node{\scriptsize $k$};
\end{scope}

% number in a circle up right
\begin{scope}[xshift=0cm,yshift=0cm,rotate=0,scale=1/1.5]
\draw (1.5,1)  node{\tiny $k-1$};
\node[draw,ellipse] (S) at(1.5,1) {\ \ \ };
\end{scope}

% number in a circle down left
\begin{scope}[xshift=0cm,yshift=0cm,rotate=0,scale=1/1.5]
\draw (-0.6,-0.6)  node{\tiny $k+1$};
\node[draw,ellipse] (S) at(-0.6,-0.6) {\ \ \ };
\end{scope}

% number in a circle down right
\begin{scope}[xshift=0cm,yshift=0cm,rotate=0,scale=1/1.5]
\draw (0.6,-0.6) circle (0.15) node{\scriptsize $k$};
\end{scope}

\end{scope}
%%%%%%%%%%%%%%%%%%%%%%%%%%%%%%%%%%%%%%%%%%%%%%%%%%%%%%%%%%%%%%%

%Tétraèdre Tk
\begin{scope}[xshift=11cm,yshift=0cm,rotate=0,scale=1.5]

\draw (0,-0.15) node{\scriptsize $0$} ;
\draw (0,1.15) node{\scriptsize $1$} ;
\draw (1,-0.55) node{\scriptsize $2$} ;
\draw (-1,-0.55) node{\scriptsize $3$} ;
\draw (1,1) node{$e_p$} ;
\draw (0,-0.7) node{$e_{p+1}$} ;
\draw (-0.5,0.3) node{$f_p$} ;
\draw (0.5,0.3) node{$f_{p-1}$} ;

\draw (0,-1.4) node{\large $T_p$} ;

\path [draw=black,postaction={on each segment={mid arrow=black}}]
(0,0)--(-1.732/2,-0.5);

\path [draw=black,postaction={on each segment={mid arrow =black}}]
(0,0)--(0,1);

\path [draw=black,postaction={on each segment={mid arrow =black}}]
(0,0)--(1.732/2,-0.5);

\draw[->](1.732/2,-0.5) arc (-30:-90:1);
\draw (0,-1) arc (-90:-150:1);

\draw[->](0,1) arc (90:30:1);
\draw (1.732/2,0.5) arc (30:-30:1);

\draw[->](0,1) arc (90:150:1);
\draw (-1.732/2,0.5) arc (150:210:1);

%
% number in a circle up right
\begin{scope}[xshift=0cm,yshift=0cm,rotate=0,scale=1/1.5]
\draw (0.2,0.5) circle (0.15) node{\scriptsize $0$};
\end{scope}

\begin{scope}[xshift=0cm,yshift=0cm,rotate=0,scale=1/1.5]
\draw (0.2,-1.7) circle (0.15) node{\scriptsize $0$};
\end{scope}

% number in a circle up left
\begin{scope}[xshift=0cm,yshift=0cm,rotate=0,scale=1/1.5]
\draw (-1.5,1) circle (0.15) node{\scriptsize $p$};
\end{scope}

% number in a circle up right
\begin{scope}[xshift=0cm,yshift=0cm,rotate=0,scale=1/1.5]
\draw (1.5,1)  node{\tiny $p-1$};
\node[draw,ellipse] (S) at(1.5,1) {\ \ \ };
\end{scope}

% number in a circle down left
\begin{scope}[xshift=0cm,yshift=0cm,rotate=0,scale=1/1.5]
\draw (-0.6,-0.6)  node{\tiny $p+1$};
\node[draw,ellipse] (S) at(-0.6,-0.6) {\ \ \ };
\end{scope}

% number in a circle down right
\begin{scope}[xshift=0cm,yshift=0cm,rotate=0,scale=1/1.5]
\draw (0.6,-0.6) circle (0.15) node{\scriptsize $p$};
\end{scope}

\end{scope}
%%%%%%%%%%%%%%%%%%%%%%%%%%%%%%%%%%%%%%%%%%%%%%%%%%%%%%%%%%%%%%%

%%%%%%%%%%%%%%%%%%%%%%%%%%%%%%%%%%%%%%%%%%%%%%%%%%%%%%%%%%%%%%%

%Tétraèdre U
\begin{scope}[xshift=0cm,yshift=-5cm,rotate=0,scale=1.5]

\draw (0,-0.15) node{\scriptsize $0$} ;
\draw (0,1.15) node{\scriptsize $1$} ;
\draw (1,-0.55) node{\scriptsize $2$} ;
\draw (-1,-0.55) node{\scriptsize $3$} ;
\draw (1,1) node{$r$} ;
\draw (0,-0.6) node{$v$} ;
\draw (-0.5,0.3) node{$s'$} ;
\draw (0.5,0.3) node{$g$} ;

\draw (0,-1.4) node{\large $U$} ;

\path [draw=black,postaction={on each segment={mid arrow =black}}]
(0,0)--(0,1);

\draw[->] (0,0)--(1.732*0.3,-1*0.3);
\draw (1.732*0.3,-1*0.3)--(1.732/2,-1/2);

%\draw (0,0)--(-1.732*0.3,-1*0.3);
%\draw (-1.732*0.3,-1*0.3)--(-1.732/2,-1/2);

%\path [draw=black, postaction={on each segment={mid arrow d =black}}]
%(0,0)--(-1.732/2,-0.5);

%\begin{scope}[xshift=-0.433cm, yshift=-0.25cm, rotate=120, scale=0.1]
%\draw (1,0) -- (0,1) -- (-1,0) -- (1,0);
%\end{scope}

\begin{scope}[xshift=0cm,yshift=0cm,rotate=0,scale=1/1.5]
\draw (0.6,-0.7)  node{\tiny $p+1$};
\node[draw,ellipse] (S) at(0.6,-0.7) {\ \ \ };
\end{scope}

\begin{scope}[xshift=0cm,yshift=0cm,rotate=0,scale=1/1.5]
\draw (1.5,1) circle (0.15) node{\scriptsize $p$};
\end{scope}

\draw[color=black][->](1.732/2,-0.5) arc (-30:-90:1);
\draw[color=black] (-1.732/2,-0.5) arc (-150:-87:1);

\draw[color=black][->](0,1) arc (90:30:1);
\draw[color=black] (1.732/2,0.5) arc (30:-30:1);

%\draw[color=black][->>](0,1) arc (90:150:1);
%\draw[color=black] (-1.732/2,0.5) arc (150:210:1);

%\begin{scope}[xshift=-0.866cm, yshift=0.5cm, rotate=150, scale=0.1]
%\draw (1,0) -- (0,1) -- (-1,0) -- (1,0);
%\end{scope}

%flèches blanches

\draw[color=black](-1.732/2*0.6,-0.5*0.6) -- (0,0);
\draw[color=black](-1.732/4,-0.25) -- (-1.732/2,-0.5);

\begin{scope}[xshift=-0.386cm, yshift=-0.223cm, rotate=120, scale=0.1]
\draw (1,0) -- (0,1) -- (-1,0) -- (1,0);
\end{scope}

\begin{scope}[xshift=-0.476cm, yshift=-0.275cm, rotate=120, scale=0.1]
\draw (1,0) -- (0,1) -- (-1,0) -- (1,0);
\end{scope}

\draw[color=black](0,1) arc (90:150:1);
\draw[color=black] (-1.732/2,0.5) arc (150:210:1);

%\draw[color=black][->](0,1) arc (90:150:1);
%\draw[color=black] (-1.732/2,0.5) arc (150:210:1);

\begin{scope}[xshift=-0.838cm, yshift=0.544cm, rotate=149, scale=0.1]
\draw (1,0) -- (0,1) -- (-1,0) -- (1,0);
\end{scope}

\begin{scope}[xshift=-0.887cm, yshift=0.461cm, rotate=151, scale=0.1]
\draw (1,0) -- (0,1) -- (-1,0) -- (1,0);
\end{scope}

\end{scope}
%%%%%%%%%%%%%%%%%%%%%%%%%%%%%%%%%%%%%%%%%%%%%%%%%%%%%%%%%%%%%%%

%Tétraèdre V
\begin{scope}[xshift=4cm,yshift=-5cm,rotate=0,scale=1.5]

\draw (0,-0.15) node{\scriptsize $0$} ;
\draw (0,1.15) node{\scriptsize $1$} ;
\draw (1,-0.55) node{\scriptsize $3$} ;
\draw (-1,-0.55) node{\scriptsize $2$} ;
\draw (1,1) node{$s$} ;
\draw (0,-0.6) node{$g$} ;
\draw (-0.5,0.3) node{$u$} ;
\draw (0.5,0.3) node{$f_p$} ;

\draw (0,-1.4) node{\large $V$} ;

\begin{scope}[xshift=0cm,yshift=0cm,rotate=0,scale=1/1.5]
\draw (0.6,-0.7)  node{\tiny $p+1$};
\node[draw,ellipse] (S) at(0.6,-0.7) {\ \ \ };
\end{scope}

% number in a circle up right
\begin{scope}[xshift=0cm,yshift=0cm,rotate=0,scale=1/1.5]
\draw (1.5,1) circle (0.15) node{\scriptsize $p$};
\end{scope}

%
% number in a circle up right
\begin{scope}[xshift=0cm,yshift=0cm,rotate=0,scale=1/1.5]
\draw (0.2,0.5) circle (0.15) node{\scriptsize $0$};
\end{scope}

%
% number in a circle up right
\begin{scope}[xshift=0cm,yshift=0cm,rotate=0,scale=1/1.5]
\draw (0.2,-1.7) circle (0.15) node{\scriptsize $p$};
\end{scope}

\path [draw=black,postaction={on each segment={mid arrow =black}}]
(0,0)--(-1.732/2,-0.5);

\draw[color=black][->](0,0)--(1.732/2*0.6,-0.5*0.6);
\draw[color=black](1.732/4,-0.25) -- (1.732/2,-0.5);

\draw[color=black](1.732/2,-0.5) arc (-30:-90:1);
\draw[color=black][->] (-1.732/2,-0.5) arc (-150:-87:1);

\path [draw=black,postaction={on each segment={mid arrow  =black}}]
(0,0)--(0,1);

\draw[color=black][->](0,1) arc (90:30:1);
\draw[color=black] (1.732/2,0.5) arc (30:-30:1);

\draw[color=black][->](0,1) arc (90:150:1);
\draw[color=black] (-1.732/2,0.5) arc (150:210:1);
\end{scope}
%%%%%%%%%%%%%%%%%%%%%%%%%%%%%%%%%%%%%%%%%%%%%%%%%%%%%%%%%%%%%%%

%Tétraèdre W
\begin{scope}[xshift=8cm,yshift=-5cm,rotate=0,scale=1.5]

\draw (0,-0.15) node{\scriptsize $0$} ;
\draw (0,1.15) node{\scriptsize $1$} ;
\draw (1,-0.55) node{\scriptsize $3$} ;
\draw (-1,-0.55) node{\scriptsize $2$} ;
\draw (1,1) node{$u$} ;
\draw (0,-0.6) node{$v$} ;
\draw (-0.5,0.3) node{$e_{p+1}$} ;
\draw (0.5,0.3) node{$r$} ;

\draw (0,-1.4) node{\large $W$} ;

\draw[->](0,0)--(0,0.6);
\draw(0,0.6)--(0,1);

\begin{scope}[xshift=0cm,yshift=0cm,rotate=0,scale=1/1.5]
\draw (-0.6,-0.7)  node{\tiny $p+1$};
\node[draw,ellipse] (S) at(-0.6,-0.7) {\ \ \ };
\end{scope}

% number in a circle up right
\begin{scope}[xshift=0cm,yshift=0cm,rotate=0,scale=1/1.5]
\draw (-1.5,1) circle (0.15) node{\scriptsize $0$};
\end{scope}

%
% number in a circle up right
\begin{scope}[xshift=0cm,yshift=0cm,rotate=0,scale=1/1.5]
\draw (0.2,0.5) circle (0.15) node{\scriptsize $p$};
\end{scope}

\draw[color=black][->](0,0)--(-1.732/2*0.6,-0.5*0.6);
\draw[color=black](-1.732/4,-0.25) -- (-1.732/2,-0.5);

%\path [draw=black, postaction={on each segment={mid arrow d =black}}]
%%(0,0)--(1.732/2,-0.5);

\draw[color=black][-<](1.732/2,-0.5) arc (-30:-90:1);
\draw[color=black] (-1.732/2,-0.5) arc (-150:-87:1);

\draw[color=black][->](0,1) arc (90:30:1);
\draw[color=black] (1.732/2,0.5) arc (30:-30:1);

\draw[color=black][->](0,1) arc (90:150:1);
\draw[color=black] (-1.732/2,0.5) arc (150:210:1);

%flèche blanche

\draw[color=black](1.732/2*0.6,-0.5*0.6) -- (0,0);
\draw[color=black](1.732/4,-0.25) -- (1.732/2,-0.5);

\begin{scope}[xshift=0.386cm, yshift=-0.223cm, rotate=-120, scale=0.1]
\draw (1,0) -- (0,1) -- (-1,0) -- (1,0);
\end{scope}

\begin{scope}[xshift=0.476cm, yshift=-0.275cm, rotate=-120, scale=0.1]
\draw (1,0) -- (0,1) -- (-1,0) -- (1,0);
\end{scope}

\end{scope}
%%%%%%%%%%%%%%%%%%%%%%%%%%%%%%%%%%%%%%%%%%%%%%%%%%%%%%%%%%%%%%%

%Tétraèdre Z
\begin{scope}[xshift=12cm,yshift=-5cm,rotate=0,scale=1.5]

\draw (0,-0.15) node{\scriptsize $0$} ;
\draw (0,1.15) node{\scriptsize $1$} ;
\draw (1,-0.55) node{\scriptsize $2$} ;
\draw (-1,-0.55) node{\scriptsize $3$} ;
\draw (1,1) node{$m$} ;
\draw (0,-0.6) node{$m$} ;
\draw (-0.5,0.3) node{$s$} ;
\draw (0.5,0.3) node{$s'$} ;

\draw (0,-1.4) node{\large $Z$} ;

\path [draw=black,postaction={on each segment={mid arrow =black}}]
(0,0)--(0,1);

\path [draw=black, postaction={on each segment={mid arrow =black}}]
(0,0)--(-1.732/2,-0.5);

%\path [draw=black, postaction={on each segment={mid arrow d =black}}]
%(0,0)--(1.732/2,-0.5);

% number in a circle down left
\begin{scope}[xshift=0cm,yshift=0cm,rotate=0,scale=1/1.5]
\draw (-0.6,-0.6) circle (0.15) node{\scriptsize $p$};
\end{scope}

% number in a circle up left
\begin{scope}[xshift=0cm,yshift=0cm,rotate=0,scale=1/1.5]
\draw (-1.5,1) circle (0.15) node{\scriptsize $p$};
\end{scope}

\draw[very thick,color=blue][->](1.732/2,-0.5) arc (-30:-90:1);
\draw[very thick,color=blue] (-1.732/2,-0.5) arc (-150:-87:1);

%\draw[color=black][->>](0,1) arc (90:30:1);
%\draw[color=black] (1.732/2,0.5) arc (30:-30:1);

\draw[color=black][->](0,1) arc (90:150:1);
\draw[color=black] (-1.732/2,0.5) arc (150:210:1);

%flèches blanches

\draw[color=black](1.732/2*0.6,-0.5*0.6) -- (0,0);
\draw[color=black](1.732/4,-0.25) -- (1.732/2,-0.5);

\begin{scope}[xshift=0.386cm, yshift=-0.223cm, rotate=-120, scale=0.1]
\draw (1,0) -- (0,1) -- (-1,0) -- (1,0);
\end{scope}

\begin{scope}[xshift=0.476cm, yshift=-0.275cm, rotate=-120, scale=0.1]
\draw (1,0) -- (0,1) -- (-1,0) -- (1,0);
\end{scope}

\draw[color=black](0,1) arc (90:30:1);
\draw[color=black] (1.732/2,0.5) arc (30:-30:1);

%\draw[color=black][->](0,1) arc (90:150:1);
%\draw[color=black] (-1.732/2,0.5) arc (150:210:1);

\begin{scope}[xshift=0.838cm, yshift=0.544cm, rotate=-151, scale=0.1]
\draw (1,0) -- (0,1) -- (-1,0) -- (1,0);
\end{scope}

\begin{scope}[xshift=0.887cm, yshift=0.461cm, rotate=-153, scale=0.1]
\draw (1,0) -- (0,1) -- (-1,0) -- (1,0);
\end{scope}

\end{scope}
%%%%%%%%%%%%%%%%%%%%%%%%%%%%%%%%%%%%%%%%%%%%%%%%%%%%%%%%%%%%%%%
\end{tikzpicture}
\caption{An H-triangulation for $(S^3,K_n)$, $n$ even, $n \geqslant 4$, with $p=\frac{n-2}{2}$} \label{fig:H:trig:even}
\end{figure}

In the H-triangulation of Figure \ref{fig:H:trig:even} there are
\begin{itemize}
\item $1$ common vertex,
\item $p+5 = \frac{n+8}{2}$ edges 
(simple arrow $\overrightarrow{\eta_s}$, double white triangle arrow $\overrightarrow{\eta_d}$, blue simple arrow $\overrightarrow{K_n}$, and the simple arrows $\overrightarrow{\eta_0}, \ldots, \overrightarrow{\eta_{p+1}}$ indexed by $0, \ldots p+1$ in circles)
\item $2p+8 = n+6$ faces ($e_1, \ldots, e_{p+1}, f_1, \ldots, f_{p},g, m,r,s,s',u,v  $),
\item $p+4 = \frac{n+6}{2}$ tetrahedra ($T_1, \ldots, T_{p},  U,  V, W, Z$) .
\end{itemize}

Finally, by collapsing the tetrahedron $Z$ (like in the previous section) we obtain the ideal triangulation of $S^3 \setminus K_n$ described in Figure \ref{fig:id:trig:even}. We identified the face $s'$ with $s$ and the white triangle arrow with the arrow circled by $p$.

% id triang even finale
\begin{figure}[!h]
\begin{tikzpicture}
%%%%%%%%%%%%%%%%%%%%%%%%%%%%%%%%%%%%%%%%%%%%%%%%%%%%%%%%%%%%%%%

%Tétraèdre T1
\begin{scope}[xshift=1cm,yshift=0cm,rotate=0,scale=1.5]

\draw (0,-0.15) node{\scriptsize $0$} ;
\draw (0,1.15) node{\scriptsize $1$} ;
\draw (1,-0.55) node{\scriptsize $2$} ;
\draw (-1,-0.55) node{\scriptsize $3$} ;
\draw (1,1) node{$e_1$} ;
\draw (0,-0.6) node{$e_2$} ;
\draw (-0.5,0.3) node{$f_1$} ;
\draw (0.5,0.3) node{$e_1$} ;

\draw (0,-1.4) node{\large $T_1$} ;

\path [draw=black,postaction={on each segment={mid arrow=black}}]
(0,0)--(-1.732/2,-0.5);

\path [draw=black,postaction={on each segment={mid arrow =black}}]
(0,0)--(0,1);

\path [draw=black,postaction={on each segment={mid arrow =black}}]
(0,0)--(1.732/2,-0.5);

\draw[->](1.732/2,-0.5) arc (-30:-90:1);
\draw (0,-1) arc (-90:-150:1);

\draw[->](0,1) arc (90:30:1);
\draw (1.732/2,0.5) arc (30:-30:1);

\draw[->](0,1) arc (90:150:1);
\draw (-1.732/2,0.5) arc (150:210:1);

%
% number in a circle up right
\begin{scope}[xshift=0cm,yshift=0cm,rotate=0,scale=1/1.5]
\draw (1.5,1) circle (0.15) node{\scriptsize $0$};
\end{scope}

%
% number in a circle up right
\begin{scope}[xshift=0cm,yshift=0cm,rotate=0,scale=1/1.5]
\draw (0.2,0.5) circle (0.15) node{\scriptsize $0$};
\end{scope}

%
% number in a circle up right
\begin{scope}[xshift=0cm,yshift=0cm,rotate=0,scale=1/1.5]
\draw (0.2,-1.7) circle (0.15) node{\scriptsize $0$};
\end{scope}

% number in a circle down left
\begin{scope}[xshift=0cm,yshift=0cm,rotate=0,scale=1/1.5]
\draw (-0.6,-0.6) circle (0.15) node{\scriptsize $2$};
\end{scope}

% number in a circle up left
\begin{scope}[xshift=0cm,yshift=0cm,rotate=0,scale=1/1.5]
\draw (-1.5,1) circle (0.15) node{\scriptsize $1$};
\end{scope}

% number in a circle down right
\begin{scope}[xshift=0cm,yshift=0cm,rotate=0,scale=1/1.5]
\draw (0.6,-0.6) circle (0.15) node{\scriptsize $1$};
\end{scope}

\end{scope}
%%%%%%%%%%%%%%%%%%%%%%%%%%%%%%%%%%%%%%%%%%%%%%%%%%%%%%%%%%%%%%%

%dots
\draw (3.5,0) node{$\ldots$} ;
\draw (8.5,0) node{$\ldots$} ;
%%%%%%%%%%%%%%%%%%%%%%%%%%%%%%%%%%%%%%%%%%%%%%%%%%%%%%%%%%%%%%%

%Tétraèdre Tk
\begin{scope}[xshift=6cm,yshift=0cm,rotate=0,scale=1.5]

\draw (0,-0.15) node{\scriptsize $0$} ;
\draw (0,1.15) node{\scriptsize $1$} ;
\draw (1,-0.55) node{\scriptsize $2$} ;
\draw (-1,-0.55) node{\scriptsize $3$} ;
\draw (1,1) node{$e_k$} ;
\draw (0,-0.7) node{$e_{k+1}$} ;
\draw (-0.5,0.3) node{$f_k$} ;
\draw (0.5,0.3) node{$f_{k-1}$} ;

\draw (0,-1.4) node{\large $T_k$} ;
\draw (0,-1.7) node{\tiny $(2\leqslant k \leqslant p-1)$} ;

\path [draw=black,postaction={on each segment={mid arrow=black}}]
(0,0)--(-1.732/2,-0.5);

\path [draw=black,postaction={on each segment={mid arrow =black}}]
(0,0)--(0,1);

\path [draw=black,postaction={on each segment={mid arrow =black}}]
(0,0)--(1.732/2,-0.5);

\draw[->](1.732/2,-0.5) arc (-30:-90:1);
\draw (0,-1) arc (-90:-150:1);

\draw[->](0,1) arc (90:30:1);
\draw (1.732/2,0.5) arc (30:-30:1);

\draw[->](0,1) arc (90:150:1);
\draw (-1.732/2,0.5) arc (150:210:1);

%
% number in a circle up right
\begin{scope}[xshift=0cm,yshift=0cm,rotate=0,scale=1/1.5]
\draw (0.2,0.5) circle (0.15) node{\scriptsize $0$};
\end{scope}

%
% number in a circle up right
\begin{scope}[xshift=0cm,yshift=0cm,rotate=0,scale=1/1.5]
\draw (0.2,-1.7) circle (0.15) node{\scriptsize $0$};
\end{scope}

% number in a circle up left
\begin{scope}[xshift=0cm,yshift=0cm,rotate=0,scale=1/1.5]
\draw (-1.5,1) circle (0.15) node{\scriptsize $k$};
\end{scope}

% number in a circle up right
\begin{scope}[xshift=0cm,yshift=0cm,rotate=0,scale=1/1.5]
\draw (1.5,1)  node{\tiny $k-1$};
\node[draw,ellipse] (S) at(1.5,1) {\ \ \ };
\end{scope}

% number in a circle down left
\begin{scope}[xshift=0cm,yshift=0cm,rotate=0,scale=1/1.5]
\draw (-0.6,-0.6)  node{\tiny $k+1$};
\node[draw,ellipse] (S) at(-0.6,-0.6) {\ \ \ };
\end{scope}

% number in a circle down right
\begin{scope}[xshift=0cm,yshift=0cm,rotate=0,scale=1/1.5]
\draw (0.6,-0.6) circle (0.15) node{\scriptsize $k$};
\end{scope}

\end{scope}
%%%%%%%%%%%%%%%%%%%%%%%%%%%%%%%%%%%%%%%%%%%%%%%%%%%%%%%%%%%%%%%

%Tétraèdre Tk
\begin{scope}[xshift=11cm,yshift=0cm,rotate=0,scale=1.5]

\draw (0,-0.15) node{\scriptsize $0$} ;
\draw (0,1.15) node{\scriptsize $1$} ;
\draw (1,-0.55) node{\scriptsize $2$} ;
\draw (-1,-0.55) node{\scriptsize $3$} ;
\draw (1,1) node{$e_p$} ;
\draw (0,-0.7) node{$e_{p+1}$} ;
\draw (-0.5,0.3) node{$f_p$} ;
\draw (0.5,0.3) node{$f_{p-1}$} ;

\draw (0,-1.4) node{\large $T_p$} ;

\path [draw=black,postaction={on each segment={mid arrow=black}}]
(0,0)--(-1.732/2,-0.5);

\path [draw=black,postaction={on each segment={mid arrow =black}}]
(0,0)--(0,1);

\path [draw=black,postaction={on each segment={mid arrow =black}}]
(0,0)--(1.732/2,-0.5);

\draw[->](1.732/2,-0.5) arc (-30:-90:1);
\draw (0,-1) arc (-90:-150:1);

\draw[->](0,1) arc (90:30:1);
\draw (1.732/2,0.5) arc (30:-30:1);

\draw[->](0,1) arc (90:150:1);
\draw (-1.732/2,0.5) arc (150:210:1);

%
% number in a circle up right
\begin{scope}[xshift=0cm,yshift=0cm,rotate=0,scale=1/1.5]
\draw (0.2,0.5) circle (0.15) node{\scriptsize $0$};
\end{scope}

\begin{scope}[xshift=0cm,yshift=0cm,rotate=0,scale=1/1.5]
\draw (0.2,-1.7) circle (0.15) node{\scriptsize $0$};
\end{scope}

% number in a circle up left
\begin{scope}[xshift=0cm,yshift=0cm,rotate=0,scale=1/1.5]
\draw (-1.5,1) circle (0.15) node{\scriptsize $p$};
\end{scope}

% number in a circle up right
\begin{scope}[xshift=0cm,yshift=0cm,rotate=0,scale=1/1.5]
\draw (1.5,1)  node{\tiny $p-1$};
\node[draw,ellipse] (S) at(1.5,1) {\ \ \ };
\end{scope}

% number in a circle down left
\begin{scope}[xshift=0cm,yshift=0cm,rotate=0,scale=1/1.5]
\draw (-0.6,-0.6)  node{\tiny $p+1$};
\node[draw,ellipse] (S) at(-0.6,-0.6) {\ \ \ };
\end{scope}

% number in a circle down right
\begin{scope}[xshift=0cm,yshift=0cm,rotate=0,scale=1/1.5]
\draw (0.6,-0.6) circle (0.15) node{\scriptsize $p$};
\end{scope}

\end{scope}
%%%%%%%%%%%%%%%%%%%%%%%%%%%%%%%%%%%%%%%%%%%%%%%%%%%%%%%%%%%%%%%

%%%%%%%%%%%%%%%%%%%%%%%%%%%%%%%%%%%%%%%%%%%%%%%%%%%%%%%%%%%%%%%

%Tétraèdre U
\begin{scope}[xshift=1cm,yshift=-5cm,rotate=0,scale=1.5]

\draw (0,-0.15) node{\scriptsize $0$} ;
\draw (0,1.15) node{\scriptsize $1$} ;
\draw (1,-0.55) node{\scriptsize $2$} ;
\draw (-1,-0.55) node{\scriptsize $3$} ;
\draw (1,1) node{$r$} ;
\draw (0,-0.6) node{$v$} ;
\draw (-0.5,0.3) node{$s$} ;
\draw (0.5,0.3) node{$g$} ;

\draw (0,-1.4) node{\large $U$} ;

\path [draw=black,postaction={on each segment={mid arrow =black}}]
(0,0)--(0,1);

\draw[->] (0,0)--(1.732*0.3,-1*0.3);
\draw (1.732*0.3,-1*0.3)--(1.732/2,-1/2);

\draw[->] (0,0)--(-1.732*0.3,-1*0.3);
\draw (-1.732*0.3,-1*0.3)--(-1.732/2,-1/2);

\begin{scope}[xshift=0cm,yshift=0cm,rotate=0,scale=1/1.5]
\draw (0.6,-0.7)  node{\tiny $p+1$};
\node[draw,ellipse] (S) at(0.6,-0.7) {\ \ \ };
\end{scope}

\begin{scope}[xshift=0cm,yshift=0cm,rotate=0,scale=1/1.5]
\draw (1.5,1) circle (0.15) node{\scriptsize $p$};
\end{scope}

% number in a circle down left
\begin{scope}[xshift=0cm,yshift=0cm,rotate=0,scale=1/1.5]
\draw (-0.6,-0.6) circle (0.15) node{\scriptsize $p$};
\end{scope}

% number in a circle up left
\begin{scope}[xshift=0cm,yshift=0cm,rotate=0,scale=1/1.5]
\draw (-1.5,1) circle (0.15) node{\scriptsize $p$};
\end{scope}

\draw[color=black][->](1.732/2,-0.5) arc (-30:-90:1);
\draw[color=black] (-1.732/2,-0.5) arc (-150:-87:1);

\draw[color=black][->](0,1) arc (90:30:1);
\draw[color=black] (1.732/2,0.5) arc (30:-30:1);

\draw[color=black][->](0,1) arc (90:150:1);
\draw[color=black] (-1.732/2,0.5) arc (150:210:1);

\end{scope}
%%%%%%%%%%%%%%%%%%%%%%%%%%%%%%%%%%%%%%%%%%%%%%%%%%%%%%%%%%%%%%%

%Tétraèdre V
\begin{scope}[xshift=6cm,yshift=-5cm,rotate=0,scale=1.5]

\draw (0,-0.15) node{\scriptsize $0$} ;
\draw (0,1.15) node{\scriptsize $1$} ;
\draw (1,-0.55) node{\scriptsize $3$} ;
\draw (-1,-0.55) node{\scriptsize $2$} ;
\draw (1,1) node{$s$} ;
\draw (0,-0.6) node{$g$} ;
\draw (-0.5,0.3) node{$u$} ;
\draw (0.5,0.3) node{$f_p$} ;

\draw (0,-1.4) node{\large $V$} ;

\begin{scope}[xshift=0cm,yshift=0cm,rotate=0,scale=1/1.5]
\draw (0.6,-0.7)  node{\tiny $p+1$};
\node[draw,ellipse] (S) at(0.6,-0.7) {\ \ \ };
\end{scope}

% number in a circle up right
\begin{scope}[xshift=0cm,yshift=0cm,rotate=0,scale=1/1.5]
\draw (1.5,1) circle (0.15) node{\scriptsize $p$};
\end{scope}

%
% number in a circle up right
\begin{scope}[xshift=0cm,yshift=0cm,rotate=0,scale=1/1.5]
\draw (0.2,0.5) circle (0.15) node{\scriptsize $0$};
\end{scope}

%
% number in a circle up right
\begin{scope}[xshift=0cm,yshift=0cm,rotate=0,scale=1/1.5]
\draw (0.2,-1.7) circle (0.15) node{\scriptsize $p$};
\end{scope}

\path [draw=black,postaction={on each segment={mid arrow =black}}]
(0,0)--(-1.732/2,-0.5);

\draw[color=black][->](0,0)--(1.732/2*0.6,-0.5*0.6);
\draw[color=black](1.732/4,-0.25) -- (1.732/2,-0.5);

\draw[color=black](1.732/2,-0.5) arc (-30:-90:1);
\draw[color=black][->] (-1.732/2,-0.5) arc (-150:-87:1);

\path [draw=black,postaction={on each segment={mid arrow  =black}}]
(0,0)--(0,1);

\draw[color=black][->](0,1) arc (90:30:1);
\draw[color=black] (1.732/2,0.5) arc (30:-30:1);

\draw[color=black][->](0,1) arc (90:150:1);
\draw[color=black] (-1.732/2,0.5) arc (150:210:1);
\end{scope}
%%%%%%%%%%%%%%%%%%%%%%%%%%%%%%%%%%%%%%%%%%%%%%%%%%%%%%%%%%%%%%%

%Tétraèdre W
\begin{scope}[xshift=11cm,yshift=-5cm,rotate=0,scale=1.5]

\draw (0,-0.15) node{\scriptsize $0$} ;
\draw (0,1.15) node{\scriptsize $1$} ;
\draw (1,-0.55) node{\scriptsize $3$} ;
\draw (-1,-0.55) node{\scriptsize $2$} ;
\draw (1,1) node{$u$} ;
\draw (0,-0.6) node{$v$} ;
\draw (-0.5,0.3) node{$e_{p+1}$} ;
\draw (0.5,0.3) node{$r$} ;

\draw (0,-1.4) node{\large $W$} ;

\draw[->](0,0)--(0,0.6);
\draw(0,0.6)--(0,1);

\begin{scope}[xshift=0cm,yshift=0cm,rotate=0,scale=1/1.5]
\draw (0.6,-0.6) circle (0.15) node{\scriptsize $p$};
\end{scope}

\begin{scope}[xshift=0cm,yshift=0cm,rotate=0,scale=1/1.5]
\draw (-0.6,-0.7)  node{\tiny $p+1$};
\node[draw,ellipse] (S) at(-0.6,-0.7) {\ \ \ };
\end{scope}

% number in a circle up right
\begin{scope}[xshift=0cm,yshift=0cm,rotate=0,scale=1/1.5]
\draw (-1.5,1) circle (0.15) node{\scriptsize $0$};
\end{scope}

%
% number in a circle up right
\begin{scope}[xshift=0cm,yshift=0cm,rotate=0,scale=1/1.5]
\draw (0.2,0.5) circle (0.15) node{\scriptsize $p$};
\end{scope}

\draw[color=black][->](0,0)--(-1.732/2*0.6,-0.5*0.6);
\draw[color=black](-1.732/4,-0.25) -- (-1.732/2,-0.5);

\draw[color=black,<-](1.732/2*0.6,-0.5*0.6) -- (0,0);
\draw[color=black](1.732/4,-0.25) -- (1.732/2,-0.5);

\draw[color=black][-<](1.732/2,-0.5) arc (-30:-90:1);
\draw[color=black] (-1.732/2,-0.5) arc (-150:-87:1);

\draw[color=black][->](0,1) arc (90:30:1);
\draw[color=black] (1.732/2,0.5) arc (30:-30:1);

\draw[color=black][->](0,1) arc (90:150:1);
\draw[color=black] (-1.732/2,0.5) arc (150:210:1);
\end{scope}
%%%%%%%%%%%%%%%%%%%%%%%%%%%%%%%%%%%%%%%%%%%%%%%%%%%%%%%%%%%%%%%
%%%%%%%%%%%%%%%%%%%%%%%%%%%%%%%%%%%%%%%%%%%%%%%%%%%%%%%%%%%%%%%
\end{tikzpicture}
\caption{An ideal triangulation for $S^3 \setminus K_n$, $n$ even, $n \geqslant 4$, with $p=\frac{n-2}{2}$} \label{fig:id:trig:even}
\end{figure}
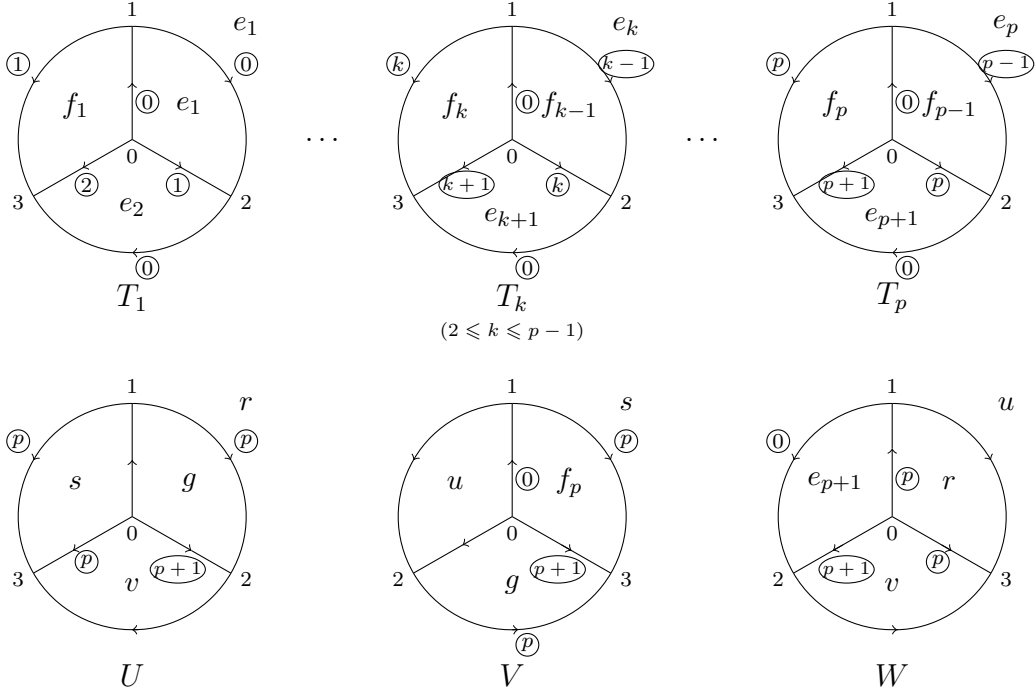

In Figure \ref{fig:id:trig:even} there are
\begin{itemize}
\item $1$ common vertex,
\item $p+3 = \frac{n+4}{2}$ edges 
(simple arrow $\overrightarrow{\eta_s}$ and the simple arrows $\overrightarrow{\eta_0}, \ldots, \overrightarrow{\eta_{p+1}}$ indexed by $0, \ldots p+1$ in circles),
\item $2p+6 = n+4$ faces ($e_1, \ldots, e_{p+1}, f_1, \ldots, f_{p}, g,r,s,u,v  $),
\item $p+3 = \frac{n+4}{2}$ tetrahedra ($T_1, \ldots, T_{p}, U,  V, W$).
\end{itemize}

\begin{remark}\label{rem:K2}
When $n=2$, i.e.\ $p=0$ here, the triangulations of Figures \ref{fig:H:trig:even} and \ref{fig:id:trig:even} are still correct (with the convention $f_0=e_1$), one just needs to stop the previous reasoning at Figure \ref{fig:even:Htriang:bigon:trick} (b) and collapse the bigon $G$ into a segment.

In this case, the ideal triangulation $X_2$ of the figure-eight knot complement $S^3 \setminus K_2$ described in Figure \ref{fig:id:trig:even} has three tetrahedra, although it is well-known that this knot complement has Matveev complexity $2$. The ideal triangulations of Figures \ref{fig:id:tri:41:complement} and \ref{fig:id:trig:even} are actually related by a Pachner $3-2$ move. 
\end{remark}

\subsection{Gluing equations and proving geometricity}\label{sub:even:geom}

% boundary torus even
\begin{figure}[!h]
	\centering
	\begin{tikzpicture}[every path/.style={string ,black}]
	
	\begin{scope}[scale=0.7]
	
	\draw[color=blue] (-7.5,7.5) node {$3_U$};
	\draw (-8,9.6) node {$r$};
	\draw (-6.3,6) node {$s$};
	\draw (-7.2,4) node {$v$};
	\draw[color=brown] (-9.5,9.6) node {$a$};
	\draw[color=brown] (-6.3,9.6) node {$b$};
	\draw[color=brown] (-6.3,1.5) node {$c$};

	\draw[color=blue] (-8.5,3) node {$3_W$};
	\draw (-8,0.4) node {$r$};
	\draw (-9.8,4.5) node {$u$};
	\draw (-8.2,4) node {$v$};
	\draw[color=brown] (-9.8,8.5) node {$a$};
	\draw[color=brown] (-9.8,0.4) node {$b$};
	\draw[color=brown] (-6.4,0.3) node {$c$};

	\draw[color=blue] (-4.5,7) node {$3_V$};
	\draw (-4.5,8.6) node {$g$};
	\draw (-5.7,6) node {$s$};
	\draw (-4.6,5) node {$f_p$};
	\draw[color=brown] (-5.8,9.5) node {$a$};
	\draw[color=brown] (-5.7,1.5) node {$b$};
	\draw[color=brown] (-3.5,8) node {$c$};

	\draw[color=blue] (-2.6,6.5) node {$3_p$};
	\draw (-4.2,4) node {$f_p$};
	\draw (-1.6,8) node {$e_{p+1}$};
	\draw (-3,5) node {$e_p$};
	\draw[color=brown] (-0.7,8.5) node {$a$};
	
	\draw[color=blue] (-0.7,9.05) node {\tiny $2_W$};
	\draw (-1.3,8.6) node {\tiny $e_{p+1}$};
	\draw (-0.18,9.4) node {\tiny $u$};
	\draw (-1.2,9) node {\tiny $v$};
	\draw[color=brown] (-0.15,9.65) node {\tiny $a$};
	\draw[color=brown] (-0.15,9.1) node {\tiny $b$};
	\draw[color=brown] (-2,8.5) node {\tiny $c$};

	\draw[color=blue] (-2.7,9) node {$2_U$};
	\draw (-4,9) node {$g$};
	\draw (-3,9.7) node {$r$};
	\draw (-1.7,9.1) node {$v$};
	\draw[color=brown] (-0.7,9.8) node {$a$};
	\draw[color=brown] (-3,8.4) node {$b$};
	\draw[color=brown] (-4.9,9.6) node {$c$};

	\draw[color=blue] (-3.9,2.3) node {\tiny $3_{p-1}$};
	\draw (-3,4) node {\tiny $e_p$};
	\draw (-3.45,3) node {\tiny $e_{p-1}$};
	\draw (-4.35,1.7) node {\tiny $f_{p-1}$};
	\draw[color=brown] (-1.2,6.95) node {\tiny $a$};
	
	\draw[color=blue] (-3.4,1.2) node {$2_{p}$};
	\draw (-2,1.4) node {\tiny $e_p$};
	\draw (-3,0.7) node {\tiny $e_{p+1}$};
	\draw (-4.2,0.9) node {\tiny $f_{p-1}$};
	\draw[color=brown] (-1,1.05) node {\tiny $a$};
	
	\draw[color=blue] (-1,0.4) node {$1_W$};
	\draw (-2,0.15) node {\tiny $r$};
	\draw (-2.8,0.3) node {\tiny $e_{p+1}$};
	\draw (-0.3,0.5) node {\tiny $u$};
	\draw[color=brown] (-4.2,0.15) node {\tiny $a$};
	\draw[color=brown] (-.2,.2) node {\tiny $b$};
	\draw[color=brown] (-.2,.75) node {\tiny $c$};

	\draw[color=blue] (-0.7,4) node {$2_{1}$};
	\draw (-0.5,5.9) node {\tiny $e_1$};
	\draw (-0.3,4.7) node {\tiny $e_1$};
	\draw (-0.5,3) node {\tiny $e_2$};
	\draw[color=brown] (-0.2,2) node {\tiny $a$};
	
	\draw[color=blue] (-1.4,4.5) node {$3_1$};
	\draw (-1,5.7) node {\tiny $e_1$};
	\draw (-1.35,5) node {\tiny $e_2$};
	\draw (-1.65,3.9) node {\tiny $f_{1}$};
	\draw[color=brown] (-0.65,7) node {\tiny $a$};
	
	\draw[color=blue] (-1.7,2.9) node {$2_2$};
	\draw (-1.3,2.3) node {\tiny $e_3$};
	\draw (-1,2.8) node {\tiny $e_2$};
	\draw (-1.5,3.45) node {\tiny $f_{1}$};
	\draw[color=brown] (-0.6,1.8) node {\tiny $a$};
	
	\draw[color=blue] (8.5,7.5) node {$2_V$};
	\draw (16-8,9.6) node {$s$};
	\draw (16-6.3,6) node {$u$};
	\draw (16-7.2,4) node {$g$};
	\draw[color=brown] (6.5,9.6) node {$a$};
	\draw[color=brown] (9.7,1.5) node {$b$};
	\draw[color=brown] (9.7,9.6) node {$c$};
	
	\draw[color=blue] (8,1.8) node {$1_U$};
	\draw (16-8,0.4) node {$s$};
	\draw (16-9.8,4) node {$r$};
	\draw (16-8.2,4) node {$g$};
	\draw[color=brown] (9.5,0.4) node {$a$};
	\draw[color=brown] (6.3,.4) node {$b$};
	\draw[color=brown] (6.3,8.5) node {$c$};

	\draw[color=blue] (5.1,3.2) node {$0_W$};
	\draw (5,1.4) node {$v$};
	\draw (5.7,4) node {$r$};
	\draw (5.3,5) node {$e_{p+1}$};
	\draw[color=brown] (5.8,8.5) node {$a$};
	\draw[color=brown] (4.4,2.1) node {$b$};
	\draw[color=brown] (5.7,.7) node {$c$};

	\draw[color=blue] (3.5,1) node {$0_U$};
	\draw (4.5,1) node {$v$};
	\draw (3,0.3) node {$s$};
	\draw (2.5,0.9) node {$g$};
	\draw[color=brown] (1,0.2) node {$a$};
	\draw[color=brown] (4,1.6) node {$b$};
	\draw[color=brown] (5.4,.2) node {$c$};

	\draw[color=blue] (3.5,3.5) node {$0_p$};
	\draw (4.2,5) node {$e_{p+1}$};
	\draw (2,2) node {$f_p$};
	\draw (3,4.5) node {$f_{p-1}$};
	\draw[color=brown] (1,1.7) node {$a$};
	
	\draw[color=blue] (3.3,8.5) node {$1_p$};
	\draw (4.5,9.1) node {$e_p$};
	\draw (3.4,9.3) node {$f_p$};
	\draw (2.1,8.8) node {$f_{p-1}$};
	\draw[color=brown] (1,8.9) node {$a$};
	
	\draw[color=blue] (4,7.7) node {\tiny $0_{p-1}$};
	\draw (4.5,8.45) node {\tiny $e_p$};
	\draw (3,6.5) node {\tiny $f_{p-2}$};
	\draw (4,7.2) node {\tiny $f_{p-1}$};
	\draw[color=brown] (1.8,4) node {\tiny $a$};
	
	\draw[color=blue] (0.7,0.8) node {$0_{V}$};
	\draw (0.2,0.6) node {\tiny $u$};
	\draw (2,1.3) node {\tiny $f_p$};
	\draw (1.35,0.85) node {\tiny $g$};
	\draw[color=brown] (0.2,0.9) node {\tiny $a$};
	\draw[color=brown] (.15,.25) node {\tiny $b$};
	\draw[color=brown] (2.6,1.5) node {\tiny $c$};

	\draw[color=blue] (1,9.6) node {$1_{V}$};
	\draw (0.2,9.5) node {\tiny $u$};
	\draw (1.8,9.5) node {\tiny $f_p$};
	\draw (2,9.85) node {\tiny $s$};
	\draw[color=brown] (0.2,9.2) node {\tiny $a$};
	\draw[color=brown] (4,9.85) node {\tiny $b$};
	\draw[color=brown] (.2,9.8) node {\tiny $c$};
	
	\draw[color=blue] (0.5,6) node {$1_{1}$};
	\draw (0.25,4.8) node {\tiny $e_1$};
	\draw (0.5,4) node {\tiny $e_1$};
	\draw (0.5,7) node {\tiny $f_1$};
	\draw[color=brown] (0.15,8.1) node {\tiny $a$};
	
	\draw[color=blue] (1.8,7.1) node {$1_{2}$};
	\draw (1.6,6.6) node {\tiny $e_2$};
	\draw (1.3,7.6) node {\tiny $f_2$};
	\draw (1.1,7) node {\tiny $f_1$};
	\draw[color=brown] (0.5,8.3) node {\tiny $a$};
	
	\draw[color=blue] (1.45,5.5) node {$0_1$};
	\draw (1.8,6.2) node {\tiny $e_2$};
	\draw (1.1,4.5) node {\tiny $e_1$};
	\draw (1.4,5) node {\tiny $f_1$};
	\draw[color=brown] (0.65,3) node {\tiny $a$};
	
	\draw (0,0)--(6,0)--(10,0)--(10,10)--(6,10)--(0,10)--(-3,8)--(-6,10)--(-10,10)--(-10,0)--(-6,0)--(0,0)--(4,2)--(6,0)--(6,10)--(10,0);
	\draw (-10,10)--(-6,0)--(-6,10);
	\draw (-6,0)--(-3,8)--(0,9)--(0,10);
	\draw (-6,0)--(0,9)--(6,10)--(4,2)--(0,1)--(0,0)--(-6,0)--(0,1)--(6,10);
	\draw (-6,0)--(-3.6,2)--(0,1)--(-2.4,3)--(-1.2,4)--(0,1)--(0,9)--(-1.2,4);
	\draw (-3.6,2)--(0,9)--(-2.4,3);
	\draw (0,9)--(1.2,6)--(0,1)--(2.4,7)--(0,9)--(3.6,8)--(0,1);
	\draw (1.2,6)--(2.4,7);
	\draw (3.6,8)--(6,10);
	\draw (0,10)--(-6,10);
	
	\draw (3-0.12,7.5-0.1) node[shape=circle,fill=black,scale=0.2] {};
	\draw (3,7.5) node[shape=circle,fill=black,scale=0.2] {};
	\draw (3+0.12,7.5+0.1) node[shape=circle,fill=black,scale=0.2] {};
	
	\draw (-3-0.12,2.5-0.1) node[shape=circle,fill=black,scale=0.2] {};
	\draw (-3,2.5) node[shape=circle,fill=black,scale=0.2] {};
	\draw (-3+0.12,2.5+0.1) node[shape=circle,fill=black,scale=0.2] {};
	
	\draw[color=violet,style=dashed,->] (7,0)--(7,5);
	\draw[color=violet,style=dashed, very thick] (7,5)--(7,10);
	\draw[color=violet] (7.7,1.1) node {\scriptsize $m_{X_n}$};

	\draw[color=teal,style=dashed, very thick,->] (-10,4)--(-8.5,2);	
	\draw[color=teal,style=dashed, very thick] (-8.5,2)--(-7,0);	
	\draw[color=teal,style=dashed, very thick,->] (-7,10)--(-6,9)--(-5.5,9);	
	\draw[color=teal,style=dashed, very thick]	(-5.5,9)--(-5,9.3)--(1.3,9.3)--(1.5,10);	
	\draw[color=teal,style=dashed, very thick,->] (1.5,0)--(2,.4)--(6,1)--(8,2.5)	;
	\draw[color=teal,style=dashed, very thick] (8,2.5)--(10,4)	;
	\draw[color=teal] (-8.5,1.5) node {\scriptsize $l_{X_n}$};

	\draw[color=red, very thick,->] (-10,0)--(-8,0);
	\draw[color=red, very thick] (-8,0)--(-6,0);
	\draw[color=red] (-8,-.5) node {(i)};
	
	\draw[color=red, very thick,->] (-6,0)--(-6,5);
	\draw[color=red, very thick] (-6,5)--(-6,10);
	\draw[color=red] (-5.4,5) node {(ii)};
	
	\draw[color=red, very thick,->] (-6,10)--(-3,10);
	\draw[color=red, very thick] (-3,10)--(0,10);
	\draw[color=red] (-3,10.5) node {(iii)};
	
	\draw[color=red, very thick,->] (0,0)--(3,0);
	\draw[color=red, very thick] (3,0)--(6,0);
	\draw[color=red] (3,-.5) node {(iv)};
	
	\draw[color=red, very thick,->] (6,0)--(6,6);
	\draw[color=red, very thick] (6,6)--(6,10);
	\draw[color=red] (5.5,6) node {(v)};
	
	\draw[color=red, very thick,->] (6,10)--(8,10);
	\draw[color=red, very thick] (8,10)--(10,10);
	\draw[color=red] (8,10.5) node {(vi)};

	\end{scope}
	
	\end{tikzpicture}
	\caption{Triangulation of the boundary torus for the truncation of $X_n$, $n$ even, with angles (brown), meridian curve $m_{X_n}$ (violet, dashed), longitude curve $l_{X_n}$ (green, dashed) and preferred longitude curve {$l_{X_n}^0=$} (i)$\cup \ldots \cup$(vi) (red).}\label{fig:trig:cusp:even}
\end{figure}
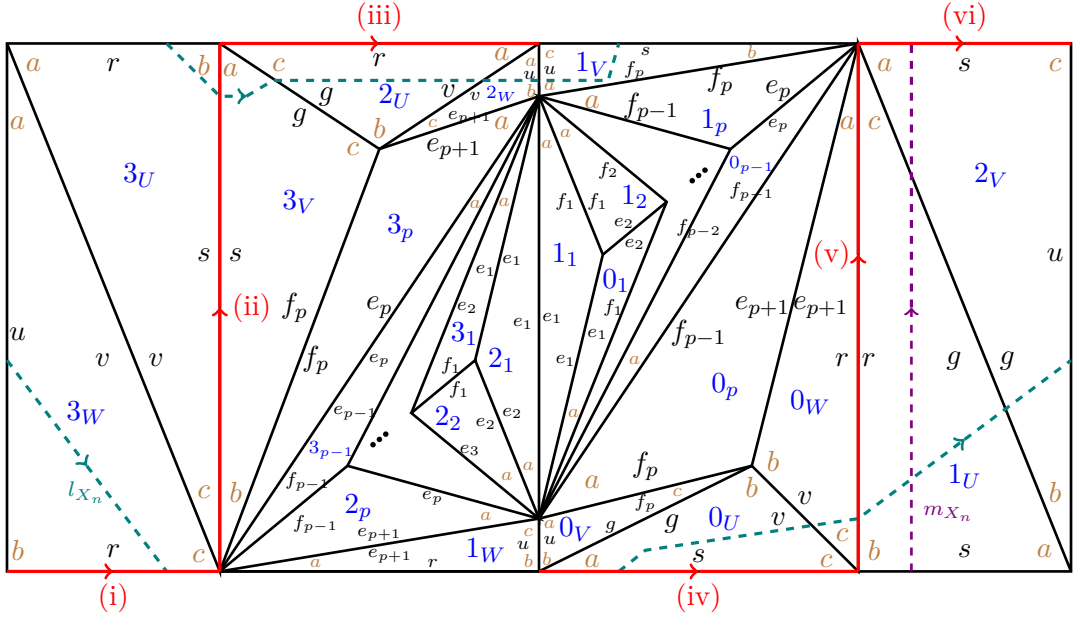

As in Section \ref{sub:complete:odd}, we constructed in Figure \ref{fig:trig:cusp:even} a triangulation of the boundary torus
$\partial \nu(K_n)$ from the datum in Figure \ref{fig:id:trig:even}. Here for the positive tetrahedra $T_1, \ldots, T_p$ we only indicated the brown $a$ angles for readability (the $b$ and $c$ follow clockwise). We also drew a meridian curve $m_{X_n}$ in violet and dashed, a longitude curve $l_{X_n}$ in green and dashed, and a preferred longitude curve {$l_{X_n}^0=$} (i)$\cup \ldots \cup$(vi) in red (one can check it is indeed a preferred longitude in Figure \ref{fig:longitude:even}).

\begin{figure}[!h]
	\includegraphics[scale=1.5]{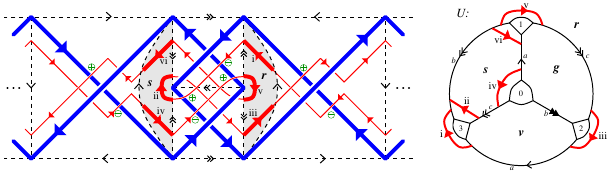}
	\caption{A preferred longitude {$l_{X_n}^0=$} (i) $\cup \ldots \cup$ (vi) (in red) for the even twist knot $K_n$, seen in $S^3 \setminus K_n$ (left) and on the truncated tetrahedron $U$ (right).}\label{fig:longitude:even}
\end{figure}

Let us now list the angular and complex weight functions associated to edges of $X_n$.
For $\alpha=(a_1,b_1,c_1,\ldots,a_p,b_p,c_p,a_U,b_U,c_U,a_V,b_V,c_V,a_W,b_W,c_W) \in \mathcal{S}_{X_n}$ a shape structure on $X_n$, we compute the weights of each edge:
\begin{itemize}
	\item $\omega_s(\alpha):= \omega_{X_n,\alpha}(\overrightarrow{\eta_s})=
	2 a_U+b_V+c_V+a_W+b_W
	$
	\item $\omega_0(\alpha):= \omega_{X_n,\alpha}(\overrightarrow{\eta_0})=
	2 a_1 + c_1 + 2 a_2 + \ldots + 2 a_p + a_V+c_W
	$
	\item $\omega_1(\alpha):= \omega_{X_n,\alpha}(\overrightarrow{\eta_1})=
	2b_1+c_2
	$
	\\
	\vspace*{-2mm}
	\item $\omega_k(\alpha):= \omega_{X_n,\alpha}(\overrightarrow{\eta_k})=
	c_{k-1}+2b_k+c_{k+1}
	$
	\ \
	(for $2\leqslant k \leqslant p-1$)
	\\
	\vspace*{-2mm}
	\item $\omega_p(\alpha):= \omega_{X_n,\alpha}(\overrightarrow{\eta_p})=
	c_{p-1}+2b_p+b_U+2c_U+a_V+b_V+a_W+c_W$
	\item $\omega_{p+1}(\alpha):= \omega_{X_n,\alpha}(\overrightarrow{\eta_{p+1}})=
	c_p+b_U+c_V+b_W$
\end{itemize}

For a complex shape structure $\widetilde{\mathbf{z}}=(z_1,\ldots,z_p,z_U,z_V,z_W) \in (\R+i\R_{>0})^{p+3}$, its complex weight functions are:

\begin{itemize}
	\item $\omega^{\C}_s(\widetilde{\mathbf{z}}):= \omega^{\C}_{X_n,\alpha}(\overrightarrow{\eta_s})=
	2\Log(z_U) + \Log(z'_V) + \Log(z''_V) + \Log(z_W) + \Log(z'_W)
	$
	\item $\omega^{\C}_0(\widetilde{\mathbf{z}}):= \omega^{\C}_{X_n,\alpha}(\overrightarrow{\eta_0})=
	2\Log(z_1) + \Log(z'_1) + 2\Log(z_2) + \cdots + 2\Log(z_p) + \Log(z_V) + \Log(z''_W)
	$
	\item $\omega^{\C}_1(\widetilde{\mathbf{z}}):= \omega^{\C}_{X_n,\alpha}(\overrightarrow{\eta_1})=
	2\Log(z''_1) + \Log(z'_2)
	$
	\\
	\vspace*{-2mm}
	\item $\omega^{\C}_k(\widetilde{\mathbf{z}}):= \omega^{\C}_{X_n,\alpha}(\overrightarrow{\eta_k})=
	\Log(z'_{k-1}) + 2\Log(z''_k) + \Log(z'_{k+1})
	$
	\ \
	(for $2\leqslant k \leqslant p-1$)
	\\
	\vspace*{-2mm}
	\item $\omega^{\C}_p(\widetilde{\mathbf{z}}):= \omega^{\C}_{X_n,\alpha}(\overrightarrow{\eta_p})=
		\Log(z'_{p-1}) + 2\Log(z''_p) + 2\Log(z'_U) + \Log(z''_U) + \Log(z_V) + \Log(z'_V) + \Log(z_W) + \Log(z''_W)$
	\item $\omega^{\C} _{p+1}(\widetilde{\mathbf{z}}):= \omega^{\C}_{X_n,\alpha}(\overrightarrow{\eta_{p+1}})=
	\Log(z'_p) + \Log(z''_U) + \Log(z''_V) + \Log(z'_W)
	$
\end{itemize}

To the meridian curve $m_{X_n}$ and the longitude curve $l_{X_n}$ are associated  angular holonomies
$$m_{X_n}(\alpha):=a_V-a_U, \ \ \ l_{X_n}(\alpha):=2(a_W-b_V),$$
and one possible complex completeness equation is once again (from the meridian curve):
$$\Log(z_U)-\Log(z_V)=0.$$
Furthermore,  one can again see in Figure \ref{fig:trig:cusp:even} that in the homology group of the boundary torus, we have the relation
$$ {l_{X_n}^0= } \mathrm{(i)} \cup \ldots \cup \mathrm{(vi)} = l_{X_n} + 2 m_{X_n}.$$

Using properties of shape structures, we see that the balancing conditions are equivalent to the following $p+2$ equations: 

\begin{itemize}
\item $E_s(\alpha): \ 2a_U + b_V + c_V + a_W + b_W = 2\pi$
\item $E_1(\alpha): \ 2b_1+c_2 = 2 \pi$
\\ \vspace*{-2mm}
\item $E_k(\alpha): \ c_{k-1}+2b_k+c_{k+1} = 2 \pi $ \quad (for $2\leqslant k \leqslant p-1$)
\\ \vspace*{-2mm}
\item $E_p(\alpha): \ c_{p-1}+2b_p + b_U + 2c_U + a_V + b_V + a_W + c_W = 2\pi$
\item $E_{p+1} (\alpha): \ c_p + b_U + c_V + b_W = 2\pi$
\end{itemize}
The missing $(p+3)$-rd equation, stating that the angles around the vertices of degree $2p+3$ in Figure~\ref{fig:trig:cusp:even} add up to $2\pi$, is redundant: summed with all of the above, it becomes simply that the sum of all angles is $(p+3)\pi$.

%%%%%%%%%%%%%%%%%%%%%%%%%%%%%%%%%

\begin{theorem} \label{thm:appendix:geom:even}
	$X_n$ is geometric for $n \geq 2$ even.
\end{theorem}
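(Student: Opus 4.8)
The plan is to mirror the proof of Theorem~\ref{thm:geometric} from the odd case and apply the Casson-Rivin criterion (Theorem~\ref{thm:casson:rivin}): it suffices to show that the space of angle structures $\mathcal{A}_{X_n}$ is non-empty and that the volume functional $\mathcal{V}\colon \overline{\mathcal{A}_{X_n}} \to \R$ cannot attain its maximum on the boundary $\overline{\mathcal{A}_{X_n}} \setminus \mathcal{A}_{X_n}$. Since $\mathcal{V}$ is concave on the compact polytope $\overline{\mathcal{A}_{X_n}}$, it attains a maximum, and if that maximum lies in the interior $\mathcal{A}_{X_n}$ then by Theorem~\ref{thm:casson:rivin} the corresponding angle structure is the complete hyperbolic one, so $X_n$ is geometric.

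First I would establish non-emptiness by exhibiting an explicit family of angle structures depending on a small parameter $\epsilon > 0$, in the spirit of Lemma~\ref{lem:non:empty}: assign to the tower tetrahedra $T_1, \ldots, T_p$ angles $(a_j, b_j, c_j)$ essentially as in the odd case (with $b_j$ close to $\pi$ and $a_j, c_j$ small, arranged so the inductive equations $E_k$ hold), and choose angles for $U, V, W$ near the symmetric configuration dictated by $E_s$, $E_p$ and $E_{p+1}$. One then verifies by direct substitution that all $p+2$ balancing equations are satisfied and that every angle lies in $(0,\pi)$ for $\epsilon$ small enough.

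Second I would run the boundary-exclusion argument. Lemma~\ref{lem:flat:taut} (at a boundary maximizer, flat tetrahedra are taut) applies verbatim. I would then simplify the maximizer by symmetry: the involution $(a_V, b_V, c_V) \leftrightarrow (c_W, a_W, b_W)$ preserves each of $E_s$, $E_p$ and $E_{p+1}$ (the combinations $b_V+c_V$, $a_W+b_W$ appearing in $E_s$, the pair $c_V, b_W$ in $E_{p+1}$, and the block $a_V+b_V+a_W+c_W$ in $E_p$ are all invariant), so by concavity there is a maximizer with $(a_V, b_V, c_V) = (c_W, a_W, b_W)$; then $E_s$ forces $a_U = a_V$, and a further order-three averaging yields a maximizer on which $U, V, W$ carry matched angles, collapsing the constraints to a reduced system in $(b_k, c_k)_k$ and $(a_U, b_U, c_U)$. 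On this reduced system I would argue, exactly as in Lemma~\ref{lem:interior}, that $T_p$ is not flat (ruling out $c_p = \pi$ and $c_p = 0$, the latter using that the non-negative sequence $(c_k)$ is convex since $E_k$ rewrites as $c_{k-1} - 2c_k + c_{k+1} = 2a_k \geq 0$), that $U$ is not flat, and that $b_1, \ldots, b_{p-1} \in (0,\pi)$ by the same ascending and descending inductions. The small cases $p = 0, 1$ (including $n=2$, cf.\ Remark~\ref{rem:K2}) would be treated separately as in Remark~\ref{rem:p1}.

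The main obstacle is precisely this boundary case analysis: although the overall scheme is identical to the odd case, the edge equations $E_s$, $E_p$ and $E_{p+1}$ carry different coefficients (notably $E_p$ now contains the term $2c_U$ together with $a_V + c_W$, and $E_{p+1}$ reads $c_p + b_U + c_V + b_W = 2\pi$), so the explicit inequalities used to exclude flat tetrahedra at $U$ and $T_p$ must be re-derived against these new relations rather than quoted directly. Once the reduced constraints are written down and the convexity and induction arguments are checked against the even coefficients, geometricity follows immediately from Theorem~\ref{thm:casson:rivin}.
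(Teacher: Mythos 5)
Your overall scheme (non-emptiness of $\mathcal{A}_{X_n}$ plus exclusion of boundary maximizers, then Casson--Rivin) is the same as the paper's, and the symmetrization step via the involution $(a_V,b_V,c_V)\leftrightarrow(c_W,a_W,b_W)$ and an order-three averaging is correct. However, there is a genuine gap in the boundary-exclusion step. You propose to rule out flatness of $T_p$ ``exactly as in Lemma~\ref{lem:interior}'', in particular excluding $c_p=\pi$ by the same kind of combinatorial contradiction as in the odd case. This fails for even $n$: after the reduction the last two equations read $E'_p:\ c_{p-1}+2b_p+a_U+3c_U=\pi$ and $E'_{p+1}:\ c_p+3b_U=2\pi$, so setting $(a_p,b_p,c_p)=(0,0,\pi)$ merely forces $b_U=\pi/3$ and $c_{p-1}+a_U+3c_U=\pi$, which is perfectly compatible with strictly positive angles on $U$ and on all the other tetrahedra (in the odd case, by contrast, $c_p=\pi$ forced $(a_U,b_U,c_U)=(0,0,\pi)$ up to order, an immediate contradiction). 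Consequently Lemma~\ref{lem:flat:taut} and the linear constraints only show that the \emph{unique} possible degeneration at a maximizer is $(a_p,b_p,c_p)=(0,0,\pi)$; they do not exclude it.

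To close this, the paper introduces a tool that has no counterpart in the odd case: assuming $(a_p,b_p,c_p)=(0,0,\pi)$ with all other tetrahedra non-degenerate, it solves the constraints explicitly for $a_{p-1},b_{p-1},c_{p-1}$ and $a_U,b_U,c_U$ in terms of $c_{p-2},c_{p-1}$, exhibits an explicit one-parameter deformation inside the space of angle structures, and computes the one-sided derivative of the volume functional along it, showing $\exp\left(-\partial\mathcal{V}/\partial t\right)\big|_{t=0}=\frac{\sin(b_{p-1})}{\sin(a_{p-1})}\cdot\frac{\sin^2(b_U)\sin(c_U)}{\sin^3(a_U)}\leq 2\cdot\frac{3}{8}<1$, so the volume strictly increases and the flat configuration cannot be a maximizer. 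The same issue recurs for $p=1$ (where $c_1=\pi$ must again be excluded by a deformation, not by Remark~\ref{rem:p1}, which is an odd-case statement). Without this first-order variational argument, or some substitute for it, your proof does not go through.
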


\begin{proof}
We begin by treating the case of $n\geq 6$, i.e.\ $p\geq 2$.
First we show that the space of positive angle structures is nonempty. For small enough $\epsilon>0$, the values
$$
\begin{pmatrix} a_j \\ b_j \\ c_j \end{pmatrix} := \begin{pmatrix} \epsilon \\ \pi - \epsilon(j^2+1) \\ \epsilon j^2  \end{pmatrix}\text{for } 1\leq j \leq p-1, \quad
\begin{pmatrix} a_p \\ b_p \\ c_p \end{pmatrix} := \begin{pmatrix} 3\pi/4 - \epsilon (p^2+2p-1)/2 \\ ~\,\pi/4 - \epsilon (p^2-2p+1)/2 \\ \epsilon p^2 \end{pmatrix}, $$ $$
\begin{pmatrix} a_U \\ b_U \\ c_U \end{pmatrix} = 
\begin{pmatrix} a_V \\ c_V \\ b_V \end{pmatrix} = 
\begin{pmatrix} c_W \\ b_W \\ a_W \end{pmatrix} := 
\begin{pmatrix} \pi/4 + \epsilon p^2/2 \\ 2\pi/3 - \epsilon p^2/3 \\ \pi/12 - \epsilon p^2/6 \end{pmatrix}
$$
give a positive solution to $E_s,E_1, \dots, E_{p+1}$.

Next, we claim that among the volume maximizers, there is one such that $U,V,W$ have identical angles modulo the permutation used in the formula above.
Let $F_j$ denote the constraint $a_j+b_j+c_j=\pi$.
The angles of $U,V,W$ appear only in equations $E_s, E_p, E_{p+1}$. These can be rewritten 
$$\begin{array}{r|l}
E_{p+1}                                & c_p + (b_U+ c_V+b_W ) =2 \pi \\
3E_p + 2 E_s - (3F_U+2F_V+2F_W) & 3 c_{p-1} + 6 b_p + (a_U + a_V + c_W) + 3 (c_U+b_V+a_W)= 3 \pi\\
E_s - (F_V + F_W)                  & 2a_U =  a_V + c_W.   \end{array}$$
The involution $(a_V, b_V, c_V) \leftrightarrow (c_W,a_W,b_W)$ preserves these equations, so by concavity of the volume function, there is a maximizer such that $(a_V, b_V, c_V)=(c_W,a_W,b_W)$. 
The last of the $3$ equations above then gives $a_U=a_V=c_W$. The order-3 substitution of variables $$(a_U, b_U, c_U) \rightarrow  (a_V, c_V, b_V) \rightarrow (c_W, b_W, a_W) \rightarrow (a_U, b_U, c_U)$$
then clearly leaves the other two equations unchanged, so by concavity we may average out and find a maximizer such that $(a_U, b_U, c_U)=(a_V, c_V, b_V)=(c_W, b_W, a_W)$, as desired.

These identifications make $E_s$ redundant. Moreover, dropping the angles of $V$ and $W$ as variables, we may now rewrite the system of constraints as 
\begin{itemize}
\item $E_1 : \ 2b_1+c_2 = 2 \pi$
\vspace{2mm}
\item $E_k : \ c_{k-1}+2b_k+c_{k+1} = 2 \pi $ \quad (for $2\leqslant k \leqslant p-1$)
\vspace{2mm}
\item $E'_p : \ c_{p-1}+2b_p + a_U + 3c_U  = \pi$ \quad (not $2\pi$!)
\item $E'_{p+1} : \ c_p + 3b_U = 2\pi$
\end{itemize}
Recall from Lemma \ref{lem:flat:taut}
 that at a volume maximizer, if $a_j b_j c_j=0$ then $a_j, b_j, c_j$ are $0,0,\pi$ up to order.
\begin{lemma} \label{lem:girafe}
 At a volume maximizer, if $a_k b_k c_k=0$ then $k=p$ and $(a_p, b_p, c_p)=(0,0,\pi)$.
\end{lemma}
\begin{proof}
First, $E'_{p+1}$ gives $b_U=(2\pi-c_p)/3\in[\pi/3, 2\pi/3]$ so the tetrahedron $U$ is {not flat}.

\noindent $\bullet$ 
Let us show by induction on $1 \leq k \leq p-1$ that $b_k > 0$. 
By $E_1$ we have $b_1=\pi-c_2/2 \geq \pi/2$, giving the case $k=1$.
For the induction step, suppose $2\leq k \leq p-1$ and $b_{k-1}>0$.
Then $c_{k-1}<\pi$, which by $E_k$ implies that $b_k>0$.

\noindent $\bullet$ 
Let us now show by \emph{descending} induction on $p-1 \geq k \geq 1$ that $b_k < \pi$. 
For the initialisation, suppose $(a_{p-1},b_{p-1}, c_{p-1})=(0,\pi,0)$ and aim for a contradiction.
Recall that $p\geq 2$: by $E_{p-1}$ we have $c_p=0$, hence $b_U=2\pi/3$ by $E'_{p+1}$.
But $c_p=0$ also implies $b_p\in \{0,\pi\}$, hence $b_p=0$ by $E'_p$.
Together with $c_{p-1}=0$, by $E'_p$ this yields $a_U+3c_U=\pi$.
But we showed  that $b_U=2\pi/3$, hence $(a_U, b_U, c_U)=(0,2\pi/3, \pi/3)$, a forbidden configuration.
This contradiction shows $b_{p-1}<\pi$.

For the (downward) induction step, suppose $p-2 \geq k \geq 1$ and $b_{k+1}<\pi$.
Actually $0<b_{k+1}<\pi$ (previous bullet-point), hence $0<c_{k+1}$: by $E_k$, this implies $b_k<\pi$.

\noindent $\bullet$ 
It only remains to rule out $c_p=0$. 
Note that the non-negative sequence $(0, c_1, \dots, c_p)$ is convex, because $E_k$ can be rewritten $c_{k-1} - 2c_k + c_{k+1} = 2 a_k \geq 0$ (agreeing that ``$c_0$'' stands for $0$).
But we showed $0<b_{p-1}<\pi$: hence, $c_{p-1}>0$ which entails $c_p\geq \frac{p}{p-1} c_{p-1} > 0$.
\end{proof}

We can now prove that the volume maximizer has only positive angles.
By the above lemma, if not, then we may assume $(a_p, b_p, c_p)=(0,0,\pi)$ and that all other tetrahedra are nondegenerate.
We will exhibit a smooth path of deformations of the angles, along which the derivative of the volume is positive. 
(As a function of the angles, the volume of an ideal tetrahedron is not smooth near the point $(0,0,\pi)$, but it has a well-defined derivative in the direction of any segment.)

Using $E_{p-1}, E'_p, E'_{p+1}$, it is straigthforward to check that the angles satisfy
\begin{equation} \label{zebre} \begin{pmatrix} 
 a_{p-1} & a_{p} & a_U \\
 b_{p-1} & b_{p} & b_U \\
 c_{p-1} & c_{p} & c_U \end{pmatrix}
= \begin{pmatrix} 
 (\pi+c_{p-2}-2c_{p-1})/2 & 0 & (\pi+c_{p-1})/2 \\
 (\pi-c_{p-2})/2 & 0 & \pi/3 \\
 c_{p-1} & \pi & \pi/6-c_{p-1}/2 \end{pmatrix}.
\end{equation}
For small $t>0$, the $t$-deformation given by $(a^t_k,b^t_k,c^t_k)=(a_k,b_k,c_k)$ for $1\leq k \leq p-2$ and 
$$ \begin{pmatrix} 
 a^t_{p-1} & a^t_{p} & a^t_U \\[1mm]
 b^t_{p-1} & b^t_{p} & b^t_U \\[1mm]
 c^t_{p-1} & c^t_{p} & c^t_U \end{pmatrix}
= \begin{pmatrix} 
 a_{p-1} & 0 & a_U \\
 b_{p-1} & 0 & b_U \\
 c_{p-1} & \pi & c_U \end{pmatrix}
+t  \begin{pmatrix} 
 -1 & 2 & - 1  \\
  1 & 0  & 2/3  \\
  0 & - 2 &   1/3 \end{pmatrix}
 $$
is still an angle structure, i.e.\ satisfies $E_1,\dots, E_{p-1}, E'_p, E'_{p+1}$.
By definition of the volume functional $\mathcal{V}$ (Section~\ref{sub:volume}), we have for this deformation
\begin{equation} \label{eq:slope} 
\left .\mathrm{exp} \left ( \frac{-\partial \mathcal{V}} {\partial t} \right )\right |_{t=0} =
\frac{\sin (b_{p-1})}{\sin (a_{p-1})} \frac{\sin^2(b_U) \sin(c_U)}{\sin^3(a_U)}.
 \end{equation}
Each factor $\sin(\theta)$ appears to the power $\partial \theta/\partial t$, but tripled for $\theta=a_U, b_U, c_U$ because there are 3 isometric copies of the tetrahedron~$U$.
The $p$-th tetrahedron stays flat, hence does not contribute volume.
The formula for $c_U$ in~\eqref{zebre} gives $0\leq c_{p-1} \leq \pi/3$.
We proved in the lemma above that $(0,c_1,\dots, c_p)$ is convex, 
hence nondecreasing: thus~\eqref{zebre} 
also yields $a_{p-1} \in [\pi/6, \pi/2]$.
Therefore,
$$\frac{\sin (b_{p-1})}{\sin (a_{p-1})} \leq \frac {1}{\sin (\pi/6)}=2.$$
On the other hand, still using~\eqref{zebre}, 
$$\frac{\sin^2(b_U) \sin(c_U)}{\sin^3(a_U)} = \frac{3}{4} \frac{\sin (\pi/6-c_{p-1}/2)}{\sin^3(\pi/2+c_{p-1}/2)} \leq \frac{3}{4} \frac{\sin(\pi/6)}{\sin^3(\pi/2)} = \frac{3}{8}$$
by an easy monotonicity argument for $c_{p-1}$ ranging over $[0,\pi/3]$.
In conclusion,~\eqref{eq:slope} is bounded above by $2\cdot 3/8<1$, hence $(\partial \mathcal{V} / \partial t)_{t=0^+}>0$ as desired. 

Thus, the volume maximizer is interior to the space of angle structures. 
By Theorem~\ref{thm:casson:rivin}, this implies Theorem~\ref{thm:appendix:geom:even} for $p\geq 2$. 
It only remains to discuss $p=0,1$.

\noindent $\bullet$ For $p=1$ we find the initial gluing equations
$$ \begin{array}{lrcl}
E_s : & 2a_U + b_V + c_V + a_W + b_W &=& 2\pi \\
E_1 : & 2b_1 + b_U + 2c_U + a_V + b_V + a_W + c_W &=& 2\pi \\
E_2 : & c_1 + b_U + c_V + b_W &=& 2\pi 
\end{array} $$
(only the term ``$c_{p-1}$'' has disappeared from $E_1$). 
Symmetry between $U,V,W$ can be argued as in the $p\geq 2$ case, reducing the above to
$$ \begin{array}{lrcl}
E'_1 : &  2b_1 + a_U + 3c_U  &=& \pi \\
E'_2 : &   c_1 + 3b_U &=& 2\pi. 
\end{array}$$
The tetrahedron $U$ is not flat, as $b_U=(2\pi-c_1)/3\in[\pi/3, 2\pi/3]$.
If $c_1=0$ then $b_1\in\{0,\pi\}$ must be $0$ by $E'_1$, hence $(a_U, b_U, c_U)=(0,2\pi/3, \pi/3)$ which is prohibited.
If $c_1=\pi$ then  
$$ \begin{pmatrix} 
 a_1  & a_U \\
 b_1  & b_U \\
 c_1  & c_U \end{pmatrix}
= \begin{pmatrix} 
  0   & \pi/2 \\
  0   & \pi/3 \\
  \pi & \pi/6  \end{pmatrix}
\text{ can be perturbed by adding } ~
t \begin{pmatrix} 
   2 &  -1  \\
   0  &  2/3  \\
 - 2 &  1/3  \end{pmatrix} $$ 
(where $0<t\ll 1$) to produce a path of angle structures, yielding as before
$$\left . \mathrm{exp} \left ( \frac{-\partial \mathcal{V}} {\partial t} \right ) \right |_{t=0} =
 \frac{\sin^2(b_U) \sin(c_U)}{\sin^3(a_U)} = \frac{3}{8}<1. $$
\noindent $\bullet$ For $p=0$ it is straightforward to check that $(a_U, b_U, c_U)=(a_V, c_V, b_V)=(c_W, b_W, a_W)=(\pi/6, 2\pi/3, \pi/6)$ yields the complete hyperbolic metric 
(this is actually the result of a $2\rightarrow 3$ Pachner move on the standard triangulation of the figure eight knot complement into two regular ideal tetrahedra).
Theorem~\ref{thm:appendix:geom:even} is proved.
\end{proof}

\subsection{Computation of the partition functions}\label{sub:even:tqft}

The following theorem is the version of Theorem \ref{thm:part:func} for even $n$. Note that here $\mu_{X_n}(\alpha)=-m_{X_n}(\alpha)$ and once again
$\lambda_{X_n}(\alpha)=l_{X_n}(\alpha)+2m_{X_n}(\alpha)$ corresponds to a preferred longitude.

\begin{theorem}\label{thm:even:part:func}
Let $n$ be a positive even integer and $p=\frac{n-2}{2}$. Consider the ideal triangulation $X_n$ of $S^3\setminus K_n$ described in Figure \ref{fig:id:trig:even}. Then for all angle structures $\alpha =(a_1,\ldots,c_W)\in \mathcal{A}_{X_n}$ and all $\hbar>0$, we have:
\begin{equation*}
\mathcal{Z}_{\hbar}(X_n,\alpha) 
\stareq 
\int_{\mathbb{R}+i \frac{\mu_{X_n}(\alpha) }{2\pi \sqrt{\hbar}}  } 
J_{X_n}(\hbar,x)
e^{\frac{1}{2 \sqrt{\hbar}}  x  \lambda_{X_n}(\alpha)} 
dx,
\end{equation*}
with 
\begin{itemize}
\item the degree one angle polynomial $\mu_{X_n}\colon\alpha\mapsto  a_U- a_V$,
\item the degree one angle polynomial $\lambda_{X_n}\colon\alpha\mapsto 
2(a_V-a_U+a_W-b_V)$,
\item the map
\begin{equation*}
J_{X_n}\colon(\hbar,x)\mapsto
\int_{\mathcal{Y}'} 
d\mathbf{y}' \
e^{2 i \pi \mathbf{y'}^{\!\top} Q_n \mathbf{y'}}
e^{2 i \pi x(x- y'_U-y'_W)}
e^{
\frac{1}{\sqrt{\hbar}} (\mathbf{y'}^{\!\top} \mathcal{W}_n - \pi x)
}
\dfrac{
\Phi_\B\left (x - y'_U \right )
\Phi_\B\left (y'_W\right )
}{
\Phi_\B\left (y'_1\right )
\cdots
\Phi_\B\left (y'_p\right )\Phi_\B\left (y'_U\right )
} 
,
\end{equation*}
where 
$$\mathcal{Y}'=\mathcal{Y}'_{\hbar,\alpha} =
\left (\prod_{k=1,\ldots,p,U}\left (\R - \frac{i}{2 \pi \sqrt{\hbar}} (\pi-a_k)\right ) \right )
\times 
\left (\R + \frac{i}{2 \pi \sqrt{\hbar}} (\pi-a_W)\right ),$$
\begin{equation*}
\mathbf{y'}=\begin{bmatrix}
y'_1 \\ \vdots \\ y'_p \\ y'_U \\y'_W
\end{bmatrix}, 
\quad 
\mathcal{W}_n=\begin{bmatrix}-2p\pi \\ \vdots \\ -2 \pi \left ( k p - \frac{k(k-1)}{2}\right ) \\ \vdots \\ -p(p+1)\pi \\ -(p^2+p+3)\pi \\ \pi\end{bmatrix} 
\quad 
\text{ and }
\quad
Q_n=\begin{bmatrix}
1 & 1 & \cdots & 1 & 1 & 0 \\ 
1 & 2 & \cdots & 2 & 2 & 0 \\ 
\vdots & \vdots & \ddots & \vdots & \vdots & \vdots \\ 
1 & 2 & \cdots & p & p & 0 \\ 
1 & 2 & \cdots & p & p+1 & -\frac{1}{2} \\ 
0 & 0 & \cdots & 0 & -\frac{1}{2} & 0 
\end{bmatrix}.
\end{equation*}
\end{itemize}
\end{theorem}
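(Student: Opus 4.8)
The plan is to mirror the three-step proof of Theorem~\ref{thm:part:func}, re-deriving each ingredient from the even ideal triangulation of Figure~\ref{fig:id:trig:even} rather than from Figure~\ref{fig:id:trig:odd}. First I would compute the kinematical kernel, establishing the even counterpart of Lemma~\ref{lem:kin:odd}: namely that $\mathcal{K}_{X_n}(\mathbf{\widetilde{t}}) = \exp\left(2 i \pi\,\mathbf{\widetilde{t}}^T \widetilde{Q}_n \mathbf{\widetilde{t}}\right)$ for a symmetric half-integer matrix $\widetilde{Q}_n$ on the variables $(t_1,\ldots,t_p,t_U,t_V,t_W)$. Concretely, I would read the face-pairing maps $x_k(T)$ off Figure~\ref{fig:id:trig:even}, assemble the integer matrices $R, A, B$ exactly as in the statement of Lemma~\ref{lem:dirac}, check that the $2(p+3)\times 2(p+3)$ matrix $A$ is invertible with $|\det A| = 1$, invert it, and then symmetrize $-RA^{-1}B$ to obtain $\widetilde{Q}_n$. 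The sign data differs from the odd case --- here $T_1,\ldots,T_p$ and $U$ are positive while $V,W$ are negative, whereas in the odd triangulation $U$ was negative --- so the off-diagonal block coupling $U$ to the tower and to $V,W$ changes sign and must be recomputed, not copied.

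Second, I would prove the even analogue of Lemma~\ref{lem:2QGamma+C}, relating $2\widetilde{Q}_n\widetilde{\Gamma}(\alpha)+\widetilde{C}(\alpha)$ to the angular edge weights $\omega_s,\omega_0,\ldots,\omega_{p+1}$ listed at the start of Section~\ref{sub:even:geom}, where $\widetilde{C}(\alpha)=(c_1,\ldots,c_W)^T$ and $\widetilde{\Gamma}(\alpha)=(a_1-\pi,\ldots,a_p-\pi,\pi-a_U,\pi-a_V,\pi-a_W)^T$. This is a direct (if lengthy) matrix computation; the payoff, as before, is that for $\alpha\in\mathcal{A}_{X_n}$ every weight equals $2\pi$, so the resulting vector collapses to one depending only on the single linear combination $\lambda_{X_n}(\alpha)=2(a_V-a_U+a_W-b_V)$, yielding the vector $\mathcal{W}_n$ of the theorem statement after the reduction below.

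Third, I would assemble $\mathcal{Z}_\hbar(X_n,\alpha)=\int \mathcal{K}_{X_n}\mathcal{D}_{\hbar,X_n}$. The dynamical content is read from its definition, placing the $\Phi_\B$ factors of the positive tetrahedra $T_1,\ldots,T_p,U$ in the denominator and those of the negative tetrahedra $V,W$ in the numerator. I would then perform the change of variables $y'_k=t_k-\frac{i}{2\pi\sqrt{\hbar}}(\pi-a_k)$ for $k\in\{1,\ldots,p,U\}$ and $y'_l=t_l+\frac{i}{2\pi\sqrt{\hbar}}(\pi-a_l)$ for $l\in\{V,W\}$; using the $\stareq$ relation (taking modulus) the purely imaginary quadratic terms drop out, and the even version of Lemma~\ref{lem:2QGamma+C} identifies the surviving linear term with $\widetilde{\mathcal{W}}(\alpha)^T\mathbf{\widetilde{y}'}$. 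Finally the substitution $x=y'_V-y'_U$ eliminates $V$, producing the contour $\mathbb{R}+i\frac{\mu_{X_n}(\alpha)}{2\pi\sqrt{\hbar}}$ with $\mu_{X_n}(\alpha)=a_U-a_V$, the prefactor $e^{\frac{1}{2\sqrt{\hbar}}x\lambda_{X_n}(\alpha)}$, and the reduced data $Q_n,\mathcal{W}_n$ obtained from $\widetilde{Q}_n,\widetilde{\mathcal{W}}(\alpha)$ by adding the $V$-row and $V$-column into the $U$-row and $U$-column and then deleting $V$.

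The main obstacle I expect is the linear-algebra bookkeeping of the first step: inverting $A$ in closed form and symmetrizing $-RA^{-1}B$ for the even gluing pattern, where the altered sign of $U$ and the slightly different identifications around the faces $v,g,s$ change several entries of $\widetilde{Q}_n$ relative to Lemma~\ref{lem:kin:odd}. A secondary subtlety is tracking the negative sign of $V$ through the final substitution so that $\Phi_\B(y'_V)$ becomes $\Phi_\B(x-y'_U)$ in the numerator (rather than $\Phi_\B(x+y'_U)$), together with moving the positive tetrahedron $U$'s factor $\Phi_\B(y'_U)$ into the denominator --- this is precisely where the even formula for $J_{X_n}$ genuinely departs from the odd one.
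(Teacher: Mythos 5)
Your overall route is exactly the paper's: the proof of Theorem \ref{thm:even:part:func} proceeds by recomputing the kinematical kernel from Figure \ref{fig:id:trig:even} via Lemma \ref{lem:dirac} (with matrices $R_e,A_e,B$ and $\det A_e=-1$), establishing the even analogue of Lemma \ref{lem:2QGamma+C}, changing variables according to the tetrahedron signs, and eliminating $V$ by a substitution. You also correctly isolate the key structural difference, namely that $U$ is now a positive tetrahedron while $V,W$ are negative.

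However, you do not propagate that sign change consistently, and two of your concrete steps would fail as written. First, since $y'_U = t_U - \frac{i}{2\pi\sqrt{\hbar}}(\pi-a_U)$, the $U$-entry of $\widetilde{\Gamma}(\alpha)=\frac{2\pi\sqrt{\hbar}}{i}(\widetilde{\mathbf{y}}'-\widetilde{\mathbf{t}})$ must be $a_U-\pi$, not $\pi-a_U$ as you wrote; with your $\widetilde{\Gamma}$ the vector $2\widetilde{Q}_n\widetilde{\Gamma}(\alpha)+\widetilde{C}(\alpha)$ no longer matches the linear term produced by your own change of variables, so the computation cannot collapse to $\mathcal{W}_n$. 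Second, and more seriously, the substitution eliminating $V$ must be $x:=y'_U+y'_V$ (so that $y'_V=x-y'_U$), not $x=y'_V-y'_U$. Because $\Im(y'_U)=-\frac{\pi-a_U}{2\pi\sqrt{\hbar}}$ while $\Im(y'_V)=+\frac{\pi-a_V}{2\pi\sqrt{\hbar}}$, only the sum has imaginary part $\frac{a_U-a_V}{2\pi\sqrt{\hbar}}=\frac{\mu_{X_n}(\alpha)}{2\pi\sqrt{\hbar}}$; your choice places $x$ on the contour $\R+\frac{i}{2\pi\sqrt{\hbar}}(2\pi-a_U-a_V)$ and turns $\Phi_\B(y'_V)$ into $\Phi_\B(x+y'_U)$, contradicting both the stated contour and the factor $\Phi_\B(x-y'_U)$ that you yourself require in your closing remark. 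Correspondingly, $Q_n$ is obtained from $\widetilde{Q}_n$ by replacing the $U$- and $V$-rows (and columns) by their \emph{difference}, not by adding the $V$-row into the $U$-row as in the odd case. These are local sign corrections rather than a missing idea, but without them the computation does not land on the statement of the theorem.
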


\begin{proof}
Since the computations are very similar to those of the proof of Theorem \ref{thm:part:func} we will not give all the details. 
Let $n \geq 2$ be an even integer and set $p=\frac{n-2}{2}$. As before, we denote $\widetilde{\mathbf{t}}=(t_1,\ldots,t_{p-1},t_p,t_U,t_V,t_W)^{\!\top} \in \R^{X^3}$  the vector whose coordinates are associated to the tetrahedra, and $\mathbf{x}=(e_1,\dots,e_p,e_{p+1},f_1,\ldots, f_p,v,r,s,g,u)^{\!\top} \in \R^{X^2}$  the face variables vector. 

Like in Lemma \ref{lem:kin:odd}, we compute 
$
\mathcal{K}_{X_n}\left (\mathbf{\widetilde{t}}\right ) = \frac{1}{|\det(A_e)|} e^{ 2 i \pi \mathbf{\widetilde{t}}^{\!\top} (-R_e A_e^{-1} B) \mathbf{\widetilde{t}}}$, where $B$ is like in the proof of  Lemma \ref{lem:kin:odd}, but $R_e, A_e$ (\textit{e} standing for \textit{even}) are given by 
$$
R_e=\kbordermatrix{
	\mbox{} 	& e_1 	& \dots 	& e_p	&e_{p+1} 	& \omit\vrule	&f_1	& \ldots 	& f_p	&\omit\vrule	&  v 	& r 	& s 		& g 	& u \\
	t_1 		& 1	 	 & 		&\push{\low{0}} 	& \omit\vrule	& 	&  		&  	&\omit\vrule	&   	&  	&   		&  	&  	\\
	\vdots 	& 	    	&\ddots 	& 		&	  	& \omit\vrule	& 	&  0		&  	&\omit\vrule	& 	&   	&  0		&	&  	\\
	t_{p}    	&\push{0}		       	& 1        	& 		& \omit\vrule	&  	&  		&	&\omit\vrule	&	&	&		&	&  	\\ 
\cline{1-1} \cline{2-15}
	t_U	 	& 		& 		& 		&  		& \omit\vrule	&	&	 	&	&\omit\vrule	&0  	&1 	&0  		& 0 	& 0	\\
	t_V	 	& 		&\push{0}	  		&  		& \omit\vrule	&	& 	0	& 	&\omit\vrule	&0  	&0 	&-1  		& 0 	& 0	\\
	t_W	 	& 		&  		& 		& 	 	& \omit\vrule	&	&	  	&  	&\omit\vrule	&0 	&0 	&0  		& 0 	&-1
},
$$

$$
A_e=\kbordermatrix{
	\mbox{} 	& e_1 	& e_2 	& \dots 	&e_p		& e_{p+1}	& \omit\vrule	&f_1	&f_{2}	&  \ldots 	& f_p	&\omit\vrule	&  v 	& r 	& s 		& g 	& u 	\\
	w_1 		& 1		&   -1 	& 		&		&		& \omit\vrule	& 1	&		&  		&  	&\omit\vrule	&   	&  	&  	 	&  	&  	\\
	\vdots 	&     		&   \ddots	& \ddots 	&	\push{0}		& \omit\vrule	& 	&\ddots	&\push{0}  	&\omit\vrule	& 	&   	&  \low{0}	&	&  	\\
	\vdots 	&     	\push{0}   		& \ddots 	&\ddots	&		& \omit\vrule	& \push{0}  	&\ddots	&  	&\omit\vrule	& 	&   	&  		&	&  	\\
	w_{p} 	&		&		&		&	1	& -1		& \omit\vrule 	&	&		&		&1	&\omit\vrule	&	&	&		&	&  	\\
\cline{1-1} \cline{2-17}
	w_{U} 	&		&		&		&		&  		& \omit\vrule 	&	&		&		&0	&\omit\vrule	& -1	& 1	& 1		& 0	& 0	\\
	w_{V} 	&		&		& 0		&		&  		& \omit\vrule 	&	&		&		&1	&\omit\vrule	& 0	& 0	& 1		& -1	& 0	\\
	w_{W} 	&		&		&		&		&  		& \omit\vrule 	&	&		&		&0	&\omit\vrule	& -1	& 1	& 0		& 0	& 1	\\
\cline{1-1} \cline{2-17}
	w'_1 		& -1		&   	 	& 		&		&		& \omit\vrule	& 1	&  		&  		&	&\omit\vrule	&   	&  	&   		&  	& 	\\
	\vdots 	&     		&   		& 	 	&		&		& \omit\vrule	& -1	&  \ddots	&  \push{0}	&\omit\vrule	& 	&   	&  \low{0}	&	& 	\\
	\vdots 	&     		&   		& 	 	&		&		& \omit\vrule	& 	&  \ddots	& \ddots 	&	&\omit\vrule	& 	&   	&  		&	& 	\\
	w'_{p} 	&		&		&		&		&  		& \omit\vrule 	&	\push{0}	&-1		&1	&\omit\vrule	&	&	&		&	& 	\\
\cline{1-1} \cline{2-17}
	w'_{U} 	&		&		&		&		&  0		& \omit\vrule 	&	&		&		&0	&\omit\vrule	& 0	& 0	& 1		& -1	& 0 	\\
	w'_{V} 	&		&		&		&		&  0		& \omit\vrule 	&	&		&		&1	&\omit\vrule	& 0	& 0	& 0		& 0	& -1	\\
	w'_{W} 	&		&		&		&		&  -1		& \omit\vrule 	&	&		&		&0	&\omit\vrule	& 0	& 1	& 0		& 0	& 0
}.$$
Careful computation yields that $\det(A_e)=-1$ and that $A_e^{-1}$ is equal to

$$
A_e^{-1}=\kbordermatrix{
	\mbox{} 	& w_{1}	& w_{2} 	& \ldots	& w_{p-1}	& w_{p}	& \omit\vrule	& w_{U}	& w_{V}			& w_{W}	& \omit\vrule	&  w'_{1}	& w'_{2}	& \ldots	& w'_{p-1}			& w'_{p}			& \omit\vrule 	& w'_{U}			& w'_{V}	& w'_{W}	\\ 
	e_{1} 	& 	0	&		& \cdots	& 		&	0	& \omit\vrule 	& 	0	& 	1			& 	0	& \omit\vrule	&  	-1	& 	-1	&	\push{\cdots}			& 	-1			& \omit\vrule 	& 	-1			& 	0	& 	0	\\ 
	e_{2} 	& 	-1	& 	0	& 		& 		& 		& \omit\vrule 	& 	0	& 	2			& 	0	& \omit\vrule	&  	-1	& 	-2	&	\push{\cdots}			& 	-2			& \omit\vrule 	& 	-2			& 	0	& 	0	\\ 
\low{\vdots} 	& 	-1	& 	-1	& \ddots	& 		& \vdots	& \omit\vrule 	& 		& 				& 		& \omit\vrule	& 	 	&		& \ddots	& 				& 	\vdots		& \omit\vrule 	& 				& 		& 		\\ 
		 	& \vdots	& 		& \ddots	& 	0	& 	0	& \omit\vrule 	& \vdots	& \vdots			& \vdots	& \omit\vrule	&  \vdots	& \vdots	&		&\text{\tiny {\(1-p\)}}	& \text{\tiny {\(1-p\)}}	& \omit\vrule 	& \vdots			& \vdots	& \vdots	\\ 
	e_{p} 	& 		& 		& 		& 	-1	& 	0	& \omit\vrule 	& 		& 				& 		& \omit\vrule	&  		& 		&		& \text{\tiny {\(1-p\)}}	& 	-p			& \omit\vrule 	& 				& 		& 		\\ 
	e_{p+1} 	& -1		& 		& \cdots	& 		& 	-1	& \omit\vrule 	& 	0	& \text{\tiny {\(p+1\)}}	& 	0	& \omit\vrule	&  	-1	&	-2	&\cdots	& \text{\tiny {\(1-p\)}}	& 	-p			& \omit\vrule 	& \text{\tiny {\(-p-1\)}}	& 	0	& 	0	\\ 
\cline{1-1} \cline{2-20}
	f_{1} 	& 		& 		& 		& 		& 		& \omit\vrule 	& 	0	& 	1			& 	0	& \omit\vrule	&  	0	& -1		& 	\push{\cdots}			&	-1			& \omit\vrule 	& 	-1			& 	0	& 	0	\\ 
	f_{2} 	& 		& 		& 		& 		& 		& \omit\vrule 	& 		& 	1			& 		& \omit\vrule	&  	0	&0		& \ddots	&				&	\low{\vdots}	& \omit\vrule 	& 	-1			& 		& 		\\ 
	\vdots 	& 		& 		& 	0	& 		& 		& \omit\vrule 	& \vdots	& \vdots			& \vdots	& \omit\vrule	& \vdots 	& 		& \ddots	&	-1			&	-1			& \omit\vrule 	& \vdots			& \vdots	& \vdots	\\ 
	f_{p-1} 	& 		& 		& 		& 		& 		& \omit\vrule 	& 		& 				& 		& \omit\vrule	&  	0	& 		& 		&	0			&	-1			& \omit\vrule 	& 				& 		& 		\\ 
	f_{p} 	& 		& 		& 		& 		& 		& \omit\vrule 	& 	0	& 	1			& 	0	& \omit\vrule	&  	0	& 		& \cdots	&				&	0			& \omit\vrule 	& 	-1			& 	0	& 	0	\\ 
\cline{1-1} \cline{2-20}
	v	 	& 	-1	& 		& \cdots	& 		& 	-1	& \omit\vrule 	& 	0	& \text{\tiny {\(p+2\)}}	& 	-1	& \omit\vrule	&  	-1	& 	-2	& \push{\cdots}				&	-p			& \omit\vrule 	& \text{\tiny {\(-p-2\)}}	& 	-1	& 	1	\\ 
	r	 	& 	-1	& 		& \cdots	& 		& 	-1	& \omit\vrule 	& 	0	& \text{\tiny {\(p+1\)}}	& 	0	& \omit\vrule	&  	-1	& 	-2	& \push{\cdots}				&	-p			& \omit\vrule 	& \text{\tiny {\(-p-1\)}}	& 	0	& 	1	\\ 
	s	 	& 		& 		& 		& 		& 		& \omit\vrule 	& 	1	& 	1			& 	-1	& \omit\vrule	&  		& 		& 		&				&				& \omit\vrule 	& 	-1			& 	-1	& 	0	\\ 
	g	 	& 		& 		& 	0	& 		& 		& \omit\vrule 	& 	1	& 	1			& 	-1	& \omit\vrule	&  		& 		& 	0	&				&				& \omit\vrule 	& 	-2			& 	-1	& 	0	\\ 
	u	 	& 		& 		& 		& 		& 		& \omit\vrule 	& 	0	& 	1			& 	0	& \omit\vrule	&  		& 		& 		&				&				& \omit\vrule 	& 	-1			& 	-1	& 	0	
}.
$$
Hence $\mathcal{K}_{X_n}(\widetilde{\mathbf{t}})=  \exp\left (2 i \pi \widetilde{\mathbf{t}}^{\!\top} \widetilde{Q}_n \widetilde{\mathbf{t}} \right )$,
where
$$
\widetilde{Q}_n:=
\frac{(-R_e A_e^{-1} B)+(-R_e A_e^{-1} B)^{\!\top}}{2}=
\kbordermatrix{
	\mbox{}	&t_1		&t_2		&\cdots 	& t_{p-1} 	& t_p 	& \omit\vrule	& t_U 	& t_V 	& t_W 	\\
	t_1 		& 1 		& 1 		& \cdots 	& 1  		& 1 		& \omit\vrule	& 1 		& 0 		& 0 		\\
	t_2 		& 1 		& 2 		& \cdots 	& 2  		& 2 		& \omit\vrule	& 2 		& 0 		& 0 		\\
	\vdots 	& \vdots 	& \vdots 	& \ddots 	& \vdots  	& \vdots 	& \omit\vrule	& \vdots 	& \vdots 	& \vdots 	\\
	t_{p-1} 	& 1 		& 2 		& \cdots 	& p-1  	& p-1 	& \omit\vrule	& p-1 	& 0 		& 0 		\\
	t_p 		& 1 		&  2 		& \cdots 	&  p-1 	& p 		& \omit\vrule	& p 		& 0 		& 0 		\\
\cline{1-1} \cline{2-10}
	t_U 		& 1 		& 2 		& \cdots 	& p-1  	& p  		& \omit\vrule	& p+1 	& -1/2 	& -1 		\\
	t_V 		& 0 		& 0 		& \cdots 	&  0 		& 0 		& \omit\vrule	& -1/2 	& -1 		& -1/2 	\\
	t_W 		& 0 		& 0 		& \cdots 	&  0 		& 0 		& \omit\vrule	& -1 		& -1/2 	& 0 }.
$$

Now, like in Lemma \ref{lem:2QGamma+C}, if we denote  $\widetilde{C}(\alpha) = (c_1,\ldots,c_W)^{\!\top}$, and $\widetilde{\Gamma}(\alpha) := (a_1-\pi,\ldots, a_p-\pi,a_U-\pi,\pi-a_V,\pi-a_W)^{\!\top}$, then
(indexing entries by  $k\in\{1,\ldots,p+3\}$) we can compute:
$
2\widetilde{Q}_n \widetilde{\Gamma}(\alpha) + \widetilde{C}(\alpha) =$ 

$$
\renewcommand{\kbldelim}{(}
\renewcommand{\kbrdelim}{)}
 \kbordermatrix{
	\mbox{}	& \\
	 k=1	& \vdots\\
\vdots	& \hspace{6mm} k(\omega_{s}(\alpha) -2(p+2) \pi) + \sum_{j=1}^{k}j \omega_{k-j}(\alpha) \\
	k=p 	& \vdots \\ \cline{1-1} 
\cline{2-2}
	 	& \omega_{s}(\alpha)- \omega_{p+1}(\alpha) + \left ( p(\omega_{s}(\alpha) -2(p+2) \pi) + \sum_{j=1}^{p}j \omega_{p-j}(\alpha) \right )  -4 \pi + \frac{1}{2} \lambda_{X_n}(\alpha) \\
		& \frac{1}{2}\lambda_{X_n}(\alpha) - \pi \\
		& 3 \pi - \omega_{s}(\alpha)
},
$$
where $\lambda_{X_n}(\alpha)=2(-a_U+a_V-b_V+a_W)$.
Notably we have for all angle structures $\alpha \in \mathcal{A}_{X_n}$:
\renewcommand{\arraystretch}{1.2} $$
2\widetilde{Q}_n \widetilde{\Gamma}(\alpha) + \widetilde{C}(\alpha) = \:
\renewcommand{\kbldelim}{(}
\renewcommand{\kbrdelim}{)}
 \kbordermatrix{
	\mbox{}	& \\
	 k=1	& \vdots\\
\vdots	& -2 \pi \left ( k p - \dfrac{k(k-1)}{2}\right ) \\
	k=p 	& \vdots \\
	\cline{1-1}   \cline{2-2}
		& -(p^2+p+4)\pi + \frac{1}{2}\lambda_{X_n}(\alpha) \\
		& \frac{1}{2}\lambda_{X_n}(\alpha) - \pi \\
		& \pi }.
$$ \renewcommand{\arraystretch}{1.0}
The above computations are fairly quick consequences of the similarities between the matrices $\widetilde{Q}_n$ and the weights $\omega_j(\alpha)$ whether $n$ is odd or even.

Denote again $\alpha=(a_1,b_1,c_1,\ldots,a_W,b_W,c_W)$ a general 
vector of dihedral angles in $\mathcal{A}_{X_n}$.
Let $\hbar>0$. Since the tetrahedron $T_U$ is of positive sign here, the dynamical content $\mathcal{D}_{\hbar,X_n}(\widetilde{\mathbf{t}},\alpha)$ thus becomes
\[
e^{\frac{1}{\sqrt{\hbar}} \widetilde{C}(\alpha)^{\!\top} \widetilde{\mathbf{t}}}
\dfrac{
\Phi_\B\left (t_V + \frac{i}{2 \pi \sqrt{\hbar}}(\pi-a_V)\right )
\Phi_\B\left (t_W + \frac{i}{2 \pi \sqrt{\hbar}} (\pi-a_W)\right )
}{
\Phi_\B\left (t_1 - \frac{i}{2 \pi \sqrt{\hbar}} (\pi-a_1)\right )
\cdots
\Phi_\B\left (t_p - \frac{i}{2 \pi \sqrt{\hbar}} (\pi-a_p)\right )
\Phi_\B\left (t_U - \frac{i}{2 \pi \sqrt{\hbar}} (\pi-a_U)\right )}.\]
According to tetrahedra signs, we do the following change of variables:
\begin{itemize}
\item $y'_k = t_k - \frac{i}{2 \pi \sqrt{\hbar}} (\pi-a_k)$ for $k \in \{1,\ldots,p,U\}$,
\item $y'_l = t_l + \frac{i}{2 \pi \sqrt{\hbar}} (\pi-a_l)$ for $l\in\{V,W\}$,
\end{itemize}
and we define $\widetilde{\mathbf{y'}}:=\left (y'_1, \ldots, y'_{p}, y'_U, y'_V, y'_W\right )^{\!\top}$. 
We also denote 
\[
\widetilde{\mathcal{Y}}'_{\hbar, \alpha} :=
\prod_{k=1, \ldots, p, U}\left (\R - \frac{i}{2 \pi \sqrt{\hbar}} (\pi-a_k)\right ) 
\times 
\prod_{l=V,W}
\left (\R + \frac{i}{2 \pi \sqrt{\hbar}} (\pi-a_l)\right ).
\]
After computations similar to the ones in the proof of Theorem \ref{thm:part:func}, we obtain:
\begin{equation*}
\mathcal{Z}_{\hbar}(X_n,\alpha)
\stareq
\int_{\mathbf{\widetilde{y}'} \in \widetilde{\mathcal{Y}}'_{\hbar,\alpha}} d\mathbf{\widetilde{y}'}
e^{
2 i \pi \mathbf{\widetilde{y}}^{\prime T} \widetilde{Q}_n  \mathbf{\widetilde{y}'} + \frac{1}{\sqrt{\hbar}} \left ( 2\widetilde{Q}_n \widetilde{\Gamma}(\alpha) + \widetilde{C}(\alpha)\right )^{\!\top} \mathbf{\widetilde{y}'} 
}
\dfrac{
\Phi_\B\left (y'_V\right )
\Phi_\B\left (y'_W\right )
}{
\Phi_\B\left (y'_1\right )
\cdots
\Phi_\B\left (y'_p\right )
\Phi_\B\left (y'_U\right )
},
\end{equation*}

We define a new variable $x:= y'_U + y'_V$ living in the set 
$$\mathcal{Y}^0_{\hbar,\alpha}=\R + \frac{i}{2 \pi \sqrt{\hbar}} (a_U-a_V),$$
 and we also define $
\mathbf{y'}$ (respectively  $\mathcal{Y}'
_{\hbar,\alpha}$) exactly like $
\widetilde{\mathbf{y}'}$ (respectively $
\widetilde{\mathcal{Y}}'_{\hbar,\alpha}$) but with the second-to-last coordinate (corresponding to $y_V$) taken out. 
We also define 
\begin{equation*} 
\mathcal{W}_{n}=
\begin{bmatrix}\mathcal{W}_{n,1} \\ \vdots \\ \mathcal{W}_{n,k} \\ \vdots \\ \mathcal{W}_{n,p} \\ \mathcal{W}_{n,U} \\ \mathcal{W}_{n,W} \end{bmatrix}
:=
\begin{bmatrix}-2p\pi \\ \vdots \\ -2 \pi \left ( k p - \frac{k(k-1)}{2}\right ) \\ \vdots \\ -p(p+1)\pi \\ -(p^2+p+3)\pi \\ \pi\end{bmatrix}
\qquad 
\text{ and }
\qquad
Q_n=\begin{bmatrix}
1 & 1 & \cdots & 1 & 1 & 0 \\ 
1 & 2 & \cdots & 2 & 2 & 0 \\ 
\vdots & \vdots & \ddots & \vdots & \vdots & \vdots \\ 
1 & 2 & \cdots & p & p & 0 \\ 
1 & 2 & \cdots & p & p+1 & -\frac{1}{2} \\ 
0 & 0 & \cdots & 0 & -\frac{1}{2} & 0 
\end{bmatrix}.
\end{equation*}
This time, $Q_n$ is obtained from $\widetilde{Q}_n$ by replacing the two rows corresponding to $y_U$ and $y_V$ with their difference (row of $y_U$ minus row of $y_V$), and by replacing the two  columns corresponding to $y_U$ and $y_V$ with their difference.
We now use the substitution $y'_V = x - y'_U$ and we compute
\begin{align*}
2 i \pi \widetilde{\mathbf{y}}^{\prime T} \widetilde{Q}_n \widetilde{\mathbf{y}'} 
&= 
2 i \pi \left ( 
(\mathbf{y'}^{\!\top} Q_n \mathbf{y'} 
- (p+1) {y'_U}^2 + y'_U y'_W) 
+ (p+1){y'_U}^2 -  y'_U y'_V - 2 y'_U y'_W
 - {y'_V}^2 - y'_V y'_W
\right ) \\
&= 
2 i \pi \left (\mathbf{y'}^{\!\top} Q_n \mathbf{y'} + x y'_U -x y'_W -x^2\right ),
\end{align*}
and 
$\frac{1}{\sqrt{\hbar}} \left ( 2\widetilde{Q}_n \widetilde{\Gamma}(\alpha) + \widetilde{C}(\alpha)\right )^{\!\top} \widetilde{\mathbf{y}'}
= \frac{1}{\sqrt{\hbar}} \left (\mathcal{W}_n^{\!\top} \mathbf{y'}
+x (\frac{1}{2}\lambda_{X_n}(\alpha)-\pi)\right )$,
thus

\begin{align*}
&\mathcal{Z}_{\hbar}(X_n,\alpha) \stareq
\int_{\mathbf{\widetilde{y}'} \in \widetilde{\mathcal{Y}}'_{\hbar,\alpha}} d\mathbf{\widetilde{y}'}
e^{
2 i \pi \mathbf{\widetilde{y}}^{\prime T} \widetilde{Q}_n  \mathbf{\widetilde{y}'} + \frac{1}{\sqrt{\hbar}}\left ( 2\widetilde{Q}_n \widetilde{\Gamma}(\alpha) + \widetilde{C}(\alpha)\right )^{\!\top} \mathbf{\widetilde{y}'} 
}
\dfrac{
\Phi_\B\left (y'_V\right )
\Phi_\B\left (y'_W\right )
}{
\Phi_\B\left (y'_1\right )
\cdots
\Phi_\B\left (y'_p\right )
\Phi_\B\left (y'_U\right )
}
\\
&\stareq
\int dx d\mathbf{y'} 
e^{2 i \pi \left (\mathbf{y'}^{\!\top} Q_n \mathbf{y'}+x(y'_U -y'_W-x)\right )+
\frac{1}{\sqrt{\hbar}} \left (\mathcal{W}_n^{\!\top} \mathbf{y'}
+x (\frac{1}{2}\lambda_{X_n}(\alpha)-\pi)\right )
}
\dfrac{
\Phi_\B\left (x-y'_U\right )
\Phi_\B\left (y'_W\right )
}{
\Phi_\B\left (y'_1\right )
\cdots
\Phi_\B\left (y'_p\right )
\Phi_\B\left (y'_U\right )
}  ,
\end{align*}
where the variables $(\mathbf{y'},x)$ in the last integral lie in $\mathcal{Y}'_{\hbar,\alpha} \times \mathcal{Y}^0_{\hbar,\alpha}$.
The theorem follows.
\end{proof}

We now state the counterpart of Corollary \ref{cor:part:func}, which is proven in exactly the same way.

\begin{corollary}\label{cor:even:part:func}
Let $n$ be a positive even integer, $p=\frac{n-2}{2}$ and $X_n$ the ideal triangulation of $S^3\setminus K_n$ from Figure \ref{fig:id:trig:even}. Then for all angle structures $\alpha \in \mathcal{A}_{X_n}$ and all $\hbar>0$, we have:
\begin{equation*}
\mathcal{Z}_{\hbar}(X_n,\alpha) 
\stareq 
\int_{\mathbb{R}+i \mu_{X_n}(\alpha)  } 
\mathfrak{J}_{X_n}(\hbar,\mathsf{x})
e^{\frac{1}{4 \pi \hbar}  \mathsf{x}  \lambda_{X_n}(\alpha)} 
d\mathsf{x},
\end{equation*}
with the map
\begin{equation*}
\mathfrak{J}_{X_n}\colon(\hbar,\mathsf{x})\mapsto
\left (
\frac{1}{2 \pi \sqrt{\hbar}}
\right )^{p+3}
\int_{\mathcal{Y}_\alpha} 
d\mathbf{y} \
e^{\frac
{
i \mathbf{y}^{\!\top} Q_n \mathbf{y} + 
 i \mathsf{x}(y_U-y_W-\mathsf{x}) + 
  \mathbf{y}^{\!\top} \mathcal{W}_n - \pi \mathsf{x}
}
 {2 \pi \hbar} 
}
\dfrac{
\Phi_\B\left ( \frac{\mathsf{x}-y_U}{2 \pi \sqrt{\hbar}} \right )
\Phi_\B\left ( \frac{y_W}{2 \pi \sqrt{\hbar}} \right )
}{
\Phi_\B\left (\frac{y_1}{2 \pi \sqrt{\hbar}}\right )
\cdots 
\Phi_\B\left (\frac{y_p}{2 \pi \sqrt{\hbar}}\right )
\Phi_\B\left ( \frac{y_U}{2 \pi \sqrt{\hbar}}  \right )
} 
,
\end{equation*}
where $\mu_{X_n},\lambda_{X_n}, \mathcal{W}_n, Q_n$ are the same as in Theorem \ref{thm:even:part:func}, and 
$$\mathcal{Y}_\alpha = \left (
\prod_{k=1, \ldots,p,U}\left (\R - i (\pi - a_k)\right ) 
\right )
\times 
\left (\R + i (\pi - a_W)\right ).$$
\end{corollary}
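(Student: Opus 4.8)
The plan is to follow the proof of Corollary \ref{cor:part:func} verbatim, replacing the odd-case data of Theorem \ref{thm:part:func} by the even-case data of Theorem \ref{thm:even:part:func}. That is, I would start from the expression for $\mathcal{Z}_{\hbar}(X_n,\alpha)$ furnished by Theorem \ref{thm:even:part:func} and perform a single rescaling change of variables, with $\hbar>0$ fixed throughout. Concretely, in the double integral one sets
\[
y'_j = \frac{y_j}{2\pi\sqrt{\hbar}} \quad (j\in\{1,\ldots,p,U,W\}), \qquad x = \frac{\mathsf{x}}{2\pi\sqrt{\hbar}},
\]
so that the inner variables $y'_j$ and the outer variable $x$ of Theorem \ref{thm:even:part:func} are traded for the variables $y_j$ and $\mathsf{x}$ appearing in the statement of the corollary.

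Next I would check that this substitution carries each ingredient of Theorem \ref{thm:even:part:func} to the corresponding ingredient of the corollary. First, the contour: the factor $\R - \frac{i}{2\pi\sqrt{\hbar}}(\pi-a_k)$ of $\mathcal{Y}'$ becomes $\R - i(\pi-a_k)$, and likewise for the $U$- and $W$-factors, so that $\mathcal{Y}'$ is sent to the $\hbar$-independent contour $\mathcal{Y}_\alpha$; similarly the outer contour $\R + i\frac{\mu_{X_n}(\alpha)}{2\pi\sqrt{\hbar}}$ becomes $\R + i\mu_{X_n}(\alpha)$. Second, the quadratic phase $2i\pi\,\mathbf{y'}^TQ_n\mathbf{y'}$, the linear term $\frac{1}{\sqrt{\hbar}}\mathbf{y'}^T\mathcal{W}_n$, the mixed term in $x$, and the $-\pi x/\sqrt{\hbar}$ contribution all acquire a common factor $\frac{1}{2\pi\hbar}$ after rescaling (the quadratic one because $(2\pi\sqrt{\hbar})^{2}=4\pi^2\hbar$ combines with the leading $2\pi$), reproducing the exponent $\frac{1}{2\pi\hbar}\bigl(i\mathbf{y}^TQ_n\mathbf{y}+i\mathsf{x}(y_U-y_W-\mathsf{x})+\mathbf{y}^T\mathcal{W}_n-\pi\mathsf{x}\bigr)$ of $\mathfrak{J}_{X_n}$. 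The quantum dilogarithm arguments $y'_j$ become $\frac{y_j}{2\pi\sqrt{\hbar}}$ as required, and the factors $\Phi_\B(x-y'_U)$, $\Phi_\B(y'_U)$ transform into $\Phi_\B\!\left(\frac{\mathsf{x}-y_U}{2\pi\sqrt{\hbar}}\right)$, $\Phi_\B\!\left(\frac{y_U}{2\pi\sqrt{\hbar}}\right)$. Finally, the outer exponential $e^{\frac{1}{2\sqrt{\hbar}}x\lambda_{X_n}(\alpha)}$ becomes $e^{\frac{1}{4\pi\hbar}\mathsf{x}\lambda_{X_n}(\alpha)}$, and the Jacobians of the two substitutions produce the real positive prefactor $\left(\frac{1}{2\pi\sqrt{\hbar}}\right)^{p+3}$ placed in front of $\mathfrak{J}_{X_n}$.

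There is essentially no obstacle here beyond careful bookkeeping of the powers of $2\pi\sqrt{\hbar}$: the one point worth stressing is that the identity is asserted only up to $\stareq$, so the real positive scaling constants coming from the Jacobian (and any overall phase) are irrelevant, and one need only track the modulus. This is exactly the role played by $\stareq$ in the proof of Corollary \ref{cor:part:func}, and the computation is identical in structure to the odd case. Hence the even-case corollary follows at once from Theorem \ref{thm:even:part:func} by this rescaling argument.
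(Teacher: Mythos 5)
Your proof is correct and is exactly the paper's argument: the paper's proof of this corollary simply defers to the proof of Corollary \ref{cor:part:func}, which is the same rescaling $y'_j = y_j/(2\pi\sqrt{\hbar})$, $x = \mathsf{x}/(2\pi\sqrt{\hbar})$ with fixed $\hbar$ that you carry out, up to the real positive Jacobian factors absorbed by $\stareq$. One minor remark: your bookkeeping of the cross term tacitly uses the form $2i\pi x(y'_U-y'_W-x)$ obtained at the end of the proof of Theorem \ref{thm:even:part:func} (which is the one consistent with the corollary), rather than the expression $2i\pi x(x-y'_U-y'_W)$ misprinted in that theorem's displayed statement.
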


\begin{proof}
Exactly similar to the proof of Corollary \ref{cor:part:func}.
\end{proof}

We finally come to H-triangulations for even twists knots.
Again, before stating Theorem \ref{thm:part:func:Htrig:even}, we compute the weights on each edge of the H-triangulation $Y_n$ given in Figure \ref{fig:H:trig:even} (for $n \geqslant 3$ even).  

We use exactly the same {notation} as the odd case. We denoted $\overrightarrow{\eta_0}, \ldots, \overrightarrow{\eta_{p+1}}, \overrightarrow{\eta_s}, \overrightarrow{\eta_d}, \overrightarrow{K_n} \in (Y_n)^1$ the $p+5$ edges in $Y_n$ respectively represented in Figure \ref{fig:H:trig:even} by arrows with circled $0$, \ldots, circled $p+1$, simple arrow, double arrow and blue simple arrow.

For $\alpha=(a_1,b_1,c_1,\ldots,a_p,b_p,c_p,a_U,b_U,c_U,a_V,b_V,c_V,a_W,b_W,c_W,a_Z,b_Z,c_Z) \in \mathcal{S}_{Y_n}$ a shape structure on $Y_n$, the weights of each edge are given by:
\begin{itemize}
\item $\widehat{\omega}_s(\alpha):= \omega_{Y_n,\alpha}(\overrightarrow{\eta_s})=
2 a_U+b_V+c_V+a_W+b_W+a_Z
$
\item $\widehat{\omega}_d(\alpha):= \omega_{Y_n,\alpha}(\overrightarrow{\eta_d})=
b_U+c_U+c_W+b_Z+c_Z
$
\item $\omega_0(\alpha):= \omega_{Y_n,\alpha}(\overrightarrow{\eta_0})=
2 a_1 + c_1 + 2 a_2 + \ldots + 2 a_p + a_V+c_W
$
\item $\omega_1(\alpha):= \omega_{Y_n,\alpha}(\overrightarrow{\eta_1})=
2b_1+c_2
$
\\
\vspace*{-2mm}
\item $\omega_k(\alpha):= \omega_{Y_n,\alpha}(\overrightarrow{\eta_k})=
c_{k-1}+2b_k+c_{k+1}
$
 \ \
(for $2\leqslant k \leqslant p-1$)
\\
\vspace*{-2mm}
\item $\widehat{\omega}_p(\alpha):= \omega_{Y_n,\alpha}(\overrightarrow{\eta_p})=
c_{p-1}+2b_p+c_U+a_V+b_V+a_W+b_Z+c_Z$
\item $\omega_{p+1}(\alpha):= \omega_{Y_n,\alpha}(\overrightarrow{\eta_{p+1}})=
c_p+b_U+c_V+b_W
$
\item $\widehat{\omega}_{\overrightarrow{K_n}}(\alpha):= \omega_{Y_n,\alpha}(\overrightarrow{K_n})=
a_Z
$
\end{itemize}

We can now compute the partition function for the H-triangulations $Y_n$ ($n$ even), and prove the following theorem. As for the odd case, we will denote $\mathcal{S}_{Y_n \backslash Z}$ the space of shape structures on every tetrahedron of $Y_n$ except for $Z$.

\begin{theorem}\label{thm:part:func:Htrig:even}
Let $n$ be a positive even integer and $p=\frac{n-2}{2}$. Consider the one-vertex H-triangulation $Y_n$ of the pair $(S^3,K_n)$ described in Figure \ref{fig:H:trig:even}. Then for every $\hbar>0$ and for every 
$\tau\in \mathcal{S}_{Y_n \setminus Z} \times \overline{\mathcal{S}_Z}$
 such that $\omega_{Y_n,\tau}$ vanishes on $\overrightarrow{K_n}$ and is equal to $2\pi$ on every other edge, one has
\begin{equation*}
\underset{\tiny \begin{matrix}\alpha \to \tau \\ \alpha \in \mathcal{S}_{Y_n}\end{matrix}}{\lim}
	 \Phi_{\B}\left( \frac{\pi-\omega_{Y_n,\alpha}\left (\overrightarrow{K_n}\right )}{2\pi i \sqrt{\hbar}} \right)  \mathcal{Z}_{\hbar}(Y_n,\alpha) \stareq J_{X_n}(\hbar,0),
\end{equation*}
where $J_{X_n}$ is defined in Theorem \ref{thm:even:part:func}.
\end{theorem}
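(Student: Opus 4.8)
The plan is to follow verbatim the three-step structure of the proof of Theorem~\ref{thm:part:func:Htrig:odd}, adapting every explicit matrix and weight to the even triangulation $Y_n$ of Figure~\ref{fig:H:trig:even} and to the even data $(Q_n, \mathcal{W}_n, J_{X_n})$ of Theorem~\ref{thm:even:part:func}. The underlying mechanism — computing the kinematical kernel via Lemma~\ref{lem:dirac}, passing to the limit $\alpha \to \tau$ by dominated convergence, and identifying the resulting one-variable integral with $J_{X_n}(\hbar,0)$ — is identical; only the combinatorial bookkeeping changes.

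\emph{Step 1 (kinematical kernel).} First I would read off from Figure~\ref{fig:H:trig:even} the three integer matrices $\widehat{S}, \widehat{H}, \widehat{D}$ encoding the face gluings of $Y_n$, with $\widehat{\mathbf{t}} = (t_1, \ldots, t_p, t_U, t_V, t_W, t_Z)$ and $\widehat{\mathbf{x}} = (e_1, \ldots, e_{p+1}, f_1, \ldots, f_p, v, r, s, s', g, u, m)$. As in the odd case the full matrix $\widehat{H}$ is singular, so I would delete the $m$-column together with the $w_V$-row to obtain an invertible submatrix $H$ (expecting $\det H = \pm 1$), isolate the deleted row $R_V$, and apply the double Fourier transform argument of Lemma~\ref{lem:dirac} in the variables $\mathbf{x}$ and $\mathbf{w}$. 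The integral over $m$ should produce $\delta(t_Z)$ and the integral over $w_V$ a second factor $\delta(-R_V H^{-1} \widehat{D}\,\widehat{\mathbf{t}})$, which I expect to evaluate to $\delta(t_U - t_V - t_Z)$. After computing $-\widehat{S} H^{-1} \widehat{D}$ explicitly and symmetrising, the quadratic form should collapse — once $t_Z = 0$ and $t_V$ is expressed through $t_U$ — onto the even matrix $Q_n$ of Theorem~\ref{thm:even:part:func}, plus the cross term coming from the $(t_V - t_U)$-combination.

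\emph{Step 2 (dominated convergence).} I would then write the dynamical content $\mathcal{D}_{\hbar, Y_n}(\widehat{\mathbf{t}}, \alpha)$, taking care of tetrahedra signs: as in the even ideal triangulation, $T_1, \ldots, T_p, U$ are positive and $V, W$ negative, so that $\Phi_\B(t_U - \tfrac{i}{2\pi\sqrt{\hbar}}(\pi - a_U))$ sits in the denominator, the sign of $Z$ being read off Figure~\ref{fig:H:trig:even}. Integrating out $t_V$ and $t_Z$ against the two Dirac factors replaces $t_V$ by $t_U$ and sets $t_Z = 0$, and the prefactor $\Phi_{\B}\big(\tfrac{\pi - a_Z}{2\pi i \sqrt{\hbar}}\big)$ is precisely designed to cancel the $Z$-term of the dynamical content at $t_Z = 0$. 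To pass to the limit $\alpha \to \tau$ I would reuse verbatim the uniform bound of the odd case: choosing $\delta > 0$ with all relevant angles in $(\delta, \pi - \delta)$ on a neighbourhood of $\tau$, Lemma~\ref{lem:PhiB:bounded} together with Proposition~\ref{prop:quant:dilog}~(1)--(2) dominates the integrand by $(M_{\delta,\hbar})^{p+3}\, e^{-\frac{1}{\sqrt{\hbar}} \delta(|t_1| + \cdots + |t_p| + 2|t_U| + |t_W|)}$, which is integrable over $\R^{p+2}$, so the dominated convergence theorem applies.

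\emph{Step 3 (evaluation at $\tau$ and the main obstacle).} Finally, at $\alpha = \tau$ I would perform the change of variables $y'_k = t_k - \tfrac{i}{2\pi\sqrt{\hbar}}(\pi - a^\tau_k)$ for $k \in \{1, \ldots, p, U\}$ and $y'_W = t_W + \tfrac{i}{2\pi\sqrt{\hbar}}(\pi - a^\tau_W)$, and check that the term $\Phi_\B(x - y'_U)$ of $J_{X_n}(\hbar, 0)$ emerges correctly from the collapsed $V$-variable, using $a^\tau_U - a^\tau_V = 0$. The crux, exactly as in the odd case, will be to verify the vector identity $\mathcal{W}(\tau) = \mathcal{W}_n$: I would write $\mathcal{W}(\alpha) = \Lambda(\widetilde{\mathcal{W}}(\alpha)) + (\text{correction vector})$ for the projection $\Lambda$ forgetting the $V$-coordinate, invoke the even analogue of Lemma~\ref{lem:2QGamma+C} established within the proof of Theorem~\ref{thm:even:part:func}, specialise to $\tau$, and show that the relevant correction entries vanish by expressing them through the edge-weight combinations $\widehat{\omega}_s(\tau), \widehat{\omega}_d(\tau), \widehat{\omega}_{\overrightarrow{K_n}}(\tau)$ and their prescribed values $2\pi, 2\pi, 0$. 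This reconciliation of the hatted weights of $Y_n$ with the even $\widetilde{Q}_n$ and $\mathcal{W}_n$ is where the even computation genuinely diverges from the odd one and is the step most likely to hide sign errors; everything else transcribes mechanically from Section~\ref{sec:part:H:odd}.
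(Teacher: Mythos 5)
Your overall strategy is exactly the paper's: the same three-step scheme, the same dominated-convergence bound, and the same final reconciliation $\mathcal{W}(\tau)=\mathcal{W}_n$ via the hatted edge weights. However, there is one concrete error that would derail Step~3 if carried through as written. You predict that integrating out $w_V$ produces $\delta(t_U-t_V-t_Z)$ and hence that $t_V$ gets replaced by $t_U$ (you assert this twice, in Steps~1 and~2). That is the \emph{odd}-case pattern; in the even triangulation the tetrahedron $U$ has flipped sign (it is positive, while $V$ remains negative), the collapsing variable in Theorem~\ref{thm:even:part:func} is $x=y'_U+y'_V$ rather than $x=y'_V-y'_U$, and the actual computation gives $-R_{e,V}H_e^{-1}D\,\widehat{\mathbf{t}}=-t_U-t_V$, i.e.\ the second Dirac factor is $\delta(-t_U-t_V)$ and the substitution is $t_V\mapsto -t_U$. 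Correspondingly, the cross term in the quadratic form is governed by $(t_Z-t_U-t_V)$, not by $(t_V-t_U)$, and the exponential carries $(c_U-c_V)t_U$ rather than $(c_U+c_V)t_U$.

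This sign matters for the very identification you flag as the crux: with $t_V=-t_U$ the factor $\Phi_\B\bigl(t_V+\tfrac{i}{2\pi\sqrt{\hbar}}(\pi-a_V)\bigr)$ becomes $\Phi_\B\bigl(-y'_U+\tfrac{i}{2\pi\sqrt{\hbar}}(a^\tau_U-a^\tau_V)\bigr)=\Phi_\B(-y'_U)$, which is precisely $\Phi_\B(x-y'_U)$ at $x=0$; with your substitution $t_V=t_U$ you would instead land on $\Phi_\B\bigl(y'_U+\tfrac{i}{2\pi\sqrt{\hbar}}(2\pi-a^\tau_U-a^\tau_V)\bigr)$, which does not match $J_{X_n}(\hbar,0)$. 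The fix is mechanical once the matrices $\widehat{S_e},\widehat{H_e}$ are written down honestly from Figure~\ref{fig:H:trig:even}, and the rest of your outline (including the correction vector in $\mathcal{W}(\alpha)$, which acquires a $-c_V$ rather than $+c_V$ entry, and the fact that it is $\widehat{\omega}_p$ rather than $\widehat{\omega}_{p+1}$ that carries the hat in the even case) then goes through as in Section~\ref{sec:part:H:odd}.
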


\begin{proof}
Let $n$ be an even integer and $p=\frac{n-2}{2}$. The proof is similar to the odd case and will be separated in three steps: computing the partition function $\mathcal{Z}_{\hbar}(Y_n,\alpha)$, applying the dominated convergence theorem in $\alpha \to \tau$ and finally retrieving the value $J_{X_n}(\hbar,0)$ in $\alpha =\tau$.

\textit{Step 1. Computing the partition function $\mathcal{Z}_{\hbar}(Y_n,\alpha)$.}

Like in the proof of Theorem \ref{thm:even:part:func} we start by computing the kinematical kernel. We denote 
$\widehat{\mathbf{t}}=(t_1,\ldots,t_p,t_U,t_V,t_W,t_Z) \in \mathbb{R}^{Y_n^3}$ and $
{\widehat{\mathbf{x}}=(e_1,\ldots,e_p,e_{p+1},f_1,\ldots,f_p,v,r,s,s',g,u,m) \in \mathbb{R}^{Y_n^2}}$.

Like in the proof of Theorem \ref{thm:part:func:Htrig:odd}, using Figure \ref{fig:H:trig:even}, we compute
$$
\mathcal{K}_{Y_n}\left (\mathbf{\widehat{t}}\right ) =
\int_{\widehat{\mathbf{x}} \in \R^{Y_n^{2}}} d\widehat{\mathbf{x}} 
\int_{\widehat{\mathbf{w}} \in \R^{2 (p+4)}} d\widehat{\mathbf{w}} \
e^{ 2 i \pi \mathbf{\widehat{t}}^{\!\top} \widehat{S_e} \widehat{\mathbf{x}}}
e^{ -2 i \pi \widehat{\mathbf{w}}^{\!\top} \widehat{H_e} \widehat{\mathbf{x}}}
e^{ -2 i \pi \widehat{\mathbf{w}}^{\!\top} \widehat{D} \mathbf{\widehat{t}}},
$$
where $\widehat{D}$ is like in proof of Theorem \ref{thm:part:func:Htrig:odd}, whereas the matrices $\widehat{S_e}$ and $\widehat{H_e}$ are given by:

$$
\widehat{S_e}=\kbordermatrix{
	\mbox{} 	& e_1 	& \dots 	& e_p	&e_{p+1} 	& \omit\vrule	&f_1	& \ldots 	& f_p	&\omit\vrule	&  v 	& r 	& s 	& s'	& g	& u 	& m 	\\
	t_1 		& 1	 	 & 		&\push{\low{0}} 	& \omit\vrule	& 	&  		&  	&\omit\vrule	&   	&  	&   	&	&	&  	&  	\\
	\vdots 	& 	    	&\ddots 	& 		&	  	& \omit\vrule	& 	&  0		&  	&\omit\vrule	& 	&   	&  	& 0	&	&	&  	\\
	t_{p}    	&\push{0}		       	& 1        	& 		& \omit\vrule	&  	&  		&	&\omit\vrule	&	&	&	&	&	&	&  	\\ 
\cline{1-1} \cline{2-17}
	t_U	 	& 		& 		& 		&  		& \omit\vrule	&	&	 	&	&\omit\vrule	&0  	&1 	&0  	& 0	& 0	&0 	& 0 	\\
	t_V	 	& 		&\push{0}	  		&  		& \omit\vrule	&	& 	0	& 	&\omit\vrule	&0  	&0 	&-1  	& 0	& 0	& 0 	& 0 	\\
	t_W	 	& 		&  		& 		& 	 	& \omit\vrule	&	&	  	&  	&\omit\vrule	&0 	&0 	&0  	& 0	& 0	& -1 	& 0	\\
	t_Z	 	& 		&  		& 		& 	 	& \omit\vrule	&	&	  	&  	&\omit\vrule	&0 	&0 	&0  	& 0	& 0	& 0 	& 1
},
$$

$$
\widehat{H_e}=\kbordermatrix{
	\mbox{} 	& e_1 	& e_2 	& \dots 	&e_p		& e_{p+1}	& \omit\vrule	&f_1	&f_{2}	&  \ldots 	& f_p	&\omit\vrule	&  v 	& r 	& s 	& s'		& g	& u 	& m 	\\
	w_1 		& 1		&   -1 	& 		&		&		& \omit\vrule	& 1	&		&  		&  	&\omit\vrule	&   	&  	&  	&		&	&  	&  	\\
	\vdots 	&     		&   \ddots	& \ddots 	&	\push{0}		& \omit\vrule	& 	&\ddots	&\push{0}  	&\omit\vrule	& 	&   	&  	& \low{0}	&	&	&  	\\
	\vdots 	&     	\push{0}   		& \ddots 	&\ddots	&		& \omit\vrule	& \push{0}  	&\ddots	&  	&\omit\vrule	& 	&   	&  	&		&	&	&  	\\
	w_{p} 	&		&		&		&	1	& -1		& \omit\vrule 	&	&		&		&1	&\omit\vrule	&	&	&	&		&	&	&  	\\
\cline{1-1} \cline{2-19}
	w_{U} 	&		&		&		&		&  		& \omit\vrule 	&	&		&		&0	&\omit\vrule	& -1	& 1	& 0	&	1	& 0	& 0	& 0 	\\
	w_{V} 	&		&		& \low{0}	&		&  		& \omit\vrule 	&	&		&		&1	&\omit\vrule	& 0	& 0	& 1	&	0	& -1	& 0	& 0  	\\
	w_{W} 	&		&		&		&		&  		& \omit\vrule 	&	&		&		&0	&\omit\vrule	& -1	& 1	& 0	&	0	& 0	& 1	& 0 	\\
	w_{Z} 	&		&		&		&		&  		& \omit\vrule 	&	&		&		&0	&\omit\vrule	& 0	& 0	& 1	&	0	& 0	& 0	& 0 	\\
\cline{1-1} \cline{2-19}
	w'_1 		& -1		&   	 	& 		&		&		& \omit\vrule	& 1	&  		&  		&	&\omit\vrule	&   	&  	&   	&		&	&  	&  	\\
	\vdots 	&     		&   		& 	 	&		&		& \omit\vrule	& -1	&  \ddots	&  \push{0}	&\omit\vrule	& 	&   	&  	& \low{0}	&	&	&  	\\
	\vdots 	&     		&   		& 	 	&		&		& \omit\vrule	& 	&  \ddots	& \ddots 	&	&\omit\vrule	& 	&   	&  	&		&	&	&  	\\
	w'_{p} 	&		&		&		&		&  		& \omit\vrule 	&	\push{0}	&-1		&1	&\omit\vrule	&	&	&	&		&	&	&  	\\
\cline{1-1} \cline{2-19}
	w'_{U} 	&		&		&		&		&  0		& \omit\vrule 	&	&		&		& 0	&\omit\vrule	& 0	& 0	& 0	&	1	& -1	& 0	& 0  	\\
	w'_{V} 	&		&		&		&		&  0		& \omit\vrule 	&	&		&		& 1	&\omit\vrule	& 0	& 0	& 0	&	0	& 0	& -1	& 0 	\\
	w'_{W} 	&		&		&		&		&  -1		& \omit\vrule 	&	&		&		& 0	&\omit\vrule	& 0	& 1	& 0	&	0	& 0	& 0	& 0 	\\
	w'_{Z} 	&		&		&		&		&  0		& \omit\vrule 	&	&		&		& 0	&\omit\vrule	& 0	& 0	& 1	& 	-1	& 0	& 0	& 0 
}.$$

Like in the odd case, let us define $S_e$ the submatrix of $\widehat{S_e}$ without the $m$-column, $H_e$ the submatrix of $\widehat{H_e}$ without the $m$-column and the $w_V$-row, $R_{e,V}$ this very $w_V$-row of $\widehat{H_e}$, $D$ the submatrix of $\widehat{D}$ without the $w_V$-row, $\mathbf{x}$ the subvector of $\widehat{\mathbf{x}}$ without the variable $m$ and $\mathbf{w}$ the subvector of $\widehat{\mathbf{w}}$ without the variable $w_V$. 
We remark that $H_e$ is invertible and $\det(H_e)=-1$. Hence, by using multi-dimensional Fourier transform and the integral definition of the Dirac delta function like in the odd case, we compute 

$$
\mathcal{K}_{Y_n}\left (\mathbf{\widehat{t}}\right ) 
= \delta(-t_Z)
e^{2i \pi \widehat{\mathbf{t}}^{\!\top} (-S_e H_e^{-1} D) \mathbf{\widehat{t}}}
\delta  (-R_{e,V} H_e^{-1} D \mathbf{\widehat{t}}).$$

We can now compute $H_e^{-1}=$
$$ 
\kbordermatrix{
	\mbox{} 	& w_{1}	& w_{2} 	& \ldots	& w_{p-1}	& w_{p}	& \omit\vrule	& w_{U}			& w_{W}			& w_{Z}			& \omit\vrule	&  w'_{1}	& w'_{2}	& \ldots	& w'_{p-1}			& w'_{p}			& \omit\vrule 	& w'_{U}	& w'_{V}			& w'_{W}	& w'_{Z}			\\
	e_{1} 	& 	0	&		& \cdots	& 		&	0	& \omit\vrule 	& 	-1			& 	1			& 	1			& \omit\vrule	&  	-1	& 	-1	&	\push{\cdots}			& 	-1			& \omit\vrule 	& 	0	& 	1			& 	0	&	-1			\\
	e_{2} 	& 	-1	& 	0	& 		& 		& 		& \omit\vrule 	& 	-2			& 	2			& 	2			& \omit\vrule	&  	-1	& 	-2	&	\push{\cdots}			& 	-2			& \omit\vrule 	& 	0	& 	2			& 	0	&	-2			\\
\low{\vdots} 	& 	-1	& 	-1	& \ddots	& 		& \vdots	& \omit\vrule 	& 				& 				& 				& \omit\vrule	& 	 	&		& \ddots	& 				& 	\vdots		& \omit\vrule 	& 		& 				& 		&				\\
		 	& \vdots	& 		& \ddots	& 	0	& 	0	& \omit\vrule 	& \vdots			& \vdots			&  \vdots			& \omit\vrule	&  \vdots	& \vdots	&		&\text{\tiny {\(1-p\)}}	& \text{\tiny {\(1-p\)}}	& \omit\vrule 	& \vdots	& \vdots			& \vdots	& \vdots			\\
	e_{p} 	& 		& 		& 		& 	-1	& 	0	& \omit\vrule 	& 				& 				& 				& \omit\vrule	&  		& 		&		& \text{\tiny {\(1-p\)}}	& 	-p			& \omit\vrule 	& 		& 				& 		&				\\
	e_{p+1} 	& -1		& 		& \cdots	& 		& 	-1	& \omit\vrule 	& \text{\tiny {\(-p-1\)}}	& \text{\tiny {\(p+1\)}}	& \text{\tiny {\(p+1\)}}	& \omit\vrule	&  	-1	&	-2	&\cdots	& \text{\tiny {\(1-p\)}}	& 	-p			& \omit\vrule 	& 	0	& \text{\tiny {\(p+1\)}}	& 	0	& \text{\tiny {\(-p-1\)}}	\\
\cline{1-1} \cline{2-21}
	f_{1} 	& 		& 		& 		& 		& 		& \omit\vrule 	& 	-1			& 	1			& 	1			& \omit\vrule	&  	0	& -1		& 	\push{\cdots}			&	-1			& \omit\vrule 	& 	0	& 	1			& 	0	&	-1			\\
	f_{2} 	& 		& 		& 		& 		& 		& \omit\vrule 	& 				& 				& 				& \omit\vrule	&  	0	&0		& \ddots	&				&	\low{\vdots}	& \omit\vrule 	& 		& 				& 		&				\\
	\vdots 	& 		& 		& 	0	& 		& 		& \omit\vrule 	& \vdots			& \vdots			& \vdots			& \omit\vrule	& \vdots 	& 		& \ddots	&	-1			&	-1			& \omit\vrule 	& \vdots	& \vdots			& \vdots	& \vdots			\\
	f_{p-1} 	& 		& 		& 		& 		& 		& \omit\vrule 	& 				& 				& 				& \omit\vrule	&  	0	& 		& 		&	0			&	-1			& \omit\vrule 	& 		& 				& 		&				\\
	f_{p} 	& 		& 		& 		& 		& 		& \omit\vrule 	& 	-1			& 	1			& 	1			& \omit\vrule	&  	0	& 		& \cdots	&				&	0			& \omit\vrule 	& 	0	& 	1			& 	0	&	-1			\\
\cline{1-1} \cline{2-21}
	v	 	& 	-1	& 		& \cdots	& 		& 	-1	& \omit\vrule 	& \text{\tiny {\(-p-2\)}}	& \text{\tiny {\(p+1\)}}	&\text{\tiny {\(p+2\)}}	& \omit\vrule	&  	-1	& 	-2	& \push{\cdots}				&	-p			& \omit\vrule 	& 	0	& \text{\tiny {\(p+1\)}}	& 	1	& \text{\tiny {\(-p-2\)}}	\\
	r	 	& 	-1	& 		& \cdots	& 		& 	-1	& \omit\vrule 	& \text{\tiny {\(-p-1\)}}	& \text{\tiny {\(p+1\)}}	&\text{\tiny {\(p+1\)}}	& \omit\vrule	&  	-1	& 	-2	& \push{\cdots}				&	-p			& \omit\vrule 	& 	0	& \text{\tiny {\(p+1\)}}	& 	1	& \text{\tiny {\(-p-1\)}}	\\
	s	 	& 		& 		& 		& 		& 		& \omit\vrule 	& 	0			& 	0			& 	1			& \omit\vrule	&  		& 		& 		&				&				& \omit\vrule 	& 	0	& 	0			& 	0	&	0			\\
	s'	 	& 		& 		& \low{0}	& 		& 		& \omit\vrule 	& 	0			& 	0			& 	1			& \omit\vrule	&  		& 		& \low{0}	&				&				& \omit\vrule 	& 	0	& 	0			& 	0	&	-1			\\
	g	 	& 		& 		& 		& 		& 		& \omit\vrule 	& 	0			& 	0			& 	1			& \omit\vrule	&  		& 		& 		&				&				& \omit\vrule 	& 	-1	& 	0			& 	0	&	-1			\\
	u	 	& 		& 		& 		& 		& 		& \omit\vrule 	& 	-1			& 	1			& 	1			& \omit\vrule	&  		& 		& 		&				&				& \omit\vrule 	& 	0	& 	0			& 	0	&	-1		
},$$
and thus find that $-R_{e,V} H_e^{-1} D \mathbf{\widehat{t}}=-t_U-t_V$ and
$$-S_e H_e^{-1} D= \kbordermatrix{
	\mbox{}	&t_1		&t_2		&\cdots 	& t_{p-1} 	& t_p 	& \omit\vrule	& t_U 	& t_V 	& t_W 	&t_Z		\\
	t_1 		& 1 		& 1 		& \cdots 	& 1  		& 1 		& \omit\vrule	& 0 		& -1 		& 0  		& 1 		\\
	t_2 		& 1 		& 2 		& \cdots 	& 2  		& 2 		& \omit\vrule	& 0 		& -2 		& 0  		& 2 		\\
	\vdots 	& \vdots 	& \vdots 	& \ddots 	& \vdots  	& \vdots 	& \omit\vrule	& \vdots 	& \vdots 	& \vdots 	& \vdots	\\
	t_{p-1} 	& 1 		& 2 		& \cdots 	& p-1  	& p-1 	& \omit\vrule	& 0		& -(p-1)  	& 0  		& p-1 	\\
	t_p 		& 1 		&  2 		& \cdots 	&  p-1 	& p 		& \omit\vrule	& 0		& -p 		& 0  		& p 		\\
\cline{1-1} \cline{2-11}
	t_U 		& 1 		& 2 		& \cdots 	& p-1  	& p  		& \omit\vrule	& 0 		& -(p+1) 	& -1  	& p+1 	\\
	t_V 		& 0 		& 0 		& \cdots 	&  0 		& 0 		& \omit\vrule	& 0 		& 0 		& 0  		& 0 		\\
	t_W 		& 0 		& 0 		& \cdots 	&  0 		& 0 		& \omit\vrule	& 0 		& 0 		& 0 		& -1 		\\
	t_Z 		& 0 		& 0 		& \cdots 	&  0 		& 0 		& \omit\vrule	& 0 		& 0 		& 0 		& 0
}.$$

Since 
$$\widehat{\mathbf{t}}^{\!\top} (-S_e H_e^{-1} D) \mathbf{\widehat{t}} = \mathbf{t}^{\!\top} Q_n \mathbf{t}
+ (-t_U-t_V)(t_1+\ldots+p t_p+(p+1)t_U)
+t_Z(t_1+\ldots+pt_p+(p+1)t_U-t_W),$$ 
where $\mathbf{t}=(t_1,\ldots,t_p,t_U,t_W)$ and $Q_n$ is defined in Theorem \ref{thm:even:part:func}, we conclude that the kinematical kernel can be written as  
\[
\mathcal{K}_{Y_n}(\mathbf{\widehat{t}})= 
e^{2 i \pi \left( \mathbf{t}^{\!\top} Q_n \mathbf{t}-t_W t_Z +(t_Z - t_U-t_V)(t_1 + \cdots + p t_p + (p+1) t_U) \right)}
\delta(t_Z) \delta(-t_U - t_V).
\]

We now compute the dynamical content. We denote
$\alpha=(a_1,b_1,c_1,\ldots,a_W,b_W,c_W,a_Z,b_Z,c_Z)$ a general 
vector in $\mathcal{S}_{Y_n}$.
Let $\hbar>0$. The dynamical content $\mathcal{D}_{\hbar,Y_n}(\mathbf{\widehat{t}},\alpha)$ is equal to:
\[
e^{\frac{1}{\sqrt{\hbar}} \widehat{C}(\alpha)^{\!\top} \mathbf{\widehat{t}}}
\dfrac{
\Phi_\B\left (t_V + \frac{i}{2 \pi \sqrt{\hbar}}(\pi-a_V)\right )
\Phi_\B\left (t_W + \frac{i}{2 \pi \sqrt{\hbar}} (\pi-a_W)\right )
}{
\prod_{k=1}^p
\Phi_\B\left (t_k - \frac{i}{2 \pi \sqrt{\hbar}} (\pi-a_k)\right )
\Phi_\B\left (t_U + \frac{i}{2 \pi \sqrt{\hbar}} (\pi-a_U)\right )
\Phi_\B\left (t_Z - \frac{i}{2 \pi \sqrt{\hbar}} (\pi-a_Z)\right )
},
\]
where $\widehat{C}(\alpha) = (c_1, \ldots, c_p,c_U,c_V, c_W, c_Z)^{\!\top}$.

Let us come back to the computation of the partition function of the Teichm\"uller TQFT. 
We begin by integrating over the variables $t_V$ and $t_Z$, which consists in removing the two Dirac delta functions $\delta(-t_Z)$ and $\delta(-t_U -t_V)$ in the kinematical kernel and replacing $t_Z$ by $0$ and $t_V$ by $-t_U$ in the other terms. Therefore, we have
$$
\Phi_{\B}\left( \frac{\pi-a_Z}{2\pi i \sqrt{\hbar}} \right)  \mathcal{Z}_{\hbar}(Y_n,\alpha) 
\stareq \int_{\mathbf{t}\in\R^{p+2}} d\mathbf{t} \
e^{2 i \pi \mathbf{t}^{\!\top} Q_n\mathbf{t}} e^{\frac{1}{\sqrt{\hbar}} (c_1 t_1 + \cdots + c_p t_p + (c_U - c_V)t_U + c_W t_W)} 
\Pi(\mathbf{t},\alpha,\hbar),
$$
where $\mathbf{t} =(t_1, \ldots,t_p,t_U,t_W)$ and
$$
\Pi(\mathbf{t},\alpha,\hbar) :=
\frac{
\Phi_\B\left (-t_U + \frac{i}{2 \pi \sqrt{\hbar}}(\pi-a_V)\right )
\Phi_\B\left (t_W + \frac{i}{2 \pi \sqrt{\hbar}} (\pi-a_W)\right )
}{
\Phi_\B\left (t_1 - \frac{i}{2 \pi \sqrt{\hbar}} (\pi-a_1)\right )
\cdots
\Phi_\B\left (t_p - \frac{i}{2 \pi \sqrt{\hbar}} (\pi-a_p)\right )
\Phi_\B\left (t_U - \frac{i}{2 \pi \sqrt{\hbar}} (\pi-a_U)\right )
}.
$$

\textit{Step 2. Applying the dominated convergence theorem for $\alpha \to \tau$.}

This step is exactly as in the proof of Theorem \ref{thm:part:func:Htrig:odd}.
As for the odd case, for the rest of the proof, set 
$$\tau = (a^\tau_1,b^\tau_1,c^\tau_1,\ldots,a^\tau_Z,b^\tau_Z,c^\tau_Z) \in \mathcal{S}_{Y_n \setminus Z} \times \overline{\mathcal{S}_Z}$$
be such that $\omega_j(\tau) = 2 \pi$ for all $j \in \{0,1, \ldots, p-1,p+1\}$,
$\widehat{\omega}_j(\tau) = 2 \pi$ for all $j \in \{s,d,p\}$
 and $\widehat{\omega}_{\overrightarrow{K_n}}(\tau)=a^\tau_Z=0$.

\textit{Step 3. Retrieving the value $J_{X_n}(\hbar,0)$ in $\alpha =\tau$.}

Similarly as in the odd case, we do the following change of variables:
\begin{itemize}
\item $y'_k = t_k - \frac{i}{2 \pi \sqrt{\hbar}} (\pi-a_k)$ for $k \in \{1,\ldots,p,U\}$,
\item $y'_W = t_W + \frac{i}{2 \pi \sqrt{\hbar}} (\pi-a_W)$,
\end{itemize}
and we denote $\mathbf{y'}=\left (y'_1, \ldots, y'_{p}, y'_U, y'_W\right )^{\!\top}$. 
Again $a^\tau_U-a^\tau_V = (\widehat{\omega}_{s}(\tau)- 2\pi)+(\widehat{\omega}_{d}(\tau)- 2\pi) = 0$.

We also denote 
$$
\widetilde{\mathcal{Y}}'_{\hbar,\tau} :=
\prod_{k=1,\ldots,p,U}\left (\R - \frac{i}{2 \pi \sqrt{\hbar}} (\pi-a^\tau_k)\right ) 
\times 
\left (\R + \frac{i}{2 \pi \sqrt{\hbar}} (\pi-a^\tau_W)\right ),
$$
the subset of $\C^{p+2}$ on which the variables in $\mathbf{y'}$ reside. 

By a similar computation as in the proof of Theorem \ref{thm:even:part:func}, we obtain
\begin{align*}
&\int_{\mathbf{t}\in\R^{p+2}} d\mathbf{t} 
e^{2 i \pi \mathbf{t}^{\!\top} Q_n\mathbf{t}} e^{\frac{1}{\sqrt{\hbar}} (c^\tau_1 t_1 + \cdots + c^\tau_p t_p + (c^\tau_U - c^\tau_V)t_U + c^\tau_W t_W)} \Pi(\mathbf{t},\tau,\hbar)\\
&\stareq
\int_{\mathbf{y'} \in \mathcal{Y}'_{\hbar,\tau}}  d\mathbf{y'}
e^{
2 i \pi \mathbf{y}^{\prime T} Q_n  \mathbf{y'} + \frac{1}{\sqrt{\hbar}} \mathcal{W}(\tau)^{\!\top} \mathbf{y'} 
}
\dfrac{
\Phi_\B\left (-y'_U \right )
\Phi_\B\left (y'_W\right )
}{
\Phi_\B\left (y'_1\right )
\cdots
\Phi_\B\left (y'_p\right )
\Phi_\B\left (y'_U\right )
},
\end{align*}

where for any $\alpha \in \mathcal{S}_{Y_n \setminus Z}$, $\mathcal{W}(\alpha)$ is defined as
$$\mathcal{W}(\alpha):= 2 Q_n \Gamma(\alpha)+C(\alpha)+(0,\ldots,0,-c_V,0)^{\!\top},$$
with  $\Gamma(\alpha)=(a_1-\pi,\ldots,a_p-\pi,a_U-\pi, \pi-a_W)^{\!\top}$ and $C(\alpha)=(c_1,\ldots,c_p,c_U,c_W)$.
Hence, from the value of $J_{X_n}(\hbar,0)$, it remains only to prove that $\mathcal{W}(\tau) = \mathcal{W}_n$.

Let us denote $\Lambda: (u_1,\ldots,u_p,u_U,u_V,u_W) \mapsto (u_1,\ldots,u_p,u_U,u_W)$ the process of forgetting the second-to-last coordinate. then obviously $C(\alpha) = \Lambda (\widetilde{C}(\alpha))$. Recall from the proof of Theorem \ref{thm:even:part:func}
that $\widetilde{\mathcal{W}}(\alpha) = 2 \widetilde{Q}_n \widetilde{\Gamma}(\alpha) + \widetilde{C}(\alpha)$ depends almost only on edge weights of the angles in $X_n$.

Thus, a direct calculation shows that for any $\alpha \in \mathcal{S}_{Y_n \setminus Z}$, we have
\begin{equation*} \label{eqn:v:Alpha:Odd}
\mathcal{W}(\alpha) =
\Lambda(\widetilde{\mathcal{W}}(\alpha)) + 
\begin{bmatrix}
0 \\ \vdots \\ 0 \\ 
-c_V +(\pi-a_V)+(\pi-a_W) 
\\ a_U-a_V
\end{bmatrix}.
\end{equation*}

Now, if we specify $\alpha=\tau$, then the weights $\omega_{X_n,j}(\alpha)$ appearing in $\Lambda(\widetilde{\mathcal{W}}(\alpha))$  all become $2\pi$, since 
$\omega_s(\tau) =\widehat{\omega}_s(\tau)-\widehat{\omega}_{\overrightarrow{K_n}}(\tau) = 2 \pi$
and
$\omega_{p}(\tau) =\widehat{\omega}_d(\tau)+\widehat{\omega}_{p}(\tau) - 2\left (\pi-\widehat{\omega}_{\overrightarrow{K_n}}(\tau)\right ) = 2\pi.$
Hence 
$$\mathcal{W}(\tau)= \mathcal{W}_{n} + 
\begin{bmatrix}
0 \\ \vdots \\ 0 \\ 
\frac{1}{2}\lambda_{X_n}(\tau)- \pi -c^\tau_V +(\pi-a^\tau_V)+(\pi-a^\tau_W) \\ a^\tau_U-a^\tau_V
\end{bmatrix}.
$$
Finally, since
$\frac{1}{2}\lambda_{X_n}(\tau)=a^\tau_V-a^\tau_U+a^\tau_W-b^\tau_V$ and $a^\tau_U-a^\tau_V=0$, we conclude that $\mathcal{W}(\tau) = \mathcal{W}_n$ and the theorem is proven.
\end{proof}

\subsection{Geometricity implies the volume conjecture} \label{sub:even:vol:conj}

In this section we will prove the following theorem, which can be compared with Theorem \ref{thm:vol:conj}.

\begin{theorem}\label{thm:even:vol:conj}
Let $n$ be a positive even integer, and $J_{X_n}, \mathfrak{J}_{X_n}$ the functions defined in Theorem \ref{thm:even:part:func} and Corollary \ref{cor:even:part:func}. If the ideal triangulation $X_n$ is geometric, then
$$
\lim_{\hbar \to 0^+} 2\pi \hbar \log \vert J_{X_n}(\hbar,0) \vert
= \lim_{\hbar \to 0^+} 2\pi \hbar \log \vert \mathfrak{J}_{X_n}(\hbar,0) \vert
 = -\emph{Vol}(S^3\backslash K_n).$$
\end{theorem}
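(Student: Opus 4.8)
The plan is to reproduce, essentially verbatim, the architecture of Section~\ref{sec:vol:conj}, since the analytic core of that argument depends neither on the knot nor on the precise shape of the potential, only on its general form and on the properties of $\Phi_\B$ and $\Li$. First I would introduce the even-case potential $S\colon\mathcal{U}\to\C$ built from the matrices $Q_n,\mathcal{W}_n$ of Theorem~\ref{thm:even:part:func} and from the semi-classical limits (Proposition~\ref{prop:quant:dilog}~(3)) of the quantum dilogarithm factors appearing in $\mathfrak{J}_{X_n}(\hbar,0)$ of Corollary~\ref{cor:even:part:func}. The only genuine structural difference from the odd case is that here the tetrahedron $U$ is \emph{positive}, so at $\mathsf{x}=0$ the factor $\Phi_\B(y_U/2\pi\sqrt\hbar)$ sits in the denominator while $\Phi_\B(-y_U/2\pi\sqrt\hbar)$ sits in the numerator. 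Applying the inversion relations (Propositions~\ref{prop:dilog}~(1) and~\ref{prop:quant:dilog}~(1)) converts the ratio $\Phi_\B(-y_U)/\Phi_\B(y_U)$ into a Gaussian factor times $\Phi_\B(y_U)^{-2}$, so the classical potential acquires a contribution $2i\Li(-e^{y_U})$ together with an explicit quadratic correction in $y_U$ and a constant, exactly in the spirit of the rewriting of Lemma~\ref{lem:rewriteS}.

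Next I would re-run the preparatory Subsections~\ref{sub:S:U}--\ref{sub:ReS:Y0}. The holomorphic Hessian computation of Lemma~\ref{lem:hess} carries over: its imaginary part is $2Q_n$ plus a diagonal matrix and its real part is diagonal with strictly signed entries on each band $\mathcal{Y}_\alpha$, so invertibility and strict concavity of $\Re S$ follow from Lemma~\ref{lem:complex:sym} with no new input. The analogue of Lemma~\ref{lem:grad:thurston} requires deriving, from the even complex weight functions $\omega^{\C}_j$ listed in Section~\ref{sub:even:geom}, the reduced balancing-and-completeness system, and then exhibiting an explicit unimodular change of basis identifying $\nabla S=0$ with that system; geometricity of $X_n$ (Theorem~\ref{thm:appendix:geom:even}, which makes the hypothesis automatic) then guarantees a unique solution $\mathbf{z^0}$, hence a unique critical point $\mathbf{y^0}$, which by strict concavity is the strict global maximum of $\Re S$ on $\mathcal{Y}^0$ (analogue of Lemma~\ref{lem:maximum}). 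Finally, the analogue of Lemma~\ref{lem:-vol} expresses $\Re S(\mathbf{y^0})$ as a sum of Bloch--Wigner values, $-D(z^0_1)-\cdots-D(z^0_p)-2D(z^0_U)-D(z^0_W)=-\Vol(S^3\setminus K_n)$, where the vanishing of the residual linear term is precisely the identity $\mathcal{W}_n=\mathcal{W}(\tau^0)$ established inside the proof of Theorem~\ref{thm:part:func:Htrig:even}.

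With these ingredients in place, the two remaining blocks transfer with no essential change. The saddle-point asymptotics on the compact and unbounded parts of the contour (Proposition~\ref{prop:compact:contour:S:SPM}, Lemma~\ref{lem:unbounded:contour}, Proposition~\ref{prop:all:contour:S}) use only strict concavity, the critical-point data, and the general form of $S$, so they apply directly. Likewise the comparison between $\Li$ and $\Phi_\B$ --- the parity trick of Lemma~\ref{lem:parity}, the uniform bounds of Lemmas~\ref{lem:unif:bound} and~\ref{lem:unif:bound:neg} on unbounded horizontal bands, and the passage from $\B$ to $\hbar$ of Lemma~\ref{lem:unif:bound:hbar} --- is phrased \emph{per factor} and is independent of the knot; it applies to each $\Phi_\B$ occurring in the even integrand, the numerator factor $\Phi_\B(-y_U)$ being handled with argument $-y_U$, which lands in the upper band $\R+i[\delta,\pi-\delta]$ where Lemma~\ref{lem:unif:bound} is stated. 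Introducing the even potentials $S_\B$ and $S'_\B$ and reading off the effective exponents $(\eta_1,\dots,\eta_p,\eta_U,\eta_W)=(-1,\dots,-1,-2,1)$, Propositions~\ref{prop:all:contour:Sb} and~\ref{prop:all:contour:S'b} follow. The conclusion is then assembled as in Subsection~\ref{sub:conjvol:conclusion}: the even analogue of Remark~\ref{rem:J':S'b} gives $|\mathfrak{J}_{X_n}(\hbar,0)|\stareq\left|(2\pi\sqrt\hbar)^{-(p+3)}\int_{\mathcal{Y}^0}e^{S'_\B/2\pi\hbar}\,d\mathbf{y}\right|$, whence $2\pi\hbar\log|\mathfrak{J}_{X_n}(\hbar,0)|\to\Re S(\mathbf{y^0})=-\Vol(S^3\setminus K_n)$, and the identity $J_{X_n}(\hbar,x)=2\pi\sqrt\hbar\,\mathfrak{J}_{X_n}(\hbar,(2\pi\sqrt\hbar)x)$ transfers the limit to $J_{X_n}$.

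The main obstacle is bookkeeping rather than conceptual: one must track the sign flip of the tetrahedron $U$ through every formula, since it moves the corresponding $\Phi_\B$ factor between numerator and denominator and thereby changes the sign of its $\Li$ term and the entries $\eta_U=-2$, $\mathcal{W}_{n,U}=-(p^2+p+3)\pi$. The single most delicate verification is the even analogue of Lemma~\ref{lem:-vol}: after the inversion relation introduces the extra Gaussian term in $y_U$, one must confirm that the leftover linear vector $\mathcal{T}$ still vanishes at $\mathbf{y^0}$, which reduces exactly to the relation $\mathcal{W}_n=\mathcal{W}(\tau^0)$ proved for the even H-triangulation in Theorem~\ref{thm:part:func:Htrig:even}. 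Once that identity is secured, geometricity does all the remaining work through uniqueness of the saddle point, and no analytic estimate beyond those of Section~\ref{sec:vol:conj} is needed.
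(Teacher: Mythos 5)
Your proposal is correct and follows essentially the same route as the paper: the published proof of Theorem \ref{thm:even:vol:conj} likewise re-runs the architecture of Section \ref{sec:vol:conj} verbatim, recording only the even-case modifications (the new $\mathcal{U}$, $\mathcal{Y}_\alpha$, potential $S$ with the $i\Li(-e^{y_U})-i\Li(-e^{-y_U})$ pair coming from the sign flip of $U$, the vector $\eta=(-1,\dots,-1,-2,1)$), proving the gradient-versus-gluing-equations correspondence by an explicit unimodular matrix, rewriting $S$ via the inversion relation exactly as you describe, and identifying $\Re S(\mathbf{y^0})=-\Vol(S^3\setminus K_n)$ through $-e^{y^0_U}=z^0_U=z^0_V$ and the identity $\mathcal{W}_n=\mathcal{W}(\tau^0)$ from Theorem \ref{thm:part:func:Htrig:even}. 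You have also correctly identified the one point needing care in the analytic estimates, namely that the numerator factor $\Phi_\B(-y_U)$ is bounded via Lemma \ref{lem:unif:bound} on the upper band.
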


The following Corollary \ref{cor:vol:conj:even} is  an immediate consequence of Theorem \ref{thm:even:vol:conj} and Theorem \ref{thm:appendix:geom:even}.

\begin{corollary}\label{cor:vol:conj:even}
The Teichm\"uller TQFT volume conjecture of Andersen--Kashaev is proven for the even twist knots.
\end{corollary}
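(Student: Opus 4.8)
The plan is to transpose the entire argument of Section \ref{sec:vol:conj} to the even case, exploiting the fact that its analytic core is independent of the knot, the triangulation and the precise potential. First I would introduce the even-case objects: the open band
$$\mathcal{U} := \prod_{k=1,\ldots,p,U}\left(\R + i(-\pi,0)\right) \times \left(\R + i(0,\pi)\right) \subset \C^{p+2},$$
the horizontal contours $\mathcal{Y}_\alpha := \prod_{k=1,\ldots,p,U}(\R - i(\pi-a_k)) \times (\R + i(\pi-a_W))$ for $\alpha\in\mathcal{A}_{X_n}$, the distinguished contour $\mathcal{Y}^0 := \mathcal{Y}_{\alpha^0}$ at the complete structure $\alpha^0$ (which exists by the geometricity hypothesis, i.e.\ Theorem \ref{thm:appendix:geom:even}), and the classical potential $S\colon\mathcal{U}\to\C$ read off from the integrand of $\mathfrak{J}_{X_n}(\hbar,0)$ in Corollary \ref{cor:even:part:func} via the semi-classical limit Proposition \ref{prop:quant:dilog} (3),
$$S(\mathbf{y}) = i\,\mathbf{y}^T Q_n \mathbf{y} + \mathbf{y}^T\mathcal{W}_n + i\sum_{k=1}^p \Li(-e^{y_k}) + i\,\Li(-e^{y_U}) - i\,\Li(-e^{-y_U}) - i\,\Li(-e^{y_W}),$$
with $Q_n,\mathcal{W}_n$ as in Theorem \ref{thm:even:part:func}; here the signs follow from the fact that $T_1,\dots,T_p,U$ are \emph{positive} and $V,W$ \emph{negative}, and the two $U$-terms come from the factors $\Phi_\B(y_U)^{-1}$ and $\Phi_\B(-y_U)$ appearing in the even integrand.

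Next I would port the odd-case lemmas one by one, each of which adapts with only a change of explicit data. Using the inversion relation Proposition \ref{prop:dilog} (1) (the analogue of Lemma \ref{lem:rewriteS}) to absorb $-i\,\Li(-e^{-y_U})$ into $2i\,\Li(-e^{y_U})+\tfrac{i}{2}y_U^2$, the non-degeneracy of $\mathrm{Hess}(S)$ (analogue of Lemma \ref{lem:hess}) again follows from Lemma \ref{lem:complex:sym}, since $\Re(\mathrm{Hess}(S))$ is diagonal with coefficients of constant sign on $\mathcal{U}$ (using $\Im(y_1),\dots,\Im(y_U)\in(-\pi,0)$ and $\Im(y_W)\in(0,\pi)$); the same sign information yields strict concavity of $\Re S$ on each $\mathcal{Y}_\alpha$ (analogue of Lemma \ref{lem:concave}). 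The heart of this porting is the analogue of Lemma \ref{lem:grad:thurston}: I would exhibit an invertible integer matrix $A$ that triangularizes $\nabla S$ so that $A\cdot(\nabla S)(\psi(\mathbf{z}))$ becomes, up to the factor $i$, exactly the list of even-case complex gluing and completeness equations $\mathcal{E}_{X_n,\bullet}$ implicit in the weight functions recorded in Section \ref{sub:even:geom}. This identifies the unique critical point $\mathbf{y^0}\in\mathcal{Y}^0\subset\mathcal{U}$ with the complete hyperbolic structure $\mathbf{z^0}$, whence $\mathbf{y^0}$ is the strict global maximum of $\Re S|_{\mathcal{Y}^0}$ (analogue of Lemma \ref{lem:maximum}). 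Finally, $\Re S(\mathbf{y^0}) = -\Vol(S^3\setminus K_n)$ (analogue of Lemma \ref{lem:-vol}) follows by rewriting $S$ into Bloch--Wigner terms and using the even-case identity $\mathcal{W}(\tau^0) = \mathcal{W}_n$ established at the end of the proof of Theorem \ref{thm:part:func:Htrig:even}.

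With these geometric inputs in place, the remaining analytic machinery applies essentially verbatim, since Proposition \ref{prop:compact:contour:S:SPM}, Lemma \ref{lem:unbounded:contour}, Proposition \ref{prop:all:contour:S} and the dilogarithm comparison Lemmas \ref{lem:parity}, \ref{lem:unif:bound}, \ref{lem:unif:bound:neg}, \ref{lem:unif:bound:hbar} make no use of the specific knot or potential, only of the diagonal-definite structure of $\Re(\mathrm{Hess}(S))$, the concavity, and the general shape of $S$. Thus the even analogues of Propositions \ref{prop:all:contour:Sb} and \ref{prop:all:contour:S'b} hold for the corresponding quantum potentials $S_\B$ and $S'_\B$, giving $2\pi\hbar\log\bigl|\int_{\mathcal{Y}^0} e^{\frac{1}{2\pi\hbar}S'_\B(\mathbf{y})}\,d\mathbf{y}\bigr| \to \Re S(\mathbf{y^0})$. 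I would then conclude exactly as in the proof of Theorem \ref{thm:vol:conj}: the even analogue of Remark \ref{rem:J':S'b} (using $\tau^0 := \alpha^0\oplus(0,0,\pi)$ in Theorem \ref{thm:part:func:Htrig:even}) identifies $|\mathfrak{J}_{X_n}(\hbar,0)|$ with that integral, yielding the second limit, and the scaling relation $J_{X_n}(\hbar,x) = 2\pi\sqrt{\hbar}\,\mathfrak{J}_{X_n}(\hbar,(2\pi\sqrt{\hbar})x)$ yields the first. The main obstacle I anticipate is purely bookkeeping yet genuinely error-prone: pinning down the correct even-case potential and its triangularizing matrix $A$, because $U$ is now a positive tetrahedron and the inversion relation mixes the $\Li(-e^{y_U})$ and $\Li(-e^{-y_U})$ contributions, so the sign conventions and the quadratic corrections to $Q_n$ must be tracked carefully in order for the gradient equations to coincide with the gluing equations of Section \ref{sub:even:geom}.
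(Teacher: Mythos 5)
Your proposal is correct and follows essentially the same route as the paper: the corollary is obtained there by combining Theorem \ref{thm:even:vol:conj} with the geometricity result Theorem \ref{thm:appendix:geom:even}, and your porting of the Section \ref{sec:vol:conj} machinery (the even potential $S$ with the $i\,\Li(-e^{y_U})-i\,\Li(-e^{-y_U})$ pair, the triangularizing matrix $A$ identifying $\nabla S=0$ with the gluing/completeness equations, the inversion-relation rewriting, and the verbatim reuse of the saddle-point and dilogarithm-comparison lemmas) is precisely the paper's own proof of Theorem \ref{thm:even:vol:conj}.
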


\begin{proof}[Proof of Theorem \ref{thm:even:vol:conj}]
To prove Theorem \ref{thm:even:vol:conj}, we will follow exactly the same general path as in Section \ref{sec:vol:conj}. For the sake of brevity, we will thus only state the modifications that are due to the fact that $n$ is even instead of odd. For the remainder of the section, let $n$ be a positive even integer such that $X_n$ is geometric.
Let us first list the changes in {notation}:
\begin{itemize}
\item The open ``multi-band'' is now
$\mathcal{U} := \left (
\prod_{k=1, \ldots,p,U}\left (\R + i (-\pi,0)\right ) 
\right )
\times 
\left (\R + i (0,\pi)\right ),$
and the closed one $\mathcal{U}_{\delta}$ (for $\delta>0$) is
$\mathcal{U}_{\delta}:= \prod_{k=1,\ldots,p,U}\left (\R + i [-\pi+\delta,-\delta] \right ) 
\times 
\left (\R + i [\delta,\pi-\delta]\right ).$
\item As said in Corollary \ref{cor:even:part:func}, $\mathcal{Y}_\alpha := \left (
\prod_{k=1, \ldots,p,U}\left (\R - i (\pi - a_k)\right ) 
\right )
\times 
\left (\R + i (\pi - a_W)\right ).$
\item The potential function $S\colon \mathcal{U} \to \C$ is now
$S := \mathbf{y} \mapsto$
$$i \mathbf{y}^{\!\top} Q_n \mathbf{y} +  
  \mathbf{y}^{\!\top} \mathcal{W}_n
  + i \Li\left (-e^{y_1}\right ) + \cdots
  + i \Li\left (-e^{y_p}\right )
  + i \Li\left (-e^{y_U}\right )
  - i \Li\left (-e^{-y_U}\right )
  - i \Li\left (-e^{y_W}\right ).$$
  The expressions of its quantum deformations $S_{\B}$ and $S'_{\B}$ (for $\B>0$) should be obvious.
  \item The vector ${\zeta}$, first appearing in Proposition \ref{prop:all:contour:Sb}, is now 
  ${\zeta} := (-1, \ldots,-1,-2,1)$.
\end{itemize}

We will state and prove several facts, which are variants of statements in Section \ref{sec:vol:conj}.

Before all, let us remark that the non-degeneracy of the holomorphic hessian of $S$ (Lemma \ref{lem:hess}) and the strict concavity of $\Re(S)$ (Lemma \ref{lem:concave}) are obtained immediately by arguments and computations similar with the ones in Section \ref{sec:vol:conj}.

However, relating the vanishing of $\nabla S$ to Thurston's gluing equations (Lemma \ref{lem:grad:thurston}) needs a little more detail:\\
\textit{Fact 1. The diffeomorphism $\psi$ induces a bijective mapping between
$\{\mathbf{y} \in \mathcal{U};
\nabla S(\mathbf{y}) = 0\}$ and
$\{\mathbf{z} \in (\R+i\R_{>0})^{p+2}; \mathcal{E}_{X_n}^{co}(\mathbf{z})\}$.}

The system $\mathcal{E}^{co}_{X_n}(\mathbf{z})$ of equations (satisfied by the complete hyperbolic structure) is: 
\begin{itemize}
	\item $\mathcal{E}_{X_n,0}(\mathbf{z}) \colon \Log(z'_1) + 2 \Log(z_1)+\cdots + 2\Log(z_p)+2\Log(z_U) = 2i\pi$
	\item $\mathcal{E}_{X_n,1}(\mathbf{z}) \colon 2\Log(z''_1)+\Log(z'_2)=2i\pi$\\
	\vspace*{-2mm}
	\item $\mathcal{E}_{X_n,k}(\mathbf{z}) \colon \Log(z'_{k-1})+2\Log(z''_k)+\Log(z'_{k+1})=2i\pi$\ \
	(for $2\leqslant k \leqslant p-1$)\\
	\vspace*{-2mm}
	\item $\mathcal{E}_{X_n,p+1}^{co}(\mathbf{z}) \colon \Log(z'_{p}) +2 \Log(z''_{U})+\Log(z_{W})=2i\pi$
	\item $\mathcal{E}_{X_n,s}^{co}(\mathbf{z}) \colon \Log(z''_{W}) -\Log(z_{U})=0$
\end{itemize}

To prove Fact 1, let us first compute, for  $\mathbf{y} \in \mathcal{U}$:
$$
\nabla S(\mathbf{y}) =
2 i Q_n \mathbf{y} + \mathcal{W}_n + i
\begin{pmatrix}
-\Log (1+e^{y_1})\\
\vdots\\
-\Log (1+e^{y_p})\\
-\Log (1+e^{y_U})-\Log (1+e^{-y_U})\\
\Log (1+e^{y_W})
\end{pmatrix}.$$

Then, we define the  matrix $A=\kbordermatrix{
	\mbox{}	&y_1	&y_2		&y_3		&\cdots 	&y_p	& \omit\vrule	&y_U	& y_W 	\\
	y_1 		& 1 	& 		& 		& 		& 	& \omit\vrule	& 		&  		\\
	y_2 		&-2 	& 1 		& 		& 		&0 	& \omit\vrule	&  		&		\\
	y_3 		&1 	& -2 		& 1 		& 		& 	& \omit\vrule	& 		&		\\
	\vdots 	& 	& \ddots 	& \ddots 	& \ddots 	& 	& \omit\vrule	& 		&		\\
	y_p		& 	& 		& 1 		& -2 		& 1 	& \omit\vrule	&0 		&0		\\
\cline{1-1} \cline{2-9}
	y_U 		& 	& 		& 		& 		& -1 	& \omit\vrule	& 1 		& 1		\\
	y_W 		& 	&0 		& 		& 		&0 	& \omit\vrule	& 0		&1}
\in GL_{p+2}(\Z)$, and 
we compute $A \cdot  \nabla S(\mathbf{y})=$
$$
\begin{pmatrix}
2i(y_1+\cdots+y_p-y_U)-2\pi p -i \Log (1+e^{y_1})\\
-2i y_1 + 2 \pi +2 i \Log (1+e^{y_1}) - i \Log (1+e^{y_2}) \\
- 2i y_2 + 2\pi -i \Log (1+e^{y_1}) +2 i \Log (1+e^{y_2}) 
-i \Log (1+e^{y_3})\\
\vdots\\
- 2i y_k+2\pi -i \Log (1+e^{y_{k-1}}) +2 i \Log (1+e^{y_k}) 
-i \Log (1+e^{y_{k+1}})\\
\vdots\\
- 2i y_{p-1}+2\pi -i \Log (1+e^{y_{p-2}}) +2 i \Log (1+e^{y_{p-1}}) 
-i \Log (1+e^{y_p})\\
i y_U-i y_W -2 \pi -i \Log (1+e^{y_p}) - i \Log (1+e^{y_U})- i \Log (1+e^{-y_U})+ i \Log (1+e^{y_W})\\
- i y_U +i \pi +i\Log(1+e^{y_W})
\end{pmatrix}.
$$
Hence we compute, for all $\mathbf{z} \in (\R+i\R_{>0})^{p+2}$,
$$
A \cdot  (\nabla S)(\psi(\mathbf{z})) =
i \begin{pmatrix}
\Log(z'_1) + 2 \Log(z_1)+\cdots + 2\Log(z_p)+2\Log(z_U)-2i\pi\\
2\Log(z''_1)+\Log(z'_2)-2i\pi \\
\Log(z'_{1})+2\Log(z''_2)+\Log(z'_{3})-2i\pi \\
\vdots\\
\Log(z'_{k-1})+2\Log(z''_k)+\Log(z'_{k+1})-2i\pi \\
\vdots\\
\Log(z'_{p-2})+2\Log(z''_{p-1})+\Log(z'_{p})-2i\pi \\
-\Log(z'_{p}) -2 \Log(z''_{U})-\Log(z'_{W}) + 2i\pi \\
\Log(z''_{W}) -\Log(z_{U})
\end{pmatrix},
$$
which is zero if and only if the system $\mathcal{E}^{co}_{X_n}(\mathbf{z})$ is satisfied. Fact 1 then follows from the invertibility of $A$.

The second fact, a variant of Lemma \ref{lem:rewriteS}, is proven similarly, using Proposition \ref{prop:dilog}:\\
\textit{Fact 2.
	The function $S\colon \mathcal{U} \to \C $ can be re-written}
\begin{multline*}
S(\mathbf{y}) =  i \Li\left (-e^{y_1}\right ) + \cdots
+ i \Li\left (-e^{y_p}\right )
+ 2 i \Li\left (-e^{y_U}\right )
+ i \Li\left (-e^{-y_W}\right ) \\
+ i \mathbf{y}^{\!\top} Q_n \mathbf{y} +  
i \frac{y_U^2}{2} + i \frac{y_W^2}{2} +
\mathbf{y}^{\!\top} \mathcal{W}_n
+   i \frac{\pi^2}{3}.
\end{multline*}

Consequently, the fact that $\Re(S)(\mathbf{y^0}) = - \mathrm{Vol}(S^3 \setminus K_n)$ is proven like in the proof of Lemma \ref{lem:-vol}, using the particular form of $S$ stated in Fact 2, and the fact that at the complete angle structure, 
$ -e^{y^0_U}=z^0_U = z^0_V = -e^{-y^0_V}$
is the complex shape of both tetrahedra $U$ and $V$.

The rest of the statements in Section \ref{sec:vol:conj} (Lemma \ref{lem:maximum} and Proposition \ref{prop:compact:contour:S:SPM} to Proposition \ref{prop:all:contour:S'b}) are proven in exactly the same way, using the new {notation} defined at the beginning of this proof.

Notably, we obtain the following asymptotic behaviour for $\mathfrak{J}_{X_n}(\hbar,0)$:
$$\mathfrak{J}_{X_n}(\hbar,0) =\left (\dfrac{1}{2\pi \sqrt{\hbar}}\right )^{p+3} e^{\frac{1}{2 \pi \hbar} S(\mathbf{y^0})}
\left (
\rho' \hbar^{\frac{p+2}{2}} \left (
1 + 
o_{\hbar \to 0^+}\left (1\right )
\right )
+ {O}_{\hbar \to 0^+}(1)
 \right ).$$
\end{proof}


\begin{thebibliography}{00}


\bibitem{AH} J.E.~Andersen and S.K.~Hansen, \emph{Asymptotics of the quantum invariants for surgeries on the figure 8 knot}, J. Knot Theory Ramifications 15 (2006), no. 4, 479--548.
%
\bibitem{AK} J.E.~Andersen and R.~Kashaev, \emph{A TQFT from Quantum Teichm\"uller theory}, Communications in Mathematical Physics, 330 (3), 2014, p. 887--934.
%
\bibitem{AKnew} J.E.~Andersen and R.~Kashaev, \emph{A new formulation of the Teichm\"uller TQFT}, arXiv preprint 1305.4291.
%
\bibitem{AKicm} J.E.~Andersen and R.~Kashaev, \emph{A TQFT from Quantum Teichm\"uller theory}, Proc. Int. Cong. of Math., 2018, Rio de Janeiro, Vol. 2 (2527--2552).
%
\bibitem{AM}
J. E. Andersen and A. Malusa, \emph{The AJ-conjecture for
the Teichm\"uller TQFT}, arXiv preprint 1711.11522.
%
\bibitem{AN}
J. E. Andersen and J.-J. K. Nissen, \emph{Asymptotic aspects
of the Teichm\"uller TQFT}, Quantum geometry of moduli spaces with applications
to TQFT and RNA folding. Vol. 25. Trav. Math. Fac. Sci. Technol. Commun. Univ.
Luxemb., Luxembourg, pp. 41--95, 2017.
%
\bibitem{At} M. Atiyah, \emph{Topological quantum field theories}. Inst. Hautes \'Etudes Sci.
Publ. Math. 68, 175--186 (1989).
%
\bibitem{BAPNcras} F.~Ben Aribi and E.~Piguet-Nakazawa, \emph{The Teichm\"uller TQFT volume conjecture for twist knots}, C. R. Acad. Sci. Paris, Ser. I, 2019, 9 pages.
%
\bibitem{BAPN2} F.~Ben Aribi and E.~Piguet-Nakazawa, \emph{New geometric triangulations for complements of twist knots}, arXiv preprint 1903.09480.
%
\bibitem{BB}
S. Baseilhac and R. Benedetti, \emph{Quantum hyperbolic invariants of 3-manifolds
with $PSL(2,\C)$-characters}, Topology 43 (2004) 1373--1423.
%
\bibitem{BHMV1}
C. Blanchet, N. Habegger, G. Masbaum, P. Vogel: \emph{Three-manifold invariants derived from
the Kauffman bracket}, Topology 31, 685--699 (1992).
%
\bibitem{BHMV2}
C. Blanchet, N. Habegger, G. Masbaum, P. Vogel: \emph{Topological quantum field theories derived
from the Kauffman bracket}, Topology 34, 883--927 (1995).
%
\bibitem{BZH}
G. Burde, H. Zieschang 	and M. Heusener, {\em Knots}, De Gruyter, Berlin, 2014.
%
\bibitem{BB}
B.A. Burton, \emph{The cusped hyperbolic census is complete}, to appear in Transactions of the AMS, \texttt{arxiv/1405.2695}, 2014.
%
\bibitem{CMY}
J. Cho, J. Murakami and Y. Yokota, \emph{The complex volumes of twist knots}, Proceedings of the American Mathematical Society, 2009, vol. 137, no 10, p. 3533--3541.
%
\bibitem{Co}
F. Costantino, \emph{Coloured Jones invariants of links and the volume conjecture}, J. London Math. Soc. (2) 76 (2007) 1--15.
%
\bibitem{CY}
Q. Chen and T. Yang,  \emph{Volume conjectures for the Reshetikhin-Turaev and the Turaev-Viro invariants}, 2018, Quantum Topology, 9(3), 419--460.
%
\bibitem{DG}
T. Dimofte and S. Garoufalidis,  \emph{The quantum content of the gluing equations},  Geom. Topol. 17 (2013), no. 3, 1253--1315. 
%
\bibitem{Fe1} M. V. Fedoryuk,
\emph{The saddle-point method} (Russian), Izdat. Nauka, Moscow, 1977, 368 pp.
%
\bibitem{Fe2} M. V. Fedoryuk,
\emph{Asymptotic Methods in Analysis}, in 
\emph{Analysis I: Integral Representations and Asymptotic Methods}, Springer-Verlag, 1989, pages 83--191.
%
\bibitem{FG} D. Futer and F. Gu\'eritaud, 
\emph{From angled triangulations to hyperbolic structures}, Interactions between hyperbolic geometry, quantum topology and number theory, volume 541 of Contemp. Math., pages 159--182, Amer. Math.Soc., Providence, RI, 2011.
%
\bibitem{Gf} F. Gu\'eritaud, 
\emph{On canonical triangulations of once-punctured torus bundles and two-bridge link complements}, with an appendix by David Futer,
Geom. Topol. 10 (2006), 1239--1284.
%
\bibitem{IN} 
M. Ishikawa and K. Nemoto, 
{\em Construction of spines of two-bridge link complements and upper bounds of their Matveev complexities},
Hiroshima Math. J. 46, no. 2, 2016, 149--162.
%
\bibitem{Jo}
V. F. R. Jones, \emph{A polynomial invariant for knots via von Neumann algebras},  Bull. Amer. Math. Soc. (N.S.) 12 (1985), no. 1, 103--111. 
%
\bibitem{Kan}
R. P. Kanwal, \emph{Generalized functions},  Theory and technique. Second edition. Birkh\"auser Boston, Inc., Boston, MA, 1998.
%
\bibitem{Ka94}
R. Kashaev, \emph{Quantum dilogarithm as a $6j$-symbol},  Modern Phys. Lett. A 9 (1994), no. 40, 3757--3768. 
%
\bibitem{Ka95}
R. Kashaev, \emph{The hyperbolic volume of knots from the quantum dilogarithm}, Lett. Math. Phys. 39, no. 3, 1997, 269--275.
%
\bibitem{KaWB}
R. Kashaev, \emph{Combinatorics of the Teichm\"uller TQFT}, Volume 3, Winter Braids Lecture
Notes, 2016. Available at http://wbln.cedram.org.
%
\bibitem{KLV}
R. Kashaev, F. Luo and G. Vartanov, {\em A TQFT of Turaev-Viro Type on Shaped Triangulations},  Annales Henri Poincar\'e, May 2016, Volume 17, Issue 5, pp 1109--1143.
%
\bibitem{Kr}
S. G. Krantz, {\em Function theory of several complex variables}, Pure and Applied Mathematics. A Wiley-Interscience Publication. John Wiley \& Sons, Inc., New York, 1982.
%
\bibitem{Ma}
S. V. Matveev, {\em Algorithmic topology and classification of 3-manifolds}, Springer, Berlin, 2003.
%
\bibitem{Me}
W. Menasco, \emph{Polyhedra representation of link complements}, Low-dimensional topology (San Francisco, Calif., 1981), 305--325,
Contemp. Math., 20, Amer. Math. Soc., Providence, RI, 1983. 
%
\bibitem{Mi}
V. Mikhaylov, \emph{Teichm\"uller TQFT vs. Chern-Simons theory}, Journal of High Energy Physics, 2018(4), 85.
%
\bibitem{MM}
 H. Murakami and J. Murakami, \emph{The colored Jones polynomials and the simplicial volume of a knot}, Acta Math. 186, no. 1, 2001, 85--104.
%
\bibitem{Mor}
J. Morgan, \emph{Thurston’s uniformization theorem for three dimensional manifolds}, pp.
37--125, in: The Smith Conjecture, Pure Appl. Math., vol. 112, Academic Press, Orlando,
1984.
%
\bibitem{MT}
J. Morgan and G. Tian, \emph{Ricci flow and the Poincaré conjecture}, Clay Mathematics
Monographs, 3. American Mathematical Society, Providence, RI; Clay Mathematics Institute, Cambridge, MA, 2007.
%
\bibitem{Mo}
G. D. Mostow, \emph{Strong rigidity of locally symmetric spaces}, Annals of Mathematics Studies, No. 78. Princeton University Press, Princeton, N.J.; University of Tokyo Press, Tokyo, 1973.
%
\bibitem{MuIntro}
 H. Murakami, \emph{An introduction to the volume conjecture}, Interactions between hyperbolic geometry, quantum topologyand number theory, vol. 541, 2011, p. 1--40.
%
\bibitem{MY}
H. Murakami and Y. Yokota, \emph{Volume Conjecture for Knots}, Springer, 2018.
%
\bibitem{NZ}
W. D. Neumann and D. Zagier, {\em  Volumes of hyperbolic three-manifolds},   Topology 24 (1985), no. 3, 307--332.
%
\bibitem{Oh}
T. Ohtsuki, \emph{On the asymptotic expansion of the quantum $SU(2)$
	invariant at \\
	$q=\exp(4\pi-1/N)$
	for closed hyperbolic 3-manifolds obtained by integral surgery along the figure-eight knot},     Algebr. Geom. Topol.
Volume 18, Number 7 (2018), 4187--4274.
%
\bibitem{PN}
E. Piguet, \emph{Teichm\"uller TQFT Calculations for Infinite Families of Knots}, PhD thesis, University of Geneva. Available at https://archive-ouverte.unige.ch/unige:152763.
%
\bibitem{PV} F. Pinna and C. Viola, \emph{The saddle-point method in $\mathbb{C}^N$ and the generalized Airy functions}, Bulletin de la Société Mathématique de France, 147(2), 2019, 221--257.
%
\bibitem{Pr}
G. Prasad, \emph{Strong rigidity of $\mathbb{Q}$-rank $1$ lattices},  Invent. Math. 21 (1973), 255--286. 
%
\bibitem{PP}
E. Pervova and C. Petronio, {\em Complexity of links in 3-manifolds}, J. Knot Theory Ramifications 18, no. 10, 2009, 1439--1458.
%
\bibitem{RT1}
N. Reshetikhin and V. Turaev: \emph{Ribbon graphs and their invariants derived fron quantum groups}, Commun. Math. Phys. 127, 1--26 (1990)
%
\bibitem{RT2}
N. Reshetikhin and V. Turaev: \emph{Invariants of 3-manifolds via link polynomials and quantum groups}, Invent. Math. 103, 547--597 (1991)
%
\bibitem{Th} W. P. Thurston, \emph{The Geometry and Topology of Three-Manifolds}, Princeton lecture notes,
preprint. Available at
http://library.msri.org/books/gt3m/ (1978).
%
\bibitem{Th2} W. P. Thurston, \emph{Three-dimensional manifolds, Kleinian groups and hyperbolic geometry},  Bull. Amer. Math. Soc. (N.S.) 6 (1982), no. 3, 357--381. 
%
\bibitem{TV}
V. G. Turaev and O. Y. Viro, \emph{State sum invariants of 3-manifolds and quantum
6j-symbols}, Topology 31.4, pp. 865--902 (1992).
%
\bibitem{VdV}
R. van der Veen, \emph{Proof of the volume conjecture for Whitehead chains}, Acta Math. Vietnam. 33 (2008) 421--431.
%
\bibitem{Wi} E. Witten, \emph{Quantum field theory and the Jones polynomial}, Comm. Math. Phys., Volume 121, Number 3 (1989), 351--399.
%
\bibitem{Wo} R. Wong, \emph{Asymptotic approximations of integrals},  Computer Science and Scientific Computing. Academic Press, Inc., Boston, MA, 1989. xiv+546 pp.
%
\bibitem{Za} D. Zagier, \emph{The dilogarithm function},   Frontiers in number theory, physics, and geometry. II, 3--65, Springer, Berlin, 2007. 
%


\end{thebibliography}
\end{document}